\theoremstyle{plain}
\newtheorem{thm}{Theorem}[section]
\newtheorem{lem}[thm]{Lemma}
\newtheorem{prop}[thm]{Proposition}
\def\@rst #1 #2other{#1}
\newcommand\MR[1]{\relax\ifhmode\unskip\spacefactor3000 \space\fi
  \MRhref{\expandafter\@rst #1 other}{#1}}
\newcommand{\MRhref}[2]{\href{http://www.ams.org/mathscinet-getitem?mr=#1}{MR#2}}
\theoremstyle{definition}
\newtheorem{defn}[thm]{Definition}
\newtheorem{remark}[thm]{Remark}
\numberwithin{equation}{section}
\newcommand{\dsb}{\begin{adjustwidth}{2.5em}{0pt}
\begin{footnotesize}}
\newcommand{\dse}{\end{footnotesize}
\end{adjustwidth}}
\newcommand{\ssb}{\begin{adjustwidth}{2.5em}{0pt}}
\newcommand{\sse}{\end{adjustwidth}}
\newcommand{\aryb}{\begin{eqnarray*}}
\newcommand{\arye}{\end{eqnarray*}}
\def\alb#1\ale{\begin{align*}#1\end{align*}}
\def\allb#1\alle{\begin{align}#1\end{align}}
\newcommand{\eqb}{\begin{equation}}
\newcommand{\eqe}{\end{equation}}
\newcommand{\eqbn}{\begin{equation*}}
\newcommand{\eqen}{\end{equation*}}
\newcommand{\BB}{\mathbbm}
\newcommand{\ol}{\overline}
\newcommand{\ul}{\underline}
\newcommand{\op}{\operatorname}
\newcommand{\frk}{\mathfrak}
\newcommand{\eqD}{\overset{d}{=}}
\newcommand{\ep}{\epsilon}
\newcommand{\rta}{\rightarrow}
\newcommand{\wt}{\widetilde}
\newcommand{\mcl}{\mathcal}
\newcommand{\bdy}{\partial}
\newcommand{\rng}{\mathring}
\newcommand{\srta}{\shortrightarrow}
\newcommand{\el}{l}
\newcommand{\SLE}{{\operatorname{SLE}}}
\newcommand{\pbl}{{\operatorname{pbl}}}
\newcommand{\bcon}{{\mathbbm c}}
\newcommand{\tcon}{{\mathbbm s}}
\newcommand{\ccon}{\mathbbm c_{\op{Peel}}}
\newcommand{\lcon}{\mathbbm c_{\op{Stable}}}
\let\originalleft\left
\let\originalright\right
\renewcommand{\left}{\mathopen{}\mathclose\bgroup\originalleft}
\renewcommand{\right}{\aftergroup\egroup\originalright}
\title{Convergence of percolation on uniform quadrangulations with boundary to SLE$_{6}$ on $\sqrt{8/3}$-Liouville quantum gravity} 
\date{  }
\author{
\begin{tabular}{c} Ewain Gwynne\footnote{\url{ewain@uchicago.edu}}\\[-5pt]\small University of Chicago \end{tabular}
\begin{tabular}{c} Jason Miller\footnote{\url{jpmiller@statslab.cam.ac.uk}}\\[-5pt]\small University of Cambridge \end{tabular}
}
\begin{document}

\maketitle

\begin{abstract}
Let $Q$ be a free Boltzmann quadrangulation with simple boundary decorated by a critical ($p=3/4$) face percolation configuration.  We prove that the chordal percolation exploration path on $Q$ between two marked boundary edges converges in the scaling limit to chordal SLE$_6$ on an independent $\sqrt{8/3}$-Liouville quantum gravity disk (equivalently, a Brownian disk). The topology of convergence is the Gromov-Hausdorff-Prokhorov-uniform topology, the natural analog of the Gromov-Hausdorff topology for curve-decorated metric measure spaces.  We also obtain analogous scaling limit results for face percolation on the uniform infinite half-plane quadrangulation with simple boundary, and for site percolation on a uniform triangulation with simple boundary. Our method of proof is robust and, up to certain technical steps, extends to any percolation model on a random planar map which can be explored via peeling.
\end{abstract}

\noindent\textbf{Keywords:} percolation, random quadrangulation, random planar maps, peeling, Schramm-Loewner evolution, Liouville quantum gravity, Brownian disk, Brownian half-plane, scaling limit.
\medskip

\noindent\textbf{AMS Subject Classification:} 60K35, 60F17, 60J67, 60G57 

\tableofcontents

\section{Introduction}
\label{sec-intro}

\subsection{Overview}
\label{sec-overview}

In the past several decades, a vast literature concerning statistical mechanics models in two dimensions has been developed.  This work includes models on on deterministic lattices (such as $\BB Z^2$) as well as on random planar maps, i.e., random graphs embedded in the plane, viewed modulo orientation-preserving homeomorphisms. A central focus in this field is to show that these statistical mechanics models converge, under an appropriate scaling limit, to continuum models.

In the case of critical models on deterministic lattices, the limiting objects are often described (or conjectured to be described) in terms of \emph{Schramm-Loewner evolution (SLE)}~\cite{schramm0}, a one-parameter family of random fractal curves; see, e.g.,~\cite{lsw-lerw-ust,smirnov-cardy,smirnov-ising,ss-dgff}. $\SLE$ has been conjectured to arise in this context because in his original derivation \cite{schramm0} Schramm showed that it is characterized by the fact that it is \emph{conformally invariant} and satisfies a spatial Markov property called the \emph{domain Markov property}; these two properties together are sometimes referred to as the \emph{conformal Markov property}.  Many discrete models in two dimensions satisfy an exact spatial Markov property and are conjectured to be conformally invariant in the limit and therefore be $\SLE$s.
For critical models on random planar maps, one instead gets SLE curves in a random geometry which arises as the scaling limit of the underlying random planar map.  This random geometry can be described in terms of \emph{Liouville quantum gravity (LQG)}, a one-parameter family of random fractal surfaces.  
LQG surfaces with parameter $\gamma=\sqrt{8/3}$ are especially important since such surfaces describe the scaling limits of \emph{uniform} random planar maps, i.e., planar maps where each possibility is assigned equal probability. Certain special $\sqrt{8/3}$-LQG surfaces are equivalent, as metric measure spaces, to \emph{Brownian surfaces}, such as the Brownian map~\cite{legall-uniqueness,miermont-brownian-map} or the Brownian disk~\cite{bet-mier-disk}. 

The goal of this article is to show that a certain statistical mechanics model --- namely, critical percolation --- on certain types of random planar maps converges to SLE$_6$ on a $\sqrt{8/3}$-LQG surface, or equivalently a Brownian surface. The topology of convergence is the so-called \emph{Gromov-Hausdorff-Prokhorov uniform topology}, the natural analog of the Gromov-Hausdorff topology for curve-decorated metric measure spaces. We will provide more background about our result and the relevant mathematical objects shortly, but before we do so let us first comment briefly on our proof strategy.

The proof of our main scaling limit result has three main steps.
\begin{enumerate}
\item \label{item-tight} Show that percolation on a random planar map is tight with respect to the above topology.
\item \label{item-char}  Show that the desired limiting object --- namely SLE$_6$ on a $\sqrt{8/3}$-LQG surface --- is uniquely characterized by a certain set of properties  (essentially, the topology of the curve plus a ``LQG" variant of the domain Markov property). 
\item \label{item-ssl} Show that every possible subsequential limit of our discrete objects satisfies the properties in this characterization theorem.
\end{enumerate}
This proof outline is quite different from known scaling limit proofs for models on deterministic lattices toward SLE, which typically show directly that the Loewner driving function of the discrete curve converges to a multiple of Brownian motion (as opposed to using Schramm's conformal Markov characterization of $\SLE$). Our argument is also very different from the proof of the convergence of self-avoiding walk on random planar maps to SLE$_{8/3}$ on $\sqrt{8/3}$-LQG, with respect to the same topology we consider here, from~\cite{gwynne-miller-saw}.

In this paper, we will carry out steps~\ref{item-tight} and~\ref{item-ssl}, which both involve purely discrete (random planar map) arguments. Step~\ref{item-char} is carried out in the companion paper~\cite{gwynne-miller-char}, using purely continuum (SLE/LQG) arguments which are of quite a different flavor and make use of a different mathematical toolbox in comparison to this paper. We review all of the SLE/LQG results which are needed for the proofs of our main results, including the characterization theorem from~\cite{gwynne-miller-char}, in Section~\ref{sec-lqg-prelim} below. 
We note that in the course of proving this characterization theorem,~\cite{gwynne-miller-char} also establishes characterizations for other variants of SLE$_\kappa$ curves on $\gamma$-LQG surfaces for $\gamma \in (\sqrt2 , 2)$ and $\kappa  = 16/\gamma^2 \in (4,8)$, which may have applications to proving other scaling limit results for statistical mechanics models in random geometries.

\subsubsection{Percolation}
\label{sec-overview-perc} 

Let $G$ be a graph and $p \in [0,1]$.  Recall that \emph{site (resp.\ bond) percolation} on $G$ with parameter $p\in [0,1]$ is the model in which each vertex (resp.\ edge) of $G$ is declared to be open independently with probability $p$.  A vertex (resp.\ edge) which is not open is called closed. If $G$ is a planar map (i.e.,\ a graph together with an embedding into the plane so that no two edges cross), one can also consider \emph{face percolation}, equivalently site percolation on the dual map, whereby each face is open with probability $p$ and closed with probability $1-p$.  We refer to~\cite{grimmett-book,br-perc-book} for general background on  percolation.

Suppose now that~$G$ is an infinite graph with a marked vertex~$v$. The first question that one is led to ask about percolation on~$G$, which was posed in~\cite{bh-perc-processes}, is whether there exists an infinite \emph{open cluster} containing~$v$, i.e.\ a connected set of open vertices, edges, or faces (depending on the choice of model). For $p \in [0,1]$, let $\phi(p)$ be the probability that there is such an open cluster containing~$v$ and let $p_c = \sup\{ p \in [0,1] : \phi(p) = 0\}$ be the \emph{critical probability} above (resp.\ below) which there is a positive (resp.\ zero) chance there is an infinite open cluster containing~$v$.  The value of~$p_c$ is in general challenging to determine, but has been identified in some special cases.  For example, it is known that $p_c=1/2$ for both bond percolation on $\BB Z^2$ and for site percolation on the triangular lattice \cite{kesten-perc-book}.  As we will explain below, $p_c$ has also been identified for a number of random planar map models.

The next natural question that one is led to ask is whether the percolation configuration at criticality ($p=p_c$) possesses a \emph{scaling limit}, and this is the question in which we will be interested in the present work.  For percolation on a two-dimensional lattice when $p = p_c$, the interfaces between open and closed clusters are expected to converge in the scaling limit to Schramm-Loewner evolution (SLE)-type curves~\cite{schramm0} with parameter $\kappa = 6$. The reason for this is that the scaling limits of these percolation interfaces are conjectured to be conformally invariant (attributed to Aizenman by Langlands, Pouliot, and Saint-Aubin in \cite{lpsa-conformal-invariance}) with crossing probabilities which satisfy Cardy's formula~\cite{cardy-formula}. The particular value $\kappa = 6$ is obtained since this is the only value for which SLE possesses the \emph{locality property}~\cite{lsw-bm-exponents1}, which is a continuum analog of the statement that the behavior of a percolation interface is not affected by the percolation configuration outside of a sub-graph of the underlying lattice until it exits that sub-graph.  This conjecture has been proven in the special case of site percolation on the triangular lattice by Smirnov~\cite{smirnov-cardy}; see~\cite{camia-newman-sle6} for a detailed proof of the scaling limit result and~\cite{hls-sle6} for a proof of convergence in the so-called natural parameterization.  The proof of~\cite{smirnov-cardy} relies crucially on the combinatorics of site percolation on the triangular lattice and does not generalize to other percolation models.

In this paper we will prove scaling limit results for percolation on \emph{random planar maps} and identify the limit with $\SLE_6$ on $\sqrt{8/3}$-Liouville quantum gravity, equivalently, $\SLE_6$ on a Brownian surface.  Statistical mechanics models on random planar maps and deterministic lattices are both of fundamental importance in mathematical physics. Indeed, both are well-motivated in the physics literature and both possess a rich mathematical structure. Many questions (e.g., scaling limit results for random curves toward SLE) can be asked for both random planar maps and deterministic lattices, and it is not in general clear which setting is easier. There are scaling limit results which have been proven for models on deterministic lattices but not random planar maps (e.g., the convergence of Ising model interfaces to SLE$_3$~\cite{smirnov-ising} or, prior to this paper, the convergence of percolation to SLE$_6$) or for random planar maps but not deterministic lattices (e.g., the convergence of self-avoiding walk to SLE$_{8/3}$~\cite{gwynne-miller-saw} or peanosphere scaling limit results~\cite{wedges,shef-burger,kmsw-bipolar,gkmw-burger}).

We will focus on the particular model of face percolation on a random quadrangulation.  (We will discuss the universality of the scaling limit in Section~\ref{sec-triangulation} in detail in the setting of site percolation on triangulations.) Critical probabilities for several percolation models on random planar maps are computed in~\cite{angel-curien-uihpq-perc}, building on ideas of \cite{angel-peeling,angel-uihpq-perc}; in particular, $p_c=3/4$ for face percolation on random quadrangulations.  The fact that $p_c=3/4$ and not $1/2$ is related to the asymmetry between open and closed faces: open faces are considered adjacent if they share an edge, whereas closed faces are considered adjacent if they share a vertex. See~\cite{richier-perc,menard-nolin-perc} for the computation of $p_c$ for other planar map models.

One useful feature of percolation on random planar maps is the so-called \emph{peeling procedure} which allows one to describe the conditional law of the remaining map when we explore a single face.  For face percolation with open/closed boundary conditions, the peeling process gives rise to a natural path from the root edge to the target edge which we call the \emph{percolation exploration path} (see Section~\ref{sec-intro-def-perc} for a precise definition of this path).  The peeling exploration path is closely related to, but not in general identical to, the percolation interface from the root edge to the target edge; see Section~\ref{sec-interface} for further discussion of this relationship. In the special case of site percolation on a triangulation the percolation exploration path is the same as the percolation interface.
 
\subsubsection{Limiting object: SLE$_6$ on $\sqrt{8/3}$-Liouville quantum gravity}
\label{sec-overview-rpm}

For $\gamma \in (0,2)$, a \emph{$\gamma$-Liouville quantum gravity (LQG) surface} is (formally) the random surface parameterized by a domain $D\subset \BB C$ whose Riemannian metric tensor is $e^{\gamma h(z)} \, dx \otimes dy$, where $h$ is some variant of the Gaussian free field (GFF) on $D$ and $dx\otimes dy$ is the Euclidean metric tensor. This does not make rigorous sense since $h$ is a distribution, not a function. However, it was shown in~\cite{shef-kpz} that one can make rigorous sense of the volume form associated with a $\gamma$-LQG surface, i.e.\ one can define a random measure $\mu_h$ on $D$ which is a limit of regularized versions of $e^{\gamma h(z)} \, dz$ where $dz$ is the Euclidean volume form (see~\cite{rhodes-vargas-review} and the references therein for a more general approach to constructing measures of this form). Hence a $\gamma$-LQG surface can be viewed as a random measure space together with a conformal structure.

In the special case when $\gamma =\sqrt{8/3}$, it is shown in~\cite{lqg-tbm1,lqg-tbm2,lqg-tbm3}, building on \cite{qle,tbm-characterization,sphere-constructions}, that $(D,h)$ can also be viewed as a random metric space, i.e., one can construct a metric $\frk d_h$ on $D$ which is interpreted as the distance function associated with $e^{\gamma h(z)} \, dx \otimes dy$. For certain special $\sqrt{8/3}$-LQG surfaces introduced in~\cite{wedges,shef-zipper}, the metric measure space structure of a $\sqrt{8/3}$-LQG surface is equivalent to a corresponding Brownian surface. In particular, the Brownian map, the scaling limit of the uniform quadrangulation of the sphere \cite{legall-uniqueness,miermont-brownian-map}, is equivalent to the \emph{quantum sphere}.  Also, the Brownian half-plane, the scaling limit of the uniform quadrangulation of the upper-half plane $\BB H$ in the Gromov-Hausdorff topology~\cite{gwynne-miller-uihpq,bmr-uihpq}, is equivalent to the \emph{$\sqrt{8/3}$-quantum wedge}.  Finally, the Brownian disk, the scaling limit of the uniform quadrangulation of the disk $\BB D$ \cite{bet-mier-disk}, is equivalent to the \emph{quantum disk}.

The metric measure space structure of a $\sqrt{8/3}$-LQG surface a.s.\ determines the conformal structure~\cite{lqg-tbm3}, so we have a canonical way of embedding a Brownian surface into~$\BB C$.  This enables us to define an independent $\SLE_6$ on the Brownian map, half-plane, and disk as a curve-decorated metric measure space by first embedding the Brownian surface into~$\BB C$ to get a $\sqrt{8/3}$-LQG surface and then sampling an independent $\SLE_6$ connecting two marked points. The canonical choice of parameterization is the so-called \emph{quantum natural time} with respect to this $\sqrt{8/3}$-LQG surface, a notion of time which is intrinsic to the curve decorated quantum surface~\cite{wedges}.  See Section~\ref{sec-lqg-prelim} for more on $\sqrt{8/3}$-LQG surfaces and their relationship to $\SLE_6$ and to Brownian surfaces.

In the companion paper~\cite{gwynne-miller-char}, we prove a characterization of SLE$_6$ on a Brownian surface by a set of simple properties, which is re-stated as Theorem~\ref{thm-bead-mchar}. 
This characterization is in some ways similar to Schramm's~\cite{schramm0} characterization of SLE in terms of conformal Markov property, in that it involves a continuum analog of the Markov property for percolation interfaces on a random planar map. However, the properties in our characterization theorem are different than those in Schramm's characterization, and the proof is extremely different. 
As discussed above, this characterization plays a fundamental role in the proof of our main results.

\subsubsection{Scaling limit}
\label{sec-overview-result} 

The main theorem of this paper (stated precisely as Theorem~\ref{thm-perc-conv} below) says that the exploration path associated with face percolation on a random quadrangulation with simple boundary between two marked edges converges in the scaling limit to $\SLE_6$ on the $\sqrt{8/3}$-LQG disk, equivalently $\SLE_6$ on the Brownian disk. The topology of convergence is given by the \emph{Gromov-Hausdorff-Prokhorov-uniform (GHPU) metric} introduced in~\cite{gwynne-miller-uihpq}. The GHPU metric is the natural analog of the Gromov-Hausdorff metric for curve-decorated metric measure spaces: two such spaces are close in this metric if they can be isometrically embedded into a common metric space in such a way that the spaces are close in the Hausdorff distance, the measures are close in the Prokhorov distance, and the curves are close in the uniform distance. We also deduce from this finite-volume scaling limit result an analogous infinite-volume scaling limit result for face percolation on a uniform quadrangulation of $\BB H$ toward $\SLE_6$ on the $\sqrt{8/3}$-LQG wedge, equivalently $\SLE_6$ on the Brownian half-plane.


Recall that the results of~\cite{lqg-tbm1,lqg-tbm2,lqg-tbm3} allow one to give a definition of $\SLE_6$ on a Brownian surface.  The main result of the present paper says that this definition agrees with the scaling limit of percolation on random planar maps, which implies that this is the correct definition of $\SLE_6$.  It is also not difficult to see that the conformal structure imposed on Brownian surfaces by the results of \cite{lqg-tbm1,lqg-tbm2,lqg-tbm3} is characterized by the property that it embeds the scaling limit of percolation as constructed in the present paper to $\SLE_6$ on an independent $\sqrt{8/3}$-LQG surface. Indeed, this follows because any homeomorphism which takes an $\SLE_6$ to an $\SLE_6$ must be conformal. From this perspective, the results of the present paper can be interpreted as implying that the conformal structure from \cite{lqg-tbm1,lqg-tbm2,lqg-tbm3} imposed on Brownian surfaces is the correct one.  (A similar conclusion also follows from the main results of~\cite{gwynne-miller-saw}, which show that the aforementioned embedding is the one which maps the scaling limit of self-avoiding walk on a random quadrangulation to an independent $\SLE_{8/3}$.)

\begin{figure}[ht!]
\begin{center}
\includegraphics[scale=0.85]{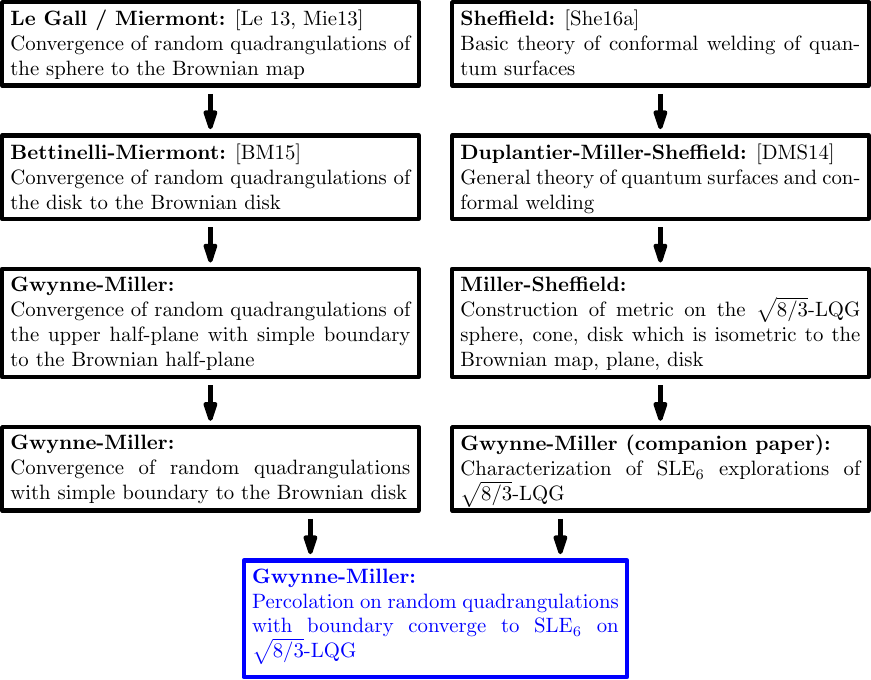}	
\end{center}
\vspace{-0.02\textheight}
\caption[Relationships between papers related to Chapter~\ref{chap-perc}]{\label{fig:chart_of_papers} An illustration of the dependencies between results which lead to the main theorem of the present paper, which is the blue box at the bottom of the figure.  See also \cite{bmr-uihpq} for another proof that the UIHPQ converges to the Brownian half-plane.}
\end{figure}

A major problem in the theory of LQG is to show that certain random planar map models conformally embedded into the plane converge in the scaling limit to $\gamma$-LQG, e.g., in the sense that the counting measure on vertices converges to the $\gamma$-LQG area measure. Some forms of this conjecture have been proved in recent years.  The works~\cite{gms-tutte,gms-poisson-voronoi} establish this type of convergence for the Tutte embedding in the case of the \emph{mated-CRT maps} and in the case of the Poisson-Voronoi approximation of the Brownian map.  Both \cite{gms-tutte,gms-poisson-voronoi} are based on understanding the embedding problem in terms of a random walk in a random environment \cite{gms-random-walk}.

The results of this paper have applications to the embedding problem in the case of uniform random planar maps. Indeed, one can define an embedding of a random planar map into the plane by matching crossing probabilities for percolation on the planar map to crossing probabilities for SLE$_6$. This embedding is called the \emph{Cardy embedding} after Cardy's formula for such crossing probabilities~\cite{cardy-formula,smirnov-cardy}. Convergence of the map under the Cardy embedding to $\sqrt{8/3}$-LQG is closely related to certain \emph{quenched} scaling limit results for percolation on a random planar map toward SLE$_6$ on $\sqrt{8/3}$-LQG (i.e., scaling limit results for the conditional law of the percolation given the map). Proving such a quenched scaling limit result amounts to showing that any finite number $N$ of independent (given the planar map) percolation explorations converge to $N$ independent $\SLE_6$ curves on the same $\sqrt{8/3}$-LQG surface.  The present paper shows that the exploration associated with a single percolation configuration converges. Subsequent work by Holden and Sun~\cite{hs-cardy-embedding}, building on the present paper as well as their various joint works with other authors~\cite{ghss-negative-moments,hlls-cut-pts,hls-sle6,aasw-type2,ghs-metric-peano,bhs-site-perc,ghss-ldp}, 
extends this result to get convergence of $N$ independent percolation configurations and deduces from this that Cardy-embedded uniform triangulations converge to $\sqrt{8/3}$-LQG.
 
\subsubsection{Remarks on proof strategy}
\label{sec-overview-proof} 

Our method of proof is robust in the sense that it does not rely on the particular random planar map or percolation model, provided one has certain technical inputs. As we will explain later, one (roughly) only needs to know that the boundary length processes associated to the percolation exploration converge to their continuum counterparts and that the corresponding planar map model with the topology of the disk converges to the Brownian disk.  The main reason that we focus on the case of face percolation on a quadrangulation of the disk is that this latter step has been carried out for quadrangulations \cite{gwynne-miller-simple-quad}.  In Section~\ref{sec-triangulation}, we will give a precise statement of a variant of our result in the setting of site percolation on a random triangulation, using the fact that \emph{triangulations} with simple boundary converge to the Brownian disk, which was recently proven in~\cite{aasw-type2}.

We only consider chordal percolation exploration paths and chordal $\SLE_6$ in this paper, but with some additional work our methods can be extended to obtain analogous scaling limit theorems for radial or whole-plane percolation exploration paths.  Likewise, we expect that our results can be extended to obtain a scaling limit statement for the full collection of percolation interfaces toward a conformal loop ensemble~\cite{shef-cle} with $\kappa=6$. See Section~\ref{sec-other-results} for more details.  

A key tool in our proof is a characterization theorem for chordal $\SLE_6$ on a Brownian disk which is proven in the companion paper~\cite{gwynne-miller-char} and re-stated as Theorem~\ref{thm-bead-mchar} below.  Roughly speaking, this result says that if $(\wt H , \wt d , \wt \mu , \wt\eta)$ is a random curve-decorated metric measure space such that $(\wt H , \wt d , \wt \mu)$ is a Brownian disk, $(\wt H , \wt\mu , \wt\eta)$ differs from an $\SLE_6$ on a Brownian disk via a curve- and measure-preserving homeomorphism, and the connected components of $\wt H \setminus \wt\eta([0,t])$ for each $t\geq 0$ equipped with their internal metrics are independent Brownian disks, then $\wt\eta$ is an independent chordal $\SLE_6$ on $(\wt H , \wt d , \wt\mu)$. 
The tools used to prove our characterization theorem are completely different from the tools used in the present paper: the proof of the characterization theorem uses SLE/LQG techniques whereas the proofs in the present paper are based on discrete methods (such as the  peeling operation for random quadrangulations). 

In addition to the characterization theorem from~\cite{gwynne-miller-char}, we will also use the scaling limit result for free Boltzmann quadrangulations with simple boundary toward the Brownian disk~\cite{gwynne-miller-simple-quad} and some properties of SLE$_6$ on the quantum disk proved in~\cite{gwynne-miller-sle6}.

To prove our main result we will prove tightness of the face percolation exploration path on a free Boltzmann quadrangulation with simple boundary in the GHPU topology (which amounts to showing equicontinuity of the percolation exploration path since we already know the scaling limit of the underlying map), then check that every subsequential limit satisfies the hypotheses of this characterization theorem.  
Although our proof relies on the theory of SLE and LQG, in the form of the characterization result Theorem~\ref{thm-bead-mchar} to prove uniqueness of subsequential limits, most of our arguments can be read without any knowledge of SLE or LQG if one takes this characterization theorem plus a few other results as a black box.

\medskip

\noindent{\bf Acknowledgements}
We thank two anonymous referees for helpful comments on an earlier version of this manuscript.
We thank Nina Holden, Scott Sheffield, and Xin Sun for helpful discussions. J.M.\ thanks Institut Henri Poincar\'e for support as a holder of the Poincar\'e chair, during which part of this work was completed.

\subsection{Preliminary definitions} 
\label{sec-intro-def}

In this subsection we recall the definitions of the objects involved in the statements of our main results. 

\subsubsection{Quadrangulations with simple boundary}
\label{sec-intro-def-quad}
 
A \emph{quadrangulation with boundary} is a (finite or infinite) planar map~$Q$ with a distinguished face~$f_\infty$, called the \emph{exterior face}, such that every face of~$Q$ other than~$f_\infty$ has degree $4$. The \emph{boundary} of $Q$, denoted by $\bdy Q$, is the smallest subgraph of~$Q$ which contains every edge of $Q$ adjacent to $f_\infty$. The \emph{perimeter} of~$Q$ is defined to be the degree of the exterior face. We note that the perimeter of a quadrangulation with boundary is necessarily even. 

We say that $Q$ has simple boundary if $\bdy Q$ is simple, i.e.\ it only has vertices of unit multiplicity. In this paper we will only consider quadrangulations with simple boundary. 
 
For such a quadrangulation $Q$, a \emph{boundary path} of $Q$ is a path $\beta$ from $[0,\#\mcl E(\bdy Q)]_{\BB Z}$ (if $\bdy Q$ is finite) or $\BB Z$ (if $\bdy Q$ is infinite) to $\mcl E(\bdy Q)$ which traces the edges $\mcl E(\bdy Q)$ of $\bdy Q$ (counted with multiplicity) in cyclic order. Choosing a boundary path is equivalent to choosing an oriented root edge on the boundary. This root edge is $\beta(0)$ and is oriented toward $\beta(1)$. In the finite boundary case the \emph{periodic boundary path} is the path obtained by extending $\beta$ to be $\#\mcl E(\bdy Q)$-periodic on $\BB Z$.

For $n \in \BB N $ and $\el \in \BB N$, we write $\mcl Q_{ \op{S} }^\shortrightarrow(n,\el)$ for the set of pairs $(Q , \BB e)$ where $Q$ is a quadrangulation with simple boundary having $2\el$ boundary edges and $n$ interior vertices and $\BB e$ is an oriented root edge in~$\bdy Q$.   By convention, we consider the trivial quadrangulation with one edge and no interior faces to be a quadrangulation with simple boundary of perimeter $2$ and define $\mcl Q_{\op{S}}^\srta(0,1)$ to be the set consisting of this single quadrangulation, rooted at its unique edge. We define $\mcl Q_{\op{S}}^\srta(0,\el) = \emptyset$ for $\el \geq 2$.

We define the \emph{free Boltzmann partition function} by  
\eqb \label{eqn-fb-partition}
\frk Z(2 \el )   :=    \frac{8^\el(3 \el-4)!}{(\el-2)! (2 \el)!}, \qquad \frk Z(2 \el+1) = 0 ,\qquad \forall \el \in \BB N ,
\eqe 
where here we set $(-1)! = 1$. 

\begin{defn} \label{def-fb}
For $\el\in\BB N$, the \emph{free Boltzmann distribution} on quadrangulations with simple boundary and perimeter $2\el$ is the probability measure on $\bigcup_{n=0}^\infty \mcl Q_{\op{S}}^\shortrightarrow (n ,  \el)$ which assigns to each element of $ \mcl Q_{\op{S}}^\shortrightarrow (n, \el)$ a probability equal to $12^{-n} \frk Z(2\el)^{-1}$.  
\end{defn}

It is shown in~\cite{bg-simple-quad} that $\frk Z(2\el) = \sum_{n=0}^\infty 12^{-n} \# \mcl Q_{\op{S}}^\shortrightarrow(n, \el) $, so that the free Boltzmann distribution is indeed a probability measure.

The \emph{uniform infinite half-plane quadrangulation with simple boundary} (UIHPQ$_{\op{S}}$) is the infinite rooted quadrangulation $(Q^\infty ,\BB e^\infty)$ with infinite simple boundary which is the Benjamini-Schramm local limit~\cite{benjamini-schramm-topology} in law of the free Boltzmann quadrangulation with simple boundary as the perimeter tends to $\infty$~\cite{curien-miermont-uihpq,caraceni-curien-uihpq}.  When we refer to a free Boltzmann quadrangulation with perimeter $2\el = \infty$, we mean the UIHPQ$_{\op{S}}$. 

\subsubsection{Critical face percolation on quadrangulations with simple boundary}
\label{sec-intro-def-perc}

In this subsection we give a brief description of the percolation exploration path of critical face percolation on a quadrangulation with simple boundary; see Section~\ref{sec-perc-peeling} for a precise definition and Figure~\ref{fig-perc-peeling-finite} for an illustration. 

Let $(Q,\BB e)$ be a quadrangulation with simple boundary of perimeter $2\el \in 2\BB N \cup \{\infty\}$.  A \emph{critical face percolation configuration} on $Q$ is a random function $\theta$ from the set of quadrilaterals $q$ of $Q$ to the set $\{\mathsf{white}, \mathsf{black}\}$ such that the values~$\theta(q)$ are i.i.d.\ Bernoulli random variables which equal $\mathsf{white}$ with probability $3/4$ and $\mathsf{black}$ with probability~$1/4$. We say that~$q$ is white or open (resp.\ closed or black) if $\theta(q) = \mathsf{white}$ (resp.\ $\theta(q) = \mathsf{black}$).

\begin{figure}[ht!]
\begin{center}
\includegraphics[scale=0.85]{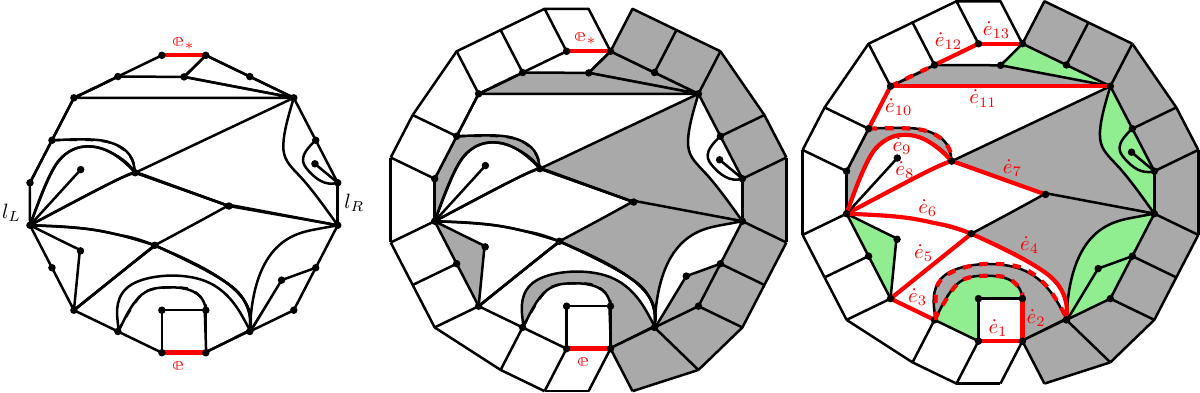}	
\end{center}
\vspace{-0.02\textheight}
\caption[Face percolation on a quadrangulation with simple boundary]{\label{fig-perc-peeling-finite} \textbf{Left:} A quadrangulation $Q$ with simple boundary with two marked boundary edges separated by boundary arcs which contain $\el_L $ and $\el_R $ vertices, respectively (here $\el_L = \el_R = 10$). \textbf{Middle:} The same quadrangulation $Q$ equipped with a face percolation configuration, with extra external quadrilaterals attached to impose white/black boundary conditions. \textbf{Right:} The edges $\dot e_j$ peeled by the percolation peeling process are shown in solid red. Quadrilaterals not revealed by this process are shown in green. The percolation exploration path $\lambda$ is obtained by adding extra edges (dashed red) to link up the $\dot e_j$'s into a path. Our main result says that $(Q,\lambda)$ converges in law to SLE$_6$ on the Brownian disk. } 
\end{figure}

Let $\el_L , \el_R \in \BB N $ with $\el_L + \el_R = 2\el$ or let $\el_L = \el_R = \infty$ in the case when $\el = \infty$.  The \emph{percolation peeling process of $(Q,\BB e,\theta)$ with $\el_L$-white/$\el_R$-black boundary conditions} is the algorithm for exploring $(Q,\BB e , \theta)$ described as follows.  If $\el < \infty$, we choose a target edge $\BB e_*$ in such a way that the clockwise (resp.\ counterclockwise) arc of $\bdy Q$ from $\BB e$ to $\BB e_*$ contains exactly $\el_L$ (resp.\ $\el_R$) vertices; or if $\el=\infty$ let $\BB e_* = \infty$. We impose white/black boundary conditions by attaching a white (resp.\ black) quadrilateral in the external face of $Q$ to each edge of $Q$ in the left (resp.\ right) arc of $\bdy Q$ from $\BB e$ to $\BB e_*$, with $\BB e$ and $\BB e_*$ included in the left, but not the right, arc.

The percolation peeling process explores $Q$ from $\BB e$ to $\BB e_*$ one quadrilateral at a time and is illustrated in the right panel of Figure~\ref{fig-perc-peeling-finite}. Let $\dot e_1$ be the root edge $ \BB e$ and at time 1, reveal the quadrilateral of $Q$ containing $\BB e$ on its boundary along with its color. We then consider the complementary connected component $\ol Q_1$ of the removed quadrilateral with $\BB e_*$ on its boundary, which is a sub-quadrangulation with simple boundary with the following property. If $\ol Q_1 \not=\emptyset$, then there are at most two edges of $\bdy \ol Q_1$ which are on the boundary of a white explored or external quadrilateral and which share a vertex with a black explored or external quadrilateral. One of these edges is equal to $\BB e_*$. We let $\dot e_2$ be the other of these two edges, or $\dot e_2 = \BB e_*$ if there is only one such edge. If $\dot e_2 \not=\BB e_*$, we can iterate the above procedure with $(\ol Q_1 , \dot e_2)$ in place of $(Q,\BB e)$ to define a quadrangulation with simple boundary $\ol Q_2 \subset \ol Q_1$ and an edge $\dot e_3 \in \bdy \ol Q_2$. We then continue inductively until we get all the way to $\BB e_*$, thereby defining edges $\dot e_j$ and quadrangulations $\ol Q_j$ for all $j\in\BB N$ (for large enough $j$, we will have $\dot e_j = \BB e_*$ and $\ol Q_j = \emptyset$).  

We now define the \emph{percolation exploration path} $\lambda : \frac12\BB N_0 \rta \mcl E(Q)$, which we will show converges to SLE$_6$. 
We set $\lambda(0) = \BB e = \dot e_1$ and for $j\in\BB N $, we set $\lambda(j):=  \dot{e}_j$. 
This does not define a path since the successive edges $\dot e_j$ might not share an endpoint. But, successive edges lie at graph distance at most 1 from each other.  We therefore extend the definition of $\lambda$ to $\frac12\BB N_0$ by taking $\lambda(j+1/2)$ to be an edge which shares an endpoint with each of $\lambda(j)$ and $\lambda(j+1/2)$ for each $j\in\BB N_0$.  One can choose $\lambda(j+1/2)$ in a variety of different ways, for example the left or rightmost edge sharing an endpoint with each of $\lambda( j)$ and $\lambda(j+1)$.  The particular choice does not affect the scaling limit, so we will not fix a convention. 
Note that we do not require that the edges of a path can be oriented in a consistent manner, so long as successive edges share an endpoint (c.f.\ Section~\ref{sec-graph-notation}).

As explained in more detail in Section~\ref{sec-interface}, the path $\lambda$ is closely related to, but not identical, to the percolation interface which goes from $\BB e$ to $\BB e_*$.  (In the case of site percolation on a triangulation, however, the analog of $\lambda$ can be taken to be the same as the percolation interface; see Section~\ref{sec-triangulation}.)

\subsubsection{Brownian disk and Brownian half-plane}
\label{sec-intro-def-disk}

For $\frk a , \frk l >0$, the \emph{Brownian disk with area $\frk a$ and perimeter $\frk l$} is the random curve-decorated metric measure space $(H , d , \mu , \xi)$ with the topology of the disk which arises as the scaling limit of uniformly random quadrangulations with boundary. The Brownian disk can be constructed as a metric space quotient of $[0,\frk a]$ via a continuum analog of the Schaeffer bijection~\cite{bet-mier-disk}; we will not need this construction here so we will not review it carefully. The area measure $\mu$ is the pushforward of Lebesgue measure on $[0,\frk a]$ under the quotient map and the path $\xi : [0,\frk l] \rta \bdy H$, called the \emph{boundary path}, parameterizes $\bdy H$ according to its natural length measure (which is the pushforward under the quotient map of the local time measure at the set of times when the encoding function attains a record minimum). The \emph{periodic boundary path} of $H$ is the path obtained by extending $\xi$ to be $\frk l$-periodic on $\BB R$.

The \emph{free Boltzmann Brownian disk with perimeter $\frk l$} is the random curve-decorated metric measure space $(H , d , \mu , \xi)$ obtained as follows: first sample a random area $\frk a$ from the probability measure $  \frac{\frk l^3}{ \sqrt{2\pi a^5 } } e^{-\frac{\frk l^2}{2 a} } \BB 1_{(a\geq 0)} \, da$, then sample a Brownian disk with boundary length $\frk l$ and area $\frk a$. The free Boltzmann Brownian disk is the scaling limit in the GHPU topology (c.f.\ Section~\ref{sec-ghpu}) of the free Boltzmann quadrangulation with simple boundary equipped with its graph metric, the measure which assigns each vertex a mass equal to its degree, and its boundary path~\cite{gwynne-miller-simple-quad}.

The \emph{Brownian half-plane} is the random-curve-decorated metric measure space $(H^\infty , d^\infty , \mu^\infty , \xi^\infty)$ with the topology of $\BB H$ which (like the Brownian disk) can be constructed via a continuum analog of the Schaeffer bijection~\cite{gwynne-miller-uihpq,bmr-uihpq}.  The path $\xi^\infty : \BB R \rta H^\infty$ is called the \emph{boundary path}.  The Brownian half-plane is the scaling limit in the local GHPU topology of the UIHPQ$_{\op{S}}$ equipped with its graph metric, the measure which assigns each vertex a mass equal to its degree, and its boundary path~\cite{gwynne-miller-uihpq}.

As alluded to in Section~\ref{sec-overview} and discussed in more detail in Section~\ref{sec-lqg-prelim}, the $\sqrt{8/3}$-LQG metric~\cite{lqg-tbm1,lqg-tbm2,lqg-tbm3} gives a natural embedding of the Brownian disk into the unit disk $\BB D$ and the Brownian half-plane into $\BB H$ which enables one to define an independent $\SLE_6$ curve between two given boundary points of either of these metric spaces.

\subsection{Main results}
\label{sec-results}

We first state the finite-volume version of our scaling limit result. 
Define the boundary length scaling constant
\eqb \label{eqn-normalizing-constant}
\bcon := \frac{2^{3/2}}{3} .
\eqe 
Also fix a time scaling constant $\tcon > 0$, which we will not compute explicitly, which depends on the random planar map model and on the scaling parameter of the $3/2$-stable process appearing in~\cite[Corollary~1.19]{wedges}, which has not been computed explicitly (the choice of $\tcon$ is made in~\eqref{eqn-tcon-choice}). 
 
Fix $\frk l_L , \frk l_R > 0$ and a sequence of pairs of positive integers $\{(\el_L^n ,\el_R^n)\}_{n\in\BB N}$ such that $ \el_L^n + \el_R^n$ is always even, $\bcon^{-1} n^{-1/2} \el_L^n  \rta \frk l_L$, and $\bcon^{-1}  n^{-1/2} \el_R^n \rta \frk l_R$.  

For $n\in\BB N$, let $(Q^n ,\BB e^n )$ be a free Boltzmann quadrangulation with simple boundary of perimeter $\el_L^n+\el_R^n$ (Definition~\ref{def-fb}), viewed as a connected metric space by replacing each edge with an isometric copy of the unit interval and let $\theta^n$ be a critical face percolation configuration on $Q^n$ (so that conditional on $Q^n$, $\theta^n$ assigns to each face of $Q^n$ the color white with probability $3/4$ and the color black with probability $1/4$).

Let $d^n$ be the graph metric on $Q^n$, thus extended, rescaled by $\bcon^{-1/2} n^{-1/4} = (9/8)^{1/4} n^{-1/4}$.  Let $\mu^n$ be the measure on~$Q^n$ which assigns to each vertex a mass equal to $(4n)^{-1}$ times its degree.  Let $\beta^n : [0,\el_L^n  + \el_R^n ] \rta \bdy Q^n$ be the counterclockwise boundary path of $Q^n$ started from the root edge $\BB e^n$, extended by linear interpolation, and define the rescaled boundary path $\xi^n(s) := \beta^n(\bcon n^{1/2}s )$ for $s \in [0, \bcon^{-1} n^{-1/2} (\el_L^n + \el_R^n)]$ where here $\bcon$ is as in~\eqref{eqn-normalizing-constant}.  Also let $\lambda^n : [0,\infty) \rta Q^n$ be the percolation exploration path of $(Q^n,\BB e^n , \theta^n)$ with $\el_L^n$-white/$\el_R^n$-black boundary conditions (Section~\ref{sec-intro-def-perc}), extended to a continuous path on $[0,\infty)$ which traces the edge $\lambda^n(j)$ during each time interval $[j-1/2,j]$ for $j\in\frac12 \BB N$; and for $t\geq 0$ let $\eta^n(t ) := \lambda^n( \tcon n^{ 3/4} t)$. 
Define the doubly-marked curve-decorated metric measure spaces
\eqbn
\frk Q^n := \left( Q^n , d^n , \mu^n , \xi^n , \eta^n \right)  .
\eqen

Let $(H ,  d , \mu , \xi)$ be a free Boltzmann Brownian disk with boundary length $\frk l_L + \frk l_R$ equipped with its natural metric, area measure, and boundary path. 
Conditional on $H$, let $\eta$ be a chordal $\SLE_6$ from $\xi(0)$ to $\xi(\frk l_R)$ in~$H$, parameterized by quantum natural time (see Section~\ref{sec-lqg-prelim} for details) and the doubly-marked curve-decorated metric measure space $\frk H = (H,d,\mu,\xi,\eta)$.

\begin{thm} \label{thm-perc-conv}
One has $\frk Q^n \rta \frk H$ in law with respect to the (two-curve) Gromov-Hausdorff-Prokhorov-uniform topology. That is, face percolation on a free Boltzmann quadrangulation with simple boundary converges to chordal $\SLE_6$ on a free Boltzmann Brownian disk. 
\end{thm}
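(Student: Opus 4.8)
The plan is to establish the convergence by a tightness-plus-characterization argument, with Theorem~\ref{thm-bead-mchar} pinning down subsequential limits. Since \cite{gwynne-miller-simple-quad} already gives $(Q^n,d^n,\mu^n,\xi^n) \to (H,d,\mu,\xi)$ in the GHPU (in fact GHP) topology, the metric-measure-boundary part of $\frk Q^n$ is automatically tight and every subsequential limit of the first four coordinates is a free Boltzmann Brownian disk with boundary length $\frk l_L + \frk l_R$. So the work is entirely about the fifth coordinate: first, to show the curves $\eta^n$ are tight, which (given that the maps already converge) reduces to an equicontinuity estimate for $\eta^n$ viewed as parametrized curves into the converging spaces; and second, to show that any subsequential limit $\wt\eta$ satisfies the three hypotheses of Theorem~\ref{thm-bead-mchar} — that the ambient space is a Brownian disk, that $(\wt H,\wt\mu,\wt\eta)$ agrees with an $\SLE_6$ on a Brownian disk modulo a curve- and measure-preserving homeomorphism, and that the complementary components of $\wt\eta([0,t])$, equipped with their internal metrics, are conditionally independent Brownian disks given their boundary lengths.

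The crucial discrete input is a careful analysis of the percolation peeling process of Section~\ref{sec-intro-def-perc} via the free Boltzmann peeling law (building on \cite{angel-curien-uihpq-perc}). At each step $j$ the unexplored region $\ol Q_j$ is, conditionally on its left and right boundary lengths $(L_j,R_j)$ relative to the target edge $\BB e_*$, a free Boltzmann quadrangulation with simple boundary — the exact discrete analog of hypothesis (iii). The pair $(L_j,R_j)$, together with the bookkeeping of how much of each already-explored boundary segment is white versus black, is a Markov chain whose one-step transition law is computable from $\frk Z$, and one shows that after rescaling lengths by $\bcon^{-1} n^{-1/2}$ and time by $(\tcon n^{3/4})^{-1}$ it converges to the boundary-length process of $\SLE_6$ on a $\sqrt{8/3}$-LQG disk, i.e.\ a time-changed pair of $3/2$-stable processes as in \cite[Corollary~1.19]{wedges} and \cite{gwynne-miller-sle6}; the constant $\tcon$ is chosen precisely so the time parametrizations match (fixing the quantum-natural-time normalization). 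Together with the known scaling limit of the map, this boundary-length convergence is what will let us recognize the limiting curve.

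For tightness of $\eta^n$ the key point is that a single peeling step changes $(L_j,R_j)$ by an amount with a $3/2$-stable tail, and that between consecutive peeling steps the tip of the exploration sits on $\bdy \ol Q_j$, so the displacement of $\eta^n$ over a short time window is controlled by the diameter of the portion of $Q^n$ swallowed during that window. Using that each swallowed piece is a free Boltzmann quadrangulation conditioned on its perimeter, together with a priori diameter estimates for free Boltzmann quadrangulations and Brownian disks in terms of perimeter and area (of the type available from \cite{gwynne-miller-simple-quad} and the Brownian disk literature), one gets a uniform modulus-of-continuity bound for $\eta^n$, hence tightness in GHPU by the criterion of \cite{gwynne-miller-uihpq}. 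This equicontinuity step — ruling out macroscopic back-tracking or fine-scale oscillation of the exploration path — is where I expect the \textbf{main technical difficulty} to lie, since it requires quantitative metric control of the pieces cut off by the exploration, not merely control of their boundary lengths.

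Finally, along a subsequence with $\frk Q^n \to (\wt H,\wt d,\wt\mu,\wt\xi,\wt\eta)$: hypothesis (i) of Theorem~\ref{thm-bead-mchar} is \cite{gwynne-miller-simple-quad}; hypothesis (iii) follows by passing the exact discrete spatial-Markov statement (``$\ol Q_j$ is a conditionally independent free Boltzmann quadrangulation given its perimeters, and the pieces disconnected from $\BB e_*$ are conditionally independent free Boltzmann quadrangulations given their perimeters'') to the GHPU limit, invoking \cite{gwynne-miller-simple-quad} together with the boundary-length convergence above and continuity of internal metrics; and hypothesis (ii) — that $(\wt H,\wt\mu,\wt\eta)$ equals an $\SLE_6$ on a Brownian disk up to a curve- and measure-preserving homeomorphism — is obtained by comparing $\wt\eta$ to $\SLE_6$ on the quantum disk through their matching boundary-length processes and the characterization of $\SLE_6$ on the quantum disk by conditionally independent quantum disks from \cite{gwynne-miller-sle6}, using that a homeomorphism carrying $\SLE_6$ to $\SLE_6$ is conformal. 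Theorem~\ref{thm-bead-mchar} then identifies $\wt\eta$ as chordal $\SLE_6$ from $\wt\xi(0)$ to $\wt\xi(\frk l_R)$ in $(\wt H,\wt d,\wt\mu)$ parametrized by quantum natural time, so the subsequential limit is $\frk H$ and the full convergence in law follows.
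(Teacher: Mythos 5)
Your outline matches the paper's high-level plan: use the known GHPU convergence of the underlying maps, prove tightness of the curves via boundary-length-process control and a modulus-of-continuity estimate, and pin down subsequential limits with the characterization theorem. Hypotheses (i) and (iii), and the tightness step, are handled essentially as you describe. But the way you treat hypothesis (ii) — the topology-and-consistency condition — has a real gap, and this is precisely where the paper locates the core difficulty.

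You write that hypothesis (ii) ``is obtained by comparing $\wt\eta$ to $\SLE_6$ on the quantum disk through their matching boundary-length processes \dots using that a homeomorphism carrying $\SLE_6$ to $\SLE_6$ is conformal.'' Matching boundary length processes do not by themselves produce a homeomorphism. From the boundary-length convergence plus the Markov property of peeling one can only show that $\wt\eta$ has \emph{at least} as many self-intersections as an $\SLE_6$ whose boundary length process is $Z$ (this is the content of the paper's Lemma~\ref{lem-curve-top}), which yields a continuous, measure- and curve-preserving \emph{surjection} $\Phi: H \to \wt H$ that is a bijection off the curve. Nothing so far rules out $\Phi$ collapsing some nontrivial continuum in the range of $\eta$ to a single point of $\wt H$, i.e.\ $\wt\eta$ having a ``bubble'' that pinches to zero thickness, in which case $\Phi$ would fail to be injective and the topology of $(\wt H,\wt\eta)$ would differ from that of an $\SLE_6$-decorated disk. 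The remark that a homeomorphism carrying $\SLE_6$ to $\SLE_6$ must be conformal presupposes you already have a homeomorphism; it is used in the paper only as a post-hoc interpretation of the result, not as a proof ingredient.

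To close this gap the paper does two extra things your proposal omits entirely. First, it proves a quantitative arm/crossing estimate for the discrete exploration (Proposition~\ref{prop-fb-crossing}): with high probability, the exploration path crosses an annulus $B_{\delta^\zeta n^{1/4}}^\bullet(v;Q^n)\setminus B_{\delta n^{1/4}}^\bullet(v;Q^n)$ at most $6$ times, uniformly over $v$. Passing this to the limit shows $\wt\eta$ hits any point of $\wt H$ at most $6$ times (Lemma~\ref{lem-ssl-curve-hit}), so the fibers of $\Phi$ are finite. Second, the paper invokes the topological theorem of \cite[Main Theorem]{almost-inj}: a continuous map between manifolds with boundary that is almost injective (dense set of singleton fibers, which $\Phi$ has because it is a bijection off the measure-zero set $\eta$) and light (totally disconnected fibers, which follows from finiteness of fibers) is an embedding. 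Only after that does one conclude $\Phi$ is a homeomorphism and hypothesis (ii) holds. Without the crossing estimate and the almost-injective/light argument, your proof of hypothesis (ii) does not go through, and the paper explicitly flags this step — not the equicontinuity — as the hardest part of the argument.
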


See Section~\ref{sec-ghpu} for more on the GHPU topology.

It will be clear from our proof of Theorem~\ref{thm-perc-conv} that we actually obtain a slightly stronger statement: namely, the joint law of $\frk Q^n$ and the associated rescaled boundary length process $Z^n = (L^n,R^n)$ of Definition~\ref{def-bdy-process-rescale} converges to $\frk H$ and its associated left/right boundary length process $Z = (L,R)$ (Section~\ref{sec-lqg-bdy-process}) in the GHPU topology on the first coordinate and the Skorokhod topology on the second coordinate.

We next state a scaling limit theorem for face percolation on the UIHPQ$_{\op{S}}$, which will be an easy consequence of Theorem~\ref{thm-perc-conv}.  To state the theorem, let $(Q^\infty , \BB e^\infty )$ be a UIHPQ$_{\op{S}}$, viewed as a connected metric space by replacing each edge with an isometric copy of the unit interval as in the case of $Q^n$ above. Let $\theta^\infty$ be a critical face percolation configuration on $Q^\infty$.

For $n\in\BB N$, let $d^{\infty,n}$ be the graph metric on $Q^\infty$, thus extended, rescaled by $\bcon^{-1/2} n^{-1/4}$.  Let $\mu^{\infty,n}$ be the measure on $Q^\infty$ which assigns to each vertex a mass equal to $(4n)^{-1}$ times its degree.  Let $\beta^\infty : \BB R \rta \bdy Q^\infty$ be the boundary path of $Q^\infty$ with $\beta^\infty(0) = \BB e^\infty$, extended by linear interpolation, and let $\xi^{\infty,n}(s) := \beta^\infty(\bcon n^{1/2}s )$ for $s \in \BB R$.  Also let $\lambda^\infty : [0,\infty) \rta Q^\infty$ be the percolation exploration path of $(Q^\infty ,\BB e^\infty , \theta^\infty)$ with white/black boundary conditions, extended to $[0,\infty)$ as in the case of $\lambda^n$ above; and for $t\geq 0$ let $\eta^{\infty,n}(t ) := \lambda^\infty( \tcon n^{ 3/4} t)$ (with $\tcon$ the same time scaling constant as above).  Define the doubly-marked curve-decorated metric measure spaces
\eqbn
\frk Q^{\infty,n} := \left( Q^{\infty } , d^{\infty,n} , \mu^{\infty,n} , \xi^{\infty,n} , \eta^{\infty,n} \right).
\eqen

Let $(H^\infty ,  d^\infty , \mu^\infty , \xi^\infty )$ be a Brownian half-plane equipped with its natural metric, area measure, and boundary path. 
Conditional on $H^\infty$, let $\eta^\infty$ be a chordal $\SLE_6$ from $\xi^\infty(0)$ to $\infty$ in $H^\infty$, parameterized by quantum natural time and the doubly-marked curve-decorated metric measure space 
$\frk H^\infty = (H^\infty ,d^\infty ,\mu^\infty ,\xi^\infty ,\eta^\infty )$.
 
\begin{thm} \label{thm-perc-conv-uihpq}
One has $\frk Q^{\infty,n} \rta \frk H^\infty$ in law with respect to the local (two-curve) Gromov-Hausdorff-Prokhorov-uniform topology. That is, face percolation on the UIHPQ$_{\op{S}}$ converges to chordal $\SLE_6$ on the Brownian half-plane.
\end{thm}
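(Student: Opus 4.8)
The plan is to deduce Theorem~\ref{thm-perc-conv-uihpq} from the finite-volume scaling limit Theorem~\ref{thm-perc-conv} by a three-term comparison in which I send $n\to\infty$ first and the perimeter of an auxiliary finite-volume model to $\infty$ afterwards. Recall that local two-curve GHPU convergence of $\frk Q^{\infty,n}$ to $\frk H^\infty$ amounts to convergence in law of the restrictions to the metric ball $B_r$ of radius $r$ around the root, with curves stopped upon first exiting $B_r$, for each (continuity point) $r>0$. Fix such an $r$ and, for a large integer $k$, let $\frk Q^n_{(k)}$ be the rescaled percolation-decorated free Boltzmann quadrangulation of Theorem~\ref{thm-perc-conv} with $\bcon^{-1}n^{-1/2}\el^n_L\to k$ and $\bcon^{-1}n^{-1/2}\el^n_R\to k$, and $\frk H_{(k)}$ the corresponding free Boltzmann Brownian disk of perimeter $2k$ decorated by an independent chordal $\SLE_6$ from $\xi(0)$ to $\xi(k)$. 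With $d$ a metric for weak convergence on laws of curve-decorated metric measure spaces, I would bound
\[
d\big(\op{law}(\frk Q^{\infty,n}|_{B_r}),\op{law}(\frk H^\infty|_{B_r})\big)\leq(\mathrm I)_{n,k}+(\mathrm{II})_{n,k}+(\mathrm{III})_k,
\]
where $(\mathrm I)_{n,k}=d(\op{law}(\frk Q^{\infty,n}|_{B_r}),\op{law}(\frk Q^n_{(k)}|_{B_r}))$, $(\mathrm{II})_{n,k}=d(\op{law}(\frk Q^n_{(k)}|_{B_r}),\op{law}(\frk H_{(k)}|_{B_r}))$, and $(\mathrm{III})_k=d(\op{law}(\frk H_{(k)}|_{B_r}),\op{law}(\frk H^\infty|_{B_r}))$. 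Term $(\mathrm{II})_{n,k}\to0$ as $n\to\infty$ for fixed $k$ directly from Theorem~\ref{thm-perc-conv} together with continuity of the restriction map.

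For $(\mathrm I)_{n,k}$ I would couple $(Q^\infty,\BB e^\infty,\theta^\infty)$ with $(Q^n_{(k)},\BB e^n,\theta^n_{(k)})$ using the Benjamini--Schramm convergence of free Boltzmann quadrangulations with simple boundary toward the UIHPQ$_{\op{S}}$ \cite{curien-miermont-uihpq,caraceni-curien-uihpq} (and the i.i.d.\ structure of the percolation), so that the graph-metric balls of radius $\rho_n:=\lceil C_r n^{1/4}\rceil$ around the roots agree as rooted percolation-decorated maps --- including the boundary arcs and the colours of the external quadrilaterals attached to them --- except on an event of probability $p_{n,k}$; here $C_r$ is chosen large enough that this graph ball comfortably contains the rescaled ball $B_r$ with room to spare. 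Since the percolation peeling process of Section~\ref{sec-perc-peeling}, and hence the exploration path $\lambda^\infty$, the left/right boundary length process, and the boundary path near the root, are \emph{local} --- determined, up to the first time they leave a prescribed graph ball around the root, by the restriction of the decorated map to that ball --- agreement of the maps on $B_{\rho_n}$ forces agreement of all components of $\frk Q^{\infty,n}|_{B_r}$ and $\frk Q^n_{(k)}|_{B_r}$, so $(\mathrm I)_{n,k}\leq p_{n,k}$. The ratio $\rho_n/\sqrt{\el^n_L+\el^n_R}$ tends as $n\to\infty$ to a constant $c_k$ of order $k^{-1/2}$, and $\limsup_{n\to\infty}p_{n,k}\leq\psi(c_k)$, where $\psi(c)\to0$ as $c\to0$ --- a quantitative form of the assertion that a ball of radius a small fraction of $\sqrt{\text{perimeter}}$ in the free Boltzmann quadrangulation of that perimeter agrees with the corresponding UIHPQ$_{\op{S}}$ ball with probability close to one, which one can extract from the scaling limits of both objects toward, respectively, the free Boltzmann Brownian disk and the Brownian half-plane \cite{gwynne-miller-simple-quad,gwynne-miller-uihpq} together with the fact that, near a boundary point and at scale $\sqrt L$, the free Boltzmann Brownian disk of perimeter $L$ is GHPU-close to the Brownian half-plane. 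Thus $\limsup_{k\to\infty}\limsup_{n\to\infty}(\mathrm I)_{n,k}=0$.

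For $(\mathrm{III})_k$ I would use that $\frk H_{(k)}\to\frk H^\infty$ in the local GHPU topology as $k\to\infty$: the free Boltzmann Brownian disk of perimeter $2k$ converges locally to the Brownian half-plane (the continuum counterpart of the discrete local limit, obtainable from \cite{gwynne-miller-simple-quad,gwynne-miller-uihpq}), and on it chordal $\SLE_6$ from $\xi(0)$ to $\xi(k)$ agrees near its starting point, until the two target points are separated --- a time that is long when $k$ is large --- with chordal $\SLE_6$ from $\xi(0)$ to $\infty$, by the locality (target-independence) of $\SLE_6$ and the properties of $\SLE_6$ on the quantum disk from \cite{gwynne-miller-sle6}. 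Combining the three bounds and letting $k\to\infty$ after $n\to\infty$ gives $\frk Q^{\infty,n}\to\frk H^\infty$ in law in the local two-curve GHPU topology.

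The main obstacle is the quantitative local coupling behind $(\mathrm I)$: one must establish that balls of graph radius $\rho_n\sim C_r n^{1/4}$ --- macroscopic after rescaling, yet only a $k^{-1/2}$-fraction of $\sqrt{\text{perimeter}}$ --- agree between the UIHPQ$_{\op{S}}$ and the free Boltzmann quadrangulation with probability $1-\psi(c_k)$, $\psi(c_k)\to0$ as $k\to\infty$, and then check carefully that combinatorial agreement of the percolation-decorated rooted maps on such a ball really does force agreement of the full rescaled curve-decorated metric measure spaces on $B_r$ --- handling boundary-of-ball effects and the fact that the finite-volume exploration's target edge $\BB e^n_*$ stays far from $B_{\rho_n}$ so that the two explorations do not decouple there. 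The only essentially continuum input, the separation-time argument comparing $\SLE_6$ on the quantum disk with $\SLE_6$ on the quantum wedge, is available from \cite{gwynne-miller-sle6}.
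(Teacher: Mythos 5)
Your three-term comparison is the paper's architecture exactly: compare $\frk Q^{\infty,n}$ with $\frk Q^n$ of large rescaled perimeter $\frk l$, apply Theorem~\ref{thm-perc-conv} for the middle term, and compare $\frk H$ (the Brownian disk of perimeter $2\frk l$) with $\frk H^\infty$ (the Brownian half-plane). Terms~$(\mathrm{II})$ and~$(\mathrm{III})$ are handled as in the paper; for $(\mathrm{III})$ the paper invokes a coupling of a large Brownian disk with the Brownian half-plane (\cite[Proposition~4.2]{gwynne-miller-uihpq}) together with the observation that quantum natural time for $\SLE_6$ is determined locally by the field, which is close to what you sketch via target invariance.

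The gap is in your justification of $(\mathrm I)$. You propose to extract the needed discrete coupling --- exact agreement of the rooted, percolation-decorated maps on graph-metric balls of radius $\asymp n^{1/4}$ with probability tending to $1$ as $\frk l\to\infty$ --- from the GHPU scaling limits of the free Boltzmann quadrangulation toward the Brownian disk and of the UIHPQ$_{\op{S}}$ toward the Brownian half-plane. That does not work: GHPU convergence controls the rescaled metric, measure, and boundary/exploration paths up to small metric errors, but it gives no information about \emph{exact combinatorial} agreement of the two decorated maps, which is what you need in order to conclude that the two peeling explorations literally coincide on the ball (so that their rescaled metrics, measures, and curves are \emph{equal}, not just close, on $B_r$). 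Benjamini--Schramm convergence, which you also cite, does give exact combinatorial agreement, but only on balls of bounded graph radius; it says nothing about radius growing like $n^{1/4}$. What is actually required, and what the paper uses, is a genuinely discrete coupling estimate proven by peeling --- \cite[Proposition~4.6]{gwynne-miller-simple-quad} --- which couples a free Boltzmann quadrangulation of perimeter $2\el$ and the UIHPQ$_{\op{S}}$ so that they agree in a ball of graph radius $r\sqrt\el$ around the root with probability $1-\psi(r)$, $\psi(r)\to 0$ as $r\to 0$. Once you substitute that result for the scaling-limit heuristic, the rest of your argument (including the observation that the percolation exploration is a local functional of the decorated map, so agreement of the maps on the ball forces agreement of $\frk B_\rho\frk Q^{\infty,n}$ and $\frk B_\rho\frk Q^n$) closes term $(\mathrm I)$ and recovers the paper's proof.
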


As in the case of Theorem~\ref{thm-perc-conv}, our proof of Theorem~\ref{thm-perc-conv-uihpq} also yields a scaling limit for the joint law of~$\frk Q^{\infty,n}$ and its rescaled left/right boundary length process~$Z^{\infty,n}$ (Definition~\ref{def-bdy-process-rescale}) toward $\frk H^\infty$ and its associated left/right boundary length process (Section~\ref{sec-lqg-bdy-process}).

\subsection{Other scaling limit results}
\label{sec-other-results}
 
There are a number of other natural settings in which one can consider the scaling limit of face percolation on a quadrangulation (or more generally other percolation models on random planar maps which can be explored via peeling). We expect that scaling limit results in these settings can be deduced from the results of this paper modulo some technical steps.\footnote{In the case of site percolation on a triangulation, several extensions along the lines described in this subsection will be proven in~\cite{ghs-metric-peano}, building on the present paper. }

One can consider face percolation on a free Boltzmann quadrangulation with simple boundary, with its law weighted by the total number of interior vertices, and study a percolation exploration path targeted at a uniformly random interior vertex $\BB v_*$ rather than a fixed boundary edge.  In this case, the scaling limit will be radial (rather than chordal) $\SLE_6$ on a free Boltzmann Brownian disk weighted by its area, targeted at a uniformly random interior point.  This result can be extracted from the main result of the present paper as follows.  Choose a uniformly random target edge on the boundary (in addition to the interior marked vertex) and follow the chordal exploration path from the root edge towards this random target edge until the first time that it separates the target edge from $\BB v_*$.  Once this happens, choose another uniformly random marked boundary edge on the boundary of the complementary connected component containing $\BB v_*$ and then repeat the procedure. Due to the target invariance of $\SLE_6$, one can produce a radial $\SLE_6$ curve by re-targeting and concatenating chordal $\SLE_6$ curves via a continuum analog of the above construction. Hence the scaling limit result follows from Theorem~\ref{thm-perc-conv} with a little bit of extra technical work.

One can also work with a quadrangulation of the sphere and consider the percolation exploration between two uniformly random marked vertices.  In this case, the scaling limit will be whole-plane $\SLE_6$ on the Brownian map.  To see this, one notes that if one explores the percolation exploration path for a little bit, then one ends up in the radial setting above. Since our scaling limit result only applies to free Boltzmann quadrangulations with simple boundary, without specified area, one needs to work with the free Boltzmann distribution on quadrangulations of the sphere (which is obtained from the free Boltzmann distribution on quadrangulations with simple boundary of perimeter~$2$ by identifying the boundary edges) in order for the unexplored regions to be free Boltzmann quadrangulations with simple boundary. This distribution, appropriately rescaled, converges vaguely to the infinite measure on doubly marked Brownian maps considered in~\cite{tbm-characterization}, where the area is sampled from the infinite measure $a^{-3/2} \,da$. 

It is also natural to ask for convergence of all of the interfaces on a free Boltzmann quadrangulation with simple boundary, rather than a single exploration path. In this setting, the scaling limit should be a free Boltzmann Brownian disk decorated by a CLE$_6$~\cite{shef-cle}.  
In the case of site percolation on a triangulation, this convergence is proven in~\cite{ghs-metric-peano} by exploring the discrete interfaces via a discrete analog of the branching $\SLE_6$ process used to construct CLE$_6$ in~\cite{shef-cle}, and then using the results of the present paper to show the convergence of this discrete branching process to its continuum analog.

\subsection{Outline}
\label{sec-outline}

In Section~\ref{sec-prelim}, we introduce some (mostly standard) notation and review the GHPU metric, the Brownian disk and Brownian half-plane, and LQG surfaces and their relationship to SLE and to Brownian surfaces. We will also restate some results from~\cite{gwynne-miller-char,gwynne-miller-sle6} about $\SLE_6$ on a free Boltzmann Brownian disk which will be needed in the present paper, in particular the description of the law of the left/right boundary length process (Theorem~\ref{thm-bdy-process-law}); and the characterization theorem in terms of the topology of the curve-decorated metric space and the law of the internal metric spaces parameterized by the complementary connected components of the curve at each time $t\geq 0$ (Theorem~\ref{thm-bead-mchar}). 

In Section~\ref{sec-peeling}, we recall the definition of the peeling procedure for the uniform infinite half-plane quadrangulation, review some formulas and estimates for this procedure, and give a precise definition of the face percolation peeling process and the associated percolation exploration path which we will show converges to $\SLE_6$, as discussed in Section~\ref{sec-intro-def-perc}.  In Section~\ref{sec-interface} we also discuss the relationship between this path and the face percolation interface. 

In the remainder of the paper we commence with the proofs of our main theorems. See Figure~\ref{fig-outline} for a schematic map of the argument. Our main focus is on proving the finite-volume scaling limit result Theorem~\ref{thm-perc-conv}, which will imply the infinite-volume version Theorem~\ref{thm-perc-conv-uihpq} via a short local absolute continuity argument.  However, we will frequently switch back and forth between proving statements in the finite-volume and infinite-volume settings, depending on the setting in which the proof is easier. We will transfer estimates between the two settings using a Radon-Nikodym derivative estimate for peeling processes on a free Boltzmann quadrangulation with respect to peeling processes on the UIHPQ$_{\op{S}}$ (Lemma~\ref{lem-perc-rn}). 

In Section~\ref{sec-bdy-process} we introduce the \emph{boundary length processes} for the percolation peeling process. These processes encode the number of edges on the outer boundary of the percolation peeling cluster to the left and right of the tip of the curve, the number of edges to the left and right of the starting edge which this cluster disconnects from the target edge (or $\infty$, in the case of the UIHPQ$_{\op{S}}$), and the differences between these quantities at each time $j$ for the peeling process. 

We then prove that the boundary length processes for face percolation on the UIHPQ$_{\op{S}}$ and on a free Boltzmann quadrangulation converge in the scaling limit to the analogous processes for the hulls of a chordal $\SLE_6$ on the Brownian half-plane and on a free Boltzmann Brownian disk, respectively.  In the case of the UIHPQ$_{\op{S}}$, the desired limiting boundary length process is a pair of independent $3/2$-stable processes~\cite[Corollary~1.19]{wedges} (see also Section~\ref{sec-lqg-prelim}) so this convergence statement amounts to a straightforward application of the peeling estimates of Section~\ref{sec-peeling} and the heavy-tailed central limit theorem. 
In the case of a free Boltzmann quadrangulation, however, the argument is more subtle and relies on the description of the desired limiting boundary length process from~\cite{gwynne-miller-sle6} (see Theorem~\ref{thm-bdy-process-law}) as well as some estimates for peeling which will also be used in subsequent sections.

In Section~\ref{sec-ghpu-tight} we prove tightness of the curve-decorated metric measure spaces appearing in Theorem~\ref{thm-perc-conv}. Since we already know that the rescaled free Boltzmann quadrangulations $(Q^n, d^n ,\mu^n , \xi^n)$ converge in the scaling limit to the Brownian disk, this amounts to proving that the rescaled percolation exploration paths~$\eta^n$ are equicontinuous in law. The idea of the proof is to estimate the diameter of the outer boundary of a small increment $\eta^n([(k-1)\delta , k\delta])$ using the scaling limit result for the boundary length process and estimates for distances along the boundary in a quadrangulation with simple boundary (which follow from analogous estimates for the Brownian disk), then use the fact that the Brownian disk has the topology of a disk to bound the diameter of $\eta^n([(k-1)\delta , k\delta])$ in terms of the diameter of its outer boundary. 

Most of the remainder of the paper is devoted to checking the hypotheses of the characterization result Theorem~\ref{thm-bead-mchar} for a subsequential limit $\wt{\frk H} = (\wt H,\wt d  , \wt\mu , \wt\xi,\wt\eta)$ of the curve-decorated metric measure spaces $\frk Q^n$ of Theorem~\ref{thm-perc-conv}. As will be explained in the first several subsections of Section~\ref{sec-identification}, one can deduce from the convergence of the boundary length processes and the Markov property of peeling that internal metrics on the complementary connected components of $\wt\eta([0,t])$ at each time $t$ have the same law as the corresponding objects for $\SLE_6$ on a Brownian disk (i.e., condition~\ref{item-bead-mchar-wedge} in Theorem~\ref{thm-bead-mchar} is satisfied). Furthermore, one can show that $\wt\eta$ hits itself at least as often as an SLE$_6$, so that there exists a coupling of $\wt{\frk H}$ with a $\SLE_6$-decorated free Boltzmann Brownian disk $\frk H = (H,d,\mu,\xi,\eta)$ and a continuous surjective measure-preserving, curve-preserving map $\Phi : H \rta \wt H$ which maps $H\setminus \eta$ bijectively to $\wt H\setminus \wt\eta$. 

The most difficult part of the proof is showing that $\Phi$ is injective, so that the topology and consistency condition~\ref{item-bead-mchar-homeo} in Theorem~\ref{thm-bead-mchar} is satisfied.  For this purpose we will use the topological result~\cite[Main Theorem]{almost-inj}, which says that if $\Phi : M \rta N$ is a continuous map between manifolds with boundary which is \emph{almost injective}, in the sense that $\Phi^{-1}(\Phi(x)) = \{x\}$ for a dense set of points $x \in M$ and \emph{light} in the sense that $\Phi^{-1}(\{y\})$ is totally disconnected for each $y \in N$, then $\Phi|_{M\setminus \bdy M}$ is an embedding. The map $\Phi$ discussed in the preceding paragraph is almost injective. 
 
In Section~\ref{sec-crossing} we prove an estimate for the percolation exploration path $\eta^n$ which will enable us to show that the above map $\Phi$ is light. 
In particular, we will to prove an estimate for the number of times that~$\eta^n$ can cross an annulus between two metric balls in~$Q^n$ (equivalently, the number of percolation ``arms" which cross such an annulus), which will enable us to conclude in Section~\ref{sec-homeo} that a subsequential limit of the curves $\eta^n$ can hit a single point at most 7 times. This will imply that the pre-image of any point under $\Phi$ is finite, and hence that $\Phi$ is light and therefore a homeomorphism. In Section~\ref{sec-main-proof} we will conclude the proof of Theorem~\ref{thm-perc-conv} and then deduce Theorem~\ref{thm-perc-conv-uihpq} by coupling the UIHPQ$_{\op{S}}$ with a free Boltzmann quadrangulation with large simple boundary in such a way that they agree in a neighborhood of the root edge with high probability.

\begin{figure}[ht!]
\begin{center}
\includegraphics[scale=0.85]{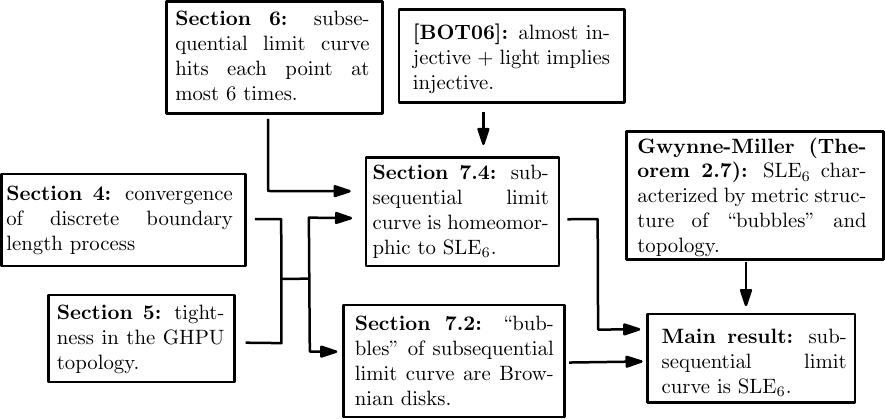}	
\end{center}
\vspace{-0.02\textheight}
\caption[Schematic outline of the proof of Theorem~\ref{thm-perc-conv}]{\label{fig-outline} Schematic illustration of the main statements involved in the paper and how they fit together.}
\end{figure}

\section{Preliminaries}
\label{sec-prelim}

\subsection{Notational conventions}
\label{sec-notation-prelim}

In this subsection, we will review some basic notation and definitions which will be used throughout the paper.
 
\subsubsection{Basic notation}
\label{sec-basic-notation}

\noindent
We write $\BB N$ for the set of positive integers and $\BB N_0 = \BB N\cup \{0\}$. 
\vspace{6pt}

\noindent
For $a,b \in \BB R$ with $a<b$ and $r > 0$, we define the discrete intervals $[a,b]_{r \BB Z} := [a, b]\cap (r \BB Z)$ and $(a,b)_{r \BB Z} := (a,b)\cap (r\BB Z)$.
\vspace{6pt}

\noindent
If $a$ and $b$ are two quantities, we write $a\preceq b$ (resp.\ $a \succeq b$) if there is a constant $C > 0$ (independent of the parameters of interest) such that $a \leq C b$ (resp.\ $a \geq C b$). We write $a \asymp b$ if $a\preceq b$ and $a \succeq b$.
\vspace{6pt}

\noindent
If $a$ and $b$ are two quantities depending on a variable $x$, we write $a = O_x(b)$ (resp.\ $a = o_x(b)$) if $a/b$ remains bounded (resp.\ tends to 0) as $x\rta 0$ or as $x\rta\infty$ (the regime we are considering will be clear from the context). We write $a = o_x^\infty(b)$ if $a = o_x(b^s)$ for every $s\in\BB R$, i.e., $a$ decays faster than any power of $b$ as $x$ tends to 0 or $\infty$ (depending on the context).  
\vspace{6pt}

\subsubsection{Graphs and maps}
\label{sec-graph-notation}
 
\noindent
For a planar map $G$, we write $\mcl V(G)$, $\mcl E(G)$, and $\mcl F(G)$, respectively, for the set of vertices, edges, and faces, respectively, of $G$.
\vspace{6pt}

\noindent
By a \emph{path} in $G$, we mean a function $ \lambda : I \rta \mcl E(G)$ for some (possibly infinite) discrete interval $I\subset \BB Z$, with the property that the edges $\lambda(i)$ and $\lambda(i+1)$ share an endpoint for each $i\in I$ other than the right endpoint of $I$. We also allow paths defined on discrete intervals $I \subset \frac12 \BB Z$ (such as the percolation exploration path). We do \emph{not} require that the edges traversed by $\lambda$ can be oriented in a consistent manner, since some of the paths we consider (such as the percolation exploration path) do not have this property.
\vspace{6pt}

\noindent
For sets $A_1,A_2$ consisting of vertices and/or edges of~$G$, we write $\op{dist}\left(A_1 , A_2 ; G\right)$ for the graph distance from~$A_1$ to~$A_2$ in~$G$, i.e.\ the minimum of the lengths of paths in $G$ whose initial edge either has an endpoint which is a vertex in $A_1$ or shares an endpoint with an edge in $A_1$; and whose final edge satisfies the same condition with $A_2$ in place of $A_1$.  If $A_1$ and/or $A_2$ is a singleton, we omit the set brackets. Note that the graph distance from an edge $e$ to a set $A$ is the minimum distance between the endpoints of $e$ and the set $A$.
We write $\op{diam}(G)$ for the maximal graph distance between vertices of $G$. 
\vspace{6pt}

\noindent
For $r>0$, we define the graph metric ball $B_r\left( A_1 ; G\right)$ to be the subgraph of $G$ consisting of all vertices of $G$ whose graph distance from $A_1$ is at most $r$ and all edges of $G$ whose endpoints both lie at graph distance at most $r$ from $A_1$.  If $A_1 = \{x\}$ is a single vertex or edge, we write $B_r\left( x ; G\right)$ for $B_r\left( \{x\} ; G\right)$.
\vspace{6pt}

\noindent
Let $Q$ be a quadrangulation with boundary and let $S\subset Q$ be a subgraph. We define its \emph{boundary} $\bdy_Q S$ of~$S$ relative to~$Q$ to be the subgraph of $S$ consisting of all vertices of $S$ which either belong to $\bdy Q$ or which are incident to an edge not in $S$; and the set of edges of $S$ which join two such vertices.  We typically drop the subscript $Q$ if the quadrangulation $Q$ we are considering is clear from the context.  If $S$ is itself a quadrangulation with boundary such that $\bdy Q$ lies in the external face of $S$ and every internal face of $S$ is a face of $Q$ (which is the case we will most often consider), then the boundary of $S$ relative to $Q$ coincides with the intrinsic boundary of $S$.

\subsubsection{Metric spaces}
\label{sec-metric-prelim}
 
Here we introduce some notation for metric spaces and recall some basic constructions.
Throughout, let $(X,d_X)$ be a metric space. 
\vspace{6pt}

\noindent
For $A\subset X$ we write $\op{diam} (A ; d_X )$ for the supremum of the $d_X$-distance between points in $A$.
\vspace{6pt}

\noindent
For $r>0$, we write $B_r(A;d_X)$ for the set of $x\in X$ with $d_X (x,A) \leq r$. We emphasize that $B_r(A;d_X)$ is closed (this will be convenient when we work with the local GHPU topology). 
If $A = \{y\}$ is a singleton, we write $B_r(y;d_X)$ for $B_r(\{y\};d_X)$.  
\vspace{6pt}

\noindent
For a curve $\gamma : [a,b] \rta X$, the \emph{$d_X$-length} of $\gamma$ is defined by 
\eqbn
\op{len}\left( \gamma ; d_X  \right) := \sup_P \sum_{i=1}^{\# P} d_X (\gamma(t_i) , \gamma(t_{i-1})) 
\eqen
where the supremum is over all partitions $P : a= t_0 < \dots < t_{\# P} = b$ of $[a,b]$. Note that the $d_X$-length of a curve may be infinite.
\vspace{6pt}

\noindent
For $Y\subset X$, the \emph{internal metric $d_Y$ of $d_X$ on $Y$} is defined by
\eqb \label{eqn-internal-def}
d_Y (x,y)  := \inf_{\gamma \subset Y} \op{len}\left(\gamma ; d_X \right) ,\quad \forall x,y\in Y 
\eqe 
where the infimum is over all curves in $Y$ from $x$ to $y$. 
The function $d_Y$ satisfies all of the properties of a metric on $Y$ except that it may take infinite values. 
\vspace{6pt}
 
\noindent
We say that $(X,d_X)$ is a \emph{length space} if for each $x,y\in X$ and each $\ep > 0$, there exists a curve of $d_X$-length at most $d_X(x,y) + \ep$ from $x$ to $y$. 
\vspace{6pt}

\subsubsection{Skorokhod topology} 
\label{sec-skorokhod}

For $k\in\BB N$, let $\mcl D_\infty^k$ be the set of cadlag functions $f : \BB R\rta \BB R^k$, i.e., those which are right continuous with left limits. Also let $\mcl D^k $ be the set of those $f \in \mcl D_\infty^k$ which extend continuously to the two-point compactification $[-\infty,\infty]$, equivalently $\lim_{t\rta-\infty} f(t)$ and $\lim_{t\rta\infty} f(t)$ exist. We view a cadlag function $f : [a,b] \rta \BB R^k$ as an element of $\mcl D^k$ by setting $f(t) = f(a)$ for $t<a$ and $f(t) =f(b)$ for $t>b$. 

We define the \emph{Skorokhod metric} on $\mcl D^k$ by
\eqbn
d^{\op{Sk}}(f,g) := \inf_\phi \max\left\{ \|f\circ \phi - g\|_\infty , \| \phi - \op{Id}\|_\infty \right\}
\eqen
where the infinimum is over all increasing homeomorphisms $\phi : \BB R\rta\BB R$, $\op{Id}$ denotes the identity function on $\BB R$, and $\|\cdot\|_\infty$ denotes the uniform norm. Note that $d^{\op{Sk}}(f,g)$ is finite for $f,g\in\mcl D^k$ since $f$ and $g$ are nearly constant outside of some compact interval. 

We define the \emph{local Skorokhod metric} on $\mcl D_\infty^k$ by
\eqbn
d^{\op{Sk}}_\infty(f,g) := \sum_{k=1}^\infty 2^{-k} \left(1 \wedge d^{\op{Sk}}(f|_{[-k,k]} , g|_{[-k,k]} ) \right) .
\eqen

\subsection{The Gromov-Hausdorff-Prokhorov-uniform metric}
\label{sec-ghpu}

In this subsection we will review the definition of the Gromov-Hausdorff-Prokhorov-uniform (GHPU) metric from~\cite{gwynne-miller-uihpq}, 
which is the metric with respect to which our scaling limit results hold. 
In fact, in this paper we will have occasion to consider the GHPU topology for metric measure spaces decorated by multiple curves, the theory of which is identical to the theory in the case of a single curve. Actually, for this paper we will only need to consider metric measure spaces with two curves, but we treat an arbitrary finite number of curves for the sake of completeness.
  
For a metric space $(X,d)$, we let $C_0(\BB R , X)$ be the space of continuous curves $\eta : \BB R\rta X$ which are ``constant at $\infty$," i.e.\ $\eta$ extends continuously to the extended real line $[-\infty,\infty]$. 
Each curve $\eta : [a,b] \rta X$ can be viewed as an element of $C_0(\BB R ,X)$ by defining $\eta(t) = \eta(a)$ for $t < a$ and $\eta(t) = \eta(b)$ for $t> b$. 
\begin{itemize}
\item Let $\BB d_d^{\op{H}}$ be the $d$-Hausdorff metric on compact subsets of $X$.
\item Let $\BB d_d^{\op{P}}$ be the $d$-Prokhorov metric on finite measures on $X$.
\item Let $\BB d_d^{\op{U}}$ be the $d$-uniform metric on $C_0(\BB R , X)$.
\end{itemize}

For $k\in \BB N$, let $\BB M_k^{\op{GHPU}}$ be the set of $3+k$-tuples $\frk X  = (X , d , \mu , \eta_1,\dots,\eta_k)$ where $(X,d)$ is a compact metric space, $\mu$ is a finite Borel measure on $X$, and $\eta_1,\dots,\eta_k \in C_0(\BB R,X)$. 

Given elements $\frk X^1 = (X^1 , d^1, \mu^1 , \eta^1_1,\dots , \eta_k^1) $ and $\frk X^2 =  (X^2, d^2,\mu^2,\eta^2_1 , \dots , \eta^2_k) $ of $ \BB M_k^{\op{GHPU}}$, a compact metric space $(W, D)$, and isometric embeddings $\iota^1 : X^1\rta W$ and $\iota^2 : X^2\rta W$, we define their \emph{GHPU distortion} by 
\begin{align}
\label{eqn-ghpu-var}
\op{Dis}_{\frk X^1,\frk X^2}^{\op{GHPU}}\left(W,D , \iota^1, \iota^2 \right)   
:=  \BB d^{\op{H}}_D \left(\iota^1(X^1) , \iota^2(X^2) \right) +   
\BB d^{\op{P}}_D \left(( (\iota^1)_*\mu^1 ,(\iota^2)_*\mu^2) \right) + 
\sum_{j=1}^k \BB d_D^{\op{U}}\left( \iota^1 \circ \eta_j^1 , \iota^2 \circ\eta_j^2 \right) .
\end{align}
We define the \emph{Gromov-Hausdorff-Prokhorov-Uniform (GHPU) distance} by
\begin{align} \label{eqn-ghpu-def}
 \BB d^{\op{GHPU}}\left( \frk X^1 , \frk X^2 \right) 
 = \inf_{(W, D) , \iota^1,\iota^2}  \op{Dis}_{\frk X^1,\frk X^2}^{\op{GHPU}}\left(W,D , \iota^1, \iota^2 \right)      ,
\end{align}
where the infimum is over all compact metric spaces $(W,D)$ and isometric embeddings $\iota^1 : X^1 \rta W$ and $\iota^2 : X^2\rta W$.
It is shown in~\cite[Proposition~1.3]{gwynne-miller-uihpq} that in the case when $k=1$, this defines a complete separable metric on~$\BB M_k^{\op{GHPU}}$ provided we identify two elements of~$\BB M_k^{\op{GHPU}}$ which differ by a measure- and curve- preserving isometry. Exactly the same proof shows that the same is true for general $k\in\BB N$.

GHPU convergence is equivalent to a closely related type of convergence which is often easier to work with, in which all of the curve-decorated metric measure spaces are subsets of a larger space. For this purpose we need to introduce the following definition, which we take from~\cite{gwynne-miller-uihpq}. 
 
\begin{defn}[HPU convergence] \label{def-hpu}
Let $(W ,D)$ be a metric space. Let $k\in\BB N$ and let $\frk X^n = (X^n , d^n , \mu^n , \eta_1^n, \dots ,\eta_k^n)$ for $n\in\BB N$ and $\frk X = (X,d,\mu,\eta_1,\dots , \eta_k)$ be elements of $\BB M^{\op{GHPU}}_k$ such that $X$ and each $X^n$ is a subset of $W$ satisfying $D|_X = d$ and $D |_{X^n} = d^n$. We say that $\frk X^n\rta \frk X$ in the \emph{$D$-Hausdorff-Prokhorov-uniform (HPU) sense} if $X^n \rta X$ in the $D$-Hausdorff metric, $\mu^n \rta \mu$ in the $D$-Prokhorov metric, and for each $j \in [1,k]_{\BB Z}$, $\eta_j^n \rta \eta_j $ in the $D$-uniform metric.  
\end{defn}

The following result, which is the variant of~\cite[Proposition~1.5]{gwynne-miller-uihpq} in the case of $k$ curves (and which is proven in exactly the same manner as in the case of one curve), will play a key role in Section~\ref{sec-identification}. 
 
\begin{prop} \label{prop-ghpu-embed}
Let $ \frk X^n = (X^n , d^n , \mu^n , \eta_1^n,\dots , \eta_k^n)$ for $ n\in\BB N $ and $\frk X = (X,d,\mu,\eta_1,\dots , \eta_k)$ be elements of $\BB M_k^{\op{GHPU}}$. Then $\frk X^n\rta \frk X$ in the GHPU topology if and only if there exists a compact metric space $(W,D)$ and isometric embeddings $X^n \rta W$ for $n\in\BB N$ and $X\rta W$ such that the following is true. If we identify $X^n$ and $X$ with their embeddings into $W$, then $\frk X^n \rta \frk X$ in the $D$-HPU sense.
\end{prop}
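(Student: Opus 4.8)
The statement is the $k$-curve analog of \cite[Proposition~1.5]{gwynne-miller-uihpq}, and I would follow the same strategy, checking that nothing in the argument depends on having a single curve. In brief: one direction is essentially trivial, the other is a standard gluing construction.

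For the ``if'' direction I would argue directly: given a compact $(W,D)$ together with isometric embeddings under which $D|_{X^n}=d^n$ and $D|_X=d$ and $\frk X^n\to\frk X$ in the $D$-HPU sense, replace $W$ by the (still compact) closure of $X\cup\bigcup_n X^n$ and use the inclusion maps as the embeddings in the definition of $\BB d^{\op{GHPU}}$. Then $\op{Dis}^{\op{GHPU}}_{\frk X^n,\frk X}$ for this choice of data is exactly $\BB d_D^{\op H}(X^n,X)+\BB d_D^{\op P}(\mu^n,\mu)+\sum_{j=1}^k\BB d_D^{\op U}(\eta_j^n,\eta_j)$, which tends to $0$ by the definition of HPU convergence; hence $\BB d^{\op{GHPU}}(\frk X^n,\frk X)\to 0$.

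For the ``only if'' direction, suppose $\BB d^{\op{GHPU}}(\frk X^n,\frk X)\to 0$. For each $n$ I would choose a compact $(W_n,D_n)$ and isometric embeddings $\iota_n:X\to W_n$, $\iota_n':X^n\to W_n$ whose GHPU distortion is at most $\epsilon_n:=\BB d^{\op{GHPU}}(\frk X^n,\frk X)+1/n\to 0$; after shrinking $W_n$ to $\iota_n(X)\cup\iota_n'(X^n)$ I may assume $W_n$ equals this union. I would then glue the $W_n$ along the common copy of $X$: on the disjoint union $W_0:=X\sqcup\bigsqcup_{n\in\BB N}X^n$ define $D$ to restrict to $d$ on $X$ and to $d^n$ on each $X^n$, to equal $D_n(\iota_n'(x),\iota_n(y))$ for $x\in X^n$ and $y\in X$, and to equal $\inf_{z\in X}\big(D_n(\iota_n'(x),\iota_n(z))+D_m(\iota_m(z),\iota_m'(y))\big)$ for $x\in X^n$, $y\in X^m$ with $n\neq m$. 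Letting $(W,D)$ be the completion of $(W_0,D)$ and taking the inclusions $X^n\hookrightarrow W$ and $X\hookrightarrow W$ as the desired embeddings, the $D$-HPU convergence $\frk X^n\to\frk X$ is then read off from the distortion bound exactly as in the ``if'' direction, since $\BB d_D^{\op H}(X^n,X)$, $\BB d_D^{\op P}(\mu^n,\mu)$, and each $\BB d_D^{\op U}(\eta_j^n,\eta_j)$ are all $\le\epsilon_n\to 0$.

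The two points that need genuine care --- and together form the main (though still routine) obstacle --- are the metric-space facts about the glued object. First, that $D$ is an honest metric: the only nonobvious part is the triangle inequality for a chain passing through several distinct pieces, which reduces to the triangle inequalities for the $D_n$ precisely because the gluing is along an isometrically embedded common subspace $X$ (and $D$ separates points because it restricts to the original metric on each piece and the maps $\iota_n,\iota_n'$ are injective). Second, that $(W,D)$ is compact: each $X^n$ and $X$ is totally bounded, and the distortion bound gives $\BB d_D^{\op H}(X^n,X)\le\epsilon_n$, so for every $\delta>0$ all but finitely many $X^n$ lie in the $\delta$-neighborhood of the compact set $X$; together with the finitely many exceptional $X^n$ and with $X$ itself this shows $W_0$ is totally bounded, hence its completion $W$ is compact. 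Since none of this interacts with the curves beyond the elementary bound on $\BB d_D^{\op U}$, the one-curve proof of \cite[Proposition~1.5]{gwynne-miller-uihpq} carries over with no essential change.
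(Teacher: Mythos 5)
Your proposal is correct and follows the same route the paper takes, namely reproducing the one-curve argument of \cite[Proposition~1.5]{gwynne-miller-uihpq} and observing that nothing in the gluing construction interacts with the curves beyond the termwise bound on $\BB d_D^{\op U}$. The only point I would tidy up is that the glued function $D$ on the disjoint union is a priori only a pseudo-metric (since $\iota_n'(X^n)$ and $\iota_n(X)$ may overlap in $W_n$), so one should pass to the induced metric quotient before completing; this is routine and does not affect the argument.
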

  
For our scaling limit result for percolation on the UIHPQ$_{\op{S}}$, we need to consider the local version of the GHPU metric.  
Following~\cite{gwynne-miller-uihpq}, for $k\in\BB N$ we let $\BB M^{\op{GHPU},\infty}_k$ be the set of $3+k$-tuples $\frk X = (X,d,\mu,\eta_1,\dots ,\eta_k)$ where $(X,d)$ is a locally compact length space, $\mu$ is a measure on $X$ which assigns finite mass to each finite-radius metric ball in $X$, and $\eta_1,\dots,\eta_k $ are curves in $X$ with the following property. For each $i\in \{1,\dots,k\}$, either (a) $\eta : \BB R\rta X$ or (b) $\eta : (a,b) \rta X$ for some open interval $(a,b) \subset \BB R$ (with possibly one of $a$ or $b$ equal to $\infty$) and $\eta$ extends to a continuous curve from the closure of $(a,b)$ to the one-point compactification $X\cup \{\infty\}$. In the latter case, we view $\eta$ as a continuous function $\BB R \rta X\cup \{\infty\}$ which is constant outside of $[a,b]$. 
Note that $\BB M_k^{\op{GHPU} }$ is not contained in $\BB M_k^{\op{GHPU},\infty}$ since elements of the former are not required to be length spaces.
 
The following definition, which is slightly modified from~\cite{gwynne-miller-uihpq}, is used to define the local GHPU metric in terms of the GHPU metric.  

\begin{defn} \label{def-ghpu-truncate}
Let $k\in\BB N$ and let $\frk X = (X , d, \mu,\eta_1,\dots,\eta_k)$ be an element of $\BB M_k^{\op{GHPU},\infty}$. For $\rho > 0$ and $j \in [1,n]_{\BB Z}$, let 
\eqb
\ul\tau_\rho^{\eta_j} := (-\rho) \vee \sup\left\{t < 0 \,:\, d(\eta_1(0) ,\eta_j(t)) = \rho\right\} \quad \op{and}\quad \ol\tau_\rho^{\eta_j} := \rho\wedge \inf\left\{t > 0 \,:\, d(\eta_1(0),\eta_j(t)) = \rho\right\} .
\eqe
The \emph{$\rho$-truncation} of $\eta_j$ is the curve $\frk B_\rho\eta \in C_0(\BB R , X)$ defined by
\eqbn
\frk B_\rho\eta_j(t) = 
\begin{cases}
\eta_j(\ul\tau_\rho^{\eta_j}) ,\quad &t\leq \ul\tau_\rho^{\eta_j}  \\
\eta_j(t) ,\quad &t\in ( \ul\tau_\rho^{\eta_j}  , \ol\tau_\rho^{\eta_j}) \\
\eta_j( \ol\tau_\rho^{\eta_j}) ,\quad &t\geq  \ol\tau_\rho^{\eta_j}  .
\end{cases}
\eqen
The \emph{$\rho$-truncation} of $\frk X$ is the curve-decorated metric measure space
\eqbn
\frk B_\rho \frk X  = \left( B_\rho(\eta_1(0) ;d) , d|_{B_\rho(\eta_1(0) ;d)} , \mu|_{B_\rho(\eta_1(0) ;d)} , \frk B_\rho\eta_1 ,\dots,\frk B_\rho\eta_k \right) .
\eqen
\end{defn}

Note that the curve $\eta_1$ plays a distinguished role in Definition~\ref{def-ghpu-truncate} since $\eta_1(0)$ is taken to be the base point. 
 
The \emph{local GHPU metric} on $\BB M_k^{\op{GHPU},\infty}$ is defined by
\eqb \label{eqn-ghpu-local-def}
\BB d^{\op{GHPU},\infty}  \left( \frk X^1,\frk X^2\right) = \int_0^\infty e^{-\rho} \left(1 \wedge \BB d^{\op{GHPU}}\left(\frk B_\rho\frk X^1, \frk B_\rho\frk X^2 \right)  \right) \, d\rho
\eqe 
where $\BB d^{\op{GHPU}}$ is as in~\eqref{eqn-ghpu-def}. It is shown in~\cite[Proposition~1.7]{gwynne-miller-uihpq} that in the case when $k=1$,~$\BB d^{\op{GHPU},\infty}$ defines a complete separable metric on~$\BB M_k^{\op{GHPU},\infty}$ provided we identify spaces which differ by a measure-preserving, curve-preserving isometry. The case of general $k\in\BB N$ is treated in exactly the same manner.

\begin{remark}[Graphs as elements of $\BB M_k^{\op{GHPU}}$] \label{remark-ghpu-graph}
In this paper we will often be interested in a graph $G$ equipped with its graph distance $d_G$.  In order to study continuous curves in $G$, we identify each edge of $G$ with a copy of the unit interval $[0,1]$.  We extend the graph metric on $G$ by requiring that this identification is an isometry. 

If $\lambda : [a,b]_{\BB Z} \rta \mcl E(G)$ is a path in $G$, we extend $\lambda$ from $[a,b]_{\BB Z}$ to $[a-1,b] $ in such a way that $\lambda$ is continuous and for each $i\in [a,b]_{\BB Z}$, $\lambda|_{[i-1,i]}$ is a path lying in the edge $\lambda(i)$. Note that there are multiple ways to do this, but different choices result in paths whose uniform distance from one another is at most~$1$.

If $G$ is a finite graph and we are given a measure $\mu$ on vertices of $G$ and curves $\lambda_1,\dots,\lambda_k$ in $G$ and we view $G$ as a connected metric space and $\lambda_1,\dots,\lambda_k$ as continuous curves as above, then $(G , d_G , \mu , \lambda_1,\dots,\lambda_k)$ is an element of $\BB M_k^{\op{GHPU}}$.  Similar considerations enable us to view infinite graphs equipped with a locally finite measure and $k$ curves as elements of $\BB M_k^{\op{GHPU},\infty}$. 
\end{remark}

\subsection{Liouville quantum gravity and SLE}
\label{sec-lqg-prelim}
 
In this subsection we review the definition of Liouville quantum gravity (LQG) surfaces (Section~\ref{sec-lqg-surface}) and explain their equivalence with Brownian surfaces in the case when $\gamma = \sqrt{8/3}$ and how this enables us to define $\SLE_6$-type curves on Brownian surfaces (Section~\ref{sec-lqg-metric}).

We also state the results about SLE and LQG from~\cite{gwynne-miller-char} which are used in our proofs (Section~\ref{sec-lqg-bdy-process}).  Our proofs do not make any explicit use of SLE or LQG outside of these results, so if the reader is willing to take the results described in this subsection as a black box, the paper can be read without any knowledge of SLE and LQG. 

\subsubsection{Liouville quantum gravity surfaces}
\label{sec-lqg-surface}
 
For $\gamma \in (0,2)$, a \emph{Liouville quantum gravity (LQG)} surface with $k\in\BB N_0$ marked points is an equivalence class of $(k+2)$-tuples $(D, h ,x_1,\dots ,x_k)$, where $D\subset \BB C$ is a domain; $h$ is a distribution on $D$, typically some variant of the Gaussian free field (GFF)~\cite{shef-kpz,shef-gff,ss-contour,shef-zipper,ig1,ig4}; and $x_1,\dots , x_k \in D\cup \bdy D$ are $k$ marked points. Two such $(k+2)$-tuples $(D, h ,x_1,\dots ,x_k)$ and $(\wt D, \wt h  ,\wt x_1,\dots , \wt x_k)$ are considered equivalent if there is a conformal map $f : \wt D \rta D$ such that
\eqb \label{eqn-lqg-coord}
f(\wt x_j) = x_j,\quad \forall j \in [1,k]_{\BB Z} \quad \op{and}\quad  \wt h = h\circ f + Q \log |f'|  \quad \op{where} \: Q = \frac{2}{\gamma} + \frac{\gamma}{2} .
\eqe  
Several specific types of $\gamma$-LQG surfaces (which correspond to particular choices of the GFF-like distribution~$h$) are studied in~\cite{wedges}.  In this paper we will only consider the special case when $\gamma =\sqrt{8/3}$ and the only quantum surfaces we will be interested in are the quantum disk and the $\sqrt{8/3}$-quantum wedge. 

It is shown in~\cite{shef-kpz} that a Liouville quantum gravity surface for general $\gamma \in (0,2)$ admits a natural area measure $\mu_h$, which can be interpreted as ``$e^{\gamma h(z)} \, dz$", where $dz$ is Lebesgue measure on $D$, and a length measure $\nu_h$ defined on certain curves in $D$, including $\bdy D$ and $\SLE_\kappa$-type curves for $\kappa = \gamma^2$~\cite{shef-zipper}.  These measures are invariant under coordinate changes of the form~\eqref{eqn-lqg-coord}, so one can think of a $\gamma$-LQG surface as an equivalence class of measure spaces modulo conformal maps. 

For $\gamma \in (0,2)$, a \emph{quantum disk} $(\BB D , h)$ is a finite-volume quantum surface typically taken to be parameterized by the unit disk, defined precisely in~\cite[Definition~4.21]{wedges}. One can consider quantum disks with fixed area or with fixed area and fixed boundary length. In this paper we will primarily be interested in the case of fixed boundary length and random area. A \emph{singly (resp.\ doubly) marked quantum disk} is a quantum disk together with one (resp.\ two) marked points sampled uniformly from its $\gamma$-quantum boundary length measure.  One can consider a doubly marked quantum disk with specified left and right boundary lengths (and possibly also area) by conditioning on the $\gamma$-quantum lengths of the two arcs between the marked points.

For $\alpha < Q$, an \emph{$\alpha$-quantum wedge} $(\BB H ,h^\infty , 0, \infty)$ is an infinite-volume (i.e., $\mu_{h^\infty}(\BB H ) =\infty$) doubly-marked quantum surface which is typically taken to be parameterized by the upper half plane.  This quantum surface is defined precisely in~\cite[Section~1.6]{shef-zipper} and in~\cite[Definition~4.5]{wedges}.  Roughly speaking, the distribution~$h^\infty$ can be obtained by starting with the distribution $\wt h - \alpha \log |\cdot|$, where $\wt h$ is a free-boundary GFF on $\BB H$, then zooming in near the origin and re-scaling to get a surface which describes the local behavior of this field when the additive constant is fixed appropriately~\cite[Proposition~4.7(ii)]{wedges}.  The case when $\alpha = \gamma$ is special because the $\gamma$-LQG boundary length measure is supported on points where the field has a $-\gamma$-log singularity, so the $\gamma$-quantum wedge can be thought of as describing the local behavior of the field at a quantum typical boundary point (see~\cite[Proposition~1.6]{shef-zipper} for a precise statement along these lines).  

It is particularly natural to consider a $\gamma$-LQG surface decorated by an independent $\SLE_{\kappa}$-type curve for $\kappa \in \{\gamma^2 , 16/\gamma^2\}$.  Such a curve admits a natural quantum parameterization with respect to the underlying field, which depends on the phase of $\kappa$:
\begin{enumerate}
\item For $\kappa \in (0,4)$, we parameterize by quantum length (which is shown to be well-defined on $\SLE_\kappa$ curves in~\cite{shef-zipper}). 
\item For $\kappa \in (4,8)$ we parameterize by \emph{quantum natural time}, which roughly speaking means that we parameterize by the ``quantum local time" of $\eta^\infty$ at the set of times when it disconnects a bubble from~$\infty$ (see~\cite[Definition~6.23]{wedges} for a precise definition). 
\item For $\kappa  \geq 8$, we parameterize by the quantum mass of the region filled in by the curve. 
\end{enumerate}

\subsubsection{The $\sqrt{8/3}$-LQG metric and $\SLE_6$ on a Brownian surface}
\label{sec-lqg-metric}

It was recently proven by Miller and Sheffield that in the special case when $\gamma =\sqrt{8/3}$, a $\sqrt{8/3}$-LQG surface admits a natural metric $\frk d_h$~\cite{lqg-tbm1,lqg-tbm2,lqg-tbm3}, building on \cite{qle}.  This metric is also invariant under coordinate changes of the form~\eqref{eqn-lqg-coord}.  Hence one can view a $\sqrt{8/3}$-LQG surface as a metric measure space. In the case of $\sqrt{8/3}$-LQG surfaces with boundary (such as quantum disks and quantum wedges), one also obtains a natural boundary path, modulo a choice of starting point, by traversing one unit of $\sqrt{8/3}$-LQG length in one unit of time. 

In particular, it follows from~\cite[Corollary~1.5]{lqg-tbm2} (combined with~\cite[Theorem 3]{legall-disk-snake}) that the quantum disk is equivalent as a  metric measure space to the Brownian disk, equipped with its natural metric and area measure. In fact,~\cite{legall-disk-bdy} shows that quantum disk and Brownian disk also agree as curve-decorated metric measure spaces when equipped also with their boundary paths parameterized according to the natural boundary length measure. 
This holds if we condition on area, boundary length, or both.  In particular, the quantum disk with boundary length $\frk l > 0$ is equivalent to the free Boltzmann Brownian disk with boundary length $\frk l$. Using this and a local comparison argument, it is shown in~\cite[Proposition~1.10]{gwynne-miller-uihpq} that the $\sqrt{8/3}$-quantum wedge is equivalent as a curve-decorated metric measure space to the Brownian half-plane. 

It is shown in~\cite{lqg-tbm3} that the metric measure space structure a.s.\ determines the embedding of the quantum surface into a subset of $\BB C$.  In particular, there is a canonical embedding of the Brownian disk (resp.\ the Brownian half-plane) into $\BB D$ (resp.\ $\BB H$). 

This embedding enables us to define a chordal $\SLE_6$ on a doubly marked Brownian disk $(H ,d , \mu , x, y)$ (with fixed area, boundary length, or both) via the following procedure:
\begin{enumerate}
\item Let $(\BB D ,h , -i,i)$ be the doubly marked quantum disk obtained by embedding our given Brownian disk into $(\BB D , -i, i)$. 
\item Let $\eta_{\BB D}$ be an independent chordal $\SLE_6$ from $-i$ to $i$ in $\BB D$, parameterized by quantum natural time with respect to $h$ (recall the discussion at the end of Section~\ref{sec-lqg-surface}), and let $\eta$ be the curve from $x$ to $y$ in $H$ which is the pre-image of $\eta_{\BB D}$ under the embedding map. 
\end{enumerate}
We note that the law of $\eta$ does not depend on the particular choice of embedding since the quantum natural time parameterization is invariant under coordinate changes as in~\eqref{eqn-lqg-coord}.  One can similarly define chordal $\SLE_6$ on the Brownian half-plane. 

The same procedure also allows one to define other variants of $\SLE_6$ or $\SLE_{8/3}$ on other Brownian surfaces, but in this paper we will only consider chordal $\SLE_6$ on the Brownian disk or the Brownian half-plane.

\subsubsection{Boundary length processes and characterization theorem}
\label{sec-lqg-bdy-process}

In this subsection we review some particular facts about $\SLE_6$ on a $\sqrt{8/3}$-quantum wedge or a doubly marked quantum disk (equivalently, on a Brownian half-plane or a doubly marked Brownian disk).

Suppose first that $(\BB H , h^\infty , 0,\infty)$ is a $\sqrt{8/3}$-quantum wedge and $\eta^\infty :[0,\infty)\rta \ol{\BB H}$ is a chordal $\SLE_6$ from~$0$ to~$\infty$ in $\BB H$ sampled independently from $h^\infty$ and then parameterized by quantum natural time with respect to~$h^\infty$.

For $t \geq 0$, let $K_t^\infty$ be the closure of the set of points disconnected from $\infty$ by $\eta^\infty([0,t])$ and let $L_t^\infty$ (resp.\ $R_t^\infty$) be equal to the $\nu_{h^\infty}$-length of the segment of $\bdy K_t^\infty\cap\BB H$ lying to the left (resp.\ right) of $\eta^\infty(t)$ minus the $\nu_{h^\infty}$-length of the segment of $(-\infty,0]$ (resp.\ $[0,\infty)$) which is disconnected from $\infty$ by $\eta^\infty(t)$.  We call $Z_t^\infty := (L_t^\infty ,R_t^\infty)$ the \emph{left/right boundary length process} of~$\eta^\infty$. See Figure~\ref{fig-bdy-process-sle} for an illustration of this definition. The process $Z^\infty$ is the continuum analog of the discrete left/right boundary length process of Definition~\ref{def-bdy-process} below as well as the so-called \emph{horodistance} process of~\cite{curien-glimpse}.

\begin{figure}[ht!]
\begin{center}
\includegraphics[scale=1]{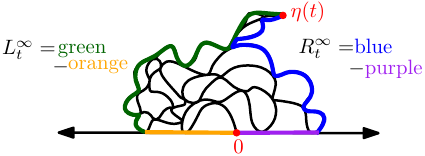} 
\caption[SLE boundary length process]{\label{fig-bdy-process-sle} Illustration of the definition of the left/right $\sqrt{8/3}$-LQG boundary length process $Z^\infty = (L^\infty,R^\infty)$ for SLE$_6$ in $\BB H$ with respect to an independent $\sqrt{8/3}$-quantum wedge. The analogous finite-volume process is defined similarly. }
\end{center}
\end{figure}

It is shown in~\cite[Corollary~1.19]{wedges} that $Z^\infty$ evolves as a pair of independent totally asymmetric $3/2$-stable processes with no upward jumps. That is, the L\'evy measure of each of $L^\infty$ and $R^\infty$ is given by a constant times $|t|^{-5/2} \BB 1_{(t < 0)} \, dt$ and
\eqb \label{eqn-levy-process-scaling}
\BB P\left[ L_1^\infty < -r \right] = \BB P\left[ R^\infty_1 <- r \right] \sim \lcon r^{-3/2} 
\eqe 
for $\lcon > 0$ a constant which is not computed explicitly. Henceforth, whenever we refer to a totally asymmetric $3/2$-stable processes with no upward jumps we mean one with this choice of scaling constant $\lcon$ (which determines the scaling constant for the L\'evy measure). 

By~\cite[Theorem~1.18]{wedges}, if we condition on $Z^\infty$ then the quantum surfaces obtained by restricting $h^\infty$ to the bubbles disconnected from $\infty$ by $\eta^\infty$, each marked by the point where $\eta^\infty$ finishes tracing its boundary, have the law of a collection of independent singly marked quantum disks indexed by the downward jumps of the two coordinates of $Z^\infty$, with boundary lengths specified by the magnitudes of the downward jumps. Furthermore, for each $t\geq 0$, the quantum surface obtained by restricting $h^\infty$ to $\BB H\setminus K_t^\infty$ is a $\sqrt{8/3}$-quantum wedge independent from the quantum surface obtained by restricting $h^\infty$ to $K_t^\infty$. 

One has similar statements in the case of a chordal $\SLE_6$ on a doubly marked quantum disk. Let $(\BB D , h , -i,i)$ be a doubly marked quantum disk with left/right boundary lengths $\frk l_L > 0$ and $\frk l_R > 0$ and let $\eta$ be an independent chordal $\SLE_6$ from $-i$ to $i$ in $\BB D$, parameterized by quantum natural time with respect to $h$.  In this case $\eta$ is only defined on some random finite time interval $[0,\sigma_0]$ but we extend the definition of $\eta$ to all of $[0,\infty)$ by setting $\eta(t) = i$ for $t > \sigma_0$.  Define the left/right boundary length process $Z = (L,R)$ in exactly the same manner as above.  Then $Z_0 = (0,0)$ and $Z_t = (-\frk l_L  , -\frk l_R)$ for each $t\geq \sigma_0$ (note that the left/right boundary length process in this paper coincides with the definition in~\cite{gwynne-miller-sle6} but is shifted by $(-\frk l_L ,-\frk l_R)$ as compared to the definition in~\cite{gwynne-miller-char}). The law of the process $Z$ is described by the following theorem, which is~\cite[Theorem~1.2]{gwynne-miller-sle6}.

\begin{thm}[\cite{gwynne-miller-sle6}] \label{thm-bdy-process-law}
The process $Z= (L,R)$ satisfies the following properties.
\begin{enumerate}
\item (Endpoint continuity) Almost surely, the terminal time $\sigma_0$ is finite. Furthermore, a.s.\ 
\eqb \label{eqn-sigma0-formula}
\sigma_0 = \inf\left\{ t \geq 0 : L_t = -\frk l_L \right\}  = \inf\left\{ t \geq 0 : R_t = -\frk l_R \right\}
\eqe 
and a.s.\ $\lim_{t\rta \sigma_0} \eta(t) = i$ and $\lim_{t \rta \sigma_0} Z_t = (-\frk l_L , -\frk l_R)$. \label{item-bead-process-time}
\item (Radon-Nikodym derivative) Let $Z^\infty = ( L^\infty, R^\infty)$ be a pair of independent totally asymmetric $3/2$-stable processes with no upward jumps, scaled as in~\eqref{eqn-levy-process-scaling},
and define
\eqb \label{eqn-S^wge-def}
\sigma_0^\infty := \inf\left\{ t\geq 0:  L_t^\infty \leq -\frk l_L \: \op{or} \:  R_t^\infty \leq -\frk l_R  \right\}   .
\eqe 
For $t\geq 0$, the law of $Z |_{[0,t]}$ restricted to the event $\{t < \sigma_0\}$ is absolutely continuous with respect to the law of $Z^\infty|_{[0,t]}$, with Radon-Nikodym derivative given by
\eqbn
 \left( \frac{ L_t^\infty  +  R_t^\infty}{\frk l_L + \frk l_R} + 1 \right)^{-5/2}\BB 1_{(t < \sigma_0^\infty) } .
\eqen
\label{item-bead-process-nat} 
\end{enumerate} 
\end{thm}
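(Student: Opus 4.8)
## Proof proposal for Theorem~\ref{thm-bdy-process-law}

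\textbf{Overall strategy.} The plan is to transfer the structural description of $\SLE_6$ on a $\sqrt{8/3}$-quantum wedge (the ``infinite-volume'' picture of \cite[Corollary~1.19 and Theorem~1.18]{wedges}, recalled just above) to the doubly marked quantum disk via an explicit absolute continuity between the quantum disk and the quantum wedge as measures on decorated surfaces, and then read off both the endpoint continuity and the Radon--Nikodym formula from the resulting change of measure on the boundary length process. The key point is that a quantum disk with given left/right boundary lengths $\frk l_L,\frk l_R$ can be obtained from a $\gamma=\sqrt{8/3}$ quantum wedge (with the same marked boundary point playing the role of $\eta$'s starting point) by a reweighting that depends only on the total boundary length disconnected from $\infty$ up to the current time; since $\SLE_6$ is sampled \emph{independently} of the field in both cases, this reweighting descends to a reweighting of the law of the pair $Z^\infty=(L^\infty,R^\infty)$.

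\textbf{Step 1: the surface-level absolute continuity.} First I would recall from \cite{wedges,shef-zipper} the precise relationship between the quantum disk and the quantum wedge: cutting a doubly marked quantum disk by an independent chordal $\SLE_6$ from one marked point toward the other produces, at each time $t$ before the terminal time, an unexplored region which is again a doubly marked quantum disk (this is the disk analogue of the wedge statement in \cite[Theorem~1.18]{wedges}), while for the wedge the unexplored region is again a wedge. Comparing the two: the law of $(h,\eta)$ on the disk, restricted to times $t<\sigma_0$ and to the $\sigma$-algebra generated by the explored hull $K_t$ and $\eta|_{[0,t]}$, is absolutely continuous with respect to the corresponding law on the wedge, with Radon--Nikodym derivative a function of the boundary length of $\partial K_t\cap \BB H$ remaining plus the amount of original boundary consumed — i.e.\ of $\frk l_L+\frk l_R+L_t+R_t$ versus $\frk l_L + \frk l_R$. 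The exponent $-5/2$ is exactly the exponent governing how the quantum-disk law depends on its boundary length in the relevant Poisson/excursion-theoretic decomposition (the same $-5/2$ appearing in the free Boltzmann Brownian disk area density and in the L\'evy measure exponent); I would pin this down using the explicit disk and wedge definitions of \cite[Definitions~4.5 and 4.21]{wedges}.

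\textbf{Step 2: descending to the boundary length process, and endpoint continuity.} Because $\eta$ is independent of $h$ and the boundary length process is a measurable function of $(h,\eta)$ in a way compatible with the filtration above, the surface-level Radon--Nikodym derivative becomes a Radon--Nikodym derivative between the law of $Z|_{[0,t]}$ on $\{t<\sigma_0\}$ and the law of $Z^\infty|_{[0,t]}$, namely $\bigl(\tfrac{L_t^\infty+R_t^\infty}{\frk l_L+\frk l_R}+1\bigr)^{-5/2}\BB 1_{(t<\sigma_0^\infty)}$, which is part~\ref{item-bead-process-nat}. For part~\ref{item-bead-process-time}: since $Z^\infty$ is a pair of independent totally asymmetric $3/2$-stable processes with only downward jumps, each coordinate a.s.\ decreases without bound and in fact hits $(-\frk l_L,-\infty)$ resp.\ $(-\infty,-\frk l_R)$, and one checks that a.s.\ $L^\infty$ reaches $-\frk l_L$ at the \emph{same} time it first has $R^\infty\le -\frk l_R$ is \emph{not} automatic — rather the content is that under the reweighted (disk) law the two coordinates terminate simultaneously at $\sigma_0=\inf\{t:L_t=-\frk l_L\}=\inf\{t:R_t=-\frk l_R\}$. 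This should follow from the endpoint continuity of $\SLE_6$ on the quantum disk (the curve a.s.\ reaches the target marked point, consuming all of both boundary arcs), which is itself a consequence of the corresponding statement for $\SLE_6$ in $\BB D$ together with the fact that the $\sqrt{8/3}$-LQG length of each arc is a.s.\ finite and positive; I would also need that the reweighting is a genuine probability measure, i.e.\ that $\BB E[(\tfrac{L_t^\infty+R_t^\infty}{\frk l_L+\frk l_R}+1)^{-5/2}\BB 1_{(t<\sigma_0^\infty)}]$ converges to the correct normalization, which can be extracted from the consistency of the family of laws as $t$ varies (martingale property of the density).

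\textbf{Main obstacle.} I expect the hard part to be Step~1: making the surface-level absolute continuity between quantum disk and quantum wedge fully rigorous with the \emph{exact} exponent $-5/2$ and the exact argument $\frk l_L+\frk l_R+L_t+R_t$, rather than just ``up to constants''. This requires carefully tracking the Poissonian structure of quantum disk boundary lengths under the $\SLE_6$ cutting (how a disk of boundary length $\ell$ decomposes, as a function of $\ell$, into the current hull plus an unexplored disk) and matching it against the wedge's scale invariance, using the precise normalizations in \cite{wedges}. The endpoint continuity in Step~2, while conceptually clear, also needs care because it is a statement about the \emph{joint} hitting behavior of two correlated (after reweighting) stable processes, and one must rule out the bad scenario where one coordinate reaches its terminal value strictly before the other — this is precisely where the geometric input ``$\SLE_6$ on the disk terminates at the target, exhausting both arcs'' is essential and cannot be replaced by soft stable-process arguments alone.
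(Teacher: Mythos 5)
This theorem is not proven in the present paper: it is stated with the attribution \texttt{[\textbackslash cite\{gwynne-miller-sle6\}]} and the text immediately preceding it says explicitly that it is ``\cite[Theorem~1.2]{gwynne-miller-sle6}.'' So there is no proof in this paper to compare your proposal against — the result is a black-box input, exactly as the authors indicate in Section~\ref{sec-overview-proof} when they describe which external results the argument relies on.

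That said, your high-level plan is the right one for the \emph{cited} paper: the proof there does indeed proceed by a surface-level absolute continuity between the doubly-marked quantum disk and the $\sqrt{8/3}$-quantum wedge, with the exponent $-5/2$ coming from the density of the disk's boundary length in the relevant excursion/Poissonian decomposition (the same $-5/2$ you see in the free Boltzmann partition function asymptotics $\frk Z(\el)\asymp 54^{\el/2}\el^{-5/2}$ in~\eqref{eqn-stirling-asymp}); the Radon--Nikodym derivative is then read off on the boundary length process because $\eta$ is sampled independently of the field. The main thing your sketch underemphasizes is that the endpoint continuity statement in part~\ref{item-bead-process-time} is \emph{not} a formal consequence of the reweighting alone, and also cannot be proven solely from soft facts about the reweighted stable pair: one genuinely needs the geometric input that chordal $\SLE_6$ in the disk a.s.\ reaches the opposite marked point while eventually consuming all of both boundary arcs, plus the fact that the quantum lengths of those arcs are a.s.\ finite. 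You flag this yourself as the delicate point, which is correct. One concrete caution: in Step~2 you argue that the density $\bigl(\tfrac{L_t^\infty+R_t^\infty}{\frk l_L+\frk l_R}+1\bigr)^{-5/2}\BB 1_{(t<\sigma_0^\infty)}$ defines a consistent family and hence a probability measure by ``the martingale property of the density,'' but you cannot take the martingale property for granted here — it is something that must be \emph{verified} from the explicit stable-process structure of $Z^\infty$ (and it encodes exactly the normalization of the quantum disk law), rather than something that falls out of the abstract Kolmogorov consistency argument. If you were to write this up fully you would need that computation before asserting the family glues into a probability measure.
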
 

Similarly to the $\sqrt{8/3}$-quantum wedge case, if we condition on $Z|_{[0,t]}$ then the conditional law of the quantum surfaces obtained by restricting $h $ to the bubbles disconnected from $i$ by $\eta $, each marked by the point where $\eta $ finishes tracing its boundary, has the law of a collection of independent singly marked quantum disks indexed by the downward jumps of the two coordinates of $Z $, with boundary lengths specified by the magnitudes of the downward jumps. Furthermore, the conditional law of the doubly marked quantum surface obtained by restricting $h$ to the connected component of $\BB D\setminus \eta([0,t])$ with $i$ on its boundary, with marked points $\eta(t)$ and $i$, is that of a doubly marked quantum disk with left/right boundary lengths $L_t + \frk l_L$ and $R_t + \frk l_R$~\cite[Theorem~1.1]{gwynne-miller-sle6}. 
The $\sqrt{8/3}$-LQG metric induced by the restriction of $h$ to a sub-domain coincides with the internal metric (Section~\ref{sec-metric-prelim}) of $\frk d_h$ on that sub-domain~\cite[Lemma~7.5]{gwynne-miller-char}, and a quantum disk conditioned on its boundary lengths is equivalent to a free Boltzmann Brownian disk (as defined in Section~\ref{sec-intro-def-disk}).
Hence we can re-phrase~\cite[Theorem~1.1]{gwynne-miller-sle6} in terms of Brownian disks instead of quantum disks to obtain the following theorem.

\begin{thm}[\cite{gwynne-miller-sle6}] \label{thm-sle-bead-nat}
Let $(H,d,\mu,\xi , x,y)$ be a doubly marked Brownian disk with left/right boundary lengths $\frk l_L$ and $\frk l_R$ and let $\eta$ be an independent chordal $\SLE_6$ in $H$ from $x$ to $y$, parameterized by quantum natural time. Let $Z = (L,R)$ be the corresponding left/right boundary length process.  For $t\geq 0$, let $\mcl U_t$ be the collection of singly marked metric measure spaces of the form $(U ,  d_U  ,  \mu_U  ,  x_U)$ where $U$ is a connected component of $H \setminus \eta ([0,t])$, $d_U  $ is the internal metric of $d$ on $U$, and $x_U$ is the point where $\eta$ finishes tracing $\bdy U$.  If we condition on $Z  |_{[0,t]}$, then the conditional law of $\mcl U_t$ is that of a collection of independent singly marked free Boltzmann Brownian disks with boundary lengths specified as follows. The elements of $\mcl U_t$ corresponding to the connected components of $H \setminus \eta ([0,t])$ which do not have the target point $y$ on their boundaries are in one-to-one correspondence with the downward jumps of the coordinates of $Z |_{[0,t]}$, with boundary lengths given by the magnitudes of the corresponding jump. The element of $\mcl U_t$ corresponding to the connected component of $H \setminus  \eta ([0,t])$ with $y$ on its boundary has boundary length $L_t   +   R_t  + \frk l_L  + \frk l_R  $. 
\end{thm}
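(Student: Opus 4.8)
The plan is to deduce this theorem directly from its quantum-surface counterpart \cite[Theorem~1.1]{gwynne-miller-sle6} (recalled in Section~\ref{sec-lqg-bdy-process}) by converting each quantum disk that appears there into the corresponding Brownian disk, using that the metric, the area measure, the marked points, and the left/right boundary length process are all intrinsic to the quantum surface and hence transported unchanged across the equivalence between quantum disks and Brownian disks.

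First I would set up the dictionary. By the construction of $\SLE_6$ on a Brownian disk in Section~\ref{sec-lqg-metric}, we may realize $(H,d,\mu,\xi)$ as the metric measure space (with boundary path) of a doubly marked quantum disk $(\BB D , h , x, y)$ with left/right boundary lengths $\frk l_L$ and $\frk l_R$, in such a way that $\eta$ becomes an independent chordal $\SLE_6$ from $x$ to $y$ in $\BB D$ parameterized by quantum natural time with respect to $h$. Since the $\nu_h$-length of a boundary arc is intrinsic to the quantum surface and coincides with the natural boundary length measure of the Brownian disk (as quantum disks and Brownian disks agree as curve-decorated metric measure spaces when equipped with boundary paths parameterized by quantum/natural length; \cite{jacob-miermont-disk}, see also \cite[Corollary~1.5]{lqg-tbm2}), the left/right boundary length process $Z = (L,R)$ of $\eta$ defined in Section~\ref{sec-lqg-bdy-process} agrees with the one attached to $\eta$ as a curve on $H$.

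Next, fix $t \geq 0$ and condition on $Z|_{[0,t]}$. For a connected component $U$ of $H \setminus \eta([0,t])$ --- equivalently of $\BB D \setminus \eta([0,t])$ --- consider the quantum surface $(U , h|_U)$. By \cite[Lemma~7.5]{gwynne-miller-char} the $\sqrt{8/3}$-LQG metric induced by $h|_U$ equals the internal metric $d_U$ of $d = \frk d_h$ on $U$ (Section~\ref{sec-metric-prelim}); the restriction of the quantum area measure is $\mu|_U$; and the point where $\eta$ finishes tracing $\bdy U$ is the same point whether computed in $H$ or in $\BB D$. Thus $(U , d_U , \mu|_U , x_U)$ is exactly the metric measure space with marked point underlying the marked quantum surface $(U , h|_U , x_U)$. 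Now \cite[Theorem~1.1]{gwynne-miller-sle6} says that, conditionally on $Z|_{[0,t]}$, these marked quantum surfaces for the components not having $y$ on their boundary are independent singly marked quantum disks in bijection with the downward jumps of the coordinates of $Z|_{[0,t]}$, with boundary lengths equal to the jump magnitudes, while the component $U_y$ with $y$ on its boundary, carrying the two marked points $\eta(t)$ and $y$, is a doubly marked quantum disk with left/right boundary lengths $L_t + \frk l_L$ and $R_t + \frk l_R$. Finally I would apply the equivalence between a quantum disk conditioned on its boundary length and a free Boltzmann Brownian disk of the same boundary length (Section~\ref{sec-intro-def-disk}) to each of these quantum disks; since the conversion is an a.s.\ well-defined function of the quantum surface it preserves conditional independence, giving the claimed description of $\mcl U_t$.

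The only genuinely delicate points --- and the closest thing to an obstacle here --- are bookkeeping. One must check that forgetting the second marked point $y$ of $U_y$ leaves a singly marked free Boltzmann Brownian disk, which holds because forgetting one of two uniformly and conditionally independently sampled boundary points of a quantum disk leaves the other uniform on the boundary of the quantum disk of total boundary length; and one must track the shift convention of Section~\ref{sec-lqg-bdy-process} so that the left/right lengths $L_t + \frk l_L$ and $R_t + \frk l_R$ of $U_y$ sum to $L_t + R_t + \frk l_L + \frk l_R$, matching the asserted boundary length of the $y$-component of $\mcl U_t$. Everything else is a direct transcription.
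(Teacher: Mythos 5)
Your proof is correct and follows essentially the same route as the paper's own (brief) derivation: translate~\cite[Theorem~1.1]{gwynne-miller-sle6} from quantum disks to Brownian disks via the metric-measure-space equivalence, using~\cite[Lemma~7.5]{gwynne-miller-char} to identify the LQG metric on a complementary component with the internal metric of $d$, and then convert each quantum disk to a free Boltzmann Brownian disk. The two "delicate points" you flag are real and you handle them appropriately; if anything, the justification for forgetting $y$ is most cleanly phrased via the rotational symmetry of the singly marked quantum disk's boundary length measure, which makes the choice of which marked boundary point to retain irrelevant once the conditional law of the underlying (unmarked) disk is identified, rather than via "exchangeability" of the two marked points (which are not exchangeable since one is the tip $\eta(t)$ and the other is the target $y$) — but the conclusion and the mechanism are the same.
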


It is shown in~\cite[Theorem~7.12]{gwynne-miller-char} that the Markov-type property of Theorem~\ref{thm-sle-bead-nat} together with the topological curve-decorated measure space structure of $(H,d,\mu ,\xi ,\eta)$ uniquely characterizes its law. We re-state this theorem here for the sake of reference.

\begin{thm}[\cite{gwynne-miller-char}] \label{thm-bead-mchar} 
Let $( \frk l_L, \frk l_R )\in (0,\infty)^2$ and suppose we are given a coupling of a doubly-marked free Boltzmann Brownian disk $(\wt H , \wt d , \wt \mu, \wt\xi , \wt x ,\wt y  )$ with left/right boundary lengths $\frk l_L$ and $\frk l_R$, a random continuous curve $\wt\eta: [0,\infty) \rta \wt H$ from $\wt x$ to $\wt y$, and a random process $ Z  = ( L  , R  )$ which has the law of the left/right boundary length process of a chordal $\SLE_6$ on a free Boltzmann Brownian disk with left/right boundary lengths $\frk l_L$ and $\frk l_R$, parameterized by quantum natural time.
Assume that the following conditions are satisfied.
\begin{enumerate}
\item (Laws of complementary connected components) For $t \geq 0$, let $\wt{\mcl U}_t$ be the collection of singly marked metric measure spaces of the form $(U , \wt d_U  , \wt\mu_U  , \wt x_U)$ where $U$ is a connected component of $\wt H \setminus \wt\eta  ([0,t])$, $\wt d_U $ is the internal metric of $\wt d $ on $U$, and $\wt x_U$ is the point where $\wt\eta $ finishes tracing $\bdy U$.  If we condition on $Z |_{[0,t]}$, then the conditional law of $\wt{\mcl U}_t$ is that of a collection of independent singly marked free Boltzmann Brownian disks with boundary lengths specified as follows. The elements of $\wt{\mcl U}_t$ corresponding to the connected components of $\wt H \setminus \wt\eta  ([0,t])$ which do not have the target point $\wt y$ on their boundaries are in one-to-one correspondence with the downward jumps of the coordinates of $Z |_{[0,t]}$, with boundary lengths given by the magnitudes of the corresponding jump. The element of $\wt{\mcl U}_t$ corresponding to the connected component of $\wt H \setminus \wt\eta  ([0,t])$ with $\wt y$ on its boundary has boundary length $L_t    +   R_t  +\frk l_L + \frk l_R   $. 
\label{item-bead-mchar-wedge}  
\item (Topology and consistency) The topology of $(\wt H ,\wt\eta )$ is determined by $Z $ in the same manner as the topology of a chordal $\SLE_6$ on a doubly marked free Boltzmann Brownian disk, i.e.\ there is a curve-decorated metric measure space $(H,d,\mu,\xi,\eta)$ consisting of a doubly marked Brownian disk with left/right boundary lengths $\frk l_L$ and $\frk l_R$ and an independent chordal $\SLE_6$ between its two marked points parameterized by quantum natural time and a homeomorphism $\Phi  : H \rta \wt H$ with $\Phi_* \mu =  \wt\mu $ and $\Phi \circ \eta   = \wt\eta $ (this homeomorphism is random and may depend on $(\wt H , \wt\eta)$ and $(H,d,\mu,\xi,\eta)$). Moreover, for each $t\in [0,\infty)\cap \BB Q$, $\Phi$ a.s.\ pushes forward the natural boundary length measure on the connected component of $H \setminus \eta ([0,t])$ containing the target point of $\eta$ on its boundary to the natural boundary length measure on the connected component of $\wt H \setminus \wt\eta ([0,t])$ containing the target point of $\wt\eta$ on its boundary (these boundary length measures are well-defined since we know the internal metric on the connected component is that of a Brownian disk).  \label{item-bead-mchar-homeo} 
\end{enumerate}
Then $(\wt H, \wt d  , \wt \mu , \wt\xi , \wt\eta )$ is equivalent (as a curve-decorated metric measure space) to a doubly-marked free Boltzmann Brownian disk with left/right boundary lengths $\frk l_L$ and $\frk l_R$ together with an independent chordal $\SLE_{6}$ between the two marked points, parameterized by quantum natural time. 
\end{thm}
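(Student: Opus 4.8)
The goal is to identify $(\wt H,\wt d,\wt\mu,\wt\xi,\wt\eta)$ with a free Boltzmann Brownian disk decorated by an independent quantum-natural-time $\SLE_6$, and the natural route is to pass from the metric picture to the conformal picture. Since $(\wt H,\wt d,\wt\mu)$ is assumed to be a Brownian disk, by \cite{lqg-tbm3} its metric measure structure canonically determines a conformal embedding $\psi : \wt H\rta\BB D$ under which $\wt H$ becomes a doubly-marked quantum disk $(\BB D,\wt h,-i,i)$ with left/right boundary lengths $\frk l_L,\frk l_R$, and $\wt\eta$ becomes a curve $\wt\eta_{\BB D} := \psi\circ\wt\eta$ from $-i$ to $i$. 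Because $\psi$ is by construction the embedding induced by $\wt d$, we have $\psi_*\wt d = \frk d_{\wt h}$ and $\psi_*\wt\mu = \mu_{\wt h}$; hence, using the equivalence of the quantum disk and the Brownian disk as curve-decorated metric measure spaces \cite{jacob-miermont-disk} and the definition of $\SLE_6$ on a Brownian disk from Section~\ref{sec-lqg-metric}, it suffices to prove that, conditionally on $\wt h$, $\wt\eta_{\BB D}$ is a chordal $\SLE_6$ from $-i$ to $i$ parameterized by quantum natural time. Many of the steps below are local in nature and can first be carried out in the half-plane: by the local absolute continuity between the Brownian disk and the Brownian half-plane near a boundary point, between chordal $\SLE_6$ on these two surfaces, and between the finite- and infinite-volume boundary length processes (part~\ref{item-bead-process-nat} of Theorem~\ref{thm-bdy-process-law}), it is enough to prove the corresponding statement for a curve on a $\sqrt{8/3}$-quantum wedge and then chain it along $\wt\eta$.

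The first task is to transfer hypotheses~\ref{item-bead-mchar-wedge} and~\ref{item-bead-mchar-homeo}, which are phrased metrically, into statements about the field $\wt h$. Combining \cite[Lemma~7.5]{gwynne-miller-char} (that the $\sqrt{8/3}$-LQG metric of a field restricted to a subdomain is the internal metric of the ambient LQG metric there) with the fact that the metric measure structure of a Brownian disk determines its conformal structure \cite{lqg-tbm3}, hypothesis~\ref{item-bead-mchar-wedge} upgrades to the statement that, conditionally on $Z|_{[0,t]}$ (equivalently, on $\wt\eta_{\BB D}([0,t])$), the quantum surfaces cut off from $(\BB D,\wt h)$ by $\wt\eta_{\BB D}$ are independent quantum disks whose boundary lengths are the magnitudes of the downward jumps of the coordinates of $Z|_{[0,t]}$, while the surface parameterized by the target-side component is an independent quantum disk with boundary length $L_t+R_t+\frk l_L+\frk l_R$ --- precisely the description of a quantum-natural-time $\SLE_6$ in \cite[Theorem~1.1]{gwynne-miller-sle6}. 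Hypothesis~\ref{item-bead-mchar-homeo}, which forces $\wt\eta_{\BB D}$ to have the same topology as an $\SLE_6$ and matches up the natural boundary length measures on the target-side components, pins down the conformal welding by which these quantum surfaces are glued back together along $\wt\eta_{\BB D}$.

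Given this Markovian cutting-and-welding description, one concludes as follows. Applying the cutting statement at the successive times when fixed increments of boundary length are disconnected, and using that the target-side surface is an independent quantum disk whose boundary lengths and continued curve again satisfy (a shifted version of) hypotheses~\ref{item-bead-mchar-wedge}--\ref{item-bead-mchar-homeo}, one obtains --- after forgetting $\wt h$ --- that $\wt\eta_{\BB D}$ has the conformal domain Markov property: conditionally on an initial segment, the remainder is a conformally invariant random curve in the complementary domain with the same law, started and targeted at the appropriate boundary points. By Schramm's characterization \cite{schramm0} this forces the conditional law of $\wt\eta_{\BB D}$ given $\wt h$ to be chordal $\SLE_6$. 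Independence of $\wt\eta_{\BB D}$ from $\wt h$ and the identification of the parameterization with quantum natural time then follow because, given $Z$, the cut-off bubbles are conditionally independent of everything, and $Z$ --- which records the boundary lengths and the times of the bubbles --- is assumed to have exactly the law of the quantum-natural-time left/right boundary length process (compare Theorem~\ref{thm-sle-bead-nat}). This yields the required identification, and unwinding $\psi$ gives the theorem.

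The step I expect to be the main obstacle is making the passage from the metric hypotheses to the conformal statements work \emph{consistently along the entire exploration} rather than at a single deterministic time: this requires controlling the countably many, metrically shrinking complementary components and the fact that $\wt\eta$ is $\wt\mu$-null, and above all it requires a rigorous form of the conformal domain Markov property, i.e.\ ruling out that $\wt\eta_{\BB D}$ carries more information about $\wt h$ than an $\SLE_6$ does. The consistency hypothesis~\ref{item-bead-mchar-homeo} together with the uniqueness of the conformal welding of the bubble surfaces along the curve is what makes this last point go through, and establishing that welding uniqueness (in the spirit of the quantum zipper of \cite{shef-zipper}) is the technical core of the argument.
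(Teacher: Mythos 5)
First, a clarification: the paper does not prove this theorem; it quotes it verbatim as \cite[Theorem~7.12]{gwynne-miller-char}, so there is no ``paper's own proof'' to compare against directly. I am therefore evaluating your argument on its merits and against the approach actually taken in \cite{gwynne-miller-char}.

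Your opening moves are sound and in the right spirit: using \cite{lqg-tbm3} to pass from the metric to the conformal picture, identifying hypothesis~\ref{item-bead-mchar-wedge} as a ``cutting'' statement and hypothesis~\ref{item-bead-mchar-homeo} as a ``welding/consistency'' statement, and recognizing that uniqueness of the conformal welding (quantum zipper) is the technical core. These are genuinely the central ingredients of the real proof.

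However, the step where you deduce a \emph{conformal domain Markov property} for $\wt\eta_{\BB D}$ given $\wt h$ and invoke Schramm's characterization has a genuine gap. Hypothesis~\ref{item-bead-mchar-wedge} gives the law of the complementary pieces conditionally on $Z|_{[0,t]}$ alone. It says nothing about the conditional law of the \emph{remaining curve} given the field $\wt h$ and the initial curve segment $\wt\eta_{\BB D}([0,t])$, which is what a domain Markov property requires. To get there you would first have to establish (a) that $\wt\eta_{\BB D}$ is independent of $\wt h$, and (b) that the resulting (now field-independent) law is conformally invariant across the family of domains arising as $\BB D\setminus\wt\eta_{\BB D}([0,t])$ --- but (a) and (b) together are essentially the content of the theorem you are trying to prove, and the hypotheses do not hand them to you. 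You derive them at the end (``Independence of $\wt\eta_{\BB D}$ from $\wt h$ ... then follow''), after already having used them to run Schramm's argument; as written the reasoning is circular. Moreover, Schramm's theorem needs a consistent conformally-invariant family over all simply connected domains, whereas here one starts with a curve in a single random domain-with-field, and the passage from the latter to the former is precisely the hard step.

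The actual argument in \cite{gwynne-miller-char} avoids Schramm entirely. It shows that the law of the curve-decorated surface is \emph{determined} by the collection of complementary quantum surfaces at all rational times, the way they are glued along $\wt\eta$, and the process $Z$; and then matches these distributions to those of a reference $\SLE_6$-decorated quantum disk using hypotheses~\ref{item-bead-mchar-wedge}--\ref{item-bead-mchar-homeo}. The uniqueness of this reconstruction is the welding result (in the spirit of \cite{shef-zipper}) --- which you correctly identify as the core of the matter, but misuse as a stepping stone toward Schramm rather than as the mechanism for directly identifying the joint law of $(\wt h,\wt\eta_{\BB D})$. If you replace your Schramm step with that direct reconstruction argument, the outline becomes the correct one.
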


 We emphasize that in hypothesis~\ref{item-bead-mchar-wedge} of Theorem~\ref{thm-bead-mchar}, we are taking internal metrics with respect to open sets, i.e., we are looking at the infimum of distances along paths which do not touch the boundaries of the components $U$.  

Theorem~\ref{thm-bead-mchar} is the key tool in the proof of Theorem~\ref{thm-perc-conv}; it will be used to identify the law of a subsequential limit of the curve-decorated metric measure spaces in that theorem.

Again suppose that $(\BB D ,h , -i,i)$ is a doubly-marked quantum disk with left/right boundary lengths $\frk l_L$ and $\frk l_R$ and that $\eta $ is an independent chordal $\SLE_6$ from $-i$ to $i$, parameterized by quantum natural time with respect to $h$. As above, let $Z = (L,R)$ be its left/right boundary length process and let $\sigma_0$ be the time at which $\eta$ reaches $i$.  We end this subsection by pointing out the manner in which the left/right boundary length process $Z$ encodes the topology of the pair $(\BB D , \eta)$ and the quantum length measure on the complementary connected components of $\eta$, which will be needed in Section~\ref{sec-bdy-process-id}.  
 
\begin{lem} \label{lem-inf-adjacency0}
Almost surely, the following is true.  If $t\in [0,\sigma_0]$ then $\eta$ hits the left (resp.\ right) arc of $\bdy\BB D$ from $-i$ to $i$ if and only if $L$ (resp.\ $R$) attains a running infimum at time $t$.  Furthermore, if $t_1 , t_2 \in [0,\sigma_0]$ with $t_1 < t_2$, then $\eta(t_1) = \eta(t_2)$ if and only if either
\eqb \label{eqn-inf-adjacency0}
 L_{t_1}  =  L_{t_2} = \inf_{s\in [t_1,t_2]}  L_s \quad\op{or} \quad
 R_{t_1}  =  R_{t_2} = \inf_{s\in [t_1,t_2]}   R_s .
\eqe  
\end{lem}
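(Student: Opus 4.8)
The plan is to first establish the analogous statement for a chordal $\SLE_6$ from $0$ to $\infty$ on a $\sqrt{8/3}$-quantum wedge $(\BB H , h^\infty , 0 ,\infty)$---where it is essentially part of the mating-of-trees description of $\SLE_6$ in~\cite{wedges} (see also~\cite{gwynne-miller-sle6})---and then transfer to the quantum disk by a local absolute continuity argument. For the transfer, fix $t_0 > 0$. On the event $\{t_0 < \sigma_0\}$ the joint law of $\eta|_{[0,t_0]}$ together with $h$ restricted to a neighborhood of $\eta([0,t_0])$ which stays bounded away from $i$ is mutually absolutely continuous with respect to the corresponding objects for $\SLE_6$ on a $\sqrt{8/3}$-quantum wedge: both curves are $\SLE_6$ stopped before reaching the target, the underlying fields are mutually absolutely continuous away from the marked points, and the quantum-natural-time parameterization is defined locally (cf.\ item~\ref{item-bead-process-nat} of Theorem~\ref{thm-bdy-process-law}). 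All of the relevant events---that $\eta(t)$ lies on the left arc, that $L_t = \inf_{s\le t} L_s$, that $\eta(t_1)=\eta(t_2)$, and that $L_{t_1}=L_{t_2}=\inf_{s\in[t_1,t_2]}L_s$---are measurable with respect to these local data for $t,t_1,t_2\le t_0$, so it suffices to treat the wedge case. Taking a union over $t_0 \in \BB N$ and using that a.s.\ $\sigma_0 < \infty$, $\eta(\sigma_0)=i$, and $Z_{\sigma_0} = (-\frk l_L , -\frk l_R) = (\inf L, \inf R)$ (part~\ref{item-bead-process-time} of Theorem~\ref{thm-bdy-process-law}) then yields the disk statement, the terminal time $t=\sigma_0$ included.

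For the wedge, write $L_t^\infty = \ell_t - m_t$, where $\ell_t$ is the $\nu_{h^\infty}$-length of the segment of $\bdy K_t^\infty \cap \BB H$ to the left of $\eta^\infty(t)$ and $m_t\geq 0$ is the $\nu_{h^\infty}$-length of the portion of $(-\infty,0]$ disconnected from $\infty$ by $\eta^\infty([0,t])$. The map $t\mapsto m_t$ is non-decreasing and is constant on every time interval during which $\eta^\infty$ does not hit $(-\infty,0]$, and $\ell_t\geq 0$ with $\ell_t = 0$ precisely when $\eta^\infty(t)\in(-\infty,0]$; indeed, when $\eta^\infty(t)\notin(-\infty,0]$ the left side of $\bdy K_t^\infty\cap\BB H$ contains a non-degenerate arc of the $\SLE_6$ outer boundary, which a.s.\ has positive $\nu_{h^\infty}$-length. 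Given this, if $\eta^\infty(t)\in(-\infty,0]$ then for every $s\leq t$ we have $L_s^\infty = \ell_s - m_s \geq -m_s \geq -m_t = L_t^\infty$, so $L^\infty$ attains a running infimum at $t$; and if $\eta^\infty(t)\notin(-\infty,0]$ then, setting $s_0 := \sup\{ s < t : \eta^\infty(s)\in(-\infty,0]\}$ (which is $<t$, since otherwise $\eta^\infty(t)$ would be a limit of points of the closed set $(-\infty,0]$), we get that $m$ is constant on $[s_0,t]$ and hence $L_{s_0}^\infty = -m_{s_0} < \ell_t - m_{s_0} = L_t^\infty$, so $L^\infty$ does not attain a running infimum at $t$. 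The same argument with $R^\infty$ and $[0,\infty)$ in place of $L^\infty$ and $(-\infty,0]$ gives the first assertion. For the forward direction of the self-intersection assertion, if $\eta^\infty(t_1)=\eta^\infty(t_2)$ with $t_1<t_2$ then the loop $\eta^\infty([t_1,t_2])$ (together with a possibly empty arc of $\bdy\BB H$) bounds a region disconnected from $\infty$ at time $t_2$ which lies entirely on one side of the curve, say the left; during $[t_1,t_2]$ the left outer boundary of the hull is modified only inside this region and returns to $\eta^\infty(t_2)=\eta^\infty(t_1)$, while the increase of $m$ is exactly the $\nu_{h^\infty}$-length of the part of $(-\infty,0]$ inside the region, which equals the net decrease of $\ell$; hence $L_s^\infty \geq L_{t_1}^\infty = L_{t_2}^\infty$ for $s\in[t_1,t_2]$, i.e.\ \eqref{eqn-inf-adjacency0} holds (and symmetrically for a region on the right).

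The converse of the self-intersection assertion is the main obstacle: one must show that the loop structure of $L^\infty$ and $R^\infty$ detects \emph{every} self-intersection of $\eta^\infty$, not merely that geometric self-intersections produce such loops. For this I would invoke the mating-of-trees recovery of~\cite{wedges}, by which the curve-decorated quantum surface $(\BB H, h^\infty, \eta^\infty)$---and in particular the set of identified pairs of times---is a measurable function of $Z^\infty$; combining this with the forward direction (which shows the only candidate identified pairs are those satisfying~\eqref{eqn-inf-adjacency0}), one concludes by checking that every pair satisfying~\eqref{eqn-inf-adjacency0} is in fact identified---e.g.\ by matching the $\nu_{h^\infty}$-lengths of the relevant left (resp.\ right) outer-boundary arcs of $K_{t_1}^\infty$ and $K_{t_2}^\infty$, which are determined by $Z^\infty$, or by appealing directly to the peanosphere gluing. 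The a.s.\ positivity of $\nu_{h^\infty}$ on non-degenerate $\SLE_6$ outer-boundary arcs (used above to force $\ell_t > 0$) is what rules out spurious coincidences in the boundary length process, and together with the careful treatment of the marked point $i$ and the terminal time $\sigma_0$ in the absolute continuity transfer, these are the points that require care.
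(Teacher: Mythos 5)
Your proof is correct, but it takes a noticeably different route from the paper's. The paper argues directly in the disk setting with a two-sentence geometric observation: a running infimum of $L$ is, by definition of the boundary length process, exactly a time at which the length of the left boundary arc disconnected from $i$ increases (which is exactly a time at which $\eta$ hits the left boundary); and for the self-intersection statement, the paper simply notes that $\eta(t_1)$ lies on the outer boundary of the hull at each time in $(t_1,t_2)$ and that at time $t_2$ the arc between $\eta(t_1)$ and $\eta(t_2)$ is swallowed, so the boundary length process must satisfy~\eqref{eqn-inf-adjacency0}; the converse is dispatched with ``obtained similarly.'' You instead reduce to the $\sqrt{8/3}$-quantum wedge by local absolute continuity, give a more detailed bookkeeping argument for the boundary-hitting statement via the decomposition $L_t^\infty = \ell_t - m_t$, treat the forward direction of the self-intersection statement as the paper does, and then explicitly invoke the mating-of-trees recovery of $(\BB H, h^\infty, \eta^\infty)$ from $Z^\infty$ (plus a.s.\ positivity of $\nu_{h^\infty}$ on non-degenerate $\SLE_6$ outer-boundary arcs) for the converse. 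Interestingly, the paper itself mentions precisely this wedge/mating-of-trees description in the remark immediately following the lemma, but treats it as supplementary rather than the backbone of the proof. Your version is longer but also more honest about where the real content lies: you are right that the converse (every pair satisfying~\eqref{eqn-inf-adjacency0} is identified) is not trivially ``similar'' to the forward direction, and that one needs either the topological determination from the mating-of-trees theorem or an explicit argument that $\nu_{h^\infty}$ assigns positive mass to non-degenerate arcs, so that zero boundary-length distance between $\eta(t_1)$ and $\eta(t_2)$ along the hull boundary forces $\eta(t_1)=\eta(t_2)$. One minor nit: in your argument that $L^\infty$ does not attain a running infimum at $t$ when $\eta^\infty(t)\notin(-\infty,0]$, you should note that the set $\{s\le t : \eta^\infty(s)\in(-\infty,0]\}$ is closed and nonempty (it contains $0$), so $s_0$ is attained and $\ell_{s_0}=0$; you have this implicitly but stating it removes any ambiguity.
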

\begin{proof}
The first statement is immediate since, by the definition of $Z = (L,R)$, a running infimum of $L$ (resp.\ $R$) is the same as a time at which the length of the arc disconnected from $i$ by $\eta$ which lies to the left (resp.\ right) of $-i$ increases. 
For the second statement, we observe that if $\eta(t_1) = \eta(t_2)$, then $\eta(t_1)$ lies on the outer boundary of $\eta([0,t_2-\ep])$ for each $\ep \in (0,t_2-t_1)$  and at time $t_2$, $\eta$ disconnects the boundary arc of $\eta([0,t_2-\ep])$ between $\eta(t_1)$ and $\eta(t_2)$ from the target point $i$. Therefore~\eqref{eqn-inf-adjacency0} holds. The converse is obtained similarly.
\end{proof} 

One can also describe the quantum boundary length measure on the connected components of $\BB D\setminus \eta([0,t])$ in terms of $Z$. 
If we let 
\eqbn
T^L_t(u) := \sup\left\{ s \leq t : L_s \leq L_t  - u \right\} ,\quad \forall u \in   \left[ 0, L_t  -\inf_{s\in [0,t]} L_s \right]
\eqen
then it follows from the definition of the left/right boundary length process that $\eta(T^L_t(u))$ is the point on the outer boundary of $\eta([0,t])$ lying to the left of $\eta(t)$ with the property that the counterclockwise arc of the outer boundary of $\eta([0,t])$ from $\eta(T^L_t(u))$ to $\eta(t)$ is equal to $u$. A similar statement holds with ``left" in place of ``right".
Furthermore, if $\eta$ disconnects a bubble $B$ from $i$ at time $\tau$ which lies to the left of $\eta$ then 
\eqbn
\left(L_{\tau^-} - L_\tau \right) \wedge \left( L_{ \tau^- } - \inf_{s\in [0,\tau ]} L_s \right) 
\eqen
 is the quantum length of the boundary segment of $B$ which is traced by $\eta$ (the rest of $\bdy B$ is part of $\bdy\BB D$) and if we set
\eqbn
T_\tau^L(u) := \sup\left\{ s \leq t : L_s \leq L_{\tau^-}  - u \right\} ,\quad \forall u \in \left[ 0  , \left(L_{\tau^-} - L_\tau \right) \wedge \left( L_{ \tau^- } - \inf_{s\in [0,\tau ]} L_s \right)    \right] 
\eqen
then $\eta(T_\tau^L(u))$ is the point of~$\bdy B$ such that the clockwise arc of~$\bdy B$ from $\eta(\tau)$ to~$\eta(T_\tau^L(u))$ has quantum length~$u$.  Similar statements hold for bubbles disconnected from~$i$ on the right side of~$\eta$.

We note that, as explained in~\cite[Figure~1.15, Line 3]{wedges}, in the infinite-volume case the curve-decorated topological space $(\BB H , \eta^\infty)$ can be expressed as an explicit functional of the boundary length process~$Z^\infty$. By local absolute continuity one has a similar description for the topology of $(\BB D , \eta)$ in terms of~$Z$.

\section{Peeling of the UIHPQ with simple boundary}
\label{sec-peeling}

In this section, we will review the peeling procedure for the UIHPQ$_{\op{S}}$ and for free Boltzmann quadrangulations with simple boundary (also known as the spatial Markov property) and formally define the percolation peeling process for face percolation on a quadrangulation, which is the main object of study in this paper. The first rigorous use of peeling was in~\cite{angel-peeling}, in the context of the uniform infinite planar triangulation. The peeling procedure was later adapted to the case of the uniform infinite planar quadrangulation~\cite{benjamini-curien-uipq-walk}. In this paper, we will only be interested in peeling on the UIHPQ$_{\op{S}}$ and on free Boltzmann quadrangulations with simple boundary, which is also studied, e.g., in~\cite{angel-curien-uihpq-perc,angel-ray-classification,richier-perc,gwynne-miller-saw,caraceni-curien-saw,gwynne-miller-simple-quad}.  

In Section~\ref{sec-fb}, we will review some basic estimates for the free Boltzmann distribution on quadrangulations with simple boundary and given perimeter.  In Section~\ref{sec-peeling-basic}, we will review the definition of peeling and introduce notation for the objects involved (which is largely consistent with that of~\cite{gwynne-miller-saw}).  In Section~\ref{sec-peeling-estimate}, we will review some formulas and estimates for peeling probabilities.  In Section~\ref{sec-perc-peeling} we will define the face percolation peeling process, which is main peeling process which we will be interested in this paper and which is used to define the peeling exploration paths in Theorems~\ref{thm-perc-conv} and~\ref{thm-perc-conv-uihpq}.  In Section~\ref{sec-interface} we will discuss how this peeling process is related to face percolation interfaces.

\subsection{Estimates for free Boltzmann quadrangulations with simple boundary}
\label{sec-fb}

Recall the set $\mcl Q_{ \op{S} }^\shortrightarrow(n,\el)$ of rooted quadrangulations with simple boundary having $2l$ boundary edges and $n$ interior vertices, the free Boltzmann distribution on quadrangulations with simple boundary of perimeter $2\el$ from Definition~\ref{def-fb}, and the associated partition function $\frk Z$ from~\eqref{eqn-fb-partition}.
Stirling's formula implies that for each even $\el \in\BB N$,  
\eqb \label{eqn-stirling-asymp}
\frk Z(\el ) = (c_{\frk Z} +o_\el(1)) 54^{\el/2}  \el^{-5/2}  
\eqe   
for $c_{\frk Z} > 0$ a universal constant. 
By~\cite[Equation (2.11)]{bg-simple-quad} (c.f.~\cite[Section 6.2]{curien-legall-peeling}), 
\eqb \label{eqn-simple-quad-area} 
\# \mcl Q_{\op{S}}^{\srta}(n,\el) =  3^{n-1} \frac{(3\el)! (3\el-3+2n)!}{n! \el! (2\el-1)! (n+3\el-1)!}  =  C_{\op{S}}^\srta(\el) (1 + o_n(1)) 12^n n^{-5/2}  
\eqe  
where the $o_n(1)$ can possibly depend on $\el$ and
\eqb \label{eqn-simple-quad-area-coeff}
C_{\op{S}}^\srta(\el) = \frac{8^{\el-1} (3l)!}{3\sqrt\pi \el! (2l-1)!}  = \frac{1}{8\sqrt{3} \pi }   54^{\el} (1+o_\el(1)) \sqrt \el . 
\eqe 
From~\eqref{eqn-stirling-asymp} (applied with $2\el$ in place of $\el$),~\eqref{eqn-simple-quad-area}, and~\eqref{eqn-simple-quad-area-coeff}, we obtain the tail distribution of the number of interior vertices of a free Boltzmann quadrangulation with simple boundary of perimeter $2\el$: if $(Q, \BB e)$ is such a random quadrangulation, then 
\eqb \label{eqn-fb-area-tail}
\BB P\left[ \# \mcl V(Q\setminus \bdy Q) = n \right] = (c  + o_\el(1)) (1 + o_n(1)) \el^{3} n^{-5/2} 
\eqe 
where $c > 0$ is a universal constant, the rate of the $o_\el(1)$ is universal, and the rate of the $o_n(1)$ depends on $\el$.

\subsection{The peeling procedure}
\label{sec-peeling-basic}
  
\subsubsection{General definitions for peeling} 
\label{sec-general-peeling}

Let~$Q$ be a finite or infinite quadrangulation with simple boundary. For an edge $e  \in \mcl E(\bdy Q)$, we let $\frk f(Q,e)$ be the quadrilateral of~$Q$ containing $e$ on its boundary or $\frk f(Q,e) = \emptyset$ if $Q \in \mcl Q_{\op{S}}^\srta(0,1)$ is the trivial one-edge quadrangulation with no interior faces.  If $\frk f(Q,e) \not=\emptyset$, the quadrilateral $\frk f(Q,e)$ has either two, three, or four vertices in $\bdy Q$, so divides~$Q$ into at most three connected components, whose union includes all of the vertices of~$Q$ and all of the edges of~$Q$ except for~$e$. These components have a natural cyclic ordering inherited from the cyclic ordering of their intersections with~$\bdy Q$.  We write 
\eqbn
\frk P(Q,e) \in \{\emptyset\} \cup (\BB N_0 \cup \{\infty\}) \cup (\BB N_0 \cup \{\infty\})^2 \cup (\BB N_0 \cup \{\infty\})^3
\eqen
 for the vector whose elements are the number of edges of each of these components shared by $\bdy Q$, listed in counterclockwise cyclic order started from $e$, or $\frk P(Q,e) = \emptyset$ if $\frk f(Q,e) = \emptyset$. We refer to $\frk P(Q,e)$ as the \emph{peeling indicator}. 
 
The peeling indicator determines the total boundary lengths of each of the connected components of $Q\setminus \frk f(Q,e)$, not just the lengths of their intersections with $\bdy Q$. Indeed, if $i\in \{1,2,3\}$ and the $i$th component of $\frk P(Q,e)$ is $k$, then the total boundary length of the $i$th connected component of $Q\setminus \frk f(Q,e)$ in counterclockwise cyclic order is $k+3$ if there is only one such component (Figure~\ref{fig-peeling-cases}, left panel); $k+1$ if there is more than one component and $k$ is odd (Figure~\ref{fig-peeling-cases}, middle panel); $k+2$ if $k$ is even (Figure~\ref{fig-peeling-cases}, middle and right); or $\infty$ if $k=\infty$. 

The procedure of extracting $\frk f(Q,e)$ and $\frk P(Q,e)$ from $(Q,e)$ will be referred to as \emph{peeling $Q$ at $e$}. See Figure~\ref{fig-peeling-cases} for an illustration of the possible cases that can arise when peeling $Q$ at $e$.

\begin{figure}[ht!]
\begin{center}
\includegraphics[scale=1]{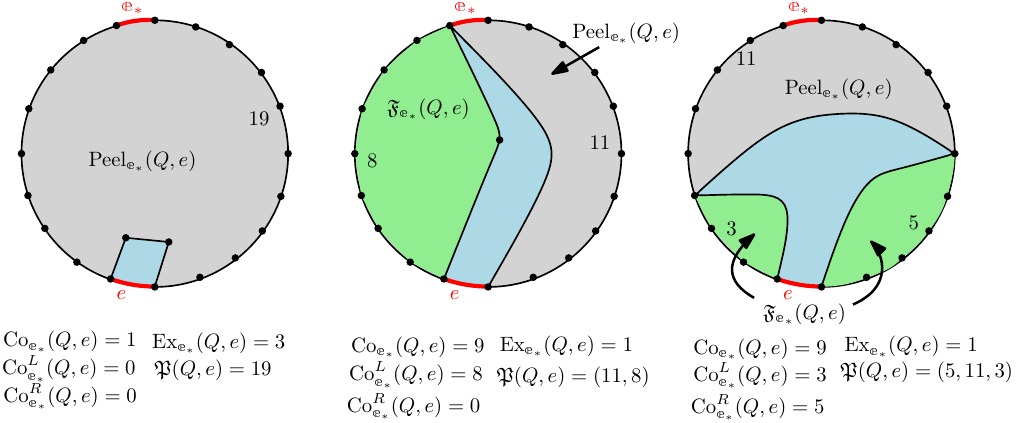} 
\caption[Peeling a free Boltzmann quadrangulation]{\label{fig-peeling-cases} A finite rooted quadrangulation with simple boundary $(Q,e) \in \mcl Q_{\op{S}}^\srta(n,20)$ together with three different possible cases for the peeled quadrilateral $\frk f(Q,e)$ (shown in light blue). In each case, the component $\op{Peel}_{\BB e_*}(Q,e)$ of $Q\setminus \frk f(Q,e)$ with $\BB e_*$ on its boundary is shown in grey and the union $\frk F_{\BB e_*}(Q,e)$ of the other components is shown in light green. The number of covered edges, left/right covered edges, and exposed edges and the peeling indicator are listed under each figure. }
\end{center}
\end{figure}

Suppose now that $\BB e_* \in \bdy Q\setminus \{e\}$ or that $\bdy Q$ is infinite and $\BB e_* = \infty$. 
\begin{itemize} 
\item Let $\op{Peel}_{\BB e_*}(Q , e)$ be the connected component of $Q\setminus \frk f(Q,e)$ with $\BB e_*$ on its boundary, or $\op{Peel}_{\BB e_*}(Q,e) = \emptyset$ if $\frk f(Q,e) =\emptyset$ (equivalently $(Q,e) \in \mcl Q_{\op{S}}^\srta(0,1)$). 
\item Let $\frk F_{\BB e_*}(Q,e)$ be the union of the components of $Q\setminus \frk f(Q,e)$ other than $\op{Peel}_{\BB e_*}(Q,e)$ or $\frk F_{\BB e_*}(Q,e) = \emptyset$ if $\frk f(Q,e) =\emptyset$.
\item Let $\op{Ex}_{\BB e_*}(Q,e)$ be the number of \emph{exposed edges} of $\frk f(Q,e)$, i.e.\ the number of edges of $ \op{Peel}_{\BB e_*}(Q,e)$ which do not belong to $\bdy Q$ (equivalently, those which are adjacent to $\frk f(Q,e)$). 
\item Let $\op{Co}_{\BB e_*}(Q,e)$ be the number of \emph{covered edges} of $\bdy Q$, i.e.\ the number of edges of $ \bdy Q $ which do not belong to $\op{Peel}_{\BB e_*}(Q,e)$ (equivalently, one plus the number of such edges which belong to $\frk F_{\BB e_*}(Q,e)$). Also let $\op{Co}_{\BB e_*}^L(Q,e)$ (resp.\ $\op{Co}_{\BB e_*}^R(Q,e)$) be the number of \emph{left (resp.\ right) covered edges}, i.e., the number of covered edges lying in the left (resp.\ right) arc of $\bdy Q$ from $e$ to $\BB e_*$, with $e$ not included in either arc and $\BB e_*$ included in the left arc, so that 
\eqb
\label{eqn-covered-identity}
\op{Co}_{\BB e_*}(Q,e) = \op{Co}_{\BB e_*}^L(Q,e) + \op{Co}_{\BB e_*}^R(Q,e) + 1 .
\eqe
\end{itemize}

\subsubsection{Markov property and peeling processes}
\label{sec-peeling-process}

If $(Q,\BB e)$ is a free Boltzmann quadrangulation with simple boundary of perimeter $2\el$ for $\el\in \BB N\cup \{\infty\}$ (recall that $\el=\infty$ corresponds to the UIHPQ$_{\op{S}}$) and we condition on $\frk P(Q,\BB e)$, then the connected components of $Q\setminus \frk f(Q,\BB e)$ are conditionally independent. 
The conditional law of each of the connected components, rooted at one of the edges of $\frk f(Q,\BB e)$ on its boundary (chosen by some deterministic convention in the case when there is more than one such edge), is the free Boltzmann distribution on quadrangulations with simple boundary and perimeter $2\wt \el$ (Definition~\ref{def-fb}), for a $\sigma(\frk P(Q,\BB e))$-measurable choice of $\wt \el \in \BB N\cup \{\infty\}$. These facts are collectively referred to as the \emph{Markov property of peeling}.

Due to the Markov property of peeling, one can iteratively peel a free Boltzmann quadrangulation with boundary to obtain a sequence of nested free Boltzmann quadrangulations with simple boundary.
To make this notion precise, let $\el \in \BB N\cup \{\infty\}$ and let $(Q, \BB e_*)$ be a free Boltzmann quadrangulation with simple boundary with perimeter $2\el$; we also allow $\BB e_*=\infty$ in the UIHPQ$_{\op{S}}$ case (when $\el=\infty$). 
A \emph{peeling process} of $Q$ targeted at $\BB e_*$ is described by a sequence of quadrangulations with simple boundary $\ol Q_j \subset Q$ for $j\in\BB N_0$, called the \emph{unexplored quadrangulations}, such that the following is true. 
\begin{enumerate}
\item We have $\ol Q_0 = Q$ and for each $j \in \BB N_0$ for which $\ol Q_{j-1}\not=\emptyset$, we have $\dot e_j \in \mcl E(\bdy \ol Q_{j-1} )$ and $\ol Q_j = \op{Peel}_{\BB e_*}(\ol Q_{j-1} , \dot e_j)$. For each $j \in \BB N_0$ for which $\ol Q_{j-1}=\emptyset$, we also have $\ol Q_j =\emptyset$. \label{item-peeling-iterate}
\item Each edge $\dot e_{j+1}$ for $j\in \BB N_0$ is chosen in a manner which is measurable with respect to the $\sigma$-algebra generated by the peeling indicator variables $\frk P(\ol Q_{i -1} , \dot e_i)$ for $i \in [1,j]_{\BB Z}$, the peeling cluster $\dot Q_j := (Q \setminus \ol Q_j) \cup (\bdy \ol Q_j \setminus \bdy Q) $, and possibly some additional random variables which are independent from $\ol Q_j$.    \label{item-peeling-msrble}
\end{enumerate} 
It follows from the Markov property of peeling that for each $j \in \BB N_0$, the conditional law of $(\ol Q_j , \dot e_{j+1})$ given the $\sigma$-algebra $\mcl F_j$ of condition~\ref{item-peeling-msrble} is that of a free Boltzmann quadrangulation with perimeter $2\wt\el$ for some $\wt \el\in\BB N_0\cup \{\infty\}$ which is measurable with respect to $\mcl F_j$ (where here a free Boltzmann quadrangulation with perimeter~$0$ is taken to be the empty set).

\subsubsection{Peeling formulas and estimates}
\label{sec-peeling-estimate}
     
Let $(Q^\infty , \BB e^\infty )$ be a UIHPQ$_{\op{S}}$.  
As explained in~\cite[Section~2.3.1]{angel-curien-uihpq-perc}, the distribution of the peeling indicator of Section~\ref{sec-general-peeling} when we peel at the root edge is described as follows, where here $\frk Z$ is the free Boltzmann partition function from~\eqref{eqn-fb-partition}. 
\begin{align} \label{eqn-uihpq-peel-prob}
\BB P\left[  \frk P(Q^\infty , \BB e^\infty) = \infty  \right]  
&= \frac38 \notag \\
\BB P\left[ \frk P(Q^\infty , \BB e^\infty) = (k , \infty) \right]  
&=  \frac{1}{12} 54^{(1-k)/2}  \frk Z(k+1)  ,\quad \text{$\forall k\in \BB N$ odd} \notag \\
\BB P\left[ \frk P(Q^\infty , \BB e^\infty) = (k , \infty) \right] 
&=   \frac{1}{12}  54^{-k/2}   \frk Z(k+2)  ,\quad \text{$\forall k\in \BB N_0$ even} \notag \\
\BB P\left[ \frk P(Q^\infty , \BB e^\infty) = (k_1, k_2 , \infty)   \right]  
&=  54^{-(k_1+k_2)/2}   \frk Z(k_1+1) \frk Z(k_2+1)  ,   \quad \text{$\forall k_1,k_2\in  \BB N$ odd} . 
\end{align} 
We get the same formulas if we replace $(k,\infty)$ with $(\infty,k)$ or $(k_1,k_2,\infty)$ with either $(\infty,k_1,k_2)$ or $(k_1,\infty,k_2)$ (which corresponds to changing which side of $\BB e^\infty$ the bounded complementary connected components of $\frk f(Q^\infty ,\BB e^\infty)$ lie on).  
 
From~\eqref{eqn-stirling-asymp}, we infer the following approximate versions of the probabilities~\eqref{eqn-uihpq-peel-prob}.
\begin{align} \label{eqn-uihpq-peel-prob-asymp} 
\BB P\left[ \frk P(Q^\infty , \BB e^\infty) = (k , \infty) \right]  
&\asymp k^{-5/2}  ,\quad  \forall k\in \BB N \notag \\ 
\BB P\left[ \frk P(Q^\infty , \BB e^\infty) = (k_1, k_2 , \infty)   \right]  
&\asymp  k_1^{-5/2} k_2^{-5/2}  ,   \quad \text{$\forall k_1,k_2\in  \BB N$ odd} . 
\end{align}
We get the same approximate formulas if we replace $(k,\infty)$ with $(\infty,k)$ or $(k_1,k_2,\infty)$ with either $(\infty,k_1,k_2)$ or $(k_1,\infty,k_2)$. 

Recall from Section~\ref{sec-general-peeling} the definitions of the number of exposed edges $\op{Ex}_{\infty}(Q^\infty,\BB e^\infty)$ and the number of covered edges $\op{Co}_{\infty}(Q^\infty,\BB e^\infty)$ and left/right covered edges $\op{Co}_\infty^L(Q^\infty,\BB e^\infty)$ and $\op{Co}_\infty^R(Q^\infty,\BB e^\infty)$ when we peel targeted at $\infty$. 

By~\cite[Proposition~3]{angel-curien-uihpq-perc},
\eqb \label{eqn-peel-mean}
\BB E\left[\op{Ex}_\infty\left(Q^\infty , \BB e^\infty \right) \right] = 2 \quad \op{and} \quad
\BB E\left[ \op{Co}_\infty^L\left(Q^\infty , \BB e^\infty \right) \right] =   \BB E\left[ \op{Co}_\infty^R\left(Q^\infty , \BB e^\infty \right) \right]  =  \frac12 , 
\eqe 
whence
\eqb \label{eqn-net-bdy-mean}
\BB E\left[  \op{Co}_\infty(Q^\infty  , \BB e^\infty ) \right] = \BB E\left[  \op{Ex}_\infty(Q^\infty  , \BB e^\infty ) \right] =2 ,
\eqe 
so in particular the expected net change in the boundary length of $Q^\infty $ under the peeling operation is~$0$. We always have $\op{Ex}_\infty(Q^\infty  , \BB e^\infty ) \in \{1,2,3\}$, but $\op{Co}_\infty(Q^\infty  , \BB e^\infty )$ can be arbitrarily large. In fact, a straightforward calculation using~\eqref{eqn-uihpq-peel-prob} shows that for $k\in\BB N$, one has 
\eqb \label{eqn-cover-tail}
\BB P\left[ \op{Co}_\infty^L(Q^\infty  , \BB e^\infty ) = k \right] = (\ccon + o_k(1)) k^{-5/2} \quad \op{and} \quad
\BB P\left[ \op{Co}_\infty^R(Q^\infty  , \BB e^\infty ) = k \right] = (\ccon + o_k(1)) k^{-5/2}
\eqe 
for $\ccon   = \frac{10\sqrt 2}{27 \sqrt{3\pi} }$.

\subsection{The face percolation peeling process}
\label{sec-perc-peeling}

In this subsection we will formally define the percolation peeling process, which is the main peeling process we will consider in this paper, and introduce some relevant notation (see Section~\ref{sec-intro-def-perc} for a less formal description of this process). 
See Figures~\ref{fig-perc-peeling-finite} and~\ref{fig-perc-peeling}, respectively, for illustrations of this process in the finite-volume and infinite-volume cases.

Let $\el_L , \el_R \in \BB N\cup \{\infty\}$ such that $\el_L$ and $\el_R$ are either both odd, both even, or both $\infty$. 
Let $\el := \el_L + \el_R$, so that $\el$ is either even or $\infty$, and let $(Q , \BB e)$ be a free Boltzmann quadrangulation with simple boundary and perimeter $\el$ (so that $(Q,\BB e)$ is a UIHPQ$_{\op{S}}$ if $\el =\infty$). Throughout this paper, when we want to refer to just the UIHPQ$_{\op{S}}$ case $\el =\infty$ we will add a superscript $\infty$ to all of the objects involved; and when we take $\el_L$ and $\el_R$ to depend on $n$, we will add a superscript $n$.

Let $\beta : \BB Z \rta \mcl E(\bdy Q)$ be the counterclockwise boundary path of $Q$ with $\beta(0) = \BB e$, extended by periodicity in the finite case, and define the target edge
\eqbn
\BB e_* := \begin{cases}
\beta(\el_R+1) ,\quad &\el < \infty \\
\infty ,\quad &\el = \infty .
\end{cases}
\eqen

Conditional on $(Q,\BB e)$, let $\theta$ be a critical face percolation configuration as in Section~\ref{sec-intro-def-perc}, so that $ \{\theta(q) : q\in\mcl F(Q ) \setminus \{f_\infty\} \}$ is a collection of i.i.d.\ Bernoulli random variables with $\BB P[\theta(q) = \mathsf{white}]= 3/4$ and $\BB P[\theta(q) = \mathsf{black}] = 1/4$. Recall that quadrilaterals $q$ with $\theta(q) = \mathsf{white}$ are said to be \emph{white} or \emph{open} and quadrilaterals with $\theta(q) = \mathsf{black}$ are \emph{black} or \emph{closed}.  

\begin{figure}[ht!]
 \begin{center}
\includegraphics[scale=1]{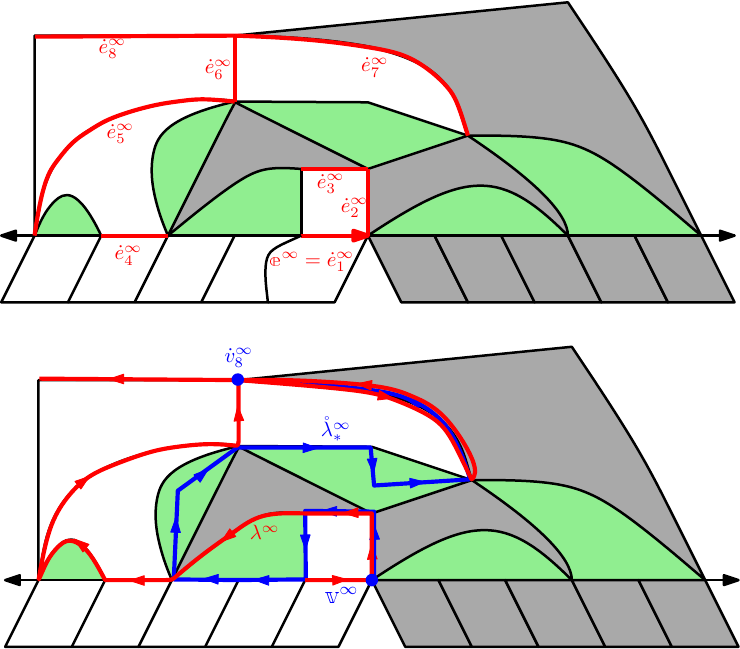} 
\caption[The percolation peeling process]{\label{fig-perc-peeling} \textbf{Top:} The percolation peeling process in the case when $\el_L = \el_R = \infty$, in which case $(Q,\BB e )= (Q^\infty,\BB e^\infty)$ is a UIHPQ$_{\op{S}}$ and we denote objects associated with the percolation peeling process by a superscript $\infty$. The external quadrilaterals used to define boundary conditions are shown below the horizontal line indicating the boundary of $Q^\infty$. The peeled edges $\dot e_j^\infty$ for $j\in [1,8]_{\BB Z}$ are shown in red, the peeled quadrilaterals are colored according to the face percolation configuration, and the regions disconnected from $\infty$ by the percolation peeling process (which, along with the peeled quadrilaterals, belong to $\dot Q_8^\infty$) are shown in light green.
\textbf{Bottom:} Same setup as in the top panel but with the percolation exploration path $\lambda^\infty$ shown in red and a possible realization of the associated percolation interface $\rng\lambda_*^\infty$ started from the right endpoint $\BB v^\infty$ of $\BB e^\infty$ shown in blue (the realization depends on the colors of the quadrilaterals inside the green regions, which are not shown). Note that $\lambda^\infty(j) = \dot e_j^\infty$ for $j\in\BB N_0$, but $\lambda^\infty(j)$ is allowed to traverse edges which are not among the $e_j^\infty$'s and/or to re-trace itself at half-integer times in order to get a continuous path. Although the percolation path $\rng\lambda_*^\infty$ does not coincide with $\lambda^\infty$, Lemma~\ref{lem-peel-interface} tells us that $\rng\lambda_*^\infty$ hits the endpoint $\dot v_j^\infty$ of $\dot e_j^\infty$ for each $j\in\BB N_0$, and does not exit the cluster $\dot Q_j^\infty$ until after it hits $\dot v_j^\infty$. 
  }
\end{center}
\end{figure}

To assign boundary conditions to $Q$, we attach a quadrilateral $q_e$ lying in the external face of $Q$ to each edge $e\in \mcl E(\bdy Q)$. We call the quadrilaterals $q_e$ \emph{external quadrilaterals} and their edges and vertices \emph{external edges and vertices}, respectively. We also define the extended quadrangulation
\eqb \label{eqn-external-quad}
Q^{\op{ext}} := Q \cup \bigcup_{e \in \mcl E(\bdy Q)} q_e .
\eqe
For each edge $e$ in the left arc of $\bdy Q$ from $\BB e$ to $\BB e_*$ (including $\BB e$ and also $\BB e_*$ if $\el < \infty$) we color the external quadrilateral $q_e$ white and for each edge $e$ in the right arc of $\bdy Q$ from $\BB e$ to $\BB e_*$ we color $q_e$ black.  We then identify any two external edges which have a common endpoint in $\bdy Q$ and which are incident to external faces of the same color.  We declare that an edge $e\in\mcl E(\bdy Q)$ is \emph{white} (resp.\ \emph{black}) if it is incident to a white (resp.\ black) external quadrilateral.  In other words, we impose $\el_L$-white/$\el_R$-black or $\el_L$-open/$\el_R$-closed boundary conditions. See Figure~\ref{fig-perc-peeling} for an illustration in the case when $\el_L = \el_R = \infty$.

We define a peeling process on $Q $ started from $\BB e$ and targeted at $\BB e_*$, which we call the \emph{percolation peeling process of $(Q,\BB e ,\theta)$ with $\el_L$-white/$\el_R$-black boundary conditions} as follows. Let $\ol Q_0 = Q$ and let $\dot Q_0 = \emptyset$.  Inductively, suppose $j\in \BB N$ and we have defined ``unexplored" and ``explored" quadrangulations with boundary $\ol Q_i$ and $\dot Q_i$ for $i\in [0,j-1]_{\BB Z}$ in such a way that the following holds.
\begin{enumerate}
\item $\ol Q_i \cup \dot Q_i = Q$ and $\ol Q_i$ and $\dot Q_i$ intersect only along their boundaries. 
\item Either $\ol Q_i = \emptyset$ and $\dot Q_i = Q$; or $\bdy \ol Q_i$ is a simple path containing $\BB e_*$ which has white-black boundary conditions, i.e.\ there is an edge $\dot e_{i+1} \in \bdy \ol Q_i$ such that $\dot e_{i+1}$ is incident to a white quadrilateral lying in the external face of $ \ol Q_i$ and each edge of $\bdy \ol Q_i$ lying in the left (resp.\ right) arc of $\bdy \ol Q_i$ from $\dot e_{i+1}$ to $\BB e_*$ is incident to a white (resp.\ black) quadrilateral lying in the external face of $\ol Q_i$.  
\end{enumerate} 
If $\ol Q_{j-1} = \emptyset$, we set $\ol Q_j = \emptyset$ and $\dot Q_j = Q$. 
Otherwise, we peel the quadrilateral of $\ol Q_{j-1}$ incident to $\dot e_j$ and add the subgraph which it disconnects from $\BB e_*$ in $\ol Q_{j-1}$ to $\dot Q_{j-1}$, i.e., in the notation of Section~\ref{sec-general-peeling} we set
\eqbn
\dot Q_j := \dot Q_{j-1} \cup \frk f(\ol Q_{j-1} , \dot e_j) \cup \frk F_{\BB e_*}(\ol Q_{j-1} , \dot e_j) \quad \op{and} \quad  \ol Q_j = \op{Peel}_{\BB e_*}( \ol Q_{j-1} , \dot e_j) .
\eqen
It is clear that $\ol Q_j$ and $\dot Q_j$ satisfy the above two conditions with $j$ in place of $i$, which completes the induction. We note that $\ol Q_j = \emptyset$ and $\dot Q_j = Q$ if and only if $\dot e_j = \BB e_*$. 

We define the terminal time 
\eqb \label{eqn-terminal-time}
\mcl J := \inf\left\{ j\in\BB N_0 : \ol Q_j = \emptyset \right\} ,
\eqe 
and note that $\mcl J = \infty$ in the UIHPQ$_{\op{S}}$ case (i.e., when $\el = \infty$).  
We also write
\eqbn
\theta_j := \theta\left( \frk f(\ol Q_{j-1} , \dot e_j) \right)  ,\quad \forall j \in \BB N
\eqen
for the random variable indicating the color of the $j$th peeled quadrilateral. 
For $j \in \BB N_0$ define the $\sigma$-algebra
\eqb \label{eqn-peel-filtration}
\mcl F_j := \sigma\left( \dot Q_{i} ,  \frk P(\ol Q_{i-1} , \dot e_i) , \theta_i \,:\, i  \in [1,j]_{\BB Z} \right) 
\eqe
where here $\frk P(\cdot,\cdot)$ is the peeling indicator (Section~\ref{sec-general-peeling}). Note that $\dot e_{j+1} \in \mcl F_j$.

We then define the percolation exploration path $\lambda : \frac12\BB N_0  \rta \mcl E(Q)$ exactly as in Section~\ref{sec-intro-def-perc}, i.e., $\lambda(0) = \BB e = \dot e_1$, $\lambda(j) = \dot e_j$ for $j\in\BB N$, and for $j\in\BB N_0$, $\lambda(j+1/2)$ is an edge which shares an endpoint with each of $\lambda(j)$ and $\lambda(j+1/2)$ (chosen by some arbitrary convention). 
 

\subsection{Face percolation interfaces}
\label{sec-interface}

Suppose we are in the setting of Section~\ref{sec-perc-peeling}, so that $(Q,\BB e,\theta)$ is a free Boltzmann quadrangulation with simple boundary and (possibly infinite) perimeter $\el_L + \el_R$ and $\lambda : \BB N_0 \rta \mcl E(Q)$ is the percolation exploration path with $\el_L$-white/$\el_R$-black boundary conditions. 
The path $ lambda$ is \emph{not} a percolation interface, but it is closely related to a certain percolation interface, as we explain in this section. 

We say that two white quadrilaterals of $Q$ are $\theta$-adjacent if they share an \emph{edge} and we say that two black quadrilaterals of $Q$ are $\theta$-adjacent if they share a \emph{vertex}; we emphasize here the asymmetry between black and white quadrilaterals (which is related to the fact that the percolation threshold $3/4$, not $1/2$).

A set of quadrilaterals $F\subset \mcl F(Q) \setminus \{f_\infty\}$ is \emph{$\theta$-connected} if any two quadrilaterals $q , q' \in F$ are the same color and can be joined by a $\theta$-adjacent path in $F$, i.e.\ a finite string of quadrilaterals $q_0,\dots ,q_n \in F$ such that $q_0 = q$, $q_n = q'$, and $q_i$ is $\theta$-adjacent to $q_{i-1}$ for each $i\in [1,n]_{\BB Z}$.  A \emph{white (resp.\ black) cluster} is a subgraph $S$ of $Q$ such that the face set $\mcl F(S)$ is a white (resp.\ black) $\theta$-connected component of $F$ and the vertex and edge sets of $F$ consist, respectively, of all vertices and edges of quadrilaterals in $\mcl F(S)$. 

A path $\rng\lambda : [a,b]_{\BB Z} \rta \mcl E(Q)$ is an \emph{interface path} if the following is true. If we orient the edges $\rng\lambda(i)$ for $i\in [a,b]_{\BB Z}$ in such a way that the terminal endpoint of $\rng\lambda(i)$ is the same as the initial endpoint of $\rng\lambda(i+1)$ for each $i\in [a,b-1]_{\BB Z}$, then each edge $\rng\lambda(i)$ has a white quadrilateral to its left and a black quadrilateral to its right and moreover neither of the endpoints of $\rng\lambda(i)$ is the corner of a black quadrilateral which lies to the left of $\rng\lambda$.  Equivalently, $\rng\lambda$ traces a segment of the boundary of some white cluster of $Q$ in the counterclockwise direction. 

It is immediate from the above definition that an interface path either has no repeated edges (it may, however, have repeated vertices) or is a sub-path of a periodic path each of whose periods have no repeated edges.  Furthermore, two distinct interface paths cannot cross (although they can share a vertex), and two interface paths which share an edge must be sub-paths of some common interface path. 
 
For each edge $e$ of $Q$ which is incident to both a white quadrilateral and a black quadrilateral, there is a unique infinite interface path $\rng\lambda : \BB Z \rta \mcl E(Q)$ satisfying $\rng\lambda(0) = e$, which traces the boundary of the white cluster containing the quadrilateral to the left of $e$ in the counterclockwise direction. This path is simple if this cluster is infinite or periodic if it is finite. 

Let~$\BB v$ (resp.\ $\BB v_*$) be the terminal endpoint of~$\BB e$ (resp.\ the initial endpoint of the target edge $\BB e_*$), so that~$\BB v$ and~$\BB v_*$ are the unique vertices of~$\bdy Q$ which are incident to both black and white external quadrilaterals.  Then there is a distinguished interface path $\rng\lambda_* : [1, N]_{\BB Z} \rta \mcl E(Q)$ in $Q^{\op{ext}} $ (notation as in~\eqref{eqn-external-quad}) from $\BB v$ to $\BB v_*$, namely the path which traces the segment of the boundary of the white cluster containing all of the white external faces from $\BB v$ to $\BB v_*$ in the counterclockwise direction.  We extend $\rng\lambda_*$ from $[1,N]_{\BB Z}$ to $[0,N+1]_{\BB Z}$ by declaring that $\rng\lambda_*(0)$ (resp.\ $\rng\lambda_*(N+1)$) is the external edge of $Q$ which is incident to $\BB v$ (resp.\ $\BB v_*$) and to a white external quadrilateral. 

The aforementioned interface path $\rng\lambda_*$ is \emph{not} the same as the percolation exploration path $\lambda$ produced via peeling. However, it is related to the percolation peeling clusters $\{\dot Q_j\}_{j\in\BB N_0}$ in the following manner. See Figure~\ref{fig-perc-peeling} for an illustration.

\begin{lem} \label{lem-peel-interface}
For $j\in\BB N_0$, let $\dot v_j $ be the right endpoint of the peeled edge $\dot e_j$. 
For each $j \in \BB N_0$, there exists a unique $s_j \in \BB N_0$ such that $\dot v_{j+1}$ is the terminal endpoint of $\rng\lambda_*(s_j)$ and $\rng\lambda_*([0,s_j]_{\BB Z})$ is contained in the peeling cluster $\dot Q_j$. 
Furthermore, $s_j \leq s_{j'}$ whenever $j\leq j'$. 
\end{lem}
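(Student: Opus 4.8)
The plan is to argue by induction on $j$, keeping track at each stage not just of the vertex $\dot v_{j+1}$ but of the entire portion of the relevant white cluster that has been revealed. Write $\mcl C$ for the white cluster of $Q^{\op{ext}}$ containing all of the white external quadrilaterals, so that (by the definition in Section~\ref{sec-interface}) $\rng\lambda_*$ traces a segment of $\bdy\mcl C$ counterclockwise. The inductive hypothesis I would maintain is: for each $j\in\BB N_0$ with $\ol Q_j\not=\emptyset$, the sub-path $\rng\lambda_*([0,s_j]_{\BB Z})$ is exactly the arc of $\bdy\mcl C$ that lies inside $\dot Q_j$ and is connected to $\BB v$ within $\dot Q_j$, and the terminal endpoint of $\rng\lambda_*(s_j)$ is $\dot v_{j+1}$, the unique transition vertex of $\bdy\ol Q_j$ between its white and black arcs (whose existence is part of the invariant established when the percolation peeling process is constructed in Section~\ref{sec-perc-peeling}). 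Given this, uniqueness of $s_j$ follows from the facts recalled in Section~\ref{sec-interface} that an interface path traverses no edge twice and that $\bdy\mcl C\cap\dot Q_j$ is a single initial arc of $\rng\lambda_*$; and monotonicity $s_j\le s_{j'}$ for $j\le j'$ is immediate once we know $\rng\lambda_*([0,s_j]_{\BB Z})=\bdy\mcl C\cap\dot Q_j$ and that $\dot Q_j$ is increasing in $j$.

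The base case $j=0$ is immediate: $\dot e_1=\BB e$, $\dot v_1=\BB v$ is the terminal endpoint of $\rng\lambda_*(0)$ (the white external edge at $\BB v$), so $s_0=0$; the degenerate case $\ol Q_{j-1}=\emptyset$, in which $\dot e_j=\dot e_{j+1}=\BB e_*$ and $\dot Q_j=Q$, is also handled directly. For the inductive step, assume the hypothesis for $j-1$ and that $\ol Q_{j-1}\not=\emptyset$, and peel $\frk f_j:=\frk f(\ol Q_{j-1},\dot e_j)$, revealing its color $\theta_j$. The local picture to exploit is that $\dot e_j$ separates, inside $\ol Q_{j-1}$, a white external-or-explored quadrilateral (which belongs to $\mcl C$) from $\frk f_j$, while $\dot v_j$ is incident both to that white quadrilateral and to the black external-or-explored quadrilateral on the other side, so $\dot v_j\in\bdy\mcl C$, consistent with $\dot v_j$ being the terminal endpoint of $\rng\lambda_*(s_{j-1})$.

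I would then split into the cases $\theta_j=\mathsf{white}$ and $\theta_j=\mathsf{black}$, and within each into the finitely many sub-cases for the configuration of $\frk f_j$ recorded by the peeling indicator $\frk P(\ol Q_{j-1},\dot e_j)$ (Figure~\ref{fig-peeling-cases}). If $\theta_j=\mathsf{white}$, then $\frk f_j$ shares the edge $\dot e_j$ with $\mcl C$ and hence is itself part of $\mcl C$, so $\dot e_j$ leaves $\bdy\mcl C$ and $\rng\lambda_*$ continues from $\dot v_j$ around $\bdy\frk f_j$, together — in the sub-cases where $\frk f_j$ disconnects one or two regions, i.e.\ $\frk P(\ol Q_{j-1},\dot e_j)$ has more than one finite entry — with an excursion around the portion of $\mcl C$ trapped in the components comprising $\frk F_{\BB e_*}(\ol Q_{j-1},\dot e_j)$. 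If $\theta_j=\mathsf{black}$, then $\dot e_j$ is itself an interface edge (with $\mcl C$ on one side and the black quadrilateral $\frk f_j$ on the other), and one uses the vertex-adjacency rule for black quadrilaterals to see how $\bdy\mcl C$ turns around $\frk f_j$ and/or threads through $\frk F_{\BB e_*}$. In every case one checks, from the explicit description of the configurations of $\frk f_j$, that the newly traced portion of $\rng\lambda_*$ lies entirely in $\frk f_j\cup\frk F_{\BB e_*}(\ol Q_{j-1},\dot e_j)=\dot Q_j\setminus\dot Q_{j-1}$ and terminates precisely at the new transition vertex $\dot v_{j+1}$; this gives $s_j\ge s_{j-1}$ with the required properties and closes the induction.

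The main obstacle is exactly the case analysis in the inductive step, and in particular the sub-cases in which $\frk f_j$ disconnects regions: one must verify that the part of $\mcl C$ trapped in $\frk F_{\BB e_*}(\ol Q_{j-1},\dot e_j)$ is traced by $\rng\lambda_*$ as a single excursion that enters and leaves at vertices of $\frk f_j$ and never leaves $\frk F_{\BB e_*}$ (so that it is entirely absorbed into $\dot Q_j$ at this step and nothing is left over), and that $\rng\lambda_*$ re-emerges at the correct frontier vertex $\dot v_{j+1}$. This requires carefully tracking which side of each relevant edge lies in $\mcl C$, and it is here that the asymmetry between the edge-adjacency rule for white quadrilaterals and the vertex-adjacency rule for black quadrilaterals — the source of the critical value $3/4$ — must be used to identify $\bdy\mcl C$ correctly; the white/black boundary-condition invariant for $\bdy\ol Q_j$ maintained in Section~\ref{sec-perc-peeling} is what guarantees $\dot v_{j+1}$ is well-defined. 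The remaining points (uniqueness of $s_j$, monotonicity, and the degenerate cases) are routine once the description $\rng\lambda_*([0,s_j]_{\BB Z})=\bdy\mcl C\cap\dot Q_j$ is in hand.
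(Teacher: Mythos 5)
Your proposal takes a genuinely different route from the paper's: you set up an induction on $j$ with a step-by-step case analysis on the color $\theta_j$ and the peeling indicator $\frk P(\ol Q_{j-1},\dot e_j)$, whereas the paper gives a short direct argument for each fixed $j$. The paper considers the extended cluster $\dot Q_j^{\op{ext}}$ (obtained by adjoining the external quadrilaterals to $\dot Q_j$), which is a finite quadrangulation with simple boundary, and makes a single structural observation: because the percolation peeling process always peels at a white edge, every quadrilateral of $\dot Q_j^{\op{ext}}$ incident to the white arc of $\bdy \dot Q_j^{\op{ext}}$ from $\BB v$ to $\dot v_{j+1}$ lies in the same white cluster (and similarly for the black arc). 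This gives, all at once and with no induction, a distinguished interface path $\rng\lambda$ from $\BB v$ to $\dot v_{j+1}$ inside $\dot Q_j^{\op{ext}}$, namely the right outer boundary of the white cluster containing the white external faces. Since that cluster equals $\mcl C\cap\dot Q_j^{\op{ext}}$, one gets $\rng\lambda=\rng\lambda_*|_{[0,s_j]_{\BB Z}}$; uniqueness is because $\rng\lambda_*$ is simple, and monotonicity is because the clusters $\dot Q_j$ are increasing. That is the whole proof.

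There is a genuine gap in your proposal as written: the inductive step is outlined but not verified. You say that ``in every case one checks'' that the newly traced piece of $\rng\lambda_*$ lies in $\frk f_j\cup\frk F_{\BB e_*}(\ol Q_{j-1},\dot e_j)$ and terminates at the new transition vertex, and you then identify exactly this (in particular the sub-cases where $\frk f_j$ disconnects regions, with parts of $\mcl C$ trapped in $\frk F_{\BB e_*}$) as the main obstacle; but that is precisely the content of the lemma, so deferring it leaves the proof unfinished. The subtle point your plan must still rule out is that $\rng\lambda_*$ might exit $\dot Q_j$ through a white edge of $\bdy\ol Q_j$, wander through $\ol Q_j$, and re-enter $\dot Q_j$ before reaching $\dot v_{j+1}$; the invariant you propose (that $\rng\lambda_*([0,s_j]_{\BB Z})$ is exactly the arc of $\bdy\mcl C$ in $\dot Q_j$ reachable from $\BB v$) rules this out, but you would need to establish it, not assume it, in each sub-case. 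The observation the paper relies on — that the peeling process only ever peels white edges, which chains together the white quadrilaterals along the explored frontier — is what makes this manageable, and your sketch does not invoke it; instead you lean on the boundary-condition invariant for $\bdy\ol Q_j$, which guarantees $\dot v_{j+1}$ is well-defined but by itself does not control the connectivity of the revealed white cluster. If you incorporate that observation, you could likely complete the case analysis, but at that point the direct argument of the paper is shorter and avoids the induction entirely.
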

\begin{proof}
Let $\dot Q_j^{\op{ext}}$ be the quadrangulation obtained by adjoining all of the external quadrilaterals of $Q$ to the cluster $\dot Q_j$. 
Then $\dot Q_j^{\op{ext}}$ is a quadrangulation with simple boundary. Furthermore, every quadrilateral of $\dot Q_j^{\op{ext}}$ incident to the clockwise (resp.\ counterclockwise) arc of $\bdy \dot Q_j^{\op{ext}}$ from $\BB v$ to $\dot v_{j+1}$ belongs to the same white (resp.\ black) cluster of $\dot Q_j^{\op{ext}}$ (this follows since we always peel at a white edge). 
Hence there is a distinguished interface path $\rng\lambda$ in $\dot Q_j^{\op{ext}}$ from $\BB v$ to $\dot v_{j+1}$, namely the right outer boundary of the white cluster of $\dot Q_j^{\op{ext}}$ containing all of the white external faces.  
Since this white cluster is the intersection with $\dot Q_j^{\op{ext}}$ of the white cluster of $Q^{\op{ext}}$ containing the white external faces, we infer that $\rng\lambda = \rng\lambda_* |_{[0,s_j]_{\BB Z}}$ for some $s_j \in \BB N_0$. This choice of $s_j$ is unique since the interface path $\rng\lambda_*$ is a simple path. The monotonicity statement for the $s_j$'s follows since $\rng\lambda_*([0,s_j]_{\BB Z}) \subset \dot Q_j$ and the clusters $\dot Q_j$ are increasing.
\end{proof}

\begin{remark}
Suppose we are in the setting of Theorem~\ref{thm-perc-conv}, so that for $n\in\BB N$, $Q^n$ is a free Boltzmann quadrangulation with simple boundary of perimeter $\el_L^n+\el_R^n$ and $d^n$ is its rescaled graph metric. Also let $\rng\lambda_*^n$ be the interface path from Lemma~\ref{lem-peel-interface}, with its domain extended to $\BB N_0$ in such a way that it is constant after its terminal time and then to $[0,\infty)$ by linear interpolation. Once Theorem~\ref{thm-perc-conv} is established, it follows from Lemma~\ref{lem-peel-interface} that the $d^n$-Hausdorff distance between $\lambda_*^n([0,\infty))$ and the range $\lambda^n([0,\infty))$ of the percolation exploration path tends to zero in law as $n\rta\infty$: indeed, this comes from the fact that $\lambda^n([0,\infty))$ has only finitely many complementary connected components of $d^n$-diameter at least $\ep$ for each $\ep > 0$, and the law of the time length of the segment of $\rng\lambda_*^n$ contained in each such connected component is typically of constant order.
We do not prove in the present paper that there is a constant $\rng{\tcon} > 0$ such that $\rng\lambda_*^n(\rng{\tcon}  n^{3/4} \cdot)$ converges uniformly to the same limiting path as $\eta^n = \lambda^n(\tcon n^{3/4} \cdot)$, but we expect this can be accomplished with some additional technical work. 
Note that in the case of site percolation on a triangulation the analogs of the paths $\lambda^n$ and $\rng\lambda_*^n$ coincide; see Section~\ref{sec-triangulation}.
\end{remark}

\section{Boundary length processes}
\label{sec-bdy-process}

Let $\el_L , \el_R \in \BB N\cup \{\infty\}$ be such that $\el_L$ and $\el_R$ are either both odd, both even, or both $\infty$. Let $(Q,\BB e, \theta )$ be a free Boltzmann quadrangulation with simple boundary of perimeter $\el_L+\el_R$ decorated by a face percolation configuration and define the clusters $\{\dot Q_j\}_{j\in \BB N_0}$ and the unexplored quadrangulations $\{\ol Q_j\}_{j\in\BB N_0}$ of the percolation peeling process of $(Q,\BB e,\theta)$ with $\el_L$-white/$\el_R$-black boundary conditions as in Section~\ref{sec-perc-peeling}. In this section we will study the boundary length processes for the percolation peeling process, which we now define. Unlike in the case of general peeling processes (see, e.g.,~\cite[Definition 3.2]{gwynne-miller-simple-quad}) it is natural to consider left and right boundary length processes since the boundary of the percolation peeling clusters have a natural notion of left and right sides.

\begin{defn} \label{def-bdy-process}
The \emph{left (resp.\ right) exposed boundary length process} $X^L_{j}$ (resp.\ $X^R_{j}$) at time $j\in\BB N_0$ is equal to the number of edges of $\bdy \ol Q_j \cap \bdy \dot Q_j$ which are adjacent to a white (resp.\ black) quadrilateral in $\dot Q_j$, equivalently the number of edges of $\bdy \ol Q_j \cap \bdy \dot Q_j$ to the left (resp.\ to the right) of $\dot e_{j+1}$, including (resp.\ not including) $\dot e_{j+1}$ itself. The \emph{left (resp.\ right) covered boundary length process} $Y^L_{j}$ (resp.\ $Y^R_{j}$) at time $j\in\BB N_0$ is the number of edges of $\bdy Q  \setminus \bdy \ol Q_j$ which are adjacent to a white (resp.\ black) quadrilateral lying in the external face of $Q$, equivalently the number of edges of $\dot Q_j\cap \bdy Q$ lying to weakly to the left (resp.\ strictly to the right) of $\BB e$. 
The \emph{left, right, and total net boundary length processes} are defined, respectively, by  
\eqbn
W_j^L := X_j^L - Y_j^L , \quad  W^R_j := X^R_j - Y^R_j ,\quad \op{and} \quad W_j := (W_j^L , W_j^R) ,\quad \forall j\in\BB N_0 .
\eqen 
We include an additional superscript $\infty$ in the notation for the above objects when we wish to discuss only the UIHPQ$_{\op{S}}$ case, when $\el_L = \el_R = \infty$; and an additional superscript $n$ when we take $\el_L$ and $\el_R$ to depend on~$n$. 
\end{defn}

See Figure~\ref{fig-bdy-process} for an illustration of Definition~\ref{def-bdy-process}.
Note that $W_j = (-\el_L , -\el_R)$ for each $j$ after the terminal time $\mcl J$ defined in~\eqref{eqn-terminal-time}.

\begin{figure}[ht!]
 \begin{center}
\includegraphics[scale=1]{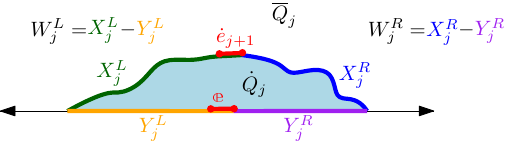} 
\caption[Definition of the left/right boundary length proceses for the percolation exploration]{\label{fig-bdy-process} Illustration of Definition~\ref{def-bdy-process}. Individual edges and vertices are not shown, except for the edges $\BB e$ and $\dot e_{j+1}$ (red). We emphasize that the definitions are not symmetric under swapping the roles of $L$ and $R$ since the two red edges are counted as part of the left boundary (this is related to the fact that the critical probability is $3/4$, not $1/2$).} 
\end{center}
\end{figure}

The process $W = (W^L , W^R)$ is the discrete analog of the left/right boundary length process for chordal $\SLE_6$ on an independent $\sqrt{8/3}$-quantum surface (Section~\ref{sec-lqg-bdy-process}), so it is natural to expect $W$ to converge in the scaling limit to this latter process both in the finite-volume and infinite-volume cases. Most of this section will be devoted to proving that this is indeed the case.

\begin{defn} \label{def-bdy-process-rescale}
For $n\in\BB N$, we define the re-normalized (net) boundary length processes
\eqb \label{eqn-bdy-process-rescale}
L_t^n :=  \bcon^{-1} n^{-1/2} W^L_{\lfloor \tcon n^{3/4} t \rfloor} , \quad 
R_t^n :=  \bcon^{-1} n^{-1/2} W^R_{\lfloor \tcon n^{3/4} t \rfloor}, \quad 
\op{and} \quad Z_t^n:= (L_t^n ,R_t^n)    
\eqe
where here $\bcon = 2^{3/2}/3$ and $\tcon$ are the normalizing constants from Section~\ref{sec-results}.
We include an additional superscript $\infty$ in the notation for the above objects when we wish to discuss only the UIHPQ$_{\op{S}}$ case.
\end{defn} 

The reason for the factor of $\bcon^{-1}$ in Definition~\ref{def-bdy-process-rescale} is that in the scaling limit results for quadrangulations with simple boundary~\cite{gwynne-miller-uihpq,gwynne-miller-simple-quad} the boundary path is pre-composed with $t\mapsto \bcon  n^{1/2} t$ so one edge along the boundary of such a quadrangulation corresponds to approximately $\bcon^{-1} n^{-1/2}$ units of boundary length in the scaling limit. 
The reason for the factor of $\tcon$ is so that the scaling of the L\'evy measure for the scaling limits of $L^{\infty,n}$ and $R^{\infty,n}$ is the same as for the $3/2$-stable processes in Section~\ref{sec-lqg-bdy-process}.

The rest of this section is devoted to proving the following two propositions, which give the scaling limits of $Z^n$ in the infinite-volume and finite-volume cases.  Along the way, we will also prove several estimates for the percolation peeling process which will be needed later. 

Recall from Section~\ref{sec-lqg-bdy-process} that the left/right boundary length process for chordal $\SLE_6$ on the Brownian half-plane is a pair of independent totally asymmetric $3/2$-stable processes with no upward jumps. As one expects, this process is the scaling limit of the discrete boundary length process in the UIHPQ$_{\op{S}}$ case.

\begin{prop} \label{prop-stable-conv}
In the UIHPQ$_{\op{S}}$ case (i.e., $\el_L = \el_R = \infty$) it holds for a suitable choice of $\tcon > 0$ that the re-normalized boundary length processes $Z^{\infty,n} = (L^{\infty,n}, R^{\infty,n})$ of Definition~\ref{def-bdy-process-rescale} converge in law as $n\rta\infty$ with respect to the local Skorokhod topology (Section~\ref{sec-skorokhod}) to a pair $Z^\infty = (L^\infty ,R^\infty)$ of independent totally asymmetric $3/2$-stable processes with no positive jumps (scaled so that~\eqref{eqn-levy-process-scaling} holds). 
\end{prop}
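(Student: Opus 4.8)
The plan is to express the re-normalized boundary length process $Z^{\infty,n}$ as a functional of a sequence of i.i.d.\ increments coming from the peeling steps of the UIHPQ$_{\op{S}}$, and then invoke a heavy-tailed (functional) central limit theorem. Here is the structure. First I would record the increment structure: by the Markov property of peeling (Section~\ref{sec-peeling-process}) the percolation peeling process on $Q^\infty$ consists, at each step $j$, of peeling a fresh free Boltzmann edge, revealing an independent Bernoulli color $\theta_j$, and reading off the peeling indicator $\frk P(\ol Q_{j-1}^\infty, \dot e_j^\infty)$, which has the law described in~\eqref{eqn-uihpq-peel-prob}. The net boundary length increments $W^L_j - W^L_{j-1}$ and $W^R_j - W^R_{j-1}$ are explicit deterministic functions of $(\theta_j, \frk P(\ol Q_{j-1}^\infty, \dot e_j^\infty))$ — they are built from the number of exposed edges $\op{Ex}_\infty$, which is always in $\{1,2,3\}$, and the number of left/right covered edges $\op{Co}^L_\infty$, $\op{Co}^R_\infty$, which have the heavy tails~\eqref{eqn-cover-tail}. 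Crucially, in the UIHPQ$_{\op{S}}$ case the unexplored region is \emph{always} a UIHPQ$_{\op{S}}$ (the perimeter stays infinite), so these increment pairs are genuinely i.i.d.\ across $j$, with no dependence on the history. Thus $W^\infty = (W^{L,\infty}, W^{R,\infty})$ is a two-dimensional random walk with i.i.d.\ steps.

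Second, I would identify the step law's tail behavior. By~\eqref{eqn-peel-mean} the mean increment is zero in each coordinate (the expected exposed minus covered is $0$), so the walk is centered. By~\eqref{eqn-cover-tail} the downward tail of each coordinate's step is $\sim \ccon k^{-5/2}$ (the bounded part $\op{Ex}_\infty \in \{1,2,3\}$ is negligible), which is exactly the regime of attraction to a totally asymmetric $3/2$-stable law with no upward jumps; the covered-edge contribution always \emph{decreases} the net boundary length, giving the one-sidedness. One also needs the two coordinates to decouple in the limit: whenever $\op{Co}^L_\infty$ is large it is because a big bubble was swallowed on the left, in which case the right covered count is typically of order $1$ (a large left jump and a large right jump at the same step have probability $\asymp k_1^{-5/2} k_2^{-5/2}$ by~\eqref{eqn-uihpq-peel-prob-asymp}, which is summable and contributes nothing to the joint stable limit). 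This asymptotic independence of large jumps is precisely what forces $L^\infty$ and $R^\infty$ to be independent in the limit. With these facts, the generalized CLT for random walks in the domain of attraction of a stable law (Skorokhod's functional version) gives that $n^{-1/2}\bcon^{-1} (W^{L,\infty}_{\lfloor c n^{3/2} t\rfloor}, W^{R,\infty}_{\lfloor c n^{3/2} t \rfloor})$ converges in the local Skorokhod topology to a pair of independent $3/2$-stable processes; the choice of the constant $\tcon$ (equivalently the constant $c$, absorbed so the number of peeling steps is $\tcon n^{3/4}$ to produce $\bcon n^{1/2}$ units of boundary length) is made to match the scaling constant $\lcon$ in~\eqref{eqn-levy-process-scaling}. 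This is the content of the promised choice in~\eqref{eqn-tcon-choice}. Note the exponent: $3/2$-stable scaling means $\lfloor m^{2/3}\rfloor$ steps give order-$m$ fluctuations, and here $m \asymp n^{1/2}$, so one needs $\asymp n^{3/4}$ steps, consistent with the time rescaling by $\tcon n^{3/4}$.

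The main obstacle, and the only genuinely nonroutine point, is verifying the joint (two-dimensional) convergence with \emph{independent} stable limits, i.e.\ controlling the correlation between the left and right increments rather than just the marginals. One must show that the joint step law lies in the domain of attraction of a product of one-sided $3/2$-stable laws, which amounts to checking (i) the mean-zero centering coordinatewise, (ii) the exact power-law downward tail $\asymp k^{-5/2}$ for each coordinate with the right constant, and (iii) that the vector-valued Lévy measure of the limit is supported on the two coordinate half-axes — i.e.\ simultaneous large jumps in both coordinates are negligible. Point (iii) is where~\eqref{eqn-uihpq-peel-prob-asymp} is used: the probability of swallowing bubbles of sizes $\asymp k_1$ on the left and $\asymp k_2$ on the right simultaneously decays like $k_1^{-5/2} k_2^{-5/2}$, so $\sum_{k_1,k_2 \geq \ep n^{1/2}} k_1^{-5/2} k_2^{-5/2} \to 0$, killing any off-diagonal mass in the limiting Lévy measure. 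Once (i)--(iii) are in hand, the multivariate generalized CLT (e.g.\ in the form of convergence of the associated point processes of jumps, or directly via the characteristic function / Laplace exponent computation using~\eqref{eqn-net-bdy-mean}) yields Proposition~\ref{prop-stable-conv}. The finite-volume analog (Proposition~\ref{prop-fb-conv}, stated just after) is then handled separately using Theorem~\ref{thm-bdy-process-law} together with the Radon--Nikodym comparison of free Boltzmann peeling to UIHPQ$_{\op{S}}$ peeling, but that is outside the scope of this statement.
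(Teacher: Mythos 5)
Your proposal is correct in its essential mathematical content, but it routes the proof differently from the paper.

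The paper's argument proceeds in two stages: first it applies the one-dimensional heavy-tailed functional CLT to each coordinate $L^{\infty,n}$ and $R^{\infty,n}$ separately, then it uses Prokhorov to extract a subsequential joint limit $(L^\infty, R^\infty)$ (a priori only some coupling of two $3/2$-stable processes) and proves independence by showing the limiting pair has no simultaneous jump times. That last step is the content of Lemma~\ref{lem-simultaneous-jump}, which bounds the probability of large left and right downward jumps within a common time window of length $m$. Crucially, that lemma splits into the case $j_L = j_R$ (a single peeling step with a big bubble on each side, handled via $\frk P = (k_1, \infty, k_2)$ and~\eqref{eqn-uihpq-peel-prob-asymp}) and the case $j_L \neq j_R$ (two distinct steps, handled by the strong Markov property and a union bound), because a simultaneous jump of the subsequential limit could arise from two distinct pre-limit steps that become arbitrarily close after time-rescaling.

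You instead appeal directly to a multivariate heavy-tailed CLT: verify multivariate regular variation of the step distribution with the limiting Lévy measure supported on the two coordinate half-axes, and let the theory deliver joint convergence to a pair of independent one-sided $3/2$-stable processes in one shot. Your condition (iii) — that $n^{3/4}\,\BB P[W^L_1 < -\ep_1 n^{1/2},\, W^R_1 < -\ep_2 n^{1/2}] \to 0$ — is precisely the statement that the limiting Lévy measure has no off-axis mass, and your computation using~\eqref{eqn-uihpq-peel-prob-asymp} is the same estimate the paper uses in the $j_L = j_R$ case of Lemma~\ref{lem-simultaneous-jump}. The advantage of your route is that the $j_L \neq j_R$ analysis disappears: the multivariate CLT builds the two-dimensional limit directly from the single-step Lévy measure, so there is no subsequential coupling whose jump structure needs to be ruled out by hand. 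The cost is that you need a genuine multivariate heavy-tailed functional CLT reference (Rva\v{c}eva-type, or the point-process/regular-variation framework) rather than the one-dimensional statement the paper cites, and your invocation of it at the end is left somewhat informal. Both approaches are sound; yours is conceptually tighter, the paper's is more self-contained. A couple of small slips that don't affect the argument: the walk time-rescaling should be $\lfloor \tcon n^{3/4} t\rfloor$ rather than $n^{3/2}$ as you write at one point (you correct yourself in the same sentence), and the finite-volume analogue is Proposition~\ref{prop-stable-conv-finite}, not a label of the form you wrote.
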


We now consider the finite-volume case. Suppose we are in the setting of Theorem~\ref{thm-perc-conv}. In particular, $\frk l_L , \frk l_R > 0$; $\{(\el_L^n , \el_R^n)\}_{n\in\BB N}$ is a sequence of pairs of positive integers such that $\el_L^n + \el_R^n$ is always even, $\bcon^{-1} n^{-1/2} \el_L^n \rta \frk l_L$, and $\bcon^{-1} n^{-1/2} \el_R^n \rta \frk l_R$ (where $\bcon = 2^{3/2} /3$ is the usual boundary length scaling constant); and $(Q^n, \BB e^n ,\theta^n)$ is a free Boltzmann quadrangulation with simple boundary of perimeter $\el_L^n + \el_R^n$ equipped with a critical face percolation configuration. 

Let $Z = (L,R)$ be the left/right boundary length process of a chordal $\SLE_6$ between the two marked points of a doubly marked Brownian disk with left/right boundary lengths $\frk l_L$ and $\frk l_R$, respectively, parameterized by quantum natural time. The law of the left/right boundary length process $Z = (L,R)$ for SLE$_6$ on the Brownian disk is described in~Theorem~\ref{thm-bdy-process-law}: it behaves locally like a pair of independent $3/2$-stable processes, but it is conditioned so that $L$ hits $-\frk l_L$ before $(-\infty,-\frk l_L)$, $R$ hits $-\frk l_R$ before $(-\infty,-\frk l_R)$, and these two hitting events occur at the same time. The process $Z$ remains constant after this common hitting time.

\begin{prop}
\label{prop-stable-conv-finite} 
For $n\in\BB N$, let $Z^n = (L^n, R^n)$ be the re-normalized left/right boundary length process for the percolation peeling process on $(Q^n, \BB e^n ,\theta^n)$ with $\el_L^n$-white/$\el_R^n$-black boundary conditions, as in Definition~\ref{def-bdy-process-rescale} with $Q=Q^n$ and with $\tcon$ as in Proposition~\ref{prop-stable-conv}. Then with $Z = (L,R)$ the process described just above, we have $Z^n \rta Z$ in law with respect to the Skorokhod topology. 
\end{prop}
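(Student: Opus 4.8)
The plan is to deduce the finite-volume convergence from the infinite-volume convergence (Proposition~\ref{prop-stable-conv}) together with the Radon-Nikodym description of the target law in Theorem~\ref{thm-bdy-process-law}\ref{item-bead-process-nat}. The key structural fact is that, on the event that the percolation peeling process on $(Q^n,\BB e^n,\theta^n)$ has not yet terminated, the explored region looks like the explored region for the percolation peeling process on the UIHPQ$_{\op{S}}$, reweighted by the (relative) number of ways to fill in the unexplored region. Concretely, if $\ol Q_j^n$ is the unexplored quadrangulation at step $j$, then conditionally on $\mcl F_j$ (the peeling filtration of~\eqref{eqn-peel-filtration}) it is a free Boltzmann quadrangulation with simple boundary of perimeter determined by the net boundary length process, namely $\el_L^n + \el_R^n + W_j^{L,n} + W_j^{R,n}$; whereas in the UIHPQ$_{\op{S}}$ case the analogous object is the UIHPQ$_{\op{S}}$ itself. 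Comparing the free Boltzmann partition functions via the asymptotics~\eqref{eqn-stirling-asymp} and the peeling probabilities~\eqref{eqn-uihpq-peel-prob}, the Radon-Nikodym derivative of the law of $(\dot Q_1^n,\dots,\dot Q_j^n)$ with respect to the law of the first $j$ steps of the UIHPQ$_{\op{S}}$ percolation peeling process, restricted to the event that the process has not terminated by step $j$, is a ratio of partition functions $\frk Z(\cdot)/\frk Z(\cdot)$ which, after rescaling by $\tcon n^{3/4}$ in time and $\bcon^{-1} n^{-1/2}$ in space, converges to the factor
\[
\left( \frac{L_t^\infty + R_t^\infty}{\frk l_L + \frk l_R} + 1 \right)^{-5/2} \BB 1_{(t < \sigma_0^\infty)}
\]
appearing in Theorem~\ref{thm-bdy-process-law}\ref{item-bead-process-nat}. (This is precisely the Radon-Nikodym derivative estimate of the type recorded as Lemma~\ref{lem-perc-rn}, which the excerpt promises.) I would make this comparison precise by writing the discrete Radon-Nikodym derivative as a product over the peeling steps and using~\eqref{eqn-stirling-asymp} to identify that this product telescopes into a ratio $\frk Z(\text{perimeter of }\ol Q_j^n)$ over a normalizing constant, and then invoking~\eqref{eqn-simple-quad-area-coeff} and the $5/2$ power in Stirling to get the stated power.

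With the discrete Radon-Nikodym derivative in hand, the argument is: first, by Proposition~\ref{prop-stable-conv}, the UIHPQ$_{\op{S}}$ boundary length process $Z^{\infty,n}$ converges in the local Skorokhod topology to $Z^\infty$, a pair of independent totally asymmetric $3/2$-stable processes; second, the discrete Radon-Nikodym derivative is a continuous (bounded, since the denominator in the power stays bounded below by $1$ on the relevant event) functional of the path $Z^{\infty,n}|_{[0,t]}$ together with $\sigma_0^{\infty,n}$; third, $\sigma_0^{\infty,n} \to \sigma_0^\infty$ in law jointly with $Z^{\infty,n} \to Z^\infty$, using that $\sigma_0^\infty$ is a.s.\ a continuity point of the relevant hitting-time functional for the limiting stable process (the limit $Z^\infty$ a.s.\ undershoots strictly at the hitting time of $(-\frk l_L,-\frk l_R)$ in each coordinate, so the hitting time is a.s.\ continuous in the Skorokhod topology at $Z^\infty$ — here I would cite the fact that totally asymmetric $3/2$-stable processes with no upward jumps a.s.\ hit any level by a jump); and fourth, combining these with the portmanteau theorem / Skorokhod representation and the fact that the limiting Radon-Nikodym derivative has expectation $1$ (which is part of the content of Theorem~\ref{thm-bdy-process-law}\ref{item-bead-process-nat}), we conclude that $Z^n|_{[0,t]}$ restricted to $\{t < \sigma_0^n\}$ converges in law to $Z|_{[0,t]}$ restricted to $\{t < \sigma_0\}$. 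Finally, since after its terminal time $W^n$ is frozen at $(-\el_L^n,-\el_R^n)$ and the rescaled version of this equals $(-\frk l_L + o(1), -\frk l_R + o(1))$, and since $Z$ is likewise frozen at $(-\frk l_L,-\frk l_R)$ after $\sigma_0$, a routine argument (letting $t \to \infty$ and using tightness of $\sigma_0^n$, which follows from the convergence of the absolutely continuous restrictions and a uniform integrability/exhaustion argument, plus the tail bound~\eqref{eqn-fb-area-tail} to rule out mass escaping to $t = \infty$) upgrades the convergence of the killed processes to convergence of the full processes $Z^n \to Z$ in the Skorokhod topology on $\mcl D^2$.

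The main obstacle I anticipate is the justification of the convergence of the discrete Radon-Nikodym derivatives and, in tandem, the tightness of $\sigma_0^n$ — i.e.\ showing that the free Boltzmann percolation peeling process terminates in time $O_{\BB P}(n^{3/4})$ and does not, with non-negligible probability, have its unexplored quadrangulation's perimeter blow up. This requires two ingredients: (i) uniform control of the discrete ratio of partition functions so that the $o_n(1)$ errors in~\eqref{eqn-stirling-asymp} are genuinely negligible even when the perimeter of $\ol Q_j^n$ is of order $n^{1/2}$ (the regime of interest), which needs the $o_\el(1)$ in~\eqref{eqn-stirling-asymp} to be uniform — as stated in the excerpt it is — and a little care near $\el$ of constant order where the asymptotics are not yet accurate, handled by a separate crude bound; and (ii) showing the discrete analogue of the a.s.\ statement that $Z^\infty$ reaches $(-\frk l_L,-\frk l_R)$ with $L^\infty$ and $R^\infty$ hitting their targets simultaneously (the "endpoint continuity" of Theorem~\ref{thm-bdy-process-law}\ref{item-bead-process-time}) is compatible with the discrete dynamics, so that the killed-process convergence indeed captures all the mass. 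I expect (i) to be the genuinely technical heart, essentially amounting to a careful version of Lemma~\ref{lem-perc-rn}; (ii) should follow from soft properties of the stable process once (i) is established, via the continuous-mapping argument sketched above. Everything else — the identification of the limiting power as $-5/2$, the continuity of the Radon-Nikodym functional, the freezing argument after the terminal time — is routine given Proposition~\ref{prop-stable-conv} and Theorem~\ref{thm-bdy-process-law}.
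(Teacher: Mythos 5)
Your high-level strategy---comparing to the UIHPQ$_{\op{S}}$ boundary length process via a Radon--Nikodym derivative, identifying the limit of that derivative with the one in Theorem~\ref{thm-bdy-process-law}\ref{item-bead-process-nat}, and then invoking Proposition~\ref{prop-stable-conv}---is the same as the paper's. But two of the steps you describe as ``routine'' are in fact the heart of the proof, and as written your argument for them does not hold up.

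First, you assert that the discrete Radon--Nikodym derivative is a \emph{bounded} functional of the path ``since the denominator in the power stays bounded below by~$1$ on the relevant event.'' This is false. The quantity inside the power is $\frac{W_j^{L}+W_j^{R}}{\el_L+\el_R}+1$, which on the event that the process has not yet terminated is positive but can be arbitrarily close to~$0$: indeed $W^L_j+W^R_j$ is precisely the (centered) perimeter of the unexplored quadrangulation, which shrinks toward $-(\el_L+\el_R)$ as the exploration approaches its terminal time. Consequently the Radon--Nikodym derivative of Lemma~\ref{lem-perc-rn} is \emph{unbounded}, and you cannot conclude by continuity/portmanteau alone; one needs some form of uniform integrability, and obtaining it is exactly the non-trivial step. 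The paper is explicit about this obstruction at the start of Section~\ref{sec-sk-tight}.

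Second, to freeze the process after the terminal time you propose proving tightness of $\sigma_0^n=\tcon^{-1}n^{-3/4}\mcl J^n$. The paper deliberately does \emph{not} prove this (and states so just after Lemma~\ref{lem-sk-tight}); instead it constructs a stopping time $I_{\alpha_0,\alpha_1}^n$ at which the unexplored quadrangulation's boundary is small but of order $n^{1/2}$ on \emph{both} sides of the tip, and shows (i) this time is $O_{\BB P}(n^{3/4})$ (Lemma~\ref{lem-short-bdy-time}, itself relying on the overshoot estimate Lemma~\ref{lem-no-close-jump} and the re-targeting argument Lemma~\ref{lem-uneven-length}), and (ii) by Lemma~\ref{lem-max-bdy-length} the boundary length process changes only a little thereafter. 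This is what controls both the unboundedness of the Radon--Nikodym derivative (since the perimeter at $I_{\alpha_0,\alpha_1}^n$ is bounded below by $\alpha_0 n^{1/2}$) and the freezing of $Z^n$ near the terminal time, \emph{without} needing $\sigma_0^n$ itself to be tight. Your proposal is missing this stopping-time construction, and in particular the overshoot lemma, which is the ingredient that prevents the exploration from jumping from a macroscopic unexplored region directly to a microscopic one. Once these are in place, the identification of the subsequential limit via the Radon--Nikodym formula at stopping times $\wt\sigma_r$ (Lemmas~\ref{lem-perc-limit-rn}--\ref{lem-perc-limit-rn-stopping}) goes through much as you sketch.
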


In Section~\ref{sec-bdy-process-def}, we point out some basic properties of the processes of Definition~\ref{def-bdy-process}. In Section~\ref{sec-uihpq-bdy-conv}, we prove Proposition~\ref{prop-stable-conv}. The proof is a straightforward application of the heavy-tailed central limit theorem plus a short argument to make sure that the two coordinates of the limiting process are indeed independent. 

We then turn our attention to the proof of Proposition~\ref{prop-stable-conv-finite}, which is more challenging. We will deduce Proposition~\ref{prop-stable-conv-finite} from Proposition~\ref{prop-stable-conv} and a local absolute continuity argument. In Section~\ref{sec-fb-bdy-estimate}, we state a general lemma which allows us to compare peeling processes on free Boltzmann quadrangulations and on the UIHPQ$_{\op{S}}$ (Lemma~\ref{lem-perc-rn}) and prove some general estimates for peeling processes on free Boltzmann quadrangulations which rule out various pathologies. 

In Section~\ref{sec-sk-tight}, we establish tightness of the law of the finite-volume boundary length process $Z^n$ with respect to the Skorokhod topology. To do this we need to analyze certain stopping times for this process in order to rule out pathological behavior of $Z^n$ near the terminal time. In Section~\ref{sec-stable-conv-finite} we conclude the proof of Proposition~\ref{prop-stable-conv} by identifying a subsequential scaling limit, using a comparison of the Radon-Nikodym derivatives appearing in Theorem~\ref{thm-bdy-process-law} and Lemma~\ref{lem-perc-rn}.  


\subsection{Basic properties of the boundary length processes}
\label{sec-bdy-process-def}

Suppose we are in the setting of Definition~\ref{def-bdy-process-rescale}, so that $\el_L  , \el_R \in \BB N$ with $\el_L + \el_R$ even or $\el_L = \el_R = \infty$ and we are considering the boundary length processes of the percolation peeling process on a free Boltzmann quadrangulation with $\el_L$-white/$\el_R$-black boundary conditions. 

For each $j \in \BB N_0$, the boundary length of the unexplored quadrangulation is given by
\eqb \label{eqn-unexplored-length}
\#\mcl E(\bdy \ol Q_j) = W_j^L + W_j^R + \el_L + \el_R.
\eqe 
One can almost recover the pairs $(X^L , Y^L)$ and $(X^R , Y^R)$ from the two-dimensional process $W$ in the following manner.  If the $j$th peeled quadrilateral $\frk f(\ol Q_{j-1} , \dot e_j)$ disconnects an edge in $\bdy Q \cap \bdy \ol Q_{j-1}$ adjacent to a white quadrilateral in the external face from the target edge $\BB e_*$ in $\ol Q_{j-1}$, then $\frk f(\ol Q_{j-1} , \dot e_j)$ also disconnects every edge in $\bdy \ol Q_{j-1} \cap \bdy \dot Q_{j-1}$ adjacent to a white quadrilateral of $\dot Q_{j-1}$ from $\BB e_*$ in $\ol Q_{j-1}$. If this is the case, then $X_j^L \leq 3$ and $W_j^L$ differs from the running minimum of $W^L$ up to time $j$ by at most~$3$ (the~$3$ comes from the exposed edges of $\frk f(\ol Q_{j-1} , \dot e_{j})$ in case this quadrilateral is white). Similar considerations apply to~$W^R$. Therefore,
\eqb \label{eqn-bdy-process-inf}
\min_{ i\in [0,j ]_{\BB Z}} W_i^L  \in \left[ - Y_j^L , - Y_j^L +  3 \right]_{\BB Z}     \quad \op{and} \quad   W_j^L - \min_{ i\in [0,j]_{\BB Z}} W^L_i \in \left[  X_j^L  -3 , X_j^L \right]_{\BB Z} ;
\eqe
and similarly with ``$R$'' in place of ``$L$''.

With $\op{Ex}_{\BB e_*}(\cdot, \cdot)$, $\op{Co}_{\BB e_*}^L(\cdot ,\cdot)$, $\op{Co}_{\BB e_*}^R(\cdot ,\cdot)$, and $\op{Co}_{\BB e_*}(\cdot ,\cdot)$ the number of exposed, left/right covered, and covered edges with respect to the target edge, respectively, as in Section~\ref{sec-general-peeling}, and $\theta_j$ the color indicator from Section~\ref{sec-perc-peeling},
\allb \label{eqn-peel-inc-cases}
W_j^L - W^L_{j-1} &=    \BB 1_{(\theta_j = \mathsf{white})}  \op{Ex}_{\BB e_*} (\ol Q_{j-1}  , \dot e_j)    - \op{Co}_{\BB e_*}^L(\ol Q_{j-1}  , \dot e_j) - 1  \notag \\
W^R_j - W^R_{j-1} &=    \BB 1_{(\theta_j = \mathsf{black})}  \op{Ex}_{\BB e_*} (\ol Q_{j-1}  , \dot e_j)    - \op{Co}_{\BB e_*}^R(\ol Q_{j-1}  , \dot e_j)  ,\quad \op{and} \\
W_j^L  + W_j^R - W^L_{j-1} - W^R_{j-1} &= \op{Ex}_{\BB e_*} (\ol Q_{j-1}  , \dot e_j)    - \op{Co}_{\BB e_*} (\ol Q_{j-1}  , \dot e_j) .\notag 
\alle
 Recall~\eqref{eqn-covered-identity} for the last line. The $-1$ in the first line comes from the fact that the white edge $\dot e_j$ itself is always disconnected from $\BB e_*$ by the peeled quadrilateral $\frk f(\ol Q_{j-1}  , \dot e_j)$.  
 
The indicator variable $\theta_j$ is independent from $\mcl F_{j-1}$ and $\frk P(\ol Q_{j-1} , \dot e_j)$ and equals $\mathsf{white}$ with probability $3/4$ and $\mathsf{black}$ with probability $1/4$.  In the case when $\el = \infty$ (so that $(Q,\BB e) = (Q^\infty,\BB e^\infty)$ is a UIHPQ$_{\op{S}}$), it follows from~\eqref{eqn-peel-inc-cases} that the increments $W_j^\infty -W_{j-1}^\infty$ are i.i.d. Furthermore, by~\eqref{eqn-peel-mean}, 
\eqb  \label{eqn-peel-inc-mean}
\BB E\left[ W_j^{\infty,L} - W^{\infty,L}_{j-1}   \right] =
\BB E\left[ W^{\infty,R}_j - W^{\infty,R}_{j-1}   \right] 
= 0 
\eqe 
and by~\eqref{eqn-cover-tail}, 
\eqb \label{eqn-peel-inc-tail}
\BB P\left[ W_j^{\infty,L} - W^{\infty,L}_{j-1} = k   \right] = (\ccon + o_k(1)) k^{-5/2}  \quad \op{and} \quad
\BB P\left[ W^{\infty,R}_j - W^{\infty,R}_{j-1} = k   \right] = (\ccon + o_k(1)) k^{-5/2}  .
\eqe

\subsection{Scaling limit of the UIHPQ$_{\op{S}}$ boundary length processes}
\label{sec-uihpq-bdy-conv}
 
We will deduce Proposition~\ref{prop-stable-conv} from the heavy-tailed central limit theorem and the fact that peeling steps are i.i.d. 
We will need the following elementary estimate to show that the coordinates of the limiting process do not have simultaneous jumps (which, as we will see, will imply that they are independent).  

\begin{lem} \label{lem-simultaneous-jump}
Suppose we are in the UIHPQ$_{\op{S}}$ case (i.e., $\el_L = \el_R = \infty$).
For $m\in\BB N$ and $k\in\BB N$, let $E(m,k)$ be the event that there exist times $j_L , j_R \in [1,m]_{\BB Z}$ such that both $W^{\infty,L}_{j_L} - W^{\infty,L}_{j_L-1} \leq -k$ and $W^{\infty,R}_{j_R} - W^{\infty,R}_{j_R-1} \leq -k$. Then $\BB P[E(m,k)] \preceq k^{-3} m^2$ with universal implicit constant. 
\end{lem}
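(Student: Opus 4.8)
The plan is to bound $\BB P[E(m,k)]$ by a union bound over the pair of peeling steps $(j_L,j_R)$ at which the two downward jumps occur, treating the cases $j_L\neq j_R$ and $j_L=j_R$ separately; the second case is where the work lies. Throughout one uses that in the UIHPQ$_{\op{S}}$ case the $\BB R^2$-valued increments $(W^{\infty,L}_j-W^{\infty,L}_{j-1},\,W^{\infty,R}_j-W^{\infty,R}_{j-1})$ for $j\in\BB N$ are i.i.d.\ (Section~\ref{sec-bdy-process-def}), and that summing the tail estimate~\eqref{eqn-peel-inc-tail} (equivalently, summing~\eqref{eqn-cover-tail} and using $\op{Ex}_\infty\in\{1,2,3\}$ with~\eqref{eqn-peel-inc-cases}) gives
\[
\BB P[\, W^{\infty,L}_j - W^{\infty,L}_{j-1}\leq -k \,] \;=\; \BB P[\, W^{\infty,R}_j - W^{\infty,R}_{j-1}\leq -k \,] \;\preceq\; k^{-3/2}, \qquad \forall k\in\BB N,
\]
with universal implicit constant. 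Since $k^{-3}m^2\geq 1$ when $k=1$, one may assume $k\geq 2$.

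\emph{Distinct times.} For a pair $j_L\neq j_R$ in $[1,m]_{\BB Z}$, the events $\{W^{\infty,L}_{j_L}-W^{\infty,L}_{j_L-1}\leq -k\}$ and $\{W^{\infty,R}_{j_R}-W^{\infty,R}_{j_R-1}\leq -k\}$ are measurable with respect to increments at two different times, hence independent. A union bound over the at most $m^2$ such pairs therefore contributes at most $\preceq m^2\cdot k^{-3/2}\cdot k^{-3/2}=m^2k^{-3}$ to $\BB P[E(m,k)]$.

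\emph{Equal times.} It remains to bound $\BB P[B_j]$ for each fixed $j\in[1,m]_{\BB Z}$, where $B_j$ is the event that both coordinates jump down by at least $k$ at step $j$; this is the heart of the matter. First I would observe, using~\eqref{eqn-peel-inc-cases} together with $\op{Ex}_{\BB e_*}(\ol Q_{j-1},\dot e_j)\in\{1,2,3\}$ and the fact that exactly one of $\theta_j=\mathsf{white}$, $\theta_j=\mathsf{black}$ holds, that $B_j$ forces $\op{Co}_{\BB e_*}^L(\ol Q_{j-1},\dot e_j)\geq k-1$ and $\op{Co}_{\BB e_*}^R(\ol Q_{j-1},\dot e_j)\geq k$ simultaneously. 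Since $k\geq 2$ both quantities are positive, so the peeled quadrilateral $\frk f(\ol Q_{j-1},\dot e_j)$ must disconnect a nonempty complementary component on \emph{each} side of $\dot e_j$; i.e.\ the peeling indicator $\frk P(\ol Q_{j-1},\dot e_j)$ has exactly two finite entries, of sizes $\op{Co}_{\BB e_*}^L\geq k-1$ and $\op{Co}_{\BB e_*}^R\geq k$. By the i.i.d.\ property of the peeling steps one may take $j=1$, where $\frk P(\ol Q_0,\dot e_1)=\frk P(Q^\infty,\BB e^\infty)$, so summing the exact formula~\eqref{eqn-uihpq-peel-prob} (in its asymptotic form~\eqref{eqn-uihpq-peel-prob-asymp}) over the at most three cyclic arrangements of a two-finite-entry peeling indicator,
\[
\BB P[B_j]\;\preceq\;\sum_{k_1\geq k-1}\ \sum_{k_2\geq k}\ k_1^{-5/2}\,k_2^{-5/2}\;\preceq\;k^{-3}.
\]
Summing over $j\in[1,m]_{\BB Z}$ contributes $\preceq mk^{-3}\leq m^2k^{-3}$, and adding the two cases gives $\BB P[E(m,k)]\preceq m^2k^{-3}$ with universal implicit constant.

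The only nontrivial step is the reduction in the equal-times case: one must see that two simultaneous large downward jumps of the left and right boundary length processes correspond to the peeled quadrilateral cutting off a bubble of perimeter of order $k$ on each side of $\dot e_j$, which is exactly what upgrades the naive one-sided bound $k^{-3/2}$ to the two-sided bound $k^{-3}$ through the product form of the peeling probability~\eqref{eqn-uihpq-peel-prob-asymp}. (Note that a one-sided argument genuinely fails here, since in the regime $m^2\leq k^3$ of interest the quantity $m^2k^{-3}$ is \emph{smaller} than $mk^{-3/2}$, so the case split cannot be avoided.) Everything else is a union bound plus the heavy-tailed peeling estimates already recorded in Section~\ref{sec-peeling}.
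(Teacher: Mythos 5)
Your proof is correct, and the two cases are handled in essentially opposite ways compared to the paper. The equal-times case ($j_L=j_R$) is the same as the paper's: you reduce a simultaneous large downward jump to the peeling indicator having two large finite entries, bound via the product form of~\eqref{eqn-uihpq-peel-prob-asymp}, and get $\preceq k^{-3}$ per step; your extra precaution of summing over the three cyclic arrangements of the indicator is harmless (in fact only one arrangement is compatible with $\op{Co}^L$ and $\op{Co}^R$ both being large, since otherwise both finite components would lie on the same side of $\dot e_j$). Where you diverge is the distinct-times case: you union-bound directly over ordered pairs $(j_L,j_R)$ with $j_L\neq j_R$ and invoke independence of the peeling increments at distinct times to multiply the two one-sided tails, giving $\preceq m^2\cdot k^{-3/2}\cdot k^{-3/2}$. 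The paper instead introduces the stopping times $J_1, J_2$ (first and second times that the total number of covered edges is $\geq k$), bounds $\BB P[J_1<\infty]\preceq k^{-3/2}m$ by a union bound and then $\BB P[J_2<\infty\mid J_1<\infty]\preceq k^{-3/2}m$ by the strong Markov property of peeling. Both routes produce $\preceq k^{-3}m^2$; yours is more elementary in that it only uses the i.i.d.\ structure of the increments (already recorded in Section~\ref{sec-bdy-process-def}), whereas the paper's stopping-time formulation is closer in spirit to how one would argue without exact independence. Your closing observation---that a purely one-sided bound of order $m k^{-3/2}$ would not suffice in the regime $m^2\ll k^3$---correctly identifies why the case split is necessary.
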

\begin{proof}
If $E(m,k)$ occurs, let $j_L$ and $j_R$ be the smallest times in $[1,m]_{\BB Z}$ satisfying the conditions in the definition of $E(m,k)$. 
We will treat the case where $j_L = j_R$ and the case where $j_L \not= j_R$ separately.

If $j_L = j_R$, then by~\eqref{eqn-peel-inc-cases} there is a $j \in [1,m]_{\BB Z}$ such that the numbers of left and right covered edges satisfy
\eqbn
\op{Co}_\infty^L(\ol Q^\infty_{j-1}  , \dot e_j) \geq k -1  \quad \op{and} \quad \op{Co}_\infty^R(\ol Q^\infty_{j-1}  , \dot e_j) \geq k .
\eqen
By~\eqref{eqn-uihpq-peel-prob-asymp} and the Markov property of peeling, the probability that this is the case for any fixed time $j$ is at most
\eqbn
\sum_{k_1=k}^\infty \sum_{k_2=k-1}^\infty \BB P\left[ \frk P_\infty\left(\ol Q^\infty_{j-1}  , \dot e_j\right) = (k_1,\infty, k_2) \right] \preceq k^{-3} .
\eqen
Taking a union bound over $j \in [1,m]_{\BB Z}$ shows that $\BB P\left[ E(m,k) ,\, j_L = j_R \right] \preceq k^{-3} m$. 

To treat the case when $j_L \not=j_R$, let $J_1$ (resp.\ $J_2$) be the first (resp.\ second) time in $[1,m]_{\BB Z}$ for which the total number of covered edges satisfies $\op{Co}_\infty(\ol Q^\infty_{j-1}  , \dot e_j) \geq k$, or $\infty$ if there are fewer than one (resp.\ two) such times. If $E(m,k)$ occurs and $j_L \not= j_R$, then both $J_1$ and $J_2$ are finite. By~\eqref{eqn-cover-tail}, for $j\in [1,m]_{\BB Z}$ 
\eqbn
\BB P\left[ \op{Co}_\infty(\ol Q^\infty_{j-1}  , \dot e_j) \geq k \right] \preceq k^{-3/2} .
\eqen
By a union bound, $\BB P[J_1 < \infty] \preceq k^{-3/2} m$. By the strong Markov property, $\BB P[J_2 < \infty \,|\, J_1 < \infty] \preceq k^{-3/2} m$. Hence $\BB P\left[ E(m,k) ,\, j_L \not= j_R \right] \preceq k^{-3} m^2$. 
\end{proof}

\begin{proof}[Proof of Proposition~\ref{prop-stable-conv}]
We take
\eqb \label{eqn-tcon-choice}
\tcon := \frac32 \bcon^{3/2} \ccon^{-1} \lcon
\eqe  
where here $\bcon = 2^{3/2}/3$, $\lcon$ is as in~\eqref{eqn-levy-process-scaling}, and $\ccon$ is as in~\eqref{eqn-cover-tail}. 
It is clear from~\eqref{eqn-peel-inc-mean},~\eqref{eqn-peel-inc-tail}, and the heavy-tailed central limit theorem (see, e.g.,~\cite{js-limit-thm}) that the processes $L^{\infty,n}$ and $R^{\infty,n}$ each converge in law separately in the local Skorokhod topology to a totally asymmetric $3/2$-stable process with no positive jumps, scaled as in~\eqref{eqn-levy-process-scaling}. We need to check that they converge jointly. 

By the Prokhorov theorem, for any sequence of positive integers tending to~$\infty$, there exists a subsequence~$\mcl N$ along which $Z^{\infty,n} \rta Z^\infty$ in law as $\mcl N\ni n \rta\infty$ with respect to the local Skorokhod topology on each coordinate, where here $Z^\infty = (L^\infty,R^\infty)$ is a coupling of two totally asymmetric $3/2$-stable processes with no positive jumps. We must show that $L^\infty$ and $R^\infty$ are independent (which in particular implies that the joint law of the limit does not depend on the choice of $\mcl N$). We claim that to prove this, it suffices to show that $L^\infty$ and $R^\infty$ a.s.\ do not have any simultaneous jumps. 
Indeed, $(L^\infty , R^\infty)$ is a L\'evy process taking values in $\BB R^2$, so the jumps of $L^\infty$ and $R^\infty$ arrive according to two Poisson point processes which are adapted w.r.t.\ a common filtration (namely, the one generated by $(L^\infty, R^\infty)$). If $L^\infty$ and $R^\infty$ have no simultaneous jumps, then these two Poisson point processes have no simultaneous jumps, so are independent~\cite[Proposition~1, Section~0]{bertoin-book}. Since $L^\infty$ and $R^\infty$ are each determined by their jumps, it then follows that $L^\infty$ and $R^\infty$ are independent. 

Fix $\ep > 0$ and $T> 0$. For $n\in\BB N$, $\delta > 0$, and $i \in [1, \lceil T/\delta \rceil]_{\BB Z}$ let $E_\delta^n(i)$ be the event that there exist times $t_L , t_R \in [(i-1)\delta  ,i \delta]$ such that both $L^{\infty,n}_{t_L} - \lim_{t\rta t_L^-} L^{\infty,n}_t \leq -\ep$ and $R^{\infty,n}_{t_R} - \lim_{t\rta t_R^-} R^{\infty,n}_t \leq -\ep$. By Lemma~\ref{lem-simultaneous-jump} (applied with $k \asymp \ep n^{1/2}$ and $m = \delta n^{3/4}$) and stationarity of the law of $Z$, $\BB P[E_\delta^n(i)] \preceq \ep^{-3/2} \delta^2$, with universal implicit constant. By a union bound, $\BB P\left[ \bigcup_{i=1}^{\lceil T/\delta \rceil} E_\delta^n(i) \right] \preceq T \ep^{-3/2} \delta$, with universal implicit constant.

Taking a limit as $\mcl N \ni n\rta\infty$ shows that as $\delta \rta 0$, the probability that there is an $i\in [1, \lceil T/\delta \rceil]_{\BB Z}$ such that $L^\infty$ and $R^\infty$ have a simultaneous jump of size at least $\ep$ in the time interval $((i-1)\delta , i\delta)$ tends to zero. For fixed $\delta>0$, a.s.\ neither $L^\infty$ nor $R^\infty$ has a jump at time $\delta i$ for any $i\in\BB Z$. Since $\ep$ and $T$ are arbitrary, we infer that $L^\infty$ and $R^\infty$ a.s.\ do not have any simultaneous jumps, so are independent. 
\end{proof}

\subsection{Estimates for free Boltzmann quadrangulations with simple boundary}
\label{sec-fb-bdy-estimate}

The goal of the remainder of this section is to prove our scaling limit result for the finite-volume boundary length process, Proposition~\ref{prop-stable-conv-finite}.
In this section we record some general estimates for peeling processes on a free Boltzmann quadrangulation with simple boundary which will be used in the remainder of this section as well as in Sections~\ref{sec-ghpu-tight} and~\ref{sec-crossing}.  
 
Throughout this subsection we consider the following setup. 
Let $\el_L , \el_R \in \BB N_0$ with $\el_L + \el_R$ even, let $(Q,\BB e , \theta)$ be a free Boltzmann quadrangulation with boundary with a critical face percolation configuration, and define the clusters $\{\dot Q_j\}_{j\in\BB N_0}$, the unexplored quadrangulations $\{\ol Q_j\}_{j\in\BB N_0}$, the peeled edges $\{\dot e_j\}_{j\in\BB N}$, the filtration $\{\mcl F_j\}_{j \in \BB N_0}$, and the terminal time $\mcl J$ as in Section~\ref{sec-perc-peeling}. 
Also define the boundary length processes $X^L , X^R,Y^L,Y^R$, and $W = (W^L , W^R)$ as in Definition~\ref{def-bdy-process}.  

We will have occasion to compare the above objects with the analogous objects associated with a UIHPQ$_{\op{S}}$. To this end, we let $(Q^\infty,\BB e^\infty,\theta^\infty)$ be a UIHPQ$_{\op{S}}$ with a critical face percolation configuration and define the objects $\{\dot Q_j^\infty\}_{j\in\BB N_0}$, $\{\ol Q_j^\infty\}_{j\in\BB N_0}$, $\{\dot e_j^\infty\}_{j\in\BB N}$, $\{\theta_j^\infty\}_{j\in\BB N_0}$, and $\{\mcl F_j^\infty\}_{j \in \BB N_0}$ as in Section~\ref{sec-perc-peeling} with $\el_L = \el_R = \infty$ (with an extra superscript $\infty$ to indicate the UIHPQ$_{\op{S}}$ case). Also define the associated boundary length processes $X^{ \infty,L} , X^{\infty,R} ,Y^{\infty,L} ,Y^{\infty,R}$, and $W^\infty = (W^{\infty,L} , W^{\infty,R})$ as in Definition~\ref{def-bdy-process}. We also define the time
\eqb \label{eqn-infty-terminal-time}
\mcl J^\infty := \inf\left\{ j \in \BB N_0 : Y_j^{\infty,L} \leq - \el_L \:\op{or} \: Y_j^{\infty,R} \leq - \el_R \right\} 
\eqe 
so that $\mcl J^\infty$ and $\mcl J$ are given by the same deterministic functional of the processes $W^\infty$ and $W$, respectively.

The following lemma is our main tool for comparing the percolation explorations in the case when $\el_L,\el_R < \infty$ and the case when $\el_L=\el_R=\infty$. We note the similarity to the Radon-Nikodym estimate in Theorem~\ref{thm-bdy-process-law} (which is no coincidence, as the estimates are proven in an analogous manner).
 
\begin{lem} \label{lem-perc-rn}
Let $\iota$ be a stopping time for $\{\mcl F_{j}\}_{j\in\BB N}$ which is less than $\mcl J$ with positive probability. The law of $\{ \dot Q_{j} , \frk P(\ol Q_{j-1} , \dot e_j)  , \theta_j \}_{j\in [1,\iota]_{\BB Z}}$ conditional on the event $  \{ \iota < \mcl J\}$ is absolutely continuous with respect to the law of $\{ \dot Q_{j}^\infty , \frk P(\ol Q_{j-1}^\infty , \dot e_j^\infty) , \theta_j^\infty  \}_{j\in [1,\iota]_{\BB Z}}$, with Radon-Nikodym derivative given by
\eqb \label{eqn-perc-rn}
 (1 + o(1))  \left( \frac{ W_\iota^{\infty,L} + W_\iota^{\infty,R} }{\el_L+\el_R}  + 1 \right)^{-5/2}  \BB 1_{(\iota < \mcl J^\infty) } 
\eqe  
where here the $o(1)$ tends to zero as $(\el_L + \el_R) \wedge (W_\iota^{\infty,L} + W_\iota^{\infty,R} + \el_L + \el_R)   $ tends to $\infty$, at a deterministic rate.
\end{lem}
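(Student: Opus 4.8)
The plan is to reduce the statement to a purely combinatorial computation of the ratio of probabilities of a fixed finite peeling "history" under the free Boltzmann measure versus under the UIHPQ$_{\op{S}}$ measure, exactly in the spirit of the standard Radon-Nikodym comparison for peeling processes on free Boltzmann quadrangulations (see~\cite{gwynne-miller-saw,gwynne-miller-simple-quad}). First I would observe that, by the Markov property of peeling (Section~\ref{sec-peeling-process}) together with the tower property, it suffices to prove the claim when $\iota = j$ is a deterministic time, since one can then integrate over the value of $\iota$ on the event $\{\iota < \mcl J\}$; the stopping-time version follows because $\iota$ is $\{\mcl F_j\}$-measurable and $\mcl F_j$ is generated precisely by the peeling data $\{\dot Q_i, \frk P(\ol Q_{i-1},\dot e_i),\theta_i\}_{i\le j}$. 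So fix $j$ and a possible realization $\frak h = \{q_i, p_i, c_i\}_{i\in[1,j]_{\BB Z}}$ of this data which is consistent with staying alive (i.e.\ does not trigger $\mcl J$ before time $j$), and let $\frk h$ determine the explored cluster $\dot Q_j$, the unexplored boundary length $2\ol\el := W_j^L + W_j^R + \el_L + \el_R$ (using~\eqref{eqn-unexplored-length}), and the interior-vertex count $n_{\dot Q}$ of the cluster.

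Next I would write down both probabilities explicitly. Under the free Boltzmann measure on quadrangulations of perimeter $\el_L+\el_R$, the probability of the history $\frak h$ is a product of the face-percolation weights (which are the same $3/4$ / $1/4$ factors in both settings and cancel), the weight $12^{-n_{\dot Q}}$ for the revealed interior faces of the cluster (again common to both settings), and the partition-function ratio coming from the fact that, conditionally on $\frak h$, the unexplored region $\ol Q_j$ is itself a free Boltzmann quadrangulation of perimeter $2\ol\el$: this contributes a factor $\frak Z(2\ol\el)/\frak Z(\el_L+\el_R)$. Under the UIHPQ$_{\op{S}}$, the analogous computation gives the same percolation and $12^{-n_{\dot Q}}$ factors and the same combinatorial enumeration of the peeled quadrilaterals, but the ``partition function'' ratio is replaced by the UIHPQ$_{\op{S}}$ peeling weights, whose structure (see~\eqref{eqn-uihpq-peel-prob}) is arranged precisely so that, after telescoping over $i\in[1,j]_{\BB Z}$, the cluster-dependent factors cancel and one is left with a clean constant. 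Dividing the two, essentially everything cancels and one is left with
\eqbn
\frac{d(\text{free Boltzmann history})}{d(\text{UIHPQ}_{\op{S}} \text{ history})} = \BB 1_{(j<\mcl J^\infty)} \cdot c\, \frac{\frak Z(2\ol\el)}{\frak Z(\el_L+\el_R)}
\eqen
for an explicit constant $c$ absorbing the normalization of the UIHPQ$_{\op{S}}$ peeling law. Here the indicator $\BB 1_{(j<\mcl J^\infty)}$ appears because the history $\frak h$ is only compatible with the UIHPQ$_{\op{S}}$ exploration not having already terminated, and because $\mcl J$ and $\mcl J^\infty$ are the same functional of the boundary data.

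Finally I would apply the Stirling asymptotics~\eqref{eqn-stirling-asymp}, $\frak Z(\el) = (c_{\frak Z}+o_\el(1))\,54^{\el/2}\el^{-5/2}$, to the ratio $\frak Z(2\ol\el)/\frak Z(\el_L+\el_R)$. Writing $2\ol\el = (\el_L+\el_R) + (W_j^{\infty,L}+W_j^{\infty,R})$, the exponential factors $54^{\ol\el}/54^{(\el_L+\el_R)/2}$ are exactly matched by the explicit powers of $54$ in the UIHPQ$_{\op{S}}$ peeling weights~\eqref{eqn-uihpq-peel-prob} that were absorbed into $c$, so those cancel, and the polynomial factors yield $(2\ol\el)^{-5/2}(\el_L+\el_R)^{5/2} = \bigl(\frac{W_j^{\infty,L}+W_j^{\infty,R}}{\el_L+\el_R}+1\bigr)^{-5/2}$, with a multiplicative error $(1+o(1))$ where the $o(1)$ comes from the two $o_\el(1)$ terms in~\eqref{eqn-stirling-asymp} and hence tends to $0$ as $(\el_L+\el_R)\wedge(2\ol\el)\to\infty$ at a deterministic rate. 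This is precisely~\eqref{eqn-perc-rn}. The constant $c$ must equal $1$ for consistency, which one can see either by tracking it through the cancellation carefully or by noting that the Radon-Nikodym derivative must integrate to the probability of $\{j<\mcl J\}$, but in fact all that is needed downstream is the displayed form up to the $(1+o(1))$.

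The main obstacle is the bookkeeping in the middle step: one has to set up the peeling-history probabilities so that the combinatorial factors enumerating the choices of peeled quadrilateral and their interior vertices genuinely cancel between the two settings, which requires being careful that the ``deterministic convention'' for rooting the unexplored component and the handling of the peeling-indicator cases (one, two, or three complementary components, cf.\ Figure~\ref{fig-peeling-cases}) are treated identically on both sides. Once the exact ratio is isolated as a pure partition-function ratio, the asymptotic analysis is routine given~\eqref{eqn-stirling-asymp}.
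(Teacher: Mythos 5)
The paper's proof is a one-line citation to \cite[Lemma~3.6]{gwynne-miller-simple-quad}, which is a general Radon--Nikodym comparison for arbitrary peeling processes on free Boltzmann quadrangulations vs.\ the UIHPQ$_{\op{S}}$. What you have done is sketch a direct re-derivation of that external lemma rather than apply it, but the underlying mathematics is the same: the ratio of probabilities of a fixed peeling history is, after cancellation of the percolation weights, the $12^{-n_{\dot Q}}$ factor for the revealed faces, and the combinatorial enumeration of peeled quadrilaterals, exactly $\frak Z(2\ol\el)/\bigl(\frak Z(\el_L+\el_R)\,54^{\ol\el-\el}\bigr)$ with $2\ol\el = W_\iota^{\infty,L}+W_\iota^{\infty,R}+\el_L+\el_R$, and Stirling in the form~\eqref{eqn-stirling-asymp} then kills the $54$-powers and leaves $(1+o(1))\bigl(\ol\el/\el\bigr)^{-5/2}$, which is~\eqref{eqn-perc-rn}. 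Your reduction from a stopping time to a deterministic time via $\mcl F_j$-measurability of $\{\iota=j\}$ is also standard and valid. The one place your write-up is misleading is the displayed identity asserting the ratio equals ``$c\cdot\frak Z(2\ol\el)/\frak Z(\el_L+\el_R)$ for an explicit constant $c$'': the $54$-powers that cancel against $\frak Z(2\ol\el)$ depend on the history through $\ol\el$, so $c$ is not a constant and that display as written is false; you implicitly fix this in the next sentence, but it would be cleaner to write the exact ratio $\frak Z(2\ol\el)/\bigl(\frak Z(\el_L+\el_R)\,54^{\ol\el-\el}\bigr)$ from the start and note that the simplest way to see the UIHPQ$_{\op{S}}$-side probability of a history is as the $\el\to\infty$ limit of the free Boltzmann one, which produces the $54^{\ol\el-\el}$ normalization automatically. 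With that cosmetic repair your argument is a correct proof of the cited lemma; what it buys relative to the paper is self-containedness, at the cost of redoing bookkeeping that \cite{gwynne-miller-simple-quad} already packages.
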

\begin{proof}
This is an immediate consequence of~\cite[Lemma~3.6]{gwynne-miller-simple-quad}.
\end{proof} 

The main tool in the proof of Proposition~\ref{prop-stable-conv-finite} is the estimate of the Radon-Nikodym derivative~\eqref{eqn-perc-rn}.  We note that it blows up on the event that $W_\iota^{\infty,L} + W_\iota^{\infty,R}$ is close to $-\el_L-\el_R$, which in turn occurs when $\iota$ is chosen to be close to the terminal time of the finite-volume percolation exploration process.  We will thus need to rule out $W_\iota^{\infty,L} + W_\iota^{\infty,R}$ being \emph{too} close to $-\el_L-\el_R$ for an appropriately chosen stopping time.  This is carried out in Lemma~\ref{lem-short-bdy-time}, using Lemmas~\ref{lem-uneven-length} and~\ref{lem-no-close-jump}.  It will also be important that the boundary length does not change much after our stopping time, which is the purpose of Lemma~\ref{lem-max-bdy-length} just below.  The reader might find it helpful first to read Section~\ref{sec-stable-conv-finite}, where the proof of Proposition~\ref{prop-stable-conv-finite} is completed before reading the details of these intermediate lemmas.

The following general lemma gives us a bound for the boundary length of the unexplored region for \emph{any} peeling process on a free Boltzmann quadrangulation with simple boundary, and will be used in conjunction with~\eqref{eqn-unexplored-length} to bound the processes of Definition~\ref{def-bdy-process}.

\begin{lem} \label{lem-max-bdy-length}
Let $\el \in \BB N$ and let $(Q,\BB e)$ be a free Boltzmann quadrangulation with simple boundary and perimeter $2\el$. 
Let $\{(\ol Q_j , \dot e_{j+1})\}_{j\in\BB N_0}$ be any peeling process of $Q$, with any choice of target point. 
For $k \in \BB N$ with $k > \el$, 
\eqbn
\BB P\left[ \exists j \in \BB N_0 \: \op{with} \: \#\mcl E\left(\bdy \ol Q_j \right)  \geq k \right] \leq \left(1 + o_\el(1) \right) \left(\frac{k}{\el} \right)^{-3}
\eqen
with the rate of the $o_\el(1)$ universal. 
\end{lem}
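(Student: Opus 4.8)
The plan is to bound the probability that the unexplored boundary length ever reaches $k$ by a supermartingale (or rather, a carefully chosen functional of the boundary length that is a supermartingale up to stopping), together with an optional-stopping argument. The key observation is~\eqref{eqn-fb-area-tail}: a free Boltzmann quadrangulation with simple boundary of perimeter $2m$ has probability $\asymp m^3 n^{-5/2}$ of having exactly $n$ interior vertices, so the probability that it has \emph{at least} one interior vertex — equivalently, that it is nonempty and not the trivial one-edge map — and more to the point, the total free Boltzmann partition function $\frk Z(2m)$ scales like $54^m m^{-5/2}$ by~\eqref{eqn-stirling-asymp}. This $m^{-5/2}$ growth (sub-polynomially, it is \emph{decay} after normalizing out the exponential) is what produces the exponent $-3$ in the statement: a quadrangulation with larger boundary is ``less likely'' in an appropriate relative sense, and iterating the peeling can only make the boundary length perform a random walk-like motion whose probability of reaching a high level $k$ decays like $(k/\el)^{-3}$.

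Concretely, first I would reduce to a statement about the boundary length process alone. Let $P_j := \#\mcl E(\bdy \ol Q_j)$ (a nonincreasing-in-``progress'' but otherwise fluctuating sequence; note $P_0 = 2\el$). By the Markov property of peeling (Section~\ref{sec-peeling-process}), conditionally on $\mcl F_j$ the unexplored quadrangulation $\ol Q_j$ is a free Boltzmann quadrangulation with simple boundary of perimeter $P_j$, so the one-step increment $P_{j+1} - P_j$ has the law of $\op{Ex} - \op{Co}$ for a peeling step on such a quadrangulation — and crucially, for $P_j$ large, this increment distribution converges to the UIHPQ$_{\op{S}}$ increment distribution~\eqref{eqn-uihpq-peel-prob}, which has mean zero~\eqref{eqn-net-bdy-mean} and tails $\asymp |k|^{-5/2}$ on the downward side~\eqref{eqn-cover-tail}. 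The natural candidate is to check that $M_j := (P_j / (2\el))^{-\alpha}$ (or a variant with a correction term) is a supermartingale for a suitable exponent; matching the $-3$ in the statement suggests working with the exponent that makes $\BB E[(P_{j+1})^{-\alpha} \mid \mcl F_j] \leq (P_j)^{-\alpha}$. In fact the cleanest route is to use the exact free Boltzmann combinatorics: the quantity $\frk Z(P_j)$ itself, or $P_j^{5/2} / 54^{P_j/2}$, should be (after the exponential factor cancels in expectation because each peeling step reveals a bounded number of faces, weighted correctly) almost exactly a martingale — this is essentially the content of the identity $\frk Z(2\el) = \sum_n 12^{-n} \#\mcl Q_{\op{S}}^\srta(n,\el)$ stated after~\eqref{eqn-fb-partition}, unrolled one peeling step at a time. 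Then $P_j^{5/2}$-type quantities give the supermartingale, and since we want reaching level $k$, an optional stopping / maximal inequality argument at the first time $\tau_k$ that $P_j \geq k$ yields $\BB P[\tau_k < \infty] \leq (2\el)^{?}/k^{?}$ with the exponents arranged to give $(k/\el)^{-3}$ after accounting for the fact that an upward jump overshoots $k$ by a bounded-in-expectation amount (again using the $\asymp k^{-5/2}$ tail to control the overshoot, which contributes only a constant factor absorbed into the $1 + o_\el(1)$).

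The main obstacle I anticipate is making the martingale/supermartingale bookkeeping exact enough. Two issues: (i) for \emph{small} perimeter $P_j$ the one-step law is not the UIHPQ$_{\op{S}}$ law and the mean-zero / tail estimates degrade — this is precisely why the conclusion carries a $(1 + o_\el(1))$ and requires $k > \el$, and one handles it by noting the process must pass through all intermediate perimeters, so the relevant comparison is always at perimeter $\geq \el$ where the error in~\eqref{eqn-uihpq-peel-prob-asymp} is $o_\el(1)$; (ii) the target point matters for which component is ``$\ol Q_j$'', but the bound is stated for \emph{any} target, and the point is that for any choice of target, $\ol Q_j$ is still free Boltzmann with some perimeter $P_j$, and $P_j$ can only be \emph{smaller} than the total boundary length of all complementary components, so the one-step increment of $P_j$ is stochastically dominated by the ``keep the largest component'' increment — hence it suffices to prove the bound for one canonical choice and the general case follows by a domination/coupling argument. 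Once the supermartingale is identified, the optional stopping theorem (justified because $P_j$ is eventually constant equal to $0$ or $2$ once the map is fully explored, and the supermartingale is nonnegative and bounded by its value at $\tau_k$ on the event we care about) delivers $\BB P[\exists j: P_j \geq k] \leq (1 + o_\el(1))(k/\el)^{-3}$, which is the claim after translating $P_j = \#\mcl E(\bdy \ol Q_j)$ back and recalling $P_0 = 2\el$.
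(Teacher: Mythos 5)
Your proposal takes a genuinely different route from the paper, and it has a real gap in the martingale identification that means it would not deliver the exponent $3$ as written.

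The paper's proof is much simpler and does not involve any martingale or optional stopping at all. It lets $\iota_k$ be the first time $\#\mcl E(\bdy\ol Q_j)\geq k$, observes that on $\{\iota_k<\infty\}$ the interior vertices of $\ol Q_{\iota_k}$ form a subset of the interior vertices of $Q$, so
\[
\BB P\left[\#\mcl V(Q\setminus\bdy Q)\geq m\right]\geq \BB P\left[\#\mcl V(\ol Q_{\iota_k}\setminus\bdy\ol Q_{\iota_k})\geq m\,\big|\,\iota_k<\infty\right]\BB P[\iota_k<\infty],
\]
and then compares the two area-tail asymptotics coming from~\eqref{eqn-fb-area-tail}: the left side is $\asymp\el^3 m^{-3/2}$ and the conditional tail on the right is $\asymp k^3 m^{-3/2}$ (since $\ol Q_{\iota_k}$, given its boundary length $\geq k$, is free Boltzmann with that perimeter). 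Taking the ratio and sending $m\rta\infty$ yields $(1+o_\el(1))(k/\el)^{-3}$, with the sharp $1+o_\el(1)$ constant arriving automatically because identical asymptotics are used in numerator and denominator. The exponent $3$ is literally the exponent of $\el$ in~\eqref{eqn-fb-area-tail}; no peeling increment estimates are needed.

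Your supermartingale approach has two problems. First, your candidate supermartingale is not one, and for a structural reason: when a peeling step cuts off several components, the free-Boltzmann weight identity ties $\frk Z(P_j)$ to the \emph{product} of the $\frk Z$-weights of all components, not just the kept one, so tracking only $P_j$ loses exactly the information you would need to make $\frk Z(P_j)/54^{P_j/2}$ (or any function of $P_j$ alone) a martingale. Second, and more importantly, even the ``ideal'' exponent coming from your candidates is $5/2$, not $3$. Optional stopping with a supermartingale $\asymp P_j^{5/2}$ would give at best $\BB P[\tau_k<\infty]\preceq(\el/k)^{5/2}$, which for $k>\el$ is a strictly \emph{weaker} bound than the one you need to prove. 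To recover the exponent $3$ by an argument in the spirit you describe, you would have to combine two separate effects: the Radon--Nikodym tilt of the free Boltzmann law relative to the UIHPQ$_{\op{S}}$ law (which contributes a factor $\asymp(k/\el)^{-5/2}$, cf.\ Lemma~\ref{lem-perc-rn}) \emph{and} the first-passage probability of the underlying spectrally negative $3/2$-stable-like walk, whose scale function contributes an additional factor $\asymp(\el/k)^{1/2}$. Your write-up only identifies the first of these two; without the second, the exponent does not close. Even with both pieces, getting the $(1+o_\el(1))$ constant would require controlling the overshoot at $\tau_k$ and the subcritical-regime corrections with much more precision than the area-tail comparison demands.

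Your handling of the ``any target'' caveat is also imprecise: the increment distribution of $P_j$ genuinely depends on the peeling rule, and a stochastic-domination reduction to a canonical choice is not as straightforward as you suggest. The paper's argument is target-agnostic for free, since it never looks at increments at all.
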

\begin{proof}
Let $\iota_k$ be the smallest $j\in\BB N$ for which $ \#\mcl E\left(\bdy \ol Q_j \right)  \geq k $, or $\iota_k = \infty$ if no such $j$ exists.
We seek an upper bound for $\BB P[\iota_k < \infty]$. The idea of the proof is that if $\iota_k < \infty$, then the number of interior vertices of $Q$ is likely to be larger than usual, so $\BB P[\iota_k < \infty]$ cannot be too large.
For $m\in\BB N$, 
\eqb \label{eqn-area-bayes}
\BB P\left[ \#\mcl V\left(Q\setminus \bdy Q \right) \geq m \right] \geq \BB P\left[  \#\mcl V\left( \ol Q_{\iota_k} \setminus \bdy \ol Q_{\iota_k} \right) \geq m \,|\, \iota_k < \infty \right] \BB P[\iota_k < \infty] .
\eqe 
By the asymptotic formula~\eqref{eqn-fb-area-tail} for the law of the area of a free Boltzmann quadrangulation with simple boundary,
\eqb
\label{eqn-boundary-l-tail}
\BB P\left[ \#\mcl V\left(Q\setminus \bdy Q \right) \geq m \right] = (c  + o_\el(1) )(1+o_m(1)) \el^{3} m^{-3/2} , 
\eqe
with the constant $c >0$ and the rate of the $o_\el(1)$ universal; and the rate of the $o_m(1)$ depending on $\el$.  Since $ \#\mcl E\left(\bdy \ol Q_{\iota_k} \right)  \geq k > \el$ on the event $\{\iota_k <\infty\}$ and the conditional law of $\ol Q_{\iota_k}$ given $\{\iota_k<\infty\}$ and its boundary length is that of a free Boltzmann quadrangulation with simple boundary, also
\eqb
\label{eqn-boundary-k-tail}
\BB P\left[ \#\mcl V\left( \ol Q_{\iota_k} \setminus \bdy \ol Q_{\iota_k} \right) \geq m \,|\, \iota_k < \infty \right] = (c  + o_\el(1) )(1+o_m(1)) k^{3} m^{-3/2}  .
\eqe
Rearranging~\eqref{eqn-area-bayes} and inserting the bounds~\eqref{eqn-boundary-l-tail}, \eqref{eqn-boundary-k-tail} we have for each $m\in\BB N$ that
\eqbn
\BB P[\iota_k < \infty]  \leq   (1  + o_\el(1) )(1+o_m(1))\left( \frac{k}{\el} \right)^{-3} .
\eqen 
Sending $m\rta\infty$ yields the statement of the lemma.
\end{proof}

Lemma~\ref{lem-max-bdy-length} gives us a good bound for $\max_{j\in \BB N_0} |W_j|$ in the case when~$\el_L$ and~$\el_R$ are approximately the same size. If one of~$\el_L$ or~$\el_R$ is much smaller than the other, we expect that one of~$W^L$ or~$W^R$ has a single big downward jump corresponding to the time when most of~$\bdy Q$ is disconnected from the target edge~$\BB e_*$ by the percolation peeling process, and otherwise~$|W_j|$ fluctuates by at most~$\el_L\wedge\el_R$. The following lemma makes this intuition precise. The lemma will only be used in the proof of Lemma~\ref{lem-short-bdy-time} below, so the reader may wish to skip the statement and proof for now and refer back to it when it is needed.

\begin{figure}[ht!]
 \begin{center}
\includegraphics[scale=.65]{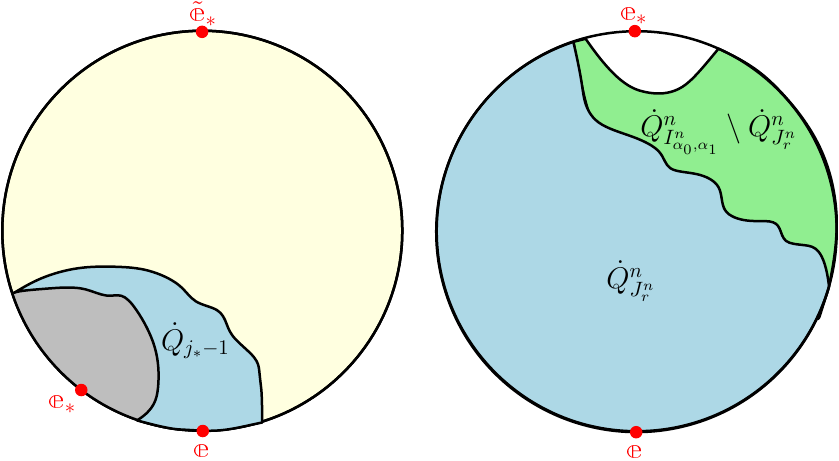} 
\caption[Stopping times for the peeling process]{\label{fig-uneven-length} \textbf{Left:} Illustration of the proof of Lemma~\ref{lem-uneven-length}. If the target edge $\BB e_*$ is close to the initial edge $\BB e$, we re-target the process at an edge $\wt{\BB e}_*$ at macroscopic boundary length away from $\BB e$. The peeling processes targeted at $\BB e_*$ and $\wt{\BB e}_*$ agree at every time strictly before the first time $j_*$ that the target edges are separated, at which time the cluster of the process targeted at $\BB e_*$ (resp.\ $\wt{\BB e}_*$) is the union of the blue and yellow (resp.\ blue and gray) regions. \textbf{Right:} The peeling clusters at the stopping times $J_r^n$ and $I_{\alpha_0,\alpha_1}^n$ considered in Lemmas~\ref{lem-no-close-jump} and~\ref{lem-short-bdy-time}, respectively. At time $J_r^n$, the left and right boundary lengths of the unexplored quadrangulation might be very different. Lemma~\ref{lem-uneven-length} is used to produce the time $I_{\alpha_0,\alpha_1}^n$, at which these two lengths are comparable.   }
\end{center}
\end{figure}

\begin{lem}
\label{lem-uneven-length}
For each $\ep \in (0,1)$ there exists $C  = C(\ep) >1$ such that for each choice of $\el_L , \el_R \in \BB N_0$ with $\el_L + \el_R$ even, there is a stopping time $j_* = j_*(\ep) \in [0,\mcl J]_{\BB Z}$ such that the following is true. Let $E(C)$ be the event that all of the following hold.
\begin{enumerate}
\item $C^{-1} (\el_L \wedge \el_R)^{3/2} \leq j_* \leq C(\el_L \wedge \el_L)^{3/2}$. \label{item-uneven-time} 
\item $Y^L_{j_*}  \leq \el_L - C^{-1} (\el_L \wedge \el_R) $ and $Y^R_{j_*}  \leq \el_R  -  C^{-1} (\el_L \wedge \el_R)$. \label{item-uneven-lower}
\item $\max_{j\in [1,j_* -1]_{\BB Z }} |W_j| + \max_{j\in [j_* , \mcl J]_{\BB Z}} |W_j - W_{j_*}|  \leq C (\el_L \wedge \el_R) $ (here $W = (W^L,W^R)$, as in Definition~\ref{def-bdy-process}).  \label{item-uneven-upper}
\end{enumerate} 
Then $\BB P\left[ E(C) \right] \geq 1- \ep $. 
\end{lem}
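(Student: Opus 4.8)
The plan is to prove the lemma by a \emph{re-targeting} comparison, following the geometry of Figure~\ref{fig-uneven-length}. Write $m := \el_L \wedge \el_R$; we may assume $m\geq 1$, since $m=0$ forces one arc of $\bdy Q$ to be trivial and the statement is vacuous. First dispose of the ``balanced'' case where $\el_L$ and $\el_R$ differ by at most a factor of, say, $2$. There one simply takes $j_*$ to be a fixed multiple $\lfloor A m^{3/2}\rfloor$ of $m^{3/2}$, intersected with the first time that $\#\mcl E(\bdy\ol Q_j)$ drops below $m/2$ (so that $j_*<\mcl J$). Items~\ref{item-uneven-time}--\ref{item-uneven-upper} then follow from~\eqref{eqn-unexplored-length} together with the boundary-length bound of Lemma~\ref{lem-max-bdy-length}, applied to $Q$ (to control $\max_{j\leq j_*}|W_j|$ and to see that $\ol Q_j$ has perimeter of order $m$ for $j$ around $m^{3/2}$) and to $\ol Q_{j_*}$ (to control $\max_{j\geq j_*}|W_j-W_{j_*}|$), once one checks --- using the peeling estimates of Section~\ref{sec-peeling-estimate} and the comparison of Lemma~\ref{lem-perc-rn} --- that with probability close to $1$ the exploration neither terminates before time $A^{-1}m^{3/2}$ nor fails to have $\#\mcl E(\bdy\ol Q_j)$ of order $m$ throughout $[0,\lfloor Am^{3/2}\rfloor]$.

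So assume $\el_R=m$ and $\el_L\gg m$ (the case $\el_L=m\ll\el_R$ is identical, with the roles of the white and black arcs of $\bdy Q$ interchanged). Fix an auxiliary target edge $\wt{\BB e}_*$ on $\bdy Q$ at boundary distance of order $m$ from $\BB e$ on the white side, chosen so that the only edges of $\bdy Q$ whose color differs between the $\el_L$-white/$\el_R$-black boundary conditions and the analogous boundary conditions for the target $\wt{\BB e}_*$ are the $O(m)$ edges strictly between $\BB e_*$ and $\wt{\BB e}_*$. Let $j_*$ be the first time $j$ at which the peeled quadrilateral $\frk f(\ol Q_{j-1},\dot e_j)$ separates $\BB e_*$ from $\wt{\BB e}_*$. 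The key point --- a consequence of the fact that the peeled edge at each step is a function of the colors on the already-revealed boundary (Section~\ref{sec-perc-peeling}) --- is that the percolation peeling process of $(Q,\BB e,\theta)$ targeted at $\BB e_*$ coincides with the one targeted at $\wt{\BB e}_*$ at every time $j<j_*$, and that at time $j_*$ the peeled quadrilateral splits the long white arc of $\bdy Q$ onto the $\wt{\BB e}_*$-side; equivalently, $\ol Q_{j_*}=\op{Peel}_{\BB e_*}(\ol Q_{j_*-1},\dot e_{j_*})$ contains $\BB e_*$ but not the bulk of the white arc, so $W^L$ has a downward jump of size $\el_L-O(m)$ at time $j_*$. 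In particular $j_*$ is a stopping time for $\{\mcl F_j\}$ with $j_*<\mcl J$.

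It remains to establish the quantitative claims. For item~\ref{item-uneven-time}: the upper bound $j_*\preceq m^{3/2}$ holds with probability close to $1$ since the two targets $\BB e_*,\wt{\BB e}_*$ are at boundary distance of order $m$ in the unexplored region, so the $\wt{\BB e}_*$-targeted exploration must separate them within $\preceq m^{3/2}$ steps --- this follows from Lemma~\ref{lem-max-bdy-length} (the unexplored boundary stays of order $m$ until the split) and the area-tail estimate~\eqref{eqn-fb-area-tail}, as in the balanced case; and the matching lower bound holds because separating two edges at boundary distance of order $m$ requires peeling through a region of boundary length $\succeq m$. For the ``before $j_*$'' part of item~\ref{item-uneven-upper}, note that for $j<j_*$ the whole long white arc still lies in $\ol Q_j$, so $Y^L_j,Y^R_j\preceq m$, whence by~\eqref{eqn-unexplored-length} and~\eqref{eqn-bdy-process-inf} it suffices to bound $\max_{j<j_*}|W_j|$; this is done by transferring to the UIHPQ$_{\op{S}}$ via the Radon--Nikodym comparison of Lemma~\ref{lem-perc-rn} (whose density is bounded above as long as $W^{\infty,L}+W^{\infty,R}$ stays far from $-(\el_L+\el_R)$, which is automatic for $j\preceq m^{3/2}$) and invoking the i.i.d.\ increment structure and heavy-tailed scaling of Section~\ref{sec-uihpq-bdy-conv}. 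For the ``after $j_*$'' part, condition on $\mcl F_{j_*}$: the Markov property of peeling makes $\ol Q_{j_*}$ a free Boltzmann quadrangulation of perimeter $\#\mcl E(\bdy\ol Q_{j_*})=W^L_{j_*}+W^R_{j_*}+\el_L+\el_R\asymp m$, so Lemma~\ref{lem-max-bdy-length} applied to $\ol Q_{j_*}$ gives $\#\mcl E(\bdy\ol Q_j)\preceq m$ for all $j\geq j_*$ with probability close to $1$, which via~\eqref{eqn-unexplored-length} and the bounds $-\el_L\leq W^L_j$ and $W^L_{j_*}=-\el_L+O(m)$ (and likewise for $W^R$) yields $\max_{j\in[j_*,\mcl J]}|W_j-W_{j_*}|\preceq m$. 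Finally, item~\ref{item-uneven-lower} amounts to saying that $\Theta(m)$ of the white arc survives in $\ol Q_{j_*}$ and $\Theta(m)$ of the black arc is still uncovered at time $j_*$; since $j_*$ is a macroscopic fraction of the terminal time, the exploration tip at time $j_*$ is at boundary distance $\Theta(m)$ from $\BB e_*$, and tuning the constant in the placement of $\wt{\BB e}_*$ makes both statements hold with probability close to $1$.

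The main obstacle is making the re-targeting comparison of the second paragraph fully precise: one must pin down the exact event on which the two percolation peeling processes diverge (i.e.\ show it is exactly $\{j=j_*\}$, which requires the recolored edges to be precisely those between $\BB e_*$ and $\wt{\BB e}_*$ and the cluster $\dot Q_j$ not to reach them before the split) and, more delicately, one must show that at time $j_*$ the ``cut'' produced by the peeled quadrilateral lands at boundary distance $\Theta(m)$ --- rather than $o(m)$ --- from $\BB e_*$, so that $\ol Q_{j_*}$ is genuinely balanced and item~\ref{item-uneven-lower} holds. This last point is a quantitative, probabilistic statement about the geometry of the separating quadrilateral, and it is where the peeling estimates of Section~\ref{sec-peeling-estimate} and a symmetry/uniformity argument for the cut location do the real work; all the remaining inputs (terminal-time and boundary-length bounds, the finite-volume/UIHPQ$_{\op{S}}$ comparison) are already available from Lemma~\ref{lem-max-bdy-length}, Lemma~\ref{lem-perc-rn}, and Section~\ref{sec-uihpq-bdy-conv}.
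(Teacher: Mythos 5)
Your proposal recognizes the right two-case structure (balanced vs.\ unbalanced) and, in the unbalanced case, hits on the paper's key idea of re-targeting the peeling process and using the first time the cluster separates $\BB e_*$ from $\wt{\BB e}_*$ as $j_*$. The balanced case is handled correctly along the same lines as the paper. But your re-targeting step contains a genuine gap, and the place where you acknowledge ``the main obstacle'' is exactly where the paper's choice of $\wt{\BB e}_*$ does real work that your choice does not.

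The paper places $\wt{\BB e}_*$ at the \emph{balance point} of $\bdy Q$: with $\wt\el_L=\wt\el_R=(\el_L+\el_R)/2$, so the re-targeted process has symmetric boundary conditions. Your description (``at boundary distance of order $m$ from $\BB e$ on the white side, chosen so that the only edges whose color differs are the $O(m)$ edges strictly between $\BB e_*$ and $\wt{\BB e}_*$'') is internally inconsistent in the regime $\el_R=m\ll\el_L$: if $\wt{\BB e}_*$ lies at boundary distance $O(m)$ from $\BB e$ on the white arc, then the recolored edges are precisely the edges of the arc from $\BB e_*$ to $\wt{\BB e}_*$ \emph{not through $\BB e$}, and that arc has length $\Theta(\el_L)$, not $O(m)$ (only $O(m)$ boundary edges lie on the short arc through $\BB e$, and those do not change color). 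There is no placement of $\wt{\BB e}_*$ for which only $O(m)$ edges are recolored unless $\wt{\BB e}_*$ is at distance $O(m)$ from $\BB e_*$, and in that case the re-targeted process has $\wt\el_L\wedge\wt\el_R=O(m)$, i.e.\ it is exactly as imbalanced as the original one. Either way, your re-targeting leaves the original difficulty unchanged.

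Why the balance point matters, and where your argument for item~\ref{item-uneven-lower} is missing. The paper's $\wt{\BB e}_*$ makes the \emph{re-targeted} process balanced, so Lemma~\ref{lem-perc-rn} applied to the re-targeted process has Radon--Nikodym density bounded by a constant as long as $|\wt W_{j}|\leq \ep(\el_L+\el_R)$, which holds with high probability up to time $j_*$. More importantly, the paper observes the exact bookkeeping identity $j_*=\min\{j:\wt Y^L_j\geq \el_L\}$ and the transfer relations $Y^L_{j_*}=\wt Y^L_{j_*-1}$, $\el_R-Y^R_{j_*}=\wt Y^L_{j_*}-\el_L$. Thus item~\ref{item-uneven-lower} reduces to controlling the \emph{undershoot} $\wt Y^L_{j_*-1}\leq(1-C_1^{-1})\el_L$ and \emph{overshoot} $\wt Y^L_{j_*}\geq(1+C_1^{-1})\el_L$ of the re-targeted covered-boundary-length process at its first passage of level $\el_L$. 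Since $\wt Y^L$ equals (up to a bounded additive correction~\eqref{eqn-bdy-process-inf}) the running infimum of $\wt W^L$, and $\wt W^L$ is, after the RN comparison, well-approximated by a totally asymmetric $3/2$-stable process by Proposition~\ref{prop-stable-conv}, this first-passage undershoot/overshoot is a standard quantitative fact for stable subordinators. This is precisely the ``quantitative, probabilistic statement about the geometry of the separating quadrilateral'' and the ``symmetry/uniformity argument'' that you gesture at but do not supply; it is not a corollary of the peeling estimates in Section~\ref{sec-peeling-estimate} alone, and it does not follow from a $\wt{\BB e}_*$ that is at distance $O(m)$ from $\BB e$ or $\BB e_*$, because then $j_*$ is not a first-passage time of a balanced boundary-length process that is comparable to the UIHPQ$_{\op{S}}$ with bounded density.

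In short: you have the right skeleton (re-target, define $j_*$ as the separation time, control before-$j_*$ by UIHPQ$_{\op{S}}$ comparison and after-$j_*$ by Lemma~\ref{lem-max-bdy-length} applied to $\ol Q_{j_*}$), but you chose the wrong $\wt{\BB e}_*$, which both invalidates your description of the recolored arc and, more seriously, leaves the re-targeted process as imbalanced as the original one; and you have no argument for item~\ref{item-uneven-lower}. The fix is to re-target to the midpoint $\wt\el_L=\wt\el_R=(\el_L+\el_R)/2$, identify $j_*$ as the first passage time of $\wt Y^L$ at level $\el_L$, and read items~\ref{item-uneven-time}--\ref{item-uneven-lower} off the undershoot/overshoot and maximum of the re-targeted boundary length processes, which are controlled by Proposition~\ref{prop-stable-conv} and Lemma~\ref{lem-perc-rn} with a bounded density.
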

\begin{proof}
Throughout the proof we assume that $\el_L \leq \el_R$; the case when $\el_R \leq \el_L$ is treated similarly.  Let $b \in (0,1)$ be a small parameter to be chosen later, in a manner depending only on $\ep$. 

We will first treat the case when $\el_L$ and $\el_R$ are roughly comparable, in the sense that $\el_L \geq b \el_R$, which is a straightforward consequence of Lemma~\ref{lem-max-bdy-length}. To treat the case when $\el_L \leq b \el_R$, we will consider the percolation peeling process with a new target edge $\wt{\BB e}_*$ chosen in such a way that the two arcs separating $\BB e$ and $\wt{\BB e}_*$ have comparable lengths. We will take $j_*$ to be the time at which the original (equivalently, the re-targeted) process disconnects $\BB e_*$ from $\wt{\BB e}_*$, so that the processes agree up to time $j_*$. We will then show that the conditions in the definition of $E(C)$ are satisfied with high probability at time $j_*$ using estimates for the re-targeted process (which come from a comparison to peeling on the UIHPQ$_{\op{S}}$). See Figure~\ref{fig-uneven-length} for an illustration. 
\medskip
 
\noindent\textit{Step 1: the case when $\el_L \geq b \el_R$.} If $\el_L \geq b \el_R$, then $\el_L \wedge \el_R \geq \frac{b}{2} (\el_L  + \el_R)$. We always have $Y_j^L \leq \el_L$ and $Y_j^R \leq \el_R$. Furthermore, the boundary length of the unexplored region at time $j$ is given by $\#\mcl E(\bdy \ol Q_j) = W_j^L + W_j^R + \el_L + \el_R$. Lemma~\ref{lem-max-bdy-length} implies that if $\el_L \geq b \el_R$, then for $C>0$,
\eqb \label{eqn-similar-length}
\BB P\left[ \max_{j\in [1,\mcl J]_{\BB Z}}  \#\mcl E(\bdy \ol Q_j)  > C (\el_L \wedge \el_R) \right] \preceq b^{-3} C^{-3}  
\eqe 
with universal implicit constant. Furthermore, if we set $j_* = C_0^{-1} (\el_L \wedge \el_R)^{3/2}$ for large enough $C_0 = C_0(b,\ep) > 1$, then Proposition~\ref{prop-stable-conv} and Lemma~\ref{lem-perc-rn} together imply that $\BB P\left[ |W^L_{j_*} + \el_L| \wedge |W^R_{j_*} + \el_R| \leq  C_1^{-1} (\el_L \wedge \el_R) \right] \leq \ep/2$ for a constant $C_1 = C_1(b,\ep) > 0$. By combining this with~\eqref{eqn-similar-length}, we obtain the statement of the lemma in the case when $\el_L \geq b \el_R$ for an appropriate choice of $C \leq C_0\wedge C_1$. 
 \medskip
 
\noindent\textit{Step 2: re-targeting the peeling process.}
Now suppose that $\el_L < b \el_R$. We will show that the statement of the lemma holds in this case for sufficiently small $b = b(\ep) \in (0,1)$. We start by re-targeting the peeling process in such a way that the two boundary arcs between the starting point and the target point have the same length.  Let $\beta : [0, \el_L + \el_R]_{\BB Z} \rta \mcl E(\bdy Q)$ be the boundary path for $Q$ with $\beta(0) = \BB e$, and recall that $\BB e_* = \beta(\el_R+1)$ is the target edge for the peeling process of $(Q,\BB e,\theta)$ with $\el_L$-white/$\el_R$-black boundary conditions.  Let $\wt \el_L = \wt \el_R = (\el_L + \el_R)/2$ and consider the peeling process of $(Q,\BB e,\theta)$ with $\wt\el_L$-white/$\wt\el_R$-black boundary conditions, which is a peeling process targeted at $\wt{\BB e}_*:= \beta(\wt\el_R-1)$.  Define the boundary length processes $\wt X^L , \wt X^R ,\wt Y^L , \wt Y^R, \wt W^L, \wt W^R$, and $\wt W$ as in Definition~\ref{def-bdy-process} for this peeling process.

Let $j_* $ be the smallest $j\in\BB N$ for which the peeling cluster $\dot Q_j$ disconnects $\BB e_*$ from $\wt{\BB e}_*$.  The definition of the percolation peeling process implies that the peeling processes targeted at $\BB e_*$ and $\wt{\BB e}_*$ agree until time $j_*$.  In particular,
\eqb \label{eqn-retarget-agree}
\wt X^L|_{[0,j_*-1]} = X^L|_{[0,j_*-1]}  ,\quad \wt Y^L|_{[0,j_*-1]} = Y^L|_{[0,j_*-1]}  , \quad\op{and} \quad    \wt W^L|_{[0,j_*-1]} = W^L|_{[0,j_*-1]}  .
\eqe  
and similarly with $R$ in place of $L$. Furthermore,
\eqb \label{eqn-bubble-compare-time}
j_* = \min\left\{ j\in \BB N_0 : \wt Y^L_j \geq \el_L \right\} .
\eqe 
 \medskip
 
\noindent\textit{Step 3: regularity event for the re-targeted process.}
We will now define an event in terms of $(\wt W,\wt Y^L,\wt Y^R) |_{[0,j_*]}$ on which the event $E(C)$ in the statement of the lemma is likely to occur.  For $C_1 >0$, let $\wt E(C_1)$ be the event that the following is true.
\begin{enumerate}
\item $C_1^{-1} \el_L^{3/2} \leq   j_* \leq C_1 \el_L^{3/2}$. 
\item $\wt Y^L_{j_*-1} \leq \left(1 - C_1^{-1} \right) \el_L$ and $\wt Y^L_{j_*} \geq \left(1 + C_1^{-1} \right) \el_L$.
\item $\max_{j\in [0,j_*]} |\wt W_j|  \leq C_1 \el_L$. 
\end{enumerate}
If $\wt E(C_1)$ occurs, then condition~\ref{item-uneven-time} in the definition of $E(C_1)$ occurs.  Furthermore, by~\eqref{eqn-retarget-agree}, $Y^L_{j_*}=\wt Y^L_{j_*-1}$ and $\el_R -  Y^R_{j_*} =   \wt Y^L_{j_* } - \el_L$. Hence also condition~\ref{item-uneven-lower} in the definition of $E(C_1)$ occurs. The relation~\eqref{eqn-retarget-agree} shows also that $\max_{j\in [1,j_* -1]_{\BB Z }} |W_j| \leq C_1 \el_L$.  In other words, $\wt E(C_1)$ implies all of the conditions in the definition of $E(C_1)$ except possibly an upper bound for $\max_{j\in [j_* , \mcl J]_{\BB Z}} |W_j - W_{j_*}|$.

We now check that we have a suitable upper bound for $\max_{j\in [j_* , \mcl J]_{\BB Z}} |W_j - W_{j_*}|$ on $\wt E(C_1)$. 
The unexplored quadrangulation $\ol Q_{j_*}$ at time $j_*$ is the same as the quadrangulation disconnected from $\wt{\BB e}_*$ at time $j_*$ by the peeling process targeted at $\wt{\BB e}_*$, so on $\wt E(C_1)$, 
\eqbn
\#\mcl E(\bdy \ol Q_{j_*} )  \leq \wt W_{j_*} - \wt W_{j_*-1}  +2 \leq 2 C_1 \el_L + 2 .
\eqen
By the Markov property of peeling, the conditional law of $ \ol Q_{j_*} $ given $\mcl F_{j_*}$ is that of a free Boltzmann quadrangulation with simple boundary and given boundary length. Hence Lemma~\ref{lem-max-bdy-length} implies that we can find $C_2 = C_2(C_1, \ep) > 1$ such that 
\eqbn
\BB P\left[  \max_{j\in [j_* , \mcl J]_{\BB Z}} |W_j - W_{j_*}| \leq C_2 \el_L \,|\, \wt E(C_1) \right] \geq 1-\ep ,
\eqen
which means that 
\eqb \label{eqn-retarget-cond}
\BB P[E(C_1 + C_2) \,|\, \wt E(C_1) ] \geq 1-\ep . 
\eqe
Hence it remains only to choose $b = b(\ep) \in (0,1)$ and $C_1 = C_1(\ep) >1$ in such a way that $\BB P[\wt E(C_1)]$ is close to~$1$. 
 \medskip
 
\noindent\textit{Step 4: regularity event holds with high probability.}
We will estimate the probability of $\wt E(C_1)$ by comparison to the boundary length processes for the percolation peeling process on the UIHPQ$_{\op{S}}$,  which we recall are defined as in Definition~\ref{def-bdy-process} and are denoted with a superscript $\infty$.  Following~\eqref{eqn-bubble-compare-time}, let $j_*^\infty$ be the smallest $j\in\BB N_0$ for which $Y^{\infty,L}_j \geq \el_L$.  Also let $E^\infty(C_1)$ be defined in the same manner as $\wt E(C_1)$ as above but with $Y^{\infty,L}$, $Y^{\infty,R}$, $W^\infty$, and $j_*^\infty$ in place of $\wt Y^L,\wt Y^R,\wt W$, and $j_*$.  By Proposition~\ref{prop-stable-conv}, the process $W^\infty = (W^{\infty,L} , W^{\infty,R})$, rescaled as in~\eqref{eqn-bdy-process-rescale}, converges in law in the local Skorokhod topology to a pair of independent totally asymmetric $3/2$-stable processes with no positive jumps.  From this scaling limit result together with~\eqref{eqn-bdy-process-inf}, we find that for each $\ep > 0$ there exists $C_1  = C_1(\ep ) > 0$ such that $\BB P[E^\infty(C_1)] \geq 1 - \ep$ for every possible choice of $\el_L$. 

Choose $b = b(\ep) \in (0,1)$ in such a way that $b < \frac14 C_1^{-1} \ep$, so that $C_1 \el_L \leq \ep \wt\el_L = \ep \wt \el_R$.  Then $j_*$ is always less than the terminal time $\wt{\mcl J}$ of the peeling process targeted at $\beta(\wt \el_R + 1)$.  By Lemma~\ref{lem-perc-rn} applied with $(\wt\el_L, \wt \el_R)$ in place of $(\el_L , \el_R)$, we find that
\allb
\BB P\left[ \wt E(C_1) \right] 
\geq \BB E\left[  \left( \frac{ W^{\infty,L}_{j_*^\infty} + W^{\infty,R}_{j_*^\infty}  }{  \el_L +  \el_R} + 1     \right)^{-5/2}    \BB 1_{E^\infty(C_1)}    \right]  \geq (1 + o_{ \el_L +  \el_R} (1)) (1+\ep)^{-5/2} (1-\ep) . \label{eqn-retarget-sup}
\alle 
Here we recall that $W^{\infty,L}_{j_*^\infty} + W^{\infty,R}_{j_*^\infty} \leq C_1 \el_L \leq \ep \wt \el_L$ on $E^\infty(C_1)$ and that $\el_L+ \el_R = \wt \el_L +\wt \el_R$.  Combining~\eqref{eqn-retarget-cond} and~\eqref{eqn-retarget-sup}, possibly shrinking $C_1$ to deal with finitely many small values of $\ep$, and using that $\ep \in (0,1)$ is arbitrary concludes the proof. 
\end{proof}

\subsection{Tightness of the boundary length process in the finite boundary case}
\label{sec-sk-tight}

In the remainder of this section we assume we are in the setting of Proposition~\ref{prop-stable-conv-finite} (equivalently, the setting of Theorem~\ref{thm-perc-conv}).
With $\el_L^n , \el_R^n$ as in the discussion just above Proposition~\ref{prop-stable-conv-finite}, we set 
\eqbn
\frk l_L^n :=  \bcon^{-1} n^{-1/2} \el_L^n \quad \op{and} \quad \frk l_R^n :=  \bcon^{-1} n^{-1/2} \el_R^n , 
\eqen
so that $\frk l_L^n \rta \frk l_L$ and $\frk l_R^n \rta \frk l_R$. 

We also define the peeling clusters $\{\dot Q_j^n\}_{j\in\BB N_0}$, the unexplored quadrangulations $\{\ol Q_j^n\}_{j\in\BB N_0}$, the peeled edges $\{\dot e_j^n\}_{j\in\BB N}$, the filtration $\{\mcl F_j^n\}_{j \in \BB N_0}$, and the terminal time $\mcl J^n$ for the percolation peeling process of $(Q^n,\BB e^n,\theta^n)$ with $\el_L^n$-white/$\el_R^n$-black boundary conditions as in Section~\ref{sec-perc-peeling}. 
Also define the boundary length processes $X^{L,n} , X^{R,n}   ,Y^{L,n} ,Y^{R,n}$, and $W^n = (W^{L,n} , W^{R,n})$ as in Definition~\ref{def-bdy-process} with $(Q,\BB e, \theta) = (Q^n,\BB e^n,\theta^n)$ and the rescaled boundary length process $Z^n = (L^n,R^n)$ as in~\eqref{eqn-bdy-process-rescale}. 

We will often compare the boundary length processes for $(Q^n,\BB e^n,\theta^n)$ to the analogous processes for the percolation peeling process on the UIHPQ$_{\op{S}}$, which (as usual) we denote by an additional superscript $\infty$. 

In this subsection we will prove tightness of the law of the rescaled boundary length processes $Z^n$ in the Skorokhod topology, and in the next subsection we will complete the proof of Proposition~\ref{prop-stable-conv-finite} by identifying the law of a subsequential limit. 

\begin{lem} \label{lem-sk-tight}
The laws of the processes $Z^n = (L^n ,R^n)$ are tight in the Skorokhod topology on $[0,\infty)$. 
\end{lem}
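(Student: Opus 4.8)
The plan is to prove tightness of $Z^n = (L^n, R^n)$ in the Skorokhod topology on $[0,\infty)$ by a local absolute continuity comparison with the UIHPQ$_{\op{S}}$ boundary length processes $Z^{\infty,n}$, which are already known to converge (hence are tight) by Proposition~\ref{prop-stable-conv}. The subtlety is that the Radon-Nikodym derivative in Lemma~\ref{lem-perc-rn} blows up as the net boundary length $W_\iota^{\infty,L} + W_\iota^{\infty,R}$ approaches $-(\el_L^n + \el_R^n)$, i.e.\ near the terminal time $\mcl J^n$; so the comparison can only be run up to a stopping time that stays safely away from $\mcl J^n$, and then one must separately control the behavior of $Z^n$ on the remaining (late) time interval.

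First I would fix $T > 0$ and $\ep > 0$ and, using Lemma~\ref{lem-uneven-length} (together with the tightness of $Z^{\infty,n}$), produce a stopping time $\iota_n$ for $\{\mcl F_j^n\}$ such that with probability at least $1 - \ep$: (i) $\iota_n \leq \mcl J^n$; (ii) the rescaled time $\tcon^{-1} n^{-3/4}\iota_n$ exceeds $T$, or else the unexplored boundary length $\#\mcl E(\bdy\ol Q_{\iota_n}^n) = W_{\iota_n}^{L,n} + W_{\iota_n}^{R,n} + \el_L^n + \el_R^n$ is at least $c \, n^{1/2}$ for a constant $c = c(\ep) > 0$, so the Radon-Nikodym derivative~\eqref{eqn-perc-rn} is bounded by $C(\ep)$; and (iii) $\max_{j \leq \iota_n}|W_j^n|$ together with $\max_{\iota_n \leq j \leq \mcl J^n}|W_j^n - W_{\iota_n}^n|$ is $O(n^{1/2})$, which via Lemma~\ref{lem-max-bdy-length} controls the total fluctuation after $\iota_n$. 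On the event in (ii)–(iii), Lemma~\ref{lem-perc-rn} gives that the law of $\{Z^n_t\}_{t \leq \tcon^{-1}n^{-3/4}\iota_n}$ is absolutely continuous with respect to that of the corresponding restricted UIHPQ$_{\op{S}}$ process with bounded Radon-Nikodym derivative, so any Skorokhod-tightness criterion (an Aldous-type criterion, or directly the modulus-of-continuity-in-Skorokhod-sense estimate) that holds for $Z^{\infty,n}$ transfers to $Z^n$ restricted to $[0, \tcon^{-1}n^{-3/4}\iota_n]$.

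Next I would handle the time interval $[\tcon^{-1}n^{-3/4}\iota_n, \infty)$. By Lemma~\ref{lem-max-bdy-length} applied to the unexplored quadrangulation $\ol Q_{\iota_n}^n$ (whose conditional law given $\mcl F_{\iota_n}^n$ is free Boltzmann with boundary length $O(n^{1/2})$), with high probability $\max_{j \geq \iota_n}\#\mcl E(\bdy\ol Q_j^n)$, and hence $\max_{j \geq \iota_n}|W_j^n - W_{\iota_n}^n|$, is $O(n^{1/2})$; combined with~\eqref{eqn-unexplored-length} and the fact that $W^n$ is pinned at $(-\el_L^n, -\el_R^n)$ after $\mcl J^n$, this shows $Z^n$ does not move by more than a uniformly-controlled amount, in sup norm, after the rescaled time $\tcon^{-1}n^{-3/4}\iota_n$. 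Since on the good event either that rescaled time already exceeds $T$ (so nothing more is needed on $[0,T]$) or the increment of $Z^n$ on $[\tcon^{-1}n^{-3/4}\iota_n, \infty)$ is a single controlled step plus small fluctuations, one deduces that the restriction of $Z^n$ to $[0,T]$ satisfies the Aldous tightness criterion. As $T$ and $\ep$ are arbitrary, tightness in the Skorokhod topology on $[0,\infty)$ follows.

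I expect the main obstacle to be the construction and analysis of the stopping time $\iota_n$: one needs a stopping time that is simultaneously (a) measurable with respect to $\{\mcl F_j^n\}$, (b) with high probability strictly before $\mcl J^n$ and such that the unexplored boundary length at that time is of macroscopic order $n^{1/2}$ (so the Radon-Nikodym derivative is bounded), and (c) such that the process does essentially nothing — in a way that is uniform in $n$ — between $\iota_n$ and $\mcl J^n$. Lemma~\ref{lem-uneven-length} is designed precisely to furnish such a time (its conditions~\ref{item-uneven-time}–\ref{item-uneven-upper} encode exactly (b) and (c) after rescaling), so the real work is in plumbing these estimates together and checking that the failure probabilities can be made uniformly small; the transfer of the Skorokhod modulus estimate across the bounded Radon-Nikodym derivative is then routine.
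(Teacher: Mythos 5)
Your overall strategy — run a Radon–Nikodym comparison with the UIHPQ$_{\operatorname{S}}$ process up to a stopping time that keeps the unexplored boundary length of order $n^{1/2}$, then control the tail separately — is the paper's strategy. The gap is in how you construct the stopping time and what it delivers for the tail.

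You propose to get the stopping time directly from Lemma~\ref{lem-uneven-length}, and in step (iii) you settle for the bound $\max_{j\ge \iota_n}|W^n_j - W^n_{\iota_n}| = O(n^{1/2})$, which after rescaling by $n^{-1/2}$ is only $O(1)$. That is not enough. The Skorokhod compactness criterion (stated at the start of the paper's proof, and required as its third condition) demands that for each $\ep$ there be a $\delta$ such that $|Z^n_t - Z^n_s| \le \ep$ for all $s,t$ past some fixed rescaled time. An $O(1)$ (non-small) fluctuation after $\iota_n$ is compatible with the process oscillating by a macroscopic amount on arbitrarily short time intervals, which defeats the Skorokhod modulus $w'$; one cannot package this as ``a single controlled step plus small fluctuations,'' because nothing in your argument shows the post-$\iota_n$ fluctuations are small, only that they are bounded. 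Indeed, the constant $C(\ep)$ in Lemma~\ref{lem-uneven-length} gets \emph{larger} as $\ep \to 0$, so choosing $\ep$ small makes the boundary-length bound at $j_*$ worse, not better. Lemma~\ref{lem-max-bdy-length} then only transfers a boundary length of order $C n^{1/2}$ to a post-stopping-time fluctuation of order $C' n^{1/2}$ — again non-small.

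What is actually needed is a stopping time at which the unexplored boundary length is simultaneously (a) bounded below by $\alpha_0 \bcon n^{1/2}$ (so the Radon–Nikodym derivative stays bounded) and (b) bounded above by $\alpha_1 \bcon n^{1/2}$ \emph{where $\alpha_1$ can be made as small as desired}. This is exactly what the paper's stopping time $I^n_{\alpha_0,\alpha_1}$ from~\eqref{eqn-discrete-close-time} and Lemma~\ref{lem-short-bdy-time} provide. Producing it is not immediate from Lemma~\ref{lem-uneven-length}: the paper's proof of Lemma~\ref{lem-short-bdy-time} first runs the peeling process to time $J^n_r$ (so that the effective remaining left and right boundary lengths are both $O(rn^{1/2})$, using Lemma~\ref{lem-no-close-jump} to rule out overshoot), and \emph{only then} applies Lemma~\ref{lem-uneven-length} to the restarted process — for which $\el_L \wedge \el_R$ is now $O(rn^{1/2})$ rather than $O(n^{1/2})$, so condition~\ref{item-uneven-upper} of that lemma finally yields a fluctuation bound $O(rn^{1/2})$ that can be made small by taking $r$ small. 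Your proposal skips this re-targeting step and applies Lemma~\ref{lem-uneven-length} with the full $\el_L^n \wedge \el_R^n \asymp n^{1/2}$, which cannot yield a small tail; this is the concrete missing idea.
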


Recall that $Z^n$ is constant on  $[\tcon^{-1} n^{-3/4} \mcl J^n,\infty)$. We do not show that the law of the time $\tcon^{-1} n^{-3/4} \mcl J^n$ is tight, but we do show that $Z^n$ is likely to be nearly constant after a time which might be much smaller than $\tcon^{-1} n^{-3/4} \mcl J^n$ but which is typically of constant order.

We will deduce Lemma~\ref{lem-sk-tight} from the scaling limit for the UIHPQ$_{\op{S}}$ boundary length processes $Z^{\infty,n}$ together with local absolute continuity in the form of Lemma~\ref{lem-perc-rn}. However, some care is needed since the Radon-Nikodym derivative in Lemma~\ref{lem-perc-rn} blows up when the boundary length $W^{L,n} + W^{R,n} + \el_L^n +\el_R^n $ of the unexplored quadrangulation is small, so this lemma does not immediately enable us to rule out pathological behavior of the process $W $ when it is close to $(-\el_L,-\el_R)$. To get around this issue, we need to analyze certain stopping times corresponding to when the percolation peeling process gets close, in some sense, to the terminal time. The stopping times introduced in this subsection and the estimates we prove for these times will also be used several times later in the paper. See Figure~\ref{fig-uneven-length}, right, for an illustration of these stopping times.
 
Let $\bcon$ and $\tcon$ be the normalizing constants from Section~\ref{sec-results} and for $r \geq 0$, let
\allb \label{eqn-discrete-hit-time}
J_r^n &:= \min\left\{ j \in \BB N_0 : Y_j^{L,n} \geq    \el_L^n -   r \bcon  n^{1/2}   \:\op{or} \: Y_j^{R,n} \geq   \el_R^n  -   r \bcon n^{1/2} \right\}   , \notag \\
J_r^{\infty,n} &:= \min\left\{ j \in \BB N_0 : Y_j^{\infty,L} \geq    \el_L^n -   r \bcon n^{1/2}    \:\op{or} \: Y_j^{\infty,R} \geq   \el_R^n  -   r \bcon  n^{1/2}   \right\} , \quad   \\
&\qquad \sigma_r^n := \tcon^{-1} n^{-3/4} J_r^n  , \quad \op{and} \quad   \sigma_r^{\infty,n} := \tcon^{-1} n^{-3/4} J_r^{\infty,n}   \notag  
\alle
We observe that $J_r^n$ is an $\{\mcl F_j^n\}_{j\in\BB N_0}$-stopping time.  For $r > 0$, $J_r^n $ is typically strictly less than the terminal time $\mcl J^n$ and for $r = 0$, $J_0^n  = \mcl J^n$.  Since $(\frk l_L^n , \frk l_R^n) \rta (\frk l_L ,\frk l_R)$ and by~\eqref{eqn-bdy-process-inf}, the time $\sigma_r^n$ is approximately the first time $t$ that either $L^n_t \leq -\frk l_L - r$ or $R^n_t \leq -\frk l_R - r$, and similarly for $\sigma_r^{\infty,n}$. 

The main fact we need about the times $J_r^n$ is the following overshoot lemma, which says that it is unlikely that either the left boundary length or the right boundary length of the unexplored quadrangulation at time $J_r^n$ is of smaller order than $n^{1/2}$. Heuristically, this means that the tip of the percolation peeling process cannot jump from an edge at macroscopic rescaled boundary length distance from the target edge to an edge at microscopic rescaled distance from the target edge. 
 
\begin{lem} \label{lem-no-close-jump}
For each $r>0$ and $\ep \in (0,1)$ there exists $\alpha = \alpha(r,\ep) \in (0,r]$, $A = A(r,\ep) >0$, and an $n_* = n_*(r,\ep) \in \BB N$ such that for $n\geq n_*$, 
\eqbn
\BB P\left[\el_L^n - Y^{ L,n}_{J_r^{ n} } \geq \alpha n^{1/2}  , \:  \el_R^n - Y^{R,n}_{J_r^{ n} } \geq \alpha n^{1/2} ,\: \op{and} \: J_r^n \leq A n^{3/4}  \right] \geq 1-\ep .
\eqen
\end{lem}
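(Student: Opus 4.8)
The plan is to prove this by comparison to the UIHPQ$_{\op{S}}$ via the Radon-Nikodym estimate of Lemma~\ref{lem-perc-rn}, reducing the statement to an analogous statement about the boundary length process on the UIHPQ$_{\op{S}}$, which in turn follows from the scaling limit Proposition~\ref{prop-stable-conv} together with the ladder/overshoot structure of $3/2$-stable processes with no upward jumps. First I would fix $r > 0$ and $\ep \in (0,1)$ and observe that, by~\eqref{eqn-bdy-process-inf}, the event we want to control is, up to additive errors of order $1$, the event that $L^n$ and $R^n$ are both bounded below by $-\frk l_L - r + \alpha$ and $-\frk l_R - r + \alpha$ respectively at the stopping time $\tau_r^n$, together with $\tau_r^n \leq A$. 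Since $J_r^n < \mcl J^n$ deterministically for $r > 0$, Lemma~\ref{lem-perc-rn} applies with $\iota = J_r^n$: the law of the peeling data up to time $J_r^n$ is absolutely continuous with respect to the corresponding UIHPQ$_{\op{S}}$ data up to time $J_r^{\infty,n}$, with Radon-Nikodym derivative $(1+o(1)) ( (W_{J_r^n}^{\infty,L} + W_{J_r^n}^{\infty,R})/(\el_L^n + \el_R^n) + 1)^{-5/2} \BB 1_{(J_r^n < \mcl J^{\infty,n})}$. On the event $\{J_r^{\infty,n} < \mcl J^{\infty,n}\}$ we have $W_{J_r^{\infty,n}}^{\infty,L} + W_{J_r^{\infty,n}}^{\infty,R} + \el_L^n + \el_R^n \geq r \bcon n^{1/2}$ (since at least one of the two covered-length coordinates is still at distance $\geq r \bcon n^{1/2}$ from its maximum), so the Radon-Nikodym derivative is bounded above by a deterministic constant $C(r,\frk l_L,\frk l_R)$. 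Hence it suffices to prove the analogous estimate for the UIHPQ$_{\op{S}}$ boundary length process: that there exist $\alpha$, $A$, and $n_*$ so that for $n \geq n_*$,
\eqbn
\BB P\left[ \el_L^n - Y_{J_r^{\infty,n}}^{\infty,L} \geq \alpha n^{1/2}, \ \el_R^n - Y_{J_r^{\infty,n}}^{\infty,R} \geq \alpha n^{1/2}, \ J_r^{\infty,n} \leq A n^{3/4} \right] \geq 1 - \ep/C .
\eqen

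Next I would establish this UIHPQ$_{\op{S}}$ statement. By Proposition~\ref{prop-stable-conv}, the rescaled process $Z^{\infty,n} = (L^{\infty,n}, R^{\infty,n})$ converges in law in the local Skorokhod topology to a pair $Z^\infty = (L^\infty, R^\infty)$ of independent totally asymmetric $3/2$-stable processes with no upward jumps, and by~\eqref{eqn-bdy-process-inf} the rescaled covered-length processes $(\bcon^{-1} n^{-1/2} Y^{\infty,L}_{\lfloor \tcon n^{3/4} \cdot \rfloor}, \bcon^{-1} n^{-1/2} Y^{\infty,R}_{\lfloor \tcon n^{3/4}\cdot\rfloor})$ converge jointly with $Z^{\infty,n}$ to $(-\inf_{s\leq\cdot} L^\infty_s, -\inf_{s\leq\cdot} R^\infty_s)$. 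The rescaled stopping time $\tau_r^{\infty,n}$ is (up to $o(1)$ boundary-length errors, since $\frk l_L^n \to \frk l_L$) an approximation of $\tau_r^\infty := \inf\{t : -\inf_{s\leq t} L^\infty_s \geq \frk l_L - r \text{ or } -\inf_{s\leq t} R^\infty_s \geq \frk l_R - r\}$, which is a.s.\ finite (the running infimum of each coordinate tends to $-\infty$) and is a.s.\ a continuity point of the infimum processes and not a jump time, so the convergence of stopping times and of the stopped processes can be transferred in the usual way. The point is now that at the continuum time $\tau_r^\infty$, the value of $-\inf_{s\leq\tau_r^\infty} L^\infty_s$ that has not yet reached $\frk l_L - r$ — say it is the $L$-coordinate — is a.s.\ strictly less than $\frk l_L - r$: indeed the running infimum $-\inf_{s\leq t} L^\infty_s$ is a continuous nondecreasing process (the infimum of a process with no upward jumps is continuous), so at the time $\tau_r^\infty$ determined by the \emph{other} coordinate hitting its level it takes some a.s.\ strictly smaller value. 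Therefore there is a deterministic $\alpha_0 = \alpha_0(r,\frk l_L,\frk l_R,\ep) > 0$ with $\BB P[\min(\frk l_L + (\inf_{s\leq\tau_r^\infty} L^\infty_s), \frk l_R + (\inf_{s\leq\tau_r^\infty}R^\infty_s)) \geq \alpha_0 \text{ and } \tau_r^\infty \leq A_0] \geq 1 - \ep/(2C)$ for suitable $A_0$; taking $\alpha$ slightly smaller than $\alpha_0$, $A$ slightly larger than $A_0$, and $n_*$ large, the weak convergence above (together with~\eqref{eqn-bdy-process-inf} to pass between $Y^{\infty,\cdot}$ and the running infima of $W^{\infty,\cdot}$) gives the displayed UIHPQ$_{\op{S}}$ estimate.

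Finally, combining the two steps: by the change of measure,
\eqbn
\BB P\left[ \el_L^n - Y^{L,n}_{J_r^n} \geq \alpha n^{1/2}, \ \el_R^n - Y^{R,n}_{J_r^n} \geq \alpha n^{1/2}, \ J_r^n \leq A n^{3/4} \right] \geq 1 - C \cdot \frac{\ep}{C} = 1 - \ep
\eqen
for $n$ large, after absorbing the $(1+o(1))$ factor in Lemma~\ref{lem-perc-rn} by a further (harmless) adjustment of $n_*$ and of the constants. The main obstacle I expect is the careful handling of the stopping-time convergence: one must check that $\tau_r^\infty$ is a.s.\ a point where the infimum processes are continuous and strictly increasing through the relevant level (so that $\tau_r^{\infty,n} \to \tau_r^\infty$ and the stopped covered-lengths converge), and that the $o(1)$ discrepancy between $\el_L^n/(\bcon n^{1/2})$ and $\frk l_L$ does not spoil the comparison near the threshold — this is where the slack between $\alpha$ and $\alpha_0$ and between $A$ and $A_0$ is used. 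The Radon-Nikodym step itself is routine given Lemma~\ref{lem-perc-rn} once one notes the uniform lower bound $r\bcon n^{1/2}$ on the unexplored boundary length at time $J_r^{\infty,n}$.
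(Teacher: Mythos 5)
Your plan rests on the following assertion, made just before the UIHPQ$_{\op{S}}$ reduction: ``On the event $\{J_r^{\infty,n} < \mcl J^{\infty,n}\}$ we have $W_{J_r^{\infty,n}}^{\infty,L} + W_{J_r^{\infty,n}}^{\infty,R} + \el_L^n + \el_R^n \geq r \bcon n^{1/2}$ (since at least one of the two covered-length coordinates is still at distance $\geq r\bcon n^{1/2}$ from its maximum).'' This is false, and it is precisely the point of the lemma. The time $J_r^{\infty,n}$ is only the first time that \emph{one} of the processes $Y^{\infty,L}, Y^{\infty,R}$ crosses its $r$-threshold; nothing prevents the single peeling step at time $J_r^{\infty,n}$ from covering a large number of boundary edges on \emph{both} the left and right arcs simultaneously (this corresponds to a peeling indicator of the form $(k_1,k_2,\infty)$ with both $k_1,k_2$ large, see \eqref{eqn-uihpq-peel-prob-asymp}), which would carry \emph{both} $\el_L^n - Y^{\infty,L}$ and $\el_R^n - Y^{\infty,R}$ down to a few units while keeping them positive. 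In that case the Radon--Nikodym factor in Lemma~\ref{lem-perc-rn} is of order $n^{5/4}$, not $O(1)$, so your change-of-measure step loses control. In other words, the lower bound you assert for the unexplored boundary length is not a consequence of the definitions; it is the content of the lemma.

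The paper handles this by first proving the needed lower bound for the unexplored boundary length \emph{directly for the free Boltzmann quadrangulation $Q^n$}, with no appeal to Radon--Nikodym comparison, and only afterwards invoking Lemma~\ref{lem-perc-rn}. Specifically, it introduces the overshoot quantities $K_r^{L,n}, K_r^{R,n}$ (which measure how far past the $r$-window the peeling cluster $\dot Q^n_{J_r^n}$ has penetrated on each side) and invokes a quantitative estimate from the companion paper (\cite[Lemma~3.9]{gwynne-miller-simple-quad}) to bound the probability that both overshoots are simultaneously large, obtaining a bound of order $r^{-7/2} n^{-1/2} \to 0$. This produces the event $\{(\el_L^n - Y^{L,n}_{J_r^n}) \vee (\el_R^n - Y^{R,n}_{J_r^n}) \geq cr n^{1/2}\}$ of high probability, and only on the UIHPQ$_{\op{S}}$ analog of that event is the Radon--Nikodym derivative actually bounded by a constant depending on $r$. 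Your steps two and three (the UIHPQ$_{\op{S}}$ overshoot estimate via Proposition~\ref{prop-stable-conv} and the final assembly via the change of measure) are essentially what the paper does, but without the missing input they do not close the argument. To repair the proposal you would need to supply an independent proof that the unexplored boundary length at time $J_r^n$ is $\gtrsim n^{1/2}$ with high probability, which is a nontrivial geometric estimate and cannot be obtained from the Radon--Nikodym comparison itself.
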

\begin{proof}
The idea of the proof is to compare $Y^{L,n}$ and $Y^{R,n}$ to the analogous processes for the UIHPQ$_{\op{S}}$ via Lemma~\ref{lem-perc-rn}. For this purpose we first need a lower bound of order $n^{1/2}$ for the total boundary length of the unexplored quadrangulation at time $J_r^n$, so that the Radon-Nikodym derivative of Lemma~\ref{lem-perc-rn} does not blow up.

Let $\beta^n$ be the counterclockwise periodic boundary path of $Q^n$ with $\beta^n(0) = \BB e^n$.  For $r > 0$, let $K_r^{L,n}$ (resp.\ $ K_r^{R,n}$) be the largest $k \in \BB N$ such that $\beta^n(- \lfloor \el_L^n - r \bcon n^{1/2} +k \rfloor )$ (resp.\ $\beta^n(\lfloor \el_R^n - r \bcon n^{1/2} + k \rfloor)$) belongs to $\dot Q^n_{J_r^n}$, or $0$ if no such $k\in\BB N$ exists. By the definition of~$J_r^n$, at least one of~$K_r^{L,n}$ or~$K_r^{R,n}$ is positive. Furthermore,
\eqb \label{eqn-overshoot-to-length}
 \el_L^n - Y^{L,n}_{J_r^n}  \geq    r \bcon n^{1/2} -  K_r^{L,n} \quad \op{and} \quad 
  \el_R^n - Y^{R,n}_{J_r^n} \geq     r \bcon n^{1/2} -   K_r^{R,n}  .
\eqe 
By~\cite[Lemma~3.9]{gwynne-miller-simple-quad} (applied with $2\el = \el_L^n + \el_R^n $, $a_L \asymp 2\el-a_R \asymp r n^{1/2}$, and $E$ the whole probability space), we can find a universal constant $c>0$ such that 
\eqb \label{eqn-use-overshoot}
\BB P\left[   r \bcon n^{1/2} - K_r^{L,n} \leq  c r n^{1/2} \,\op{and} \, r \bcon n^{1/2} -  K_r^{R,n} \leq  c r n^{1/2}   \right]  
\preceq   \frac{(\el_L^n+\el_R^n)^{5/2}  (r n^{1/2})^{3/2}  }{ (r n^{1/2})^{5/2}  (r n^{1/2})^{5/2} } 
\preceq r^{-7/2} n^{-1/2} .
\eqe 
In particular,~\eqref{eqn-overshoot-to-length} implies that we can find $n_* = n_*(r , \ep) \in \BB N$ such that for $n\geq n_*$, it holds with probability at least $1-\ep$ that
\eqb \label{eqn-total-length-min}
  (\el_L^n - Y^{ L ,n}_{J_r^{ n} }) \vee (\el_R^n - Y^{ R ,n}_{J_r^{ n} } )  \geq c r n^{1/2} .
\eqe
 
By Proposition~\ref{prop-stable-conv}, after possibly increasing $n_*$ we can find $\alpha  = \alpha(r,\ep) \in (0,r]$ and $A  =A(r,\ep) > 0$ such that for $n\geq n_*$, 
\eqb \label{eqn-big-jump-use-stable}
\BB P\left[ \el_L^n - Y^{\infty,L}_{J_r^{\infty,n} } \in \left[ 0 ,  \alpha n^{1/2}   \right]_{\BB Z} ,\:  \el_R^n - Y^{\infty,R}_{J_r^{\infty,n} } \in \left[ 0 ,  \alpha n^{1/2}   \right]_{\BB Z} ,\: \op{or} \:  J_r^{\infty,n} > A n^{3/4} \right] \leq  \frac12 \ep (c r)^{5/2} .
\eqe 
On the event that $(\el_L^n - Y^{\infty ,L }_{J_r^{\infty, n} }) \wedge (\el_R^n - Y^{\infty ,R }_{J_r^{\infty, n}} )  \geq 0$ and $ (\el_L^n - Y^{\infty ,L }_{J_r^{\infty, n} }) \vee (\el_R^n - Y^{\infty ,R }_{J_r^{\infty, n}} )  \geq c r n^{1/2} $, the Radon-Nikodym derivative of Lemma~\ref{lem-perc-rn} at time $J_r^{\infty,n}$ is bounded above by $(1+o_n(1)) (c r)^{-5/2}$.
By~\eqref{eqn-big-jump-use-stable}, after possibly increasing $n_*$ we can arrange that for $n\geq n_*$, 
\eqbn
\BB P\left[ \left\{ \el_L^n - Y^{ L,n}_{J_r^{ n} } \leq \alpha n^{1/2} , \:   \el_R^n - Y^{R,n}_{J_r^{ n} } \leq \alpha n^{1/2} ,  \: \op{or} \:  J_r^n > A n^{3/4} \right\} \cap \left\{  (\el_L^n - Y^{ L ,n}_{J_r^{ n} }) \vee (\el_R^n - Y^{ R ,n}_{J_r^{ n} } )  \geq c r n^{1/2}  \right\}  \right] \leq  \ep  
\eqen
with the implicit constant depending only on $(\frk l_L , \frk l_R)$. Since $\ep \in (0,1)$ is arbitrary, we conclude by combining this last estimate with~\eqref{eqn-total-length-min}. 
\end{proof}

Lemma~\ref{lem-no-close-jump} is not quite sufficient for our purposes since we have not ruled out the possibility that one of $W_{J_r^n}^{L,n} + \el_L^n$ or $W_{J_r^n}^{R,n} + \el_R^n$ is much larger than $r n^{1/2}$. We next consider a stopping time at which $W_{J_r^n}^{L,n} + \el_L^n$ and $W_{J_r^n}^{R,n} + \el_R^n$ are necessarily proportional to one another.

For $0\leq \alpha_0 \leq \alpha_1 $, let $I_{\alpha_0,\alpha_1}^n$ be the smallest $j\in\BB N_0$ for which 
\allb \label{eqn-discrete-close-time}
& \alpha_0 \bcon n^{1/2}   \leq \el_L^n -   Y_j^{L,n}    ,\quad  W_j^{L,n} + \el_L^n \leq   \alpha_1  \bcon  n^{1/2}    ,  \quad \notag\\
& \alpha_0 \bcon n^{1/2}  \leq   \el_R^n - Y_j^{R,n} ,\quad \op{and} \quad  W_j^{R,n} + \el_R^n \leq   \alpha_1 \bcon n^{1/2}   ,
\alle
so that on the event $\{I_{\alpha_0,\alpha_1}^n < \infty\}$ we have both a lower bound for the number of edges of $\bdy \ol Q_{I_{\alpha_0,\alpha_1}^n}^n\cap \bdy Q^n$ lying to the left and right of the target edge $\BB e_*^n$ and an upper bound for the total number of edges of $\bdy \ol Q_{I_{\alpha_0,\alpha_1}^n}^n$. 
Analogously, let $I_{\alpha_0,\alpha_1}^{\infty,n}$ be the smallest $j\in\BB N_0$ such that 
\allb \label{eqn-discrete-close-time-infty}
& \alpha_0 \bcon n^{1/2}    \leq   \el_L^n - Y_j^{\infty,L} ,\quad  W_j^{\infty,L} + \el_L^n \leq  \alpha_1  \bcon n^{1/2}  ,  \quad \notag\\
& \alpha_0 \bcon  n^{1/2}  \leq   \el_R^n  - Y_j^{\infty,R}  ,\quad \op{and} \quad  W_j^{\infty,R}  + \el_R^n \leq   \alpha_1 \bcon n^{1/2}   .
\alle
Also define the rescaled times
\eqb \label{eqn-discrete-close-time-rescale}
\tau_{\alpha_0,\alpha_1}^n := \tcon^{-1} n^{-3/4} I_{\alpha_0,\alpha_1}^n \quad \op{and} \quad \tau_{\alpha_0,\alpha_1}^{\infty,n} := \tcon^{-1} n^{-3/4} I_{\alpha_0,\alpha_1}^{\infty,n} .
\eqe 

In contrast to the times $J_r^n$ of~\eqref{eqn-discrete-hit-time}, it is possible that $I_{\alpha_0,\alpha_1}^n = \infty$ (equivalently $I_{\alpha_0,\alpha_1}^n \geq \mcl J^n$). However, as the next lemma demonstrates, this is unlikely to be the case if we make an appropriate choice of $\alpha_0$ and $\alpha_1$. 
 
\begin{lem} \label{lem-short-bdy-time}
For each $\alpha_1 >0$ and each $\ep \in (0,1)$, there exists $\ol \alpha_0 = \ol \alpha_0(\alpha_1,\ep) \in (0,\alpha_1)$, $A = A(\alpha_1 ,\ep ) > 0$, and $n_* = n_*(\alpha_1 ,\ep) \in \BB N$ 
such that for each $\alpha_0 \in (0,\ol\alpha_0]$ and each $n\geq n_*$, $\BB P[I_{\alpha_0,\alpha_1}^n \leq A n^{3/4} ]\geq 1-\ep$.
\end{lem}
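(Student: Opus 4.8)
The plan is to reduce the finite-volume statement to the corresponding statement for the UIHPQ$_{\op{S}}$ via the Radon-Nikodym comparison of Lemma~\ref{lem-perc-rn}, so the first step is to establish the UIHPQ$_{\op{S}}$ analogue: for each $\alpha_1 > 0$ and $\ep \in (0,1)$ there exist $\ol\alpha_0$, $A$, and $n_*$ so that $\BB P[I_{\alpha_0,\alpha_1}^{\infty,n} \leq A n^{3/4}] \geq 1-\ep$ for $\alpha_0 \leq \ol\alpha_0$ and $n \geq n_*$. This is a purely stable-process statement once one passes to the scaling limit via Proposition~\ref{prop-stable-conv}: in the limit $W^{\infty,n}$, rescaled, converges to a pair of independent totally asymmetric $3/2$-stable processes with no upward jumps, so $(\el_L^n - Y_j^{\infty,L})$ and $(\el_R^n - Y_j^{\infty,R})$ converge (using~\eqref{eqn-bdy-process-inf}, which says $Y_j$ is within $3$ of the negative of the running minimum of $W_j$) to $\frk l_L - (-\inf_{s\leq t} L^\infty_s)$ and the analogous right quantity, and the rescaled $W^{\infty,L}_j + \el_L^n$, $W^{\infty,R}_j + \el_R^n$ converge to $L^\infty_t + \frk l_L$ and $R^\infty_t + \frk l_R$. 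So I need to show that for the limiting pair of stable processes, the first time $t$ at which simultaneously $\frk l_L + \inf_{s\leq t} L^\infty_s \geq \alpha_0$ (i.e. the running infimum of $L^\infty$ has not yet dropped below $-\frk l_L + \alpha_0$), $L^\infty_t + \frk l_L \leq \alpha_1$, and the two right-hand analogues hold, is finite with probability close to $1$ provided $\alpha_0$ is small; and moreover is bounded by a deterministic constant $A$ with probability close to $1$. This follows from regularity of the stable process: a totally asymmetric $3/2$-stable process with no upward jumps a.s.\ attains values arbitrarily close to (but above) any fixed level before its running infimum drops below that level, and one can just wait until the process is simultaneously close to $-\frk l_L$ from above in the $L$-coordinate and close to $-\frk l_R$ from above in the $R$-coordinate while neither running infimum has yet reached those levels; since the two coordinates are independent this happens at a finite (and, with high probability, bounded) time.

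The second step is to transfer this to the finite-volume setting. Here there is a subtlety identical to the one flagged in the discussion before Lemma~\ref{lem-max-bdy-length}: the Radon-Nikodym derivative in Lemma~\ref{lem-perc-rn} is $(1+o(1))\left(\frac{W_\iota^{\infty,L} + W_\iota^{\infty,R}}{\el_L+\el_R} + 1\right)^{-5/2} \BB 1_{(\iota < \mcl J^\infty)}$, which blows up precisely when the total boundary length $W^{\infty,L}_\iota + W^{\infty,R}_\iota + \el_L^n + \el_R^n$ of the unexplored quadrangulation becomes small. To handle this I would apply Lemma~\ref{lem-perc-rn} with the stopping time $\iota = I_{\alpha_0,\alpha_1}^n \wedge \lceil A n^{3/4} \rceil$ (which is a stopping time for $\{\mcl F_j^n\}$), and note that on the event $\{I_{\alpha_0,\alpha_1}^{\infty,n} \leq A n^{3/4}\}$ the corresponding UIHPQ$_{\op{S}}$ stopping time $I_{\alpha_0,\alpha_1}^{\infty,n}$ satisfies, by the defining inequalities~\eqref{eqn-discrete-close-time-infty}, the bounds $\alpha_0 \bcon n^{1/2} \leq \el_L^n - Y^{\infty,L}_{I_{\alpha_0,\alpha_1}^{\infty,n}}$ together with $W^{\infty,L}_{I_{\alpha_0,\alpha_1}^{\infty,n}} + \el_L^n \leq \alpha_1 \bcon n^{1/2}$ and the right analogues, so $\#\mcl E(\bdy \ol Q^{\infty}_{I_{\alpha_0,\alpha_1}^{\infty,n}}) = W^{\infty,L}_{I_{\alpha_0,\alpha_1}^{\infty,n}} + W^{\infty,R}_{I_{\alpha_0,\alpha_1}^{\infty,n}} + \el_L^n + \el_R^n \leq 2\alpha_1 \bcon n^{1/2}$ while simultaneously being at least $\asymp \alpha_0 n^{1/2}$ from below (since the left and right covered counts are each at most $\el_L^n - \alpha_0\bcon n^{1/2}$ and $\el_R^n - \alpha_0 \bcon n^{1/2}$). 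Hence on this event the Radon-Nikodym derivative at time $I_{\alpha_0,\alpha_1}^{\infty,n}$ is bounded above by a constant depending only on $\alpha_0$ (roughly $(\alpha_0 / (\frk l_L + \frk l_R))^{-5/2}$) up to the $1+o_n(1)$ factor. Therefore $\BB P[I_{\alpha_0,\alpha_1}^n \leq A n^{3/4}] \geq (1+o_n(1)) \cdot (\text{const})^{-1} \cdot \BB P[I_{\alpha_0,\alpha_1}^{\infty,n} \leq A n^{3/4}]$; this only gives a lower bound bounded away from zero, not close to $1$.

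To upgrade the lower bound to $1-\ep$ I would iterate: run the percolation peeling process, and each time it either reaches time $A n^{3/4}$ without having hit the target configuration~\eqref{eqn-discrete-close-time} or the unexplored boundary length becomes too small (which is controlled by Lemma~\ref{lem-no-close-jump}, giving the overshoot bound that rules out one of $\el_L^n - Y^{L,n}$ or $\el_R^n - Y^{R,n}$ being much smaller than $n^{1/2}$ at the relevant hitting time), use the Markov property of peeling to restart a fresh percolation peeling process on the unexplored quadrangulation $\ol Q^n_{J_r^n}$, which is conditionally a free Boltzmann quadrangulation with simple boundary; Lemma~\ref{lem-uneven-length} guarantees that at the stopping time produced there the two boundary-arc lengths are comparable, so we are in a genuinely finite-volume situation where the one-step success probability is bounded below by a positive constant $c > 0$ uniform in $n$. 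After $\lceil \log \ep / \log(1-c)\rceil$ attempts the failure probability is below $\ep$, and since each attempt adds at most $A n^{3/4}$ (or, more carefully, a constant-order amount of rescaled time by Lemma~\ref{lem-no-close-jump}) to the running time, after enlarging $A$ we conclude $\BB P[I_{\alpha_0,\alpha_1}^n \leq A n^{3/4}] \geq 1-\ep$. The main obstacle is precisely this last step: making the iteration rigorous requires carefully composing the Markov property of peeling with the scaling-limit and Radon-Nikodym inputs while keeping track of the accumulated time, and in particular one must ensure that restarting at a time when the boundary lengths are uneven does not destroy the uniform lower bound on the per-step success probability — which is exactly why Lemmas~\ref{lem-uneven-length} and~\ref{lem-no-close-jump} were set up in advance, so that at each restart we can first wait until the geometry is regular (comparable arc lengths, macroscopic unexplored boundary) before applying the scaling-limit estimate.
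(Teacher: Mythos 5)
Your plan --- establish a UIHPQ$_{\op{S}}$ analogue, transfer via Lemma~\ref{lem-perc-rn}, then iterate to upgrade a positive lower bound to $1-\ep$ --- diverges from the paper, and the iteration is the part that does not close. The paper's proof is one-shot and needs no iteration: it applies Lemma~\ref{lem-no-close-jump} to get an event $E_r^1$ of probability at least $1-\ep$ on which $J_r^n \leq \wt A n^{3/4}$, $\el_L^n - Y^{L,n}_{J_r^n} \geq \alpha(r) n^{1/2}$, and $\el_R^n - Y^{R,n}_{J_r^n} \geq \alpha(r) n^{1/2}$; it then conditions on $\mcl F^n_{J_r^n}$ and applies Lemma~\ref{lem-uneven-length} directly to the restarted process (a fresh free Boltzmann quadrangulation with boundary lengths $W^{L,n}_{J_r^n} + \el_L^n$ and $W^{R,n}_{J_r^n} + \el_R^n$, one of which is at most $r\bcon n^{1/2} + 2$). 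What you miss is that Lemma~\ref{lem-uneven-length} is itself a $1-\ep$ statement, and its conclusions~\ref{item-uneven-lower} and~\ref{item-uneven-upper} verify all four inequalities in~\eqref{eqn-discrete-close-time} at the produced time $j_*^n(r)$: condition~\ref{item-uneven-lower} gives the two lower bounds with $\alpha_0 \asymp \alpha(r)$, and condition~\ref{item-uneven-upper} together with $W^n_{\mcl J^n} = (-\el_L^n, -\el_R^n)$ gives the two upper bounds $W^{L,n}_{j_*^n(r)} + \el_L^n \leq C r n^{1/2}$ and $W^{R,n}_{j_*^n(r)} + \el_R^n \leq C r n^{1/2}$, which are $\leq \alpha_1\bcon n^{1/2}$ once $r$ is chosen small. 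This yields $I_{\alpha_0,\alpha_1}^n \leq j_*^n(r) \leq A n^{3/4}$ with probability at least $(1-\ep)^2$, which suffices since $\ep$ is arbitrary.

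Your iteration scheme has a genuine gap: after a failed attempt, the restarting state may have unexplored boundary length already too small or too uneven to permit another useful attempt (the process may be effectively at its terminal time), and Lemma~\ref{lem-no-close-jump} applies at the specific hitting time $J_r^n$, not at an arbitrary restart. The ``per-attempt success probability bounded below by $c>0$ uniformly in $n$'' that your argument requires is precisely what would need to be proved, and the re-regularization you propose (via Lemma~\ref{lem-uneven-length}) is what the paper does once, at the single controlled time $J_r^n$, where the hypotheses are verifiably met. You had the right two auxiliary lemmas in hand; the resolution is to notice that their conclusions are already $1-\ep$ statements and compose them once, rather than settling for a positive lower bound from the Radon--Nikodym comparison and iterating.
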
  
\begin{proof} 
By Lemma~\ref{lem-no-close-jump}, for each $r >0$ there exists $\alpha(r) = \alpha(r,\ep) \in (0,r]$, $\wt A = \wt A(r,\ep) >0$, and $n_* = n_*(r,\ep) \in \BB N$ such that for $n\geq n_*$, the event
\eqbn
E_r^1 := \left\{\el_L^n - Y^{ L,n}_{J_r^{ n} } \geq \alpha(r) n^{1/2} , \: \el_R^n - Y^{R,n}_{J_r^{ n} } \geq \alpha(r) n^{1/2} ,  \: \op{and} \:   J_r^n \leq \wt A n^{3/4} \right\}
\eqen
has probability at least $1-\ep$. It could be the case that $E_r^1$ occurs, but one of $W_{J_r^n}^L + \el_L^n$ or $W_{J_r^n}^R + \el_R^n$ is much larger than $r n^{1/2}$. To deal with this, we will apply Lemma~\ref{lem-uneven-length} to the process after time $J_{r}^n$ to produce a time after $J_r^n$ at which the left/right boundary lengths of the unexplored quadrangulation are comparable.

By the Markov property of peeling, if we condition on $\mcl F_{J_r^n}^n$, then the conditional law of $(\ol Q_{J_r^n}^n , \dot e_{J_r^n}^n)$ is that of a free Boltzmann quadrangulation with simple boundary and perimeter $W_{J_r^n}^{L,n} + W_{J_r^n}^{R,n} + \el_L^n + \el_R^n$. Furthermore, the conditional law of the percolation peeling process after time $J_r^n$ is that of a percolation peeling process from $\dot e_{J_r^n}$ to the target edge $\BB e_*^n$, i.e.\ a percolation peeling process in $\ol Q_{J_r^n}^n$ with $(W_{J_r^n}^{L,n} + \el_L^n)$-white/$(W_{J_r^n}^{R,n} + \el_R^n)$-black boundary conditions. By the definition~\eqref{eqn-discrete-hit-time} of $J_r^n$, either $W_{J_r^n} +\el_L^n \leq \el_L^n - Y_{J_r^n}^L + 2 \leq r \bcon  n^{1/2} + 2$ or the same holds with $R$ in place of $L$. Therefore, if $E_r^1$ occurs then 
\eqbn
 \alpha(r) n^{1/2}\leq (W_{J_r^n}^{L,n} + \el_L^n) \wedge (W_{J_r^n}^{R,n} + \el_R^n) \leq r \bcon   n^{1/2} +2 .
\eqen

By Lemma~\ref{lem-uneven-length} applied to the above conditional percolation peeling process after time $J_r^n$, there is a constant $C = C(\ep) > 0$, independent of $n$ and $r$, and a stopping time $j_*^n(r)  \geq J_r^n+1$ such that on $E_r^1$, it holds with conditional probability at least $1-\ep$ given $\mcl F_{J_r^n}^n$ that the following hold.
\begin{enumerate} 
\item \label{item-use-uneven-time} $j_*^n(r) - J_r^n \leq C r^{3/2}  n^{3/4}$. 
\item \label{item-use-uneven-lower} $\el_L^n - Y^{L,n}_{j_*^n(r) }  \geq  C^{-1}  \alpha(r)  n^{1/2} $ and $\el_R^n - Y^{R,n}_{j_*^n(r) }  \geq C^{-1} \alpha(r)  n^{1/2}$.
\item \label{item-use-uneven-upper} $ \max_{j\in [j_*^n(r) , \mcl J^n]_{\BB Z}} |W_j^n - W_{j_*^n(r)}^n|  \leq C r  n^{1/2} $.  
\end{enumerate} 
Let $E_r^2$ be the event that this is the case, so that $\BB P[E_r^1 \cap E_r^2] \geq (1-\ep)^2$. 

Given $\alpha_1 > 0$, choose $r>0$ small enough that $  C r \leq  \alpha_1 \bcon $. 
Since $W_{\mcl J^n}^n = (-\el_L^n , -\el_R^n)$, condition~\ref{item-use-uneven-upper} in the definition of $E_r^2$ implies that
\eqbn
W_{j_*^n(r)}^{L,n} + \el_L^n \leq  C r n^{1/2} \leq \alpha_1 \bcon  n^{1/2}  \quad\op{and} \quad W_{j_*^n(r)}^{R,n} + \el_R^n \leq C r  n^{1/2} \leq   \alpha_1 \bcon n^{1/2}  .
\eqen
By combining this with condition~\ref{item-use-uneven-lower}, we see that for $\alpha_0 \leq \ol \alpha_0 :=   C^{-1} \alpha(r) \bcon^{-1}$, the time $j_{*}^n(r)$ satisfies the conditions in the definition~\eqref{eqn-discrete-close-time} of $I_{\alpha_0,\alpha_1}^n$ on $E_r^1\cap E_r^2$, whence $I_{\alpha_0,\alpha_1}^n \leq j_*^n(r) \leq A n^{3/4}$ on this event for $A :=   C r^{3/2} +\wt A $. Since $\BB P[E_r^1 \cap E_r^2] \geq (1-\ep)^2$ and $\ep \in (0,1)$ is arbitrary, we obtain the statement of the lemma.
\end{proof}

\begin{proof}[Proof of Lemma~\ref{lem-sk-tight}]
Fix $\ep \in (0,1)$.  By the standard compactness criterion for the Skorokhod space, we must show that there is a $C > 0$, a $\delta>0$, and an $n_*\in\BB N$ such that for each $n\geq n_*$, it holds with probability at least $1-\ep$ that the following hold.
\begin{enumerate}
\item $\sup_{t \geq 0} |Z_t^n| \leq C$. 
\item There exists a partition $0 = t_0 < \dots < t_N = \delta^{-1}$ such that $t_k - t_{k-1} \geq \delta$ and $\sup_{s,t\in [t_{k-1} , t_k)} |Z^n_s - Z^n_t| \leq \ep$ for each $k \in [1,N]_{\BB Z}$. 
\item $|Z^n_t - Z^n_s | \leq \ep$ for each $s,t \geq \delta^{-1}$. 
\end{enumerate}

To this end, let $\alpha_1 =  \alpha_1(\ep) > 0$ to be chosen later, in a manner depending only on $\ep$. By Lemma~\ref{lem-short-bdy-time}, there exists $\alpha_0 \in (0,\alpha_1)$, $A = A(\alpha_1,\ep)>0$, and $n_0  = n_0(\alpha_1,\ep) \in\BB N$ such that for $n\geq n_*$, the stopping time defined in~\eqref{eqn-discrete-close-time} satisfies 
\eqb \label{eqn-use-short-time-sk}
\BB P\left[ I_{\alpha_0,\alpha_1}^n \leq A n^{3/4} \right] \geq 1-\ep.
\eqe 

If $I_{\alpha_0,\alpha_1}^n < \infty$, then since $\alpha_0 > 0$ necessarily $I_{\alpha_0,\alpha_1}^n  < \mcl J^n$. 
In fact, by definition, 
\eqbn
W_{I_{\alpha_0,\alpha_1}^n}^{L,n} + W_{I_{\alpha_0,\alpha_1}^n}^{R,n} +\el_L^n + \el_R^n \geq  \alpha_0  \bcon n^{1/2} 
\eqen
whenever $I_{\alpha_0,\alpha_1}^n < \infty$. Hence Lemma~\ref{lem-perc-rn} implies that there is an $n_1 = n_1(\alpha_1 ,\ep) \geq n_1$ such that for $n\geq n_1$, the law of $W^n|_{[0, I_{\alpha_0,\alpha_1}^n]_{\BB Z}}$ on the event $\{ I_{\alpha_0,\alpha_1}^n  <\infty\}$ is absolutely continuous with respect to the law of $W^\infty|_{[0, I_{\alpha_0,\alpha_1}^{\infty,n}]_{\BB Z}}$, with Radon-Nikodym derivative bounded above by 
\eqb \label{eqn-sk-tight-rn}
b \alpha_0^{-5/2} \BB 1_{  (I_{\alpha_0,\alpha_1}^{\infty,n}  < \mcl J^{\infty,n} ) } 
\eqe 
where here $b > 0$ is a constant depending only on $(\frk l_L , \frk l_R)$ and $\mcl J^{\infty,n} := \min\left\{ j \in \BB N_0 : Y_j^{\infty,L} \leq - \el_L^n \:\op{or} \: Y_j^{\infty,R} \leq - \el_R^n\right\} $ is as in~\eqref{eqn-infty-terminal-time}.  

By Proposition~\ref{prop-stable-conv}, the rescaled boundary length processes $Z^{\infty,n} = (L^{\infty,n}, R^{\infty,n})$ for the UIHPQ$_{\op{S}}$ converge in law as $n\rta\infty$ to a pair $Z^\infty= (L^\infty,R^\infty)$ of independent totally asymmetric $3/2$-stable processes with no upward jumps.  
From this, we infer that the rescaled times $\tau_{\alpha_0,\alpha_1}^{\infty,n} = \tcon^{-1} n^{-3/4} I_{\alpha_0,\alpha_1}^{\infty,n} $ from~\eqref{eqn-discrete-close-time-rescale} and $\tcon^{-1} n^{-3/4} \mcl J^{\infty,n}$ converge in law to the analogous times for the process $Z^\infty$.  

Consequently, there exists $n_2 = n_2(\alpha_1,\ep) \geq n_1$, $\wt C = \wt C(\alpha_0 ,\ep) > 0$ and $\wt\delta =\wt\delta(\alpha_0 , \ep) > 0$ such that with probability at least $1- b^{-1} \alpha_0^{ 5/2} \ep  $, the following is true.
\begin{enumerate}
\item $\sup_{t \in [0, \tau_{\alpha_0,\alpha_1}^{\infty,n} \wedge A]} |Z_t^{\infty,n}| \leq \wt C$.
\item There exists a partition $0 = t_0 < \dots < t_N = \tau_{\alpha_0,\alpha_1}^{\infty,n} \wedge A $ such that $t_k - t_{k-1} \geq \wt\delta$ and $\sup_{s,t\in [t_{k-1} , t_k)} |Z^{\infty,n}_s - Z^{\infty,n}_t| \leq \ep$ for each $k \in [1,N]_{\BB Z}$. 
\end{enumerate} 
By this,~\eqref{eqn-use-short-time-sk}, and the Radon-Nikodym derivative estimate~\eqref{eqn-sk-tight-rn}, with probability at least $1-2\ep$, 
\begin{enumerate} 
\item $\sup_{t \in [0,\tau_{\alpha_0,\alpha_1}^n]} |Z_t^n| \leq \wt C$ and $\tau_{\alpha_0,\alpha_1}^n \leq A$. \label{item-sk-tight-max}
\item There exists a partition $0 = t_0 < \dots < t_N = \tau_{\alpha_0,\alpha_1}^n$ such that $t_k - t_{k-1} \geq \wt\delta$ and $\sup_{s,t\in [t_{k-1} , t_k)} |Z^n_s - Z^n_t| \leq \ep$ for each $k \in [1,N]_{\BB Z}$. \label{item-sk-tight-partition}
\end{enumerate} 
Let $E$ be the event that the above conditions are satisfied, so that $\BB P[E]\geq 1-2\ep$. 

It remains to deal with the behavior of $Z^n$ after time $\tau_{\alpha_0,\alpha_1}^n$. 
By the Markov property of peeling, if we condition on $\mcl F_{I_{\alpha_0,\alpha_1}^n}^n$ then on the event $E$, the conditional law of the unexplored quadrangulation $(\ol Q_{I_{\alpha_0,\alpha_1}^n}^n , \dot e_{I_{\alpha_0,\alpha_1}^n+1}^n)$ is that of a free Boltzmann quadrangulation with simple boundary and perimeter at most 
\eqbn 
W_{I_{\alpha_0,\alpha_1}^n }^{L,n} + W_{I_{\alpha_0,\alpha_1}^n }^{R,n} + \el_L^n + \el_R^n \leq 2 \alpha_1 \bcon n^{1/2} ,
\eqen
where here we recall the definition~\eqref{eqn-discrete-close-time} of $I_{\alpha_0,\alpha_1}^n$. 
By Lemma~\ref{lem-max-bdy-length}, if we choose $\alpha_1 = \alpha_1(\ep)$ sufficiently small, then there is an $n_* =n_*(\alpha_1 ,\ep) \geq n_2$ such that for $n\geq n_*$, 
\eqbn
\BB P\left[ \sup_{t \geq \tau_{\alpha_0,\alpha_1}^n} |W_t^n - W_{\tau_{\alpha_0,\alpha_1}^n}^n| > \ep  \,|\, E \right] \leq \ep .
\eqen
We thus obtain the conditions required for tightness with $C = \wt C + \ep$, $\delta = \wt\delta \wedge A^{-1}$, and $3\ep$ in place of $\ep$. Since $\ep\in (0,1)$ is arbitrary this suffices. 
\end{proof}

\subsection{Identification of the limiting boundary length process}
\label{sec-stable-conv-finite}

In this subsection we will prove Proposition~\ref{prop-stable-conv-finite}.  By Lemma~\ref{lem-sk-tight}, in the setting of that proposition, for any sequence of positive integers tending to $\infty$, there is a subsequence $\mcl N$ along which $Z^n$ converges in law in the Skorokhod topology to a process $\wt Z = (\wt L , \wt R): [0,\infty)\rta\BB R^2$ as $\mcl N \ni n \rta\infty$.  Henceforth fix such a subsequence $\mcl N$ and such a process $\wt Z$. We must show that $\wt Z$ has the same law as the process $Z = (L,R)$ in Proposition~\ref{prop-stable-conv-finite}.

We will compare the laws of the processes $\wt Z$ and $Z$ to the law of a pair $Z^\infty=(L^\infty,R^\infty)$ of independent totally asymmetric $3/2$-stable processes with no upward jumps (which we recall from Proposition~\ref{prop-stable-conv} is the limit of the laws of the processes $Z^{\infty,n}$).  In analogy with~\eqref{eqn-discrete-hit-time}, for $r \geq 0$ let
\allb \label{eqn-continuum-hit-time}
 \wt\sigma_r &:= \inf\left\{ t \geq 0 : \wt L_t \leq - \frk l_L + r \: \op{or} \: \wt R_t \leq -\frk l_R+r\right\} \notag \\
 \sigma_r &:= \inf\left\{ t\geq 0 : L_t \leq  -\frk l_L + r \: \op{or} \:  R_t \leq -\frk l_R+r\right\}  \\
 \sigma_r^\infty &:= \inf\left\{ t \geq 0 : L_t^\infty \leq - \frk l_L + r \: \op{or} \: R_t^\infty \leq -\frk l_R+r\right\} . \notag 
\alle
By Theorem~\ref{thm-bdy-process-law}, the time $\sigma_0$ is the terminal time of $Z$, i.e.\ the total quantum natural time length of the corresponding $\SLE_6$ in a quantum disk. Furthermore, a.s.\ $Z_t = (-\frk l_L ,-\frk l_R)$ for each $t \geq  \sigma_0$. 

The following lemma is an analog of Lemma~\ref{lem-perc-rn} for the process $\wt Z$.

\begin{lem} \label{lem-perc-limit-rn}
For each $t\geq 0$, the law of $\wt Z|_{[0,t]}$ restricted to the event $\{t < \wt\sigma_0\}$ is absolutely continuous with respect to the law of $Z^\infty|_{[0,t]}$, with Radon-Nikodym derivative
\eqbn
\left( \frac{L_t^{\infty } + R_t^{\infty }}{\frk l_L  + \frk l_R }  + 1    \right)^{-5/2} \BB 1_{(t < \sigma_0^{\infty })} . 
\eqen 
\end{lem}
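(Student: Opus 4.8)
\textbf{Proof plan for Lemma~\ref{lem-perc-limit-rn}.}

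The plan is to obtain this continuum Radon-Nikodym statement as the limit of the discrete Radon-Nikodym statement in Lemma~\ref{lem-perc-rn}, using the scaling limit convergence $Z^n \rta \wt Z$ (along the subsequence $\mcl N$) and $Z^{\infty,n} \rta Z^\infty$ (Proposition~\ref{prop-stable-conv}), together with the fact that the Radon-Nikodym derivative $\left( \frac{W_\iota^{\infty,L} + W_\iota^{\infty,R}}{\el_L^n+\el_R^n} + 1 \right)^{-5/2}$ is, after rescaling, an explicit continuous functional of the rescaled process $Z^{\infty,n}$ evaluated at the rescaled stopping time. Fix $t \geq 0$ and let $F : \mcl D^2 \rta \BB R$ be a bounded continuous functional. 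The goal is to show
\eqbn
\BB E\left[ F(\wt Z|_{[0,t]}) \BB 1_{(t < \wt\sigma_0)} \right] = \BB E\left[ F(Z^\infty|_{[0,t]}) \left( \frac{L_t^\infty + R_t^\infty}{\frk l_L + \frk l_R} + 1 \right)^{-5/2} \BB 1_{(t < \sigma_0^\infty)} \right] .
\eqen
First I would take $\iota$ in Lemma~\ref{lem-perc-rn} to be the deterministic time $\iota = \lfloor \tcon n^{3/4} t \rfloor$, so that in the rescaled coordinates of Definition~\ref{def-bdy-process-rescale} the discrete identity reads
\eqbn
\BB E\left[ F(Z^n|_{[0,t]}) \BB 1_{(\lfloor \tcon n^{3/4} t \rfloor < \mcl J^n)} \right] = \BB E\left[ F(Z^{\infty,n}|_{[0,t]}) (1+o(1)) \left( \frac{L_t^{\infty,n} + R_t^{\infty,n}}{\frk l_L^n + \frk l_R^n} + 1 \right)^{-5/2} \BB 1_{(\lfloor \tcon n^{3/4} t \rfloor < \mcl J^{\infty,n})} \right] ,
\eqen
using $\bcon^{-1} n^{-1/2}(\el_L^n + \el_R^n) \rta \frk l_L + \frk l_R$ and $W_{\lfloor \tcon n^{3/4} t\rfloor}^{\infty,L} + W_{\lfloor \tcon n^{3/4} t\rfloor}^{\infty,R} = \bcon n^{1/2}(L_t^{\infty,n} + R_t^{\infty,n})$; note the event $\{\lfloor \tcon n^{3/4} t \rfloor < \mcl J^n\}$ equals $\{t < \tcon^{-1} n^{-3/4}\mcl J^n\}$, which is exactly $\{\tau_0^n > t\}$ in the notation of~\eqref{eqn-discrete-hit-time}, and similarly on the UIHPQ$_{\op{S}}$ side.

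The right-hand side is a bounded continuous functional of $Z^{\infty,n}$ composed with the event $\{t < \tau_0^{\infty,n}\}$; since $Z^{\infty,n} \rta Z^\infty$ in law in the local Skorokhod topology and the map $z \mapsto \left( F(z|_{[0,t]}), \frac{L_t^\infty + R_t^\infty}{\frk l_L+\frk l_R} + 1, \BB 1_{(t < \sigma_0^\infty)} \right)$ is a.s.\ continuous at $Z^\infty$ — here I would invoke that a $3/2$-stable process a.s.\ has no jump at the deterministic time $t$ and a.s.\ does not attain the level $-\frk l_L$ or $-\frk l_R$ exactly at time $t$, so that the indicator $\BB 1_{(t < \sigma_0^\infty)}$ is a.s.\ continuous — the right-hand side converges to $\BB E\big[ F(Z^\infty|_{[0,t]}) \big( \frac{L_t^\infty + R_t^\infty}{\frk l_L + \frk l_R} + 1 \big)^{-5/2} \BB 1_{(t < \sigma_0^\infty)} \big]$; the $(1+o(1))$ factor is harmless since it is deterministic conditional on the relevant event (or can be absorbed by the uniform integrability coming from the bound $\big( \tfrac{L_t^{\infty,n} + R_t^{\infty,n}}{\frk l_L^n + \frk l_R^n} + 1\big)^{-5/2} \leq C$ on $\{t < \tau_0^{\infty,n}\}$, which holds once the boundary length stays positive). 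On the left-hand side, $Z^n \rta \wt Z$ in law along $\mcl N$; I would check that $F(z|_{[0,t]}) \BB 1_{(t < \sigma_0(z))}$ is a.s.\ continuous at $\wt Z$, where $\sigma_0(z)$ is the hitting time of the set $\{L = -\frk l_L\} \cup \{R = -\frk l_R\}$. Passing to the limit along $\mcl N$ then yields the claimed identity, and since $F$ was an arbitrary bounded continuous functional this identifies the Radon-Nikodym derivative.

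The main obstacle, and the step requiring genuine care, is the continuity of the terminal-time indicators $\BB 1_{(t < \sigma_0)}$ and $\BB 1_{(t < \sigma_0^\infty)}$ under Skorokhod convergence. Skorokhod convergence does not in general preserve hitting times of closed sets, because of the possibility that the limiting process approaches the level $-\frk l_L$ (resp.\ $-\frk l_R$) from above without crossing it, or touches it exactly at time $t$. For the stable process $Z^\infty$ this is resolved by standard facts: a totally asymmetric $3/2$-stable process with no upward jumps crosses any level by a jump strictly past it with probability one on the relevant event, so $\sigma_0^\infty$ is an a.s.\ continuity point of the hitting-time functional, and $t$ is a.s.\ not equal to $\sigma_0^\infty$. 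For $\wt Z$ one cannot argue directly since its law is not yet known, but one can instead bound $\BB 1_{(t < \wt\sigma_0)}$ between $\limsup$ and $\liminf$ of $\BB 1_{(t < \tau_0^n)}$ using monotonicity in $r$ of the times $\wt\sigma_r$ and $\tau_r^n$ from~\eqref{eqn-discrete-hit-time} and~\eqref{eqn-continuum-hit-time}: for $r > 0$, convergence of hitting times of \emph{open} half-spaces is automatic, so $\tau_r^n \rta \wt\sigma_r$ in law jointly with $Z^n \rta \wt Z$, and then one lets $r \downarrow 0$, using Lemma~\ref{lem-no-close-jump} (or rather its consequence that $\wt Z$ a.s.\ makes a macroscopic downward jump across $-\frk l_L$ or $-\frk l_R$, hence $\wt\sigma_r \downarrow \wt\sigma_0$ a.s.\ and $\wt\sigma_0$ is not a local-time-zero crossing). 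This sandwiching argument, which requires establishing that $\wt Z$ cannot approach $-\frk l_L$ or $-\frk l_R$ continuously, is the technically delicate part; everything else is routine weak-convergence bookkeeping.
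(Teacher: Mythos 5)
Your high-level plan is the same as the paper's: apply Lemma~\ref{lem-perc-rn} with the deterministic stopping time $\iota=\lfloor \tcon n^{3/4} t\rfloor$, pass to the Skorokhod limit, and handle the boundary via an $r$-truncation. But two things in your write-up are off, and they suggest you don't yet see clearly why the $r$-truncation is there.

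First, your parenthetical claim that the Radon--Nikodym derivative $\bigl(\tfrac{L_t^{\infty,n}+R_t^{\infty,n}}{\frk l_L^n+\frk l_R^n}+1\bigr)^{-5/2}$ is bounded by a constant $C$ on $\{t<\tau_0^{\infty,n}\}$ is false. On that event the rescaled unexplored boundary length $L_t^{\infty,n}+R_t^{\infty,n}+\frk l_L^n+\frk l_R^n$ is strictly positive but can be arbitrarily small, so the derivative is unbounded and no uniform integrability follows. This is precisely why the paper truncates at $r>0$ \emph{before} taking $n\to\infty$: on $\{t<\tau_r^{\infty,n}\}$ one has, via~\eqref{eqn-bdy-process-inf}, $L_t^{\infty,n}+R_t^{\infty,n}\geq -\frk l_L^n-\frk l_R^n+2r-o_n(1)$, giving a deterministic bound $\preceq r^{-5/2}$ uniform in $n$. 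The truncation therefore does double duty: it makes the derivative bounded (allowing weak-convergence arguments) \emph{and} it puts the hitting-time indicator $\BB 1_{(t<\sigma_r^\infty)}$ in a regime where a.s.\ continuity is available for the $3/2$-stable limit (since $Z^\infty$ a.s.\ does not jump at time $t$ and first passage below a level occurs by a strict jump for a spectrally negative stable process). You present the truncation only as a patch for the indicator issue.

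Second, your final paragraph overstates what the $r\downarrow 0$ step needs. No statement about $\wt Z$ ``not approaching the levels continuously'' is required. Since $\wt L$ and $\wt R$ are c\`adl\`ag with no upward jumps, the infimum over $[0,t]$ is attained, and hence $\{t<\wt\sigma_0\}=\bigcup_{r>0}\{t<\wt\sigma_r\}$ is a purely set-theoretic identity; the same holds for $Z^\infty$. Once the $r$-truncated version of the Radon--Nikodym identity is in hand for every $r>0$, one just applies monotone convergence in $r$ to both sides. Nothing delicate about the boundary behaviour of $\wt Z$ enters. (Indeed, information about $\wt Z$ near the levels is deduced \emph{from} this lemma, not used to prove it.) With those two corrections, your argument matches the paper's proof.
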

\begin{proof} 
First note that the statement with $Z$ in place of $\wt Z$ follows from Theorem~\ref{thm-bdy-process-law}. 

By Proposition~\ref{prop-stable-conv}, if we define $\sigma_r^{\infty,n}$ as in~\eqref{eqn-discrete-hit-time}, then for each $r>0$ we have $(Z^{\infty,n} , \tau^{\infty,n}_r) \rta (Z^\infty,\sigma_r^\infty)$ in law as $n\rta\infty$. By Lemma~\ref{lem-perc-rn}, we find that for each $r > 0$ and each $t\geq 0$, the Radon-Nikodym derivative of the law of $Z^n|_{[0,t]}$ restricted to the event $\{t < \sigma_r^n\}$ with respect to the law of $Z^{\infty,n}|_{[0,t]}$ is given by
\eqbn
(1+o(1)) \left( \frac{L_t^{\infty,n} + R_t^{\infty,n}}{\frk l_L^n + \frk l_R^n}  + 1    \right)^{-5/2} \BB 1_{(t < \sigma_r^{\infty,n})} , 
\eqen
where the $o(1)$ tends to zero as $n\rta\infty$, at a deterministic rate (note that $L_t^{\infty,n} + \frk l_L^n$ and $R_t^{\infty,n} + \frk l_R^n$ are bounded away from 0 for $t < \sigma_r^{\infty,n}$). 
This Radon-Nikodym derivative is bounded above by a deterministic constant depending only on $r$, and $Z^\infty$ a.s.\ does not have a jump at time $t$. From this, we infer that for each $t\geq 0$ and each $r>0$, the law of $\wt Z|_{[0,t]}$ restricted to the event $\{t < \wt\sigma_r\}$ is absolutely continuous with respect to the law of $Z^\infty|_{[0,t]}$, with Radon-Nikodym derivative
\eqbn
\left( \frac{L_t^{\infty } + R_t^{\infty }}{\frk l_L  + \frk l_R }  + 1    \right)^{-5/2} \BB 1_{(t < \sigma_r^{\infty })}. 
\eqen
Since $\{t < \wt\sigma_0\} = \bigcap_{r > 0} \{t < \wt\sigma_r\}$, sending $r\rta 0$ shows that the same is true with $r = 0$. 
\end{proof}

We next establish some basic properties of the times $\wt\sigma_r$ from~\eqref{eqn-continuum-hit-time}. 

\begin{lem} \label{lem-endpoint-cont}
The times $\wt\sigma_r$ satisfy the following properties.
\begin{enumerate}
\item \label{item-endpoint-law} The times $\wt\sigma_0$ and $\sigma_0$ have the same law. 
\item \label{item-endpoint-const} Almost surely, $\wt Z_t = (-\frk l_L , -\frk l_R)$ for each $t\geq \wt\sigma_0$.  
\item \label{item-endpoint-strict} Almost surely, $\wt\sigma_r < \wt\sigma_0$ for each $r>  0$.  
\item \label{item-endpoint-lim} Almost surely, $\lim_{r\rta 0} \wt\sigma_r = \wt\sigma_0$. 
\end{enumerate}
Assertions~\ref{item-endpoint-const} through~\ref{item-endpoint-lim} are also true with $Z$ in place of $\wt Z$. 
\end{lem}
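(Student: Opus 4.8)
The plan is to prove the four assertions of Lemma~\ref{lem-endpoint-cont} by leveraging the Radon-Nikodym comparison of Lemma~\ref{lem-perc-limit-rn} together with the corresponding facts for the process $Z$, which come from Theorem~\ref{thm-bdy-process-law}. All four statements are really statements about the process $\wt Z$ that are already known for $Z$, and Lemma~\ref{lem-perc-limit-rn} shows that $\wt Z$ and $Z$ have the same local description (both being absolutely continuous with respect to $Z^\infty$ with the same Radon-Nikodym derivative up to any time $t < \wt\sigma_0$, resp.\ $t < \sigma_0$). The assertions for $Z$ in place of $\wt Z$ follow from Theorem~\ref{thm-bdy-process-law}: part~\ref{item-bead-process-time} gives that $\sigma_0$ is a.s.\ finite and equals the first hitting time of $-\frk l_L$ by $L$ (equivalently of $-\frk l_R$ by $R$), and that $Z_t = (-\frk l_L,-\frk l_R)$ for $t\geq\sigma_0$; the strictness $\sigma_r < \sigma_0$ for $r>0$ and the limit $\sigma_r\to\sigma_0$ follow because $L$ (being a $3/2$-stable-type process with only downward jumps, conditioned appropriately) is continuous and strictly decreasing in a neighborhood of its first passage below $-\frk l_L$, or more carefully because $\sigma_r$ is increasing in $r$, bounded above by $\sigma_0$, and any strict gap $\lim_{r\to 0}\sigma_r < \sigma_0$ would force $L$ to stay above $-\frk l_L + \ep$ on an interval ending at $\sigma_0$ and then jump down, contradicting the absence of downward jumps at the exact passage level combined with the left-continuity of running infima.

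For assertion~\ref{item-endpoint-law}, the natural approach is: for each $t \geq 0$ and each $r > 0$, express $\BB P[\wt\sigma_r > t]$ via Lemma~\ref{lem-perc-limit-rn} as an expectation of the Radon-Nikodym derivative against the law of $Z^\infty$ on the event $\{t < \sigma_r^\infty\}$ — but one must be slightly careful since Lemma~\ref{lem-perc-limit-rn} is stated for the restriction to $\{t<\wt\sigma_0\}$, so one should work with $\{\wt\sigma_r > t\} \subset \{\wt\sigma_0 > t\}$ and note $\{\wt\sigma_r > t\}$ is measurable with respect to $\wt Z|_{[0,t]}$. The same computation with $\sigma_r$ and $Z$ gives the identical expression, so $\BB P[\wt\sigma_r > t] = \BB P[\sigma_r > t]$ for all $t \geq 0$ and all $r > 0$, i.e.\ $\wt\sigma_r \overset{d}{=}\sigma_r$. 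To pass from $r > 0$ to $r = 0$, I would invoke assertion~\ref{item-endpoint-lim} for both $\wt Z$ and $Z$ (which must therefore be proven first, or in parallel): $\wt\sigma_r \uparrow \wt\sigma_0$ a.s.\ and $\sigma_r\uparrow\sigma_0$ a.s., hence the distributional identity passes to the limit.

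For assertion~\ref{item-endpoint-strict}: since $\{t < \wt\sigma_0\} = \bigcup_{r>0}\{t<\wt\sigma_r\}$ was already used in the proof of Lemma~\ref{lem-perc-limit-rn}, and since $r\mapsto\wt\sigma_r$ is non-increasing in the obvious sense (larger $r$ means the barrier is hit sooner, so actually $\wt\sigma_r$ is non-decreasing as $r\downarrow 0$), strictness $\wt\sigma_r < \wt\sigma_0$ for $r > 0$ should follow from the corresponding fact for $Z$ plus the absolute continuity: on the event $\{\wt\sigma_r = \wt\sigma_0\}$ one has $\wt\sigma_r < \infty$ (by assertion~\ref{item-endpoint-law} and finiteness of $\sigma_0$), and this event is measurable with respect to $\wt Z|_{[0,t]}$ intersected with $\{t<\wt\sigma_0\}$ for $t$ rational approaching $\wt\sigma_r$ from below — so Lemma~\ref{lem-perc-limit-rn} transfers its null probability from the $Z$ side. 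Actually the cleanest route is: $\BB P[\wt\sigma_r < \wt\sigma_0] = \lim_{s\downarrow r}\BB P[\wt\sigma_r < \wt\sigma_s] \leq \lim \dots$, comparing to $Z$ where $\{\sigma_r < \sigma_s\}$ has full probability by strict monotonicity of the hitting times of a process with no intervals of constancy before $\sigma_0$. Assertion~\ref{item-endpoint-const} is immediate once we know $\wt\sigma_0 < \infty$ a.s.\ and that $Z^n$ is constant on $[\tau^{-1}_{\op{con}} n^{-3/4}\mcl J^n, \infty)$ with $\tcon^{-1} n^{-3/4}\mcl J^n \geq \wt\sigma_0^n$-analog (the discrete terminal time), so the limit $\wt Z$ is a.s.\ constant on $[\wt\sigma_0,\infty)$; combined with $\wt\sigma_0 \overset{d}{=}\sigma_0$ and the fact that $Z$ equals $(-\frk l_L,-\frk l_R)$ after $\sigma_0$, and that Skorokhod convergence preserves the terminal value, we get $\wt Z_t = (-\frk l_L,-\frk l_R)$ for $t\geq\wt\sigma_0$. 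The main obstacle I anticipate is handling the measurability and limiting subtleties at the boundary of $\{t<\wt\sigma_0\}$ — specifically making the transfer of null sets from $Z$ to $\wt Z$ rigorous when the relevant events ($\{\wt\sigma_r = \wt\sigma_0\}$, constancy after $\wt\sigma_0$) are not themselves of the form "$\wt Z|_{[0,t]}$-measurable on $\{t<\wt\sigma_0\}$" — this requires a careful exhaustion by rational times and an application of the monotone/dominated convergence theorem to the Radon-Nikodym derivatives, and is where one must be most attentive that $Z^\infty$ a.s.\ has no jump at any fixed time.
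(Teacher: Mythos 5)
Your high-level diagnosis is correct that everything should follow from Lemma~\ref{lem-perc-limit-rn} and the known facts about $Z$, and your sketch for assertion~\ref{item-endpoint-lim} (no upward jumps $\Rightarrow$ infimum is attained on a compact interval, so a strict gap $\lim_{r\rta 0}\wt\sigma_r < \wt\sigma_0$ is impossible) matches the paper's argument. However, two of the other assertions are handled in a way that would not go through as written.

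For assertion~\ref{item-endpoint-law}, you propose to first prove $\wt\sigma_r \eqD \sigma_r$ for $r>0$ by applying the Radon--Nikodym comparison to the event $\{\wt\sigma_r > t\}$, and then pass to $r=0$ \emph{using assertion~\ref{item-endpoint-lim}}. This creates an awkward dependency (and your sketch of~\ref{item-endpoint-lim} itself presupposes $\wt\sigma_0 < \infty$, which is what~\ref{item-endpoint-law} is being used to supply). The paper's proof of~\ref{item-endpoint-law} avoids this entirely: Lemma~\ref{lem-perc-limit-rn} is already a statement about the restriction to $\{t<\wt\sigma_0\}$, so taking the full event gives directly, for every fixed $t$,
$\BB P[t<\wt\sigma_0] = \BB E\bigl[\bigl(\tfrac{L_t^\infty + R_t^\infty}{\frk l_L + \frk l_R}+1\bigr)^{-5/2}\BB 1_{(t<\sigma_0^\infty)}\bigr] = \BB P[t<\sigma_0]$. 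No $r>0$ and no appeal to~\ref{item-endpoint-lim} is needed.

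For assertions~\ref{item-endpoint-const} and~\ref{item-endpoint-strict} there is a genuine gap. You want to transfer the null events $\{\wt\sigma_r=\wt\sigma_0\}$ and $\{\exists t\geq\wt\sigma_0:\wt Z_t\neq(-\frk l_L,-\frk l_R)\}$ from the $Z$-side using the Radon--Nikodym density, but these events are \emph{not} of the form ``$\wt Z|_{[0,t]}$-measurable and contained in $\{t<\wt\sigma_0\}$'' for any fixed $t$; they concern the behavior of $\wt Z$ at and after the terminal time, which is exactly where the density $\bigl(\tfrac{L_t^\infty+R_t^\infty}{\frk l_L+\frk l_R}+1\bigr)^{-5/2}\BB 1_{(t<\sigma_0^\infty)}$ blows up and the absolute continuity degenerates. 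You correctly flag this as ``the main obstacle,'' but the ``careful exhaustion by rational times'' you propose would have to take $t$ up to (and effectively past) $\wt\sigma_0$, and the dominated convergence you invoke has no integrable dominating function once the density is unbounded near $\sigma_0^\infty$. Similarly, your approach to~\ref{item-endpoint-const} hinges on controlling the rescaled discrete terminal time $\tcon^{-1}n^{-3/4}\mcl J^n$, but the paper explicitly notes (after Lemma~\ref{lem-sk-tight}) that this quantity is \emph{not} shown to be tight. What the paper actually does for~\ref{item-endpoint-const} and~\ref{item-endpoint-strict} is qualitatively different: it uses Lemma~\ref{lem-short-bdy-time} to produce a discrete stopping time $I_{\alpha_0,\alpha_1}^n$ (rescaled to $\tau_{\alpha_0,\alpha_1}^n$) at which the unexplored boundary length is bounded both above by $\asymp \alpha_1 n^{1/2}$ and below by $\asymp \alpha_0 n^{1/2}$, combines it with Lemma~\ref{lem-max-bdy-length} to bound the total fluctuation of $W^n$ \emph{after} $\tau_{\alpha_0,\alpha_1}^n$ by an arbitrarily small amount with high probability, and then passes to the Skorokhod limit of $Z^n$. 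Since $\alpha_1\leq r$ and $\alpha_0>0$, the limiting time $\wt\tau_{\alpha_0,\alpha_1}$ satisfies $\wt\sigma_r\leq\wt\tau_{\alpha_0,\alpha_1}<\wt\sigma_0$, and $\wt Z$ changes by at most $\ep$ after it; sending $r,\ep\to 0$ simultaneously yields both~\ref{item-endpoint-const} and~\ref{item-endpoint-strict}. This pre-limiting truncation at a controlled boundary length is the idea your proposal is missing, and it is what lets the argument sidestep the blow-up of the Radon--Nikodym derivative entirely.
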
 
\begin{proof}
By Lemma~\ref{lem-perc-limit-rn} and Theorem~\ref{thm-bdy-process-law}, for each $t\geq 0$, 
\eqbn
\BB P\left[t <  \wt\sigma_0 \right] = \BB E\left[ \left( \frac{L_t^{\infty } + R_t^{\infty }}{\frk l_L  + \frk l_R }  + 1    \right)^{-5/2} \BB 1_{(t < \sigma_0^{\infty })} \right] = \BB P\left[ t < \sigma_0 \right] 
\eqen
which yields assertion~\ref{item-endpoint-law}. 

To prove the other assertions, fix $\ep \in (0,1)$. By Lemma~\ref{lem-short-bdy-time} and Lemma~\ref{lem-max-bdy-length} (applied to the unexplored quadrangulation at time $I_{\alpha_0,\alpha_1}^n$), for each $r>0$ there exists $0 < \alpha_0 < \alpha_1 \leq r$ and $A > 0$ such that with $\tau_{\alpha_0,\alpha_1}^n$ as in~\eqref{eqn-discrete-close-time-rescale}, it holds for large enough $n\in\BB N$ that
\eqbn
\BB P\left[ \tau_{\alpha_0,\alpha_1}^n \leq A \:\op{and} \: \sup_{t\geq \tau_{\alpha_0,\alpha_1}^n} |Z_t^n - Z_{\tau_{\alpha_0,\alpha_1}^n}^n | \leq \ep \right] \geq 1-\ep .
\eqen
Passing to the scaling limit along the subsequence $\mcl N$ and recalling~\eqref{eqn-bdy-process-inf} shows that with $\wt\tau_{\alpha_0,\alpha_1}$ the smallest $t \geq 0$ for which
\alb
 \frk l_L  + \inf_{s\in [0,t]} \wt L_s \geq \alpha_0 ,\quad \wt L_t +\frk l_L \leq \alpha_1 , \quad
  \frk l_R  + \inf_{s\in [0,t]} \wt R_s \geq\alpha_0 ,\quad \op{and} \quad \wt R_t +\frk l_R \leq \alpha_1 ,
\ale 
it holds that
\eqbn
\BB P\left[ \wt\tau_{\alpha_0,\alpha_1}  < \infty\:\op{and} \: \sup_{t\geq \wt\tau_{\alpha_0,\alpha_1} } |\wt Z_t  - \wt Z_{\wt\tau_{\alpha_0,\alpha_1}} | \leq \ep      \right] \geq 1-\ep .
\eqen
Since $\alpha_1 \leq r$ and $\alpha_0 > 0$, we have $\sigma_r \leq  \wt\tau_{\alpha_0,\alpha_1} < \wt\sigma_0$ so
\eqbn
\BB P\left[ \wt\sigma_r  < \wt\sigma_0 ,\, \sup_{t\geq \wt\sigma_0 } |\wt Z_t  - \wt Z_{\wt\tau_{\alpha_0,\alpha_1}} | \leq \ep \right]  \geq 1-\ep .
\eqen
Since $r>0$ and $\ep\in (0,1)$ can be made arbitrarily small, we obtain assertions~\ref{item-endpoint-const} and~\ref{item-endpoint-strict}.  

To prove assertion~\ref{item-endpoint-lim}, let $\wt\sigma_0' := \lim_{r\rta 0} \wt\sigma_r$. By monotonicity $\wt\sigma_0'$ exists and is at most $ \wt\sigma_0$. On the other hand, either $\inf_{t\in [0,\wt\sigma_0']} \wt L_t = -\frk l_L$ or $\inf_{t\in [0,\wt\sigma_0']} \wt R_t = -\frk l_R$. Since $\wt L$ and $\wt R$ have no upward jumps, each of these functions attains its minimum on $[0,\wt\sigma_0']$ whence $\wt\sigma_0' = \wt\sigma_0$. 

Assertion~\ref{item-endpoint-const} with $Z$ in place of $\wt Z$ is true by definition, and assertion~\ref{item-endpoint-strict} with $Z$ in place of $\wt Z$ follows from Theorem~\ref{thm-bdy-process-law}. Assertion~\ref{item-endpoint-lim} with $Z$ in place of $\wt Z$ follows from the same argument as in the case of $\wt Z$. 
\end{proof}

We now transfer the Radon-Nikodym derivative formula from Lemma~\ref{lem-perc-limit-rn} from deterministic times to the stopping times of~\eqref{eqn-continuum-hit-time}.

\begin{lem} \label{lem-perc-limit-rn-stopping}
For each $r > 0$, the law of $\wt Z|_{[0,\wt\sigma_r]}$ is absolutely continuous with respect to the law of $Z^\infty|_{[0,\sigma_r^\infty]}$, with Radon-Nikodym derivative
\eqb \label{eqn-perc-limit-rn-stopping}
\left( \frac{  L_{\sigma_r^\infty}^{\infty } + R_{\sigma_r^\infty}^{\infty}    }{\frk l_L  + \frk l_R }  + 1    \right)^{-5/2}  \BB 1_{(\sigma_r^\infty < \sigma_0^\infty)} . 
\eqe 
The same is true with $Z$ in place of $\wt Z$. 
\end{lem}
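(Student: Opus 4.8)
The plan is to upgrade Lemma~\ref{lem-perc-limit-rn}, which identifies the Radon--Nikodym derivative at a fixed deterministic time, to the stopping times $\wt\sigma_r$ and $\sigma_r^\infty$ of~\eqref{eqn-continuum-hit-time} by a dyadic approximation plus optional stopping argument. Throughout, write $M_t := \left( \frac{L_t^\infty + R_t^\infty}{\frk l_L + \frk l_R} + 1 \right)^{-5/2}$, regarded as a functional of $Z^\infty|_{[0,t]}$ on the event $\{t < \sigma_0^\infty\}$, and $N_t := M_t \BB 1_{(t < \sigma_0^\infty)}$. One reads off from Lemma~\ref{lem-perc-limit-rn} that $\{N_t\}_{t\geq 0}$ is a nonnegative c\`adl\`ag supermartingale for the filtration generated by $Z^\infty$ with $\BB E[N_t] = \BB P[t < \wt\sigma_0]$ (the supermartingale inequality comes from $\{t<\wt\sigma_0\}\subseteq\{s<\wt\sigma_0\}$ for $s<t$). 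It suffices to prove that $\BB E[F(\wt Z|_{[0,\wt\sigma_r]})] = \BB E[F(Z^\infty|_{[0,\sigma_r^\infty]})\, N_{\sigma_r^\infty}]$ for every bounded continuous functional $F$ on the Skorokhod space, since such functionals determine the law of a stopped path and the claimed Radon--Nikodym derivative is exactly $N_{\sigma_r^\infty}$. The statement with $Z$ in place of $\wt Z$ then follows from the identical argument, because Lemma~\ref{lem-perc-limit-rn} and assertions~\ref{item-endpoint-const}--\ref{item-endpoint-lim} of Lemma~\ref{lem-endpoint-cont} also hold for $Z$ (the former via Theorem~\ref{thm-bdy-process-law}).

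For $k\in\BB N$ let $\wt\sigma_r^{(k)} := 2^{-k}\lceil 2^k \wt\sigma_r\rceil$ and $\sigma_r^{\infty,(k)} := 2^{-k}\lceil 2^k \sigma_r^\infty\rceil$ be the roundings up to the dyadic grid of mesh $2^{-k}$; these are stopping times valued in that grid, nonincreasing in $k$, with $\wt\sigma_r^{(k)}\downarrow\wt\sigma_r$ and $\sigma_r^{\infty,(k)}\downarrow\sigma_r^\infty$. Fix a dyadic time $s$ and note that the events $\{\wt\sigma_r^{(k)} = s\}$ and $\{s<\wt\sigma_0\}$ are, up to null sets, measurable with respect to $\wt Z|_{[0,s]}$, and that $\wt Z|_{[0,\wt\sigma_r^{(k)}]} = \wt Z|_{[0,s]}$ on $\{\wt\sigma_r^{(k)}=s\}$. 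Applying Lemma~\ref{lem-perc-limit-rn} at the deterministic time $s$ to the bounded measurable functional $\omega\mapsto F(\omega|_{[0,s]})\,\BB 1_{(\text{the dyadic rounding of }\sigma_r(\omega)\text{ equals }s)}$ and summing over $s$ gives
\eqbn
\BB E\left[ F\left(\wt Z|_{[0,\wt\sigma_r^{(k)}]}\right) \BB 1_{(\wt\sigma_r^{(k)} < \wt\sigma_0)}\right] = \BB E\left[ F\left(Z^\infty|_{[0,\sigma_r^{\infty,(k)}]}\right) N_{\sigma_r^{\infty,(k)}}\right] .
\eqen
Since $\wt\sigma_r < \wt\sigma_0$ a.s.\ by assertion~\ref{item-endpoint-strict} of Lemma~\ref{lem-endpoint-cont} and $\wt\sigma_r^{(k)}\downarrow\wt\sigma_r$, we have $\BB P[\wt\sigma_r^{(k)}\geq\wt\sigma_0]\to0$, so the indicator on the left may be dropped at the cost of an error $O(\|F\|_\infty\BB P[\wt\sigma_r^{(k)}\geq\wt\sigma_0]) = o_k(1)$; taking $F\equiv1$ also yields $\BB E[N_{\sigma_r^{\infty,(k)}}] = \BB P[\wt\sigma_r^{(k)}<\wt\sigma_0]\uparrow1$.

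It remains to let $k\to\infty$. On the left, $\wt Z|_{[0,\wt\sigma_r^{(k)}]}\to\wt Z|_{[0,\wt\sigma_r]}$ a.s.\ by right continuity of $\wt Z$, so bounded convergence gives $\BB E[F(\wt Z|_{[0,\wt\sigma_r^{(k)}]})]\to\BB E[F(\wt Z|_{[0,\wt\sigma_r]})]$. On the right, since $\sigma_r^{\infty,(k)}$ is nonincreasing in $k$ the random variables $N_{\sigma_r^{\infty,(k)}}$ form a backward supermartingale for the decreasing $\sigma$-fields $\sigma(Z^\infty|_{[0,\sigma_r^{\infty,(k)}]})$, $k\in\BB N$, with expectations confined to $[0,1]$; hence they converge in $L^1$, and by right continuity of $N$ their a.s.\ limit equals $N_{\sigma_r^\infty}$ (in particular $\BB E[N_{\sigma_r^\infty}]=1$). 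Combining the $L^1$ convergence of $N_{\sigma_r^{\infty,(k)}}$ with $F(Z^\infty|_{[0,\sigma_r^{\infty,(k)}]})\to F(Z^\infty|_{[0,\sigma_r^\infty]})$ a.s.\ and $\|F\|_\infty<\infty$, the right side converges to $\BB E[F(Z^\infty|_{[0,\sigma_r^\infty]})\,N_{\sigma_r^\infty}]$. This proves the desired identity for all bounded continuous $F$, and hence the lemma, with $N_{\sigma_r^\infty} = \left(\frac{L_{\sigma_r^\infty}^\infty + R_{\sigma_r^\infty}^\infty}{\frk l_L + \frk l_R}+1\right)^{-5/2}\BB 1_{(\sigma_r^\infty < \sigma_0^\infty)}$.

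The step I expect to be the main obstacle is the interchange of limit and expectation on the $Z^\infty$ side. The density $N_t$ is \emph{not} bounded --- it blows up as the unexplored boundary length $L_t^\infty + R_t^\infty + \frk l_L + \frk l_R$ approaches $0$ --- and $N$ is only a supermartingale, losing mass precisely at $\sigma_0^\infty$, so $N_{\sigma_r^{\infty,(k)}}$ cannot be controlled by a single dominating function; this is exactly why one passes through backward-supermartingale (reversed-martingale) convergence rather than dominated convergence. It is also what forces the indicator $\BB 1_{(\sigma_r^\infty<\sigma_0^\infty)}$: the $Z^\infty$-paths with $\sigma_r^\infty = \sigma_0^\infty$, which occur with positive probability because spectrally negative stable processes overshoot on downcrossings, have no counterpart on the $\wt Z$ side, where $\wt\sigma_r<\wt\sigma_0$ almost surely.
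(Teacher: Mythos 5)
Your proof is correct, and it starts from the same basic device the paper uses: rounding the stopping time up to the dyadic grid of mesh $2^{-k}$ and applying Lemma~\ref{lem-perc-limit-rn} slice by slice over the (countably many) possible values of the rounded time --- a step the paper compresses into a direct citation of Lemma~\ref{lem-perc-limit-rn} at the stopping time $t_k$, while you spell out the decomposition. Where you genuinely diverge is the passage to the limit $k\to\infty$. The paper localizes to $\{t_k^\infty < \sigma_\ep^\infty\}$, on which the density is deterministically bounded by $c(\frk l_L,\frk l_R)\,\ep^{-5/2}$, applies dominated convergence, and then removes the truncation by monotone convergence as $\ep\to 0$ (invoking assertion~\ref{item-endpoint-strict} and~\ref{item-endpoint-lim} of Lemma~\ref{lem-endpoint-cont}). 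You instead observe that $N_t = M_t\,\BB 1_{(t<\sigma_0^\infty)}$ is a nonnegative right-continuous supermartingale, apply optional stopping at the decreasing stopping times $\sigma_r^{\infty,(k)}$, and invoke backward supermartingale convergence to get $L^1$ convergence of $N_{\sigma_r^{\infty,(k)}}$ directly. This is a more structural route: it dispenses with the auxiliary truncation family altogether and recovers $\BB E[N_{\sigma_r^\infty}]=1$ in one stroke, at the cost of a slightly less elementary convergence theorem. Both arguments hinge, as you correctly note, on the a.s.\ pointwise convergence $N_{\sigma_r^{\infty,(k)}}\to N_{\sigma_r^\infty}$, which uses right continuity of $Z^\infty$ together with the observation that on the overshoot event $\{\sigma_r^\infty=\sigma_0^\infty\}$ both the approximants and the limit vanish. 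Your closing diagnosis --- that $N$ is unbounded, only a supermartingale, and loses mass exactly at $\sigma_0^\infty$, which is why a naive dominated-convergence argument is unavailable and why the indicator $\BB 1_{(\sigma_r^\infty<\sigma_0^\infty)}$ must appear --- is exactly the right way to see where the difficulty lives.
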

\begin{proof} 
For $k \in\BB N$, let $t_k = 2^{-k} \lceil 2^k \wt\sigma_r \rceil$ and $t_k^\infty = 2^{-k} \lceil 2^k \sigma_r^\infty \rceil$. Then each $t_k $ (resp.\ $t_k^\infty$) is a stopping time for $\wt Z$ (resp.\ $Z^\infty$) and $t_k$ (resp.\ $t_k^\infty$) a.s.\ decreases to $\wt\sigma_r$ (resp.\ $\sigma_r^\infty$) as $k\rta\infty$. By Lemma~\ref{lem-perc-limit-rn}, for each $k \in \BB N$ the law of $\wt Z|_{[0,t_k]}$ restricted to the event $\{t_k < \wt\sigma_0\}$ is absolutely continuous with respect to the law of $Z^\infty|_{[0,t_k^\infty ]}$, with Radon-Nikodym derivative
\eqb \label{eqn-stopping-approx-rn}
\left( \frac{L_{t_k^\infty}^{\infty } + R_{t_k^\infty}^{\infty }}{\frk l_L  + \frk l_R }  + 1    \right)^{-5/2} \BB 1_{(t_k^\infty < \sigma_0^{\infty })} . 
\eqe 

Now let $\ep  > 0$. On the event $\{t_k^\infty < \sigma_{\ep}^\infty  \}$, the Radon-Nikodym derivative~\eqref{eqn-stopping-approx-rn} is bounded above by a constant $c = c(\frk l_L , \frk l_R)>0$ times $\ep^{-5/2} $. By right continuity, a.s.\
\eqbn
\lim_{k\rta\infty} \left( \frac{L_{t_k^\infty}^{\infty } + R_{t_k^\infty}^{\infty }}{\frk l_L  + \frk l_R }  + 1    \right)^{-5/2} \BB 1_{(t_k^\infty < \sigma_{\ep}^\infty)}
=   \left( \frac{L_{\sigma_r^\infty}^{\infty } + R_{\sigma_r^\infty}^{\infty }}{\frk l_L  + \frk l_R }  + 1    \right)^{-5/2} \BB 1_{(\sigma_r^\infty < \sigma_{\ep}^\infty)} .
\eqen
By the discussion just above~\eqref{eqn-stopping-approx-rn} and the dominated convergence theorem, for each bounded, measurable, non-negative function $F$ on the state space for $\wt Z|_{[0,\wt\sigma_r]}$, 
\eqbn
\BB E\left[ F(\wt Z|_{[0,\wt\sigma_r]}) \BB 1_{(\wt\sigma_r  < \wt \sigma_{\ep})} \right] 
= \lim_{k\rta\infty} \BB E\left[ F(\wt Z|_{[0,\wt\sigma_r]}) \BB 1_{(t_k < \wt \sigma_{\ep})} \right]
= \BB E\left[F( Z^\infty|_{[0, \sigma_r^\infty]})   \left( \frac{L_{\sigma_r^\infty}^{\infty } + R_{\sigma_r^\infty}^{\infty }}{\frk l_L  + \frk l_R }  + 1    \right)^{-5/2} \BB 1_{(\sigma_r^\infty < \sigma_{\ep}^\infty )} \right].
\eqen
Sending $\ep \rta 0$ and applying the monotone convergence theorem shows that the law of $\wt Z|_{[0,\wt\sigma_r]}$ restricted to the event $\{\wt\sigma_r < \wt\sigma_0\}$ is absolutely continuous with respect to the law of $Z^\infty|_{[0,\sigma_r^\infty]}$, with Radon-Nikodym derivative as in~\eqref{eqn-perc-limit-rn-stopping}. By Lemma~\ref{lem-endpoint-cont} the event $\{\wt\sigma_r < \wt\sigma_0\}$ has probability 1. We thus obtain the statement of the lemma for $\wt Z$. 

The statement for $Z$ is proven in exactly the same manner. 
\end{proof}

 \begin{proof}[Proof of Proposition~\ref{prop-stable-conv-finite}]
By Lemma~\ref{lem-perc-limit-rn-stopping}, for each $r \geq 0$ the laws of $\wt Z|_{[0,\wt\sigma_r]}$ and $Z|_{[0,\sigma_r]}$ agree, where here $\wt\sigma_r$ and $\sigma_r$ are as in~\eqref{eqn-continuum-hit-time}. By assertion~\ref{item-endpoint-lim} of Lemma~\ref{lem-endpoint-cont}, $\wt Z|_{[0,\wt\sigma_0)}$ and $Z|_{[0,\sigma_0)}$ have the same law.
By Theorem~\ref{thm-bdy-process-law}, $\lim_{t \rta \sigma_0^-} Z_t = (-\frk l_L , -\frk l_R)$ and by assertion~\ref{item-endpoint-const} of Lemma~\ref{lem-endpoint-cont}, $\wt Z$ (resp.\ $Z$) is identically equal to $(-\frk l_L , -\frk l_R)$ after time $\wt\sigma_0$ (resp.\ $\sigma_0$). Hence $\wt Z \eqD Z$. Since our initial choice of subsequence was arbitrary we infer that $Z^n\rta Z$ in law.
\end{proof}

\section{Tightness in the GHPU topology}
\label{sec-ghpu-tight}

Throughout this section we assume we are in the setting of Theorem~\ref{thm-perc-conv}, so that $\frk l_L , \frk l_R > 0$ and $\{(\el_L^n ,\el_R^n)\}_{n\in\BB N}$ is a sequence of pairs of positive integers  such that $ \el_L^n + \el_R^n$ is always even, $ \bcon^{-1} n^{-1/2} \el_L^n  \rta\frk l_L$, and $    \bcon^{-1} n^{-1/2} \el_R^n   \rta \frk l_R$. 

For $n\in\BB N$ let $(Q^n ,\BB e^n , \theta^n)$ be a free Boltzmann quadrangulation with simple boundary with a critical face percolation configuration as in Theorem~\ref{thm-perc-conv}, and define the clusters $\{\dot Q_j^n\}_{j\in\BB N_0}$, the unexplored quadrangulations $\{\ol Q_j^n\}_{j\in\BB N_0}$, the peeled edges $\{\dot e_j^n\}_{j\in\BB N}$, the filtration $\{\mcl F_j^n\}_{j \in \BB N_0}$, and the terminal time $\mcl J^n$ for the percolation peeling process of $(Q^n,\BB e^n,\theta^n)$ with $\el_L^n$-white/$\el_R^n$-black boundary conditions as in Section~\ref{sec-perc-peeling}. 
Also define the boundary length processes $X^{L,n} , X^{R,n}   ,Y^{L,n} ,Y^{R,n}$, and $W^n = (W^{L,n} , W^{R,n})$ as in Definition~\ref{def-bdy-process} and the rescaled boundary length process $Z^n = (L^n,R^n)$ as in~\eqref{eqn-bdy-process-rescale}.

As in the discussion just above Theorem~\ref{thm-perc-conv}, let $\beta^n$ (resp.\ $\lambda^n$) be the boundary path (resp.\ percolation exploration path), and recall that $\lambda^n$ is defined on $\frac12 \BB N_0$. Also let $d^n$, $\mu^n$, $\xi^n$, and $\eta^n$, respectively, be the rescaled graph metric, area measure, boundary path, and percolation exploration path.

\begin{prop} \label{prop-ghpu-tight}
The laws of the doubly curve-decorated metric measure spaces $\frk Q^n = (Q^n , d^n , \mu^n ,  \xi^n , \eta^n)$ for $n\in\BB N$ are tight in the 2-curve GHPU topology.
\end{prop}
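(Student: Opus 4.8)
The plan is to reduce tightness of $\frk Q^n = (Q^n, d^n, \mu^n, \xi^n, \eta^n)$ in the $2$-curve GHPU topology to a single new input, namely equicontinuity in law of the rescaled percolation exploration paths $\eta^n$. We already know from~\cite{gwynne-miller-simple-quad} that $(Q^n, d^n, \mu^n, \xi^n)$ converges in law to a free Boltzmann Brownian disk, hence these are tight in the (one-curve) GHPU topology; combining with tightness of $\xi^n$ (which is part of that convergence), it suffices by a standard diagonal/Prokhorov argument to show: for every $\ep, \delta > 0$ there exist $\rho>0$ and $n_* \in \BB N$ such that for $n \geq n_*$, with probability at least $1-\delta$, $d^n(\eta^n(s), \eta^n(t)) \leq \ep$ whenever $|s-t| \leq \rho$ (together with a uniform bound on the $d^n$-diameter of $Q^n$, which is automatic from the scaling limit, and the fact that $\eta^n$ is eventually constant). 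Here one also uses that the GHPU topology on a family with a tight first three coordinates plus an equicontinuous-in-law last coordinate is tight — this is the multi-curve analog of the criterion in~\cite{gwynne-miller-uihpq}.

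To prove the equicontinuity estimate, I would fix a small increment $[(k-1)\delta, k\delta]$ of quantum natural time and bound $\op{diam}(\eta^n([(k-1)\delta,k\delta]); d^n)$ in three steps. First, control the boundary length increment: by Proposition~\ref{prop-stable-conv-finite} the rescaled boundary length process $Z^n = (L^n, R^n)$ converges in law (Skorokhod) to the $\SLE_6$-on-Brownian-disk boundary length process $Z$, which has no upward jumps and càdlàg paths, so with high probability all increments of $L^n$ and $R^n$ over windows of length $\delta$ are bounded by some $\ep' = \ep'(\delta)$ with $\ep' \to 0$ as $\delta \to 0$ except on finitely many ``macroscopic-jump'' windows, which can be absorbed into the exceptional event. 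Via~\eqref{eqn-bdy-process-inf} and~\eqref{eqn-unexplored-length} this bounds the number of boundary edges of $\ol Q^n$ swallowed or exposed during the increment, hence the length of the outer boundary of the increment of the peeling cluster, measured in rescaled boundary-length units. Second, convert this boundary-length bound into a $d^n$-diameter bound for the outer boundary of $\eta^n([(k-1)\delta,k\delta])$: this uses estimates for graph distances along the boundary of a quadrangulation with simple boundary, which follow from the corresponding Hölder-type estimates for the boundary path of the Brownian disk together with the scaling limit result for $(Q^n, d^n, \mu^n, \xi^n)$ (one may also invoke Lemma~\ref{lem-max-bdy-length} and the peeling estimates of Section~\ref{sec-fb-bdy-estimate} to control unexplored regions with atypically long boundary). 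Third, upgrade the diameter bound on the outer boundary of the increment to a diameter bound on the whole increment: since the Brownian disk has the topology of the disk, a subsequential limit of $Q^n$ is a.s.\ such that any small-outer-boundary-length sub-region has small diameter — i.e.\ the region enclosed by a short loop is small — and this ``no thin tentacles'' property transfers to $Q^n$ for large $n$ with high probability by the scaling limit. Summing over the $O(\delta^{-1})$ windows (and handling the eventually-constant tail of $\eta^n$ after the terminal time using Lemma~\ref{lem-short-bdy-time} and Lemma~\ref{lem-max-bdy-length}) yields the claimed equicontinuity.

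I would organize the exceptional events carefully: one event for the boundary length process being irregular (controlled by Proposition~\ref{prop-stable-conv-finite} and tightness of the number of macroscopic jumps), one for some unexplored quadrangulation along the exploration having anomalously long boundary (controlled by Lemma~\ref{lem-max-bdy-length} applied at the peeling times, via a union bound), one for the boundary-length-to-distance comparison failing (controlled by the scaling limit of $(Q^n,d^n,\mu^n,\xi^n)$ to the Brownian disk together with uniform modulus-of-continuity estimates for $\xi$), and one for the ``short loop encloses large region'' bad event (controlled by the disk topology of the Brownian disk and the scaling limit). The main obstacle is Step 3 — the passage from a bound on the outer-boundary length of an increment to a bound on its full $d^n$-diameter. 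At the discrete level one does not directly have a uniform-in-$n$ statement that ``a region of small boundary length has small diameter''; the cleanest route is to argue by contradiction along a subsequence: if $\op{diam}(\eta^n([(k-1)\delta,k\delta]); d^n)$ failed to be small with non-vanishing probability, then passing to a GHPU-subsequential limit of $(Q^n,d^n,\mu^n,\xi^n,\eta^n)$ — whose existence uses only the already-established tightness of the first four coordinates plus tightness of $\eta^n$ as a random compact set, not the full equicontinuity we are trying to prove — one would obtain a Brownian disk containing a closed set bounded by a loop of small length but of macroscopic diameter, contradicting the a.s.\ disk topology and the a.s.\ Hölder continuity of the metric. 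Making the interchange of limits rigorous, and ensuring one really extracts a genuine ``enclosed region'' in the limit (so that the outer boundary of the limiting increment is the limit of the outer boundaries), is the technical heart of the argument.
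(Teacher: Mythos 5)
Your overall strategy — reduce to equicontinuity of $\eta^n$, then run (boundary-length control) $\to$ (boundary diameter via H\"older estimates along $\bdy Q^n$) $\to$ (full diameter via a ``no pinch'' lemma and the disk topology of the Brownian disk) — is the same as the paper's (Proposition~\ref{prop-equicont} and Section~\ref{sec-equicont-proof}, with Lemma~\ref{lem-fb-bdy-holder} playing the role of your Step~2 and Lemma~\ref{lem-fb-pinch} playing the role of your Step~3). The contradiction argument you describe in Step~3 is essentially Lemma~\ref{lem-fb-pinch}; one small simplification the paper makes is that Lemma~\ref{lem-fb-pinch} is a statement purely about the quadrangulation $(Q^n,d^n)$, so there is no need to carry the curve $\eta^n$ along into the subsequential limit there.

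The real gap is in your Step~1, and it has two parts. First, Skorokhod convergence $Z^n\rta Z$ does not by itself give a uniform (over windows, with high probability) oscillation bound for the prelimiting discrete processes $L^n,R^n$; the paper instead proves quantitative oscillation estimates directly from the heavy-tailed peeling increments (Lemmas~\ref{lem-before-jump}--\ref{lem-equicont-reg-event-infty} in the UIHPQ$_{\op{S}}$, transferred to the finite-volume case via Lemma~\ref{lem-perc-rn}). Second, and more importantly, the statement that ``all increments of $L^n,R^n$ over $\delta$-windows are $\le\ep'(\delta)$ except on finitely many macroscopic-jump windows, which can be absorbed into the exceptional event'' does not work: the limiting processes are $3/2$-stable, so big downward jumps occur almost surely, and those windows cannot be placed in a small-probability exceptional set — you must control the curve there. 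The reason a big downward jump does not spoil the diameter bound is that it corresponds to the curve pinching off a large bubble, which the path increment does \emph{not} enter. The paper handles this by decomposing each $\delta n^{3/4}$-window at the times of big downward jumps (Lemma~\ref{lem-equicont-reg-event}), defining the sub-regions $\dot Q_{k,r}^n(\delta)$ that \emph{exclude} the cut-off bubbles (see~\eqref{eqn-quad-step-def}), showing each such sub-region has small boundary length and hence small boundary diameter (Lemma~\ref{lem-quad-step-bdy}), and bounding the number $\le \delta^{-\zeta}$ of sub-regions per window so that the boundary diameters can be summed (Proposition~\ref{prop-bdy-equicont}). Without this decomposition your ``outer boundary of the increment of the peeling cluster'' is not a well-controlled quantity, because the peeling cluster's boundary can have a single huge downward jump even over a tiny time window. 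You should replace Step~1 with the jump-time decomposition and a jump-count bound, at which point the rest of your outline becomes the paper's argument.
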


We already know from~\cite[Theorem~1.4]{gwynne-miller-simple-quad} that the laws of the curve-decorated metric measure spaces $\{(Q^n , d^n , \mu^n ,  \xi^n)\}_{n\in\BB N}$ are tight in the GHPU topology. By the 2-curve variant of the GHPU compactness criterion~\cite[Lemma~2.6]{gwynne-miller-uihpq} (which is proven in the same manner), we only need to check that the curves~$\eta^n$ are equicontinuous. For this purpose it suffices to prove the following proposition.

\begin{prop}
\label{prop-equicont}
For each $\ep \in (0,1)$, there exists $\delta > 0$ and $n_* \in \BB N$ such that for $n\geq n_*$, it holds with probability at least $1-\ep$ that  
\eqbn
\op{dist}\left( \lambda^n(i) ,\lambda^n(j) ; Q^n \right) \leq \ep n^{1/4} , \quad \forall i,j \in \frac12 \BB N_0 \: \op{with} \: |i-j| \leq \delta n^{3/4} 
\eqen
and with $\BB e_*^n = \beta^n(\el_R^n-1)$ the target edge,
\eqb \label{eqn-equicont-end}
\op{dist}\left( \lambda^n(i) ,  \BB e_*^n ; Q^n \right) \leq \ep n^{1/4} , \quad \forall i \in [ \delta^{-1} n^{3/4} , \infty)_{\frac12 \BB Z}  .
\eqe 
\end{prop}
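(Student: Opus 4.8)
\textbf{Proof proposal for Proposition~\ref{prop-equicont}.}

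The plan is to reduce the equicontinuity of $\lambda^n$ to an estimate on the diameter of the outer boundary of a small increment of the percolation exploration, then invoke the topology of the Brownian disk to upgrade a bound on boundary diameter to a bound on the full diameter of the increment. First I would introduce, for $j_1 < j_2$, the ``increment hull'' $\dot Q_{j_2}^n \setminus \dot Q_{j_1}^n$; its outer boundary (the part lying in $\bdy \ol Q_{j_1}^n$) has $\#\mcl E(\bdy\ol Q_{j_1}^n) - \#\mcl E(\bdy\ol Q_{j_2}^n)$ worth of ``moved'' edges, but more importantly the vertices $\dot v_{j_1+1}^n$ and $\dot v_{j_2+1}^n$ divide $\bdy\ol Q_{j_1}^n$ into left and right arcs whose edge-lengths are controlled by $|W^{L,n}_{j_2} - W^{L,n}_{j_1}|$ up to the running-minimum corrections of~\eqref{eqn-bdy-process-inf}. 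By Proposition~\ref{prop-stable-conv-finite}, after rescaling these boundary-length differences are close in law to increments of the left/right boundary length process $Z$ for SLE$_6$ on the Brownian disk, which are small with high probability when $\delta$ is small (using right-continuity of $Z$ and the fact that $Z$ has only finitely many jumps exceeding any fixed size in a compact time interval). So with probability close to $1$, all increments $\lambda^n([(k-1)\delta n^{3/4}, k\delta n^{3/4}])$ for $k \le \delta^{-2}$, say, have outer boundary of $d^n$-length at most $\ep^2$ (in rescaled units).

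Next I would translate a small bound on the \emph{length along the boundary} of the increment hull into a small bound on its $d^n$-\emph{diameter}. Along $\bdy\ol Q_{j_1}^n$, consecutive edges share a vertex, so a bound on the number of edges on each arc gives a path in $Q^n$ of that length connecting $\dot v_{j_1+1}^n$ to $\dot v_{j_2+1}^n$, hence $\op{dist}(\lambda^n(j_1), \lambda^n(j_2); Q^n)$ is directly bounded; but I also need the diameter of the whole hull, not just the distance between its endpoints, in order to control $\op{dist}(\lambda^n(i), \lambda^n(j); Q^n)$ for $i,j$ in the same increment. Here I would use the scaling limit of $(Q^n, d^n, \mu^n, \xi^n)$ to the Brownian disk together with the known fact that the Brownian disk has the topology of a disk: a simple closed-ish curve of small diameter in a space homeomorphic to $\ol{\BB D}$ bounds a region of small diameter, provided the region does not contain the ``bulk'' of the disk. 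Concretely, I would argue that the increment hull is contained in a connected component of $Q^n$ minus a short cycle (formed by the outer boundary of the increment together with a short boundary arc of $Q^n$), and that with high probability this component is the ``small'' side — this requires knowing the increment hull does not carry macroscopic area, which follows from Lemma~\ref{lem-max-bdy-length} applied to $\ol Q_{j_1}^n$ (bounded boundary length implies the unexplored region containing the complement has the dominant area) or more simply from the boundary-length bound plus a uniform modulus-of-continuity estimate for the Brownian disk metric near the boundary. I would phrase the quantitative version as: there is a (random but tight) modulus $\omega^n$ such that any connected subgraph of $Q^n$ whose boundary has $d^n$-length $\le s$ and which does not contain $\BB e_*^n$ on its boundary has $d^n$-diameter $\le \omega^n(s)$, with $\omega^n(s) \to 0$ as $s \to 0$ uniformly in $n$; this is a consequence of tightness of $\{(Q^n,d^n,\mu^n,\xi^n)\}$ and the disk topology of the limit, and is the analog of the boundary-to-interior distance estimates referenced in the outline as following from the Brownian disk.

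For the endpoint statement~\eqref{eqn-equicont-end}, I would use the times $I_{\alpha_0,\alpha_1}^n$ and $J_r^n$ from Section~\ref{sec-sk-tight}: by Lemma~\ref{lem-short-bdy-time}, for $\alpha_1$ small there is $A$ with $I^n_{\alpha_0,\alpha_1} \le A n^{3/4}$ with high probability, and at this time the unexplored quadrangulation $\ol Q^n_{I^n_{\alpha_0,\alpha_1}}$ has boundary length at most $2\alpha_1\bcon n^{1/2}$ with $\BB e_*^n$ on its boundary; by Lemma~\ref{lem-max-bdy-length} it has small diameter (and small area) with high probability, so for all later times $i \ge I^n_{\alpha_0,\alpha_1}$ the edge $\lambda^n(i)$ lies within the small-diameter set $\dot v^n_{I^n_{\alpha_0,\alpha_1}+1} \cup \ol Q^n_{I^n_{\alpha_0,\alpha_1}}$ containing $\BB e_*^n$, giving~\eqref{eqn-equicont-end} once $\delta^{-1} \ge A$. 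The main obstacle I anticipate is the topological step: making rigorous and quantitative the passage from ``short outer boundary'' to ``small diameter of the enclosed increment hull'', uniformly in $n$. The cleanest route is to prove it first as a statement about the Brownian disk (a null set of ``bad'' configurations, using that the complement of a Jordan curve in $\ol{\BB D}$ has exactly two components and the metric is continuous), then transfer via the GHPU scaling limit and a Skorokhod coupling; care is needed because the increment hull is a discrete object whose boundary is only approximately a cycle, and because one must simultaneously control all $\asymp \delta^{-2}$ increments, which is handled by a union bound after fixing $\delta$.
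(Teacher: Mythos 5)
Your high-level plan — bound the diameter of the boundary of each percolation-path increment, then use the disk topology of the limit to upgrade this to a diameter bound for the enclosed region, handling the terminal piece separately via $I^n_{\alpha_0,\alpha_1}$ — is essentially the strategy of the paper (Proposition~\ref{prop-bdy-equicont} plus Lemma~\ref{lem-fb-pinch}). However, there are two genuine gaps in the way you propose to bound the boundary of the increment.

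First, you claim that increments of $Z^n$ (hence of the outer boundary) are small with high probability when $\delta$ is small, citing right-continuity and the fact that $Z$ has finitely many large jumps in a compact interval. This is not correct: $Z$ is a c\`adl\`ag process with $3/2$-stable jumps, so over a time window of length $\delta n^{3/4}$ the net boundary length $W^n$ makes a downward jump of macroscopic size with probability of order $\delta$, and taking a union over the $\asymp \delta^{-1}$ windows these jumps happen somewhere with probability close to $1$. The increment hull $\dot Q_{j_2}^n\setminus\dot Q_{j_1}^n$ over such a window includes a bubble whose boundary has macroscopic length. The paper copes with this by further subdividing each $\delta$-window at the times $T^n_{k,r}(\delta)$ of jumps of size $\geq c\delta^{2/3}n^{1/2}$ (Lemmas~\ref{lem-equicont-reg-event-infty} and~\ref{lem-equicont-reg-event}) and working with the sets $\dot Q_{k,r}^n(\delta)$ of~\eqref{eqn-quad-step-def}, in which the disconnected bubbles are removed; the quantity that one can actually control over $[T^n_{k,r-1}(\delta), T^n_{k,r}(\delta)-1]$ is $\max_j|W_j^n-W^n_{T^n_{k,r-1}(\delta)}|$, of order $\delta^{2/3-\zeta}n^{1/2}$, and one needs to argue separately that the number of such subdivisions per window is at most $\delta^{-\zeta}$. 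Your proposal does not address this.

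Second, your claim that ``a bound on the number of edges on each arc gives a path in $Q^n$ of that length, hence $\op{dist}(\lambda^n(j_1),\lambda^n(j_2);Q^n)$ is directly bounded'' is quantitatively wrong. A boundary arc with $K \asymp \delta^{2/3}n^{1/2}$ edges gives a path of graph length $K$, hence a bound of order $\delta^{2/3}n^{1/2}$ on the graph distance, which vastly exceeds the target $\ep n^{1/4}$. What rescues the argument is the boundary H\"older estimate for the free Boltzmann Brownian disk (Lemma~\ref{lem-bd-bdy-holder}) and its discrete consequence Lemma~\ref{lem-fb-bdy-holder}, which say that a boundary arc of $K$ edges in a free Boltzmann quadrangulation of perimeter $\asymp n^{1/2}$ has graph-distance \emph{diameter} of order $K^{1/2}$ (up to log corrections), turning $\delta^{2/3}n^{1/2}$ edges into a rescaled diameter of order $\delta^{1/3}$. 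This must be applied at each subdivision time via the Markov property of peeling (each $\ol Q^n_{T^n_{k,r}(\delta)}$ is a free Boltzmann quadrangulation of controlled perimeter), as in Lemma~\ref{lem-quad-step-bdy}. You gesture at a ``modulus of continuity for the Brownian disk metric near the boundary,'' but you conflate it with the no-bottleneck statement (your $\omega^n$, which corresponds to Lemma~\ref{lem-fb-pinch} and which correctly requires the boundary to have small \emph{diameter}, not small $d^n$-length or edge count). These are two distinct ingredients and the first one is where the exponent $1/2$ gain comes from.

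A smaller inaccuracy: for the endpoint estimate you invoke Lemma~\ref{lem-max-bdy-length} to say $\ol Q^n_{I^n_{\alpha_0,\alpha_1}}$ has small diameter, but that lemma only controls boundary lengths; the diameter bound follows from the Markov property plus GHPU convergence of free Boltzmann quadrangulations to the Brownian disk (this is Lemma~\ref{lem-end-bdy-choice}). The overall structure of your endpoint argument is otherwise correct.
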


We expect, but do not prove, that when $\delta$ is small it holds with high probability that the terminal time $\mcl J^n$ is smaller than $\delta^{-1} n^{3/4}$, which implies that in fact $\lambda^n(i) = \BB e_*^n$ for each $i \geq \delta^{-1} n^{3/4}$. The slightly weaker statement~\eqref{eqn-equicont-end} is sufficient for our purposes.

The remainder of this section will be devoted to the proof of Proposition~\ref{prop-equicont}. We start in Section~\ref{sec-length-diam} by proving several estimates which reduce the problem of estimating diameters of segments of $\lambda^n$ to the problem of estimating boundary lengths of certain sub-quadrangulations of $Q^n$. In particular, we prove in Lemma~\ref{lem-fb-bdy-holder} a bound for the diameter of a boundary arc of a free Boltzmann quadrangulation with simple boundary in terms of its length; and an estimate for the maximal diameter of a sub-graph of such a quadrangulation in terms of the diameter of its boundary. These estimates are easy consequences of the convergence of free Boltzmann quadrangulations to the Brownian disk. 

In Section~\ref{sec-uihpq-jump-estimate}, we will use the basic estimates for peeling which we reviewed in Section~\ref{sec-peeling-estimate} to prove estimates for the boundary length processes for the percolation peeling process on the UIHPQ$_{\op{S}}$. In particular, we will show that the maximum of the magnitude of the boundary length process $W^\infty = (W^{L,\infty} ,W^{R,\infty})$ over an interval can only be unusually large if either $W^{L,\infty}$ or $W^{R,\infty}$ has a big downward jump in this interval; and that there cannot be too many such big downward jumps. 
 
In Section~\ref{sec-equicont-proof}, we will transfer the estimates of Section~\ref{sec-uihpq-jump-estimate} to estimates for the boundary length process of the percolation peeling process on $Q^n$ using the Radon-Nikodym derivative estimate of Lemma~\ref{lem-perc-rn}, then deduce Proposition~\ref{prop-equicont} from these estimates together with the estimates of Section~\ref{sec-length-diam}. A more detailed outline of the argument of this subsection appears at the beginning of the subsection.

\subsection{Estimates for distances in terms of boundary length}
\label{sec-length-diam}

In this subsection we will prove some basic estimates for the graph distances in free Boltzmann quadrangulations with simple boundary which are straightforward consequences of the GHPU convergence of these quadrangulations to the Brownian disk~\cite[Theorem~1.4]{gwynne-miller-uihpq}.
We first prove a quantitative bound for distances along the boundary in a free Boltzmann quadrangulation with simple boundary, which will follow from the following estimate for the Brownian disk.

 \begin{lem} \label{lem-bd-bdy-holder}
Let $(H,d,\mu,\xi)$ be a free Boltzmann Brownian disk with unit boundary length equipped with its natural metric, area measure, and boundary path. 
For $\zeta  >0$ and $C>0$,
\eqb \label{eqn-bd-bdy-holder}
\BB P\left[ \sup_{ 0 \leq s < t \leq \frk l}  \frac{ d(\xi(s) , \xi(t)) }{   (t-s)^{1/2}(| \log(t-s)| + 1 )^{7/4 +\zeta}   } >  C  \right] = o_C^\infty(C)
\eqe 
as $C\rta\infty$, at a rate depending only on $\zeta$. 
\end{lem}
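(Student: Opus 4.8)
The plan is to derive~\eqref{eqn-bd-bdy-holder} from a uniform two-point tail estimate for boundary distances in the free Boltzmann Brownian disk, and then to promote this to the stated uniform modulus of continuity by a multiscale chaining argument. Since $\frk l = 1$, I would first reduce to small boundary-length increments: for pairs $(s,t)$ with $t-s > 1/2$ the denominator in~\eqref{eqn-bd-bdy-holder} is at least $2^{-1/2}$, so on the event that the supremum in~\eqref{eqn-bd-bdy-holder} exceeds $C$ and is attained at such a pair one has $\op{diam}(H;d) > 2^{-1/2} C$, and the diameter tail bound $\BB P[\op{diam}(H;d) > C] = o_C^\infty(C)$ (with, in fact, stretched-exponential decay) is a standard consequence of the Brownian snake / continuum Schaeffer construction of the free Boltzmann Brownian disk in~\cite{bet-mier-disk,legall-disk-snake}. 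So the content is in controlling $d(\xi(s),\xi(t))$ for $t-s \leq 1/2$.

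The main step, and the main obstacle, is a uniform two-point estimate: for all $0 \leq s < t \leq 1$ and all $u \geq 1$,
\eqbn
\BB P\left[ d(\xi(s),\xi(t)) > u\,(t-s)^{1/2} \right] \leq g(u),
\eqen
where $g$ is independent of $(s,t)$ and decays super-polynomially as $u\rta\infty$ (one expects $g(u) \preceq \exp(-c u^{a})$ for suitable universal $c,a>0$). By the re-rooting invariance of the free Boltzmann Brownian disk we may take $s=0$. To bound $d(\xi(0),\xi(t))$ one uses a Markov-type decomposition of the Brownian disk along the boundary arc $\xi([0,t])$: conditionally on its boundary length $\frk l'$, the sub-surface of $(H,d)$ naturally associated with this arc (via the snake/Schaeffer encoding of~\cite{bet-mier-disk,legall-disk-snake}) is a free Boltzmann Brownian disk of perimeter $\frk l'$, with $\frk l'$ stochastically of order $t$; since $d(\xi(0),\xi(t))$ is at most the $d$-diameter of this sub-surface, the scaling identity ``free Boltzmann Brownian disk of perimeter $\frk l' \eqD (\frk l')^{1/2} \cdot$ (free Boltzmann Brownian disk of unit perimeter)'', combined with the diameter tail bound above, yields the estimate, the factor $(t-s)^{1/2}$ arising from $(\frk l')^{1/2} \asymp t^{1/2}$. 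Equivalently one can work directly with the Brownian snake: control $d(\xi(0),\xi(t))$ by increments of the label process restricted to a boundary arc (via the standard ``cactus'' upper bound for $d$) and use that this restricted label process is, after the appropriate time-change and conditioning, a Brownian-type process run for time of order $t$. Making either route precise --- pinning down the exact law of the cut-off boundary length $\frk l'$ (respectively of the boundary label process) and handling the conditioning --- is where the substance lies; in particular the sharp two-point modulus governs the exponent of the logarithmic correction and must be tracked carefully, which is the source of the power $7/4$.

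Finally, given the two-point estimate, a standard chaining argument yields~\eqref{eqn-bd-bdy-holder}. Fix $\zeta>0$. For each $m\in\BB N$ apply the two-point estimate to the $2^m$ dyadic pairs $(k2^{-m},(k+1)2^{-m})$, $0\leq k<2^m$, with a threshold of the form $u_m = C(m+1)^{\gamma}$ for an appropriate $\gamma$ matching the decay of $g$ (the statement being exactly the borderline choice $\gamma = 7/4+\zeta$), and take a union bound over $k$ and then over all $m\in\BB N$. One finds that outside an event of probability $o_C^\infty(C)$, with rate depending only on $\zeta$ (the universal constants in $g$ being absorbed), one has $d(\xi(k2^{-m}),\xi((k+1)2^{-m})) \leq C\,2^{-m/2}(m+1)^{7/4+\zeta}$ for all such $m,k$ simultaneously. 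For general $s<t$ with $2^{-m_0-1} < t-s \leq 2^{-m_0}$, writing $[s,t]$ as a union of at most two dyadic intervals at each scale $m\geq m_0$ and summing these bounds gives $d(\xi(s),\xi(t)) \preceq 2^{-m_0/2}(m_0+1)^{7/4+\zeta} \asymp (t-s)^{1/2}(|\log(t-s)|+1)^{7/4+\zeta}$, which is~\eqref{eqn-bd-bdy-holder}. The only delicate point in this step is the bookkeeping relating the per-scale logarithmic exponent to the exponent in the statement, but this is a routine computation once the two-point estimate is in hand.
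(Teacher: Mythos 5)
Your reduction to small increments is already wrong. The free Boltzmann Brownian disk with unit boundary length does not have super-polynomial diameter tails: its area $A=\mu(H)$ has density $\propto a^{-5/2}e^{-1/(2a)}\,da$, so $\BB P[A>a]\asymp a^{-3/2}$, and since conditionally on $A=a$ the diameter of $(H,d)$ is of order $a^{1/4}$ by Brownian scaling, $\BB P[\op{diam}(H;d)>C]$ decays only polynomially in $C$ (roughly like $C^{-6}$). In particular it is not $o_C^\infty(C)$ and certainly not stretched-exponential, so the bound you invoke to dispose of pairs $(s,t)$ with $t-s>1/2$ does not hold. The lemma is nevertheless true because $d(\xi(s),\xi(t))$ is a distance between two \emph{boundary} points, which has far better tails than $\op{diam}(H;d)$ --- boundary length scales like squared distance, so the boundary stays at bounded intrinsic distance from itself even when the interior is huge --- but bounding it above by $\op{diam}(H;d)$ throws that away.

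The chaining part is also underdetermined: to produce the logarithmic power $7/4+\zeta$ from a single chaining step you would need $\BB P[d(\xi(s),\xi(t))>u(t-s)^{1/2}]\leq\exp(-cu^{4/7})$ uniformly in $(s,t)$, and neither this form of decay nor the intermediate Markov decomposition (a free Boltzmann sub-disk cut off along a boundary arc of a free Boltzmann disk) is established by the references you cite. The paper does not reprove this modulus of continuity. It cites the proof of \cite[Lemma~3.2]{gwynne-miller-gluing}, which yields the stated bound, with exactly this logarithmic exponent, for the \emph{area-weighted} free Boltzmann Brownian disk by working directly with the continuum Schaeffer encoding processes, and then transfers to the unweighted law by a Cauchy--Schwarz change of measure: with $E_C$ the bad event, $\BB P[E_C]=\BB E[A^{1/2}\cdot A^{-1/2}\BB 1_{E_C}]\leq\BB E[A\BB 1_{E_C}]^{1/2}\BB E[A^{-1}]^{1/2}=o_C^\infty(C)$, using $\BB E[A^{-1}]<\infty$ for the density above. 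Some such device to absorb the heavy-tailed area is essential, and it is absent from your outline.
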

\begin{proof}
The analogous statement for a unit boundary length free Boltzmann Brownian disk weighted by its area (which is the random area Brownian disk in~\cite[Definition~3.1]{gwynne-miller-gluing}) follows from the proof of~\cite[Lemma~3.2]{gwynne-miller-gluing}.
That is, if we let $A  =\mu(H)$ be the total area of our given Brownian disk and we let $E_C$ be the event whose probability we are trying to bound in~\eqref{eqn-bd-bdy-holder}, then $\BB E[A \BB 1_{E_C}] = o_C^\infty(C)$. The law of $A$ is given by $\frac{1}{ \sqrt{2\pi a^5 } } e^{-\frac{1}{2 a} } \BB 1_{(a\geq 0)} \, da$, so $\BB E[A^{-1}] < \infty$.
By the Cauchy-Schwarz inequality,  
\eqbn
\BB P[E_C] = \BB E[A^{1/2} A^{-1/2} \BB 1_{E_C} ] \leq \BB E[A \BB 1_{E_C}]^{1/2} \BB E[A^{-1}]^{1/2} = o_C^\infty(C) . \qedhere
\eqen 
\end{proof}

\begin{lem} \label{lem-fb-bdy-holder}
Let $\el \in \BB N$ and let $(Q , \BB e)$ be a free Boltzmann quadrangulation with simple boundary of perimeter~$2\el$. Let $\beta : [0,2\el]_{\BB Z} \rta\mcl E(\bdy Q )$ be its boundary path.  For each $\zeta , \ep  >0$ and $C>0$, the probability that there exists $i , j \in [0,2\el]_{\BB Z}$ with $i < j$ such that 
\alb
\el^{-1/2} \op{dist} \left( \beta(i) , \beta (j) ; Q   \right)  >  
C  \left(\frac{j-i}{\el } \right)^{1/2}\left( \left| \log \left(\frac{j-i}{\el } \right)  \right| + 1 \right)^{7/4 +\zeta}    + \ep   
\ale
is at most $o_C^\infty(C) + o_\el(1)$ as $C\rta\infty$, with the rate of the $o_C^\infty(C)$ depending only on $\zeta$ and the rate of the $o_\el(1)$ depending only on $\zeta$ and $\ep$. 
\end{lem}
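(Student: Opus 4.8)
The plan is to transfer the continuum estimate of Lemma~\ref{lem-bd-bdy-holder} to the discrete setting using the GHPU convergence of free Boltzmann quadrangulations with simple boundary to the free Boltzmann Brownian disk, together with a scaling argument to reduce to unit boundary length. First I would reduce to the case of a free Boltzmann Brownian disk with \emph{unit} boundary length: by the definition of the free Boltzmann distribution, a free Boltzmann quadrangulation with perimeter $2\el$ is close in GHPU distance (after rescaling distances by $\el^{-1/2}$, up to a universal constant, and boundary lengths by $(2\el)^{-1}$) to a free Boltzmann Brownian disk with boundary length $1$, by~\cite[Theorem~1.4]{gwynne-miller-uihpq} (stated for perimeter tending to $\infty$; this is exactly the $o_\el(1)$ error in the statement). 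The slight subtlety is that Lemma~\ref{lem-bd-bdy-holder} is for boundary length $\frk l$, but by Brownian scaling a free Boltzmann Brownian disk with boundary length $\frk l$ rescales to one of boundary length $1$ with distances scaled by $\frk l^{-1/2}$, so the $\el^{-1/2}$ normalization in the discrete statement matches $\frk l = 1$ in the limit.

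Next I would set up the comparison carefully. Fix $\zeta,\ep,C$. Consider the event $F^n_{C,\ep}$ that there exist $i<j$ in $[0,2\el]_{\BB Z}$ with
\[
\el^{-1/2} \op{dist}(\beta(i),\beta(j);Q) > C \left(\frac{j-i}{\el}\right)^{1/2}\left(\left|\log\frac{j-i}{\el}\right|+1\right)^{7/4+\zeta} + \ep .
\]
I would show that, for $\el$ large, $\BB P[F^n_{C,\ep}]$ is bounded above by the probability of a corresponding event $E_{C'}$ for the limiting Brownian disk, with $C'$ slightly smaller than $C$, plus an $o_\el(1)$ coming from the GHPU convergence. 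The point is that if $\beta(i)$ and $\beta(j)$ are at rescaled distance $\geq r$ in $Q^n$ and at rescaled boundary-length parameters $s = i/(2\el)$, $t = j/(2\el)$, then under a GHPU coupling the boundary path points $\xi(s),\xi(t)$ of the limiting disk are at distance $\geq r - o_\el(1)$, and the boundary length parameters $\frk l s,\frk l t$ differ by $\frk l(t-s) = (j-i)/(2\el) + o_\el(1)$; one then absorbs the additive $\ep/2$ error and the multiplicative slack into the constant. A small technical care: the function $u\mapsto u^{1/2}(|\log u|+1)^{7/4+\zeta}$ is continuous and the discrete-to-continuum boundary-length correspondence is uniformly continuous, so there is no issue matching up the modulus. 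Since a uniformly continuous (in fact $1$-Lipschitz after identifying edges with unit intervals) path is bounded on the compact space, the Hausdorff-Prokhorov-uniform convergence of the quadruples $(Q^n,d^n,\mu^n,\xi^n)$ suffices; we do not need the exploration curve here.

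Then the conclusion follows from Lemma~\ref{lem-bd-bdy-holder}: $\BB P[E_{C'}] = o_{C'}^\infty(C')$, and since $C'$ differs from $C$ by a bounded factor this is still $o_C^\infty(C)$ with rate depending only on $\zeta$ (and the slack, which we fix once and for all). Combining, $\BB P[F^n_{C,\ep}] \leq o_C^\infty(C) + o_\el(1)$, with the $o_\el(1)$ rate depending on $\zeta$ and $\ep$ (the dependence on $\ep$ enters because the additive $\ep$ buffer controls how much GHPU error we can tolerate). The main obstacle I anticipate is purely bookkeeping: carefully matching the discrete normalization (rescale distances by $(9/8)^{1/4}n^{-1/4}$ vs.\ by $\el^{-1/2}$, and the boundary path precomposition by $s\mapsto \bcon\el^{1/2}\cdot$ or by $s \mapsto 2\el s$) to the continuum normalization so that the modulus-of-continuity function comes out exactly as stated, and handling the GHPU coupling at the level of the embedded boundary paths (using Proposition~\ref{prop-ghpu-embed}) so that ``$\beta(i)$ and $\beta(j)$ far apart'' transfers to ``$\xi(s)$ and $\xi(t)$ far apart'' uniformly over all pairs $i<j$ simultaneously — this last uniformity is what forces us to work with the sup over pairs on an event of high probability rather than pair-by-pair.
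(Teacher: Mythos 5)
Your proposal matches the paper's proof, which is a one-line appeal to Lemma~\ref{lem-bd-bdy-holder} together with the GHPU convergence of free Boltzmann quadrangulations with simple boundary to the Brownian disk; the only slip is that the correct reference for that convergence is \cite[Theorem~1.4]{gwynne-miller-simple-quad}, not \cite{gwynne-miller-uihpq} (which treats the infinite-volume UIHPQ$_{\op{S}}$).
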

\begin{proof}
This follows from Lemma~\ref{lem-bd-bdy-holder} and the fact that free Boltzmann quadrangulations with simple boundary converge in the scaling limit to the Brownian disk in the GHPU topology~\cite[Theorem~1.4]{gwynne-miller-simple-quad}. 
\end{proof}

We also record analogs of the two preceding lemmas in the infinite-volume setting.

\begin{lem} \label{lem-bhp-bdy-holder}
Let $(H^\infty , d^\infty ,\mu^\infty,\xi^\infty)$ be a Brownian half-plane equipped with its natural metric, area measure, and boundary path (with $\xi^\infty(0)$ the marked boundary point). 
For $\zeta  >0$, $A>0$, and $C>0$,
\eqbn
\BB P\left[ \sup_{ -A \leq s < t \leq A } \frac{ d^\infty(\xi^\infty(s) , \xi^\infty(t))}{   (t-s)^{1/2}(| \log(t-s)| + 1 )^{7/4 +\zeta}   } >  C A^{1/2} \right] = o_C^\infty(C)
\eqen
as $C\rta\infty$, at a rate depending only on $\zeta$. 
\end{lem}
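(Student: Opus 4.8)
The plan is to deduce Lemma~\ref{lem-bhp-bdy-holder} from the finite-volume H\"older estimate for the Brownian disk, Lemma~\ref{lem-bd-bdy-holder}, together with a local absolute continuity comparison between the Brownian half-plane and the free Boltzmann Brownian disk near their respective boundary paths. The key input is that a neighborhood of a marked boundary point in the Brownian half-plane, when restricted to the portion of the boundary path parameterized by $[-A,A]$, can be coupled with the corresponding object for a free Boltzmann Brownian disk of large boundary length so that the two agree with high probability on the relevant scale; this kind of comparison is already used elsewhere in the paper (and is the continuum analog of Lemma~\ref{lem-perc-rn} comparing peeling on the UIHPQ$_{\op{S}}$ with peeling on a free Boltzmann quadrangulation). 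Alternatively, and perhaps more directly, one can use that the Brownian half-plane is the scaling limit of the UIHPQ$_{\op{S}}$ while the free Boltzmann Brownian disk is the scaling limit of the free Boltzmann quadrangulation with simple boundary, together with the fact that the UIHPQ$_{\op{S}}$ is locally absolutely continuous with respect to a free Boltzmann quadrangulation with large boundary length (by Lemma~\ref{lem-perc-rn} or the Radon-Nikodym estimates of \cite{gwynne-miller-simple-quad}), and pass this absolute continuity to the limit.

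Concretely, first I would reduce to the case of a fixed dyadic $A$ by a union bound over scales (with the tail decay $o_C^\infty(C)$ being super-polynomial this costs nothing). Next, fix $A$ and let $E_C^\infty(A)$ be the event in the statement, i.e.\ that $\sup_{-A\leq s<t\leq A} d^\infty(\xi^\infty(s),\xi^\infty(t)) / \left[(t-s)^{1/2}(|\log(t-s)|+1)^{7/4+\zeta}\right] > C A^{1/2}$. The event $E_C^\infty(A)$ is measurable with respect to the metric on a bounded neighborhood of $\xi^\infty([-A,A])$ in $H^\infty$: indeed, any path realizing a near-geodesic between two points $\xi^\infty(s),\xi^\infty(t)$ with $s,t\in[-A,A]$ can be taken to stay within distance $CA^{1/2}(2A)^{1/2}(|\log 2A|+1)^{7/4+\zeta}$ of $\xi^\infty([-A,A])$, so (on the complement of a further super-polynomially unlikely event) $E_C^\infty(A)$ depends only on $\frk B_{\rho}\frk H^\infty$ for $\rho$ a suitable constant multiple of $CA^{1/2}$. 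Then I would invoke the local absolute continuity: the law of $\frk B_\rho \frk H^\infty$ is absolutely continuous with respect to the law of the corresponding truncation of a free Boltzmann Brownian disk of boundary length $\frk l$, for $\frk l$ large depending on $\rho$, with Radon-Nikodym derivative bounded above by a constant depending only on $\rho$ and $\frk l$ (this is the continuum analog of Lemma~\ref{lem-perc-rn}; it also follows from \cite[Proposition~1.10]{gwynne-miller-uihpq} together with \cite{gwynne-miller-simple-quad}). Applying Lemma~\ref{lem-bd-bdy-holder} with constant $C' \asymp C \frk l^{1/2}/A^{1/2}$ (to account for the rescaling from unit boundary length to boundary length $\frk l$ and from the interval $[-A,A]$ to a sub-interval of $[0,\frk l]$) and using that the Radon-Nikodym derivative is bounded by a function of $C$ (since $\rho \asymp CA^{1/2}$ and $\frk l$ can be chosen as a fixed function of $C$, e.g.\ $\frk l \asymp C^2 A$), we conclude $\BB P[E_C^\infty(A)] \leq (\text{const depending on } C) \cdot o_{C'}^\infty(C') = o_C^\infty(C)$, as desired.

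The main obstacle I anticipate is making the local absolute continuity comparison quantitatively precise with a Radon-Nikodym derivative whose growth in $C$ does not overwhelm the super-polynomial decay coming from Lemma~\ref{lem-bd-bdy-holder}. The point is that the comparison region has radius growing like $C A^{1/2}$ (because near-geodesics between points of the boundary path can wander that far), and the Radon-Nikodym derivative for comparing a ball of radius $\rho$ in the Brownian half-plane to a Brownian disk of boundary length $\frk l$ typically grows polynomially in $\rho$ and in $\frk l$; but since $o_C^\infty(C)$ decays faster than any polynomial, any polynomial loss is absorbed. One must be slightly careful that the super-polynomial tail event ``a near-geodesic wanders far from $\xi^\infty([-A,A])$'' is controlled independently — this follows from standard metric-ball volume and diameter estimates for the Brownian half-plane, or can be bounded by a further application of Lemma~\ref{lem-bhp-bdy-holder} itself at a lower threshold (bootstrapping), or simply by noting that on the complement of $E_C^\infty(A)$ at a fixed threshold the relevant distances are already bounded. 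A cleaner route that avoids this issue entirely is to prove the estimate directly by a union bound over dyadic scales $2^{-k}$ for the increment $t-s$, controlling $d^\infty(\xi^\infty(s),\xi^\infty(t))$ for $s,t$ in a dyadic grid using the known modulus-of-continuity estimates for the boundary path of the Brownian half-plane (which in turn follow from those of the Brownian disk via the explicit construction in \cite{gwynne-miller-uihpq,bmr-uihpq}), then using the triangle inequality and a chaining argument to pass from the grid to all $s,t$; this is how one typically proves such Kolmogorov-type continuity statements and sidesteps the need to track wandering of geodesics.
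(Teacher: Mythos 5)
The paper's proof of Lemma~\ref{lem-bhp-bdy-holder} does not go through Lemma~\ref{lem-bd-bdy-holder} at all; it argues directly via the continuum encoding functions of the Brownian half-plane from~\cite[Section~1.5]{gwynne-miller-uihpq}, mimicking the proof of \cite[Lemma~3.2]{gwynne-miller-gluing} at the level of encoding functions and then invoking Brownian scaling to produce the $A^{1/2}$ factor. Your primary proposal instead tries to transfer the finite-volume statement to the half-plane via a local comparison between the Brownian half-plane and a Brownian disk with large boundary length, which is a genuinely different route.

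The issue with your primary route is that the local comparison you have available from the cited sources is not in the quantitative form you need. \cite[Proposition~4.2]{gwynne-miller-uihpq} (which is what the paper uses in the proof of Theorem~\ref{thm-perc-conv-uihpq}) gives a \emph{coupling} of the metric balls of radius $\rho$ under which they are equal with probability at least $1-\ep$, where $\ep\to 0$ as $\frk l\to\infty$ for fixed $\rho$; it does \emph{not} give a Radon-Nikodym derivative bounded in terms of $\rho$ and $\frk l$. This matters here because you want to grow $\rho\asymp CA^{1/2}$ and $\frk l$ with $C$, so the relevant quantity is not ``RN-derivative times $o_{C'}^\infty(C')$'' but ``$o_{C'}^\infty(C') + \ep(\rho,\frk l)$,'' and unless the coupling failure probability $\ep(\rho,\frk l)$ is shown to decay super-polynomially in $C$ under your choice of $\rho,\frk l$, the additive error swamps the super-polynomial gain from Lemma~\ref{lem-bd-bdy-holder}. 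You flag this obstacle yourself, but the proposed fix (choose $\frk l\asymp C^2 A$ and absorb a ``polynomial loss'') does not resolve it, because the needed polynomial (or any explicit) bound on the coupling error or RN-derivative as a function of $\rho$ and $\frk l$ is simply not stated in the references you invoke. You would have to prove such a quantitative estimate from scratch, and that is not easier than the direct argument. There is also a secondary, subtler point: the event $E_C^\infty(A)$ is only guaranteed to be measurable with respect to $\frk B_\rho\frk H^\infty$ for $\rho$ of order $CA^{1/2}\cdot A^{1/2}(\log A+1)^{7/4+\zeta}$ after you rule out near-geodesics wandering far, so the radius $\rho$ you need is larger than $CA^{1/2}$ and the geodesic-localization step itself needs an argument (not merely a further ``bootstrapping'' appeal to the lemma being proved).

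Your proposed ``cleaner route'' --- working directly with the Brownian half-plane encoding functions and a dyadic/chaining argument --- is, in spirit, exactly what the paper does, and it is correct. The only correction I'd make to the way you phrase it is that the H\"older estimates on the boundary path of the Brownian half-plane should be extracted directly from the explicit laws of its contour and label processes (Brownian-motion-type H\"older bounds at each scale), not derived ``from those of the Brownian disk via the explicit construction''; the half-plane is not constructed from the disk in \cite{gwynne-miller-uihpq,bmr-uihpq}, it has its own Brownian-snake-type construction, and that construction is what supplies the needed modulus of continuity.
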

\begin{proof}
This follows from the same argument used to prove~\cite[Lemma~3.2]{gwynne-miller-gluing}, but with the encoding functions for the Brownian half-plane from~\cite[Section 1.5]{gwynne-miller-uihpq} used in place of the encoding functions for the Brownian disk. 
Note that the factor of $A^{1/2}$ comes from Brownian scaling. 
\end{proof}

\begin{lem} \label{lem-uihpq-bdy-holder}
Let $(Q^\infty,\BB e^\infty)$ be a UIHPQ$_{\op{S}}$. Let $\beta^\infty : \BB Z\rta\mcl E(\bdy Q^\infty)$ be its boundary path with $\beta^\infty(0) = \BB e^\infty$. 
For each $\zeta , \ep  , A  > 0$ and each $C>0$, the probability that there exists $i , j \in [-A \el , A \el]_{\BB Z}$ with $i < j$ such that 
\alb
 \el^{-1/2} \op{dist} \left( \beta^\infty (i) , \beta^\infty(j) ; Q^\infty   \right)  >  
C A^{1/2} \left(\frac{j-i}{\el } \right)^{1/2}\left( \left| \log \left(\frac{j-i}{\el } \right)  \right| + 1 \right)^{7/4 +\zeta}    + \ep   
\ale
is at most $o_C^\infty(C) + o_\el(1)$ as $C\rta\infty$, with the rate of the $o_C^\infty(C)$ depending only on $\zeta$ and the rate of the $o_\el(1)$ depending only on $A$, $\zeta$, and $\ep$. 
\end{lem}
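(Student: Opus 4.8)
The plan is to deduce this from Lemma~\ref{lem-bhp-bdy-holder} exactly as Lemma~\ref{lem-fb-bdy-holder} is deduced from Lemma~\ref{lem-bd-bdy-holder}, using the scaling-limit result for the UIHPQ$_{\op{S}}$ toward the Brownian half-plane in the local GHPU topology~\cite{gwynne-miller-uihpq}. The only real subtlety compared to the finite-volume case is that the convergence of $(Q^\infty, d^{\infty,n}, \mu^{\infty,n}, \xi^{\infty,n})$ to $(H^\infty, d^\infty, \mu^\infty, \xi^\infty)$ is stated in the \emph{local} GHPU topology (Definition~\ref{def-ghpu-truncate}, \eqref{eqn-ghpu-local-def}), so one must first restrict attention to a bounded region before invoking it. However, since the statement only concerns boundary vertices $\beta^\infty(i)$ with $i \in [-A\el, A\el]_{\BB Z}$ and the event in question is monotone (it can only become easier to satisfy by enlarging the index range), it suffices to work inside a metric ball of fixed (rescaled) radius containing all these boundary vertices with high probability.

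First I would fix $\zeta, \ep, A > 0$. Set $n = \el$ and use the rescaled metric $d^{\infty,n} := (9/8)^{1/4} n^{-1/4} \op{dist}(\cdot,\cdot; Q^\infty)$ and boundary path $\xi^{\infty,n}(s) = \beta^\infty(\bcon n^{1/2} s)$. In these coordinates, a pair $i < j$ in $[-A\el, A\el]_{\BB Z}$ corresponds to parameters $s = \bcon^{-1} n^{-1/2} i$ and $t = \bcon^{-1} n^{-1/2} j$ lying in $[-\bcon^{-1} A n^{1/2}, \bcon^{-1} A n^{1/2}]$, and $j - i = \bcon n^{1/2}(t-s)$, $\el^{-1/2}\op{dist}(\beta^\infty(i),\beta^\infty(j);Q^\infty) \asymp d^{\infty,n}(\xi^{\infty,n}(s), \xi^{\infty,n}(t))$ up to the fixed constant $(8/9)^{1/4}$; the logarithmic and power factors transform consistently since $(j-i)/\el = \bcon n^{-1/2}(t-s)$ differs from $t-s$ by a bounded multiplicative constant and an additive $O(\log n)$ shift, which is absorbed into the $(|\log(\cdot)| + 1)^{7/4+\zeta}$ factor at the cost of adjusting $C$ by a bounded factor and enlarging the $o_\el(1)$ error. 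Thus, up to harmless constants and $o_\el(1)$ errors, the discrete event is contained in the event that there exist $s < t$ in a bounded interval (of length $\asymp A n^{1/2}$, which is handled by first truncating to a ball of rescaled radius depending only on $A$ — distances along the boundary of the UIHPQ$_{\op{S}}$ of length $\asymp A n^{1/2}$ edges are $O(A n^{1/2})$ in graph metric, hence $O(A^{1/2})$ in $d^{\infty,n}$, with the needed control coming from Lemma~\ref{lem-bhp-bdy-holder} itself applied at scale $A$) with
\[
\frac{d^{\infty,n}(\xi^{\infty,n}(s),\xi^{\infty,n}(t))}{(t-s)^{1/2}(|\log(t-s)|+1)^{7/4+\zeta}} > c C A^{1/2} - \ep'
\]
for suitable constants $c > 0$ and $\ep' > 0$ depending only on $\ep$.

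Then I would pass to the scaling limit: by~\cite{gwynne-miller-uihpq} the spaces $(Q^\infty, d^{\infty,n}, \mu^{\infty,n}, \xi^{\infty,n})$ converge in law in the local GHPU topology to $(H^\infty, d^\infty, \mu^\infty, \xi^\infty)$, and the supremum appearing above, restricted to a fixed bounded parameter range, is a (lower semi-)continuous functional of the curve-decorated metric space. Since the event $\{\text{sup} > C'\}$ has boundary of probability zero for all but countably many $C'$ (as the limiting supremum has a continuous distribution by Lemma~\ref{lem-bhp-bdy-holder}), the probability of the discrete event is at most $\BB P[\text{limiting sup} > c C A^{1/2} - \ep'] + o_\el(1)$. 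The main (and only) obstacle is bookkeeping the local-versus-global issue — making sure the truncation to a bounded ball is done cleanly and that the index range $[-A\el, A\el]$ genuinely sits inside that ball with probability $1 - o_\el(1)$ — but this is routine given Lemma~\ref{lem-bhp-bdy-holder} and the analogous a priori graph-distance bounds. Finally, Lemma~\ref{lem-bhp-bdy-holder} gives that the limiting supremum exceeds $c C A^{1/2} - \ep'$ with probability $o_C^\infty(C)$ as $C \rta \infty$ (the $A^{1/2}$ scaling matches exactly), at a rate depending only on $\zeta$; combining yields the claimed bound $o_C^\infty(C) + o_\el(1)$.
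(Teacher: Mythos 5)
Your approach coincides exactly with the paper's (which, for this lemma, is the single sentence ``combine Lemma~\ref{lem-bhp-bdy-holder} with the scaling limit result for the UIHPQ$_{\op{S}}$ in the local GHPU topology''), and your extra bookkeeping about truncating to a bounded ball before invoking local GHPU convergence is the right idea. However, there is a concrete arithmetic error in the choice of scaling parameter that propagates through the whole computation: you set $n=\el$, but you should set $n=\el^2$. The lemma rescales distances by $\el^{-1/2}$, which matches $n^{-1/4}$ only if $n=\el^2$; likewise, the boundary indices range over $[-A\el,A\el]_{\BB Z}$, which under the reparametrization $s=\bcon^{-1}n^{-1/2}i$ gives a bounded interval $s\in[-\bcon^{-1}A,\bcon^{-1}A]$ only when $n^{1/2}=\el$.

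As written with $n=\el$, the key asserted equivalences fail: $d^{\infty,n}=(9/8)^{1/4}\el^{-1/4}\op{dist}$ is \emph{not} comparable to $\el^{-1/2}\op{dist}$; the parameter range $[-\bcon^{-1}A n^{1/2},\bcon^{-1}A n^{1/2}]=[-\bcon^{-1}A\el^{1/2},\bcon^{-1}A\el^{1/2}]$ grows unboundedly, so there is no fixed $A$ at which to apply Lemma~\ref{lem-bhp-bdy-holder}; and $(j-i)/\el=\bcon\el^{-1/2}(t-s)$ differs from $t-s$ by a factor tending to $0$, not a bounded constant, so the logarithmic factors do not transfer as claimed. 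All of these problems disappear if you substitute $n=\el^2$ throughout: then $d^{\infty,n}\asymp\el^{-1/2}\op{dist}$, $s\in[-\bcon^{-1}A,\bcon^{-1}A]$ is bounded, and $(j-i)/\el=\bcon(t-s)$ is off by the fixed constant $\bcon$, which is absorbed into the constants and the $o_\el(1)$ error as you describe. With that single substitution your argument is correct and completes the proof along the paper's intended lines.
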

\begin{proof}
This follows by combining Lemma~\ref{lem-bhp-bdy-holder} with the scaling limit result for the UIHPQ$_{\op{S}}$ in the local GHPU topology~\cite[Theorem~1.12]{gwynne-miller-uihpq}. 
\end{proof}

Lemma~\ref{lem-fb-bdy-holder} together with the Markov property of peeling will eventually enable us to prove estimates for the diameters of the boundaries of certain subsets of the quadrangulations $Q^n$. In order to deduce estimates for the diameters of the sets themselves, we will use Lemma~\ref{lem-fb-pinch} below, which says that a free Boltzmann quadrangulation with simple boundary does not have small bottlenecks which separate sets of macroscopic diameter and which follows from the fact that the Brownian disk has the topology of a disk.

\begin{lem} \label{lem-fb-pinch}
For each $\ep   \in (0,1)$ there exists $\delta >0$ such that the following is true. 
Let $\el\in\BB N$ and let $(Q , \BB e)$ be a free Boltzmann quadrangulation with simple boundary of perimeter $2\el$.
The probability that there exists a subgraph $S$ of $Q$ with 
\eqbn
\op{diam}\left(   S ; Q \right)  \geq \ep \el^{1/2} \quad \op{and} \quad 
\op{diam}\left( \bdy S ; Q \right) \leq \delta \el^{1/2} 
\eqen
is at most $ \ep$, where here $\bdy S$ is the boundary of $S$ relative to $Q$, as in Section~\ref{sec-graph-notation}.  
\end{lem}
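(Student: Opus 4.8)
The plan is to reduce the statement to the corresponding fact about the Brownian disk via the GHPU scaling limit, and then to prove that fact using the topology of the disk. First I would record the continuum analogue: if $(H,d,\mu,\xi)$ is a free Boltzmann Brownian disk with unit boundary length, then for each $\ep \in (0,1)$ there is a $\delta>0$ such that with probability at least $1-\ep/2$, there is no closed connected set $K\subset H$ with $\op{diam}(K;d)\geq \ep$ whose ``relative boundary'' (the set of points of $K$ that do not lie in the interior of $K$ within $H$, together with $K\cap \bdy H$) has $d$-diameter at most $\delta$. This continuum statement is where the topology of the disk is used: since $(H,d)$ is a.s.\ homeomorphic to the closed disk, a set $K$ of large diameter whose relative boundary is contained in a small ball would have to be ``cut off'' from a macroscopic portion of $H$ by a set of small diameter; letting $\delta\to 0$ and using compactness one extracts a single point $x$ such that $H\setminus\{x\}$ is disconnected with both components of macroscopic diameter, contradicting the fact that removing a point from a disk keeps it connected. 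More precisely, I would argue by contradiction: if for some $\ep$ no $\delta$ works, take $\delta_m\to 0$ and corresponding sets $K_m$; the relative boundaries $\bdy K_m$ lie in balls $B_{\delta_m}(x_m;d)$ with $x_m\to x$ along a subsequence, and one shows $H\setminus B_{\delta_m}(x_m;d)$ has a connected component of diameter $\geq \ep - o(1)$ disjoint from a fixed macroscopic piece of $\bdy H$, which in the limit gives a separation of $H\setminus\{x\}$.

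Next I would transfer this to the quadrangulation. By~\cite[Theorem~1.4]{gwynne-miller-simple-quad}, the rescaled free Boltzmann quadrangulations $(Q^n, d^n, \mu^n, \xi^n)$ with fixed perimeter converge in the GHPU topology (after rescaling distances by a constant times $n^{-1/4}$ and with $\el \asymp n^{1/2}$, or equivalently, working directly with perimeter $2\el$ and rescaling lengths by $\el^{-1/2}$ up to the constant $\bcon$) to a free Boltzmann Brownian disk. Using the Skorokhod representation / Proposition~\ref{prop-ghpu-embed}, embed the $Q$'s and $H$ isometrically into a common compact space $W$ so that $Q \to H$ in the $D$-HPU sense. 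A subgraph $S$ of $Q$ with $\op{diam}(S;Q)\geq \ep\el^{1/2}$ and $\op{diam}(\bdy S;Q)\leq \delta\el^{1/2}$ gives, after rescaling and embedding, a closed set $\wh S\subset W$ of $D$-diameter $\geq \ep'$ (with $\ep'$ comparable to $\ep$) whose ``boundary'' in the appropriate sense has $D$-diameter $\leq$ a constant times $\delta$. The subtlety is that the graph-theoretic boundary $\bdy_Q S$ must be related to a topological boundary: here I would use that, as noted in Section~\ref{sec-graph-notation}, when $S$ is itself a quadrangulation with simple boundary whose exterior contains $\bdy Q$ and whose interior faces are faces of $Q$, the relative boundary $\bdy_Q S$ coincides with the intrinsic boundary of $S$, and in general any path in $Q$ from $S$ to its complement passes within graph distance $1$ of $\bdy_Q S$; hence the topological boundary of (the $1$-neighborhood of) $S$ in $Q$ has diameter at most $\op{diam}(\bdy_Q S; Q) + O(1)$. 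Passing to the Hausdorff limit of $\wh S$ along a further subsequence yields a closed set in $H$ violating the continuum statement, which gives the contradiction; choosing $n$ large and invoking the convergence in law, the discrete probability is at most $\ep$ for a suitable $\delta$.

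The main obstacle I anticipate is the careful handling of the boundary in the limit: the set $\bdy_Q S$ is a combinatorial object, and one must be sure that having small $Q$-diameter for $\bdy_Q S$ genuinely forces the limiting set to be ``pinched off'' by a small-diameter set in $H$, without any diameter escaping through the interior due to the $1$-neighborhood fudge or due to subtleties of how subgraphs embed. I would deal with this by always replacing $S$ with the union of the faces it contains (a legitimate sub-quadrangulation with simple boundary, up to minor adjustments) so that $\bdy_Q S$ is its genuine topological boundary, and by noting that $\op{diam}(S;Q)\geq \ep\el^{1/2}$ is stable under this replacement up to an additive $O(1)=o(\el^{1/2})$. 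A secondary technical point is that the convergence~\cite[Theorem~1.4]{gwynne-miller-simple-quad} is stated for the quadrangulation rescaled by $(9/8)^{1/4}n^{-1/4}$ with $n$ interior vertices, whereas here we have fixed perimeter $2\el$; but a free Boltzmann quadrangulation with simple boundary and perimeter $2\el$ has $\asymp \el^2$ interior vertices with high probability (by~\eqref{eqn-fb-area-tail}), and after rescaling lengths by $\el^{-1/2}$ it converges to the free Boltzmann Brownian disk, so the statement follows with constants adjusted accordingly. With these points addressed, the argument is routine.
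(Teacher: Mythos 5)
Your proposal is correct and follows essentially the same route as the paper: both arguments couple the quadrangulations to the free Boltzmann Brownian disk via the Gromov--Hausdorff scaling limit of~\cite{gwynne-miller-simple-quad} and the Skorokhod representation theorem, take Hausdorff limits along a subsequence, and extract a contradiction from the fact that the Brownian disk is a.s.\ homeomorphic to the closed disk (so removing a single point cannot separate it into two macroscopic pieces). The one organizational difference is that you state and prove a standalone continuum lemma and then transfer it back, whereas the paper runs a single contradiction argument directly on the coupled sequence (bad sets $S^{\el_k}$ with $\op{diam}(\bdy S^{\el_k};d^{\el_k})\leq 1/k\to 0$), which makes the ``continuum lemma for all $\delta$ at once'' implicit and saves a layer of compactness. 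The other point worth flagging is your transfer step: ``Passing to the Hausdorff limit of $\wh S$ \dots\ yields a closed set in $H$ violating the continuum statement'' is not quite the right formulation, because the topological boundary of a Hausdorff limit need not be close to the Hausdorff limit of the discrete boundaries. The paper sidesteps this by never taking a limit of $\bdy_Q S$ as a set: it picks a single point $y^{\el_k}\in\bdy S^{\el_k}$, uses the small diameter to get $\bdy S^{\el_k}\subset B_{2\delta}(y^{\el_k})$, then takes Hausdorff limits of $V_\delta^{\el_k}=S^{\el_k}\setminus B_{4\delta}(y^{\el_k})$ and $U_\delta^{\el_k}=Q^{\el_k}\setminus(S^{\el_k}\cup B_{4\delta}(y^{\el_k}))$, which are separated by distance $\geq\delta$ and together with $B_{4\delta}(y)$ cover $H$; you already use exactly this single-point/ball idea in your proof sketch of the continuum lemma, so it would suffice to carry it through uniformly rather than invoking the continuum lemma as a black box.
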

\begin{proof}
We will extract the statement of the lemma from the fact that the free Boltzmann quadrangulation with simple boundary converges in the scaling limit to the random-area Brownian disk, which has the topology of a disk. The proof is similar to that of~\cite[Lemma~4.10]{gwynne-miller-uihpq}. 

For $\el\in\BB N$, let $(Q^\el , \BB e^\el)$ be a free Boltzmann quadrangulation with simple boundary of perimeter $2\el$.  Let $d^\el$ be the graph metric on $Q^\el$, rescaled by $(2\el )^{-1/2}$. Also let $(H,d)$ be a free Boltzmann Brownian disk with unit boundary length. 

By~\cite[Theorem~1.4]{gwynne-miller-simple-quad}, $(Q^\el,d^\el) \rta (H,d)$ in law in the Gromov-Hausdorff topology.  By the Skorokhod representation theorem, we can find a coupling of $\{(Q^\el,d^\el)\}_{\el \in \BB N}$ with $(H,d)$ such that this convergence occurs almost surely.  By~\cite[Lemma~A.1]{gpw-metric-measure}, we can a.s.\ find a random compact metric space $(W,D)$ and isometric embeddings $(Q^\el , d^\el) \rta (W,D)$ and $(H,d)\rta (W,D)$ such that if we identify $Q^\el$ and $H$ with their images under these embeddings, then a.s.\ $Q^\el \rta H$ in the $D$-Hausdorff distance as $\el\rta\infty$.

Now, suppose by way of contradiction that the statement of the lemma is false. Then we can find $\ep > 0$ and a sequence $\el_k\rta\infty$ such that for each $k\in\BB N$, it holds with probability at least $\ep$ that there exists a subgraph $S^{\el_k} \subset Q^{\el_k}$ with $\op{diam}(S^{\el_k} , d^{\el_k}) \geq \ep$ and $\op{diam} (\bdy S^{\el_k} , d^{\el_k}) \leq 1/k$. Let $E^{\el_k}$ be the event that this is the case, so that $\BB P[E^{\el_k}] \geq \ep$.  Also let $E$ be the event that $E^{\el_k}$ occurs for infinitely many $k\in\BB N$, so that also $\BB P[E] \geq \ep$. 

On $E$, we can find a random sequence $\mcl K$ of positive integers tending to $\infty$ such that $E^{\el_k}$ occurs for each $k\in\mcl K$.  For $k\in\mcl K$, let $S^{\el_k} \subset Q^{\el_k}$ be as in the definition of $E^{\el_k}$ and choose $y^{\el_k} \in \bdy S^{\el_k}$.  It is clear that a.s.\ $\liminf_{\el \rta\infty} \op{diam}( \bdy Q^\el ; d^{\el }) > 0$, so there a.s.\ exists $\zeta >0$ such that for large enough $k \in \mcl K$, the $d^{\el_k}$-diameter of $Q^{\el_k} \setminus S^{\el_k}$ is at least $\zeta$. 

For $\delta \in (0, (\ep \wedge \zeta)/100 )$, define 
\eqbn
V_\delta^{\el_k} := S^{\el_k} \setminus B_{4\delta}(y^{\el_k}; d^{\el_k}) \quad \op{and} \quad
U_\delta^{\el_k} := Q^{\el_k} \setminus \left( S^{\el_k} \cup B_{4\delta}(y^{\el_k}; d^{\el_k})  \right) .
\eqen
By definition of $E^{\el_k}$, for large enough $k\in \mcl K$ the set $V_\delta^{\el_k}$ (resp.\ $U_\delta^{\el_k}$) has $d^{\el_k}$-diameter at least $\ep/2$ (resp.\ $\zeta/2$).
Furthermore, since $\op{diam}(\bdy S^{\el_k} , d^{\el_k} ) \leq 1/k$, it follows that for large enough $k\in\mcl K$ the sets $\bdy S^{\el_k} \subset B_{2\delta}(y^{\el_k} ; d^{\el_k})$ so the sets $V_\delta^{\el_k}$ and $U_\delta^{\el_k}$ lie at $d^{\el_k}$-distance at least $\delta$ from each other. 

By possibly passing to a further subsequence, we can find $y\in H$ closed sets $U_\delta , V_\delta \subset H$ for each rational $\delta \in (0, (\ep\wedge \zeta) /100)$ such that as $\mcl K\ni k \rta\infty$, a.s.\ $y^{\el_k} \rta y$ and $U_\delta^{\el_k} \rta U_\delta$ and $V_\delta^{ \el_k} \rta V_\delta$ in the $D$-Hausdorff metric. 
Then $U_\delta$ and $V_\delta$ lie at $d$-distance at least $\delta$ from each other and have $d$-diameters at least $\ep/2$ and $\zeta/2$, respectively. 
Furthermore, we have $H = U_\delta\cup V_\delta \cup B_{4\delta}(y ; d)$. Sending $\delta\rta 0$, we see that removing $y$ from $H$ disconnects $H$ into two components. But, $H$ a.s.\ has the topology of a disk~\cite{bet-disk-tight}, so we obtain a contradiction. 
\end{proof}

\subsection{Jumps of the UIHPQ$_{\mathrm{S}}$ boundary length process}
\label{sec-uihpq-jump-estimate}

In this subsection we consider the boundary length processes for the percolation peeling process on the UIHPQ$_{\op{S}}$ $(Q^\infty,\BB e^\infty)$, which we recall are defined in Definition~\ref{def-bdy-process} and denoted by a superscript $\infty$. Our main goal is to prove Lemma~\ref{lem-equicont-reg-event-infty} just below, which gives a regularity statement for the macroscopic downward jumps of the total net boundary length process $ W^{\infty,L} + W^{\infty,R} $. Roughly speaking, the lemma tells us that with very high probability there are at most $\delta^{o_\delta(1)}$ jumps of size at least $\delta^{2/3} n^{1/2}$ in any time interval of length $\delta n^{3/4}$ and $|W^{\infty,L}|$ fluctuates by at most $\delta^{2/3 - o_\delta(1)} n^{1/2}$ between the times of these jumps. 
This lemma is the only statement from this subsection which is needed in the proof of Proposition~\ref{prop-equicont}, and will be transferred to the setting of free Boltzmann quadrangulations in the next subsection.

\begin{lem} \label{lem-equicont-reg-event-infty}
There is a universal constant $c>0$ such that the following is true. 
For $n\in\BB N$, $\delta \in (0,1)$, and $k \in \BB N $, let $T^{\infty,n}_{k,0}(\delta) = \lfloor (k-1) \delta n^{3/4} \rfloor$ and for $r\in \BB N$ inductively define
\eqb \label{eqn-equicont-reg-times-infty}
T^{\infty,n}_{k,r}(\delta)  := \lfloor k \delta n^{3/4} \rfloor \wedge \inf\left\{ j \geq T_{k, r-1}^{\infty,n}(\delta)+1 : W_j^{\infty,L} + W_j^{\infty,R} -  W_{j-1}^{\infty,L} - W_{j-1}^{\infty,R} \leq - c \delta^{2/3} n^{1/2} \right\} , 
\eqe  
so that $T^{\infty,n}_{k,r}(\delta)$ is the $r$th smallest time in $[\lfloor (k-1) \delta n^{3/4} \rfloor , \lfloor k \delta n^{3/4} \rfloor ]_{\BB Z}$ at which the total boundary length process has a big downward jump, or $T^{\infty,n}_{k,r}(\delta) = \lfloor k \delta n^{3/4} \rfloor$ if there are fewer than $r$ such jumps.  

For $A>0$ and $\zeta \in (0,1)$, let $E^{\infty,n}(\delta) = E^{\infty,n}(\delta,A,c,\zeta)$ be the event that the following holds.
\begin{enumerate}
\item For each $k \in [1, 2A \delta^{-1}   ]_{\BB Z}$, the number of jump times in $[\lfloor (k-1) \delta n^{3/4} \rfloor , \lfloor k \delta n^{3/4} \rfloor ]_{\BB Z}$ satisfies $\# \{ r \in \BB N  : T^{\infty,n}_{k,r}(\delta) <  \lfloor k \delta n^{3/4} \rfloor   \} \leq \delta^{-\zeta}$. \label{item-equicont-reg-times}
\item For each $k\in [1,2A \delta^{-1}]_{\BB Z}$ and each $r\in\BB N$, 
\eqbn
\max_{j \in [T_{k,r-1}^{\infty,n}(\delta) , T_{k ,r }^{\infty,n}(\delta) -1 ]_{\BB Z}} |W_j^\infty - W_{T_{k,r-1}^{\infty,n}(\delta)}^\infty |    \leq \delta^{2/3-\zeta}  n^{1/2} .
\eqen \label{item-equicont-reg-sup}
\end{enumerate}
Then
\eqbn
\BB P\left[ E^{\infty,n}(\delta)  \right] \geq 1 - o_\delta^\infty(\delta) 
\eqen 
at a rate depending only on $A$ and $\zeta$. 
\end{lem}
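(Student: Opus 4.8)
The plan is to reduce both conditions to tail estimates for the i.i.d.\ peeling increments on the UIHPQ$_{\op{S}}$, using the fact (recorded in Section~\ref{sec-bdy-process-def}) that in the UIHPQ$_{\op{S}}$ case the increments $W_j^\infty - W_{j-1}^\infty$ are i.i.d.\ with the tail~\eqref{eqn-peel-inc-tail}, so that $\op{Co}_\infty^L$ and $\op{Co}_\infty^R$ each have a $k^{-5/2}$ tail and hence $\op{Co}_\infty = \op{Co}_\infty^L + \op{Co}_\infty^R + 1$ has a $k^{-3/2}$ tail by~\eqref{eqn-cover-tail} and a convolution estimate. The key observation is that a downward jump of $W_j^{\infty,L} + W_j^{\infty,R}$ of magnitude at least $c\delta^{2/3} n^{1/2}$ at step $j$ forces $\op{Co}_\infty(\ol Q_{j-1}^\infty , \dot e_j^\infty) \geq c \delta^{2/3} n^{1/2}$ (using~\eqref{eqn-peel-inc-cases}, since $\op{Ex}_\infty \in \{1,2,3\}$), an event of probability $\preceq (\delta^{2/3} n^{1/2})^{-3/2} = \delta^{-1} n^{-3/4}$ for any fixed step, independently across steps conditionally on the past.

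First I would prove condition~\ref{item-equicont-reg-times}. Fix $k$ and let $N_k$ be the number of steps $j \in [\lfloor(k-1)\delta n^{3/4}\rfloor , \lfloor k\delta n^{3/4}\rfloor]_{\BB Z}$ (there are at most $\delta n^{3/4} + 1 \asymp \delta n^{3/4}$ of them) at which $\op{Co}_\infty(\ol Q_{j-1}^\infty , \dot e_j^\infty) \geq c\delta^{2/3} n^{1/2}$. By the Markov property of peeling and the tail bound above, $N_k$ is stochastically dominated by a Binomial random variable with $\asymp \delta n^{3/4}$ trials and success probability $\preceq \delta^{-1} n^{-3/4}$, so $\BB E[N_k]$ is bounded by a universal constant $C_0$. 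A standard Chernoff/moment bound for the Binomial then gives $\BB P[N_k > \delta^{-\zeta}] \leq o_\delta^\infty(\delta)$ (indeed, for a Binomial with bounded mean, $\BB P[N_k > m] \leq (C_0 e / m)^m$, which decays faster than any power of $\delta$ once $m = \delta^{-\zeta} \to \infty$). Since $T_{k,r}^{\infty,n}(\delta) < \lfloor k\delta n^{3/4}\rfloor$ implies such a jump occurred, the number of genuine jump times is at most $N_k$, and a union bound over the $\lceil 2A\delta^{-1}\rceil$ values of $k$ preserves the $o_\delta^\infty(\delta)$ rate (with the rate now depending on $A$ and $\zeta$).

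Next, condition~\ref{item-equicont-reg-sup}. Fix $k$ and $r$; on the complement of a set handled by condition~\ref{item-equicont-reg-times} we may assume there are at most $\delta^{-\zeta}$ relevant jump times, so it suffices to bound, for each of the at most $\delta^{-\zeta}$ excursion intervals $[T_{k,r-1}^{\infty,n}(\delta) , T_{k,r}^{\infty,n}(\delta) - 1]_{\BB Z}$, the maximal fluctuation of $W^\infty$. Within such an interval, by definition of the stopping times, every increment of $W_j^{\infty,L} + W_j^{\infty,R}$ is $> -c\delta^{2/3} n^{1/2}$; together with~\eqref{eqn-peel-inc-cases} and $\op{Ex}_\infty \leq 3$, this means each increment of $W^{\infty,L}$ and of $W^{\infty,R}$ lies in $[-c\delta^{2/3}n^{1/2} - 3 , 3]$ on the interval. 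The partial sums $W_j^{\infty,L} - W_{T_{k,r-1}}^{\infty,L}$ restricted to increments bounded below thus form (after subtracting their small positive mean, which is $0$ by~\eqref{eqn-peel-inc-mean}) a martingale with increments bounded above by $3$ and below by $-c\delta^{2/3} n^{1/2}$, and whose increments have second moment of order $\delta^{2/3} n^{1/2}$ (the truncated $3/2$-stable increment has variance $\asymp$ the truncation level, here $\asymp \delta^{2/3} n^{1/2}$), so over an interval of length $\leq \delta n^{3/4}$ the quadratic variation is $\preceq \delta n^{3/4} \cdot \delta^{2/3} n^{1/2} = \delta^{5/3} n^{5/4}$; a Freedman-type maximal inequality then gives $\BB P[\max_j |W_j^{\infty,L} - W_{T_{k,r-1}}^{\infty,L}| > \delta^{2/3 - \zeta} n^{1/2}] \preceq \exp(-c' \delta^{-2\zeta})$ after optimizing, which is $o_\delta^\infty(\delta)$. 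Summing over the $\leq \delta^{-\zeta}$ excursions, the $\leq 2A\delta^{-1}$ values of $k$, and both coordinates keeps the rate $o_\delta^\infty(\delta)$ depending only on $A$ and $\zeta$. Combining with condition~\ref{item-equicont-reg-times} via a final union bound gives $\BB P[E^{\infty,n}(\delta)] \geq 1 - o_\delta^\infty(\delta)$.

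The main obstacle I anticipate is the second step: one must carefully choose the universal constant $c$ so that the ``no big jump'' constraint translates cleanly into a two-sided bound on the individual increments of each coordinate (the asymmetry between $L$ and $R$, and the $+3$ from exposed edges of a white peeled quadrilateral, must be tracked), and one needs a concentration inequality for sums of heavy-tailed-but-truncated increments that is strong enough to beat every power of $\delta$ — a plain $L^2$/Chebyshev bound gives only polynomial decay, so a Bernstein-- or Freedman--type exponential martingale inequality (exploiting the one-sided boundedness of the truncated increments) is essential. The variance estimate for the truncated increment — that truncating a $3/2$-stable tail at level $h$ produces variance $\asymp h^{1/2}$ times a constant, i.e.\ $\asymp (\delta^{2/3} n^{1/2})^{1/2} \cdot (\delta^{2/3} n^{1/2})$... more precisely $\int_1^{h} x^2 \cdot x^{-5/2}\,dx \asymp h^{1/2}$, so variance $\asymp (\delta^{2/3}n^{1/2})^{1/2}$ — must be done with some care, and the exponents chased through to confirm the final bound is super-polynomially small in $\delta$ uniformly in $n$.
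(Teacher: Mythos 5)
Your overall plan — reduce condition~\ref{item-equicont-reg-times} to a Binomial/Poisson bound on the number of big-jump steps, and condition~\ref{item-equicont-reg-sup} to a Freedman-type maximal inequality for the walk with increments truncated at $-c\delta^{2/3}n^{1/2}$ — is a viable alternative to the paper's route, but as written the proposal has errors that would break the argument, so let me flag those and also compare with what the paper actually does.

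For condition~\ref{item-equicont-reg-times} the two approaches are nearly identical in spirit. The paper invokes Lemma~\ref{lem-big-jump-count}, which bounds the number of big jumps by noting that the gaps between them are i.i.d.\ and each is $\leq m$ with probability $\leq a_1 c^{-3/2}$; one then picks $c$ so that this is $<1/2$ and gets $(1/2)^{\delta^{-\zeta}}$. Your Binomial bound $(eC_0/m)^m$ avoids having to tune $c$ (any $c>0$ works), which is a small gain. Both give $o_\delta^\infty(\delta)$ and both use the same Markov property plus the $k^{-3/2}$ tail of the total covered length.

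For condition~\ref{item-equicont-reg-sup} your route genuinely diverges from the paper's, and this is where the proposal has real problems. Three specific issues.

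First, the variance of the truncated increment. In the main body you take the variance of one truncated step to be $\asymp\delta^{2/3}n^{1/2}$ and conclude the quadratic variation over $\delta n^{3/4}$ steps is $\delta^{5/3}n^{5/4}$. In the last paragraph you correct yourself: $\int_1^h x^2 x^{-5/2}\,dx \asymp h^{1/2}$, so the per-step variance is $\asymp (\delta^{2/3}n^{1/2})^{1/2} = \delta^{1/3}n^{1/4}$, giving total quadratic variation $\asymp \delta^{4/3}n$. The difference is not cosmetic. With the incorrect $V=\delta^{5/3}n^{5/4}$, the Freedman exponent in the $V$-dominated regime is $-x^2/(2V) \asymp -\delta^{-1/3-2\zeta}n^{-1/4}$, which tends to $0$ as $n\to\infty$ for fixed $\delta$ — so the bound would not be uniform in $n$, and the lemma requires uniformity. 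With the corrected $V=\delta^{4/3}n$, the downward-deviation exponent (increments bounded by $b\asymp c\delta^{2/3}n^{1/2}$ on the relevant side) is $\asymp -x/(2b) \asymp -\delta^{-\zeta}$, and the upward-deviation exponent (increments bounded above by $3$) is $\asymp -\delta^{-2\zeta}$ in the large-$n$ regime; both are $o_\delta^\infty(\delta)$ uniformly in $n$, once you also handle the boundary regime $\delta n^{3/4} \lesssim 1$ trivially. You need to carry the corrected variance through the rest of the computation; as it stands the proof of condition~\ref{item-equicont-reg-sup} does not close.

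Second, the drift. You write that the truncated increments have mean ``$0$ by~\eqref{eqn-peel-inc-mean}''. This is false: conditioning an increment of a mean-zero heavy-tailed walk to avoid a large downward jump shifts its mean upward. The drift per step is $\asymp (\delta^{2/3}n^{1/2})^{-1/2}$, so over $\delta n^{3/4}$ steps the accumulated drift is $\asymp \delta^{2/3}n^{1/2}$, which is $\delta^{\zeta}\cdot\delta^{2/3-\zeta}n^{1/2}$ and hence negligible compared with the threshold — but this must be said, not waved away. Related: you also need to justify that the increments on the random interval $[T_{k,r-1},T_{k,r}-1]_{\BB Z}$ can be coupled to i.i.d.\ truncated increments before applying a martingale inequality (this is true because $\{T>m\}$ is exactly the event that the first $m$ increments avoid a big jump, but the argument should be spelled out).

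For comparison, the paper's proof of condition~\ref{item-equicont-reg-sup} avoids the truncated-walk analysis entirely via Lemma~\ref{lem-bdy-bound-infty}, which rests on the algebraic bound $|W_j^\infty| \leq (W_j^{\infty,L}+W_j^{\infty,R}) + 2(Y_j^{\infty,L}+Y_j^{\infty,R})$. The first term is a one-sided heavy-tailed walk whose running maximum has a clean sub-exponential tail (quoted from~\cite{gwynne-miller-saw}), and the second is monotone and is controlled at the single time $T^\infty(r)-1$ by Lemma~\ref{lem-before-jump}, which is a straightforward geometric argument. This is simpler and bypasses the drift and variance computations you are doing. If you want to keep your Freedman route, it can be made to work, but it needs the corrections above and some care in distinguishing the two sides (upper jumps bounded by $3$ versus lower jumps bounded by $\asymp\delta^{2/3}n^{1/2}$).
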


To prove Lemma~\ref{lem-equicont-reg-event-infty}, we will need several further lemmas which are each straightforward consequences of the fact that $W^{\infty,L}+W^{\infty,R}$ has independent, stationary increments (by~\eqref{eqn-peel-inc-cases} and the Markov property of peeling) and the tail asymptotics~\eqref{eqn-cover-tail} for the law of these increments.
Our first lemma gives a tail bound for the total number of covered edges of $Q^\infty$ before the first large downward jump of $W^{\infty,L}+W^{\infty,R}$. 
  
\begin{lem} \label{lem-before-jump} 
For $r \in \BB N$, let $T^\infty(r)$ be the smallest $j \in\BB N$ for which $ W_j^{\infty,L} + W_j^{\infty,R} -  W_{j-1}^{\infty,L} - W_{j-1}^{\infty,R} \leq  -r $. 
There is a universal constant $a_0 >0$ such that for $C \geq 2$, the number of covered edges satisfies
\eqbn
\BB P\left[ Y_{T^\infty(r)-1}^{\infty,L} + Y_{T^\infty(r)-1}^{\infty,R} > C r \right] \leq e^{-a_0 C}. 
\eqen
\end{lem}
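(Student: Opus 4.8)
The key structural fact is that, by~\eqref{eqn-peel-inc-cases} and the Markov property of peeling, the increments of the total net boundary length process $W^{\infty,L}+W^{\infty,R}$ on the UIHPQ$_{\op{S}}$ are i.i.d., with the tail asymptotics given in~\eqref{eqn-peel-inc-tail} (equivalently~\eqref{eqn-cover-tail}): $\BB P[W_j^{\infty,L}+W_j^{\infty,R} - W_{j-1}^{\infty,L}-W_{j-1}^{\infty,R} = k] = (\ccon + o_k(1))k^{-5/2}$ for $k \in \BB N$, so in particular the probability of a downward jump of magnitude $\geq r$ at any given step is $\asymp r^{-3/2}$. Meanwhile the number of covered edges at a single peeling step, $\op{Co}_\infty^L + \op{Co}_\infty^R$, has the tail $\BB P[\op{Co}_\infty^L(Q^\infty,\BB e^\infty) + \op{Co}_\infty^R(Q^\infty,\BB e^\infty) \geq k] \asymp k^{-3/2}$ by~\eqref{eqn-cover-tail}, and $Y_{j}^{\infty,L}+Y_j^{\infty,R}$ is a sum of the per-step covered-edge counts. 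So $T^\infty(r)$ is geometric with success probability $p_r \asymp r^{-3/2}$, and $Y_{T^\infty(r)-1}^{\infty,L}+Y_{T^\infty(r)-1}^{\infty,R}$ is a sum of $T^\infty(r)-1$ i.i.d.\ covered-edge increments, each conditioned to correspond to a step with no downward jump of size $\geq r$ (but this conditioning only reduces the covered-edge count, roughly, and in any case changes probabilities by a bounded factor on the relevant scale).

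The plan is as follows. First I would set $N := T^\infty(r)$, which is a geometric random variable: $\BB P[N > m] = (1-p_r)^m$ with $p_r \asymp r^{-3/2}$, so $\BB P[N > C' r^{3/2}] \leq e^{-a_1 C'}$ for a universal $a_1>0$ and all $C'\geq 1$. Next, conditionally on the step count, the covered-edge increments are i.i.d.; it is convenient to dominate $Y_{N-1}^{\infty,L}+Y_{N-1}^{\infty,R}$ by $\sum_{i=1}^{N} V_i$ where the $V_i$ are i.i.d.\ copies of $\op{Co}_\infty^L(Q^\infty,\BB e^\infty)+\op{Co}_\infty^R(Q^\infty,\BB e^\infty)$ (ignoring the no-big-jump conditioning costs at most a constant factor in all the bounds below, since that event has probability bounded away from $0$; alternatively one works directly with the conditioned law, whose tail is still $\asymp k^{-3/2}$ truncated at $r$). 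Now split on the value of $N$: for the event $\{Y_{N-1}^{\infty,L}+Y_{N-1}^{\infty,R} > Cr\}$ to occur we either have $N > C' r^{3/2}$ (probability $\leq e^{-a_1 C'}$), or $N \leq C' r^{3/2}$ and $\sum_{i=1}^{\lceil C' r^{3/2}\rceil} V_i > Cr$.

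For the second event I would use a truncation/union-bound argument adapted to the heavy tail. Since each $V_i$ has tail $\asymp k^{-3/2}$, a sum of $M \asymp C' r^{3/2}$ of them is typically of order $M^{2/3}\asymp r$ (consistent with the scaling of Section~\ref{sec-bdy-process}), so $Cr$ is $C$ standard deviations out and the bound should be exponential in $C$ after choosing $C'$ a suitable power of $C$. Concretely: the event $\{\sum V_i > Cr\}$ is contained in the union of (a) some $V_i > Cr/(2\log M)$, which by a union bound and the tail has probability $\preceq M (Cr)^{-3/2}(\log M)^{3/2} \preceq C' (\log C' + \log r)^{3/2} C^{-3/2} r^{-3/4}$, and (b) $\sum_i V_i \BB 1_{(V_i \leq Cr/(2\log M))} > Cr/2$; for (b) one applies an exponential Markov/Bernstein bound using $\BB E[e^{s V_i \BB 1_{(V_i\leq x)}}] \leq 1 + C_0 s x^{-1/2} + C_0 s^2 x^{1/2}\cdots$ — more simply, $\BB E[V_i\BB 1_{(V_i\leq x)}] \preceq x^{1/2}$ and, optimizing $s \asymp x^{-1}\log(\cdot)$, one gets $\BB P[\text{(b)}] \leq \exp(-a_2 C)$ after absorbing the $\log$ factors into constants and choosing $C' \asymp C^{3/4}$ (which keeps $M^{2/3} \asymp C^{1/2} r \ll Cr$). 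Collecting the pieces and renaming constants gives $\BB P[Y_{T^\infty(r)-1}^{\infty,L}+Y_{T^\infty(r)-1}^{\infty,R} > Cr] \leq e^{-a_0 C}$ for a universal $a_0>0$ and all $C\geq 2$, where the constant $2$ (rather than $1$) gives room to swallow lower-order terms.

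The main obstacle is step (b): the summands $V_i$ have infinite variance, so one cannot apply a naive Chernoff bound, and the truncation level, the number of terms $M$, and the deviation $Cr$ must be balanced so that the logarithmic corrections from the truncation do not destroy the clean exponential-in-$C$ form claimed in the lemma. Getting the three scales ($r^{3/2}$ steps, truncation at $\asymp Cr/\log$, target $Cr$) to cooperate — in particular choosing how $C'$ grows with $C$ — is the delicate bookkeeping; everything else is a routine consequence of the i.i.d.\ structure and the $k^{-3/2}$ and $k^{-5/2}$ tails already recorded in Section~\ref{sec-peeling-estimate}.
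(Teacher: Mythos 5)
The foundational identification in your setup---that ``$Y_{j}^{\infty,L}+Y_j^{\infty,R}$ is a sum of the per-step covered-edge counts''---is incorrect, and the error cascades through the rest of the argument. At step~$j$ the peeling operation covers $\op{Co}_{\BB e_*}^L(\ol Q_{j-1}^\infty,\dot e_j^\infty)+\op{Co}_{\BB e_*}^R(\ol Q_{j-1}^\infty,\dot e_j^\infty)$ edges of $\bdy\ol Q_{j-1}^\infty$, but most of these are internal boundary edges created by earlier peeling steps, not edges of the original boundary $\bdy Q^\infty$. The process $Y^{\infty,L}+Y^{\infty,R}$ records only the covered edges lying on $\bdy Q^\infty$. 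Quantitatively, the cumulative per-step covered count has mean $2j$ by~\eqref{eqn-net-bdy-mean} and hence grows like~$j$, whereas by~\eqref{eqn-bdy-process-inf} one has $Y^{\infty,L}_j+Y^{\infty,R}_j$ equal up to $O(1)$ to minus the sum of the running minima of $W^{\infty,L}$ and $W^{\infty,R}$, which grows like $j^{2/3}$ under the $3/2$-stable scaling. Since $T^\infty(r)\asymp r^{3/2}$, these give $\asymp r^{3/2}$ versus $\asymp r$: your domination of $Y^{\infty,L}_{N-1}+Y^{\infty,R}_{N-1}$ by $\sum_i V_i$ is valid but wasteful by a factor $\asymp r^{1/2}$, which is exactly the gap between what the lemma asserts and what a union of covered edges can give.

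This also breaks your subsequent scaling claim. The $V_i$ are nonnegative with $\BB E[V_i]=1$, so by the law of large numbers $\sum_{i\leq M}V_i\asymp M$, not $\asymp M^{2/3}$; the exponent $2/3$ governs only the fluctuations around the mean (or a centered walk). Thus $\sum_{i\leq C'r^{3/2}}V_i\asymp C'r^{3/2}$, which exceeds the target $Cr$ whenever $r^{1/2}\gg C/C'$, and in that regime no truncation/Bernstein scheme can yield the claimed bound because the mean of the (even truncated) sum already exceeds $Cr$. Relatedly, $\BB E[V_i\BB 1_{(V_i\leq x)}]\asymp 1$, not $\preceq x^{-1/2}$ as suggested at one point, and the union-bound term in your step (a) is polynomial rather than exponential in $C$.

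The paper's proof instead works with the correct quantity and avoids heavy-tail concentration altogether. It defines renewal times $I_k$ at which $Y^{\infty,L}+Y^{\infty,R}$ increases by $r$; by the strong Markov property the segments $(I_{k-1},I_k]$ are i.i.d., and by Proposition~\ref{prop-stable-conv} and scale invariance of the $3/2$-stable scaling limit, each such interval contains a downward jump of $W^{\infty,L}+W^{\infty,R}$ of magnitude $\geq r$ with probability at least a universal $p>0$. Before the first such jump the per-step covered count is $<r+3$, so the overshoot of $Y^{\infty,L}+Y^{\infty,R}$ at each $I_k$ is at most $\approx r$, giving $Y^{\infty,L}_{T^\infty(r)-1}+Y^{\infty,R}_{T^\infty(r)-1}\leq 2rK$ with $K$ geometric with success probability $\geq p$. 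This yields the exponential bound immediately. If you want to make a concentration-style approach work, you would have to replace the per-step covered count by the running-minimum representation of $Y^{\infty,L}+Y^{\infty,R}$ and run a Bernstein bound for a walk with increments truncated at $r$ over $\asymp r^{3/2}$ steps---feasible, but considerably more delicate than the renewal decomposition.
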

\begin{proof} 
Let $I_0 = 0$ and for $k\in\BB N$ inductively let $I_k$ be the smallest $j \geq I_{k-1}+1$ for which
\eqbn
Y_j^{\infty,L} + Y_j^{\infty,R} - Y_{I_{k-1}}^{\infty,L} - Y_{I_{k-1}}^{\infty,R} \geq r .
\eqen  
By~\eqref{eqn-peel-inc-cases} and the strong Markov property, the walk increments $(W^\infty-W^\infty_{I_{k-1}})|_{[I_{k-1}+1 , I_k]_{\BB Z}}$ for $k\in\BB N$ are i.i.d.  

Let $E_k$ be the event that there is a $j\in [I_{k-1} +1 , I_k]_{\BB Z}$ for which $W_j^{\infty,L} + W_j^{\infty,R} -  W_{j-1}^{\infty,L} - W_{j-1}^{\infty,R} \leq  -r$ and let $K$ be the smallest $k\in\BB N$ for which $E_k$ occurs. 
By~\eqref{eqn-bdy-process-inf}, Proposition~\ref{prop-stable-conv}, and the independence of the increments $(W^{\infty,L} ,W^{\infty,R})|_{[I_{k-1}+1 , I_k]_{\BB Z}}$ we infer that there is a universal constant $p  \in (0,1)$ such that $\BB P[E_k \,|\, \mcl F_{I_{k-1}}^\infty ] \geq p$ for each $k\in \BB N$. 
Consequently, $K$ is stochastically dominated by a geometric random variable with success probability $p$. 

We have $ T^\infty(r) \in [I_{K-1} , I_K]$. Since $Y_{I_k}^{\infty,L} + Y_{I_k}^{\infty,R} - Y_{I_{k-1}}^{\infty,L} - Y_{I_{k-1}}^{\infty,R} \leq 2r$ for $k \leq K-1$, we have $Y_{T^\infty(r)-1}^{\infty,L} + Y_{T^\infty(r)-1}^{\infty,R} \leq 2r K$.  
By combining this with the preceding paragraph,  
\eqbn
\BB P\left[ Y_{T^\infty(r)-1}^{\infty,L} + Y_{T^\infty(r)-1}^{\infty,R} > C r \right] \leq \BB P\left[ K >   C/2 \right] \leq (1-p)^{\lfloor C/2\rfloor}
\eqen 
which yields the statement of the lemma.
\end{proof}

We next bound the total number of large downward jumps of $W^{\infty,L} +W^{\infty,R}$ in a given interval.

\begin{lem} \label{lem-big-jump-count}
For $c > 0$ and $m \in \BB N$, let $N_m^\infty(c)$ be the number of $j \in [1,m]_{\BB Z}$ for which 
\eqb \label{eqn-big-jump-def}
W_j^{\infty,L} + W_j^{\infty,R} -  W_{j-1}^{\infty,L} - W_{j-1}^{\infty,R} \leq -c m^{2/3}  .
\eqe 
There is a universal constant $a_1  > 0$ such that for $k \in \BB N$, 
\eqb \label{eqn-big-jump-count}
\BB P\left[ N_m^\infty(c) \geq k \right] \leq  (a_1 c^{-3/2})^k   .
\eqe  
\end{lem}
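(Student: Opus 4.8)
The plan is to reduce \eqref{eqn-big-jump-count} to a union bound for a sum of independent Bernoulli random variables.

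Recall from \eqref{eqn-peel-inc-cases} and the Markov property of peeling that in the UIHPQ$_{\op{S}}$ case the increments $W_j^{\infty,L} + W_j^{\infty,R} - W_{j-1}^{\infty,L} - W_{j-1}^{\infty,R}$ for $j\in\BB N$ are i.i.d., and each of them equals $\op{Ex}_{\BB e_*}(\ol Q_{j-1}^\infty , \dot e_j^\infty) - \op{Co}_{\BB e_*}(\ol Q_{j-1}^\infty , \dot e_j^\infty)$, where the number of exposed edges always lies in $\{1,2,3\}$. Hence the event \eqref{eqn-big-jump-def} that the $j$th increment is a big downward jump forces $\op{Co}_{\BB e_*}(\ol Q_{j-1}^\infty , \dot e_j^\infty) \geq c m^{2/3} + 1$. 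First I would record, using \eqref{eqn-cover-tail} (summing the tail over both the left and right covered edges and using $\op{Co} = \op{Co}^L + \op{Co}^R + 1$), that there is a universal constant $C_0 > 0$ with $\BB P[\op{Co}_\infty(Q^\infty , \BB e^\infty) \geq t] \leq C_0 t^{-3/2}$ for all $t \geq 1$; by the Markov property of peeling the conditional law of $\op{Co}_{\BB e_*}(\ol Q_{j-1}^\infty , \dot e_j^\infty)$ given $\mcl F_{j-1}^\infty$ is the law of $\op{Co}_\infty(Q^\infty , \BB e^\infty)$, so the same tail bound applies to each peeling step.

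Next I would bound the single-step probability $p := \BB P[j\text{th increment} \leq -c m^{2/3}]$. By the previous paragraph, $p \leq C_0(c m^{2/3} + 1)^{-3/2} \leq C_0 (c m^{2/3})^{-3/2}$, and hence $m p \leq C_0 c^{-3/2}$; here the exact cancellation $m \cdot (c m^{2/3})^{-3/2} = c^{-3/2}$ is what makes the bound uniform in both $c$ and $m$ (in particular when $c m^{2/3}$ is of order one). Finally, writing $N_m^\infty(c) = \sum_{j=1}^m \BB 1_{A_j}$ where the $A_j$ are the independent events that the $j$th increment is a big downward jump, the event $\{N_m^\infty(c) \geq k\}$ is the union over $k$-element subsets $S \subseteq [1,m]_{\BB Z}$ of $\bigcap_{j\in S} A_j$, and a union bound over the $\binom{m}{k}$ such subsets gives
\[
\BB P\left[ N_m^\infty(c) \geq k \right] \leq \binom{m}{k} p^k \leq \frac{(m p)^k}{k!} \leq (m p)^k \leq \left( C_0 c^{-3/2} \right)^k ,
\]
so \eqref{eqn-big-jump-count} holds with $a_1 = C_0$.

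I do not anticipate a genuine obstacle: the argument is short, and the only point that requires any care is the uniformity of the estimate $m p \leq C_0 c^{-3/2}$ across all admissible $c$ and $m$, which is handled by the cancellation noted above together with the transfer of \eqref{eqn-cover-tail} from a single UIHPQ$_{\op{S}}$ peeling step to a general step of the percolation peeling process via the Markov property of peeling.
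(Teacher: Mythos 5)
Your proof is correct and relies on the same two ingredients as the paper's: i.i.d.\ increments plus the per-step tail bound $\BB P[\op{Co}_\infty(Q^\infty,\BB e^\infty)\geq t]\preceq t^{-3/2}$, combined via the cancellation $m\cdot(cm^{2/3})^{-3/2}=c^{-3/2}$. The only difference is the final bookkeeping: the paper decomposes via inter-arrival times $T_k^\infty-T_{k-1}^\infty$ between successive big jumps and multiplies $\BB P[T_r^\infty-T_{r-1}^\infty\leq m]\leq a_1 c^{-3/2}$ over $r\in[1,k]_{\BB Z}$, whereas you take a union bound over $k$-element subsets and use $\binom{m}{k}p^k\leq(mp)^k$; these are equivalent and give the same constant.
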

\begin{proof} 
Let $T^\infty_0 = 0$ and for $k\in\BB N$, let $T^\infty_k = T^\infty_k(c m^{2/3})$ be the $k$th smallest $j \in\BB N$ for which~\eqref{eqn-big-jump-def} holds.
By~\eqref{eqn-peel-inc-cases} and the Markov property of peeling, the increments $T^\infty_k-T^\infty_{k-1}$ for $k\in\BB N$ are i.i.d. 
By~\eqref{eqn-cover-tail}, 
\eqbn
\BB P\left[ T^\infty_k -T^\infty_{k-1} \leq m   \right] 
\leq \sum_{j=1}^m \BB P\left[ W_j^{\infty,L} + W_j^{\infty,R} -  W_{j-1}^{\infty,L} - W_{j-1}^{\infty,R} \leq -c m^{2/3} \right] 
\leq  a_1 \sum_{j=1}^m (cm^{2/3})^{-3/2}
\leq a_1 c^{-3/2}  
\eqen
for $a_1 > 0$ a universal constant. Therefore,
\eqbn
\BB P\left[ N_m^\infty(c) \geq k  \right] = \BB P\left[ T^\infty_k \leq m \right] \leq \BB P\left[ T^\infty_r - T^\infty_{r-1} \leq m,\: \forall r \in [1,k]_{\BB Z} \right] \leq (a_1 c^{-3/2})^k . \qedhere
\eqen
\end{proof}

Next we bound the maximum magnitude of the two-dimensional boundary length process $W^\infty$ before the time of the first large downward jump.

\begin{lem} \label{lem-bdy-bound-infty}
For $r \in \BB N$, let $T^\infty(r)$ be the smallest $j \in\BB N$ for which $ W_j^{\infty,L} + W_j^{\infty,R} -  W_{j-1}^{\infty,L} - W_{j-1}^{\infty,R} \leq  -r $, as in Lemma~\ref{lem-before-jump}. 
There are universal constants $b_0,b_1 > 0$ such that for each $r  ,m  \in \BB N$ and each $C> 0$,  
\eqbn
\BB P\left[ \max_{j\in [1,(T^\infty(r)-1) \wedge m]_{\BB Z}} |W_j^\infty |    > C (r \vee m^{2/3}) \right] \leq b_0 e^{-b_1 C}. 
\eqen 
\end{lem}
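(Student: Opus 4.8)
\textbf{Proof proposal for Lemma~\ref{lem-bdy-bound-infty}.}

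The plan is to control the maximum of $|W_j^\infty| = |(W_j^{\infty,L}, W_j^{\infty,R})|$ before the stopping time $T^\infty(r)$ by splitting the total net boundary length process $S_j^\infty := W_j^{\infty,L} + W_j^{\infty,R}$ from the ``balanced'' part $D_j^\infty := W_j^{\infty,L} - W_j^{\infty,R}$, and observing that up to time $T^\infty(r)-1$ neither coordinate has made a downward jump of magnitude exceeding $r$ (since $S^\infty$ has only downward jumps, each coordinate $W^{\infty,L}$ and $W^{\infty,R}$ can drop by at most the size of the corresponding jump of $S^\infty$, hence by at most $r$). First I would handle $S^\infty$ itself: by~\eqref{eqn-peel-inc-cases} and the Markov property of peeling, $S^\infty$ has i.i.d.\ increments with mean zero (by~\eqref{eqn-net-bdy-mean}) and symmetric-ish tails $\asymp k^{-5/2}$ on the downward side by~\eqref{eqn-cover-tail}–\eqref{eqn-peel-inc-tail} (the upward increments are bounded by $3$, since $\op{Ex}_\infty \in \{1,2,3\}$). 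Truncating each increment at level $r$ (which is exactly what happens before $T^\infty(r)$) produces a mean-$O(r^{-1/2})$, variance-$O(r^{1/2})$ random variable bounded below by $-r$; running $m$ such steps, a Doob maximal inequality / Bernstein-type bound for the truncated random walk gives that $\max_{j \le (T^\infty(r)-1)\wedge m} |S_j^\infty|$ exceeds $C(r \vee m^{2/3})$ with probability $\le b_0 e^{-b_1 C}$. Here the scale $r \vee m^{2/3}$ is natural: the truncated-increment standard deviation over $m$ steps is of order $r^{1/4} m^{1/2} \le (r\vee m^{2/3}) \cdot (\text{const})$ when $r \le m^{2/3}$, and when $r > m^{2/3}$ the crude bound $|S_j^\infty| \le (j$ upward steps$) \cdot 3 + mr$ forces a slightly different but still exponential estimate; I expect to combine these via a standard exponential martingale (Cramér transform) argument applied to the truncated walk, which is clean because after truncation at $-r$ the increments have all exponential moments up to a scale depending on $r$.

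Next I would bound the remaining coordinate-imbalance. The key point is that $X^{\infty,L}_j$ and $X^{\infty,R}_j$ (the exposed-boundary-length processes, which are the running-minimum-corrected versions of $W^{\infty,L}$ and $W^{\infty,R}$ by~\eqref{eqn-bdy-process-inf}) and the covered pieces $Y^{\infty,L}, Y^{\infty,R}$ can each be controlled separately. By~\eqref{eqn-bdy-process-inf}, $|W_j^{\infty,L}| \le X_j^{\infty,L} + Y_j^{\infty,L}$ and $X_j^{\infty,L} \le W_j^{\infty,L} - \min_{i\le j} W_i^{\infty,L} + 3 \le S_j^\infty - \min_{i\le j} S_i^\infty + O(r) + 3$ (using that before $T^\infty(r)$ the left running min of $W^{\infty,L}$ differs from the left running min of $S^\infty$ by $O(r)$, since $W^{\infty,R}$ can't have dropped more than $r$), so $X^{\infty,L}$ is already controlled by the $S^\infty$ bound above. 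For $Y_j^{\infty,L} + Y_j^{\infty,R}$ I would invoke Lemma~\ref{lem-before-jump}: up to time $T^\infty(r)-1$ there has been no downward jump of $S^\infty$ of magnitude $\ge r$, so $Y_{T^\infty(r)-1}^{\infty,L} + Y_{T^\infty(r)-1}^{\infty,R} \le Cr$ except on an event of probability $\le e^{-a_0 C}$ — but actually we need this up to time $(T^\infty(r)-1)\wedge m$, for which I use that $Y^{\infty,L}+Y^{\infty,R}$ is non-decreasing in $j$, so its value at $(T^\infty(r)-1)\wedge m$ is at most its value at $T^\infty(r)-1$, and Lemma~\ref{lem-before-jump} applies directly. (If $T^\infty(r) > m$, we instead need a tail bound for $Y_m^{\infty,L}+Y_m^{\infty,R}$ conditioned on no jump $\ge r$ in $[1,m]$, which follows from the same renewal argument as in Lemma~\ref{lem-before-jump} truncated at time $m$, or more simply from the $S^\infty$ maximal bound plus $|Y_j^\infty| \le |X_j^\infty| + |S_j^\infty| + \dots$ — in any case the scale is $r \vee m^{2/3}$ and the tail is exponential in $C$.) Combining the $S^\infty$ maximal inequality, the analogous bound for the imbalance via Lemma~\ref{lem-before-jump}, and a union bound yields $\BB P[\max_{j\le (T^\infty(r)-1)\wedge m}|W_j^\infty| > C(r\vee m^{2/3})] \le b_0 e^{-b_1 C}$ after adjusting the universal constants $b_0,b_1$.

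The main obstacle I anticipate is getting the two regimes $r \le m^{2/3}$ and $r > m^{2/3}$ to both land on the single clean scale $r \vee m^{2/3}$ with a genuinely exponential (not merely polynomial) dependence on $C$, given that the untruncated increments of $S^\infty$ are heavy-tailed with only a finite $3/2$-moment. The resolution is that we are \emph{conditioning on} (or rather, stopping before) the first jump of size $\ge r$, which is precisely the truncation that restores exponential moments at scale $r$; so the argument must be organized so that the truncation level and the deviation scale are linked, and one should run the exponential-martingale estimate for the $r$-truncated walk rather than for $S^\infty$ directly. A secondary, more bookkeeping-level issue is handling the $\wedge m$ carefully — ensuring that on $\{T^\infty(r) \le m\}$ we only ever use the process up to $T^\infty(r)-1$ (where the $\le -r$ truncation is valid) and on $\{T^\infty(r) > m\}$ we use the process up to $m$ (where the same truncation is valid because no big jump has occurred yet) — but this is routine once the stopping-time structure is set up. I expect the whole proof to be about a page, most of it the Cramér/Bernstein computation for the truncated mean-zero walk.
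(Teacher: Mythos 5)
Your basic plan — bound the running max of $S_j^\infty := W_j^{\infty,L} + W_j^{\infty,R}$, bound $Y_j^{\infty,L}+Y_j^{\infty,R}$ using Lemma~\ref{lem-before-jump}, and combine — is the paper's strategy, and it is the right one. But there is a wrong step in your argument, and the way you handle the $S^\infty$ estimate is much harder than it needs to be.

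On $S^\infty$: you only need a one-sided (upper-deviation) maximal inequality, not a bound on $|S_j^\infty|$, because the decomposition $|W_j^\infty|\le |W_j^{\infty,L}|+|W_j^{\infty,R}|\le S_j^\infty + 2(Y_j^{\infty,L}+Y_j^{\infty,R})$ (using $W = X-Y$ with $X,Y\ge 0$, so $|W|\le X+Y=W+2Y$) already reduces everything to an upper bound on $S_j^\infty$ and an upper bound on $Y$. And for the upper deviation of $S^\infty$ over $[1,m]$, no truncation or Cram\'er computation is required at all: the increments of $S^\infty$ are bounded above by $3$, so the running maximum over $m$ steps has an exponential tail at scale $m^{2/3}$ regardless of how heavy the downward tail is. This is a standard maximal inequality for walks with bounded-above increments, and it is precisely what the paper quotes as \cite[Lemma~5.8]{gwynne-miller-saw} to get~\eqref{eqn-peel-mart-upper}. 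Once you have that plus Lemma~\ref{lem-before-jump} for $Y_{(T^\infty(r)-1)\wedge m}$, the union bound gives $\le \tfrac12 C m^{2/3}+\tfrac12 C r\le C(r\vee m^{2/3})$ and the two-regime bookkeeping you flag as "the main obstacle" simply never comes up.

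On the wrong step: your claim that "the left running min of $W^{\infty,L}$ differs from the left running min of $S^\infty$ by $O(r)$, since $W^{\infty,R}$ can't have dropped more than $r$" is false as stated. Before $T^\infty(r)$ the \emph{single-step} decrement of each coordinate is $O(r)$ (since the total covered count at each step is $<\op{Ex}+r\le 3+r$), but the \emph{cumulative} drop of $W^{\infty,R}$ is not bounded by $r$ — indeed $W^{\infty,R}\ge -Y^{\infty,R}$ and $Y^{\infty,R}$ is unbounded and monotone. The only reason $W^{\infty,R}$ stays above $-O(r)$ with high probability is precisely Lemma~\ref{lem-before-jump}, which you were planning to invoke later; using the bound as an a priori input to control $X^{\infty,L}$ via the running min of $S^\infty$ makes the argument circular. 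The fix is to drop that detour altogether and go directly from $|W_j^\infty|\le S_j^\infty+2(Y_j^{\infty,L}+Y_j^{\infty,R})$, bounding the first term with the one-sided maximal inequality and the second with monotonicity of $Y$ plus Lemma~\ref{lem-before-jump}.
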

\begin{proof} 
By Definition~\ref{def-bdy-process}, for $j \in\BB N$ we have 
\eqb \label{eqn-bdy-bound-split}
| W_j^\infty | \leq  | W_j^{\infty,L} | + | W_j^{\infty, R} | \leq W_j^{\infty,L} + W_j^{\infty,R} + 2\left(  Y_j^{\infty,L} + Y_j^{\infty,R} \right) .
\eqe  
By~\eqref{eqn-peel-inc-cases} and the Markov property of peeling, the increments $W_j^{\infty,L} + W_j^{\infty,R} -  W_{j-1}^{\infty,L} - W_{j-1}^{\infty,R}$ are i.i.d. By the analog of~\eqref{eqn-cover-tail} for the total number of covered edges, the probability that one of these increments is smaller than $-k$ is $\sim k^{-3/2}$. 
By a straightforward estimate for heavy-tailed walks with no upward jumps (see, e.g.,~\cite[Lemma~5.8]{gwynne-miller-saw}), there exist universal constants $b_0' , b_1' > 0$ such that for each $m\in\BB N$ and each $C>0$, 
\eqb \label{eqn-peel-mart-upper}
\BB P\left[ \max_{j \in [0,m]_{\BB Z}} (W_j^{\infty,L} + W_j^{\infty,R})  > C m^{2/3} \right] \leq b_0' e^{-b_1' C}.
\eqe  
By combining this with~\eqref{eqn-bdy-bound-split} and Lemma~\ref{lem-before-jump} and recalling that $j \mapsto Y_j^{\infty,L}$ and $j\mapsto Y_j^{\infty,R}$ are monotone non-decreasing,  
\alb
&\BB P\left[\max_{j\in [1,(T^\infty(r)-1) \wedge m]_{\BB Z}} |W_j^\infty | > C (r \vee m^{2/3}) \right]\\
&\qquad \leq \BB P\left[  \max_{j\in[1,m]_{\BB Z} } \left(  W_j^{\infty,L} + W_j^{\infty,R} \right) + 2(Y_{(T^\infty(r)-1) \wedge m}^{\infty,L} + Y_{(T^\infty(r)-1) \wedge m}^{\infty,R})  > C (r \vee m^{2/3})  \right]  \\
&\qquad \leq \BB P\left[ \max_{j \in [1,m]_{\BB Z}} ( W_j^{\infty,L} + W_j^{\infty,R})  >  \frac12 C m^{2/3} \right] + \BB P\left[ Y_{(T^\infty(r)-1) \wedge m}^{\infty,L} + Y_{(T^\infty(r)-1) \wedge m}^{\infty,R} > \frac14 C r \right] \leq b_0 e^{-b_1 C}
\ale
for appropriate $b_0,b_1>0$ as in the statement of the lemma. 
\end{proof}

\begin{proof}[Proof of Lemma~\ref{lem-equicont-reg-event-infty}]
By Lemma~\ref{lem-big-jump-count} (applied with $m= \lfloor \delta n^{3/4} \rfloor$) and since $W$ has stationary increments, for each fixed $k\in \BB N$ one has
\eqbn
\BB P\left[  \# \{ r \in \BB N  : T^{\infty,n}_{k,r}(\delta) <  \lfloor k \delta n^{3/4} \rfloor   \} >  \delta^{-\zeta}  \right] \leq (a_1 c^{-3/2})^{\delta^{-\zeta}}
\eqen
for $a_1 > 0$ a universal constant. Choose $c > 0$ for which $a_1 c^{-3/2} < 1/2$. Then this last probability is of order $o_\delta^\infty(\delta)$ for each fixed $k \in \BB N$ so by a union bound the probability that condition~\ref{item-equicont-reg-times} in the definition of $E^n(\delta)$ fails to occur is at most $o_\delta^\infty(\delta)$. 

By Lemma~\ref{lem-bdy-bound-infty} (applied with $m = \lfloor \delta n^{3/4} \rfloor$, $r = \lfloor c  \delta^{2/3} n^{1/2} \rfloor$, and $C = \delta^{-\zeta}$) and the strong Markov property, for each $k , r \in \BB N$,
\eqb \label{eqn-equicont-reg-sup-estimate}
\BB P\left[ \max_{j \in [T_{k,r-1}^{\infty,n}(\delta) , T_{k ,r }^{\infty,n}(\delta) -1 ]_{\BB Z}}  |W_j^\infty - W_{T_{k,r-1}^{\infty,n}(\delta)}^\infty |   \right] = o_\delta^\infty(\delta)  .
\eqe 
If condition~\ref{item-equicont-reg-times} in the definition of $E^n(\delta)$ occurs, then there are at most $2 A^{-1} \delta^{-1 -\zeta}$ pairs $(k,r) \in \BB N^2$ for which $k \leq 2 A^{-1}$ and $T_{k,r-1}^{n,\infty} \not= T_{k,r}^{n,\infty}$. We conclude by applying~\eqref{eqn-equicont-reg-sup-estimate}, taking a union bound over all such pairs $(k,r)$, and recalling the previous paragraph. 
\end{proof}

We end this subsection by recording the following straightforward consequence of the above estimates, which is not needed for the proof of tightness but which will be used in Section~\ref{sec-crossing}.
 
\begin{lem} \label{lem-X-tail}
For $m\in\BB N$ and $C>1$, the left/right outer boundary length processes satisfy
\eqbn
\BB P\left[ \max_{j\in [1,m]_{\BB Z}} (X_j^{\infty,L} \vee X_j^{\infty,R}) > C m^{2/3} \right] = o_C^\infty(C)
\eqen
uniformly over all $m\in\BB N$. 
\end{lem}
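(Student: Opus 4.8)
The plan is to bound $\max_{j \in [1,m]_{\BB Z}} X_j^{\infty,L}$ (the argument for $X_j^{\infty,R}$ is identical) by relating it to the boundary length processes whose tails we have already controlled. First I would recall from~\eqref{eqn-bdy-process-inf} that $X_j^{\infty,L}$ differs from $W_j^{\infty,L} - \min_{i \in [0,j]_{\BB Z}} W_i^{\infty,L}$ by at most $3$, so it suffices to get an $o_C^\infty(C)$ tail bound on $\max_{j \in [1,m]_{\BB Z}} \big(W_j^{\infty,L} - \min_{i \in [0,j]_{\BB Z}} W_i^{\infty,L}\big)$, uniformly in $m$. The natural route is to split into the event that $W^{\infty,L}$ (equivalently $W^{\infty,L} + W^{\infty,R}$, since both coordinates contribute the same order of fluctuations) has a macroscopic downward jump before time $m$ and the complementary event.

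The key steps, in order, would be: (1) apply Lemma~\ref{lem-bdy-bound-infty} with $r = \lfloor C m^{2/3}\rfloor$ and the same $m$ to see that, on the event that $T^\infty(r) > m$ (no downward jump of size $\geq C m^{2/3}$ in $[1,m]_{\BB Z}$), one has $\max_{j \in [1,m]_{\BB Z}} |W_j^\infty| \leq C' (r \vee m^{2/3}) = C' C m^{2/3}$ except on an event of probability $b_0 e^{-b_1 C} = o_C^\infty(C)$, which in particular controls $\max_j (X_j^{\infty,L})$ by the preceding paragraph up to a constant factor; (2) handle the event $\{T^\infty(r) \leq m\}$ — i.e., there is at least one downward jump of $W^{\infty,L}+W^{\infty,R}$ of magnitude at least $C m^{2/3}$ in $[1,m]_{\BB Z}$ — using Lemma~\ref{lem-big-jump-count} with this value of $c = C$, which gives $\BB P[N_m^\infty(C) \geq 1] \leq a_1 C^{-3/2} = o_C^\infty(C)$. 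Since a downward jump of $W^{\infty,L}+W^{\infty,R}$ cannot increase $X^{\infty,L}$ (covered edges only decrease the running supremum deficit), on the complement of both events $X_j^{\infty,L}$ stays $\preceq C m^{2/3}$ for all $j \leq m$. Combining the two bounds and absorbing the constant $C'$ into the $o_C^\infty$ notation (valid since the estimate holds for all $C$ with the implicit rate) yields the claim; the uniformity in $m$ is automatic because every estimate invoked is uniform in $m$ after the scaling $r \asymp C m^{2/3}$.

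One technical point to be careful about is that $X^{\infty,L}$ is the \emph{outer} boundary length to the left, and its relation to $W^{\infty,L} - \min W^{\infty,L}$ in~\eqref{eqn-bdy-process-inf} only holds up to an additive $3$ and only when the peeled quadrilateral disconnects a white edge of $\bdy Q^\infty$ from the target; I would note that in the UIHPQ$_{\op{S}}$ case the target is $\infty$, every covered edge is disconnected from $\infty$, and the hypothesis in the discussion around~\eqref{eqn-bdy-process-inf} is automatically satisfied at every step, so the additive-$3$ bound is always valid. Given that, the whole argument is a three-line combination of Lemmas~\ref{lem-bdy-bound-infty} and~\ref{lem-big-jump-count} and~\eqref{eqn-bdy-process-inf}. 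The main (mild) obstacle is bookkeeping: making sure the choice $r \asymp C m^{2/3}$ simultaneously makes Lemma~\ref{lem-bdy-bound-infty}'s bound $\preceq C m^{2/3}$ with an $e^{-b_1 C}$ error and makes Lemma~\ref{lem-big-jump-count}'s bound $C^{-3/2}$; both are $o_C^\infty(C)$, so there is no real difficulty, just care with the order of quantifiers to get uniformity in $m$.
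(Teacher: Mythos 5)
There is a genuine gap in your step (2), and it is fatal. You claim $\BB P[N_m^\infty(C) \geq 1] \leq a_1 C^{-3/2} = o_C^\infty(C)$, but this last equality is false: by the definition in Section~\ref{sec-basic-notation}, $a = o_C^\infty(C)$ means $a = o_C(C^s)$ for \emph{every} $s\in\BB R$, i.e., $a$ decays faster than any power of $C$. The bound $C^{-3/2}$ is only polynomial; for instance with $s=-2$ one has $C^{-3/2}/C^{-2} = C^{1/2}\not\to 0$. So your step (2) does not produce the required superpolynomial rate, and this is not bookkeeping: the single-interval decomposition cannot work. If you set the jump threshold at $r\asymp C m^{2/3}$, then (by~\eqref{eqn-peel-inc-tail}, or Lemma~\ref{lem-big-jump-count}) the chance of at least one such jump in $[1,m]_{\BB Z}$ is $\asymp C^{-3/2}$, only polynomially small. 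If you instead shrink $r$ to, say, $m^{2/3}$ so that Lemma~\ref{lem-bdy-bound-infty} with the lemma's parameter set to $C$ gives the bound $\max|W_j^\infty|\leq C m^{2/3}$ with failure probability $b_0 e^{-b_1 C}$, then $\BB P[T^\infty(r) \leq m]$ is of constant order and the process is left uncontrolled past the first big jump. Either way, stopping the argument at the first big jump cannot give a $o_C^\infty(C)$ bound. (Your step (1) has a secondary, fixable imprecision: applying Lemma~\ref{lem-bdy-bound-infty} with $r=\lfloor C m^{2/3}\rfloor$ and the lemma's constant equal to the external $C$ bounds $\max|W_j^\infty|$ by $C^2 m^{2/3}$, not $C m^{2/3}$, but that is a relabeling issue and not the main problem.)

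The paper's proof gets around this by iterating over jumps rather than stopping at the first one. Fix a \emph{universal} constant $c$ with $a_1 c^{-3/2}\leq 1/2$, not a threshold depending on $C$, and decompose $[1,m]_{\BB Z}$ at the successive downward jump times $T_1^\infty < T_2^\infty < \cdots$ of $W^{\infty,L}+W^{\infty,R}$ of size at least $c m^{2/3}$. The number $K$ of resulting intervals then has a genuine geometric tail (Lemma~\ref{lem-big-jump-count}), so $\BB P\left[K > \tfrac{1}{100}C^{1/2}\right] = o_C^\infty(C)$. On each interval, Lemma~\ref{lem-bdy-bound-infty} together with the strong Markov property bounds the fluctuation of $W^{\infty,L}$ by $C^{1/2} m^{2/3}$ up to failure probability $\preceq e^{-b_1 C^{1/2}}$, also $o_C^\infty(C)$. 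Since $X_j^{\infty,L}-X_{j-1}^{\infty,L}\leq 2$ in every step (so a jump contributes only a bounded additive amount to $X^{\infty,L}$), one gets the decomposition
\[
\max_{j\in[1,m]_{\BB Z}} X_j^{\infty,L}\leq 2\sum_{k=1}^K \max_{j\in[T_{k-1}^\infty\wedge m, (T_k^\infty -1)\wedge m]_{\BB Z}} \bigl|W_j^{\infty,L}-W_{T_{k-1}^\infty\wedge m}^{\infty,L}\bigr| + 8K ,
\]
and summing over the at most $\tfrac1{100}C^{1/2}$ intervals gives $\preceq C^{1/2}\cdot C^{1/2} m^{2/3}=C m^{2/3}$ on the complement of a $o_C^\infty(C)$ event. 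The essential idea your proposal misses is this $C^{1/2}$-by-$C^{1/2}$ split: both the number of intervals and the per-interval fluctuation scale are $\asymp C^{1/2}$, and each of the two controls contributes an exponential-in-$C^{1/2}$ failure probability, whereas requesting a single jump of size $\asymp C m^{2/3}$ pays only a polynomial price.
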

\begin{proof}
Let $a_1$ be as in Lemma~\ref{lem-big-jump-count} and fix $c>0$ such that $a_1 c^{-3/2} \leq 1/2$. 
As in the proof of Lemma~\ref{lem-big-jump-count}, let $T_0^\infty =0$ and for $k\in \BB N$ let $T_k^\infty$ be the $k$th smallest $j \in \BB N$ for which $W_j^{\infty,L} + W_j^{\infty,R} -  W_{j-1}^{\infty,L} - W_{j-1}^{\infty,R} \leq -c m^{2/3} $. 
Also let $K = N_m^\infty(c) + 1$ be the smallest $k\in \BB N_0$ for which $T_k^\infty \geq m$. 

By~\eqref{eqn-bdy-process-inf} and since $X_j^{\infty,L} - X_{j-1}^{\infty,L} \leq 2$ for each $j\in\BB N_0$, we infer that
\eqb \label{eqn-X-tail-decomp}
 \max_{j\in [1,m]_{\BB Z}} X_j^{\infty,L}   \leq  2 \sum_{k=1}^K \max_{j \in [T_{k-1}^\infty \wedge m , ( T_k^\infty-1 )\wedge m]_{\BB Z}} |W_j^{\infty,L} - W_{T_{k-1}^\infty \wedge m}^{\infty,L}|  + 8 K .
\eqe 
By Lemma~\ref{lem-big-jump-count}, $\BB P[K > \frac{1}{100} C^{1/2}] = o_C^\infty(C)$. 
By Lemma~\ref{lem-bdy-bound-infty} and the strong Markov property, for each $k\in\BB N_0$, 
\eqbn
\BB P\left[ \max_{j \in [T_{k-1}^\infty \wedge m , ( T_k^\infty-1 )\wedge m]_{\BB Z}} |W_j^{\infty,L} - W_{T_{k-1}^\infty \wedge m}^{\infty,L}| > C^{1/2} m^{2/3} \right] = o_C^\infty(C) .
\eqen
Taking a union bound over all $k \in \left[1, \frac{1}{100}C^{1/2}\right]_{\BB Z}$ and recalling~\eqref{eqn-X-tail-decomp} shows that $\BB P[\max_{j\in [1,m]_{\BB Z}} X_j^{\infty,L} > C m^{2/3} ] = o_C^\infty(C)$. We also have the analogous bound with $X^{\infty,R}$ in place of $X^{\infty,L}$.
\end{proof}

\subsection{Proof of Proposition~\ref{prop-equicont}}
\label{sec-equicont-proof}

In this subsection we conclude the proof of Proposition~\ref{prop-equicont} and thereby the proof of Proposition~\ref{prop-ghpu-tight}. 
Throughout this subsection, for $n\in\BB N$ and $0 < \alpha_0 < \alpha_1$ we define the stopping time $I_{\alpha_0,\alpha_1}^n$ as in~\eqref{eqn-discrete-close-time}. 
Our main aim is to prove the following statement, which will be combined with Lemma~\ref{lem-fb-pinch} to obtain Proposition~\ref{prop-equicont}.

\begin{prop}
\label{prop-bdy-equicont}  
Let $\ep \in (0,1)$ and $\zeta \in (0,1/3)$.  There exists $0 < \alpha_0 < \alpha_1$, $A>0$, and $\delta_* \in (0,1)$ depending on $\ep$ and $\zeta$, such that for $\delta \in (0,\delta_*]$ there exists $n_* = n_*(\delta,\ep,\zeta) \in \BB N$ such that for $n\geq n_*$ the following holds with probability at least $1-\ep$.
\begin{enumerate}
\item $I_{\alpha_0,\alpha_1}^n \leq  A n^{3/4}$ and the unexplored quadrangulation $\ol Q_{I_{\alpha_0,\alpha_1}^n}^n$ has internal graph distance diameter at most $\ep n^{1/4}$. \label{item-bdy-equicont-end} 
\item For each $j\in  [0,I_{\alpha_0,\alpha_1}^n-1]_{  \lfloor \delta n^{3/4} \rfloor \BB Z}$, the percolation path increment $\lambda^n([j , (j+ \delta n^{3/4}) \wedge I_{\alpha_0,\alpha_1}^n]_{\frac12\BB Z})$ is contained in a subgraph of $\ol Q_j^n$ whose boundary relative to $\ol Q_j^n$ (Section~\ref{sec-graph-notation}) contains $\dot e_j^n$ and has $\ol Q_j^n$-graph distance diameter at most $ \frac12 \delta^{1/3- \zeta} n^{1/4}$. \label{item-bdy-equicont}
\end{enumerate}
\end{prop}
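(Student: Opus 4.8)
The plan is to reduce the statement to a combination of the boundary-length estimates already developed in Sections~\ref{sec-bdy-process} and~\ref{sec-uihpq-jump-estimate} together with the distance-versus-boundary-length estimates of Section~\ref{sec-length-diam}, using the Radon-Nikodym comparison Lemma~\ref{lem-perc-rn} to transfer everything from the UIHPQ$_{\op{S}}$ to the finite-volume setting.

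First I would choose the parameters $\alpha_0 < \alpha_1$ and $A$ by applying Lemma~\ref{lem-short-bdy-time} and Lemma~\ref{lem-max-bdy-length}: Lemma~\ref{lem-short-bdy-time} gives $\ol\alpha_0 = \ol\alpha_0(\alpha_1,\ep)$, $A = A(\alpha_1,\ep)$, and $n_*$ with $\BB P[I_{\alpha_0,\alpha_1}^n \leq A n^{3/4}] \geq 1 - \ep/10$, and the Markov property of peeling plus Lemma~\ref{lem-max-bdy-length} (applied to the unexplored quadrangulation $\ol Q_{I_{\alpha_0,\alpha_1}^n}^n$, which has perimeter at most $2\alpha_1 \bcon n^{1/2}$) together with the GHPU convergence of free Boltzmann quadrangulations to the Brownian disk (Lemma~\ref{lem-fb-bdy-holder} applied to $\ol Q_{I_{\alpha_0,\alpha_1}^n}^n$) show that by first fixing $\alpha_1$ small enough the internal diameter of $\ol Q_{I_{\alpha_0,\alpha_1}^n}^n$ is at most $\ep n^{1/4}$ with probability at least $1-\ep/10$. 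This gives item~\ref{item-bdy-equicont-end}.

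For item~\ref{item-bdy-equicont} the key point is that $\ccon^{-1} \BB 1_{(\iota < \mcl J^n)}$ restricted to the event $\{I_{\alpha_0,\alpha_1}^n \leq An^{3/4}\}$ lets us run each length-$\lfloor\delta n^{3/4}\rfloor$ increment of the percolation peeling process before time $I_{\alpha_0,\alpha_1}^n$ as a peeling process on the UIHPQ$_{\op{S}}$ at bounded Radon-Nikodym cost: on $\{j \leq I_{\alpha_0,\alpha_1}^n\}$ the total net boundary length $W_j^{L,n}+W_j^{R,n}+\el_L^n+\el_R^n$ is at least $\alpha_0\bcon n^{1/2}$, so the Radon-Nikodym derivative in~\eqref{eqn-perc-rn} is bounded above by a constant depending only on $\alpha_0$ and $(\frk l_L,\frk l_R)$. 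Hence it suffices to prove the analogous statement on the UIHPQ$_{\op{S}}$, where it follows from Lemma~\ref{lem-equicont-reg-event-infty}: on the event $E^{\infty,n}(\delta)$ (applied with $\zeta$ replaced by some $\zeta' < \zeta$) each increment $W^\infty|_{[T_{k,r-1}^{\infty,n}(\delta), T_{k,r}^{\infty,n}(\delta)]}$ between consecutive large downward jumps fluctuates by at most $\delta^{2/3-\zeta'}n^{1/2}$, and there are at most $\delta^{-\zeta'}$ such increments in a length-$\delta n^{3/4}$ window. Now I would observe that a length-$\lfloor\delta n^{3/4}\rfloor$ increment $\lambda^n([j,j+\delta n^{3/4}])$ of the exploration path is contained in the subgraph of $\ol Q_j^n$ swept out by the peeling cluster between times $j$ and $j + \delta n^{3/4}$, and the boundary of this subgraph relative to $\ol Q_j^n$ is a concatenation of: (i) the outer boundaries of the peeled quadrilaterals (each of bounded size), (ii) at most $\delta^{-\zeta'}$ segments of $\bdy\ol Q_{j'}^n$ of length $O(\delta^{2/3-\zeta'}n^{1/2})$ corresponding to the fluctuation of $W^\infty$ between big jumps, and (iii) at most $\delta^{-\zeta'}$ boundary arcs of $\dot Q^n$ of length $O(\delta^{2/3}n^{1/2})$ disconnected by the big jumps. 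Using Lemma~\ref{lem-fb-bdy-holder} (and the Markov property of peeling, so that each $\ol Q_{j'}^n$ and each disconnected region is a free Boltzmann quadrangulation with simple boundary of the appropriate perimeter) each such boundary arc has $\ol Q_j^n$-diameter at most $O\big(\delta^{1/3-\zeta'/2}(\log(1/\delta))^{7/4+\zeta''}n^{1/4}\big)$, and summing the $O(\delta^{-\zeta'})$ contributions gives total diameter $O\big(\delta^{1/3 - 2\zeta'}n^{1/4}\big)$; choosing $\zeta'$ small relative to $\zeta$ and $\delta_*$ small enough makes this at most $\frac12\delta^{1/3-\zeta}n^{1/4}$. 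One also needs $\dot e_j^n$ to lie on this boundary, which holds by construction since $\dot e_j^n$ is the tip edge at the start of the increment.

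The main obstacle I anticipate is the careful bookkeeping in the last step: organizing the boundary of the region swept out by a $\delta n^{3/4}$-length segment of the peeling process into the big-jump pieces and the between-jump fluctuation pieces, keeping track of which quadrangulations the Markov property applies to, and arranging that every one of the $O(\delta^{-\zeta'})$ arcs is simultaneously controlled by Lemma~\ref{lem-fb-bdy-holder} (which requires a union bound over the $O(A\delta^{-1})$ windows $k$ and the $O(\delta^{-\zeta'})$ jumps within each window, hence the need for the $o_\delta^\infty(\delta)$-type error terms in Lemmas~\ref{lem-equicont-reg-event-infty} and~\ref{lem-fb-bdy-holder}). Once this is set up, combining with item~\ref{item-bdy-equicont-end} and Lemma~\ref{lem-fb-pinch} to pass from boundary diameters to full diameters of the path increments, as described at the start of Section~\ref{sec-equicont-proof}, yields Proposition~\ref{prop-equicont}.
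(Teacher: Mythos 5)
Your overall strategy matches the paper's: choose $\alpha_1$ so that the terminal unexplored quadrangulation has small diameter (Lemma~\ref{lem-end-bdy-choice}), choose $\alpha_0$, $A$ via Lemma~\ref{lem-short-bdy-time}, transfer Lemma~\ref{lem-equicont-reg-event-infty} to the finite-volume setting via Lemma~\ref{lem-perc-rn} (Lemma~\ref{lem-equicont-reg-event}), and then bound diameters of boundary arcs via Lemma~\ref{lem-fb-bdy-holder} and the Markov property of peeling, with a union bound over $O_\delta(\delta^{-1-\zeta/2})$ windows. The choice of $\zeta' < \zeta$ plays the same role as the paper's $\zeta/2$.

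There is, however, a gap in item~(iii) of your boundary decomposition. The ``boundary arcs of $\dot Q^n$ disconnected by the big jumps'' are the boundaries of the bubbles cut out by $\lambda^n$ at the big-jump times $T^n_{k,r}(\delta)$, and these have \emph{no} upper bound of order $\delta^{2/3}n^{1/2}$: a ``big jump'' means the downward jump of $W^{L,n}+W^{R,n}$ has magnitude \emph{at least} $c\delta^{2/3}n^{1/2}$, but it can be of any larger size, so the disconnected region can contain a long arc of $\bdy\ol Q_j^n$ whose $\ol Q_j^n$-diameter you cannot control. If you take the raw cluster increment $\dot Q^n_{j+\delta n^{3/4}} \cap \ol Q_j^n$ as your containing subgraph, its boundary relative to $\ol Q_j^n$ includes all of the covered part of $\bdy\ol Q_j^n$, and a single big jump can cover a macroscopic arc. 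The paper avoids this by defining $\dot Q_{k,r}^n(\delta)$ (equation~\eqref{eqn-quad-step-def}) to run only up to time $T^n_{k,r}(\delta)-1$ and then adjoin just the peeled quadrilateral $\frk f(\ol Q^n_{T^n_{k,r}(\delta)-1},\dot e_{T^n_{k,r}(\delta)})$, \emph{not} the disconnected regions $\frk F(\cdot,\cdot)$ at the big-jump step; this gives $\#\mcl E(\bdy\dot Q_{k,r}^n(\delta)) \le 2\max_{j\in[T^n_{k,r-1},T^n_{k,r}-1]}|W_j^n - W^n_{T^n_{k,r-1}}|+12$, controlled on the regularity event. The containing subgraph $\dot Q_k^{n,\bullet}(\delta)$ is then the union of the $\dot Q_{k,r}^n(\delta)$ together with what this union disconnects from $\bdy Q^n$, and the key point~\eqref{eqn-bdy-relation} is that $\bdy\dot Q_k^{n,\bullet}(\delta)\subset\bigcup_r \bdy\dot Q_{k,r}^n(\delta)$, so the long bubble boundaries never appear. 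Your argument would go through once this step is made precise; as written, items~(ii) and~(iii) do not by themselves account for the possibly long arcs of $\bdy\ol Q_j^n$ swallowed at big-jump times.
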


\makeatletter
\newcommand{\mylabel}[2]{#2\def\@currentlabel{#2}\label{#1}}
\makeatother

\begin{remark}
\label{remark-weaker-cont}
Since internal $\ol Q_j^n$-graph distances are dominated by $Q^n$-graph distances and since each interval $[j_1,j_2]_{\frac12\BB Z}$ with $0\leq j_2-j_1 \leq \delta n^{3/4}$ and $j_1,j_2 \in [0,I_{\alpha_0 , \alpha_1}^n-1]_{\frac12\BB Z}$ is contained in $ [j , (j+ 2\delta n^{3/4}) \wedge I_{\alpha_0,\alpha_1}^n]_{\frac12\BB Z} $ for some $j\in  [0,I_{\alpha_0,\alpha_1}^n-1]_{  \lfloor \delta n^{3/4} \rfloor \BB Z}$, condition~\ref{item-bdy-equicont} in Proposition~\ref{prop-bdy-equicont} implies the following slightly weaker condition: 
\begin{enumerate}
\item[\mylabel{item-bdy-equicont'}{2'}] For each $j_1,j_2 \in [0,I_{\alpha_0,\alpha_1}^n-1]_{\frac12\BB Z}$ with $0\leq j_2-j_1 \leq \delta n^{3/4}$, the percolation path increment $\lambda^n([j_1,j_2]_{\frac12\BB Z})$ is contained in a subgraph of $Q^n$ whose boundary relative to $Q^n$ has $Q^n$-graph distance diameter at most $ \delta^{1/3- \zeta} n^{1/4}$. 
\end{enumerate}
When we apply Proposition~\ref{prop-bdy-equicont}, we will typically use condition~\ref{item-bdy-equicont'}. But, on one occasion (in Section~\ref{sec-crossing-proof}) we will need the stronger condition~\ref{item-bdy-equicont}. 
\end{remark}
 
Proposition~\ref{prop-bdy-equicont} only gives an upper bound for the diameter of the \emph{boundary} of a set containing each $\delta n^{3/4}$-length increment of $\lambda^n$, but this will be enough for our purposes due to Lemma~\ref{lem-fb-pinch}. 
 
The idea of the proof of Proposition~\ref{prop-bdy-equicont} is as follows. When $\alpha_1$ is small, the diameter of the final small unexplored quadrangulation $\ol Q_{ I_{\alpha_0,\alpha_1}^n}^n$ can be bounded using the scaling limit result for free Boltzmann quadrangulations with simple boundary (Lemma~\ref{lem-end-bdy-choice}), so we can restrict attention to $[0,I_{\alpha_0,\alpha_1}^n]_{\BB Z}$. Using Lemmas~\ref{lem-perc-rn} and~\ref{lem-equicont-reg-event-infty}, we show that with high probability when $\delta >0$ is small, no $\delta n^{3/4}$-length interval of time which is contained in $[0,I_{\alpha_0,\alpha_1}^n-1]_{\BB Z}$ contains more than $\delta^{o_\delta(1)}$ downward jumps of $W^{L,n} + W^{R,n}$ of size larger than a constant times $\delta^{2/3} n^{1/2}$, and the supremum of $|W^n|$ over the intervals of time between these downward jumps is at most of order $\delta^{2/3 + o_\delta(1)} n^{1/2}$ (Lemma~\ref{lem-equicont-reg-event}). 

This gives us an upper bound for the outer boundary lengths of the sub-quadrangulations of $Q^n$ discovered by the percolation peeling process between the times corresponding to the downward jumps of $W^{L,n}+W^{R,n}$ of size at least $\delta^{2/3} n^{1/2}$ which happen before time $I_{\alpha_0,\alpha_1}^n$. Combining this with the estimates of Section~\ref{sec-length-diam} and the Markov property of peeling gives an upper bound for the diameters of the boundaries of these sub-quadrangulations (Lemma~\ref{lem-quad-step-bdy}), which leads to the estimate of Proposition~\ref{prop-bdy-equicont}. 

See Figure~\ref{fig-ghpu-tight} for an illustration of the proof.

\begin{figure}[ht!]
 \begin{center}
\includegraphics[scale=1]{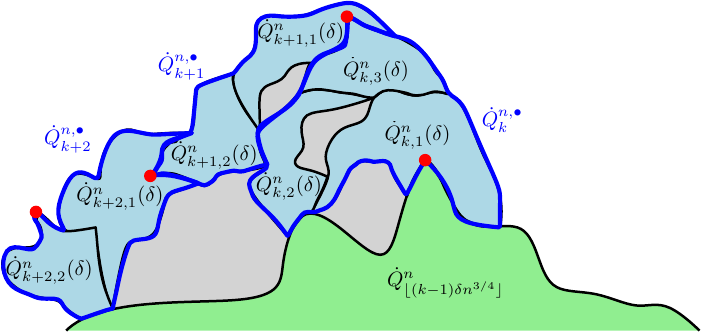} 
\caption[Illustration of the proof of Proposition~\ref{prop-bdy-equicont}]{\label{fig-ghpu-tight}Illustration of the proof of Proposition~\ref{prop-bdy-equicont}.  The quadrangulations $\dot Q_{k,r}^n(\delta)$ (light blue) correspond to increments of time contained in $[ \lfloor (k-1)\delta n^{3/4} \rfloor , \lfloor k \delta n^{3/4} \rfloor \wedge  I_{\alpha_0,\alpha_1}^n ]_{\BB Z}$ between the times $T_{k,r}^n(\delta)$ at which percolation peeling process cuts off a bubble with boundary length larger than $c\delta^{2/3} n^{1/2}$ (these bubbles are colored grey).  The boundary lengths of these quadrangulations and the maximal number of such quadrangulations for each value of $k$ are bounded in Lemma~\ref{lem-end-bdy-choice}.  This leads to an upper bound to the graph-distance diameters of their boundaries in Lemma~\ref{lem-quad-step-bdy}.  Each percolation path increment $\lambda^n([ \lfloor (k-1)\delta n^{3/4} \rfloor , \lfloor k \delta n^{3/4} \rfloor \wedge  I_{\alpha_0,\alpha_1}^n ]_{\frac12\BB Z} )$ is contained in the corresponding filled quadrangulation $\dot Q_k^{n,\bullet}$ (outlined in blue), which consists of the union of the quadrangulations $\dot Q_{k,r}^n$ over all possible values of $r$ and the set of vertices and edges which it disconnects from $\bdy Q^n$. We bound the graph-distance diameter of~$\bdy \dot Q_k^{n,\bullet}$ by summing over~$r$.}
\end{center}
\end{figure}
 
We now proceed with the details. Fix $\ep \in (0,1)$ and $\zeta \in (0,1/3)$. 
We start by making a suitable choice of~$\alpha_1$. 

\begin{lem}
\label{lem-end-bdy-choice}
There exists $\ol\alpha_1= \ol\alpha_1(\ep) > 0$ such that for each $\alpha_1\in (0,\ol\alpha_1]$, there exists $n_0 = n_0(\alpha_1 ,\ep)  > 0$ such that for each $\alpha_0 \in (0,\alpha_1)$ and each $n\geq n_0$, the internal diameter of the unexplored quadrangulation at time $I_{\alpha_0,\alpha_1}^n$ satisfies 
\eqbn
\BB P\left[ \op{diam}\left( \ol Q_{I_{\alpha_0,\alpha_1}^n}^n \right) \leq \ep n^{1/4} \,|\, \mcl F_{I_{\alpha_0,\alpha_1}^n}^n \right] \geq 1-\ep .
\eqen
\end{lem}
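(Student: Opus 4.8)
\textbf{Proof proposal for Lemma~\ref{lem-end-bdy-choice}.} The plan is to deduce the statement from the Markov property of peeling together with the scaling limit of free Boltzmann quadrangulations with simple boundary (Theorem~1.4 of~\cite{gwynne-miller-simple-quad}, which gives convergence to the Brownian disk in the GHPU topology). Conditional on $\mcl F_{I_{\alpha_0,\alpha_1}^n}^n$ on the event $\{I_{\alpha_0,\alpha_1}^n < \infty\}$, the Markov property of peeling tells us that $\ol Q_{I_{\alpha_0,\alpha_1}^n}^n$ is a free Boltzmann quadrangulation with simple boundary whose perimeter is $W_{I_{\alpha_0,\alpha_1}^n}^{L,n} + W_{I_{\alpha_0,\alpha_1}^n}^{R,n} + \el_L^n + \el_R^n$. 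By the definition~\eqref{eqn-discrete-close-time} of $I_{\alpha_0,\alpha_1}^n$, this perimeter is at most $2\alpha_1 \bcon n^{1/2}$. So it suffices to prove the following deterministic-perimeter statement: for each $\ep \in (0,1)$ there exists $\ol\alpha_1 = \ol\alpha_1(\ep) > 0$ such that for each $\alpha_1 \in (0,\ol\alpha_1]$ there exists $n_0 = n_0(\alpha_1,\ep)$ such that if $(Q,\BB e)$ is a free Boltzmann quadrangulation with simple boundary of perimeter $2\el$ for any $\el \leq \alpha_1 \bcon n^{1/2}$ (with $n \geq n_0$), then $\BB P[\op{diam}(Q) \leq \ep n^{1/4}] \geq 1-\ep$.

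To prove this deterministic statement, I would rescale: a free Boltzmann quadrangulation with simple boundary of perimeter $2\el$, with its graph metric rescaled by $(2\el)^{-1/2}$, converges in law to a free Boltzmann Brownian disk with unit boundary length in the GHPU topology, hence in particular its rescaled diameter converges in law to the (a.s.\ finite) diameter of the unit-boundary-length free Boltzmann Brownian disk. Therefore there is a constant $M = M(\ep) > 0$ such that for $\el$ sufficiently large, $\BB P[\op{diam}(Q) \leq M \el^{1/2}] \geq 1 - \ep/2$; by choosing $\ol\alpha_1$ small enough that $M (\ol\alpha_1 \bcon)^{1/2} \leq \ep$ we get $M\el^{1/2} \leq M(\alpha_1\bcon n^{1/2})^{1/2} = M(\alpha_1\bcon)^{1/2} n^{1/4} \leq \ep n^{1/4}$, so the event $\{\op{diam}(Q) \leq M\el^{1/2}\}$ is contained in $\{\op{diam}(Q) \leq \ep n^{1/4}\}$. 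Only finitely many small values of $\el$ are not handled by "sufficiently large $\el$", and these can be absorbed by further increasing $n_0$: for each fixed small $\el$, $\op{diam}(Q)$ is an a.s.\ finite random variable not depending on $n$, so $\BB P[\op{diam}(Q) \leq \ep n^{1/4}] \to 1$ as $n\to\infty$, and there are only finitely many such $\el$ to worry about (namely $\el$ below the threshold after which the scaling-limit bound kicks in, which is a finite set once $\ol\alpha_1$ is fixed). Taking the maximum of the finitely many resulting thresholds and the scaling-limit threshold gives $n_0 = n_0(\alpha_1,\ep)$.

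Finally I would combine the two pieces: on the complement of the (small-probability) event that $I_{\alpha_0,\alpha_1}^n = \infty$ — but actually the statement is only about the conditional probability given $\mcl F_{I_{\alpha_0,\alpha_1}^n}^n$, so there is no issue; on the event $\{I_{\alpha_0,\alpha_1}^n = \infty\}$ the conditioning is degenerate and we may interpret the statement vacuously, or note $\ol Q_{I_{\alpha_0,\alpha_1}^n}^n = \ol Q_\infty^n$ which is empty. Applying the deterministic statement with the $\sigma(\mcl F_{I_{\alpha_0,\alpha_1}^n}^n)$-measurable perimeter value (which is always $\leq 2\alpha_1\bcon n^{1/2}$), and noting the uniformity of the bound over all admissible perimeters, gives the conditional probability bound $\BB P[\op{diam}(\ol Q_{I_{\alpha_0,\alpha_1}^n}^n) \leq \ep n^{1/4} \mid \mcl F_{I_{\alpha_0,\alpha_1}^n}^n] \geq 1-\ep$ as desired. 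The main (and essentially only) obstacle is making the scaling-limit bound \emph{uniform} over the range of perimeters $2\el \leq 2\alpha_1\bcon n^{1/2}$ rather than just for a fixed sequence of perimeters; this is handled by the observation that the rescaled diameter converges in law as $\el\to\infty$, which is uniform in the sense that $\sup_{\el \geq \el_0} \BB P[\op{diam}(Q^\el) > M\el^{1/2}]$ can be made small, together with the separate treatment of the finitely many small perimeters.
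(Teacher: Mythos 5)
Your proposal is correct and follows essentially the same route as the paper's proof: apply the Markov property of peeling at the stopping time $I_{\alpha_0,\alpha_1}^n$ to see that the conditional law of $\ol Q_{I_{\alpha_0,\alpha_1}^n}^n$ is a free Boltzmann quadrangulation with simple boundary of perimeter at most $2\alpha_1\bcon n^{1/2}$, then invoke~\cite[Theorem~1.4]{gwynne-miller-simple-quad} to conclude that its diameter is on the order of the square root of that perimeter. The paper dispatches the second step in one sentence; you fill in the uniformity-over-perimeters detail (large $\el$ handled by convergence in law plus a uniform tail bound, finitely many small $\el$ absorbed into $n_0$), which is a worthwhile elaboration but not a different method.
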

\begin{proof}
Since $I_{\alpha_0,\alpha_1}^n$ is a stopping time for the filtration $\{\mcl F_j^n\}_{j\in\BB N_0}$ of~\eqref{eqn-peel-filtration}, the conditional law given $\mcl F_{I_{\alpha_0,\alpha_1}^n}^n$ of the quadrangulation $ \ol Q_{I_{\alpha_0,\alpha_1}^n}^n  $ is that of a free Boltzmann quadrangulation with simple boundary and perimeter $W_{I_{\alpha_0,\alpha_1}^n }^{L,n} + W_{I_{\alpha_0,\alpha_1}^n}^{R,n} + \el_L^n + \el_R^n$, which by the definition~\eqref{eqn-discrete-close-time} is at most $2  \alpha_1  \bcon n^{1/2}$. The statement of the lemma now follows since a free Boltzmann quadrangulation with simple boundary converges in the scaling limit to the Brownian disk when we rescale distances by a factor proportional to the square root of the boundary length~\cite[Theorem~1.4]{gwynne-miller-simple-quad}. 
\end{proof}

Henceforth fix $\alpha_1 \in (0,\ol\alpha_1]$ and $n_0$ as in Lemma~\ref{lem-end-bdy-choice}. 
By Lemma~\ref{lem-short-bdy-time}, there exists $ \alpha_0 = \alpha_0(\alpha_1,\ep) \in (0,\alpha_1)$, $A = A(\alpha_1,\ep)>0$, and $n_1  = n_1(\alpha_1,\ep) \geq n_0$ such that for $n\geq n_1$, 
\eqb \label{eqn-use-short-time-equicont}
\BB P\left[ I_{\alpha_0,\alpha_1}^n \leq A n^{3/4} \right] \geq 1-\ep .
\eqe  
From now on fix such an $\alpha_0$, $A$, and $n_1$. 

We next transfer the regularity statement for the boundary length processes from Lemma~\ref{lem-equicont-reg-event-infty} to the setting of this subsection.  Note that the statement of the following lemma is essentially identical to that of Lemma~\ref{lem-equicont-reg-event-infty}, except that we work with finite quadrangulations, we replace $\zeta$ with $\zeta/2$, and we truncate at the time $I_{\alpha_0,\alpha_1}^n$. 
 
\begin{lem} \label{lem-equicont-reg-event}
Let $\ep,\zeta$, $\alpha_0$, $\alpha_1$, and $A$ be as above.
There is a universal constant $c>0$ such that the following is true. 
For $n\in\BB N$, $\delta \in (0,1)$, and $k\in\BB N$, let $T^n_{k,0}(\delta) = I_{\alpha_0,\alpha_1}^n \wedge \lfloor (k-1) \delta n^{3/4} \rfloor$ and for $r \in \BB N$ inductively define
\eqb \label{eqn-equicont-reg-times}
T^n_{k,r}(\delta)  := I_{\alpha_0,\alpha_1}^n \wedge \lfloor k \delta n^{3/4}  \rfloor \wedge \inf\left\{ j \geq T_{k,r-1}^n(\delta)+1 : W_j^{L,n} + W_j^{R,n} -  W_{j-1}^{L,n} - W_{j-1}^{R,n} \leq - c \delta^{2/3} n^{1/2} \right\} .
\eqe 
Let $E^{n}(\delta) = E^{n}(\delta,c,\alpha_0,\alpha_1,A,\zeta)$ be the event that the following hold.
\begin{enumerate}
\item $I_{\alpha_0,\alpha_1}^n \leq A n^{3/4}$. 
\item For each $k \in \BB N$, one has $\# \{ r \in \BB N  : T^{ n}_{k,r}(\delta) < I_{\alpha_0,\alpha_1}^n \wedge \lfloor k \delta n^{3/4} \rfloor   \} \leq \delta^{-\zeta/2}$.  \label{item-equicont-reg-times'}
\item For each $k\in \BB N $ and each $r \in \BB N$, 
\eqbn
\max_{j \in [T^n_{k,r-1}(\delta) , T^n_{k,r}(\delta)-1]_{\BB Z}} |W_j^n - W_{T^n_{k,r-1}(\delta)}^n| \leq \delta^{2/3-\zeta/2}  n^{1/2} .
\eqen 
\end{enumerate}
There is a $\delta_0 = \delta_0(\alpha_0,\alpha_1,\zeta,\ep) \in (0,1)$ and an $n_2 = n_2(\alpha_0,\alpha_1,\zeta,\ep) \geq n_1$ such that for $\delta \in (0,\delta_0)$ and $n\geq n_2$, 
\eqbn
\BB P\left[ E^{n}(\delta)  \right] \geq 1 - 2\ep .
\eqen  
\end{lem}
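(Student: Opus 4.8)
\textbf{Proof proposal for Lemma~\ref{lem-equicont-reg-event}.}

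The plan is to deduce this finite-volume regularity statement from its infinite-volume analog, Lemma~\ref{lem-equicont-reg-event-infty}, via the Radon-Nikodym comparison of Lemma~\ref{lem-perc-rn}, combined with the fact that $I_{\alpha_0,\alpha_1}^n \leq A n^{3/4}$ with high probability (already recorded in~\eqref{eqn-use-short-time-equicont}). First I would fix the universal constant $c>0$ to be the same one produced in Lemma~\ref{lem-equicont-reg-event-infty}. The key observation is that on the event $\{I_{\alpha_0,\alpha_1}^n < \infty\}$, by the definition~\eqref{eqn-discrete-close-time} of $I_{\alpha_0,\alpha_1}^n$ the total boundary length $W_{I_{\alpha_0,\alpha_1}^n}^{L,n} + W_{I_{\alpha_0,\alpha_1}^n}^{R,n} + \el_L^n + \el_R^n$ of the unexplored quadrangulation is at least $\alpha_0 \bcon n^{1/2}$. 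Hence, applying Lemma~\ref{lem-perc-rn} with the stopping time $\iota = I_{\alpha_0,\alpha_1}^n \wedge \lfloor 2 A \delta^{-1} \delta n^{3/4} \rfloor$ (or more simply $\iota = I_{\alpha_0,\alpha_1}^n$ truncated at a deterministic time past which the event in question is already determined), the law of $W^n|_{[0,\iota]}$ restricted to $\{\iota < \mcl J^n\}$ is absolutely continuous with respect to the law of $W^\infty|_{[0,\iota]}$ (restricted to $\{\iota < \mcl J^{\infty,n}\}$) with Radon-Nikodym derivative bounded above by $(1+o_n(1)) (\alpha_0 \bcon / (\el_L^n+\el_R^n))^{-5/2} \asymp b \alpha_0^{-5/2}$ for a constant $b$ depending only on $(\frk l_L,\frk l_R)$, provided $n$ is large enough that the $o(1)$ in~\eqref{eqn-perc-rn} is small (which is guaranteed since $\el_L^n + \el_R^n \to \infty$ and the total boundary length at time $\iota$ is bounded below by $\alpha_0 \bcon n^{1/2} \to \infty$).

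Next I would note that the event $E^n(\delta)$ is, up to the truncation at $I_{\alpha_0,\alpha_1}^n$ and the replacement of $\zeta$ by $\zeta/2$, a measurable function of the increments of $W^n$ on the time interval $[0, I_{\alpha_0,\alpha_1}^n \wedge A n^{3/4}]_{\BB Z}$. Precisely: let $\wt E^{n}(\delta)$ denote the event defined exactly like $E^{\infty,n}(\delta, A, c, \zeta/2)$ in Lemma~\ref{lem-equicont-reg-event-infty} but with all objects truncated at the time $J^{\infty,n}$ at which the UIHPQ$_{\op{S}}$ process would be ``stopped'' in the corresponding finite comparison — i.e.\ one works on $\{I_{\alpha_0,\alpha_1}^{\infty,n} < \mcl J^{\infty,n}\}$ and reads off the jump-count and oscillation conditions of Lemma~\ref{lem-equicont-reg-event-infty} only up to time $I_{\alpha_0,\alpha_1}^{\infty,n} \wedge A n^{3/4}$. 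Since $I_{\alpha_0,\alpha_1}^n$ and $I_{\alpha_0,\alpha_1}^{\infty,n}$ are defined by the same deterministic functional of $W^n$ and $W^\infty$ respectively (compare~\eqref{eqn-discrete-close-time} and~\eqref{eqn-discrete-close-time-infty}), and likewise the truncated jump times $T_{k,r}^n(\delta)$ and $T_{k,r}^{\infty,n}(\delta)$ are the same functional, the event $E^n(\delta) \cap \{I_{\alpha_0,\alpha_1}^n \leq A n^{3/4}\}$ is the image under this functional identification of the corresponding event for $W^\infty$. By Lemma~\ref{lem-equicont-reg-event-infty} (with $\zeta/2$ in place of $\zeta$), the probability that conditions~\ref{item-equicont-reg-times'} and the oscillation bound fail for $W^\infty$ on $[0, A n^{3/4}]_{\BB Z}$ — hence in particular on the smaller interval $[0, I_{\alpha_0,\alpha_1}^{\infty,n} \wedge A n^{3/4}]_{\BB Z}$ — is $o_\delta^\infty(\delta)$, uniformly in $n$, at a rate depending only on $A$ and $\zeta$. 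Applying the Radon-Nikodym bound of the previous paragraph transfers this to: the probability that conditions~\ref{item-equicont-reg-times'} or the oscillation bound fail for $W^n$ on $[0, I_{\alpha_0,\alpha_1}^n \wedge A n^{3/4}]_{\BB Z}$, intersected with $\{I_{\alpha_0,\alpha_1}^n < \mcl J^n\}$, is at most $b\alpha_0^{-5/2}(1+o_n(1)) \cdot o_\delta^\infty(\delta)$. Since $b$ and $\alpha_0$ are now fixed (depending only on $\ep$ and the data of the problem), this is still $o_\delta^\infty(\delta)$, so one can choose $\delta_0 = \delta_0(\alpha_0,\alpha_1,\zeta,\ep)$ small enough and $n_2 = n_2(\alpha_0,\alpha_1,\zeta,\ep) \geq n_1$ large enough that this failure probability is at most $\ep$ for all $\delta \in (0,\delta_0)$ and $n\geq n_2$. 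Combining with~\eqref{eqn-use-short-time-equicont}, which gives $\BB P[I_{\alpha_0,\alpha_1}^n \leq A n^{3/4}] \geq 1-\ep$ for $n\geq n_1$, and a union bound yields $\BB P[E^n(\delta)] \geq 1 - 2\ep$.

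The main obstacle I anticipate is the bookkeeping around the truncation at $I_{\alpha_0,\alpha_1}^n$: one must be careful that the stopping time to which Lemma~\ref{lem-perc-rn} is applied is genuinely an $\{\mcl F_j^n\}$-stopping time bounded by $\mcl J^n$ with positive probability, and that the event $E^n(\delta)$ really is determined by the peeling data up to that stopping time (which requires that the times $T_{k,r}^n(\delta)$ and the oscillations of $W^n$ involved are all read off before time $I_{\alpha_0,\alpha_1}^n$ — true by construction in~\eqref{eqn-equicont-reg-times}). A minor additional point is that Lemma~\ref{lem-equicont-reg-event-infty} controls the jump count on each block $[\lfloor (k-1)\delta n^{3/4}\rfloor, \lfloor k \delta n^{3/4}\rfloor]_{\BB Z}$ for $k$ up to $2A\delta^{-1}$, which is exactly the range of $k$ that matters once $I_{\alpha_0,\alpha_1}^n \leq A n^{3/4}$ (for $k > 2A\delta^{-1}$ the block starts after time $A n^{3/4} \geq I_{\alpha_0,\alpha_1}^n$, so $T_{k,0}^n(\delta) = I_{\alpha_0,\alpha_1}^n$ and there are no jump times to count), so condition~\ref{item-equicont-reg-times'} ``for each $k\in\BB N$'' is automatically satisfied for the trivial blocks and reduces to the finitely many nontrivial ones. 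Everything else is a routine assembly of results already established in the excerpt.
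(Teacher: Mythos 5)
Your proof is correct and follows essentially the same route as the paper's: compare the finite-volume boundary length process to the UIHPQ$_{\op{S}}$ process via the Radon-Nikodym estimate of Lemma~\ref{lem-perc-rn} at the stopping time $I_{\alpha_0,\alpha_1}^n$ (bounded by $b\alpha_0^{-5/2}$), apply Lemma~\ref{lem-equicont-reg-event-infty} with $\zeta/2$ in place of $\zeta$ and a suitably shrunk $\ep$, and combine with~\eqref{eqn-use-short-time-equicont} via a union bound to get $1-2\ep$. Your extra bookkeeping about the range of $k$ and the well-posedness of the stopping time is correct and consistent with what the paper leaves implicit.
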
 
\begin{proof}
Let $I_{\alpha_0, \alpha_1}^{\infty,n}$ be the analogous stopping time for the percolation peeling process on the UIHPQ$_{\op{S}}$, as in~\eqref{eqn-discrete-close-time-infty}.
As in the proof of Lemma~\ref{lem-sk-tight}, there is a $\wt n_2 = \wt n_2(\alpha_1 ,\ep) \geq n_1$ such that for $n\geq \wt n_2$, the law of $W^n|_{[0, I_{\alpha_0,\alpha_1}^n]_{\BB Z}}$ on the event $\{ I_{\alpha_0,\alpha_1}^n  <\infty\}$ is absolutely continuous with respect to the law of $W^\infty|_{[0, I_{\alpha_0,\alpha_1}^{\infty,n}]_{\BB Z}}$, with Radon-Nikodym derivative bounded above by $b \alpha_0^{-5/2} \BB 1_{  (I_{\alpha_0,\alpha_1}^{\infty,n}  < \mcl J^{\infty,n} ) } $, where here $b >0$ is a constant depending only on $(\frk l_L , \frk l_R)$ and $\mcl J^{\infty,n}$ is as in~\eqref{eqn-infty-terminal-time}.  

Let $c > 0$ be the constant from Lemma~\ref{lem-equicont-reg-event-infty}.
By Lemma~\ref{lem-equicont-reg-event-infty} (applied with $\zeta/2$ in place of $\zeta$ and $b^{-1} \alpha_0^{5/2} \ep$ in place of $\ep$), there exists $\delta_0 = \delta_0(A,\zeta,\alpha_0, \ep) \in (0,1)$ such that for $\delta\in(0,\delta_0]$, the event of that lemma defined with $\zeta/2$ in place of $\zeta$ and $A$ as in~\eqref{eqn-use-short-time-equicont} satisfies $\BB P\left[ E^{\infty,n}(\delta)  \right] \geq 1-b^{-1} \alpha_0^{5/2} \ep$. By combining this with the above Radon-Nikodym derivative estimate and~\eqref{eqn-use-short-time-equicont}, we obtain the statement of the lemma.
\end{proof}

Henceforth fix $\delta_0$ and $n_2$ as in Lemma~\ref{lem-equicont-reg-event}. We will now define some sub-quadrangulations of $Q^n$ which are illustrated in Figure~\ref{fig-ghpu-tight}. 
For $n\in\BB N$ and $\delta \in (0,1)$, define the time $T^n_{k,r}(\delta)$ for $k\in\BB N_0$ as in~\eqref{eqn-equicont-reg-times} and define the terminal indices for these stopping times by
\eqb \label{eqn-reg-times-terminal}
k_*^n(\delta) := \min\left\{ k\in \BB N : T_{k ,0}^n(\delta) = I_{\alpha_0,\alpha_1}^n \right\} 
\quad \op{and} \quad r_{*,k}^n(\delta) := \min\left\{ r \in \BB N_0 : T_{k,r }^n(\delta) = \lfloor k \delta n^{3/4} \rfloor \right\} . 
\eqe
On the event $E^n(\delta)$ of Lemma~\ref{lem-equicont-reg-event}, we have $k_*^n(\delta) \leq A \delta^{ -1 } $ and $r_{*,k}^n(\delta) \leq \delta^{-\zeta/2}$ for each $k \in \BB N$. 
For $k\in [1,k_*^n(\delta)]_{\BB Z}$ and $r\in [1,r_{*,k}^n(\delta)]_{\BB Z}$, define
\eqb \label{eqn-quad-step-def}
\dot Q_{k,r}^n(\delta) := \left(  \dot Q_{T^n_{k,r}(\delta)-1}^n \cap \ol Q^n_{T^n_{k,r-1} (\delta)} \right) \cup \frk f\left(\ol Q^n_{T^n_{k,r}(\delta)-1} , \dot e_{T_{k,r}^n(\delta)} \right) .
\eqe 
That is, $\dot Q_{k,r}^n(\delta)$ is obtained from the peeling cluster increment $\dot Q_{T^n_{k,r}(\delta) }^n \cap \ol Q^n_{T^n_{k,r-1} (\delta)}$ by removing the quadrangulation(s) which are disconnected from the target edge at time $T_{k,r}^n(\delta)$ (one of these quadrangulations might be quite large since $W^{L,n} + W^{R,n}$ may have a large downward jump at time $T_{k,r}^n(\delta)$).
The chordal percolation exploration path $\lambda^n$ satisfies
\eqb \label{eqn-equicont-path-contain}
\lambda^n\left([T_{k,r-1}^n(\delta) , T_{k,r}^n(\delta)]_{\frac12\BB Z} \right) \subset \dot Q_{k,r}^n(\delta) ,\quad \forall k \in [1,k_*^n(\delta)]_{\BB Z}, \, \forall r \in  [1,r_{*,k}^n(\delta)]_{\BB Z} .
\eqe  

Let $\dot Q_k^{n,\bullet}(\delta)$ be the union of $\bigcup_{r=1}^{r_{*,k}^n(\delta)} \dot Q_{k,r}^n(\delta)$ and the set of all vertices and edges which are disconnected from $\bdy Q^n$ by this union. Then $\dot Q_k^{n,\bullet}(\delta)$ is a quadrangulation with simple boundary and by~\eqref{eqn-equicont-path-contain},  
\eqb \label{eqn-equicont-path-contain'}
\lambda^n([ \lfloor (k-1)\delta n^{3/4} \rfloor , \lfloor k \delta n^{3/4} \rfloor \wedge  I_{\alpha_0,\alpha_1}^n ]_{ \frac12 \BB Z} )  \subset \dot Q_k^{n,\bullet} ,\quad \forall k\in [1,k_*^n(\delta)]_{\BB Z} .
\eqe
Furthermore,
\eqb \label{eqn-bdy-relation} 
\bdy \dot Q_k^{n,\bullet}(\delta) \subset \bigcup_{r=1}^{r_{*,k}^n(\delta)} \bdy \dot Q_{k,r}^n(\delta).
\eqe 
Hence we are led to estimate the diameters of the boundaries of the quadrangulations $\dot Q_{k,r}^n(\delta)$.

\begin{lem} \label{lem-quad-step-bdy}
For each $\zeta \in (0,1/3)$, there exists $\delta_1 =\delta_1(\alpha_0,\alpha_1,\zeta,\ep) \in (0,\delta_0]$ such that for each $\delta \in (0,\delta_1]$, there exists $n_3 = n_3(\delta,\alpha_0,\alpha_1,\zeta,\ep) \geq n_2$ such that for each $n\geq n_3$, it holds with probability at least $1- 3\ep$ that the event $E^n(\delta)$ of Lemma~\ref{lem-equicont-reg-event} occurs and
\eqbn
\op{diam}\left(\bdy \dot Q_{k,r}^n(\delta) ; \ol Q_{T_{k,r-1}^n(\delta)}^n \right) \leq \frac12 \delta^{1/3-\zeta/2} n^{1/4} ,\quad \forall k \in [1,k_*^n(\delta)]_{\BB Z}, \, \forall r \in  [1,r_{*,k}^n(\delta)]_{\BB Z} .
\eqen 
\end{lem}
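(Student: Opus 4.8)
Here is the plan for proving Lemma~\ref{lem-quad-step-bdy}.

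\medskip

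\textbf{Setup and reduction.} Fix $\ep \in (0,1)$ and $\zeta \in (0,1/3)$, and work on the event $E^n(\delta)$ of Lemma~\ref{lem-equicont-reg-event}, which has probability at least $1 - 2\ep$ for $\delta$ small and $n$ large. On this event we have $k_*^n(\delta) \leq A \delta^{-1}$, $r_{*,k}^n(\delta) \leq \delta^{-\zeta/2}$ for each $k$, and crucially
\eqbn
\max_{j \in [T_{k,r-1}^n(\delta), T_{k,r}^n(\delta)-1]_{\BB Z}} |W_j^n - W_{T_{k,r-1}^n(\delta)}^n| \leq \delta^{2/3 - \zeta/2} n^{1/2} .
\eqen
The key point is that this bounds the \emph{outer} boundary length of $\dot Q_{k,r}^n(\delta)$: by Definition~\ref{def-bdy-process} and~\eqref{eqn-bdy-process-inf}, the number of exposed edges $X_j^{L,n}, X_j^{R,n}$ of the unexplored quadrangulation at any time $j$ in the increment differs from the corresponding increment of $W^n$ off its running minimum by at most $3$, so on $E^n(\delta)$ the edges of $\bdy \dot Q_{k,r}^n(\delta)$ which are \emph{not} on $\bdy Q^n$ number at most $O(\delta^{2/3-\zeta/2} n^{1/2})$; and since the peeling process never revisits a peeled quadrilateral, the set $\bdy \dot Q_{k,r}^n(\delta)$ consists of such exposed-type edges together with a single boundary arc of $\bdy \ol Q^n_{T_{k,r-1}^n(\delta)}$ that is covered during the increment, whose length is also at most $O(\delta^{2/3-\zeta/2}n^{1/2})$ on $E^n(\delta)$. (One should be slightly careful: a large downward jump of $W^{L,n}+W^{R,n}$ at time $T_{k,r}^n(\delta)$ may disconnect a big bubble, but by construction~\eqref{eqn-quad-step-def} that bubble is \emph{removed} from $\dot Q_{k,r}^n(\delta)$, so it does not inflate the boundary; the quadrilateral $\frk f(\ol Q^n_{T_{k,r}^n(\delta)-1}, \dot e_{T_{k,r}^n(\delta)})$ itself contributes at most $4$ edges.)

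\medskip

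\textbf{From boundary length to boundary diameter.} Now I would bound the $\ol Q_{T_{k,r-1}^n(\delta)}^n$-graph distance diameter of $\bdy \dot Q_{k,r}^n(\delta)$. By the Markov property of peeling (Section~\ref{sec-peeling-process}), conditionally on $\mcl F^n_{T_{k,r-1}^n(\delta)}$ the unexplored quadrangulation $\ol Q^n_{T_{k,r-1}^n(\delta)}$ is a free Boltzmann quadrangulation with simple boundary of perimeter $\#\mcl E(\bdy \ol Q^n_{T_{k,r-1}^n(\delta)}) = W^{L,n}_{T_{k,r-1}^n(\delta)} + W^{R,n}_{T_{k,r-1}^n(\delta)} + \el_L^n + \el_R^n$, which on $E^n(\delta)$ (using the definition of $I_{\alpha_0,\alpha_1}^n$ and that $T_{k,r-1}^n(\delta) \leq I_{\alpha_0,\alpha_1}^n$) is at most $2\alpha_1 \bcon n^{1/2}$ and at least $\alpha_0 \bcon n^{1/2}$, hence comparable to $n^{1/2}$. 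The set $\bdy \dot Q_{k,r}^n(\delta)$ is contained in a union of two boundary arcs of this free Boltzmann quadrangulation (the exposed arc near $\dot e^n_{T_{k,r-1}^n(\delta)}$ and the covered arc, both already shown to have at most $O(\delta^{2/3-\zeta/2}n^{1/2})$ edges, and both containing $\dot e^n_{T_{k,r-1}^n(\delta)}$ up to distance $O(1)$), so its diameter is at most the sum of the $\ol Q^n_{T_{k,r-1}^n(\delta)}$-diameters of two boundary arcs each of length $O(\delta^{2/3-\zeta/2}n^{1/2})$ of a free Boltzmann quadrangulation with simple boundary of perimeter $\asymp n^{1/2}$. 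By Lemma~\ref{lem-fb-bdy-holder} (applied with $\el \asymp n^{1/2}$, so $\el^{1/2} \asymp n^{1/4}$, with $j - i \asymp \delta^{2/3-\zeta/2} n^{1/2} \asymp \delta^{2/3-\zeta/2}\el$, and with $\ep$ there replaced by a small multiple of $\delta^{1/3-\zeta}n^{1/4}/n^{1/4}$), each such arc has diameter at most a universal constant times
\eqbn
n^{1/4} \left( \delta^{2/3-\zeta/2} \right)^{1/2} \left( |\log \delta^{2/3-\zeta/2}| + 1 \right)^{7/4+\zeta'} + \text{(small error)} \preceq \delta^{1/3 - \zeta/4} |\log \delta|^{2} n^{1/4}
\eqen
with probability at least $1 - o_\delta^\infty(\delta) - o_n(1)$, for an auxiliary Hölder parameter $\zeta' > 0$ chosen small. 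Since $\delta^{1/3-\zeta/4}|\log\delta|^2 \ll \delta^{1/3-\zeta/2}$ as $\delta \to 0$ (because $\zeta/2 - \zeta/4 = \zeta/4 > 0$ dominates the log), for $\delta$ sufficiently small (depending on $\ep, \zeta$) this is bounded by $\tfrac{1}{2}\delta^{1/3-\zeta/2}n^{1/4}$, as required.

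\medskip

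\textbf{Union bound and conclusion.} Finally I would take a union bound over all pairs $(k,r)$ with $k \in [1, k_*^n(\delta)]_{\BB Z}$ and $r \in [1, r_{*,k}^n(\delta)]_{\BB Z}$. On $E^n(\delta)$ there are at most $A\delta^{-1}\cdot \delta^{-\zeta/2} = A\delta^{-1-\zeta/2}$ such pairs, and since $\delta^{-1-\zeta/2} \cdot o_\delta^\infty(\delta) = o_\delta^\infty(1)$, the probability that the diameter bound fails for some $(k,r)$ is at most $o_\delta^\infty(1) + (A\delta^{-1-\zeta/2}) o_n(1)$, which is $\leq \ep$ for $\delta$ small (fixed first) and then $n$ large. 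Combining with $\BB P[E^n(\delta)] \geq 1 - 2\ep$ gives the claimed probability $1 - 3\ep$, after setting $\delta_1$ small enough and $n_3$ large enough in terms of $\alpha_0,\alpha_1,\zeta,\ep$.

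\medskip

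\textbf{Main obstacle.} I expect the main technical subtlety to be the bookkeeping in the first step — precisely identifying $\bdy \dot Q_{k,r}^n(\delta)$ as a union of a controlled-length exposed arc and a controlled-length covered arc of $\ol Q^n_{T_{k,r-1}^n(\delta)}$, and in particular making sure the large bubble disconnected at a jump time does not contaminate the boundary (this is exactly why~\eqref{eqn-quad-step-def} is defined as it is, removing the disconnected piece). Once the boundary of $\dot Q_{k,r}^n(\delta)$ is known to be two short boundary arcs of a free Boltzmann quadrangulation of perimeter $\asymp n^{1/2}$, the rest is a direct application of Lemma~\ref{lem-fb-bdy-holder}, the Markov property, and a union bound, with the only care being that the polylog loss in the Hölder estimate is absorbed by slightly degrading the exponent from $\zeta/2$ (as in $E^n(\delta)$) to $\zeta$ (as in the statement).
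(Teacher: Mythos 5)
Your proposal is correct and follows essentially the same strategy as the paper's proof: use condition~(3) in the definition of $E^n(\delta)$ together with~\eqref{eqn-bdy-process-inf} to bound $\#\mathcal E\bigl(\partial \dot Q_{k,r}^n(\delta)\bigr)$, observe that this boundary is contained in short boundary arcs of unexplored quadrangulations at the stopping times (plus the $\leq 4$ edges of the last peeled quadrilateral), apply Lemma~\ref{lem-fb-bdy-holder} via the Markov property of peeling (with the perimeter of the unexplored region controlled between $\alpha_0 \bcon n^{1/2}$ and $O(n^{1/2})$ on $E^n(\delta)\cap G^n$), and finish with a union bound over $O_\delta(\delta^{-1-\zeta/2})$ pairs $(k,r)$. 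Two small imprecisions worth noting: the ``exposed'' arc you describe is centered at the tip $\dot e^n_{T_{k,r}^n(\delta)+1}$ of the unexplored quadrangulation $\ol Q^n_{T_{k,r}^n(\delta)}$ (not near $\dot e^n_{T_{k,r-1}^n(\delta)}$, as you write), so the paper in fact works with two arcs $A^n_{k,r-1}(\delta)\subset\bdy\ol Q^n_{T_{k,r-1}^n(\delta)}$ and $A^n_{k,r}(\delta)\subset\bdy\ol Q^n_{T_{k,r}^n(\delta)}$ centered at the two respective tips (this is harmless since the total set is connected and $\ol Q^n_{T_{k,r}^n(\delta)}$-distances dominate $\ol Q^n_{T_{k,r-1}^n(\delta)}$-distances). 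Also, the polylog loss from Lemma~\ref{lem-fb-bdy-holder} is absorbed because $(2/3-\zeta/2)/2 = 1/3-\zeta/4 > 1/3-\zeta/2$, exactly as in your display; the degradation from exponent $\zeta/2$ to $\zeta$ happens later (in the proof of Proposition~\ref{prop-bdy-equicont}, when summing over $r\leq\delta^{-\zeta/2}$), not here.
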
 
\begin{proof}
The idea of the proof is as follows. We first use the definition of the event of Lemma~\ref{lem-equicont-reg-event} to bound the number of edges in the boundary of each of the $\dot Q_{k,r}^n(\delta)$'s. All but a constant-order number of the edges of $\dot Q_{k,r}^n(\delta)$ are contained in the union of the boundaries of the unexplored quadrangulations at times $ T^n_{k,r-1}(\delta) $ and $ T^n_{k,r }(\delta) $. We will apply Lemma~\ref{lem-fb-bdy-holder} to these quadrangulations to bound the diameter of $\bdy \dot Q_{k,r}^n(\delta)$.

By Definition~\ref{def-bdy-process} and~\eqref{eqn-bdy-process-inf}, for $k \in [1,k_*^n(\delta)]_{\BB Z}$ and $r \in  [1,r_{*,k}^n(\delta)]_{\BB Z} $, 
\alb
\# \mcl E\left(\bdy \dot Q_{k,r}^n(\delta) \right)  
&\leq 2 \max_{j \in [T^n_{k,r-1}(\delta) , T^n_{k,r}(\delta)-1]_{\BB Z}} |W_j^n - W_{T^n_{k,r-1}(\delta)}^n| + 12 .
\ale 
Hence on the event $E^n(\delta)$ of Lemma~\ref{lem-equicont-reg-event},
\eqb \label{eqn-bdy-length-on-reg}
\# \mcl E\left(\bdy \dot Q_{k,r}^n(\delta) \right)  \leq 2 \delta^{2/3-\zeta/2} n^{1/2} + 12 ,\quad \forall k \in [1,k_*^n(\delta)]_{\BB Z}, \, \forall r \in  [1,r_{*,k}^n(\delta)]_{\BB Z} .
\eqe

For $\delta \in (0,1)$ and $(k,r) \in \BB N \times \BB N_0 $, let $A_{k,r}^n(\delta)$ be the boundary arc of the unexplored quadrangulation $\ol Q_{T_{k,r }^n(\delta)}^n$ which contains $\lceil 2\delta^{2/3-\zeta/2} n^{1/2} + 12 \rceil$ edges of $\bdy \ol Q_{T_{k ,r}^n(\delta)}^n$ lying to the left and to the right of the root edge $\dot e_{T_{k,r}^n(\delta)+1}$. By~\eqref{eqn-bdy-length-on-reg}, if $E^n(\delta)$ occurs then 
\eqb \label{eqn-reg-bdy-contain}
\bdy \dot Q_{k,r}^n(\delta) \subset A_{k,r-1}^n(\delta) \cup A_{k,r}^n(\delta) \cup \bdy \frk f\left(\ol Q^n_{T^n_{k,r}(\delta)-1} , \dot e_{T_{k,r}^n(\delta)} \right) ,\quad \forall k \in [1,k_*^n(\delta)]_{\BB Z}, \, \forall r \in  [1,r_{*,k}^n(\delta)]_{\BB Z} .
\eqe  
Here we recall that $\bdy\frk f\left(\ol Q^n_{T^n_{k,r}(\delta)-1} , \dot e_{T_{k,r}^n(\delta)} \right)  $ is the boundary of the peeled quadrilateral at time $T_{k,r}^n(\delta)$, which contains at most 4 edges.

We will now estimate $\op{diam}(A_{k,r}^n ; \ol Q^n_{T^n_{k,r}(\delta)} )$.   
By, e.g., Lemma~\ref{lem-sk-tight}, there is a $C = C(\ep) >0$ such that 
\eqb \label{eqn-equicont-sup-event}
\BB P[G^n] \geq 1- \frac{\ep}{2} ,\: \forall n\in\BB N \quad \op{where}\quad G^n := \left\{ \max_{j \in \BB N_0} |W_j^n| \leq C n^{1/2} \right\} . 
\eqe 
Each of the times $T_{k,r}^n(\delta)$ is a $\{\mcl F_j^n\}_{j\in\BB N_0}$-stopping time. By the Markov property of peeling, for each $(k,r) \in \BB N^2$ the conditional law given $\mcl F_{T^n_{k,r}(\delta)}$ of the unexplored quadrangulation $( \ol Q_{T_{k,r }^n(\delta)}^n , \dot e_{T_{k,r}^n(\delta)+1})$ is that of a free Boltzmann quadrangulation with simple boundary and perimeter $W_{T_{k,r}^n(\delta)}^{L,n} + W_{T_{k,r}^n(\delta)}^{R,n} + \el_L^n + \el_R^n$. Since $T_{k,r}^n(\delta) \leq I_{\alpha_0,\alpha_1}^n$ by definition, this perimeter lies in $[   \alpha_0 \bcon n^{1/2} , 2 C n^{1/2} ]_{\BB Z}$ provided the event $G^n$ of~\eqref{eqn-equicont-sup-event} occurs and $I_{\alpha_0,\alpha_1}^n < \infty$.
 
Lemma~\ref{lem-fb-bdy-holder} implies that there is a $\delta_1 =\delta_1(\alpha_0,\alpha_1,\zeta,\ep) \in (0,\delta_0]$ such that for each $\delta \in (0,\delta_1]$, there exists $n_3 = n_3(\delta,\alpha_0,\alpha_1,\zeta,\ep) \geq n_2$ as in the statement of the lemma such that for $n\geq n_3$ and $(k,r) \in\BB N \times \BB N_0$,
\eqbn
\BB P\left[ \op{diam}\left( A_{k,r}^n(\delta) ; \ol Q_{T_{k,r }^n(\delta)}^n   \right)  > \frac14 \delta^{1/3- \zeta/2} n^{1/4} -4 ,\: G^n ,\: I_{\alpha_0,\alpha_1}^n < \infty \right] \leq \frac12 A^{-1} \delta^{ 1+\zeta/2} \ep .
\eqen
Since $ I_{\alpha_0,\alpha_1}^n < \infty $, $k_*^n(\delta) \leq A\delta^{-1 }$, and  $r_{*,k}^n(\delta) \leq \delta^{-\zeta/2}$ for each $k\in [1,k_*^n(\delta)]_{\BB Z}$ on $E^n(\delta)$ 
and since $\BB P[G^n \cap E^n(\delta)] \geq 1- 5\ep/2$, we can take a union bound over at most $O_\delta(\delta^{-1-\zeta/2})$ values of $k$ to get
\eqb \label{eqn-quad-arc-diam}
\BB P\left[ \op{diam}\left( A_{k,r}^n(\delta) ; \ol Q_{T_{k ,r}^n(\delta)}^n   \right)  \leq \frac14 \delta^{1/3- \zeta/2} n^{1/4} -4 ,\, \forall k \in [1,k_*^n(\delta)]_{\BB Z}, \, \forall r \in  [0,r_{*,k}^n(\delta)]_{\BB Z}  ,\: E^n(\delta)      \right] \geq 1- 3\ep .
\eqe  
The statement of the lemma now follows by combining~\eqref{eqn-reg-bdy-contain} with~\eqref{eqn-quad-arc-diam}. 
\end{proof}

\begin{proof}[Proof of Proposition~\ref{prop-bdy-equicont}]
Let $\delta_1 =\delta_1(\alpha_0,\alpha_1,\zeta,\ep) \in (0,\delta_0]$ and $n_3 = n_3(\delta,\alpha_0,\alpha_1,\zeta,\ep) \geq n_2$ for $\delta \in (0,\delta_1]$ be as in Lemma~\ref{lem-quad-step-bdy}.   

By Lemma~\ref{lem-quad-step-bdy}, if $\delta \in (0,\delta_1]$ and $n\geq n_3$ it holds with probability at least $1-3\ep$ that $E^n(\delta)$ occurs and each of the boundaries $\bdy\dot Q_{k,r}^n(\delta)$ for $k\in [1,k_*^n(\delta)]_{\BB Z}$ and $r \in  [1,r_{*,k}^n(\delta)]_{\BB Z}$ has $ \ol Q_{T_{k,r-1}^n(\delta)}^n$- and hence also $\ol Q_{\lfloor (k-1) \delta n^{3/4}\rfloor}^n$-diameter at most $\frac12 \delta^{1/3-\zeta/2} n^{1/4}$. Henceforth assume that this is the case.

By condition~\ref{item-equicont-reg-times'} in the definition of $E^n(\delta)$, we have $r_{*,k}^n(\delta) \leq \delta^{-\zeta/2}$ for each $k\in [1,k_*^n(\delta)]_{\BB Z}$. By combining this with~\eqref{eqn-bdy-relation} and using that $ \dot Q_k^{n,\bullet}$ is connected, we see that each $\bdy \dot Q_k^{n,\bullet}$ has $\ol Q_{\lfloor (k-1) \delta n^{3/4}\rfloor}^n$-diameter at most $\frac12 \delta^{1/3- \zeta } n^{1/4}$.  By~\eqref{eqn-equicont-path-contain'}, we infer that condition~\ref{item-bdy-equicont} in the proposition statement is satisfied. 
 
On $E^n(\delta)$ we have $I_{\alpha_0,\alpha_1}^n \leq A n^{3/4}$ and by Lemma~\ref{lem-end-bdy-choice}, for $n\geq n_3$ it holds with probability at least $1-\ep$ that $\op{diam}\left( \ol Q_{I_{\alpha_0,\alpha_1}^n}^n \right) \leq \ep n^{1/4} $, i.e.\ condition~\ref{item-bdy-equicont-end} in the proposition statement is satisfied.  Hence the statement of the proposition is satisfied with $4\ep$ in place of $\ep$. Since $\ep \in (0,1)$ is arbitrary, we conclude. 
\end{proof}

\begin{proof}[Proof of Proposition~\ref{prop-equicont}] 
Fix $\ep \in (0,1)$ and $\zeta\in (0,1/3)$ and let $\delta_* = \delta_*(\ep,\zeta) \in (0,1)$ be as in Proposition~\ref{prop-bdy-equicont}. 
By Lemma~\ref{lem-fb-pinch}, there is a $\delta_*' = \delta_*(\ep) \in (0,\delta_*]$ such that for $\delta \in (0,\delta_*']$, there exists $n_*' = n_*'(\delta,\ep) \geq n_*$ such that for $n\geq n_*'$, it holds with probability at least $1-\ep$ that the following is true. For each subgraph $S$ of $Q^n$ with $\op{diam}(\bdy S ; Q^n) \leq  \delta^{1/3- \zeta} n^{1/4}$, it holds that $\op{diam}(S ; Q^n) \leq \ep$.   

By combining the preceding paragraph with Proposition~\ref{prop-bdy-equicont} (c.f.\ Remark~\ref{remark-weaker-cont}), we find that if $n\geq n_*'$, then with probability at least $1-2\ep$, it holds that $I_{\alpha_0,\alpha_1}^n < \infty$, 
\eqb \label{eqn-use-step-bdy} 
\op{diam}\left( \lambda^n([j_1,j_2]_{\frac12 \BB Z}) ; Q^n \right) \leq \ep n^{1/4} ,\quad \forall  j_1, j_2 \in [0,I_{\alpha_0,\alpha_1}^n-1]_{\frac12\BB Z} \: \op{with} \: 0\leq j_2-j_1 \leq \delta n^{3/4} , 
\eqe 
and $\op{diam}\left( \ol Q_{I_{\alpha_0,\alpha_1}^n}^n \right) \leq \ep n^{1/4} $. 
Since $\lambda^n([I_{\alpha_0,\alpha_1}^n,\infty)_{\frac12\BB Z}) \subset \ol Q_{I_{\alpha_0,\alpha_1}^n}^n$, we find that the statement of the proposition is satisfied with $2\ep$ in place of $\ep$. Since $\ep \in (0,1)$ is arbitrary, we conclude.
\end{proof}

We end by recording an analog of Proposition~\ref{prop-bdy-equicont} for face percolation on the UIHPQ$_{\op{S}}$, whose proof is a subset of a proof of Proposition~\ref{prop-bdy-equicont}. In contrast to the statement of Proposition~\ref{prop-bdy-equicont}, we get a quantitative estimate for the probability of our regularity event, since we do not need to deal with any analog of the time~$I_{\alpha_0,\alpha_1}^n$.

\begin{prop} \label{prop-bdy-equicont-uihpq}
Let $\zeta \in (0,1/3)$ and $A>0$.  
For each $\delta \in (0,1)$, there exists $n_* = n_*(\delta,\zeta , A ) \in\BB N$ such that for $n\geq n_*$, it holds with probability at least $1-o_\delta^\infty(\delta)$ (at a rate which is uniform for $n\geq n_*$) such that the following is true. 
For each $j\in  [0, A n^{3/4}]_{  \lfloor \delta n^{3/4} \rfloor \BB Z}$, the percolation path increment $\lambda^\infty([j ,  j+ \delta n^{3/4} ]_{\frac12\BB Z})$ is contained in a subgraph of the unexplored quadrangulation $\ol Q_j^\infty$ whose boundary relative to $\ol Q_j^\infty$ contains $\dot e_j^\infty$ and has $\ol Q_j^\infty$-graph distance diameter at most $ \frac12 \delta^{1/3- \zeta} n^{1/4}$.  
\end{prop}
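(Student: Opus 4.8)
The plan is to repeat the proof of Proposition~\ref{prop-bdy-equicont} in simplified form. Since in the UIHPQ$_{\op{S}}$ case the unexplored quadrangulation $\ol Q_j^\infty$ is itself a UIHPQ$_{\op{S}}$ at every time~$j$, we do not need the Radon--Nikodym comparison of Lemma~\ref{lem-perc-rn} or the stopping time $I_{\alpha_0,\alpha_1}^n$: we can work directly with the boundary length estimate of Lemma~\ref{lem-equicont-reg-event-infty} and the H\"older-type boundary distance bound of Lemma~\ref{lem-uihpq-bdy-holder}. First I would fix $\zeta \in (0,1/3)$ and $A > 0$, let $c > 0$ be the universal constant of Lemma~\ref{lem-equicont-reg-event-infty}, and apply that lemma with $\zeta/2$ in place of $\zeta$ and $2A$ in place of $A$ to get that the regularity event $E^{\infty,n}(\delta) = E^{\infty,n}(\delta,2A,c,\zeta/2)$ holds with probability $1 - o_\delta^\infty(\delta)$ at a rate uniform in~$n$. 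Working on this event, I would define for each relevant $k$ and $r$ the sub-quadrangulations
\[
\dot Q_{k,r}^\infty(\delta) := \left( \dot Q_{T_{k,r}^{\infty,n}(\delta)-1}^\infty \cap \ol Q^\infty_{T_{k,r-1}^{\infty,n}(\delta)} \right) \cup \frk f\left( \ol Q^\infty_{T_{k,r}^{\infty,n}(\delta)-1}, \dot e_{T_{k,r}^{\infty,n}(\delta)}^\infty \right)
\]
exactly as in~\eqref{eqn-quad-step-def}, with the times $T_{k,r}^{\infty,n}(\delta)$ of~\eqref{eqn-equicont-reg-times-infty}, together with the ``filled'' quadrangulations $\dot Q_k^{\infty,\bullet}(\delta)$ obtained by adjoining to $\bigcup_r \dot Q_{k,r}^\infty(\delta)$ everything it disconnects from $\bdy Q^\infty$. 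As in~\eqref{eqn-equicont-path-contain'} and~\eqref{eqn-bdy-relation}, each increment $\lambda^\infty([T_{k,0}^{\infty,n}(\delta), \lfloor k\delta n^{3/4}\rfloor]_{\frac12\BB Z})$ is contained in $\dot Q_k^{\infty,\bullet}(\delta)$, which has $\dot e^\infty_{T_{k,0}^{\infty,n}(\delta)+1}$ on its boundary relative to $\ol Q^\infty_{T_{k,0}^{\infty,n}(\delta)}$ and satisfies $\bdy \dot Q_k^{\infty,\bullet}(\delta) \subset \bigcup_r \bdy \dot Q_{k,r}^\infty(\delta)$; by condition~\ref{item-equicont-reg-times} of $E^{\infty,n}(\delta)$ there are at most $\delta^{-\zeta/2}$ nontrivial values of~$r$ for each $k\le 2A\delta^{-1}$. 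It therefore suffices to bound $\op{diam}(\bdy\dot Q_{k,r}^\infty(\delta); \ol Q^\infty_{T_{k,r-1}^{\infty,n}(\delta)})$ for these $O_\delta(\delta^{-1-\zeta/2})$ pairs $(k,r)$.

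The diameter bound is obtained just as in Lemma~\ref{lem-quad-step-bdy}. Condition~\ref{item-equicont-reg-sup} of $E^{\infty,n}(\delta)$ together with~\eqref{eqn-bdy-process-inf} gives $\#\mcl E(\bdy\dot Q_{k,r}^\infty(\delta)) \leq 2\delta^{2/3-\zeta/2}n^{1/2} + 12$, so $\bdy\dot Q_{k,r}^\infty(\delta)$ lies in the union of two boundary arcs of $\ol Q^\infty_{T_{k,r-1}^{\infty,n}(\delta)}$ and $\ol Q^\infty_{T_{k,r}^{\infty,n}(\delta)}$, each consisting of $\lceil 2\delta^{2/3-\zeta/2}n^{1/2}+12\rceil$ edges on either side of the corresponding peeled edge, plus a single peeled quadrilateral. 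By the Markov property of peeling, conditionally on $\mcl F_{T_{k,r}^{\infty,n}(\delta)}^\infty$ the pair $(\ol Q^\infty_{T_{k,r}^{\infty,n}(\delta)}, \dot e^\infty_{T_{k,r}^{\infty,n}(\delta)+1})$ is again a UIHPQ$_{\op{S}}$, so I would apply Lemma~\ref{lem-uihpq-bdy-holder} with $\el \asymp n^{1/2}$, with its parameter $A$ taken to be~$1$, with a small $\ep = \ep(\delta)$, and with $C = C(\delta) \to \infty$ chosen to grow like a small power of $\delta^{-\zeta}$ so that (using the exponent $1/2$ in boundary length, and absorbing the polylogarithmic factor) an arc of $\asymp \delta^{2/3-\zeta/2}n^{1/2}$ edges has diameter at most $\tfrac14\delta^{1/3-\zeta/2}n^{1/4}$. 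Lemma~\ref{lem-uihpq-bdy-holder} bounds the failure probability of this by $o_C^\infty(C) + o_\el(1)$; summing over the $O_\delta(\delta^{-1-\zeta/2})$ pairs $(k,r)$, the $o_C^\infty(C)$ contributions remain $o_\delta^\infty(\delta)$ (since $C$ grows polynomially in $\delta^{-1}$), while the $o_\el(1)$ contributions are absorbed by taking $n \geq n_*(\delta,\zeta,A)$. Re-gluing the at most $\delta^{-\zeta/2}$ arcs, each of diameter $\leq \tfrac14\delta^{1/3-\zeta/2}n^{1/4}$, gives $\op{diam}(\bdy\dot Q_k^{\infty,\bullet}(\delta); \ol Q^\infty_{T_{k,0}^{\infty,n}(\delta)}) \leq \tfrac12\delta^{1/3-\zeta}n^{1/4}$; combining with $\BB P[E^{\infty,n}(\delta)] \geq 1 - o_\delta^\infty(\delta)$ and noting that each increment $\lambda^\infty([j, j+\delta n^{3/4}]_{\frac12\BB Z})$ with $j \in [0,An^{3/4}]_{\lfloor\delta n^{3/4}\rfloor\BB Z}$ is covered by one or two consecutive intervals $[T_{k,0}^{\infty,n}(\delta), \lfloor k\delta n^{3/4}\rfloor]$ with $k \leq 2A\delta^{-1}$ (up to adjusting $\delta$ and constants) yields the proposition.

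Given that the genuinely new probabilistic content is already contained in Lemmas~\ref{lem-equicont-reg-event-infty} and~\ref{lem-uihpq-bdy-holder}, the only real obstacle is quantitative bookkeeping: one must choose $C=C(\delta)$ and $\ep=\ep(\delta)$, and the exponents, so that after the union bound over $\asymp\delta^{-1-\zeta/2}$ pairs $(k,r)$ the resulting error probability is still $o_\delta^\infty(\delta)$ with a rate uniform for $n \geq n_*(\delta,\zeta,A)$ — which is precisely what forces $n_*$ to depend on~$\delta$, and is why the statement cannot be strengthened to give $n_*$ independent of~$\delta$. No new estimates beyond those used for Proposition~\ref{prop-bdy-equicont} are required; if anything the argument is shorter, since there is no analog of the time $I_{\alpha_0,\alpha_1}^n$ and no need for a global bound on $\max_j|W_j^\infty|$ (the role played by the event $G^n$ in the proof of Lemma~\ref{lem-quad-step-bdy} is vacuous here because $\ol Q_j^\infty$ is always a UIHPQ$_{\op{S}}$, so Lemma~\ref{lem-uihpq-bdy-holder} applies with no perimeter constraint).
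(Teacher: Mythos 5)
Your proof is correct and follows exactly the route the paper sketches: it invokes Lemma~\ref{lem-equicont-reg-event-infty} and Lemma~\ref{lem-uihpq-bdy-holder} to reprove the analog of Lemma~\ref{lem-quad-step-bdy} on an event of probability $1-o_\delta^\infty(\delta)$, then concludes as in Proposition~\ref{prop-bdy-equicont}, with the simplification (which you correctly note) that the stopping time $I_{\alpha_0,\alpha_1}^n$, the Radon--Nikodym comparison, and the event $G^n$ all drop out because $\ol Q_j^\infty$ is a UIHPQ$_{\op{S}}$ for every $j$. The quantitative choices $C(\delta) \asymp \delta^{-\zeta/4}$ and $\ep(\delta) \asymp \delta^{1/3-\zeta/2}$, and the union bound over $O_\delta(\delta^{-1-\zeta/2})$ pairs, are all handled correctly and yield the stated $o_\delta^\infty(\delta)$ rate uniformly for $n\geq n_*(\delta,\zeta,A)$.
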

\begin{proof}
From Lemmas~\ref{lem-equicont-reg-event-infty} and~\ref{lem-uihpq-bdy-holder}, we obtain an analog of Lemma~\ref{lem-quad-step-bdy} for face percolation on the UIHPQ$_{\op{S}}$ which holds on an event of probability $1-o_\delta^\infty(\delta)$ rather than $1-3\ep$ using exactly the same argument used to prove Lemma~\ref{lem-quad-step-bdy}. 
From this, we deduce the statement of the proposition via the same argument used to conclude the proof of Proposition~\ref{prop-bdy-equicont}. 
\end{proof}

We note that the condition of Proposition~\ref{prop-bdy-equicont-uihpq} implies a weaker condition with $Q^\infty$-graph distances in place of $\ol Q_j^\infty$-graph distances, analogous to the one in Remark~\ref{remark-weaker-cont}.

\section{Crossings between filled metric balls}
\label{sec-crossing}

Suppose $\el_L,\el_R \in \BB N \cup \{\infty\}$ are such that $\el_L$ and $\el_R$ are either both even, both odd, or both infinite and $(Q , \BB e , \theta)$ is a free Boltzmann quadrangulation with simple boundary of perimeter $\el_L+\el_R$.  Consider the percolation peeling process of $(Q,\BB e ,\theta)$ with $\el_L$-white/$\el_R$-black boundary conditions as defined in Section~\ref{sec-perc-peeling}. 

The goal of this section is to bound the number of times that the associated percolation exploration path $\lambda$ can cross an annulus between two filled graph metric balls in $Q$. This estimate will allow us to conclude in Section~\ref{sec-homeo} that a subsequential scaling limit of the percolation exploration paths can hit any single point at most 7 times. See Section~\ref{sec-outline} for a discussion of why we need this fact. 
 
Before stating the main results of this section, we introduce some notation.

\begin{defn} \label{def-crossing} 
For sub-graphs $S_0 \subset S_1 \subset Q$, an \emph{inside-outside crossing} of $S_1\setminus S_0$ by the percolation peeling process of $(Q ,\BB e  , \theta )$ is a discrete time interval $[i_0,i_1]_{\BB Z}\subset \BB N_0$ such that the peeled edges satisfy $\dot e_{i_0}  \in S_0 $, $\dot e_{i_1} \notin  S_1$, and $\dot e_j \in S_1 \setminus S_0$ for each $j\in [i_0+1 ,i_1-1]_{\BB Z}$.   
We write $\op{cross}(S_0,S_1) $ for the set of all crossings of $S_1\setminus S_0$.     
\end{defn}
 
We will sometimes include an extra superscript $\infty$ in the notation $\op{cross} (S_0,S_1 )$ to denote the UIHPQ$_{\op{S}}$ case or an extra superscript $n$ when we take $\el_L$ and $\el_R$ to depend on $n$.

Since the percolation exploration path satisfies $\lambda(j) =\dot e_j$ for $j\in\BB N_0$, an element of $\op{cross}(S_0,S_1)$ is (essentially) the same as a crossing of $S_1 \setminus S_0$ by the path $\lambda$. We work with crossings of the percolation peeling process instead mostly for the notational convenience of not having to worry about the values of $\lambda$ at half-integer times.
 
We will primarily be interested in inside-outside crossings of annular regions between filled graph metric balls, which are defined as follows.

\begin{defn} \label{def-filled-ball}
For a subgraph $Q'$ of $Q$ containing the target edge $\BB e_*$ (or an unbounded subgraph in the case $\el = \infty$), $r\geq 0$, and a subset $S$ of $Q'$ consisting of vertices and edges, the \emph{filled metric ball} $B_r^\bullet(S ; Q')$ is the subgraph of $Q'$ consisting of the graph metric ball $B_r (S ; Q')$ and the set of all vertices and edges of $Q'$ which it disconnects from $\BB e_*$ (or $\infty$ if $\el = \infty$). 
\end{defn}

Note that $B_r^\bullet(S;Q') = Q'$ if $\op{dist}(S , \BB e_* ; Q') < r$. 

Suppose now that we are in the setting of Theorem~\ref{thm-perc-conv}, so that $\{(\el_L^n , \el_R^n)\}_{n\in\BB N}$ is a sequence of pairs of positive integers such that $\el_L^n + \el_R^n$ is always even, $\bcon^{-1} n^{-1/2} \el_L^n \rta \frk l_L> 0$, and $\bcon^{-1} n^{-1/2} \el_R^n\rta \frk l_R  > 0$.
Let $(Q^n , \BB e^n , \theta^n)$ be a free Boltzmann quadrangulation with simple boundary of perimeter $\el_L^n + \el_R^n$ and, as per usual, denote the objects from Section~\ref{sec-perc-peeling} with $\el_L^n$-white/$\el_R^n$-black boundary conditions and the crossing sets of Definition~\ref{def-crossing} with an additional superscript $n$. 
The main result of this section is the following proposition.

\begin{prop}
\label{prop-fb-crossing}
For each $\ep \in (0,1)$, there exists $\delta_* = \delta_*(\ep) \in (0,1)$ such that for each $\delta \in (0,\delta_*]$, there exists $n_* = n_*(\delta,\ep) \in \BB N$ such that for $n\geq n_*$, it holds with probability at least $1-\ep$ that the following is true.
For each vertex $v\in \mcl V(Q^n)$ with $\op{dist}(v  , \bdy Q^n ; Q^n) \geq \ep n^{1/4}$, the number of inside-outside crossings (Definition~\ref{def-crossing}) of the filled metric ball annulus $ B_{\ep n^{1/4}}^\bullet(v ; Q^n)  \setminus B_{\delta n^{1/4}}^\bullet(v ; Q^n)$ satisfies
\eqbn
\#\op{cross}^n\left( B_{\delta n^{1/4}}^\bullet(v ; Q^n) , B_{\ep n^{1/4}}^\bullet(v ; Q^n) \right) \leq   7 .
\eqen
\end{prop}

Proposition~\ref{prop-fb-crossing} only treats vertices which are at macroscopic distance from $\bdy Q^n$. 
We have the following variant of Proposition~\ref{prop-fb-crossing} which deals with boundary vertices. 

\begin{prop}
\label{prop-fb-crossing-bdy}
For each $\ep \in (0,1)$, there exists $\delta_* = \delta_*(\ep) \in (0,1)$ such that for each $\delta \in (0,\delta_*]$, there exists $n_* = n_*(\delta,\ep) \in \BB N$ such that for $n\geq n_*$, it holds with probability at least $1-\ep$ that the following is true.
For each vertex $v\in \mcl V(\bdy Q^n)$ with $\op{dist}(v  , \BB e_*^n ; Q^n) \geq \ep n^{1/4}$, the number of inside-outside crossings (Definition~\ref{def-crossing}) of the filled metric ball annulus $ B_{\ep n^{1/4}}^\bullet(v ; Q^n)  \setminus B_{\delta n^{1/4}}^\bullet(v ; Q^n)$ satisfies
\eqbn
\#\op{cross}^n\left( B_{\delta n^{1/4}}^\bullet(v ; Q^n) , B_{\ep n^{1/4}}^\bullet(v ; Q^n) \right) \leq   4 .
\eqen
\end{prop}

The important point in Propositions~\ref{prop-fb-crossing} and~\ref{prop-fb-crossing-bdy} is that we have some constant finite upper bound for the number of crossings; the particular numbers 7 and 3 are not important and we do not try to optimize them. Since chordal $\SLE_6$ does not have any triple points~\cite[Remark 5.3]{miller-wu-dim}, once Theorem~\ref{thm-perc-conv} is established we will obtain that for each fixed $\ep > 0$, each small enough $\delta > 0$, and each large enough $n\in\BB N$, it holds with probability at least $1-\ep$ that $\#\op{cross}^n\left( B_{\delta n^{1/4}}^\bullet(v ; Q^n) , B_{\ep n^{1/4}}^\bullet(v ; Q^n) \right) \leq 2$ for all $v\in \mcl V(Q^n)$.  

Although Propositions~\ref{prop-fb-crossing} and~\ref{prop-fb-crossing-bdy} are statements about inside-outside crossings by the percolation peeling process of filled metric balls centered at general vertices of $Q^n$, for most of the proof of the proposition we will instead consider a closely related quantity which is defined precisely in Section~\ref{sec-crossing-perturb}.  In particular, we will bound the number of percolation interfaces which cross an annulus between two filled metric balls centered at edges on the boundary of a free Boltzmann quadrangulation with simple boundary or a UIHPQ$_{\op{S}}$; as we will see in Lemma~\ref{lem-crossing-ip}, this quantity provides an upper bound for the quantity considered in Proposition~\ref{prop-fb-crossing}. 

In Section~\ref{sec-uihpq-crossing} we will prove Proposition~\ref{prop-uihpq-crossing}, which is the key estimate needed for the proof of Propositions~\ref{prop-fb-crossing} and~\ref{prop-fb-crossing-bdy} and which is formulated in the setting of face percolation on the UIHPQ$_{\op{S}}$. Roughly speaking, this proposition gives for each $N\in\BB N$ and $\zeta\in (0,1/100)$ an $n$-independent upper bound for the probability that there are more than $N$ face percolation interface paths which cross an annulus between two filled metric balls of respective radii $\delta n^{1/4}$ and $\delta^\zeta n^{1/4}$ centered at the root edge, provided we truncate on the event that the boundary length of the inner filled metric ball is not unusually large.  By the union bound, for $M\in\BB N$ one gets the same estimate simultaneously for the unexplored quadrangulation at all times in $[0, n^{3/4}]_{ \lfloor \delta^M n^{3/4} \rfloor \BB Z}$ except that we lose a factor of $\delta^{-M}$. 

In Section~\ref{sec-pbl-bdy}, we prove an upper bound for the boundary lengths of filled metric balls centered at edges on the boundary in a free Boltzmann quadrangulation with simple boundary, which will be used to remove the truncation in the estimate of Section~\ref{sec-uihpq-crossing}. In Section~\ref{sec-crossing-proof}, we conclude the proof of Proposition~\ref{prop-fb-crossing} using the estimates of the preceding subsections, the modulus of continuity estimate for $\lambda^n$ in Proposition~\ref{prop-bdy-equicont}, and a triangle inequality argument. In Section~\ref{sec-crossing-proof-bdy}, we conclude the proof of Proposition~\ref{prop-fb-crossing-bdy}.

\subsection{Interface path crossings and peeling-by-layers clusters}
\label{sec-crossing-perturb}

Instead of bounding inside-outside crossings by the percolation peeling process of an annulus between filled metric balls, throughout most of this section we will focus on bounding the number of interface paths which cross an annulus between two peeling-by-layers clusters, a closely related quantity which has nicer probabilistic properties. We emphasize that we will be considering percolation interfaces (Section~\ref{sec-interface}) instead of percolation exploration paths.  The purpose of this brief subsection is to define the above quantity precisely and explain how it is related to the crossing count considered in Proposition~\ref{prop-fb-crossing}. 

Throughout this subsection, we suppose we are in the general setting described at the very beginning of this section, so that $\el_L,\el_R \in \BB N \cup \{\infty\}$ are such that $\el_L$ and $\el_R$ are either both even, both odd, or both infinite, $(Q , \BB e , \theta)$ is a free Boltzmann quadrangulation with simple boundary of perimeter $\el_L+\el_R$, and we define the percolation peeling process with $\el_L$-white/$\el_R$-black boundary conditions as in Section~\ref{sec-perc-peeling}. 

For $r,j\in\BB N_0$ and a finite connected set $\BB A$ of edges on the boundary of the unexplored quadrangulation $\ol Q_j$, we let $B_r^\pbl(\BB A ; \ol Q_j)$ be the radius-$r$ peeling-by-layers cluster of $\ol Q_j$ with initial edge set $\BB A$ targeted at $\BB e_*$ as in~\cite[Section~4.1]{gwynne-miller-simple-quad} (where it is denoted by $\dot Q_{J_r}$). The set $B_r^\pbl(\BB A ; \ol Q_j)$ is essentially the same as the filled metric ball of radius $r$ (Definition~\ref{def-filled-ball}) in the sense that 
\eqb \label{eqn-filled-ball-contain}
B_r^\bullet(\BB A ; \ol Q_j) \subset B_r^\pbl(\BB A ; \ol Q_j) \subset B_{r+2}^\bullet(\BB A ; \ol Q_j) .
\eqe  
We also define the outer boundary arc of $B_r^\pbl(\BB A ; \ol Q_j)$ by
\eqb \label{eqn-pbl-bdy-length-def}
\mcl A_r^\pbl(\BB A ; \ol Q_j) := \mcl E\left( \bdy B_r^\pbl(\BB A ; \ol Q_j) \setminus \bdy \ol Q_j  \right)    .
\eqe  
As explained in~\cite[Section~4.1]{gwynne-miller-simple-quad}, $B_r^\pbl(\BB A ; \ol Q_j)$ is the cluster of a peeling process of $\ol Q_j$ (the so-called peeling-by-layers process) run up to a stopping time, so by the Markov property of peeling the unexplored quadrangulation $(\ol Q_j \setminus B_r^\pbl(\BB A ; \ol Q_j ) )  \cup \mcl A_r^\pbl(\BB A ; \ol Q_j) $ is conditionally independent from $B_r^\pbl(\BB A ; \ol Q_j )$ given its perimeter and its conditional law is that of a free Boltzmann quadrangulation with simple boundary of given perimeter. We will not need the precise definition of $B_r^\pbl(\BB A ;\ol Q_j)$ here.

\begin{defn} \label{def-ip-crossing}
For $j\in\BB N_0$ and $e\in \bdy \ol Q_j \setminus \{\BB e_*\}$, let $\op{IP}_j(e; r_0, r_1)$ be the set of interface paths (Section~\ref{sec-interface}) which cross the semi-annular region $B_{r_1}^\pbl(e ; \ol Q_j) \setminus B_{r_0}^\pbl(e  ; \ol Q_j)$ from inside to outside, i.e.\ the number of interface paths $\rng\lambda : [0, b]_{\BB Z} \rta \mcl E(Q)$ such that $\rng\lambda(0) \in B_{r_0}^\pbl(e ; \ol Q_j) $, $\rng\lambda(b) \notin  B_{r_1}^\pbl(e ; \ol Q_j)$, and $\rng\lambda([1,b-1]_{\BB Z}) \subset B_{r_1}^\pbl(e ; \ol Q_j) \setminus B_{r_0}^\pbl(e ; \ol Q_j) $. We abbreviate
\eqb
\op{IP}_j(r_0,r_1) := \op{IP}_j(\dot e_j ; r_0,r_1) .
\eqe
\end{defn}

The following lemma is the main reason for our interest in the set of interface paths $\op{IP}_j(e;r_0,r_1)$. 

\begin{lem} \label{lem-crossing-ip}
For each $j\in\BB N_0$, each edge $e\in \bdy \ol Q_j$, and each $0\leq r_0  < r_1$ with $r_1 - r_0 \geq 5$, one has, in the notation of Definitions~\ref{def-crossing} and~\ref{def-ip-crossing}, 
\eqbn
\# \op{cross} \left(    B_{r_0}^\pbl(e  ; \ol Q_j)   , B_{r_1}^\pbl(e  ; \ol Q_j)  \right)  \leq \# \op{IP}_j(e;r_0+2 ,r_1-2)  .
\eqen
\end{lem}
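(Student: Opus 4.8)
\textbf{Proof strategy for Lemma~\ref{lem-crossing-ip}.}

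The plan is to associate to each inside-outside crossing of the filled-metric-ball-type annulus $B_{r_1}^\pbl(\dot e_j;\ol Q_j)\setminus B_{r_0}^\pbl(\dot e_j;\ol Q_j)$ by the percolation peeling process a distinct interface path in $\op{IP}_j(r_0+2,r_1-2)$, so that the claimed inequality follows by injectivity of this association. Fix $j\in\BB N_0$; for notational brevity write $B_{r} := B_{r}^\pbl(\dot e_j;\ol Q_j)$, so that $B_{r_0}\subset B_{r_1}$ are nested subgraphs of $\ol Q_j$, each containing $\dot e_j$ and each the cluster of a peeling-by-layers process run up to a stopping time. Recall from Section~\ref{sec-interface} that, via Lemma~\ref{lem-peel-interface}, the distinguished interface path $\rng\lambda_*$ from $\BB v_*$ to $\BB v$ visits the right endpoint $\dot v_{i+1}$ of each peeled edge $\dot e_{i+1}$, in the order dictated by the peeling process, and that $\rng\lambda_*([0,s_i]_{\BB Z})$ is contained in the peeling cluster $\dot Q_i$ where $\dot v_{i+1}$ is the terminal endpoint of $\rng\lambda_*(s_i)$. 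The key geometric fact is that each peeled edge $\dot e_i$ lies within graph distance $1$ of the interface path $\rng\lambda_*$ (it is incident to $\dot v_i$), and more precisely that the segment of $\rng\lambda_*$ traced between two consecutive ``times'' $s_{i-1}$ and $s_i$ stays inside the region $\dot Q_i\setminus\dot Q_{i-1}$ together with the peeled quadrilateral at step $i$, hence within graph distance $O(1)$ of $\dot e_i$.

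First I would make precise the ``$O(1)$'' comparison: using~\eqref{eqn-filled-ball-contain} and the fact that $\rng\lambda_*$ stays within distance $1$ of the sequence of peeled edges, I would show that if $[i_0,i_1]_{\BB Z}\in\op{cross}(B_{r_0},B_{r_1})$ is a crossing --- so $\dot e_{i_0}\in B_{r_0}$, $\dot e_{i_1}\notin B_{r_1}$, and $\dot e_i\in B_{r_1}\setminus B_{r_0}$ for $i\in[i_0+1,i_1-1]_{\BB Z}$ --- then the arc $\rng\lambda_*|_{[s_{i_0-1},s_{i_1}]_{\BB Z}}$ of the distinguished interface path enters $B_{r_0+2}$ (because it passes within distance $1$ of $\dot e_{i_0}\in B_{r_0}$, so the corresponding vertex lies in $B_{r_0+1}\subset B_{r_0+2}$) and exits $B_{r_1-2}$ (because it passes within distance $1$ of $\dot e_{i_1}\notin B_{r_1}$; since $B_{r_1}\supset B_{r_1-2}$ is a graph metric ball enlarged by filling, any vertex not in $B_{r_1}$ is at distance $\geq r_1-1 > r_1-2$ from $\dot e_j$, hence not in $B_{r_1-2}$, and the interface vertex within distance $1$ of $\dot e_{i_1}$ is then at distance $> r_1-3\geq r_1-2-1$... here I would be careful, which is exactly why the stated hypothesis $r_1-r_0\geq 5$ and the $\pm 2$ buffers are present). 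This produces a sub-arc of $\rng\lambda_*$, and every sub-arc of an interface path is an interface path; truncating it to its first excursion from $\bdy B_{r_0+2}$ to $\bdy B_{r_1-2}$ that realizes the inside-outside crossing gives an element of $\op{IP}_j(r_0+2,r_1-2)$.

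The remaining step is injectivity: distinct crossings $[i_0,i_1]_{\BB Z}$ and $[i_0',i_1']_{\BB Z}$ in $\op{cross}(B_{r_0},B_{r_1})$ yield distinct interface paths in $\op{IP}_j(r_0+2,r_1-2)$. Since crossings are, by definition, disjoint (or at least non-nested and appearing in a definite temporal order along the peeling process), the corresponding arcs of $\rng\lambda_*$ occur in disjoint parameter ranges $[s_{i_0-1},s_{i_1}]$ and $[s_{i_0'-1},s_{i_1'}]$ by the monotonicity $s_i\leq s_{i'}$ for $i\leq i'$ from Lemma~\ref{lem-peel-interface}; as $\rng\lambda_*$ is a simple path (or a sub-path of one whose periods have no repeated edges), arcs over disjoint parameter ranges are distinct interface paths. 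This gives the injection and hence the inequality. The main obstacle I anticipate is purely bookkeeping: getting the constants in the $B_{r_0}\leftrightarrow B_{r_0+2}$ and $B_{r_1}\leftrightarrow B_{r_1-2}$ comparisons exactly right, reconciling the ``distance $1$ from peeled edges'' slack, the $\pm 2$ in~\eqref{eqn-filled-ball-contain} between filled metric balls and peeling-by-layers clusters, and the requirement $r_1-r_0\geq 5$ so that the annulus $B_{r_1-2}\setminus B_{r_0+2}$ is genuinely non-degenerate and the crossing arcs it contains are well-defined. No delicate probabilistic input is needed --- this is a deterministic combinatorial statement about a fixed decorated quadrangulation --- so once the distance bookkeeping is pinned down the proof is complete.
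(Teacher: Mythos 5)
Your proposal is essentially the paper's proof: use the distinguished interface path $\rng\lambda_*$ from Lemma~\ref{lem-peel-interface}, observe that for a crossing $[i_0,i_1]_{\BB Z}$ the endpoints $\rng\lambda_*(s_{i_0})$ and $\rng\lambda_*(s_{i_1})$ lie, respectively, inside the inner cluster and outside the outer cluster (up to a distance-$2$ buffer), extract a sub-arc realizing an element of $\op{IP}_j(r_0+2,r_1-2)$, and use the monotonicity $s_j\leq s_{j'}$ together with the fact that $\rng\lambda_*$ traverses no edge more than once to get the injection. One small caveat: your parenthetical claim that the segment $\rng\lambda_*|_{[s_{i-1},s_i]_{\BB Z}}$ stays within graph distance $O(1)$ of $\dot e_i$ is false --- this segment can wander arbitrarily far into the bubble $\frk F_{\BB e_*}(\ol Q_{i-1},\dot e_i)$ disconnected from the target at step $i$, which need not have bounded diameter --- but you do not actually use this, since your argument only needs the locations of the endpoints $\rng\lambda_*(s_{i_0})$ and $\rng\lambda_*(s_{i_1})$.
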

\begin{proof}
By stationarity it suffices to prove the statement of the lemma for $j=0$. 
Let $\rng\lambda_*$ and $\dot v_j$ and $s_j$ for $j\in\BB N_0$ be the interface path, vertices, and times, respectively, from  Lemma~\ref{lem-peel-interface} so that $\dot v_j$ is an endpoint of $\dot e_j$ and is the terminal endpoint of $\rng\lambda_*(s_j) $. 

Suppose $[i_0,i_1]_{\BB Z} $ is an inside-outside crossing of $B_{r_1}^\pbl(e ; \ol Q_j) \setminus B_{r_0}^\pbl(e ; \ol Q_j)$. 
Then $\rng\lambda_*(s_{i_0}) \in B_{r_0}^\pbl(e  ; \ol Q_j)$ and $\rng\lambda_*(s_{i_1}) \notin B_{r_1}^\pbl(e  ; \ol Q_j)$ so there exists a discrete interval $[\rng k_0 , \rng k_1]_{\BB Z}\subset [s_{i_0} , s_{i_1}]_{\BB Z}$ such that $\rng\lambda_*|_{[\rng k_0 , \rng k_1]_{\BB Z}}$ belongs to $\op{IP}_0(r_0+2 , r_1-2)$. 
Distinct inside-outside crossings $[i_0,i_1]_{\BB Z}$ are disjoint, so (since $\rng\lambda_*$ does not hit any edge more than once) must correspond to intervals $[s_{i_0} , s_{i_1}]_{\BB Z}$ which can possibly intersect only at their endpoints. Hence the inside-outside crossings of $B_{r_1}^\pbl(e ; \ol Q_j) \setminus B_{r_0}^\pbl(e ; \ol Q_j)$ correspond to distinct elements of $\op{IP}_0(r_0+2,r_1-2)$. 
\end{proof}

\subsection{Crossings of semi-annuli in the UIHPQ$_{\op{S}}$}
\label{sec-uihpq-crossing}

Throughout this subsection we assume we are in the setting of Section~\ref{sec-perc-peeling} with $\el_L = \el_R = \infty$ (so in particular $(Q^\infty, \BB e^\infty)$ is a UIHPQ$_{\op{S}}$) and as per usual we denote the objects of that subsection with a superscript $\infty$. 
Our goal is to prove the following estimate for the number of interface paths which cross an annulus between two peeling-by-layers clusters centered at the root edge, which is the main input in the proof of Proposition~\ref{prop-fb-crossing}. 

\begin{prop} \label{prop-uihpq-crossing} 
Let $\zeta \in (0,1/100)$, $M > 0$, and $A>0$. 
For each $\delta \in (0,1)$ there exists $n_* = n_*(\delta ,\zeta ,A) \in \BB N$ such that for $n\geq n_*$ and $N\in\BB N$, it holds with probability $1 - O_\delta(\delta^{(N-1) (1-2\zeta) - M}) $ (at a rate which is uniform for $n\geq n_*$) that, in the notation of Definition~\ref{def-ip-crossing} and~\eqref{eqn-pbl-bdy-length-def}, 
\alb
&\#\op{IP}_j^\infty( \delta  n^{1/4} , \delta^\zeta n^{1/4}   ) \leq N ,\notag\\ 
&\qquad\forall j \in [0, A n^{3/4}]_{ \lfloor \delta^M n^{3/4} \rfloor \BB Z  }   \quad\text{such that} \quad     \# \mcl A_{\delta  n^{1/4}}^\pbl(\dot e_j^\infty ; \ol Q^\infty_j) \leq  \delta^{2-\zeta} n^{1/2}   .
\ale
\end{prop}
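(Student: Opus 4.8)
The plan is to reduce, using stationarity of the peeling process and a union bound over $j$, to an estimate for a single peeling-by-layers cluster rooted at the root edge; to rephrase that estimate as a bound on the number of percolation interface arms joining a short boundary arc to a macroscopic graph-distance sphere in a re-rooted UIHPQ$_{\op{S}}$; and to prove this arm bound by running a peeling-by-layers process and iterating a one-scale arm-survival estimate coming from the stable scaling limit of the boundary length process.

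For the reduction: by the Markov property of peeling, for fixed $j$ the unexplored quadrangulation $\ol Q_j^\infty$, re-rooted at $\dot e_j^\infty$ and equipped with the restriction of $\theta^\infty$ and the induced white/black boundary conditions, has the same law as $(Q^\infty,\BB e^\infty,\theta^\infty)$, and the event
\[ A_j:=\left\{\#\op{IP}_j^\infty\!\left(\delta n^{1/4},\delta^\zeta n^{1/4}\right)\geq N+1,\ \#\mcl A_{\delta n^{1/4}}^\pbl(\dot e_j^\infty;\ol Q_j^\infty)\leq\delta^{2-\zeta}n^{1/2}\right\} \]
is a deterministic functional of this re-rooted decorated quadrangulation, so $\BB P[A_j]=\BB P[A_0]$ and it suffices to bound $\BB P[A_0]$ by $C\delta^{(N+1)(1-\zeta/3)}$ with $C=C(\zeta)$; the union bound over the $\asymp A\delta^{-M}$ relevant values of $j$ then costs a factor $\delta^{-M}$ and gives the probability bound claimed in the proposition. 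To bound $\BB P[A_0]$, condition on the inner peeling-by-layers cluster $B_{\delta n^{1/4}}^\pbl(\dot e_0^\infty;Q^\infty)$, its (single-arc) outer boundary $\mcl A:=\mcl A_{\delta n^{1/4}}^\pbl(\dot e_0^\infty;Q^\infty)$, and the colors of its quadrilaterals; on $\{\#\mcl A\leq\delta^{2-\zeta}n^{1/2}\}$, the Markov property of peeling-by-layers \cite[Section~4.1]{gwynne-miller-simple-quad} shows that, given $L:=\#\mcl A$, the complementary unexplored quadrangulation $(Q^\infty\setminus B_{\delta n^{1/4}}^\pbl)\cup\mcl A$, with the restricted percolation configuration and white/black boundary conditions, is a UIHPQ$_{\op{S}}$ with a distinguished boundary arc of length $L$, independent of the revealed data. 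By \eqref{eqn-filled-ball-contain}, every element of $\op{IP}_0^\infty(\delta n^{1/4},\delta^\zeta n^{1/4})$ restricts to an interface path in this complement with both endpoints on $\mcl A$ reaching graph distance at least $R:=\frac12\delta^\zeta n^{1/4}$ from $\mcl A$ (for $\delta$ small), and distinct such interface paths have disjoint interiors. So it remains to show: uniformly over $L\in[1,\lfloor\delta^{2-\zeta}n^{1/2}\rfloor]_{\BB Z}$, the probability that a UIHPQ$_{\op{S}}$ with a marked boundary arc of length $L$ carrying a critical face percolation configuration has $\geq N+1$ interface paths with pairwise disjoint interiors, each with an endpoint on the marked arc and reaching graph distance $\geq R$ from it, is at most $C\delta^{(N+1)(1-\zeta/3)}$.

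For the arm estimate -- which is the main obstacle -- run a peeling-by-layers process from the marked arc, writing $\wh B_\rho$ and $\wh{\mcl A}_\rho$ for the radius-$\rho$ cluster and its frontier; the frontier is a single arc, every arm reaching distance $\geq\rho$ from the marked arc crosses $\wh{\mcl A}_\rho$ at a bichromatic edge of $\wh B_\rho$, the surviving arms are linearly ordered along $\wh{\mcl A}_\rho$, and their number is non-increasing in $\rho$. The idea is to work over geometric scales from the natural scale $\asymp L^{1/2}$ of the marked arc out to $R$ and to establish a one-scale estimate: conditionally on $\wh B_\rho$ (whose frontier has length of order $\rho^2$ with polynomial upper tails, a consequence of the area tail \eqref{eqn-fb-area-tail} and Lemma~\ref{lem-max-bdy-length} as in \cite{gwynne-miller-simple-quad}), the conditional probability that a given collection of $k$ arms all survive growing the cluster from $\rho$ to $2\rho$ decays like a fixed power of $\delta$ raised to the $k$, because between two consecutive surviving arms the percolation peeling process in the enclosed sector must survive without pinching off the arms' colored clusters for a number of steps that is at least a power of $R$ by the modulus-of-continuity estimates (Propositions~\ref{prop-bdy-equicont} and~\ref{prop-bdy-equicont-uihpq}), an event of polynomially small probability by the stable scaling limit of the boundary length process (Proposition~\ref{prop-stable-conv}). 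Multiplying these estimates over the arms and the scales shows that $\geq N+1$ arms forces $\gtrsim N$ essentially independent low-probability events, giving a bound $C\delta^{c(N+1)}$; the delicate point is the bookkeeping needed to guarantee $c\geq1-\zeta/3$ for $\zeta$ small -- in effect, that each arm beyond some fixed finite number costs a polynomial factor in $\delta$ strictly exceeding the inner-to-outer radius ratio -- and to make everything uniform in $n$, which is possible because the $\asymp L\leq\delta^{2-\zeta}n^{1/2}$ interfaces present on the inner frontier are overwhelmed by the survival factors accumulated across the scales. Granting this arm estimate, the reduction above completes the proof.
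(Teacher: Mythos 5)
Your reduction to the case $j=0$ via stationarity and a union bound, and your conditioning on the inner peeling-by-layers cluster $B_{\delta n^{1/4}}^\pbl$ to reduce to an arm estimate in a UIHPQ$_{\op{S}}$ with a marked boundary arc of length $L \le \delta^{2-\zeta} n^{1/2}$, follow the same opening steps as the paper. After that, however, you diverge, and the divergence is where the proposal develops a genuine gap.

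The paper does not run peeling-by-layers \emph{outward} from the arc $\mcl A$ and count surviving arms at geometric scales. Instead, it grows a sequence of auxiliary \emph{percolation} peeling clusters $\dot Q^1_{T^1},\dot Q^2_{T^2},\dots$, one per arm, each rooted at the boundary and stopped at the time $T^k$ when either its right exposed boundary length $X^{k,R}$ reaches $\delta^{3\zeta}n^{1/2}$ or it has disconnected $\mcl A$ from $\infty$. The combinatorial lemma (Lemma~\ref{lem-ip-time-count}) shows that, provided every such cluster stays inside $B_{\delta^\zeta n^{1/4}}^\pbl$, each cluster can "pass" at most one interface arm, so the total number $K$ of clusters needed to disconnect $\mcl A$ bounds $\#\op{IP}_0^\infty$ up to $+1$. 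The quantitative exponent then comes out cleanly: for each cluster, the event $\{T^k < S^k\}$ (reaching right exposed length $\delta^{3\zeta}n^{1/2}$ before covering right boundary length $\delta^{2-\zeta}n^{1/2}$) is a hitting-time event for the $3/2$-stable process $W^{k,R}$, estimated in Lemma~\ref{lem-stable-hit}, which gives exactly $\delta^{1-\zeta/3 + o_\delta(1)}$ per cluster. Independence (from the Markov property of peeling across the $k$'s) then gives $\delta^{N(1-\zeta/3)+o_\delta(1)}$ for $K>N$.

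The gap in your proposal is the one you flag yourself. Your multi-scale argument claims a per-scale arm-survival bound of "a fixed power of $\delta$ raised to the $k$", and then defers "the delicate point" of ensuring the cumulative exponent is at least $1-\zeta/3$. But this is not a bookkeeping detail to defer: it is the entire content of the estimate. Your per-scale survival bound is asserted via "the percolation peeling process in the enclosed sector must survive... for a number of steps that is at least a power of $R$ by the modulus-of-continuity estimates... an event of polynomially small probability by the stable scaling limit." This does not produce a concrete exponent, and there is no mechanism visible in your sketch that would yield specifically $1-\zeta/3$ per arm. Worse, the multi-scale decomposition you sketch is closer in spirit to a classical multi-arm exponent computation, where arms interact and the exponent per arm is a genuinely nontrivial quantity; there is no BK-type factorization available here, and the paper deliberately sidesteps this by conditioning on the auxiliary clusters one at a time so that the $K$ survival events really are a product of independent hitting-probability bounds.

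A secondary, less serious issue: you do not identify any analog of the threshold $\delta^{3\zeta}n^{1/2}$ for the right exposed boundary length. That threshold is what makes both halves of the paper's argument work — it is large enough that the stable-process estimate of Lemma~\ref{lem-stable-hit} yields exponent $\ge 1-\zeta/3$, and small enough that Lemma~\ref{lem-excursion-dist-length} (via the boundary H\"older estimate, Lemma~\ref{lem-uihpq-bdy-holder}) ensures the auxiliary clusters stay inside $B_{\delta^\zeta n^{1/4}}^\pbl$. Your sketch gestures at modulus-of-continuity inputs but never links them to a concrete boundary-length scale.

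In short: the reduction is fine, but the proposed route to the arm bound is different from the paper's and, as written, does not establish the exponent $N(1-\zeta/3)$ that the proposition requires. To repair it you would essentially need to rediscover the paper's per-arm auxiliary-cluster decomposition and the hitting-time Lemma~\ref{lem-stable-hit}.
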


\begin{figure}[ht!]
 \begin{center}
\includegraphics[scale=1]{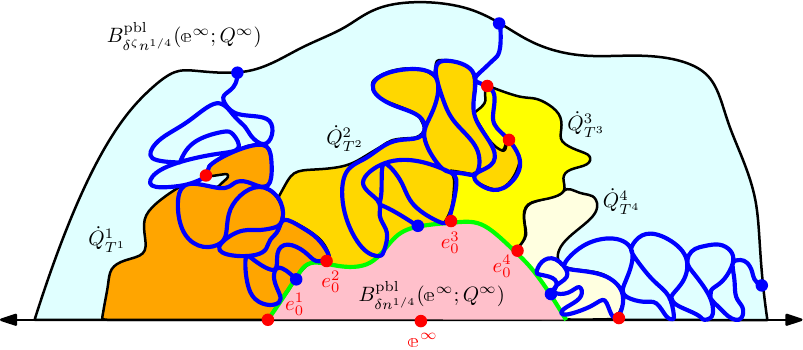} 
\caption[Illustration of the proof of Proposition~\ref{prop-uihpq-crossing}]{\label{fig-uihpq-crossing} Illustration of the proof of Proposition~\ref{prop-uihpq-crossing}. The inner (resp.\ outer) peeling-by-layers cluster $B_{\delta n^{1/4}}^\pbl(\BB e^\infty ; Q^\infty )$ (resp.\ $B_{\delta^\zeta n^{1/4}}^\pbl(\BB e^\infty ; Q^\infty )$) is shown in light red (resp.\ light blue). The arc $\mcl A_{\delta n^{1/4}}^\pbl(  \BB e^\infty ; Q^\infty )$ is shown in green. 
Here $K$, the total number of auxiliary percolation peeling clusters needed before we disconnect $\mcl A_{\delta n^{1/4}}^\pbl(  \BB e^\infty ; Q^\infty )$ from $\infty$, is equal to~$4$. The initial edges $  e_0^k$ for these clusters as well as the time-$(T^k+1)$ peeled edge for each cluster is denoted with a red dot. Each of these clusters intersects at most one percolation interface (blue) which crosses the ``annulus" between the peeling-by-layers clusters (here there are 3 such interfaces). The endpoints of the interfaces are shown with blue dots. Consequently, $K$ gives an upper bound for the number $\#\op{IP}_0^\infty( \delta  n^{1/4} , \delta^\zeta n^{1/4}   )$ of such interfaces (Lemma~\ref{lem-ip-time-count}) and hence an upper bound for the number of times that the percolation exploration path $\lambda^\infty$ can cross the annulus (Lemma~\ref{lem-crossing-ip}). 
}
\end{center}
\end{figure}

To prove Proposition~\ref{prop-uihpq-crossing}, we will first prove an estimate for $j=0$ then take a union bound.  See Figure~\ref{fig-uihpq-crossing} for an illustration of the proof. The idea is to iteratively grow percolation peeling clusters in the unbounded complementary connected component of $B_{\delta n^{1/4}}^\pbl( \BB e^\infty ; Q^\infty  )$ up to the first time $T^k$ that either the right outer boundary length of the current cluster exceeds $\delta^{3\zeta} n^{1/2}$ or the union of the previous clusters disconnects the arc $\mcl A_{\delta n^{1/4}}^\pbl( \BB e^\infty ; Q^\infty  )$ from $\infty$. The total number~$K$ of such clusters which we need to grow before disconnecting $\mcl A_{\delta n^{1/4}}^\pbl( \BB e^\infty ; Q^\infty )$ from $\infty$ provides an upper bound for $\#\op{IP}_j^\infty( \delta  n^{1/4} , \delta^\zeta n^{1/4}   )$ on the event that none of the clusters exits $B_{\delta^\zeta n^{1/4}}^\pbl( \BB e^\infty ;  Q^\infty  )$ before time~$T^k$ (Lemma~\ref{lem-ip-time-count}). 

On the other hand, if $\# \mcl A_{\delta n^{1/4}}^\pbl( \BB e^\infty ; Q^\infty  )$ is at most $\delta^{2-\zeta} n^{1/2}$, then peeling estimates give us an upper bound for $K$ (Lemma~\ref{lem-aux-count-bound}); and since distances along the boundary of the UIHPQ$_{\op{S}}$ can be bounded above in terms of boundary length, it is very unlikely that any of our clusters exits $B_{\delta^\zeta n^{1/4}}^\pbl( \BB e^\infty ; Q^\infty )$ before time~$T^k$ (Lemma~\ref{lem-excursion-dist-length}). Combining these facts and taking a union bound over $j\in [1,An^{3/4}]_{\lfloor \delta^M n^{3/4} \rfloor \BB Z}$ proves Proposition~\ref{prop-uihpq-crossing} 

We now proceed with the details. Our first task is to define the auxiliary percolation peeling clusters mentioned above.  Fix $\zeta \in (0,1/100)$, $n\in\BB N$, and $\delta \in (0,1)$.  We will define nested quadrangulations with infinite simple boundary $Q_0^1 \supset Q_0^2 \supset \dots$, root edges $e_0^k \in \mcl E(\bdy Q_0^k)$ for $k\in\BB N$, and $\sigma$-algebras $\mcl F_0^k$ for $k\in\BB N$ such that the conditional law of $(Q_0^k , e_0^k)$ given $\mcl F_0^k$ is that of a UIHPQ$_{\op{S}}$. Each of these UIHPQ$_{\op{S}}$'s will be equipped with a percolation peeling process started from the root edge and targeted at $\infty$, and the associated objects will be denoted by a superscript $k$. 

Let $Q_0^1$ be the unexplored quadrangulation for the peeling-by-layers process of $Q^\infty$ started from $\BB e^\infty$ grown up to radius $\delta n^{1/4}$, so that
\eqbn
\mcl E(Q_0^1)  = \mcl E\left( Q^\infty \setminus B_{\delta  n^{1/4}}^\pbl(\BB e^\infty ; Q^\infty )  \right) \cup \mcl A_{\delta  n^{1/4}}^\pbl(\BB e^\infty ; Q^\infty) .
\eqen
Let $\mcl F_0^1$ be the $\sigma$-algebra generated by this peeling-by-layers process grown up to radius $\delta n^{1/4}$. 
Also let $e_0^1$ be the edge of $\bdy Q_0^1$ immediately to the left of $\mcl A_{\delta  n^{1/4}}^\pbl(\BB e^\infty ; Q^\infty)$, equivalently the edge of $\bdy Q^\infty$ immediately to the left of $\bdy  B_{\delta  n^{1/4}}^\pbl(\BB e^\infty ; Q^\infty )\cap \bdy Q^\infty$.  By the Markov property of peeling, the conditional law of $(Q_0^1 , e_0^1)$ given $\mcl F_0^1$ is that of a UIHPQ$_{\op{S}}$. 

Inductively, suppose $k\in\BB N$ and we have defined a $\sigma$-algebra $\mcl F_0^k$ and an infinite rooted quadrangulation $(Q_0^k ,e_0^k)$ with simple boundary whose conditional law given $\mcl F_0^{k }$ is that of a UIHPQ$_{\op{S}}$.  Let $\{\dot Q_j^k \}_{j\in\BB N_0}$, $\{\ol Q_j^k\}_{j\in\BB N_0}$, and $\{\dot e_j^k\}_{j\in\BB N}$ be the clusters, unexplored quadrangulations, peeled edges, respectively, for the percolation peeling process of $(Q_0^{k } ,e_0^{k })$ targeted at $\infty$ (i.e., with white/black boundary conditions).  Also let $\{\mcl F_j^k\}_{j\in\BB N_0}$ be the filtration generated by $\mcl F_0^k$ and the clusters and peeling steps of this process up to time $j$. 

We note that the boundary conditions used to define the percolation peeling process in $Q_0^{k }$ are \emph{not} the boundary conditions inherited from the inclusion $Q_0^k \subset Q^\infty$; rather, for the purpose of defining this peeling process we ignore the colors of the quadrilaterals in $  \mcl F(Q^\infty) \setminus \mcl F(Q_0^k)$ and pretend that all quadrilaterals adjacent to edges of $\bdy Q_0^k$ lying to the left (resp.\ right) of $e_0^k$ are white (resp.\ black). 

Define the boundary length processes $X^{k,L} ,X^{k,R} , Y^{k,L} , Y^{k,R}$, and $W^k = ( W^{k,L}, W^{k,R})$ as in Definition~\ref{def-bdy-process} for this peeling process and define the stopping times
\eqb \label{eqn-length-time-def}
T^k := \min\left\{ j \in \BB N_0 : X_j^{k,R} \geq \delta^{3\zeta} n^{1/2} \: \op{or} \:   \bdy \ol Q_j^k \cap \mcl A_{\delta  n^{1/4}}^\pbl(\BB e^\infty ; Q^\infty )  =\emptyset\right\} .
\eqe 
Let $Q_0^{k+1} := \ol Q_{T^k}^k$, let $ \mcl F_0^{k+1} := \mcl F_0^k \vee \mcl F_{T^k}^k $, and let $e_0^{k+1}$ be the edge of $\bdy Q_0^{k+1}$ immediately to the right of $\bdy \dot Q_{T^k}^k \cap \bdy Q_0^{k+1}$, equivalently the edge of $\bdy Q_0^1$ immediately to the right of $ \dot Q_{T^k}^k$.  The Markov property of peeling implies that the conditional law of $(Q_0^{k+1} , e_0^{k+1})$ given $\mcl F_0^{k+1}$ is that of a UIHPQ$_{\op{S}}$. 

Let
\eqb \label{eqn-total-aux-def}
K = K^n(\delta) := \min\left\{ k \in \BB N : \bdy Q_0^{k+1} \cap \mcl A_{\delta  n^{1/4}}^\pbl(\BB e^\infty ; Q^\infty) = \emptyset \right\}
\eqe 
be the smallest $k\in\BB N$ for which the union of the clusters $\dot Q_{T^{k'}}^{k'}$ for $k' \leq k$ disconnects $B_{\delta  n^{1/4}}^\pbl(\BB e^\infty ; Q^\infty)$ from~$\infty$ in~$Q^\infty$. The reason for our interest in the integer~$K$ is contained in the following lemma.

\begin{lem} \label{lem-ip-time-count}
On the event 
\eqb \label{eqn-aux-subset-event}
\left\{ \dot Q_{T^k}^k \subset B_{\delta^\zeta n^{1/4}}^\pbl(\BB e^\infty , Q^\infty) ,\: \forall k \in [1,K ]_{\BB Z} \right\}  ,
\eqe
the cardinality of the set of interface paths from Definition~\ref{def-ip-crossing} satisfies
\eqb \label{eqn-ip-time-count}
 \#\op{IP}_0^\infty( \delta  n^{1/4} , \delta^\zeta n^{1/4}   ) \leq K +1.
\eqe 
\end{lem}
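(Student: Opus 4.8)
\textbf{Proof proposal for Lemma~\ref{lem-ip-time-count}.}

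The plan is to show, on the event~\eqref{eqn-aux-subset-event}, that each interface path in $\op{IP}_0^\infty(\delta n^{1/4}, \delta^\zeta n^{1/4})$ ``uses up'' at least one of the $K$ auxiliary percolation peeling clusters $\dot Q_{T^k}^k$, plus one extra for the interface which bounds the white cluster on the left. First I would recall that the auxiliary clusters are percolation peeling clusters in the nested UIHPQ$_{\op{S}}$'s $(Q_0^k, e_0^k)$ with white/black boundary conditions, and each is grown until time $T^k$ at which point either its right outer boundary length reaches $\delta^{3\zeta}n^{1/2}$ or it disconnects the green arc $\mcl A := \mcl A_{\delta n^{1/4}}^\pbl(\BB e^\infty; Q^\infty)$ from $\infty$. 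On the event~\eqref{eqn-aux-subset-event}, all of the clusters $\dot Q_{T^k}^k$ lie inside the outer peeling-by-layers cluster $B_{\delta^\zeta n^{1/4}}^\pbl(\BB e^\infty; Q^\infty)$, hence the entire union $\bigcup_{k=1}^K \dot Q_{T^k}^k$ is contained in the semi-annular region between the two peeling-by-layers clusters together with the inner cluster's closure.

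The key observation is that by Lemma~\ref{lem-peel-interface} (applied to each auxiliary UIHPQ$_{\op{S}}$), the right outer boundary of each cluster $\dot Q_{T^k}^k$ traces a segment of a single interface path — namely the right boundary of the white cluster of $(Q_0^k)^{\op{ext}}$ containing all the white external faces. Since these clusters are grown by percolation peeling started at successive edges $e_0^1, e_0^2, \dots$ moving counterclockwise (rightward) around $\mcl A$, and the process stops when $X^{k,R}$ gets large or when $\mcl A$ is disconnected, the interface paths associated to distinct $k$'s are distinct whenever the corresponding cluster is stopped for the first reason (i.e., $X_{T^k}^{k,R} \geq \delta^{3\zeta} n^{1/2}$). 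Each such interface path, being the right boundary of the $k$th cluster and having its endpoints on $\bdy Q_0^k \supset \mcl A$-adjacent structure, in fact crosses from $B_{\delta n^{1/4}}^\pbl(\BB e^\infty; Q^\infty)$ (where it starts, adjacent to $\mcl A$) to outside $B_{\delta^\zeta n^{1/4}}^\pbl(\BB e^\infty; Q^\infty)$ is \emph{not} automatic — rather, on the event~\eqref{eqn-aux-subset-event} the clusters stay inside, so this needs care. The right way is the reverse direction: I would argue that conversely, every interface path $\rng\lambda \in \op{IP}_0^\infty(\delta n^{1/4}, \delta^\zeta n^{1/4})$ must intersect one of the $\dot Q_{T^k}^k$ — because such a $\rng\lambda$ separates $\mcl A$ into two arcs, its starting edge lies in the inner cluster, and the auxiliary clusters collectively disconnect $\mcl A$ from $\infty$, so $\rng\lambda$ cannot escape to $\infty$ without passing through $\bigcup_k \dot Q_{T^k}^k$; moreover, since the clusters $\dot Q_{T^k}^k$ together with the inner peeling-by-layers cluster separate $\mcl A$ from $\infty$, and interface paths do not cross, each $\rng\lambda$ is ``caught'' by exactly one cluster or lies between two consecutive ones.

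The main obstacle I anticipate is making precise the topological/combinatorial claim that $K+1$ clusters suffice to intercept all the crossing interface paths — i.e., that no two distinct crossing interface paths can be separated only by cluster material in a way that lets a third one through without costing another cluster. The cleanest route is: order the interface paths in $\op{IP}_0^\infty(\delta n^{1/4}, \delta^\zeta n^{1/4})$ by where they hit $\mcl A$ (they are non-crossing and each has an edge on the inner cluster boundary $\mcl A$, so this ordering is well-defined), and show that between the region swept by consecutive auxiliary clusters $\dot Q_{T^{k}}^k$ and $\dot Q_{T^{k+1}}^{k+1}$ there is at most one such interface path, because the process in $Q_0^{k+1}$ starts immediately to the right of $\dot Q_{T^k}^k$ and its own boundary is the ``next'' interface. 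Counting: $K$ clusters create $K$ separating interfaces plus the leftmost interface bounding the white external cluster, giving $K+1$. I would also need to invoke that on~\eqref{eqn-aux-subset-event} none of the auxiliary clusters exits $B_{\delta^\zeta n^{1/4}}^\pbl$, so that each auxiliary cluster's boundary interface genuinely stays within the annulus and hence the crossing interfaces it intercepts are honest elements of $\op{IP}_0^\infty$. The remaining steps are bookkeeping about the cyclic order of edges on $\bdy \ol Q_j^k$ and the monotonicity of the clusters, which I would treat as routine given Lemma~\ref{lem-peel-interface} and the Markov property of peeling.
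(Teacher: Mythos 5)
Your overall strategy matches the paper's: order the crossing interfaces left to right, note that the union $\bigcup_{k=1}^K \dot Q_{T^k}^k$ separates the inner cluster from the complement of the outer one, relate each crossing interface to the auxiliary clusters via Lemma~\ref{lem-peel-interface}, and use the non-crossing property of interface paths. Those are exactly the right ingredients, and the shift you make midway to the ``reverse direction'' (every crossing interface must meet $\bigcup_k \dot Q_{T^k}^k$) is also how the paper opens.

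The gap is precisely the step you set aside as ``routine bookkeeping.'' Your claim that ``between the region swept by consecutive auxiliary clusters $\dot Q_{T^k}^k$ and $\dot Q_{T^{k+1}}^{k+1}$ there is at most one such interface path'' is not obviously true as stated, and the justification you offer (that the boundary interface of $Q_0^{k+1}$ is the ``next'' interface) conflates the auxiliary interfaces $\rng\lambda_*^k$ (which bound the clusters) with the elements of $\op{IP}_0^\infty$ (which in general are different paths that merely get intercepted by the clusters). A single auxiliary cluster could, in principle, sweep past several ordered crossing interfaces at once, and nothing in your sketch rules this out. The paper instead formulates the statement as strict monotonicity of the first-crossing-cluster index: writing $\rng\lambda_1,\dots,\rng\lambda_N$ for the crossing interfaces ordered left to right, it defines $k_m$ to be the least $k$ for which $\dot Q_{T^k}^k$ contains a quadrilateral strictly to the right of $\rng\lambda_m$, and proves $k_1 < \cdots < k_{N-1}$.

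The proof of that strict monotonicity is where the real content lies, and it is not bookkeeping. If $k_m = k_{m'}$ with $m < m'$, one looks at the interface $\rng\lambda_*^{k_m}$ of the $k_m$th auxiliary cluster (from Lemma~\ref{lem-peel-interface}): it starts at an edge to the left of $\rng\lambda_m$ and, by the time the cluster has produced a quadrilateral to the right of $\rng\lambda_{m'}$, has reached a vertex strictly to the right of $\rng\lambda_m$. For this to happen, $\rng\lambda_*^{k_m}$ must either cross $\rng\lambda_m$ (impossible — interfaces don't cross) or contain $\rng\lambda_m$ as a subpath, and the latter is impossible because on the event \eqref{eqn-aux-subset-event} the path $\rng\lambda_*^{k_m}$ stays inside $B_{\delta^\zeta n^{1/4}}^\pbl(\BB e^\infty, Q^\infty)$ while $\rng\lambda_m$, being a crossing interface, must exit it. This also clarifies the actual role of the event, which is not (as you suggest) to make the cluster boundary interfaces into elements of $\op{IP}_0^\infty$, but to force the contradiction in the subpath case. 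Finally, the ``$+1$'' comes from the observation that only the rightmost $\rng\lambda_N$ can reach the boundary of $Q^\infty$ to the right of $\BB e^\infty$ (the paths hit disjoint vertex sets), so the separation argument only guarantees crossings of $\rng\lambda_1,\dots,\rng\lambda_{N-1}$; your attribution of the extra one to ``the leftmost interface bounding the white external cluster'' is not the right accounting.
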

\begin{proof}
Essentially, this follows from the fact that the peeling exploration path associated with each of the percolation peeling processes $\{\dot Q_j^k\}_{j\in\BB N}$ is ``close" to an interface path (in the sense of Lemma~\ref{lem-peel-interface}), and this interface path cannot cross any of the interface paths in $\op{IP}_0^\infty( \delta  n^{1/4} , \delta^\zeta n^{1/4}   )$. We now explain the necessary geometric argument.

Let $N = \#\op{IP}_0^\infty( \delta  n^{1/4} , \delta^\zeta n^{1/4})$ and let $\rng \lambda_1 , \dots \rng \lambda_N$ be the elements of $\op{IP}_0^\infty( \delta  n^{1/4} , \delta^\zeta n^{1/4})$, ordered from left to right (we can order the paths in this manner since distinct interface paths cannot cross).  For $k \in \BB N$, also let $\rng\lambda_*^k : \BB N \rta \bdy \mcl E(Q_0^k)$ be the interface path associated with the percolation peeling clusters $\{\dot Q_j^k\}_{j\in\BB N_0}$ as in Lemma~\ref{lem-peel-interface}, i.e.\ the one which traces the outer boundary of the white cluster in $Q_0^k$ which contains all of the external white quadrilaterals.  

If~\eqref{eqn-aux-subset-event} occurs, the graph $\bigcup_{k=1}^K \dot Q_{T^k}^k$ is connected and disconnects $B_{\delta  n^{1/4}}^\pbl(\BB e^\infty ; Q^\infty) $ from $Q^\infty \setminus B_{\delta^\zeta  n^{1/4}}^\pbl(\BB e^\infty ; Q^\infty) $ in $Q^\infty$, so must cross each of the paths $\rng\lambda_m$ for $m\in [1,N]_{\BB Z}$ which does not hit a vertex of $\bdy Q^\infty$ lying to the right of $\BB e^\infty$.  Only $\rng\lambda_N$ can possibly hit such a vertex since the sets of vertices hit by the paths $\rng\lambda_m$ for $m\in [1,N]_{\BB Z}$ are disjoint (the paths are separated in $B_{\delta^\zeta n^{1/4}}^\pbl(\BB e^\infty , Q^\infty) \cap \dot Q_0^1$ by white clusters) and these paths are ordered from left to right.  Hence $\bigcup_{k=1}^K \dot Q_{T^k}^k$ contains a quadrilateral of $\dot Q_0^1$ lying strictly to the right of~$\rng\lambda_m$ for each $m\in [1,N-1]_{\BB Z}$. 

For $m\in [1,N-1]_{\BB Z}$, let $k_m$ be the smallest $k \in [1,K]_{\BB Z}$ for which $\dot Q_{T^k}^k $ contains a quadrilateral lying strictly to the right of $\rng\lambda_m$ and let $j_m$ be the smallest $j \in [2,T^k ]_{\BB Z}$ for which $\dot Q_{j }^{k_m}$ contains such a quadrilateral.

We will show that
\eqb \label{eqn-k-mono}
k_1 < \dots < k_{N-1}  
\eqe
 on the event~\eqref{eqn-aux-subset-event}, which will prove~\eqref{eqn-ip-time-count}.  Indeed, suppose by way of contradiction that $k_{m'} \leq k_m$ for some $1\leq m < m' \leq N-1$. It is clear from the definition of the growth processes $\{\dot Q_j^k\}_{j\in\BB N_0}$ for $k \in \BB N$ that $k_{m} \leq k_{m'}$, so $k_{m } = k_{m'}$.  Hence the times $j_m$ and $j_{m'}$ from the preceding paragraph are both defined with respect to $\{\dot Q_j^{k_m}\}_{m\in\BB N_0}$ and satisfy $j_m \leq j_{m'}$. 

The $j_{m'} $th peeled quadrilateral $\frk f(\ol Q_{j_{m'}-1}^k ; \dot e_{j_{m'} }^k )$ lies immediately to the right of $\rng\lambda_{m'}$, so must be black. Hence the right endpoint of the peeled edge $\dot e_{j_{m'}}^k$ is one of the vertices hit by $\rng\lambda_{m'}$ and in particular lies strictly to the right of $\rng \lambda_m$ in $Q_0^1$ (here we use that the paths $\rng\lambda_m$ and $\rng\lambda_{m'}$ are separated by a white cluster in $Q_0^1$, so cannot share a vertex).  By Lemma~\ref{lem-peel-interface}, there is a time $s \in \BB N_0$ such that $\rng\lambda_*^{k_m}(s)$ shares an endpoint with an edge of $\rng\lambda_m$ and $\rng\lambda_*^{k_m}([1,s ]_{\BB Z}) \subset \dot Q_{j_m }^{k_m}$.  

On the event~\eqref{eqn-aux-subset-event}, one has $\rng\lambda_*^{k_m}([1,s ]_{\BB Z})  \subset B_{\delta^\zeta n^{1/4}}^\pbl(\BB e^\infty , Q^\infty)$.  Since~$\rng\lambda_*^{k_m}(1)$ shares an endpoint with the root edge~$e_0^{k_m}$, which lies to the left of~$\rng \lambda_m$ in~$Q_0^1$, it follows that either $\rng\lambda_m$ contains the first edge of~$\rng\lambda_*^{k_m}$ or~$\rng\lambda_*^{k_m}$ crosses~$\rng\lambda_m$. In the former case, $\rng\lambda_m$ is a sub-path of $\rng\lambda_*^{k_m}$ so~$\rng\lambda_*^{k_m}$ cannot hit a vertex to the right of~$\rng\lambda_m$ before exiting $B_{\delta^\zeta n^{1/4}}^\pbl(\BB e^\infty , Q^\infty)$ and we arrive at a contradiction. The latter case is impossible since two percolation interfaces cannot cross.  We conclude that~\eqref{eqn-k-mono} holds. 
\end{proof}

In light of Lemma~\ref{lem-ip-time-count}, to prove Proposition~\ref{prop-uihpq-crossing} we need to prove an upper bound for $\BB P[K \geq N]$ and show that the event~\eqref{eqn-aux-subset-event} is very likely to occur. We start with the upper tail bound for $K$.

\begin{lem}
\label{lem-aux-count-bound}
Let $K$ be as in~\eqref{eqn-total-aux-def}. 
For each $\delta \in (0,1)$, there exists $n_*  = n_*(\delta,\zeta) \in \BB N$ such that for $n\geq n_*$ and $N\in\BB N$, 
\eqbn
\BB P\left[ K  >  N ,\,  \#\mcl A_{\delta  n^{1/4}}^\pbl(\BB e^\infty ; Q^\infty) \leq \delta^{2-\zeta} n^{1/2} \right]  \preceq \delta^{N (1-  2\zeta) }
\eqen
with the implicit constant depending only on $\zeta$. 
\end{lem}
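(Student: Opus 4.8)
\textbf{Proof proposal for Lemma~\ref{lem-aux-count-bound}.}

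The plan is to exploit the independence structure built into the auxiliary clusters $\{(Q_0^k, e_0^k)\}_{k\in\BB N}$: by the Markov property of peeling, each $(Q_0^{k+1}, e_0^{k+1})$ is a fresh UIHPQ$_{\op{S}}$ conditionally on $\mcl F_0^{k+1}$, and the percolation peeling process on it (run up to the stopping time $T^k$ of~\eqref{eqn-length-time-def}) is independent of everything explored before. So $K$, the number of auxiliary clusters needed to disconnect $\mcl A_{\delta n^{1/4}}^\pbl(\BB e^\infty ; Q^\infty)$ from $\infty$, will be dominated (on the truncation event) by a sum of i.i.d.-type increments. First I would record that each auxiliary percolation peeling process, stopped at $T^k$, ``eats'' a block of edges of $\mcl A_{\delta n^{1/4}}^\pbl(\BB e^\infty ; Q^\infty)$ whose size is the number of covered edges of the arc, i.e. controlled by $Y^{k,R}_{T^k}$ (the right covered boundary length at the stopping time). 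Since the arc has at most $\delta^{2-\zeta} n^{1/2}$ edges on the truncation event, once the running total of covered-arc edges exceeds $\delta^{2-\zeta} n^{1/2}$ we must have $K$ of them. So I want: each step covers at least $\asymp \delta^{3\zeta} n^{1/2}$ edges of the arc with probability bounded below by a constant (uniformly in $n$, for $n$ large depending on $\delta$).

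The key estimate is therefore a lower bound on the single-step covered-arc length: on a single UIHPQ$_{\op{S}}$, the percolation peeling process stopped at $T^k := \min\{j : X^{k,R}_j \geq \delta^{3\zeta} n^{1/2} \text{ or the arc is disconnected}\}$ covers, with probability bounded below by some universal $p_0 > 0$, at least (a constant times) $\delta^{3\zeta} n^{1/2}$ edges lying on the arc — or else disconnects the remaining arc entirely. This follows from Proposition~\ref{prop-stable-conv}: the rescaled boundary length process $W^{\infty,n}$ converges to a pair of independent $3/2$-stable processes, and $X^{\infty,R}$ is recovered from $W^{\infty,R}$ via~\eqref{eqn-bdy-process-inf}, so the time $T^k$ corresponds in the scaling limit to the first time the right-side stable process makes progress of order $\delta^{3\zeta}$ above its running infimum; at such a time the running infimum of $R^\infty$ (hence $Y^{\infty,R}$, hence the number of covered arc edges) has increased by a positive amount with positive probability, by basic properties of the $3/2$-stable process. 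One has to be a little careful that the percolation peeling in $Q_0^{k+1}$ targeted at $\infty$ uses the re-imposed white/black boundary conditions (ignoring inherited colors), which is fine since that is exactly the setup in which Proposition~\ref{prop-stable-conv} applies; and that when the arc is disconnected before $X^{k,R}$ reaches the threshold, we have simply finished (so $K$ does not increase further). Concatenating: on $\{\#\mcl A_{\delta n^{1/4}}^\pbl \leq \delta^{2-\zeta} n^{1/2}\}$, the event $\{K > N\}$ forces $N$ consecutive auxiliary clusters each of which fails to cover $\asymp \delta^{3\zeta} n^{1/2}$ arc edges and fails to disconnect the arc — call this failure probability $\leq 1 - p_0$ at each step, conditionally on the past. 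Hence
\eqbn
\BB P\left[ K > N,\ \#\mcl A_{\delta n^{1/4}}^\pbl(\BB e^\infty ; Q^\infty) \leq \delta^{2-\zeta} n^{1/2} \right] \leq \left(1 - p_0\right)^{\lfloor c \delta^{2-\zeta} / (\delta^{3\zeta} ) \rfloor \wedge N} ,
\eqen
but this is too strong/not quite the right bookkeeping — the cleaner route is: the event $\{K > N\}$ on the truncation event simply requires $N$ independent ``non-progress'' events, giving $\BB P[\cdots] \leq (1-p_0)^N$, and then one rewrites $(1-p_0)^N = \delta^{N \log(1/(1-p_0))/\log(1/\delta)}$; choosing the constant $c$ in~\eqref{eqn-equicont-reg-times} — sorry, the threshold constant — or rather absorbing constants, one gets $(1-p_0)^N \leq \delta^{N(1-\zeta/3) + o_\delta(1)}$ as soon as $\delta$ is small enough that $\log(1/(1-p_0))/\log(1/\delta) \geq 1 - \zeta/3$. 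Actually this fails for fixed $p_0$ as $\delta \to 0$, so the honest statement must instead use that each step covers $\asymp \delta^{3\zeta} n^{1/2}$ arc edges \emph{with probability close to $1$} (not just bounded below): here one uses that the arc-length increment at the stopping time $T^k$ is at least of order $\delta^{3\zeta} n^{1/2}$ \emph{except} on an event of probability $O(\delta^{\text{something}})$, via overshoot estimates for the stable process / the peeling estimate~\eqref{eqn-cover-tail} controlling how much boundary can be covered in one peeling step (the ``no close jump'' phenomenon of Lemma~\ref{lem-no-close-jump}). Then $\{K > N\}$ needs $N$ atypical steps, each of probability $\leq \delta^{1 - \zeta/3 + o_\delta(1)}$ — the exponent $1-\zeta/3$ being exactly the ratio $(2-\zeta)/(3\zeta) \cdot (\text{tail exponent})$ turned into a power of $\delta$ — so the product is $\delta^{N(1-\zeta/3) + o_\delta(1)}$.

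The main obstacle, and the step requiring the most care, is pinning down the single-step exponent $1 - \zeta/3$: one must show that a single auxiliary percolation peeling process, run until $X^{k,R}$ hits $\delta^{3\zeta} n^{1/2}$, covers at least a comparable number $\asymp \delta^{3\zeta}n^{1/2}$ of \emph{arc} edges with probability $1 - \delta^{1-\zeta/3+o_\delta(1)}$, and that it does not overshoot the arc length by more than a constant factor. Both are scaling-limit statements about the $3/2$-stable process $R^\infty$: the first is that $R^\infty$ attains a new running infimum (below its value at the start) before climbing distance $\delta^{3\zeta}$ above its infimum, with high probability — quantified using the explicit stable exponent, which is where $\zeta/3$ enters (the process typically drops by $\asymp$ a constant when it jumps, the ``progress $\delta^{3\zeta}$'' window is short, and the probability of failing to hit a new infimum scales like a power of the window); the second is an overshoot bound saying a single peeling step cannot cover more than $O(\delta^{3\zeta} n^{1/2})$ arc edges except with tiny probability, exactly as in Lemma~\ref{lem-no-close-jump} and~\cite[Lemma~3.9]{gwynne-miller-simple-quad}. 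Once these are in place, concatenating via the conditional independence from the Markov property and taking the product of the $N$ failure probabilities gives the claimed bound $\delta^{N(1-\zeta/3) + o_\delta(1)}$, with the $o_\delta(1)$ absorbing the overshoot discrepancies and the gap between $X^{k,R}$ and the arc-length increment. The uniformity in $n \geq n_*(\delta,\zeta)$ comes from the fact that each ingredient is a convergence-in-law statement along $n \to \infty$ with $\delta$ fixed, so for $n$ large the discrete probabilities are within a bounded factor of their continuum counterparts.
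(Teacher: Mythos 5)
Your high-level strategy matches the paper's: iterate over the auxiliary clusters, use the Markov property of peeling so that $(Q_0^k, e_0^k)$ is a fresh UIHPQ$_{\op{S}}$ given $\mcl F_0^k$, bound the per-step failure probability by $\delta^{1-\zeta/3+o_\delta(1)}$, and multiply over $k \in [1,N]_{\BB Z}$. But there is a persistent scale confusion that prevents you from correctly formulating the single-step estimate, which is the whole content of the lemma beyond this bookkeeping. You repeatedly assert that each auxiliary cluster ``covers $\asymp \delta^{3\zeta}n^{1/2}$ arc edges,'' but this cannot happen on the truncation event: the arc has at most $\delta^{2-\zeta}n^{1/2}$ edges, and since $2-\zeta > 3\zeta$ (for $\zeta < 1/100$) one has $\delta^{2-\zeta}n^{1/2} \ll \delta^{3\zeta}n^{1/2}$. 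The arc is much \emph{shorter} than the exposed-boundary threshold $\delta^{3\zeta}n^{1/2}$, so the typical behavior is that a \emph{single} auxiliary cluster disconnects the \emph{entire} arc. The correct per-step statement is that $Y^{k,R}$ reaches $\delta^{2-\zeta}n^{1/2}$ (covering the whole arc) before $X^{k,R}$ reaches $\delta^{3\zeta}n^{1/2}$, with probability at least $1 - \delta^{1-\zeta/3+o_\delta(1)}$; equivalently, setting $S^k := \min\{j : Y^{k,R}_j \geq \delta^{2-\zeta}n^{1/2}\}$, one bounds $\BB P[T^k < S^k \mid \mcl F_0^k]$ by noting that $\{T^k < S^k\}$ forces $\max_{j \leq S^k} W^{k,R}_j \geq (\delta^{3\zeta}-\delta^{2-\zeta})n^{1/2}$.

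The estimate that delivers the exponent $1-\zeta/3$ is Lemma~\ref{lem-stable-hit}, not the quadrangulation overshoot bounds (Lemma~\ref{lem-no-close-jump} or~\cite[Lemma~3.9]{gwynne-miller-simple-quad}) that you point to. Lemma~\ref{lem-stable-hit} shows that for a totally asymmetric $\alpha$-stable process $R$ with no upward jumps and $\tau_\delta$ the first time $R$ hits $-\delta$, one has $\BB P\left[\sup_{s \leq \tau_\delta} R_s \geq \delta^{\zeta'}\right] \leq \delta^{1-(1-1/\alpha)^2\zeta' + o_\delta(1)}$. Applying this with $\alpha = 3/2$, hitting level $-\delta^{2-\zeta}$, and supremum threshold $\delta^{3\zeta}$ (via Proposition~\ref{prop-stable-conv} to pass from $W^{k,R}$ to the stable limit) gives a bound that implies $\delta^{1-\zeta/3+o_\delta(1)}$. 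Your heuristic for the exponent — ``the ratio $(2-\zeta)/(3\zeta)\cdot(\text{tail exponent})$ turned into a power of $\delta$'' — does not parse and does not produce the stated power. Without Lemma~\ref{lem-stable-hit} or an equivalent explicit stable-process computation, the argument cannot establish the quantitative per-step bound that the lemma actually requires.
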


The idea of the proof of Lemma~\ref{lem-aux-count-bound} is to show that it is unlikely that the right exposed boundary length process $X^{k,R}$ reaches $\delta^{3\zeta} n^{1/2}$ before the right covered boundary length process $Y^{k,R}$ reaches $\delta^{2-\zeta} n^{1/2}$, at which time the clusters $\{\dot Q_j^k\}_{j\in \BB N_0}$ disconnect $\mcl A_{\delta  n^{1/4}}^\pbl(\BB e^\infty ; Q^\infty) \cap \bdy Q_0^k$ from $\infty$ in $Q_0^k$, then multiply over all $k$.  This will be accomplished by means of the scaling limit result for the boundary length processes from Proposition~\ref{prop-stable-conv} and the following estimate for stable processes with no upward jumps.  We emphasize here that $\zeta$ is very small, so $\delta^{3\zeta}$ is much larger than $\delta^{2-\zeta}$.

\begin{lem} \label{lem-stable-hit}
Let $\alpha \in (1,2)$ and let $R$ be a totally asymmetric $\alpha$-stable process started from~$0$ with no upward jumps. 
For $\delta >0$, let $\tau_\delta = \inf\{t\geq 0 : R_t < -\delta \}$. Then for $\zeta \in (0,1)$ and $\delta \in (0,1)$, 
\eqbn
\BB P\left[ \sup_{s \in [0, \tau_\delta]} R_s \geq \delta^\zeta \right] \preceq \delta^{(\alpha-1)(1-\zeta)} 
\eqen 
with the implicit constant depending only on $\alpha,\zeta$. 
\end{lem}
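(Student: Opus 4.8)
The plan is to control separately the small jumps and the large jumps of $R$ up to time $\tau_\delta$, using the scaling and independence structure of the totally asymmetric $\alpha$-stable process. First I would exploit the self-similarity: $R$ has the scaling property $R_{c\cdot} \overset{d}{=} c^{1/\alpha} R_\cdot$, so $\tau_\delta \overset{d}{=} \delta^\alpha \tau_1$ and $\sup_{s\in[0,\tau_\delta]} R_s \overset{d}{=} \delta \sup_{s\in[0,\tau_1]} R_s$. Hence the event $\{\sup_{s\in[0,\tau_\delta]} R_s \geq \delta^\zeta\}$ has the same probability as $\{\sup_{s\in[0,\tau_1]}R_s \geq \delta^{\zeta-1}\}$, which for $\zeta < 1$ is the probability that a fixed process run up to a fixed random time exceeds a \emph{large} threshold $\delta^{\zeta-1}\to\infty$. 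So the problem reduces to: bound $\BB P[\sup_{s\in[0,\tau_1]} R_s \geq T]$ for large $T$ and show it is $\preceq T^{-(1-1/\alpha)^2/(1-\zeta)\cdot(1-\zeta)}$-ish — more precisely, I need the bound to be $T^{-(1-1/\alpha)^2 \zeta/(1-\zeta)}$ after translating back, so let me reorganize.

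Concretely, I would split according to the size of the largest upward excursion of $R$ before $\tau_\delta$. Since $R$ has no upward jumps, to reach level $\delta^\zeta$ it must do so continuously, i.e. along its (spectrally negative) continuous ascending ladder structure. The key mechanism: for $R$ to get up to $\delta^\zeta$ before hitting $-\delta$, the overall downward drift supplied by the jumps must have been ``delayed'' — equivalently, one uses the fact that $-R$ is spectrally positive with positive mean-zero... actually the cleanest route is via the reflected process and the known tail of its supremum. Write $\ol R_t = \sup_{s\le t} R_s$. For a spectrally negative $\alpha$-stable process, $\ol R_t$ grows like $t^{1/\alpha}$ and more quantitatively $\BB P[\ol R_{t} \ge x] \preceq t x^{-\alpha}$ for $x \gg t^{1/\alpha}$ (this is a standard heavy-tailed maximal estimate, e.g. via the fact that $\ol R$ is a subordinator-type object with $\alpha$-stable-like tails, or directly from the reflection/fluctuation identities in Bertoin's book). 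Meanwhile $\tau_\delta$ is the hitting time of $-\delta$, which has the explicit stable-hitting-time law with $\BB P[\tau_\delta > t] \preceq \delta \, t^{-(1-1/\alpha)}$ (this is the one-sided exit identity; $\tau_\delta$ is the inverse of the continuous ascending ladder time evaluated at a linear function of $\delta$, and has a $(1-1/\alpha)$-stable-subordinator-type tail). Then I would condition: $\BB P[\ol R_{\tau_\delta} \ge \delta^\zeta] \le \BB P[\tau_\delta > t_0] + \BB P[\ol R_{t_0} \ge \delta^\zeta]$ for a well-chosen $t_0$. Plugging the two tails gives $\preceq \delta t_0^{-(1-1/\alpha)} + t_0 \delta^{-\alpha\zeta}$, and optimizing over $t_0$ (balance the two terms: $t_0 \asymp \delta^{(1+\alpha\zeta)/(2-1/\alpha)}$, up to the exact exponent bookkeeping) yields a bound of the form $\delta^{1 - c\zeta + o_\delta(1)}$ with $c = (1-1/\alpha)^2$ after the algebra; I would double-check the constant matches $(1-1/\alpha)^2$ and that the constraint $\zeta < (1-1/\alpha)^{-2}$ is exactly what makes the exponent positive, which is reassuring that this is the intended mechanism.

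The step I expect to be the main obstacle is getting the two input tail bounds with the \emph{right} exponents and in a form valid at the relevant scales. The hitting-time tail $\BB P[\tau_\delta > t] \preceq \delta t^{-(1-1/\alpha)}$ needs the regime $t \gg \delta^\alpha$ (so that $\tau_\delta$ has already had ``time to act''), and the maximal estimate $\BB P[\ol R_{t} \ge x] \preceq t x^{-\alpha}$ needs $x \gg t^{1/\alpha}$; one has to verify that the chosen $t_0$ simultaneously satisfies both, which is where the precise value of $\zeta$ (small) enters. For the first tail I would cite Bertoin's book (the one-sided exit problem for spectrally negative stable processes, equivalently the $(1-1/\alpha)$-stable subordinator obtained as the first-passage process); for the second, either the same source or a direct argument: $\ol R$ is itself (a time-change of) a stable subordinator of index $1-1/\alpha$ evaluated along an independent clock, so $\BB P[\ol R_t \ge x]$ can be read off, or more simply bound $\BB P[\ol R_t \ge x] \le \BB P[R_s \ge x \text{ some } s \le t]$ and use that $R$ restricted to $[0,t]$, to reach $x$, must accumulate positive linear growth since its downward jumps only hurt — giving $\BB P[\ol R_t \ge x] \le \BB P[R_t \ge x - (\text{drift compensation})]$ and then a Markov/Chernoff bound on the spectrally negative part, whose Laplace transform is explicit. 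Once both tails are in hand the optimization is routine and the $o_\delta(1)$ absorbs the non-matching constants; I would present it in that order: scaling reduction, statement of the two tail lemmas with references, the conditioning split, the optimization.
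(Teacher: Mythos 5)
Your high-level structure — the conditioning split
\[
\BB P\!\left[\sup_{s\le\tau_\delta}R_s \ge \delta^\zeta\right] \;\le\; \BB P[\tau_\delta \ge t_0] + \BB P\!\left[\sup_{s\le t_0}R_s\ge\delta^\zeta\right]
\]
followed by optimization over $t_0$ — is exactly what the paper does, and your hitting-time tail $\BB P[\tau_\delta \ge t] \preceq \delta\,t^{-(1-1/\alpha)}$ (via the $(1-1/\alpha)$-stable subordinator structure of $\delta\mapsto\tau_\delta$) is the right input there. The scaling reduction at the start is correct but not needed.

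However, the second tail bound you lead with, $\BB P[\,\ol R_t\ge x\,]\preceq t\,x^{-\alpha}$ for $x\gg t^{1/\alpha}$, is the wrong estimate here, and running the optimization with it does \emph{not} produce the stated exponent. That polynomial tail is the signature of a process that can \emph{jump} above level $x$; but $R$ is spectrally negative, so the event $\ol R_t\ge x$ is the event that $R$ \emph{creeps} continuously up to $x$ by time $t$. Equivalently, writing $T_x=\inf\{s:R_s\ge x\}$, which is a $1/\alpha$-stable subordinator, one has $\{\ol R_t\ge x\}=\{T_x\le t\}$, a \emph{lower} deviation of a subordinator — and lower deviations of stable subordinators decay exponentially (Chernoff on the explicit Laplace transform), not polynomially. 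Concretely, your description of $\ol R$ as ``a subordinator-type object with $\alpha$-stable-like tails'' has the direction backwards: the heavy $\alpha$-stable tail is on the negative side of $R$, not the positive side. If you plug your polynomial bound in and balance, you get exponent $\frac{\alpha(1-(\alpha-1)\zeta)}{2\alpha-1}$, which for small $\zeta$ is close to $\alpha/(2\alpha-1)<1$, strictly worse than the target $1-(1-1/\alpha)^2\zeta\approx 1$. So the bound is genuinely too weak, not just unoptimized.

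The Chernoff route you mention only in passing (``a Markov/Chernoff bound on the spectrally negative part, whose Laplace transform is explicit'') is actually the essential step, not an optional alternative: the paper invokes a bound of the form $\BB P[\sup_{s\le t}R_s\ge\delta^\zeta]\le c_0\exp(-c_1\,\delta^\zeta\,t^{-\alpha/(\alpha-1)})$, and this exponential (rather than polynomial) decay is what lets one take $t_0=\delta^{(1-1/\alpha)(\zeta+\ep)}$, making the second term super-polynomially small and the first term equal to $\delta^{1-(1-1/\alpha)^2(\zeta+\ep)}$, which gives the lemma upon sending $\ep\to 0$. To repair your write-up, replace the $t x^{-\alpha}$ bound with the Chernoff estimate and redo the optimization with that choice of $t_0$; the rest of your outline then goes through.
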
 
\begin{proof}
The probability that $R$ hits $-\delta$ before $\delta^\zeta$ is given by $W(\delta) / W(\delta+\delta^\zeta)$, where $W$ is the scale function for $R$ (see, e.g.,~\cite[Equation (4)]{hk-scale-function}). We have $W(x) = x^{\alpha-1}/\Gamma(\alpha)$ (see, e.g.,~\cite[Example 2, Section 2]{hk-scale-function}) which gives the lemma statement.  
\end{proof}

\begin{proof}[Proof of Lemma~\ref{lem-aux-count-bound}]
For $k\in \BB N$, let $S^k$ be the smallest $j \in \BB N_0$ for which the number of right covered edges satisfies $Y_{j}^{k,R} \geq \delta^{2-\zeta} n^{1/2}$. 
If $\#\mcl A_{\delta  n^{1/4}}^\pbl(\BB e^\infty ; Q^\infty) \leq \delta^{2-\zeta} n^{1/2}$, then since $\mcl A_{\delta  n^{1/4}}^\pbl(\BB e^\infty ; Q^\infty) \cap \bdy Q_0^k$ lies entirely to the right of $e_0^k$, it follows that $\mcl A_{\delta  n^{1/4}}^\pbl(\BB e^\infty ; Q^\infty) \cap \ol Q_{S^k}^k  =\emptyset$. 
Consequently, in this case the definition~\eqref{eqn-length-time-def} of $T^k$ implies that $T^k\leq S^k$ and if in fact $S^k = T^k$, it holds that $K \leq k $. 

If $T^k < S^k$, then by Definition~\ref{def-bdy-process},
\eqbn
\max_{j\in [0,S^k]_{\BB Z}} W_j^{k,R} \geq \max_{j\in [0,S^k]_{\BB Z}} X_j^{k,R} - \delta^{2-\zeta} n^{1/2} \geq (\delta^{3\zeta} - \delta^{2-\zeta}) n^{1/2} .
\eqen
Since the conditional law of $ W^{k,R}$ given $\mcl F_0^k$ converges weakly to that of a $3/2$-stable process in the Skorokhod topology as $n\rta\infty$ under an appropriate scaling limit (Proposition~\ref{prop-stable-conv}), it follows from Lemma~\ref{lem-stable-hit} (with $\alpha=3/2$, $\delta^{2-\zeta}$ in place of $\delta$, and $3\zeta/(2-\zeta)$  in place of $\zeta$) that there is an $n_*  = n_*(\delta,\zeta) \in \BB N$ such that for $n\geq n_*$ and $k\in\BB N$,  
\eqbn
\BB P\left[ K > k  \,|\, \mcl F_0^k \right] \leq    \BB P\left[ T^k < S^k  \,|\, \mcl F_0^k \right] 
\preceq \delta^{ (2-\zeta)(3/2 - 1) ( 1 - 3\zeta/(2-\zeta))      }  
= \delta^{1-2\zeta} .
\eqen
In particular, $\BB P[K > k \,|\, K > k-1] \preceq \delta^{1-2\zeta}$. Multiplying over $k \in [1,N ]_{\BB Z}$ yields the statement of the lemma.
\end{proof}

Our next lemma tells us that the event~\eqref{eqn-aux-subset-event} is very likely to occur when $n$ is large, $\delta$ is small, and $\#\mcl A_{\delta  n^{1/4}}^\pbl(\BB e^\infty ; Q^\infty) \leq \delta^{2-\zeta} n^{1/2}$.

\begin{lem} \label{lem-excursion-dist-length}
For each $  \delta  \in (0,1)$ there exists $n_* = n_*( \delta ,\zeta) \in \BB N$ such that for each $n\geq n_*$ and each $k\in \BB N$, 
\eqbn
\BB P\left[ \dot Q_{T^k}^k \not\subset B_{\delta^\zeta n^{1/4}}^\pbl(\BB e^\infty , Q^\infty) ,\, \#\mcl A_{\delta  n^{1/4}}^\pbl(\BB e^\infty ; Q^\infty) \leq \delta^{2-\zeta} n^{1/2} \right] = o_\delta^\infty(\delta)
\eqen
at a rate which is uniform for $n\geq n_*$ and $k\in\BB N$. 
\end{lem}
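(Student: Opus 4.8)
\textbf{Proof plan for Lemma~\ref{lem-excursion-dist-length}.}

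The plan is to show that on the event $\{\#\mcl A_{\delta  n^{1/4}}^\pbl(\BB e^\infty ; Q^\infty) \leq \delta^{2-\zeta} n^{1/2}\}$, the auxiliary percolation peeling cluster $\dot Q_{T^k}^k$ is too ``short'' in boundary length terms to reach outside $B_{\delta^\zeta n^{1/4}}^\pbl(\BB e^\infty , Q^\infty)$, and then use the Hölder continuity of boundary distances in the UIHPQ$_{\op{S}}$ (Lemma~\ref{lem-uihpq-bdy-holder}) to turn the boundary-length bound into a graph-distance bound. The first step is to bound the boundary length of $Q_0^k$, more precisely the length of the portion of $\bdy Q_0^k$ consisting of the arc $\mcl A_{\delta n^{1/4}}^\pbl(\BB e^\infty ; Q^\infty)$ together with the part of $\bdy Q^\infty$ between the endpoints of this arc and (roughly) the root edges $e_0^1,\dots,e_0^k$: since each previously-grown cluster $\dot Q_{T^{k'}}^{k'}$ for $k' < k$ has right exposed boundary length at most $\delta^{3\zeta} n^{1/2}$ at time $T^{k'}$ (by definition~\eqref{eqn-length-time-def} of $T^{k'}$), and since $\#\mcl A_{\delta  n^{1/4}}^\pbl(\BB e^\infty ; Q^\infty) \leq \delta^{2-\zeta} n^{1/2}$, the edges of $\bdy Q_0^k$ lying in $\dot Q_0^1$ number at most $\delta^{2-\zeta} n^{1/2} + k \delta^{3\zeta} n^{1/2}$ plus a constant per cluster. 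However, this is not directly useful because $k$ is unbounded; instead I will condition on $\mcl F_0^k$ and use the Markov property to treat $(Q_0^k, e_0^k)$ as a fresh UIHPQ$_{\op{S}}$, then bound the right exposed boundary length process $X^{k,R}$ up to time $T^k$: by definition $X_{T^k-1}^{k,R} < \delta^{3\zeta} n^{1/2}$ and $X_j^{k,R} - X_{j-1}^{k,R} \leq 2$, so $X_{T^k}^{k,R} \leq \delta^{3\zeta} n^{1/2} + O(1)$. Also the right covered boundary length $Y^{k,R}_{T^k}$ is at most $\#\mcl A_{\delta n^{1/4}}^\pbl(\BB e^\infty ; Q^\infty) \leq \delta^{2-\zeta} n^{1/2}$ by the argument in the proof of Lemma~\ref{lem-aux-count-bound} (the cluster stops once it disconnects the arc, which has length at most $\delta^{2-\zeta} n^{1/2}$, from $\infty$). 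Hence by Lemma~\ref{lem-max-bdy-length} applied to the peeling process inside $Q_0^k$ — or more simply by the estimate~\eqref{eqn-peel-mart-upper}-type bound on $\max_j (X_j^{k,L} + X_j^{k,R})$ together with $X_j^{k,R} \leq \delta^{3\zeta} n^{1/2} + O(1)$ — with probability $1 - o_\delta^\infty(\delta)$ the total perimeter of $\bdy \dot Q_{T^k}^k$, and hence the number of edges of $\bdy Q^\infty$ which $\dot Q_{T^k}^k$ covers, is at most $\delta^{2\zeta} n^{1/2}$, say.

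The second step converts this into a statement about graph distance. All vertices of $\dot Q_{T^k}^k$ that lie on $\bdy Q^\infty$ are contained in a boundary arc of $Q^\infty$ of length at most $\delta^{2-\zeta} n^{1/2} + \delta^{2\zeta} n^{1/2} \preceq \delta^{2\zeta} n^{1/2}$ (using $\zeta < 1/100$ so $2\zeta < 2-\zeta$), with one endpoint near $\BB e^\infty$; and all vertices of $\dot Q_{T^k}^k$ that lie in the interior of $Q_0^1$ are at $Q^\infty$-graph distance at most $\delta n^{1/4} + O(1)$ from $\bdy B_{\delta n^{1/4}}^\pbl(\BB e^\infty ; Q^\infty) \subset B_{\delta n^{1/4}+O(1)}(\BB e^\infty;Q^\infty)$, hence at distance at most $2\delta n^{1/4} + O(1)$ from $\BB e^\infty$. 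It therefore suffices to control the $Q^\infty$-graph distance from $\BB e^\infty$ to the far end of this boundary arc. By Lemma~\ref{lem-uihpq-bdy-holder} (applied with $\el$ of order $n^{1/2}$, $A$ a fixed constant, $\ep$ small, $\zeta$ in that lemma also small), with probability $1 - o_\delta^\infty(\delta)$ the $Q^\infty$-graph distance between two boundary edges separated by at most $\delta^{2\zeta} n^{1/2}$ edges of $\bdy Q^\infty$ is at most $C \delta^{\zeta} n^{1/4} (|\log \delta^{2\zeta}|+1)^{7/4+\zeta'} + \ep n^{1/4}$, which is $\ll \delta^{2\zeta/2} n^{1/4}$ and in particular $< \tfrac12 \delta^\zeta n^{1/4}$ for $\delta$ small. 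Combining the two distance bounds, every vertex of $\dot Q_{T^k}^k$ lies within $Q^\infty$-graph distance $\delta^\zeta n^{1/4}$ of $\BB e^\infty$, hence $\dot Q_{T^k}^k \subset B_{\delta^\zeta n^{1/4}}(\BB e^\infty ; Q^\infty) \subset B_{\delta^\zeta n^{1/4}}^\pbl(\BB e^\infty , Q^\infty)$ by~\eqref{eqn-filled-ball-contain}. The final step is a union bound: since all of the estimates above hold conditionally on $\mcl F_0^k$ with $o_\delta^\infty(\delta)$ error uniform in $k$ and in $n \geq n_*$, the conclusion follows with the claimed uniformity — note there is no union over $k$ needed here since the statement is for each fixed $k$.

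The main obstacle I expect is the bookkeeping in the first step: the quadrangulation $Q_0^k$ is obtained by a delicate multi-stage exploration (a peeling-by-layers stage followed by $k-1$ stages of re-targeted percolation peeling run up to the stopping times $T^{k'}$), and one must be careful that (i) the boundary conditions used for the percolation peeling inside each $Q_0^{k'}$ are the \emph{artificial} ones (all-white left of $e_0^{k'}$, all-black right), not those inherited from $Q^\infty$, as emphasized just before~\eqref{eqn-length-time-def}, and (ii) that the Markov property genuinely applies so that $(Q_0^k,e_0^k)$ is a fresh UIHPQ$_{\op{S}}$ given $\mcl F_0^k$ — this is built into the inductive construction but needs to be invoked cleanly. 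A secondary subtlety is checking that the boundary arc of $Q^\infty$ containing $\dot Q_{T^k}^k \cap \bdy Q^\infty$ really does have one endpoint within $O(\delta n^{1/4})$ graph distance of $\BB e^\infty$ (as opposed to somewhere far away), which follows because the construction always grows clusters to the right of the previous ones starting from the edge immediately left of $\mcl A_{\delta n^{1/4}}^\pbl(\BB e^\infty;Q^\infty)$, so the covered boundary arc abuts $\bdy B_{\delta n^{1/4}}^\pbl(\BB e^\infty;Q^\infty) \cap \bdy Q^\infty$, which is within graph distance $\delta n^{1/4} + O(1)$ of $\BB e^\infty$. Once these points are pinned down, the distance conversion via Lemma~\ref{lem-uihpq-bdy-holder} is routine.
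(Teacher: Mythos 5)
Your approach differs substantially from the paper's, and I believe it contains a genuine gap.

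The paper's proof conditions on a regularity event $G^k$ (Lemma~\ref{lem-crossing-reg}) whose key ingredient is the equicontinuity statement for the percolation exploration path (Proposition~\ref{prop-bdy-equicont-uihpq}, entering as condition~4 of $G^k$). It then argues by contradiction on the \emph{peeled edges} $\dot e_j^k$: if $j_*$ is the first time a peeled edge exits $B_{\delta^\zeta n^{1/4}}^\pbl$, the increment $\lambda^k([j_*,j_*'])$ is trapped in a subgraph $S$ of $Q_0^k$ with small boundary diameter, and the location of $\dot e_{j_*}^k$ forces $S$ to be far from $\mcl A_{\delta n^{1/4}}^\pbl$ in graph distance while condition~2 forces $\dot e_{j_*'}^k$ to be close to $\mcl A_{\delta n^{1/4}}^\pbl$ in boundary-length distance, contradicting the Hölder bound (condition~3).

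Your proposal instead bounds the perimeter of $\dot Q_{T^k}^k$ and the boundary arc of $Q^\infty$ it covers, then tries to convert this directly into a graph-distance bound on the entire cluster. The step ``all vertices of $\dot Q_{T^k}^k$ that lie in the interior of $Q_0^1$ are at $Q^\infty$-graph distance at most $\delta n^{1/4} + O(1)$ from $\bdy B_{\delta n^{1/4}}^\pbl(\BB e^\infty; Q^\infty)$'' is simply false: the percolation peeling cluster in $Q_0^k$ can penetrate arbitrarily far into the interior of $Q_0^1$, away from $\mcl A_{\delta n^{1/4}}^\pbl$, while still having small perimeter and covering only a short boundary arc. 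Controlling the boundary length (hence the boundary's graph-distance diameter) of a subgraph does \emph{not} control the diameter of the subgraph itself unless one rules out bottlenecks; that is exactly the content of Lemma~\ref{lem-fb-pinch}, but that lemma only delivers a probability bound of $\ep$ for a suitable $\delta(\ep)$, not the $o_\delta^\infty(\delta)$ rate you need here. There is no cheap way to upgrade this, which is why the paper routes through the equicontinuity of $\lambda^k$ rather than any statement about the whole cluster $\dot Q_{T^k}^k$.

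A secondary error: the assertion $Y^{k,R}_{T^k} \leq \#\mcl A_{\delta n^{1/4}}^\pbl(\BB e^\infty;Q^\infty) \leq \delta^{2-\zeta} n^{1/2}$ does not follow from the argument in the proof of Lemma~\ref{lem-aux-count-bound}. That argument only gives $T^k \leq S^k$, i.e.\ $Y^{k,R}_{T^k - 1} < \delta^{2-\zeta} n^{1/2}$; the covered boundary length $Y^{k,R}$ can overshoot by an arbitrarily large amount at the single peeling step $T^k$ (a downward jump of $W^{k,R}$), since the final peeled quadrilateral may swallow a large boundary arc when it disconnects $\mcl A_{\delta n^{1/4}}^\pbl$ from $\infty$. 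This is fixable (you only need the pre-$T^k$ control), but it further illustrates that your boundary-length bookkeeping does not cleanly yield the desired containment without the path-increment equicontinuity machinery the paper employs.
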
 

We will prove that $\dot Q_{T^k}^k \subset B_{\delta^\zeta n^{1/4}}^\pbl(\BB e^\infty , Q^\infty)$ provided $\#\mcl A_{\delta  n^{1/4}}^\pbl(\BB e^\infty ; Q^\infty) \leq \delta^{2-\zeta} n^{1/2}$ and the regularity event defined in the following lemma occurs. 

\begin{lem} \label{lem-crossing-reg}
For $k\in \BB N$, let $G^k = G^{k,n}(\delta,\zeta)$ be the event that the following hold. 
\begin{enumerate}
\item \label{item-crossing-reg-time} $T^k \leq  n^{3/4}$. 
\item \label{item-crossing-reg-levy} For each $j_1,j_2 \in [0, 2 n^{3/4}]_{\BB Z}$ with $0 \leq j_2-j_1 \leq \delta n^{3/4}$,  
\eqbn
\#\mcl E\left( \bdy ( \dot Q_{j_2}^k \cap \ol Q_{j_1}^k) \setminus \bdy \ol Q_{j_1}^k  \right)  \leq \delta^{2/3 - \zeta} n^{1/2} .
\eqen
\item \label{item-crossing-reg-bdy}  For $j\in\BB N_0$, let $\beta_j^k$ be the boundary path of the UIHPQ$_{\op{S}}$ $\ol Q_j^k$ with $\beta_j^k(0)  = \dot e_j^k$ if $\dot e_j^\infty \in \bdy \ol Q_j^k$ or $\beta_j^k(0)$ equal to the leftmost edge of $\mcl A_{\delta n^{1/4}}^\pbl(\BB e^\infty ; Q^\infty) \cap \bdy \ol Q_j^k$ if $\dot e_j^\infty \notin \bdy \ol Q_j^k$.  For each $j\in [0, 2n^{3/4}]_{  \lfloor \delta  n^{3/4} \rfloor \BB Z}$ and each $i_1,i_2 \in \bdy \ol Q_j^k$ with $|i_1-i_2| \leq  2\delta^{3\zeta} n^{1/2}$, 
\eqbn
\op{dist}\left( \beta_j^k (i_1) , \beta_j^k(i_2) ; \ol Q_j^k \right) \leq  \frac12 \delta^{ \zeta} n^{1/4} .
\eqen
\item \label{item-crossing-reg-equicont} For each $j_1,j_2 \in [0,  2n^{3/4} ]_{\frac12\BB Z}$ with $0 \leq j_2-j_1 \leq \delta  n^{3/4}$, the percolation path increment $\lambda^k([j_1,j_2]_{\frac12\BB Z})$ is contained in a subgraph of $Q_0^k$ whose boundary relative to $Q_0^k$ has $Q_0^k$-graph distance diameter at most $\delta^{1/3-\zeta} n^{1/4}$. 
\end{enumerate}
For each $\delta \in (0,1)$, there exists $n_*  = n_*(\delta,  \zeta  ) \in \BB N$ such that for $n\geq n_*$ and $k\in\BB N$, 
\eqbn
\BB P\left[ (G^k)^c ,\,  \#\mcl A_{\delta  n^{1/4}}^\pbl(\BB e^\infty ; Q^\infty) \leq \delta^{2-\zeta} n^{1/2}  \right] =  o_{\delta}^\infty(\delta)
\eqen
at a rate which is uniform for $n\geq n_*$ and $k\in\BB N$. 
\end{lem}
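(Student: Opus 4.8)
The plan is to reduce each of conditions~\ref{item-crossing-reg-time}--\ref{item-crossing-reg-equicont} in the definition of $G^k$ to a regularity statement about the percolation peeling process on a UIHPQ$_{\op{S}}$, which can then be read off from estimates already established. By construction and the Markov property of peeling, conditionally on $\mcl F_0^k$ the pair $(Q_0^k , e_0^k)$ is a UIHPQ$_{\op{S}}$ carrying its percolation peeling process, the arc $\mcl A_{\delta n^{1/4}}^\pbl(\BB e^\infty ; Q^\infty)$ lies just to the right of $e_0^k$ along $\bdy Q_0^k$, and on $\{\#\mcl A_{\delta n^{1/4}}^\pbl(\BB e^\infty ; Q^\infty)\leq \delta^{2-\zeta} n^{1/2}\}$ its cardinality is at most $\delta^{2-\zeta} n^{1/2}$. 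I will show that, conditionally on $\mcl F_0^k$ and on this last event, each of the four conditions fails with probability $o_\delta^\infty(\delta)$, at a rate uniform in $k\in\BB N$ and in $n\geq n_*(\delta)$; summing and taking expectations over $\mcl F_0^k$ then gives the lemma. Throughout, each union bound is over the $O(\delta^{-1})$ mesh-$\lfloor\delta n^{3/4}\rfloor$ times in $[0,2n^{3/4}]$, so costs only a factor $\delta^{-1}$ and is absorbed into $o_\delta^\infty(\delta)$; and each $o_n(1)$ error produced by a scaling-limit input is made $o_\delta^\infty(\delta)$ by letting $n_*(\delta)$ grow fast enough (e.g.\ super-polynomially) as $\delta\rta 0$.

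For condition~\ref{item-crossing-reg-time} I argue exactly as in the proof of Lemma~\ref{lem-aux-count-bound}: on $\{\#\mcl A_{\delta n^{1/4}}^\pbl(\BB e^\infty ; Q^\infty)\leq \delta^{2-\zeta} n^{1/2}\}$ the arc $\mcl A_{\delta n^{1/4}}^\pbl(\BB e^\infty ; Q^\infty)\cap\bdy\ol Q_j^k$ is emptied once the right covered boundary length process $Y^{k,R}$ reaches $\delta^{2-\zeta} n^{1/2}$, so $T^k\leq S^k := \min\{j : Y_j^{k,R}\geq \delta^{2-\zeta} n^{1/2}\}$. Since $Y^{k,R}$ is a sum of i.i.d.\ nonnegative increments with $\BB E[\op{Co}_\infty^R]=1/2>0$ (by~\eqref{eqn-peel-mean}), the truncations $\op{Co}_\infty^R\wedge 1$ are Bernoulli variables with a fixed success probability $p_0\in(0,1)$, and a Chernoff bound gives $\BB P[S^k > n^{3/4}\,|\,\mcl F_0^k]\leq e^{-c n^{3/4}}$ once $\delta^{2-\zeta} n^{1/2}\leq \tfrac12 p_0 n^{3/4}$, i.e.\ for $n\geq n_*(\delta)$; taking $n_*(\delta)$ large makes $e^{-c n_*(\delta)^{3/4}}$ an $o_\delta^\infty(\delta)$ bound valid for all $n\geq n_*(\delta)$. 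For condition~\ref{item-crossing-reg-levy}, note that $\#\mcl E(\bdy(\dot Q_{j_2}^k\cap\ol Q_{j_1}^k)\setminus\bdy\ol Q_{j_1}^k)$ is, up to an additive constant, the exposed boundary length of the percolation peeling process on the UIHPQ$_{\op{S}}$ $\ol Q_{j_1}^k$ after at most $\delta n^{3/4}$ steps; by~\eqref{eqn-bdy-process-inf} this is controlled by the oscillation of the net boundary length process $W^k$ over the window $[j_1,j_2]$, and Lemmas~\ref{lem-X-tail} and~\ref{lem-bdy-bound-infty}, applied over windows of length $\lfloor\delta n^{3/4}\rfloor$ and handling the large downward jumps of $W^k$ exactly as in the proof of Lemma~\ref{lem-equicont-reg-event-infty}, bound it by $\delta^{2/3-\zeta} n^{1/2}$ off an event of probability $o_\delta^\infty(\delta)$ (using nesting to pass from the grid windows to all pairs $j_1<j_2$).

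For condition~\ref{item-crossing-reg-bdy}, apply the boundary modulus-of-continuity estimate Lemma~\ref{lem-uihpq-bdy-holder} (with a fixed choice of the parameter $A$, using translation invariance of the boundary of the UIHPQ$_{\op{S}}$) to each of the $O(\delta^{-1})$ quadrangulations $\ol Q_j^k$, which are UIHPQ$_{\op{S}}$'s given $\mcl F_j^k$: with $\el=n^{1/2}$, a boundary segment of length at most $2\delta^{3\zeta} n^{1/2}$ has diameter at most $n^{1/4}\bigl(C\delta^{3\zeta/2}(|\log\delta|+1)^{7/4+\zeta'}+\ep\bigr)$ off an event of probability $o_C^\infty(C)+o_n(1)$; since $3\zeta/2>\zeta$, the choice $\ep=\tfrac14\delta^\zeta$ makes this at most $\tfrac12\delta^\zeta n^{1/4}$ for $\delta$ small, while $o_C^\infty(C)$ (with $C$ a large constant) and $o_n(1)$ (for $n\geq n_*(\delta)$) are both $o_\delta^\infty(\delta)$. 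Finally, condition~\ref{item-crossing-reg-equicont} is precisely the conclusion of Proposition~\ref{prop-bdy-equicont-uihpq} applied to $(Q_0^k , e_0^k)$ with $A=2$, together with the consequence for arbitrary sub-intervals stated in terms of $Q_0^k$-graph distances (as in Remark~\ref{remark-weaker-cont}), which holds off an event of probability $o_\delta^\infty(\delta)$.

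The main point requiring care is uniformity in $k\in\BB N$: the estimates must not degrade as one passes from one auxiliary UIHPQ$_{\op{S}}$ $(Q_0^k , e_0^k)$ to the next. This is exactly why each input invoked above --- the Chernoff bound, Lemmas~\ref{lem-X-tail}, \ref{lem-bdy-bound-infty}, \ref{lem-uihpq-bdy-holder}, and Proposition~\ref{prop-bdy-equicont-uihpq} --- is uniform in the choice of root edge of the UIHPQ$_{\op{S}}$, and why the arc $\mcl A_{\delta n^{1/4}}^\pbl(\BB e^\infty ; Q^\infty)$ is allowed to enter only through its cardinality, which is bounded on the event we condition on: conditioning on $\mcl F_0^k$ and then taking expectations therefore preserves every bound with a $k$-independent rate. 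The remaining bookkeeping, namely choosing a single $n_*(\delta)$ large enough to convert all of the finitely many $o_n(1)$ errors into a common $o_\delta^\infty(\delta)$ bound, is routine.
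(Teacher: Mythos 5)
Your proof follows the same overall architecture as the paper: condition on $\mcl F_0^k$ to reduce to a UIHPQ$_{\op{S}}$ estimate, bound each of the four conditions separately, and observe that the arc $\mcl A_{\delta n^{1/4}}^\pbl(\BB e^\infty;Q^\infty)$ enters only through its cardinality so all bounds are uniform in $k$.  For conditions~\ref{item-crossing-reg-levy}, \ref{item-crossing-reg-bdy}, and \ref{item-crossing-reg-equicont} you invoke the same inputs as the paper (Lemma~\ref{lem-X-tail}, Lemma~\ref{lem-uihpq-bdy-holder}, Proposition~\ref{prop-bdy-equicont-uihpq}), and your treatment of condition~\ref{item-crossing-reg-time} via a Chernoff bound on the truncated covered-boundary increments (so that $T^k \leq S^k$ and $\BB P[S^k>n^{3/4}]\leq e^{-cn^{3/4}}$, absorbed into $o_\delta^\infty(\delta)$ by letting $n_*(\delta)$ grow) is a clean and slightly more elementary alternative to the paper's argument, which instead shows that on the bad event the rescaled process $W^{k,R}$ is confined to a window of width about $\delta^{3\zeta}$ for $n^{3/4}$ steps and appeals to the confinement bound $c_0\exp(-c_1\ep^{-2/3})$ for the limiting $3/2$-stable process.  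Both give the required rate, though the paper's version produces an $n$-uniform bound depending only on $\delta$ once $n$ is large, while yours relies on increasing $n_*(\delta)$.

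There is one concrete error in your treatment of condition~\ref{item-crossing-reg-bdy}.  You apply Lemma~\ref{lem-uihpq-bdy-holder} with a \emph{fixed} ``large constant'' $C$ and assert that the resulting exceptional probability $o_C^\infty(C)$ is $o_\delta^\infty(\delta)$.  This cannot be: for fixed $C$, $o_C^\infty(C)$ is a fixed positive quantity that does not shrink as $\delta\rta 0$, and after the union bound over $O(\delta^{-1})$ values of $j$ it in fact blows up.  The correct choice is $C=C(\delta)$ growing like a small power of $\delta^{-1}$, say $C=\delta^{-\zeta/4}$: then $o_{C(\delta)}^\infty(C(\delta))=o_\delta^\infty(\delta)$ even after the union bound, while the distance threshold becomes $\delta^{-\zeta/4}\cdot\delta^{3\zeta/2}(|\log\delta|+1)^{7/4+\zeta'}+\ep$, which is still $o(\delta^\zeta)$ since $3\zeta/2-\zeta/4=5\zeta/4>\zeta$, so the choice $\ep=\tfrac14\delta^\zeta$ and $\delta$ small still gives the bound $\tfrac12\delta^\zeta n^{1/4}$.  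The rest of the argument goes through as you wrote it, so this is a fixable detail rather than a structural gap, but as stated the claim about $o_C^\infty(C)$ is false.
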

\begin{proof}
We will bound the probability that each condition in the definition of $G^k$ fails separately. Throughout, all $o_\delta^\infty(\delta)$ errors are required to be uniform in $n$ and $k$.
\medskip 

\noindent\textit{Condition~\ref{item-crossing-reg-time}.} If $T^k > n^{3/4}$ and $\#\mcl A_{\delta  n^{1/4}}^\pbl(\BB e^\infty ; Q^\infty) \leq \delta^{2-\zeta} n^{1/2}$, then by~\eqref{eqn-bdy-process-inf} and the definition~\eqref{eqn-length-time-def} of $T^k$, 
\eqbn
-\delta^{2-\zeta} n^{1/2} - 3 \leq W_j^{k,R} \leq \delta^{3 \zeta} n^{1/2} ,\quad \forall j \in [0, n^{3/4}]_{\BB Z}.
\eqen 
 By Proposition~\ref{prop-stable-conv}, $W^{k,R}$ converges in law to a $3/2$-stable process in the Skorokhod topology as $n\rta\infty$ under an appropriate scaling limit.  By multiplying over $\lfloor t \ep^{-2/3} \rfloor$ i.i.d.\ increments of time length $\ep^{2/3}$, we see that for $\ep > 0$, the probability that a $3/2$-stable process stays in the $\ep$-neighborhood of the origin for $ t \gg \ep $ units of time is at most $c_0 \exp(- c_1 \ep^{-2/3})$ for appropriate constants $c_0,c_1 > 0$ depending on $t$.  If $\#\mcl A_{\delta  n^{1/4}}^\pbl(\BB e^\infty ; Q^\infty) \leq \delta^{2-\zeta} n^{1/2}$ and condition~\ref{item-crossing-reg-time} in the definition of $G^k$ fails to occur, then $|W_j^{k,R}| \leq \delta^{3\zeta} n^{1/2}$ for $j \in [0,n^{3/4}]_{\BB Z}$.  Hence there exists $n_0 = n_0(\delta,\zeta) \in \BB N$ such that for $n\geq n_0$ and $k\in\BB N$, the probability that this is the case is of order $o_\delta^\infty(\delta)$.
\medskip

\noindent\textit{Condition~\ref{item-crossing-reg-levy}.} 
In the notation of Definition~\ref{def-bdy-process},  
\eqbn
\#\mcl E\left( \bdy ( \dot Q_{j_2}^k \cap \ol Q_{j_1}^k) \setminus \bdy \ol Q_{j_1}^k  \right) \leq \#\mcl E\left( \bdy \dot Q_{j_2}^k \setminus \bdy Q_0^k \right) =   X^{\infty ,R}_{j_2} . 
\eqen
Hence Lemma~\ref{lem-X-tail} implies that for $\delta \in (0,1)$, there exists $n_1= n_1(\delta , \zeta ) \in \BB N$ such that for $n\geq n_1$, it holds with probability  $1-o_{\delta}^\infty(\delta)$ that $\#\mcl E\left( \bdy ( \dot Q_{j_2}^k \cap \ol Q_{j_1}^k) \setminus \bdy \ol Q_{j_1}^k  \right) \leq \frac12 \delta^{2/3 - \zeta} n^{1/2}$ for each $0\leq j_1 \leq j_2 \leq \delta n^{1/2}$. By the Markov property of the percolation peeling process and a union bound over $\lceil 2   \delta^{-1} \rceil$ approximately evenly spaced values of $j \in [1 , 2   n^{3/4}]_{\BB Z}$, we see that for $n\geq n_1$ and $k\in\BB N$, the probability that condition~\ref{item-crossing-reg-levy} in the definition of $G^k$ fails to hold is $ o_{\delta}^\infty(\delta)$.  
\medskip

\noindent\textit{Condition~\ref{item-crossing-reg-bdy}.} 
By Lemma~\ref{lem-uihpq-bdy-holder} and a union bound over all $j\in [0, 2  n^{3/4}]_{  \lfloor \delta n^{3/4} \rfloor \BB Z}$, we infer that for each $\delta \in (0,1)$, there exists $n_2  = n_2 (\delta, \zeta    ) \geq n_1$ such that for $n\geq n_2$ and $k\in\BB N$, the probability that condition~\ref{item-crossing-reg-bdy} in the definition of~$G^k$ fails to hold is of order $o_{\delta}^\infty(\delta)$.
\medskip

\noindent\textit{Condition~\ref{item-crossing-reg-equicont}.} 
By Proposition~\ref{prop-bdy-equicont-uihpq}, for $\delta \in (0,1)$, there exists $n_* = n_*(\delta , \zeta  )\geq n_2$ such that for $n\geq n_*$, the probability that condition~\ref{item-crossing-reg-equicont} in the definition of $G^k$ fails to hold is $o_{\delta}^\infty(\delta)$.
\end{proof}

\begin{proof}[Proof of Lemma~\ref{lem-excursion-dist-length}]
Fix $k\in\BB N$ and let $G^k = G^{k,n}(\delta,\zeta )$ be the event of Lemma~\ref{lem-crossing-reg}.  By Lemma~\ref{lem-crossing-reg}, it suffices to show that if $\delta$ is sufficiently small (depending only on $\zeta$) and
\eqb \label{eqn-excursion-dist-reg-event}
  G^k \cap \{\# \mcl A_{\delta n^{1/4} }^\pbl(\BB e^\infty; Q^\infty) \leq \delta^{2-\zeta } n^{1/2}\}
\eqe
occurs, then $\dot Q_{T^k}^k \subset B_{\delta^\zeta n^{1/4}}^\pbl(\BB e^\infty , Q^\infty)$. 

Suppose to the contrary that the event of~\eqref{eqn-excursion-dist-reg-event} occurs but $\dot Q_{T^k}^k \not\subset B_{\delta^\zeta n^{1/4}}^\pbl(\BB e^\infty ; Q^\infty)$.  Let $j_*$ be the smallest $j\in [0,T^k]_{\BB Z}$ for which $\dot e_{j_*}^k \notin B_{\delta^\zeta n^{1/4} }^\pbl(\BB e^\infty; Q^\infty)$ and let $j_*'$ be the smallest integer multiple of $\lfloor \delta  n^{3/4} \rfloor$ which is at least $j_*$.  By condition~\ref{item-crossing-reg-time} in the definition of $G^k$, we have $j_* \leq  n^{3/4}$ and $j_*' \leq 2 n^{3/4}$.  We seek a contradiction to condition~\ref{item-crossing-reg-bdy} in the definition of $G^k$ with this choice of $j_*'$. 

We first claim that if $\delta$ is sufficiently small (depending only on $\zeta$) then
\eqb \label{eqn-excursion-dist}
\op{dist}\left(  \dot e_{j_*'}^k ,  \mcl A_{\delta n^{1/4} }^\pbl(\BB e^\infty ; Q^\infty) \cap \bdy \ol Q_{j_*'}^k    ; Q_0^k \right) > \frac12 \delta^{ \zeta } n^{1/4} .
\eqe 
By condition~\ref{item-crossing-reg-equicont} in the definition of $G^k$, the percolation exploration path increment $\lambda^k([j_* , j_*']_{\frac12\BB Z})$ is contained in a subgraph $S$ of $Q_0^k$ whose boundary relative to $Q_0^k$ has $Q_0^k$-graph distance diameter at most $\delta^{1/3-\zeta} n^{1/4}$. This implies that the edges $\dot e_{j_*}^k$ and $\dot e_{j_*'}^k$ are also contained in $S$.  Hence to prove~\eqref{eqn-excursion-dist} it suffices to show that for small enough $\delta$ (depending only on $\zeta$) the set $S$ is disjoint from $B_{ \frac12 \delta^\zeta n^{1/4} }^\pbl(\BB e^\infty ; Q^\infty)$. To see this, we observe that if $S$ intersects $B_{ \frac12 \delta^{ 2 \zeta} n^{1/4} }^\pbl(\BB e^\infty ; Q^\infty)$, then in fact $\bdy S$ intersects $B_{ \frac12 \delta^{  \zeta } n^{1/4} }^\pbl(\BB e^\infty ; Q^\infty)$ so since $\bdy S$ has $Q_0^k$- and hence also $Q^\infty$-diameter at most $\delta^{1/3-\zeta} n^{1/4}$, which is smaller than $\frac12 \delta^\zeta n^{1/4}$ for small enough $\delta$, it follows that $\bdy S \subset B_{  \delta^{ \zeta} n^{1/4}}(\BB e^\infty ; Q^\infty)$ for small enough $\delta$. This implies that $S \subset B_{ \delta^{\zeta } n^{1/4}}^\pbl(\BB e^\infty ; Q^\infty)$ and in particular $e_{j_*}^k \in B_{  \delta^{  \zeta } n^{1/4}}^\pbl(\BB e^\infty ; Q^\infty)$, which contradicts our choice of $j_*$. 
 
We next argue that
\eqb \label{eqn-excursion-length}
\op{dist}\left(  \dot e_{j_*'}^k ,  \mcl A_{\delta n^{1/4} }^\pbl(\BB e^\infty ;    Q^\infty) \cap \bdy \ol Q_{j_*'}^k    ; \bdy \ol Q_{j_*'}^k \right) \leq    2 \delta^{3\zeta} n^{1/2}   , 
\eqe  
equivalently $X_{j_*'}^{k,R} \leq 2\delta^{3\zeta} n^{1/2}$. Indeed, this is immediate from condition~\ref{item-crossing-reg-levy} in the definition of $G^k$, the fact that $X_{j_*}^{k,R} \leq \delta^{3\zeta} n^{1/2}$ (which follows since $j_* \leq T^k$) and the estimate
\eqbn
X_{j_*'}^{k,R} \leq X_{j_*}^{k,R} +  \#\mcl E\left( \bdy ( \dot Q_{j_*'}^k \cap \ol Q_{j_*}^k) \setminus \bdy \ol Q_{j_*}^k  \right) .
\eqen

The relations~\eqref{eqn-excursion-dist} and~\eqref{eqn-excursion-length} contradict condition~\ref{item-crossing-reg-bdy} in the definition of~$G^k$, applied with $j = j_*'$, $\beta_j^k(i_1) = \dot e_{j_*'}^k$, and $\beta_j^k(i_2)$ an edge of $\mcl A_{\delta n^{1/4} }^\pbl(\BB e^\infty ;    Q^\infty) \cap \ol Q_{j_*'}^k$ which is at minimal $\bdy \ol Q_{j_*'}^k$-graph distance from~$\dot e_{j_*'}^k$.
\end{proof}

\begin{proof}[Proof of Proposition~\ref{prop-uihpq-crossing}]
By Lemmas~\ref{lem-ip-time-count},~\ref{lem-aux-count-bound}, and~\ref{lem-excursion-dist-length}, for each $\delta\in (0,1)$ there exists $n_* = n_*(\delta , \zeta)\in\BB N$ such that for $n\geq n_*$ and $N\in\BB N$ one has
\alb
&\BB P\left[ \#\op{IP}_0^\infty( \delta  n^{1/4} , \delta^\zeta n^{1/4}   ) > N  ,\, \#\mcl A_{\delta  n^{1/4}}^\pbl(\BB e^\infty ; Q^\infty) \leq \delta^{2-\zeta} n^{1/2} \right]  \\
&\qquad \leq \BB P[K > N -1 ,\, \#\mcl A_{\delta  n^{1/4}}^\pbl(\BB e^\infty ; Q^\infty) \leq \delta^{2-\zeta} n^{1/2} ] \\
&\qquad \qquad + \BB P\left[\exists k \in [1,N]_{\BB Z} \: \text{with} \: \dot Q_{T^k}^k \not\subset B_{\delta^\zeta n^{1/4}}^\pbl(\BB e^\infty , Q^\infty) ,\, \#\mcl A_{\delta  n^{1/4}}^\pbl(\BB e^\infty ; Q^\infty) \leq \delta^{2-\zeta} n^{1/2}  \right] \\
&\qquad \leq O_\delta(\delta^{(N-1)(1-2\zeta)} ) + o_\delta^\infty(\delta)  =  O_\delta(\delta^{(N-1) (1-2\zeta)} ) .
\ale
By stationarity of the law of the unexplored quadrangulations $\ol Q_j^\infty$ and the restriction of the percolation peeling process to these quadrangulations, we can now take a union bound over all $j \in [0, A  n^{3/4}]_{ \lfloor \delta^M n^{3/4} \rfloor \BB Z}$ to obtain the statement of the proposition.
\end{proof}

\subsection{Boundary lengths of peeling-by-layers clusters}
\label{sec-pbl-bdy}

Proposition~\ref{prop-uihpq-crossing} is not by itself sufficient to deduce Proposition~\ref{prop-fb-crossing} since the estimate of the former proposition only holds for peeling-by-layers clusters whose outer boundary length $\# \mcl A_{\delta  n^{1/4}}^\pbl(\dot e_j^\infty ; \ol Q^\infty_j) $ is at most $\delta^{2-\zeta} n^{1/2}$.  In this subsection, we will prove that in the setting of a free Boltzmann quadrangulation with simple boundary, this outer boundary length is very unlikely to be larger than  $\delta^{2-\zeta} n^{1/2}$.

\begin{lem} \label{lem-pbl-bdy-length}
For each $\zeta \in (0,1)$ and each $\delta  \in (0,1)$, there exists $\el_* = \el_*(\delta  ,\zeta) \in \BB N$ such that the following holds for each $\el \geq \el_*$.  Let $(Q , \BB e)$ be a free Boltzmann quadrangulation with simple boundary of perimeter $2\el$.  Also let $\BB e_* \in \mcl E(\bdy Q)$ be chosen in a deterministic manner, let $B_{\delta \el^{1/2}}^\pbl(\BB e ; Q)$ be the radius-$  \delta \el^{1/2} $ peeling-by-layers cluster of $Q$ started from $\BB e$ and targeted at $\BB e_*$, and let $\mcl A_{\delta \el^{1/2}}^\pbl(\BB e ; Q) :=  \mcl E(\bdy B_{\delta \el^{1/2}}^\pbl(\BB e ; Q) \setminus \bdy Q)$, as in Section~\ref{sec-crossing-perturb}.  Then 
\eqb \label{eqn-pbl-bdy-length}
\BB P\left[ \# \mcl A_{\delta \el^{1/2}}^\pbl(\BB e ; Q) > \delta^{2-\zeta} \el  \right] = o_\delta^\infty(\delta) 
\eqe 
at a rate which is uniform for $\el\geq \el_*$. 
\end{lem}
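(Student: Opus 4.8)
The strategy is to reduce the statement for a free Boltzmann quadrangulation to the analogous statement for the UIHPQ$_{\op{S}}$ via the Radon–Nikodym derivative estimate of Lemma~\ref{lem-perc-rn} (more precisely, its direct ancestor \cite[Lemma~3.6]{gwynne-miller-simple-quad}, which applies to general peeling processes on free Boltzmann quadrangulations and not only to the percolation peeling process), and then to control $\# \mcl A_{\delta \el^{1/2}}^\pbl(\BB e^\infty ; Q^\infty)$ directly in the UIHPQ$_{\op{S}}$. The peeling-by-layers process is itself a peeling process of $(Q,\BB e)$ run up to a stopping time (the first time the peeled cluster has graph-distance radius $\delta \el^{1/2}$), so the Markov property of peeling together with the Radon–Nikodym estimate lets us transfer the event $\{\# \mcl A_{\delta \el^{1/2}}^\pbl(\BB e ; Q) > \delta^{2-\zeta}\el\}$ to the corresponding event for the UIHPQ$_{\op{S}}$ peeling-by-layers process, with a multiplicative loss that is bounded provided the total boundary length of the UIHPQ$_{\op{S}}$ cluster at that stopping time stays below a fixed fraction of $\el$. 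That extra control is exactly of the type provided by Lemma~\ref{lem-max-bdy-length} applied to the \emph{complementary} unexplored quadrangulation (whose conditional law is free Boltzmann), or alternatively by first observing that on the event $\{\# \mcl A_{\delta \el^{1/2}}^\pbl > \delta^{2-\zeta}\el\}$ with $\delta$ small, the outer boundary length $\delta^{2-\zeta}\el$ is itself $\ll \el$, so we may simply intersect with the event that the UIHPQ$_{\op{S}}$ cluster boundary length never exceeds, say, $\tfrac12\el$, which by Lemma~\ref{lem-max-bdy-length} (applied to the unexplored region) fails with probability $o_\el(1)$ uniformly in the choice of peeling process.

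The heart of the matter is therefore the UIHPQ$_{\op{S}}$ estimate: show that for the peeling-by-layers process of $(Q^\infty,\BB e^\infty)$ run to graph-distance radius $\delta \el^{1/2}$, one has $\BB P[\# \mcl A_{\delta \el^{1/2}}^\pbl(\BB e^\infty ; Q^\infty) > \delta^{2-\zeta}\el ] = o_\delta^\infty(\delta)$ uniformly in $\el$. By Brownian scaling it suffices to take $\el$ comparable to a fixed value and let $\delta \to 0$; more precisely, setting $n \asymp \el^2$, the rescaled UIHPQ$_{\op{S}}$ converges to the Brownian half-plane in the local GHPU topology \cite[Theorem~1.12]{gwynne-miller-uihpq}, and the peeling-by-layers cluster $B_{\delta \el^{1/2}}^\pbl(\BB e^\infty ; Q^\infty)$ is comparable to the filled metric ball $B_{\delta \el^{1/2}}^\bullet(\BB e^\infty ; Q^\infty)$ by \eqref{eqn-filled-ball-contain}. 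The boundary length $\# \mcl A^\pbl$ rescaled by $\bcon^{-1}\el^{-1}$ converges to the $\sqrt{8/3}$-LQG boundary length of the corresponding filled metric ball of radius $\delta$ in the Brownian half-plane, which is a tight, a.s.\ finite quantity scaling (by the scaling properties of the Brownian half-plane, equivalently the $\sqrt{8/3}$-quantum wedge) like $\delta^2$ times an order-one random variable with at least a polynomial-order upper tail. Hence $\BB P[\# \mcl A_{\delta \el^{1/2}}^\pbl > \delta^{2-\zeta}\el] \to \BB P[\text{(boundary length of radius-}\delta\text{ ball)} > \delta^{2-\zeta}] = \BB P[\text{(order-one r.v.)} > \delta^{-\zeta}]$, which is $o_\delta^\infty(\delta)$ if the relevant quantity has finite moments of all orders. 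Establishing those moment bounds — i.e.\ a superpolynomial upper-tail bound for the $\sqrt{8/3}$-LQG boundary length of a small filled metric ball in the Brownian half-plane — is the step I expect to be the main obstacle, since it needs an input beyond soft scaling-limit arguments.

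To get that moment bound I would argue at the discrete level, exploiting the peeling structure directly rather than passing through the continuum. The outer boundary length $X^\pbl_r := \# \mcl A_r^\pbl(\BB e^\infty ; Q^\infty)$ of the radius-$r$ peeling-by-layers cluster is a Markov chain in $r$ with jumps governed by the peeling transition probabilities \eqref{eqn-uihpq-peel-prob}; crucially, growing one layer of the cluster involves peeling roughly $O(X^\pbl_r)$ quadrilaterals, and each peeling step changes the boundary length by an increment with the heavy tail \eqref{eqn-cover-tail} but zero mean \eqref{eqn-net-bdy-mean}. One expects $X^\pbl_r$ to typically be of order $r^2$ (matching $\delta^2 \el$ when $r = \delta\el^{1/2}$), and the superpolynomial upper tail should follow from a submartingale/concentration argument: the number of peeling steps to complete layers up to radius $r$ is controlled, the heavy-tailed increments sum to something with stretched-exponential-type tails when the number of summands is itself controlled (cf.\ the heavy-tailed walk estimates already invoked as \cite[Lemma~5.8]{gwynne-miller-saw} and used in Lemma~\ref{lem-bdy-bound-infty}), and a union bound over the $O(r) = O(\delta\el^{1/2})$ layers costs only a polynomial factor which is swallowed by the $o_\delta^\infty(\delta)$. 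Concretely I would: (i) bound the number of peeling steps needed to grow the cluster from radius $0$ to radius $\delta\el^{1/2}$ in terms of $\max_{r \le \delta\el^{1/2}} X^\pbl_r$ (each layer costs at most $O(X^\pbl_r)$ steps since a layer is completed once every boundary edge has been covered or exposed); (ii) use the heavy-tailed walk concentration to show that over any number $m$ of peeling steps the boundary length stays below $C m^{2/3}$ except with probability $o_C^\infty(C)$, exactly as in \eqref{eqn-peel-mart-upper}; (iii) feed (ii) back into (i) via a bootstrap/induction on the layer index to conclude that $\max_{r\le \delta\el^{1/2}} X^\pbl_r \le C (\delta\el^{1/2})^2 = C\delta^2\el$ except with probability $o_C^\infty(C)$, and take $C = \delta^{-\zeta}$. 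This last bootstrap closing cleanly — i.e.\ making sure the layer-count bound and the boundary-length bound are mutually consistent with the right exponents — is the delicate part, but it is of the same flavor as the estimates already carried out in Sections~\ref{sec-peeling-estimate} and~\ref{sec-uihpq-jump-estimate}. Finally, assembling (the UIHPQ$_{\op{S}}$ bound) $+$ (Lemma~\ref{lem-max-bdy-length} to control the complementary boundary length) $+$ (the Radon–Nikodym transfer of Lemma~\ref{lem-perc-rn}/\cite[Lemma~3.6]{gwynne-miller-simple-quad}) yields \eqref{eqn-pbl-bdy-length} with the stated uniformity in $\el$.
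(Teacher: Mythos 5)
Your route is genuinely different from the paper's, and unfortunately the step you yourself flag as ``the delicate part'' is exactly where the real content lives --- and it is not resolved.

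\emph{What the paper does.} The paper does not pass through the UIHPQ$_{\op{S}}$ or through any discrete peeling bootstrap. It works directly with the free Boltzmann quadrangulation and its GHPU scaling limit, the free Boltzmann Brownian disk, using two continuum inputs: Lemma~\ref{lem-bd-ball-sup}, a superpolynomial upper tail for the volume of a metric ball in the Brownian disk (obtained by transporting Le Gall's moment bound \cite[Corollary~6.2]{legall-geodesics} for the Brownian map through Lemma~\ref{lem-bm-ball-sup}); and Lemma~\ref{lem-bdy-interval-mass}, a lower bound for the $\mu$-mass of a $\delta$-neighborhood of a boundary arc in terms of the arc length. Step one, via GHPU convergence, gives $\BB P[\#\mcl E(B_{2\delta\el^{1/2}+2}(\BB e;Q)) \geq c\delta^{4-\zeta/2}\el^2] = o_\delta^\infty(\delta)$. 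Step two: by the Markov property of peeling, the unexplored region $\ol Q^\pbl$ is a free Boltzmann quadrangulation, so GHPU convergence plus Lemma~\ref{lem-bdy-interval-mass} shows that \emph{conditional} on $\{\#\mcl A^\pbl_{\delta\el^{1/2}} > \delta^{2-\zeta}\el\}$, the ball around $\mcl A^\pbl$ of radius $\delta\el^{1/2}$ in $\ol Q^\pbl$ has $\geq c\delta^{4-\zeta/2}\el^2$ edges with conditional probability $\geq 1/2$. Since that ball is contained in $B_{2\delta\el^{1/2}+2}(\BB e;Q)$, Bayes' rule finishes the proof. The point is that the superpolynomial tail that you correctly identify as the missing input is \emph{already known} at the continuum level from Le Gall's work, so no discrete bootstrap is needed.

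\emph{The gap in your sketch.} Your reduction to the UIHPQ$_{\op{S}}$ and the Radon--Nikodym transfer are basically sound (in fact less machinery is needed than you invoke: on the event $\{\#\mcl A^\pbl > \delta^{2-\zeta}\el\}$ the unexplored boundary length is automatically $\geq \#\mcl A^\pbl > \delta^{2-\zeta}\el$, since $\#\mcl E(\bdy\ol Q) = \#\mcl A^\pbl + \#(\text{uncovered part of }\bdy Q)$; so the RN derivative of Lemma~\ref{lem-perc-rn} is at most a power of $\delta^{-(2-\zeta)}$, which is a tolerable polynomial loss against an $o_\delta^\infty(\delta)$ target, and your invocation of Lemma~\ref{lem-max-bdy-length} here is both incorrect --- that lemma controls the unexplored boundary getting \emph{large}, not the covered boundary --- and unnecessary). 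The real gap is the half-plane estimate itself. You propose a three-step discrete bootstrap (layer count controls step count, step count controls boundary fluctuation via the $C m^{2/3}$ heavy-tailed bound, feed back into layer count), but the heavy-tailed walk bound \eqref{eqn-peel-mart-upper} is for a \emph{deterministic} number of steps, while the number of steps to complete $\delta\el^{1/2}$ layers is random and depends on the very quantity you are trying to bound; closing this circularity with a clean union bound or stratification argument, uniformly over layers and at the superpolynomial tail scale, is a substantial piece of work that is stated as ``delicate'' rather than carried out. Without that, the proof is incomplete precisely at the point that contains all the content. If one did want to go your route, the more efficient version would be to recognize --- as you do --- that the wanted estimate is a statement about the $\sqrt{8/3}$-LQG boundary length of a small filled metric ball, and then to import the known Brownian-map/Brownian-disk volume moments rather than reproving them by hand in the discrete; at that point one is essentially doing the paper's argument.
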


Lemma~\ref{lem-max-bdy-length}---which gives an upper bound for boundary lengths in an arbitrary peeling process---tells us that $\# \mcl A_{\delta \el^{1/2}}^\pbl(\BB e ; Q)  \leq \delta^{2-\zeta} \el $ with high probability, but we need a stronger probabilistic estimate since we eventually want to take a union bound over peeling-by-layers clusters in a large number of different unexplored quadrangulations. For this purpose we will need to use the relationship between peeling-by-layers clusters and metric balls.

The idea of the proof of Lemma~\ref{lem-pbl-bdy-length} is to show that the number of edges in the $Q$-metric ball $B_{2 \delta \el^{1/2}   }(\BB e ; Q )$ is very unlikely to be larger than $\delta^{4-\zeta} \el^2$; and that if the arc $\mcl A_{\delta \el^{1/2}}^\pbl(\BB e ; Q)$ contains too many edges, then this $Q$-metric ball contains more than $\delta^{4-\zeta} \el^2$ edges with uniformly positive probability. Both of these statements are proven using the GHPU convergence of free Boltzmann quadrangulations with simple boundary toward the free Boltzmann Brownian disk~\cite[Theorem~1.4]{gwynne-miller-simple-quad} and estimates for the metric, area measure, and boundary length measure of the free Boltzmann Brownian disk which are similar to those found in~\cite[Section~3.2]{gwynne-miller-gluing}. In particular, we will make crucial use of~\cite[Corollary 6.2]{legall-geodesics}, which gives an upper bound for the area of a metric ball in the Brownian map.  

We start by estimating the maximal area of a metric ball in a random-area Brownian map. This estimate will be transferred to an estimate for the area of a metric ball in a free Boltzmann Brownian disk momentarily. 

Following~\cite{tbm-characterization}, we let $\BB m_{\op{BM}}^2$ be the infinite measure on doubly marked Brownian maps $(M , d_M ,\mu_M , z_0 , z_1)$, which is obtained as follows: first ``sample" an area $A$ from the infinite measure $a^{-3/2} \, da$; then, conditional on~$A$, sample a Brownian map $(M,d_M ,\mu_M)$ with area $A$ (equipped with its natural metric and area measure); finally, conditional on $(M,d_M ,\mu_M)$, sample two conditionally independent points $z_0,z_1\in M$ from $\mu_M$ (normalized to be a probability measure). 

\begin{lem}
\label{lem-bm-ball-sup}
Let $\frk a > 0$ and let $(M , d_M ,\mu_M , z_0 , z_1)$ be sampled from the infinite measure $\BB m_{\op{BM}}^2$ conditioned on $\{\mu_M(M) \geq \frk a\}$ (which is a finite measure). 
For each $p \geq 1$ and each $\zeta \in (0,2/p)$, one has 
\eqbn
\BB E\left[ \left( \sup_{\delta > 0} \sup_{z\in M} \frac{\mu_M(B_\delta(z;d_M))}{\delta^{4-\zeta}} \right)^p \right] \preceq \frk a^{p\zeta/4}
\eqen
with universal implicit constant. 
\end{lem}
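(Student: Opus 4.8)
The plan is to reduce the estimate on the maximal area of a metric ball to the analogous estimate for a Brownian map with \emph{fixed} area via Brownian scaling, and then prove the fixed-area estimate using a chaining argument together with the known moment bounds for the volume of a ball in the Brownian map.

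\textbf{Step 1: reduction to fixed area.}
First I would use the scaling property of the Brownian map: if $(M,d_M,\mu_M)$ has area $A$, then rescaling distances by $A^{-1/4}$ and the measure by $A^{-1}$ produces a Brownian map with unit area. Write $\Theta(M) := \sup_{\delta > 0} \sup_{z \in M} \mu_M(B_\delta(z;d_M)) \delta^{-(4-\zeta)}$. Under the rescaling $d_M \mapsto A^{-1/4} d_M$, $\mu_M \mapsto A^{-1} \mu_M$, one checks that a ball of radius $\delta$ becomes a ball of radius $A^{-1/4}\delta$, so $\mu_M(B_\delta(z;d_M)) \delta^{-(4-\zeta)}$ transforms by a factor $A \cdot A^{-(4-\zeta)/4} = A^{\zeta/4}$; hence $\Theta(M) = A^{\zeta/4} \Theta(M_1)$ where $M_1$ is the rescaled unit-area Brownian map. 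Consequently, under $\BB m_{\op{BM}}^2$ conditioned on $\{\mu_M(M) \geq \frk a\}$, the law of $A = \mu_M(M)$ has density proportional to $a^{-3/2} \BB 1_{(a \geq \frk a)}\, da$ (the marked points contribute only a normalization since conditionally on the area they are i.i.d.\ from the normalized measure), and $M_1$ is independent of $A$. Therefore
\[
\BB E\left[ \Theta(M)^p \right] = \BB E[\Theta(M_1)^p] \cdot \BB E[A^{p\zeta/4}],
\]
and since $\BB E[A^{p\zeta/4}] = \frac{\int_{\frk a}^\infty a^{p\zeta/4 - 3/2}\, da}{\int_{\frk a}^\infty a^{-3/2}\, da} \asymp \frk a^{p\zeta/4}$ for $p\zeta/4 < 1/2$, i.e.\ $\zeta < 2/p$, it remains only to show that $\BB E[\Theta(M_1)^p] < \infty$ for the unit-area (or, by scaling again, fixed-area) Brownian map.

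\textbf{Step 2: the fixed-area moment bound via chaining.}
For a fixed-area Brownian map, I would use the standard volume estimate: there are constants such that for each $z \in M$ and each $r \in (0,1)$, $\BB P[\mu_M(B_r(z;d_M)) \geq \lambda r^4] \leq C\exp(-c\lambda^{1/3})$, or more precisely moment bounds $\BB E[\mu_M(B_r(z;d_M))^q] \preceq r^{4q}$ for all $q \geq 1$ with $q$-dependent constants; these follow from the known results on the Brownian map (e.g.\ one can extract them from the analysis in \cite{lqg-tbm2} or from the corresponding estimates for the Brownian disk used in \cite{gwynne-miller-gluing}, since a Brownian map locally looks like a Brownian disk). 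The supremum over $\delta$ can be discretized: it suffices to control $\sup_{z} \mu_M(B_{2^{-k}}(z;d_M)) 2^{k(4-\zeta)}$ over dyadic $k \in \BB N_0$, since a ball of radius $\delta \in [2^{-k-1}, 2^{-k}]$ is contained in a ball of radius $2^{-k}$ and $\delta^{-(4-\zeta)} \asymp 2^{k(4-\zeta)}$. For each dyadic scale $2^{-k}$, I would take a maximal $2^{-k}$-separated net $N_k$ in $M$; standard volume lower bounds give $\#N_k \preceq 2^{4k}$ times a constant-order random factor, and every ball $B_{2^{-k}}(z;d_M)$ is contained in $B_{2^{-k+1}}(w;d_M)$ for some $w \in N_k$. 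Hence
\[
\sup_{z} \mu_M(B_{2^{-k}}(z;d_M)) \leq \max_{w \in N_k} \mu_M(B_{2^{-k+1}}(w;d_M)),
\]
and by a union bound over the $\preceq 2^{4k}$ net points together with the exponential tail bound, $\BB P[\sup_z \mu_M(B_{2^{-k}}(z;d_M)) \geq \lambda 2^{-4k}] \preceq 2^{4k}\exp(-c\lambda^{1/3})$. Summing $\BB P[\sup_z \mu_M(B_{2^{-k}}(z;d_M)) 2^{k(4-\zeta)} \geq t]$ over $k$ (using that the $2^{4k}$ prefactor is killed by $\exp(-c(t 2^{k\zeta})^{1/3})$ for $t$ not too small) yields a stretched-exponential tail for $\Theta(M_1)$, hence finite moments of all orders, in particular the $p$-th moment is bounded by a $p$-dependent constant.

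\textbf{Main obstacle.}
The step I expect to require the most care is the volume estimate for metric balls in the Brownian map (the tail bound $\BB P[\mu_M(B_r(z;d_M)) \geq \lambda r^4] \leq C\exp(-c\lambda^{1/3})$ and the matching lower bound used to control net cardinalities), since the excerpt only refers us to \cite{gwynne-miller-gluing} for analogous Brownian disk estimates and to \cite{lqg-tbm2} for the stable-process inputs. The cleanest route is probably to prove the lemma directly for the free Boltzmann Brownian disk --- which is what is ultimately needed in Lemma~\ref{lem-pbl-bdy-length} anyway --- by running the same chaining argument with the Brownian disk volume estimates from \cite[Section~3.2]{gwynne-miller-gluing} in place of Brownian map estimates, and then, if the Brownian map version is genuinely wanted, noting that a Brownian map can be decomposed (or locally compared) into Brownian disks via the metric net / breadth-first exploration. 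Everything else --- the scaling computation in Step 1 and the union bound in Step 2 --- is routine once these inputs are in hand.
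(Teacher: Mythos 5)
Your Step~1 (the scaling reduction) is exactly the paper's proof, including the identification of the law of $A = \mu_M(M)$ as proportional to $a^{-3/2}\BB 1_{(a\geq\frk a)}\,da$, the observation that the marked points decouple, and the computation giving $\BB E[A^{p\zeta/4}]\asymp \frk a^{p\zeta/4}$ under the constraint $\zeta<2/p$. Where you diverge is Step~2: the paper does not run a chaining argument for the unit-area moment bound, it simply cites \cite[Corollary~6.2]{legall-geodesics} (plus H\"older's inequality to handle non-integer $p$), which already says that $\sup_{\delta>0}\sup_{z\in M}\mu_M^1(B_\delta(z;d_M^1))\,\delta^{-(4-\zeta)}$ has finite moments of all orders for a unit-area Brownian map. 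So the content you were worried about --- the tail bound $\BB P[\mu_M(B_r(z;d_M))\geq\lambda r^4]\preceq \exp(-c\lambda^{1/3})$, the net lower bound, the union bound over dyadic scales --- is not needed; it is subsumed in a known result. Your chaining sketch is a reasonable way one might prove that cited fact, but it is extra work, and as written it is not complete (you would still need to justify the net cardinality bound $\# N_k \preceq 2^{4k}$, which requires a uniform lower volume bound that you only mention in passing). Your parenthetical suggestion of proving the statement directly for the Brownian disk rather than the Brownian map is also a departure: the paper proves the Brownian map version here and separately transfers it to the Brownian disk in Lemma~\ref{lem-bd-ball-sup} via the metric-net decomposition of $\BB m_{\op{BM}}^2$; proving the disk version directly would require replicating the Brownian disk volume estimates, and would not actually shorten the overall argument.
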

\begin{proof}
Let $A$ be sampled from the law of $\mu_M(M)$ (conditioned on $\{\mu_M(M) \geq \frk a\}$), which is equal to $2\frk a^{1/2} a^{-3/2}  \BB 1_{(a \geq \frk a)} \, da$ and let $(M ,d_M^1,\mu_M^1, z_0  , z_1 )$ be a unit area doubly marked Brownian map, independent from $A$. By the scaling property of the Brownian map,
\eqbn
(M , d_M,\mu_M , z_0 , z_1) := (M  , A^{1/4} d_M^1 , A \mu_M^1 , z_0 , z_1)
\eqen
has the law described in the statement of the lemma. By~\cite[Corollary~6.2]{legall-geodesics} (and H\"older's inequality to deal with non-integer values of $p$), for each $p\geq 1$ and each $\zeta \in (0,1)$,
\eqbn
\BB E\left[ \left( \sup_{\delta > 0} \sup_{z\in M} \frac{\mu_M^1(B_\delta(z;d_M^1))}{\delta^{4-\zeta}} \right)^p \right] < \infty. 
\eqen
If $\zeta \in (0,2/p)$, then $\BB E[A^{p\zeta/4}] \preceq \frk a^{p\zeta/4}$. Hence 
\alb
\BB E\left[ \left( \sup_{\delta > 0} \sup_{z\in M} \frac{\mu_M(B_\delta(z;d_M))}{\delta^{4-\zeta}} \right)^p \right] 
 =  \BB E\left[ A^{p\zeta/4} \right] \BB E\left[  \left( \sup_{\delta  > 0} \sup_{z\in M} \frac{   \mu_M^1(B_{ \delta } (z;  d_M^1 ))}{   \delta^{4-\zeta}} \right)^p \right] 
\preceq \frk a^{p\zeta/4}, 
\ale
as required.
\end{proof}

Next we estimate the maximal area of a metric ball in a free Boltzmann Brownian disk.

\begin{lem} \label{lem-bd-ball-sup}
Let $(H , d , \mu )$ be a free Boltzmann Brownian disk with unit perimeter, equipped with its natural metric and area measure. 
For each $p\geq 1$ and each $\zeta \in (0,2/p)$, one has
\eqb  \label{eqn-bd-ball-sup}
\BB E\left[ \left( \sup_{\delta > 0} \sup_{z\in H} \frac{\mu (B_\delta(z;d))}{\delta^{4-\zeta}} \right)^p \right] < \infty .
\eqe 
\end{lem}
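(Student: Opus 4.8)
The plan is to transfer the estimate of Lemma~\ref{lem-bm-ball-sup} from the random-area Brownian map to the free Boltzmann Brownian disk via a local absolute continuity argument, exactly in the spirit of the comparisons between Brownian disks and Brownian maps used elsewhere in this line of work (e.g.\ \cite{gwynne-miller-gluing}). Concretely, I would first reduce~\eqref{eqn-bd-ball-sup} to a comparison on a ``bulk'' region of $H$ far from $\bdy H$: split the supremum over $z\in H$ into the contribution from $z$ at $d$-distance $\geq \rho$ from $\bdy H$, for some fixed small $\rho>0$, and the contribution from the $\rho$-neighborhood of $\bdy H$. For the second contribution one can cover the boundary neighborhood by finitely many metric balls of radius $\rho$ and use an estimate for $\mu(B_\delta(z;d))$ with $z$ near the boundary; such near-boundary estimates follow from the explicit description of the Brownian disk as a quotient of $[0,\frk a]$ under the Brownian-snake-type encoding, or alternatively (and more cleanly) by iterating: any ball near $\bdy H$ of radius $\rho$ has, conditionally on its boundary length, the law of a Brownian disk of that boundary length glued in, and one applies the bulk bound inside it. It therefore suffices to prove the bound with the supremum restricted to $z$ with $d(z,\bdy H)\geq \rho$ and $\delta \leq \rho/2$, say, for each fixed $\rho>0$ (with the implicit constant allowed to depend on $\rho$, and then re-inserted into a $\rho\to 0$ / countable-union argument to handle all of $H$).

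On that bulk region the key point is that the metric-measure space structure of $H$ restricted to a metric ball $B_\rho(z_0;d)$ with $z_0$ at $d$-distance $\geq 2\rho$ from $\bdy H$ is mutually absolutely continuous with respect to the metric-measure space structure of a corresponding metric ball in a Brownian map, with Radon--Nikodym derivative having finite moments of all orders. This is a standard consequence of the fact that both the Brownian disk and the Brownian map can be described via the same local field/Brownian-snake data, with the difference between them being a macroscopic (boundary-dependent) modification that becomes an absolutely continuous change when one looks at a region bounded away from $\bdy H$; see the analogous statements in \cite{lqg-tbm2,gwynne-miller-gluing}. Given such a comparison, I would sample a point $z_0$ from $\mu$ normalized to a probability measure (or fix a countable dense collection of centers), transfer the quantity $\sup_{\delta}\sup_{z\in B_\rho(z_0;d)} \mu(B_\delta(z;d))/\delta^{4-\zeta}$ to the Brownian map side, and apply Lemma~\ref{lem-bm-ball-sup} there, using H\"older's inequality to absorb the R--N derivative: if $X$ denotes the Brownian-disk supremum over a bulk ball, $Y$ the Brownian-map analogue, and $F$ the R--N derivative, then $\BB E[X^p] = \BB E[Y^p F] \leq \BB E[Y^{pq}]^{1/q}\BB E[F^{q'}]^{1/q'} < \infty$ by Lemma~\ref{lem-bm-ball-sup} (valid as long as $pq\zeta/2 < 1$, which we can arrange by taking $q$ close enough to $1$ since $p\zeta<2$) and the moment bounds on $F$. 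A union bound over a deterministic finite cover of the bulk region by balls $B_\rho(z_0^i;d)$ then controls the full bulk supremum, and combining with the near-boundary contribution gives~\eqref{eqn-bd-ball-sup}.

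One technical wrinkle I would be careful about: Lemma~\ref{lem-bm-ball-sup} is stated for $\BB m^2_{\op{BM}}$ conditioned on total area $\geq \frk a$, and the bound there is $\preceq \frk a^{p\zeta/4}$, so when transferring I need the area of the relevant Brownian map to be bounded below with good probability, or equivalently I should work with Brownian maps of a fixed area scale. This is not a real obstacle because the bulk ball $B_\rho(z_0;d)$ in $H$ has $\mu$-area bounded below by a positive constant with overwhelming probability (the free Boltzmann Brownian disk of unit perimeter has $\BB E[\mu(H)^{-1}]<\infty$ and $\mu(H)$ is unlikely to be tiny, and the area of a macroscopic ball is a definite fraction of the total), and for the complementary low-area event one can use a crude deterministic-type bound (the quantity in~\eqref{eqn-bd-ball-sup} is bounded on the event that $\mu(H)$ and $\op{diam}(H;d)$ are both of constant order, by a compactness argument, with the exceptional probability decaying fast).

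\textbf{Main obstacle.} The step I expect to require the most care is making the local absolute continuity comparison between a bulk metric ball of the free Boltzmann Brownian disk and the corresponding object for the Brownian map \emph{precise}, with quantitative control (finite moments of all orders) on the Radon--Nikodym derivative --- i.e.\ setting up the right coupling of the encoding functions so that the two metric-measure balls literally agree off an event of controlled probability, or differ by a tilt whose density is in every $L^q$. Everything else (the H\"older absorption, the union bound over a finite cover, the reduction away from $\bdy H$, and the handling of the low-area event) is routine once this comparison is in hand, and versions of it have already been carried out in \cite{gwynne-miller-gluing,lqg-tbm2}, so in the actual write-up I would cite those rather than redo the construction.
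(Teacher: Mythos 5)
Your proposal takes a genuinely different route from the paper's. The paper never appeals to local absolute continuity between the Brownian disk and the Brownian map: it instead \emph{embeds} a free Boltzmann Brownian disk of boundary length in a fixed interval $[\frk l_1,\frk l_2]$ inside a Brownian map sampled from $\BB m^2_{\op{BM}}$ conditioned on large area, by taking a complementary connected component $U_S$ of the metric net from $z_0$ to $z_1$ (choosing a bubble that appears after area at least $1$ has been disconnected, to decouple from the area conditioning). The key point is then a deterministic set inclusion: an internal-metric ball $B_\delta(z;d_{U_S})$ is contained in the ambient ball $B_\delta(z;d_M)$, so the supremum in~\eqref{eqn-bd-ball-sup} for the disk is dominated pointwise by the corresponding supremum for the map, and Lemma~\ref{lem-bm-ball-sup} applies directly, with an elementary Brownian-disk scaling to normalize the boundary length. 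This avoids the two things that make your version fragile: no Radon--Nikodym derivative needs to be controlled, and no boundary-neighborhood estimate is needed at all.

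Two concrete gaps in your argument. First, the local absolute continuity between a bulk ball of the free Boltzmann Brownian disk and a Brownian-map ball, with Radon--Nikodym derivative in every $L^q$, is not something that is available off the shelf; \cite{lqg-tbm2} and \cite{gwynne-miller-gluing} do not prove a statement in that form (the disk-versus-map local comparison is more delicate than, say, Brownian plane versus Brownian map). You correctly flag this as the main obstacle, but the paper shows it is also an unnecessary one. Second, your treatment of the low-area event is wrong: you claim that on the event where $\mu(H)$ and $\op{diam}(H;d)$ are both of constant order the quantity $\sup_{\delta>0}\sup_{z}\mu(B_\delta(z;d))/\delta^{4-\zeta}$ is bounded ``by a compactness argument.'' It is not --- the supremum over $\delta$ includes arbitrarily small scales, and bounding the local H\"older growth exponent of $\mu$ is exactly the nontrivial content of~\eqref{eqn-bd-ball-sup}; no amount of global area/diameter control implies it. As written that step is circular, and your near-boundary iteration has the same flavor (each pass leaves a new, albeit smaller, boundary neighborhood uncontrolled). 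The paper's embedding-plus-inclusion argument sidesteps both issues cleanly.
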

\begin{proof}
Let $(M , d_M ,\mu_M , z_0 , z_1)$ be sampled from the infinite measure $\BB m_{\op{BM}}^2$ conditioned on $\{\mu_M(M) \geq 1 \}$. The idea of the proof is to show using the results of~\cite{tbm-characterization} that with positive probability, there is a subset of $M$ (in particular, a complementary connected component of a metric ball) which, when equipped with the internal metric of $d_M$ and the restriction of $\mu_M$, has the law of a free Boltzmann Brownian disk with random boundary length bounded away from~$0$ and $\infty$. The statement of the lemma will then follow from Lemma~\ref{lem-bm-ball-sup} and a scaling argument. The argument is slightly more subtle than one might expect since $M$ is conditioned on an event involving its area but we need to find a Brownian disk without any conditioning on its area.
  
To lighten notation set $R := d_M(z_0,z_1)$. 
Let $\{\Gamma_r \}_{r \in [0,R] }$ be the metric net of $M$ from $z_0$ to $z_1$, i.e.\ $\Gamma_r$ for $r \in [0,R)$ is the union over all $s \in [0,r]$ of the boundary of the connected component of $M\setminus B_r(z_0;d_M)$ containing $z_1$.
 
As explained in~\cite{tbm-characterization}, there is a cadlag process $L = \{L_r\}_{r\in [0,R]}$ with no upward jumps which can be interpreted as the length of the outer boundary of $\Gamma_r$ at each time $r$ (this process coincides with the $\sqrt{8/3}$-LQG length measure if we identify $M$ with an instance of the Brownian sphere as in~\cite[Corollary~1.5]{lqg-tbm2}).  Let $\{r_j\}_{j\in\BB N }$ be an enumeration of the times when $L$ makes a downward jump, chosen in a manner which depends only on~$L$.  At each time $r_j$, the metric net disconnects a bubble $U_{r_j}$ from $z_1$, and the collection of all such bubbles is precisely the set of connected components of $M\setminus \Gamma_R$. The reason why we know that there is exactly one bubble disconnected from $z_1$ at each of the times $r_j$ is that the metric net is equivalent (as an increasing family of topological spaces) to the so-called \emph{L\'evy net} associated with $L$, and by definition this L\'evy net forms exactly one bubble each time $L$ has a downward jump; see~\cite[Sections  3.3 and 4.2]{tbm-characterization}. 

For $j\in\BB N$, let $d_{U_{r_j}}$ be the internal metric of $d_M$ on $U_{r_j}$ and let $\mu_{U_{r_j}} := \mu_M|_{U_{r_j}}$. 
Also let $\Delta L_{r_j} :=    \lim_{s\rta r_j^-} L_s - L_{r_j}$ be the magnitude of the downward jump of $L$ at time $r_j$, which gives the boundary length of $U_{r_j}$. 
By~\cite[Proposition~4.4]{tbm-characterization}, if we condition on $L$, then the conditional law of the collection of metric measure spaces $(U_{r_j} , d_{U_{r_j}} , \mu_{U_{r_j}})$ for $j\in\BB N$ is that of a collection of independent free Boltzmann Brownian disks with respective boundary lengths $\Delta L_{r_j}$, conditioned on the event that the sum of their areas is at least~$1$. 
 
Fix $\frk l_2 > \frk l_1 > 0$ to be chosen later. Let $T$ be the smallest $r\geq 0$ for which the $\mu_M$-mass of the region disconnected from $z_1$ by $\Gamma_r$ is at least $1$ and let $S$ be the smallest time $r \geq T$ for which the downward jump $\Delta L_r$ lies in $[\frk l_1,\frk l_2]$, or $S = R$ if no such $r$ exists. Almost surely, $\mu_M(M)  > 1$, the boundary of the metric net converges to $z_1$ in the Hausdorff distance as $r\rta R^-$, and the metric net $\{\Gamma_r \}_{r \in [0,d_M(z_0,z_1)] }$ disconnects a bubble from $z_1$ in every positive-length interval of times.  Hence for an appropriate universal choice of~$\frk l_1$ and~$\frk l_2$, it holds that $\BB P\left[ S < R \right] \geq \frac12$.  Henceforth assume we have chosen $\frk l_1$ and $\frk l_2$ in this manner.

Note that $S$ is one of the jump times $r_j$ on the event $\{S < R\}$. The above description of the law of the bubbles $\{U_{r_j}\}_{j\in \BB N_0}$ implies that if we condition on $L$ and $\{(U_{r_j} , d_{U_{r_j}} , \mu_{U_{r_j}}) : r_j \leq T\}$ then on the event $\{S  <R\}$ the conditional law of $(U_S , d_{U_S}  , \mu_{U_S})$ is that of a free Boltzmann Brownian disk with boundary length $\Delta L_S$ (note that the sum of the areas of the bubbles before time $T$ is at least $1$). Each $d_{U_S}$-metric ball is contained in a $d_M$-metric ball of the same radius. By Lemma~\ref{lem-bm-ball-sup} and our choice of $\frk l_1$ and $\frk l_2$, 
\eqb \label{eqn-bd-ball-sup0}
\BB E\left[ \left( \sup_{\delta > 0} \sup_{z\in U_S} \frac{\mu_{U_S} (B_\delta(z;d_{U_S}))}{\delta^{4-\zeta}} \right)^p \,\big|\, S < R \right] \leq 2 \BB E\left[ \left( \sup_{\delta > 0} \sup_{z\in M} \frac{\mu_M(B_\delta(z;d_M))}{\delta^{4-\zeta}} \right)^p \right]    < \infty .
\eqe 
By scale invariance of the Brownian disk, if we set
\eqb
(H , d , \mu ) := (\ol U_S , (\Delta L_S)^{-1/2} d_{U_S} , (\Delta L_S)^{-2} \mu_{U_S}) 
\eqe 
with $\ol U_S$ the closure and $d_{U_S}$ extended to $\bdy \ol U_S$ by continuity, then the conditional law of $(H,d,\mu)$ given $\{S < R\}$ is that of a free Boltzmann Brownian disk with unit boundary length. Hence~\eqref{eqn-bd-ball-sup0} implies~\eqref{eqn-bd-ball-sup}.
\end{proof}

Together with Lemma~\ref{lem-bd-ball-sup}, the following lemma is the other key input in the proof of Lemma~\ref{lem-pbl-bdy-length}.

\begin{lem} \label{lem-bdy-interval-mass}
Let $(H , d , \mu ,\xi )$ be a free Boltzmann Brownian disk with unit perimeter, equipped with its natural metric, area measure, and 1-periodic boundary path. 
For each $\ep , \zeta \in (0,1)$, there exists $c = c(\ep , \zeta) >0$ such that 
\eqbn
\BB P\left[ \mu\left( B_\delta\left(\xi([u_1,u_2]) ; d \right) \right)  \geq c \delta^{2+\zeta} (u_2-u_1) ,\: \forall \delta \in (0,1), \: \forall \: u_1,u_2 \in \BB R \: \op{with} \: u_1<u_2 \right] \geq 1-\ep .
\eqen
\end{lem}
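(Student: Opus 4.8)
\textbf{Proof proposal for Lemma~\ref{lem-bdy-interval-mass}.}

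The plan is to reduce the uniform-over-all-scales-and-intervals statement to a countable union bound, leveraging the scaling property of the Brownian disk in the boundary-length parameter and the characterization of the area measure near the boundary via the boundary length measure. First I would observe that it suffices to prove the estimate for dyadic scales $\delta = 2^{-k}$ and dyadic boundary intervals $[u_1,u_2]$ with $u_2-u_1 = 2^{-m}$: given the estimate for these, an arbitrary $\delta \in (0,1)$ lies between two consecutive dyadic scales and an arbitrary arc $\xi([u_1,u_2])$ contains a dyadic arc of comparable length and is contained in one of at most two such, so monotonicity of $\delta \mapsto \mu(B_\delta(\cdot;d))$ and of the arc in the interval absorb the constants (at the cost of shrinking $\zeta$ slightly in the exponent, which is harmless since $\zeta$ was arbitrary). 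So the goal becomes: for each dyadic pair $(k,m)$ with $2^{-m}$ a length and $2^{-k}$ a scale, and each of the $2^m$ dyadic arcs $I$ of length $2^{-m}$, bound $\BB P[\mu(B_{2^{-k}}(\xi(I);d)) < c 2^{-k(2+\zeta)} 2^{-m}]$ by something summable over all such triples.

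Next I would use the scaling relations for the free Boltzmann Brownian disk. A free Boltzmann Brownian disk with boundary length $\frk l$ equals, as a metric measure space, the free Boltzmann Brownian disk with unit boundary length with distances scaled by $\frk l^{1/2}$ and area scaled by $\frk l^2$, and its boundary path is reparametrized so that one unit of boundary length in the unit disk becomes $\frk l$ units. The key structural fact is that if we condition on the boundary path $\xi$ restricted to an arc of length $\frk l$ together with the complementary ``hull'' structure, the Brownian disk has a decomposition — analogous to the bubble decomposition used in the proof of Lemma~\ref{lem-bd-ball-sup} via~\cite{tbm-characterization} and the metric net / Brownian snake encoding of~\cite{bet-mier-disk} — in which the region near $\xi(I)$ at distance $\le \delta$ contains, with uniformly positive conditional probability, a sub-Brownian-disk whose boundary length is of order $|I|$. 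Concretely: run the peeling-by-layers / metric exploration from the arc $\xi(I)$; the boundary length process started from $|I| = 2^{-m}$ (rescaled appropriately) behaves like a continuous-state branching process / the stable process of Section~\ref{sec-lqg-bdy-process}, and on the event that this length process stays bounded away from $0$ for time of order $\delta$, the swallowed region has area of order $\delta^2 \cdot (\text{typical boundary length}) \gtrsim \delta^2 |I|$. By the scaling property this event has conditional probability bounded below by a positive constant depending only on the ratio $\delta / |I|^{1/2}$; more precisely, writing $r = \delta |I|^{-1/2}$, after rescaling by $|I|^{1/2}$ the question becomes an $r$-scale question for a unit-boundary-length free Boltzmann Brownian disk, and one shows $\BB P[\mu(B_r(\xi_1;d_1)) \ge c' r^{2+\zeta}]$ for the unit-perimeter object, uniformly — which in turn follows from the moment bound of Lemma~\ref{lem-bd-ball-sup} applied in reverse together with the non-degeneracy of the boundary length measure (the boundary path is Hölder and locally non-constant, so small arcs are genuinely ``thick'' in the sense that nearby area accumulates at the generic rate $r^2$, and the $r^\zeta$ slack gives room for a union bound).

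For the union bound: for each $m$ there are $2^m$ dyadic arcs; for each such arc the bad probability at scale $\delta = r |I|^{1/2} = r 2^{-m/2}$ decays — this is where the $\zeta$-slack in the exponent is spent — faster than any power of $r$ on the regime $r$ small (from Lemma~\ref{lem-bd-ball-sup}'s high-moment bound via Markov), and is summably small uniformly over $m$ because we also need the area of \emph{the whole disk} swallowed not to be atypically small, handled by $\BB E[A^{-q}] < \infty$ for the free Boltzmann area law $\frac{1}{\sqrt{2\pi a^5}}e^{-1/(2a)}\BB 1_{(a\ge 0)}\,da$ (a Cauchy–Schwarz step exactly as in the proof of Lemma~\ref{lem-bd-bdy-holder}). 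Summing over dyadic $k \ge m/2$, then over $m$, then over the $2^m$ arcs, gives a total bad probability that can be made $< \ep$ by choosing the constant $c = c(\ep,\zeta)$ small. I expect the main obstacle to be the lower bound on the conditional probability that a metric ball around a short boundary arc captures area of the generic order $\delta^2|I|$ — i.e.\ ruling out the scenario where the boundary length process started from $|I|$ hits zero almost immediately so that the arc $\xi(I)$ is ``pinched off'' into a tiny region. This requires a quantitative lower bound on the survival probability of the boundary length process over a time window of size $\delta$, uniform in the starting value $|I|$ after the appropriate rescaling, which I would extract from the explicit description of this process (stable-type, no upward jumps) in Section~\ref{sec-lqg-bdy-process} and Theorem~\ref{thm-bdy-process-law}, combined with the absolute-continuity comparison to the UIHPQ$_{\op{S}}$ / half-plane picture that makes the estimate scale-covariant.
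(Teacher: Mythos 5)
The paper's own proof of this lemma is a one-line citation of \cite[Lemma~3.4]{gwynne-miller-gluing}, so your proposal is necessarily a different route: you attempt a self-contained argument. Unfortunately, the argument as sketched has a genuine gap at its quantitative core.

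The issue is where the superpolynomial decay in the union bound is supposed to come from. You assert that the bad event $\{\mu(B_\delta(\xi(I);d)) < c\,\delta^{2+\zeta}\,|I|\}$ has probability decaying faster than any power of the rescaled ratio $r = \delta|I|^{-1/2}$, and you attribute this to ``Lemma~\ref{lem-bd-ball-sup}'s high-moment bound via Markov'' and to that lemma ``applied in reverse.'' But Lemma~\ref{lem-bd-ball-sup} is an \emph{upper} bound on ball masses: it controls $\BB E[(\sup_{\delta,z}\mu(B_\delta(z;d))/\delta^{4-\zeta})^p]$, i.e.\ the upper tail of $\mu(\text{ball})$. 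Markov applied to positive moments of $\mu$ gives upper-tail estimates for $\mu$ being large; it says nothing about the lower tail of $\mu(B_\delta(\xi(I);d))$ being small, which is what you need. An upper bound cannot be ``applied in reverse'' to produce a lower bound, and no negative-moment bound appears anywhere in the lemma you cite. This is exactly the estimate your argument lives or dies on, and it is the step you flagged as the main obstacle; as written it is not resolved.

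The metric-exploration (peeling-by-layers) heuristic is a reasonable way to see \emph{why} the bound should hold, and it correctly identifies the scaling regime $\delta \lesssim |I|^{1/2}$ as the interesting one, but it delivers at most a fixed positive probability for the good event (survival of the boundary-length process over a time window of order $\delta$). A fixed positive probability is far short of what the Borel–Cantelli / union bound over $\sim 2^m$ arcs at each of $\sim \log(1/\delta)$ scales requires. To close the gap you need a quantitative concentration estimate — something like a superpolynomial lower-tail bound on the occupation time of a $\delta$-neighborhood of a boundary arc — and the natural way to obtain it is through the Brownian snake / continuum Schaeffer encoding of the Brownian disk (as in \cite{bet-mier-disk}), not through the peeling exploration. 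In that encoding, $\mu(B_\delta(\xi(I);d))$ is bounded below by the Lebesgue measure of the set of encoding times whose snake label lies within $\delta$ of its running infimum and whose local time at $0$ falls in $I$, and Gaussian concentration for the label process gives exactly the kind of stretched-exponential lower-tail control that the union bound needs. This is, in substance, what \cite[Lemma~3.4]{gwynne-miller-gluing} does (for the random-area disk, then transferred by the Cauchy–Schwarz trick used in Lemma~\ref{lem-bd-bdy-holder}). Your sketch identifies the right reduction (dyadic scales, scaling covariance, transfer from fixed to free area) but replaces the one hard estimate with an appeal to a lemma that does not supply it.
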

\begin{proof}
This is an immediate consequence of~\cite[Lemma~3.4]{gwynne-miller-gluing}.
\end{proof}

\begin{proof}[Proof of Lemma~\ref{lem-pbl-bdy-length}]
By~\cite[Theorem~1.4]{gwynne-miller-simple-quad}, the free Boltzmann quadrangulation with simple boundary, appropriately rescaled, converges in law to the unit boundary length free Boltzmann Brownian disk in the GHPU topology as $\el\rta \infty$. 
From this convergence and Lemma~\ref{lem-bd-ball-sup}, we infer that there exists $\el_1 \in \BB N$ such that for $c >0$, $\el\geq \el_1$, and $\delta \in [\delta_0 ,1)$, 
\eqb \label{eqn-fb-bdy-mass-upper}
\BB P\left[ \# \mcl E\left( B_{2 \delta \el^{1/2}  + 2 }(\BB e ; Q ) \right)  \geq c\delta^{4 - \zeta/2} \el^2 \right] = o_\delta^\infty(\delta) ,
\eqe 
at a rate depending on $c$ and $\zeta$. We will now use Lemma~\ref{lem-bdy-interval-mass} to argue that the event in~\eqref{eqn-fb-bdy-mass-upper} occurs with uniformly positive conditional probability if we condition on the event that $ \# \mcl A_{\delta \el^{1/2}}^\pbl(\BB e ; Q) > \delta^{2-\zeta} \el$, which will give our desired estimate~\eqref{eqn-pbl-bdy-length}. 

Let $\ol Q^\pbl $ be the radius-$\delta \el^{1/2}$ unexplored region for the peeling-by-layers process of $Q$ starting from $\BB e$ and targeted at $\BB e_*$ and let $\mcl F^\pbl $ be the $\sigma$-algebra generated by the clusters and peeling steps up to radius $\delta \el^{1/2}$. 
By the Markov property of peeling (see~\cite[Lemma~4.1]{gwynne-miller-simple-quad}), the conditional law of $\ol Q^\pbl $ given $\mcl F^\pbl $ is that of a free Boltzmann quadrangulation with given perimeter. The edge set $ \mcl A_{\delta \el^{1/2}}^\pbl(\BB e ; Q) $ is a connected arc of~$\bdy \ol Q^\pbl $. 
 
By the aforementioned GHPU scaling limit result for free Boltzmann quadrangulations with simple boundary applied to $\ol Q^\pbl $ together with Lemma~\ref{lem-bdy-interval-mass} (applied with $\zeta/2$ in place of $\zeta$) we infer that there is a $c  = c(\zeta) >0$ and an $\el_* \geq \el_1$ such that for $\el \geq \el_*$, 
\eqb \label{eqn-fb-bdy-mass-pos}
\BB P\left[  \# \mcl E\left( B_{  \delta \el^{1/2}} \left(    \mcl A_{\delta \el^{1/2}}^\pbl(\BB e ; Q)  ;     \ol Q^\pbl   \right)  \right)   \geq c \delta^{ 4 - \zeta/2} \el^2 \,\big|\,    \# \mcl A_{\delta \el^{1/2}}^\pbl(\BB e ; Q) > \delta^{2-\zeta} \el     \right] \geq \frac12 .
\eqe 
By~\eqref{eqn-filled-ball-contain}, $B_{\delta \el^{1/2}}^\pbl(\BB e ; Q)  \subset B_{\delta \el^{1/2} + 2}(\BB e ; Q)$, whence 
\eqbn
B_{  \delta \el^{1/2}} \left(    \mcl A_{\delta \el^{1/2}}^\pbl(\BB e ; Q)  ;     \ol Q^\pbl   \right) 
\subset B_{  2 \delta \el^{1/2} +2} \left(    \BB e ; Q  \right) .
\eqen
Hence~\eqref{eqn-fb-bdy-mass-upper} and~\eqref{eqn-fb-bdy-mass-pos} together imply that
\eqbn
\BB P\left[   \# \mcl A_{\delta \el^{1/2}}^\pbl(\BB e ; Q) > \delta^{2-\zeta} \el  \right] 
\leq \frac{ \BB P\left[ \# \mcl E\left( B_{2 \delta \el^{1/2}  + 2 }(\BB e ; Q ) \right)  \geq c \delta^{4 - \zeta/2} \el^2 \right]  }{ \BB P\left[ \# \mcl E\left( B_{2 \delta \el^{1/2}  + 2 }(\BB e ; Q ) \right)  \geq c \delta^{4 - \zeta/2} \el^2  \,|\,  \# \mcl A_{\delta \el^{1/2}}^\pbl(\BB e ; Q) >  \delta^{2-\zeta} \el   \right] } = o_\delta^\infty(\delta) ,
\eqen
as required.
\end{proof}

\subsection{Proof of Proposition~\ref{prop-fb-crossing}}
\label{sec-crossing-proof}

Throughout this subsection, we assume we are in the setting of Proposition~\ref{prop-fb-crossing}. To deduce the proposition from Proposition~\ref{prop-uihpq-crossing}, it remains to transfer our estimates from the UIHPQ$_{\op{S}}$ to the free Boltzmann quadrangulation with simple boundary $Q^n$; and to transfer for an estimate for crossings between boundaries of peeling-by-layers clusters in $\ol Q_j^n$ to an estimate for crossings between boundaries of filled $Q^n$-metric balls centered at general vertices of $Q^n$. Transferring from the UIHPQ$_{\op{S}}$ to $Q^n$ can be done via a straightforward local absolute continuity estimate, see Lemma~\ref{lem-fb-crossing0} below. 

Transferring from peeling-by-layers clusters to filled metric balls is more involved. The main difficulty is that the peeling-by-layers cluster $B_{\delta^\zeta n^{1/4}}^\pbl(\dot e_j^n ; \ol Q_j^n)$ can in principle be much larger than $B_{\delta^\zeta n^{1/4}}^\bullet(\dot e_j^n ; Q^n)$. The reason is that there can be large regions of $\ol Q_j^n$ which are disconnected from the target edge $\BB e_*^n$ by subsets of $\ol Q_j^n$ with small diameter which are close to $\dot e_j^n$. Such regions will be part of $B_{\delta^\zeta n^{1/4}}^\pbl(\dot e_j^n ; \ol Q_j^n)$ but not $B_{\delta^\zeta n^{1/4}}^\bullet(\dot e_j^n ; Q^n)$. 
To avoid this difficulty, for a given $v\in\mcl V(Q^n)$ we will make a very careful choice of $j\in\BB N$ with the property that $\dot e_j^n$ is ``close" to $v$ and $B_{\delta^\zeta n^{1/4}}^\pbl(\dot e_j^n ; \ol Q_j^n)$ can  only intersect $\bdy \ol Q_j^n$ in a certain arc of $\bdy \ol Q_j^n$ which is contained in a set whose boundary has small $Q^n$-diameter. 

In order to choose this special value of $j$, we will consider the hitting times by the percolation path $\lambda^n$ of several different $Q^n$-graph distance balls centered at $v$. More precisely, we fix a small constant $\zeta \in (0,1)$ and for $k\in\BB N$, we let
\eqb \label{eqn-crossing-scale-def}
\delta_k := 2^{-(2/\zeta)^k}  .
\eqe
Note that $\delta_{k-1} = \delta_k^{\zeta/2}$. 
 
To start off the proof of Proposition~\ref{prop-fb-crossing}, let us first record what we get from Proposition~\ref{prop-uihpq-crossing} and Lemma~\ref{lem-pbl-bdy-length}. 
In what follows, for $n\in\BB N$ and $0 <\alpha_0 < \alpha_1$, we define the stopping time $I_{\alpha_0,\alpha_1}^n$ for the percolation peeling process as in~\eqref{eqn-discrete-close-time}.
  
\begin{lem} \label{lem-fb-crossing0}
Let $0 < \alpha_0 < \alpha_1$ and $A>0$. 
For each $m  , K \in \BB N$ with $K \geq 2m$, there exists $n_* = n_*(K , m ,  \zeta  ,  \alpha_0 , \alpha_1 , A) \in \BB N $ such that for $n\geq n_*$, the probability that the following is true is at most $O_K(\delta_{\lfloor K /2 \rfloor}^{6(1-2\zeta)  - 5 })$, uniformly for all $n\geq n_*$. 
We have $I_{\alpha_0,\alpha_1}^n \leq A n^{3/4}$ and there exists $k \in [K - m  ,K]_{\BB Z}$ and $j \in [0,  I_{\alpha_0,\alpha_1}^n  ]_{ \lfloor \delta_k^5 n^{3/4} \rfloor \BB Z  }$ such that $\# \op{IP}_j^n\left( 2 \delta_k n^{1/4} , \delta_k^\zeta n^{1/4}   \right)  > 7$ and $B_{\delta^\zeta n^{1/4}}^\pbl(\dot e_j^n ; \ol Q_j^n) \cap \ol Q_{I_{\alpha_0,\alpha_1}}^n = \emptyset$. 
\end{lem}

Note that the exponent $6 (1-2\zeta) - 5$ is positive provided $\zeta$ is chosen to be sufficiently small. We henceforth assume that this is the case. 

\begin{proof}[Proof of Lemma~\ref{lem-fb-crossing0}]
For each $k\in\BB N$, the event that there exists $j \in [0,  I_{\alpha_0,\alpha_1}^n  ]_{ \lfloor \delta_k^5 n^{3/4} \rfloor \BB Z  }$ such that $\# \op{IP}_j^n\left( \delta_k n^{1/4} , \delta_k^\zeta n^{1/4}   \right)  >7$ and $B_{\delta^\zeta n^{1/4}}^\pbl(\dot e_j^n ; \ol Q_j^n) \cap \ol Q_{I_{\alpha_0,\alpha_1}}^n = \emptyset$ is measurable with respect to the $\sigma$-algebra generated by $\dot Q_{I_{\alpha_0,\alpha_1}^n}$ and the percolation peeling process up to time $I_{\alpha_0,\alpha_1}^n$ (here we use that peeling-by-layers clusters are generated by a peeling process). 
By Proposition~\ref{prop-uihpq-crossing} (applied with $N = 7$ and $M = 5$) and Lemma~\ref{lem-perc-rn} (applied at the time $I_{\alpha_0,\alpha_1}^n$), for each $k\in\BB N$, we can find $n_*  = n_* (k , \zeta ,  \alpha_0 , \alpha_1 , A) \in \BB N $ such that for $n \geq n_* $, the probability that $I_{\alpha_0,\alpha_1}^n \leq A n^{3/4} $ and there exists  $j \in [0,  I_{\alpha_0,\alpha_1}^n  ]_{ \lfloor \delta_k^5 n^{3/4} \rfloor \BB Z }$ satisfying the conditions in the statement of the lemma plus the additional condition that 
\eqb \label{eqn-fb-crossing-ball-length}
\# \mcl A_{\delta_k n^{1/4}}^\pbl\left( \dot e_j^n ; \ol Q_j^n   \right)  \leq \delta_k^{2-\zeta} n^{1/2}
\eqe 
 is at most $O_k(\delta_k^{6(1-2\zeta) - 5})$, at a rate which is uniform for $n\geq n_*$.  
 
By Lemma~\ref{lem-pbl-bdy-length} and a union bound, by possibly increasing $n_*$ we can arrange that for $n\geq n_*$, the probability that there exists $j \in [0, An^{3/4}]_{\lfloor \delta_k^5 n^{3/4} \rfloor \BB Z  }$ such that~\eqref{eqn-fb-crossing-ball-length} fails to hold is of order $o_k^\infty(\delta_k)$, uniformly for $n\geq n_*$. 
We now obtain the lemma statement by means of a union bound over all $k\in [K - m,K]_{\BB Z}$. 
\end{proof}

We now combine Lemma~\ref{lem-fb-crossing0} and the continuity condition of Proposition~\ref{prop-bdy-equicont} to get an event on which (as we will see below) the condition of Proposition~\ref{prop-fb-crossing} holds.

\begin{lem} \label{lem-fb-crossing1}
For each $\ep \in (0,1)$, there exists $0  <\alpha_0 < \alpha_1 $, $A > 0$, and $m_*   \in \BB N$ (depending only on $\ep  $) such that for each $m \geq m_*$, there exists $K_* = K_*(m,\ep,\zeta) \geq m$ such that for each $K \geq K_*$, there exists $n_* = n_*(K,m,\ep , \zeta ) \in \BB N$ such that for each $n\geq n_*$, the following is true. 
Let $E^n =E^n(K,m,\ep , \zeta )$ be the event that the following is true.
\begin{enumerate}
\item \label{item-fb-crossing-time} $I_{\alpha_0,\alpha_1}^n \leq A n^{3/4}$ and the unexplored quadrangulation $\ol Q_{I_{\alpha_0,\alpha_1}^n}^n$ has internal graph distance diameter at most $\frac{1}{8} \ep n^{1/4}$. 
\item \label{item-fb-crossing-cont} If $k \in [K-m,K]_{\BB Z}$ and $j \in \lfloor \delta_k^5 n^{3/4} \rfloor \BB N_0$ satisfies  $\op{dist}(\dot e_j^n  , \BB e_*^n ; Q^n) \geq \frac12 \ep n^{1/4}$, then $\lambda^n([j   , j + \delta_k^5 n^{3/4}]_{\frac12\BB Z})$ is contained in a subgraph $S$ of $\ol Q_j^n$ such that $\dot e_j^n \in \bdy S$ and the $\ol Q_j^n$-graph distance diameter of $\bdy S$ is at most $ \delta_k^{5/4} n^{1/4}$. The same is true with $A  m^{-1}$ used in place of $\delta_k^5$ throughout. 
\item \label{item-fb-crossing-endpoint} If $k$ and $j$ are as as in condition~\ref{item-fb-crossing-cont}, then $\# \op{IP}_j^n\left( 2 \delta_k n^{1/4} , \delta_k^\zeta n^{1/4}   \right) \leq 7$. 
\end{enumerate}
Then $\BB P[E^n] \geq 1-\ep$. 
\end{lem} 

We comment briefly on the definition of $E^n$. 
Condition~\ref{item-fb-crossing-endpoint} is the most important condition in the definition; this condition together with Lemma~\ref{lem-crossing-ip} will eventually lead to the bound for crossings of annuli between filled metric balls in Proposition~\ref{prop-fb-crossing}.
The bounds for $j \in \lfloor \delta_k^5 n^{3/4} \rfloor \BB N_0$ in condition~\ref{item-fb-crossing-cont} are needed for the following purpose.
Condition~\ref{item-fb-crossing-endpoint} only holds for $j \in \lfloor \delta_k^5 n^{3/4} \rfloor \BB N_0$, so we need condition~\ref{item-fb-crossing-cont} to approximate an edge $\dot e_j^n$ for a general $j\in \BB N_0$ by an edge of the form $\dot e_{j'}^n$ for $j' \in \lfloor \delta_k^5 n^{3/4} \rfloor \BB N_0$. 

We now explain the purpose of condition~\ref{item-fb-crossing-time} and the $Am^{-1}$-variant of condition~\ref{item-fb-crossing-cont}. In the proof of Lemma~\ref{lem-good-ball} below, we will consider a vertex $v\in Q^n$ which is at macroscopic distance from $\bdy Q^n$ and for $k\in [K-m,K]_{\BB Z}$ we will let $j_k$ be the first time when $\lambda^n$ enters $B_{\delta_k n^{1/4}}(v ; Q^n)$. The times $j_k$ all have to be less than or equal to $I_{\alpha_0,\alpha_1}^n$ (otherwise, $v$ would have to be close to $\BB e_*^n \in \bdy Q^n$). Condition~\ref{item-fb-crossing-time} therefore tells us that $j_K - j_{K-m} \leq I_{\alpha_0,\alpha_1}^n \leq A n^{3/4}$, so there must be some $k \in [K-m,K]_{\BB Z}$ for which $j_k - j_{k-1} \leq A m^{-1} n^{3/4}$. Condition~\ref{item-fb-crossing-cont} for $A m^{-1}$ will allow us to say that the set $\lambda^n([j_{k-1} , j_k]_{\frac12 \BB Z})$ is small (in a certain sense) when $m$ is large. This will eventually allow us to show that $B_{\delta_k^\zeta n^{1/4}}^{\op{pbl}}(\dot e_J^n ; \ol Q_J^n)$ is contained in $B_{\ep n^{1/4}}(v ; Q^n)$ for $J \in  \lfloor \delta_k^5 n^{3/4} \rfloor \BB N_0$ slightly smaller than $j_k$.

\begin{proof}[Proof of Lemma~\ref{lem-fb-crossing1}]
To deal with conditions~\ref{item-fb-crossing-time} and~\ref{item-fb-crossing-cont}, we will apply Proposition~\ref{prop-bdy-equicont} several times. 
By Proposition~\ref{prop-bdy-equicont}, there exists $0  <\alpha_0 < \alpha_1 $ and $A > 0$, depending only on $\ep $, such that for each large enough $n\in\BB N$, the probability of condition~\ref{item-fb-crossing-time} in the definition of $E^n$ is at least $1-\ep/4$. Henceforth fix such a choice of $\alpha_0,\alpha_1,A$. 

By a second application of Proposition~\ref{prop-bdy-equicont} (with $\delta = A m^{-1}$ and $\zeta$ replaced by a sufficiently small universal constant), there exists $m_* = m_*(\ep) \in \BB N$ such that for $m\geq m_*$ and each large enough $n\in\BB N$, it holds with probability at least $1-\ep/4$ that
\begin{itemize}
\item For each $j\in  [0,I_{\alpha_0,\alpha_1}^n-1]_{ \lfloor A m^{-1} n^{3/4} \rfloor \BB Z}$, the percolation path increment $\lambda^n([j , (j+ A m^{-1} n^{3/4}) \wedge I_{\alpha_0,\alpha_1}^n]_{\frac12\BB Z})$ is contained in a subgraph $S$ of $\ol Q_j^n$ such that $\dot e_j^n \in \bdy S$ and the $\ol Q_j^n$-graph distance diameter of $\bdy S$ is at most $ A^{1/4} m^{-1/4} n^{1/4}$. 
\end{itemize}

We now apply Proposition~\ref{prop-bdy-equicont} $m+1$ more times, with each of $\delta_k^5$ for $k\in [K-m,K]_{\BB Z}$ in place of $\delta$, $\ep / (4 (m+1) )$ in place of $\ep$, and a sufficiently small universal constant in place of $\zeta$. We then take a union bound over $k \in [K-m,K]_{\BB Z}$ to get that if $K$ is sufficiently large (depending on $m,\ep,\zeta$), then there exists $n_* = n_*(K,m,\ep,\zeta) \in \BB N$ such that for each $n\geq n_*$, it holds with probability at least $1-\ep/4$ that the following is true. 
\begin{itemize}
\item For each $k\in [K-m,K]_{\BB Z}$ and each $j\in  [0,I_{\alpha_0,\alpha_1}^n-1]_{ \lfloor \delta_k^5 n^{3/4} \rfloor \BB Z}$, the percolation path increment $\lambda^n([j , (j+ \delta_K^5 n^{3/4}) \wedge I_{\alpha_0,\alpha_1}^n]_{\frac12\BB Z})$ is contained in a subgraph $S$ of $\ol Q_j^n$ such that $\dot e_j^n \in \bdy S$ and the $\ol Q_j^n$-graph distance diameter of $\bdy S$ is at most $   \delta_k^{5/4} n^{1/4}$.    
\end{itemize}
We emphasize that here we are using only condition~\ref{item-bdy-equicont} of Proposition~\ref{prop-bdy-equicont}. In particular, the parameters $\alpha_0,\alpha_1,A$ are still chosen as above (depending only on $\ep$).
 
By Lemma~\ref{lem-fb-crossing0}, by possibly increasing $K_*$ and $n_*$, we can arrange that for $n\geq n_*$, it holds with probability at least $1-\ep$ that condition~\ref{item-fb-crossing-time} in the definition of $E^n$ holds, the above two indented conditions hold, and also
\begin{itemize}
\item For each $k\in [K - m,K]_{\BB Z}$ and each $j \in [0,  I_{\alpha_0,\alpha_1}^n  ]_{\lfloor \delta_k^5 n^{3/4} \rfloor \BB Z  }$ with $B_{\frac{1}{8} \ep n^{1/4}}^\pbl(\dot e_j^n ; \ol Q_j^n) \cap \ol Q_{I_{\alpha_0,\alpha_1}}^n = \emptyset$,  we have $\# \op{IP}_j^n\left( \delta_k n^{1/4} , \delta_k^\zeta n^{1/4}   \right) \leq 7$. 
\end{itemize}
Henceforth assume that this is the case. We will show that $E^n$ occurs. 
By assumption, condition~\ref{item-fb-crossing-time} holds. 

We now deal with condition~\ref{item-fb-crossing-endpoint}.
Since $\op{diam} \left( \ol Q_{I_{\alpha_0,\alpha_1}^n}^n \right)  \leq \frac{1}{8} \ep  n^{1/4}$, we have $B_{\frac{1}{8} \ep n^{1/4}}^\pbl(\dot e_j^n ; \ol Q_j^n) \cap \ol Q_{I_{\alpha_0,\alpha_1}}^n = \emptyset$ whenever $j\in\BB N_0$ with $\op{dist}(\dot e_j^n  , \BB e_*^n ; Q^n) \geq \frac{1}{4} \ep n^{1/4}$; and $\op{dist}(\dot e_j^n , \BB e_*^n ; Q^n) \leq \frac{1}{8} \ep n^{1/4}$ whenever $j \geq I_{\alpha_0,\alpha_1}^n$.  In particular, if $\op{dist}(\dot e_j^n  , \BB e_*^n ; Q^n) \geq \frac{1}{2} \ep n^{1/4}$ then $j \leq I_{\alpha_0,\alpha_1}^n$ and $B_{\frac{1}{8} \ep n^{1/4}}^\pbl(\dot e_j^n ; \ol Q_j^n) \cap \ol Q_{I_{\alpha_0,\alpha_1}}^n = \emptyset$. So, we can apply the last indented condition above to get that condition~\ref{item-fb-crossing-endpoint} in the definition of $E^n$ holds. 

To check condition~\ref{item-fb-crossing-cont}, we need to show that if $k\in [K-m,K]_{\BB Z}$ and $j \in  \lfloor \delta_k^5 n^{3/4} \rfloor \BB N_0$ with $\op{dist}(\dot e_j^n  , \BB e_*^n ; Q^n) \geq \frac12 \ep n^{1/4}$ then $j + \delta_k^5 n^{3/4} \leq I_{\alpha_0,\alpha_1}^n-1$.  Indeed, we know that $\lambda^n([j , (j+ \delta_k^5 n^{3/4}) \wedge I_{\alpha_0,\alpha_1}^n]_{\frac12\BB Z})$ is contained in a subgraph $S$ of $Q^n$ whose boundary contains $\dot e_j^n$ and has $Q^n$-graph distance diameter at most $\delta_k^{5/4} n^{1/4}$.  Such a subgraph cannot intersect $\ol Q_{I_{\alpha_0,\alpha_1}^n}^n$ since otherwise we would have $\op{dist}(\dot e_j^n  , \BB e_*^n ; Q^n) \leq \frac18 \ep n^{1/4} + \delta_k^{5/4} n^{1/4}  < \frac12 \ep n^{1/4}$. Hence $ (j+ \delta_k^5 n^{3/4}) \wedge I_{\alpha_0,\alpha_1}^n  < I_{\alpha_0,\alpha_1}^n $, which means that $j+ \delta_k^5 n^{3/4} < I_{\alpha_0,\alpha_1}^n$, as required.  We similarly check the condition with $  A  m^{-1}$ used in place of $\delta_k^5$. 
\end{proof}

\begin{figure}[t!]
 \begin{center}
\includegraphics[scale=1]{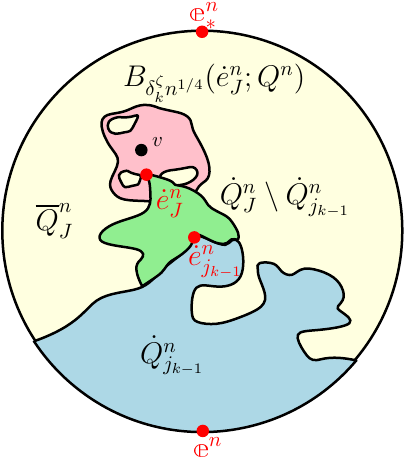}
\vspace{-0.01\textheight}
\caption{Illustration of the proof of~\eqref{eqn-good-ball-out} of Lemma~\ref{lem-good-ball} (not shown to scale). The integer $k$ is chosen so that $j_k  -j_{k-1} \leq A m^{-1}n^{3/4}$, which means that also $J-j_{k-1} \leq A m^{-1} n^{3/4}$. By condition~\ref{item-fb-crossing-cont} in the definition of $E^n$, the set $\bdy \ol Q_J^n \setminus \bdy \ol Q_{j_{k-1}}^n$ (i.e., the boundary of the green region minus the boundary of the blue region) is contained in a subgraph $S''$ of $Q^n$ whose boundary has $Q^n$-diameter at most a constant times $m^{-1/4} n^{1/4}$ (this set is not pictured). This allows us to upper-bound the size of $B_{\delta_k^\zeta n^{1/4}}^{\pbl}(\dot e_J^n ; \ol Q_J^n)$ in terms of $B_{\delta_k^\zeta n^{1/4}}(\dot e_J^n ; Q^n)$ and $S''$. 
}\label{fig-fb-crossing}
\end{center}
\vspace{-1em}
\end{figure} 

The following lemma will allow us to transfer from crossings of $B_{\delta_k^\zeta n^{1/4} }^\pbl\left( \dot e_j^n ; \ol Q_j^n \right)  \setminus B_{2\delta_k n^{1/4}    }^\pbl\left( \dot e_j^n ; \dot Q_j^n \right)  $ for $j \in \lfloor \delta_k^5 n^{3/4} \rfloor \BB N_0$ (which are bounded by Lemma~\ref{lem-fb-crossing1}) to crossings of $B_{\ep  n^{1/4} }^\bullet\left( v ; Q^n \right) \setminus B_{\delta n^{1/4} }^\bullet\left( v ; Q^n \right)$ for $v\in\mcl V(Q^n)$ (which we want to bound in Proposition~\ref{prop-fb-crossing}).

\begin{lem} \label{lem-good-ball}
Let $\ep \in (0,1)$ and let $K , m\in\BB N$ with $K  >m$.
Assume that the event $E^n = E^n(K,m,\ep)$ occurs, that $m$ is sufficiently large (depending on $\ep$) and that $K$ is sufficiently large (depending only on $m,\ep,\zeta$). 
For each $v\in\mcl V(Q^n) $ such that $\op{dist}(v , \bdy Q^n ; Q^n) \geq   \ep n^{1/4}$ and $\lambda^n$ hits $B_{\delta_K n^{1/4}}(v ; Q^n)$, there exists $k \in [K - m, K]_{\BB Z}$ and $J \in \lfloor \delta_k^5 n^{3/4} \rfloor \BB N_0$ with $\op{dist}(\dot e_J^n , \BB e_*^n ; Q^n ) \geq \frac12 \ep n^{1/4}$ (both $J$ and $k$ are random) such that 
\eqb \label{eqn-good-ball-in}
B_{(\delta_k/2) n^{1/4}  }^\bullet\left( v ; Q^n \right) \subset  B_{2\delta_k n^{1/4}   -2  }^\pbl\left( \dot e_J^n ; \ol Q_J^n \right)   
\eqe 
and
\eqb \label{eqn-good-ball-out}
B_{\delta_k^\zeta n^{1/4} + 2}^\pbl\left( \dot e_J^n ; \ol Q_J^n \right)  
\subset B_{\ep  n^{1/4} }^\bullet\left( v ; Q^n \right)   . 
\eqe 
\end{lem}
\begin{proof}
Throughout the proof, we assume that $m$ is sufficiently large (depending on $\ep$) and $K$ is sufficiently large (depending on $m,\ep,\zeta$) so that the needed estimates are satisfied. 
\medskip

\noindent\textit{Step 1: choice of $k$ and $J$.}
For $k\in [K-m,K]_{\BB Z}$, let $j_k$ be the smallest $j\in \BB N_0$ such that $\dot e_j^n \in B_{ \delta_k n^{1/4} }(v ; Q^n)$.
Since $\op{dist}(v , \BB e_*^n ; Q^n) \geq \ep n^{1/4}$ and $\op{dist}(\dot e_{j_k}^n ; Q^n ) \leq \delta_k n^{1/4} \leq \frac{1}{2} \ep n^{1/4}$, the triangle inequality implies that $\op{dist}(\dot e_{j_k}^n ,  \BB e_*^n ; Q^n) \geq \frac12 \ep n^{1/4}$. In particular, condition~\ref{item-fb-crossing-time} of Lemma~\ref{lem-fb-crossing1} implies that $j_K \leq I_{\alpha_0,\alpha_1}^n \leq A n^{3/4}$ (note that $\lambda^n([I_{\alpha_0,\alpha_1}^n , \infty)_{\frac12\BB Z}) \subset \ol Q_{\alpha_0,\alpha_1}^n$).  

By Markov's inequality, there exists $k\in [K-m,K]_{\BB Z}$ such that
\eqbn
j_k - j_{k-1} \leq  A m^{-1}  n^{3/4}.
\eqen
Henceforth fix such a $k$ and let $J$ be the largest $j \in \lfloor \delta_k^5 n^{3/4} \rfloor \BB N_0$ for which $J \leq j_k$. We claim that the lemma statement holds for this choice of $k$ and $J$. 

Note that, by our choice of $J$, 
\eqb \label{eqn-good-ball-intervals}
j_k - J \leq \delta_k^5 n^{3/4} \quad \text{and} \quad J - j_{k-1} \leq A m^{-1} n^{3/4} .
\eqe
The bounds in~\eqref{eqn-good-ball-intervals} will allow us to control the sizes of the sets $\lambda^n([J,j_k]_{\frac12\BB Z})$ and $\lambda^n([j_{k-1} , J]_{\frac12 \BB Z})$ via condition~\ref{item-fb-crossing-cont} of Lemma~\ref{lem-fb-crossing1}. This is the reason for our choice of $k$ and $J$. 
\medskip

\noindent\textit{Step 2: proof of~\eqref{eqn-good-ball-in}.}
By condition~\ref{item-fb-crossing-cont} of Lemma~\ref{lem-fb-crossing1}, the set $\lambda^n([J , j_k]_{\frac12 \BB Z})$ is contained in a subgraph $S$ of $\ol Q_J^n$ such that $\dot e_J^n \in \bdy S$ and 
\eqb  \label{eqn-good-ball-S}
\op{diam}(\bdy S ; \ol Q_J^n) \leq  \delta_k^{5/4} n^{1/4}  .
\eqe 
By the definition of $j_k$, there is a path in $Q^n$ from $v$ to $\dot e_{j_k}^n$ of length at most $\delta_k n^{1/4}$. In fact, such a path must be contained in $\ol Q_{j_k}^n \subset \ol Q_J^n$ since $j_k$ is the \emph{smallest} $j\in \BB N_0$ such that $\dot e_j^n \in B_{ \delta_k n^{1/4} }(v ; Q^n)$.
This path must hit $\bdy S$, so 
\eqb \label{eqn-good-ball-dist}
\op{dist}\left( v ,  \dot e_J^n ; \ol Q_J^n \right) 
\leq \left( \delta_k + \delta_k^{5/4} \right) n^{1/4} \leq  \frac{3}{2} \delta_k  n^{1/4}    - 4 .
\eqe
Since $J \leq j_k$, we also have
\eqb \label{eqn-good-ball-dist'}
\op{dist}\left( v ,  Q^n \setminus \ol Q_J^n ; Q^n  \right)  \geq \delta_k n^{1/4} .
\eqe
By~\eqref{eqn-good-ball-dist} and~\eqref{eqn-good-ball-dist'}, 
\eqbn
B_{(\delta_k/2) n^{1/4}}(v ; Q^n) = B_{(\delta_k/2) n^{1/4}}(v ; \ol Q_J^n)  \subset B_{2\delta_k n^{1/4} - 2}^\bullet\left( \dot e_J^n ; \ol Q_J^n \right) . 
\eqen
This last inclusion, combined with~\eqref{eqn-filled-ball-contain}, gives~\eqref{eqn-good-ball-in}. We also note that since $\op{dist}(v , \bdy Q^n ; Q^n) \geq \ep n^{1/4}$ and $\delta_k$ is much smaller than $\ep$, the relation~\eqref{eqn-good-ball-dist} implies that $\op{dist}(\dot e_J^n , \bdy Q^n ; Q^n) \geq \frac12 \ep n^{1/4}$. 
\medskip

\noindent\textit{Step 3: proof of~\eqref{eqn-good-ball-out}.} 
See Figure~\ref{fig-fb-crossing} for an illustration. 
By condition~\ref{item-fb-crossing-cont} of Lemma~\ref{lem-fb-crossing1} (the $ A m^{-1}$ case, see Remark~\ref{remark-weaker-cont}) together with~\eqref{eqn-good-ball-intervals}, there is a subgraph $S'$ of $Q^n$ such that
\eqb \label{eqn-good-ball-S'}
\lambda^n([j_{k-1} , J]_{\frac12 \BB Z} ) \subset S' \quad \text{and} \quad
\op{diam}\left( \bdy S' ; Q^n \right) \leq  2 A^{1/4}  m^{-1/4} n^{1/4} .
\eqe

The set $\bdy \ol Q_J^n \setminus \bdy \ol Q_{j_{k-1}}^n$ is a connected arc of $\bdy \ol Q_J^n$. Each edge of this arc is a side of one of the peeled quadrilaterals in the percolation peeling process between times $j_{k-1} $ and $J$, so lies at $Q^n$-graph distance at most 2 from $\lambda^n([j_{k-1} , J]_{\frac12 \BB Z} )$. By considering the graph distance neighborhood of $S'$ of radius 2, we obtain a subgraph $S''$ of $Q^n$ such that 
\eqb \label{eqn-good-ball-S''}
\bdy \ol Q_J^n \setminus \bdy \ol Q_{j_{k-1}}^n \subset S'' \quad \text{and} \quad
\op{diam}\left( \bdy S'' ; Q^n \right) \leq C m^{-1/4} n^{1/4}  
\eqe
where $C =   2 A^{ 1/4} + 2$ is a constant depending only on $\ep,\zeta$. 

By the triangle inequality, followed by~\eqref{eqn-good-ball-dist} and the definition of $j_{k-1}$, then the definition~\eqref{eqn-crossing-scale-def} of $\delta_k$,
\alb
 \op{dist}\left( \dot e_J^n ,  \dot Q_{j_{k-1}}^n ;  Q^n \right)  
&\geq  \op{dist}\left( v ,   \dot Q_{j_{k-1}}^n ; Q^n \right) -  \op{dist}\left( v ,  \dot e_J^n ; Q^n \right) \notag\\
&\geq \left( \delta_{k-1} - \frac32 \delta_k \right) n^{1/4} + 4  > \delta_k^\zeta n^{1/4} + 4 .
\ale
Hence 
\eqb \label{eqn-good-ball-past}
     B_{\delta_k^\zeta n^{1/4} + 4 } \left( \dot e_J^n ; Q^n \right) \cap \dot Q_{j_{k-1}}^n = \emptyset .
\eqe 
Since $\op{dist}(\dot e_J^n ,\bdy Q^n ; Q^n) \geq \frac12 \ep n^{1/4}$, if $k$ is large enough so that $\delta_k^\zeta   <  \frac12 \ep$, then we also have 
\eqb \label{eqn-good-ball-bdy}
B_{\delta_k^\zeta n^{1/4} +4}\left( \dot e_J^n ; Q^n \right) \cap \bdy Q^n =\emptyset . 
\eqe

By~\eqref{eqn-good-ball-past} and~\eqref{eqn-good-ball-bdy}, the ball $B_{\delta_k^\zeta n^{1/4}+4}\left( \dot e_J^n ; Q^n \right)$ can intersect $\bdy \ol Q_J^n$ only in the connected arc $\bdy \ol Q_J^n \setminus \bdy \ol Q_{j_{k-1}}^n$.
Since $ B_{\delta_k^\zeta n^{1/4}+4}\left( \dot e_J^n ; \ol Q_J^n \right) \subset B_{\delta_k^\zeta n^{1/4}+4}\left( \dot e_J^n ; Q^n \right)$, the same is true with $B_{\delta_k^\zeta n^{1/4}+4}\left( \dot e_J^n ; \ol Q_J^n \right)$ in place of $B_{\delta_k^\zeta n^{1/4}+4}\left( \dot e_J^n ; Q^n \right)$.  
Since $\bdy \ol Q_J^n \setminus \bdy \ol Q_{j_{k-1}}^n$ is a connected arc of $\bdy \ol Q_J^n$ which does not contain $\BB e_*^n$, it follows that the boundary of every connected component of $\ol Q_J^n \setminus B_{\delta_k^\zeta n^{1/4}+4}\left( \dot e_J^n ; \ol Q_J^n \right)$ which does not contain $\BB e_*^n$ is the union of a subgraph of $\bdy  B_{\delta_k^\zeta n^{1/4}+4}\left( \dot e_J^n ; \ol Q_J^n \right)$ and a subgraph of $\bdy \ol Q_J^n \setminus \bdy \ol Q_{j_{k-1}}^n$. 
Therefore, $B_{\delta_k^\zeta n^{1/4}+4}^\bullet\left( \dot e_J^n ; \ol Q_J^n \right)$ can intersect $\bdy \ol Q_J^n$ only in $\bdy \ol Q_J^n \setminus \bdy \ol Q_{j_{k-1}}^n$.

Thus, with $S''$ as in~\eqref{eqn-good-ball-S''}, the set $ B_{\delta_k^\zeta n^{1/4} +4}^\bullet \left( \dot e_J^n ; \ol Q_J^n \right)$ is disconnected from $\bdy Q^n$ by the union of $B_{\delta_k^\zeta n^{1/4}+4}^\bullet\left(\dot e_J^n ; Q^n\right)$ and the set $S''$. 
By~\eqref{eqn-good-ball-S''}, the boundary (relative to $Q^n$) of this last union has $Q^n$-diameter at most $(\delta_k^\zeta + C m^{-1/4})n^{1/4} \leq 2C m^{-1/4} n^{1/4}$. 
Since $v\in  B_{\delta_k^\zeta n^{1/4} + 4 }^\bullet \left( \dot e_J^n ; \ol Q_J^n \right)$, we obtain
\eqbn
B_{\delta_k^\zeta n^{1/4} +4}^\bullet \left( \dot e_J^n ; \ol Q_J^n \right) \subset B_{4 C m^{-1/4} n^{1/4}}\left( v ; Q^n \right) .
\eqen
Due to the relation between filled metric balls and peeling-by-layers clusters~\eqref{eqn-filled-ball-contain}, this gives~\eqref{eqn-good-ball-out} provided $m$ is large enough so that $4 C m^{-1/4} \leq \ep$. 
\end{proof}

\begin{proof}[Proof of Proposition~\ref{prop-fb-crossing}]
For $\ep > 0$, let $\alpha_0,\alpha_1,A,m,K,n$ be as in Lemma~\ref{lem-fb-crossing1} and define the event $E^n = E^n(K,m,\ep,\zeta)$ as in that lemma, so that $\BB P[E^n] \geq 1-\ep$. We now assume that $E^n$ occurs and show that the event in the proposition statement occurs. By Lemma~\ref{lem-good-ball}, for each $v\in \mcl V(Q^n)$  such that $\op{dist}(v , \bdy Q^n ; Q^n) \geq   \ep n^{1/4}$ and $\lambda^n$ hits $B_{\delta_K n^{1/4}}(v ; Q^n)$, there exists $k\in [K-m,K]_{\BB Z}$ and $J \in \lfloor \delta_k^5 n^{3/4} \rfloor \BB N_0$ with $\op{dist}(\dot e_J^n , \BB e_*^n ; Q^n ) \geq \frac12 \ep n^{1/4}$ such that~\eqref{eqn-good-ball-in} and~\eqref{eqn-good-ball-out} hold. For such a choice of $k$ and $J$, 
\eqbn
\# \op{cross}^n\left( B_{(\delta_k/2) n^{1/4}}^\bullet(v ; Q^n)  ,  B_{\ep n^{1/4}}^\bullet(v ; Q^n)     \right) \leq    
\# \op{cross}^n\left( B_{2\delta_k n^{1/4}-2}^\pbl(\dot e_J^n ; \ol Q_J^n) ,    B_{ \delta_k^\zeta n^{1/4}+2}^\pbl(  \dot e_J  ; \ol Q_J^n)   \right)   .
\eqen
By Lemma~\ref{lem-crossing-ip}, this last quantity is at most $\# \op{IP}^n_J\left( 2\delta_k ,  \delta_k^\zeta n^{1/4} \right)$ which by condition~\ref{item-fb-crossing-endpoint} in the definition of $E^n$ is at most~$7$. We thus obtain the proposition statement for any $\delta \leq \delta_K/2$. 
\end{proof}

\subsection{Proof of Proposition~\ref{prop-fb-crossing-bdy}}
\label{sec-crossing-proof-bdy}

The proof of Proposition~\ref{prop-fb-crossing-bdy} is in some ways similar to the proof of Proposition~\ref{prop-fb-crossing}, but the argument is substantially simpler since we can work exclusively with distances in $Q^n$ rather than in the quadrangulations $\ol Q_j^n$. This means that we do not need to worry about comparing filled metric balls in different quadrangulations, which was the hardest part of Section~\ref{sec-crossing-proof}. 

Due to the similarity to previous arguments, we will be terse. We first record an analog of Proposition~\ref{prop-uihpq-crossing} (Lemma~\ref{lem-uihpq-crossing-bdy}) where we take a union bound over boundary edges of $Q^\infty$ instead of a union bound over times $j$ for the percolation peeling process. We then transfer to an estimate for $Q^n$ using absolute continuity (Lemma~\ref{lem-fb-crossing-bdy0}). Finally, we use this together with a continuity estimate for distances along the boundary of $Q^n$ (Lemma~\ref{lem-fb-bdy-holder}) to conclude the proof.  
The starting point of the proof of Proposition~\ref{prop-fb-crossing-bdy} is the following straightforward consequence of Proposition~\ref{prop-uihpq-crossing}. 

\begin{lem} \label{lem-uihpq-crossing-bdy}
Let $\zeta \in (0,1/100)$, $C > 0$, and $A>0$. 
Also let $\beta^\infty$ be the boundary path of the UIHPQ$_{\op{S}}$ $Q^\infty$. 
For each $\delta \in (0,1)$ there exists $n_* = n_*(\delta ,\zeta ,A , C) \in \BB N$ such that for $n\geq n_*$ and $N\in\BB N$, it holds with probability $1 - O_\delta(\delta^{(N-1) (1-2\zeta)  - (2+\zeta) }) $ (at a rate which is uniform for $n\geq n_*$) that, in the notation of Definition~\ref{def-ip-crossing} and~\eqref{eqn-pbl-bdy-length-def}, 
\alb
&\#\op{IP}_0^\infty( \beta^\infty(i)  ; \delta  n^{1/4} , \delta^\zeta n^{1/4}   ) \leq N ,\notag\\ 
&\qquad\forall i \in [-C n^{1/2} , C n^{1/2}]_{\lfloor \delta^{2+\zeta}  n^{1/2} \rfloor \BB Z} \quad \text{such that} \quad    \# \mcl A_{\delta  n^{1/4}}^\pbl(\beta^\infty(i)  ;  Q^\infty ) \leq  \delta^{2-\zeta} n^{1/2}   .
\ale
\end{lem}
\begin{proof}
Since $(Q^\infty , \beta^\infty(i)) \eqD (Q^\infty , \BB e^\infty)$ for each $i \in \BB Z$, this follows from the $M=0$ case of Proposition~\ref{prop-uihpq-crossing} and a union bound over all $i \in [-C n^{1/2} , C n^{1/2}]_{\lfloor \delta^{2+\zeta}  n^{1/2} \rfloor \BB Z}$.
\end{proof}

We can now prove an analog of Lemma~\ref{lem-fb-crossing0} in the setting of Proposition~\ref{prop-fb-crossing-bdy}. 

\begin{lem} \label{lem-fb-crossing-bdy0}
Assume we are in the setting of Proposition~\ref{prop-fb-crossing-bdy} and for $n\in\BB N$ let $\beta^n : [0,l^n]_{\BB Z} \rta \mcl E(\bdy Q^n)$ be the boundary path of $Q^n$. 
Let $0 < \alpha_0 < \alpha_1$, $A>0$, and $\zeta\in (0,1)$. 
For each $\delta \in (0,1)$, there exists $n_* = n_*(\delta ,  \zeta  ,  \alpha_0 , \alpha_1 , A) \in \BB N $ such that for $n\geq n_*$, the probability that the following is true is at most $O_\delta(\delta^{3(1-2\zeta)  - (2+\zeta) })$, uniformly for all $n\geq n_*$. 
We have $I_{\alpha_0,\alpha_1}^n \leq A n^{3/4}$ and there exists $i \in [0,l^n]_{\lfloor \delta^{2+\zeta} n^{1/2} \rfloor \BB Z}$ such that $\# \op{IP}_0^n\left( \beta^n(i) ;  2 \delta  n^{1/4} , \delta^\zeta n^{1/4}   \right)  > 4$ and $B_{\delta^\zeta n^{1/4}}^\pbl(\beta^n(i) ; Q^n) \cap \ol Q_{I_{\alpha_0,\alpha_1}}^n = \emptyset$. 
\end{lem}

Note that the exponent $3(1-2\zeta) - (2+\zeta)$ is positive provided $\zeta$ is chosen to be sufficiently small. We henceforth assume that this is the case. 

\begin{proof}[Proof of Lemma~\ref{lem-fb-crossing-bdy0}]
This follows by combining Lemma~\ref{lem-uihpq-crossing-bdy} (applied with $N=4$) with Lemmas~\ref{lem-perc-rn} and~\ref{lem-pbl-bdy-length}, via the same argument as in the proof of Lemma~\ref{lem-fb-crossing1}. 
\end{proof}

The following lemma gives us a high-probability event on which we will check the condition of Proposition~\ref{prop-fb-crossing-bdy}.

\begin{lem} \label{lem-fb-crossing-bdy1} 
Let $\ep\in (0,1)$ and $\zeta\in (0,1)$. 
There exists $\delta_* = \delta_*(\ep) \in (0,\ep^{1/\zeta}/100]$ such that for each $\delta \in (0,\delta_*]$, there exists $n_* = n_*(\delta ,  \zeta  ) \in \BB N $ such that for $n\geq n_*$, the following is true.
Let $E^n = E^n(\delta,\ep,\zeta)$ be the event that the following is true.
\begin{enumerate}
\item For each $i,j \in [0,l^n]_{\BB Z}$ with $|i-j| \leq \delta^{2+\zeta} n^{1/2}$, we have $\op{dist}(\beta^n(i) , \beta^n(j) ; Q^n) \leq \delta^{1+\zeta/4} n^{1/4}$. \label{item-bdy-cont}
\item For each  $i \in [0,l^n]_{\lfloor \delta^{2+\zeta} n^{1/2} \rfloor \BB Z}$ such that $\op{dist}(\beta^n(i) , \BB e_*^n ; Q^n) \geq \frac12 \ep n^{1/4}$, we have $\# \op{IP}_0^n\left( \beta^n(i) ;  2 \delta  n^{1/4} , \delta^\zeta n^{1/4}   \right)  \leq 4$. \label{item-bdy-cross}
\end{enumerate}
Then $\BB P[E^n] \geq 1-\ep$. 
\end{lem} 
\begin{proof} 
This follows from Lemma~\ref{lem-fb-crossing-bdy0} combined with condition~\ref{item-bdy-equicont-end} of Proposition~\ref{prop-bdy-equicont} and Lemma~\ref{lem-fb-bdy-holder}, via a very similar argument to the one in the proof of Lemma~\ref{lem-fb-crossing1}. 
\end{proof}

\begin{proof}[Proof of Proposition~\ref{prop-fb-crossing-bdy}]
Fix $\zeta\in (0,1)$. For $\ep > 0$, let $\delta_*$ be as in Lemma~\ref{lem-fb-crossing-bdy1}, let $\delta \in (0,\delta_*]$, and let $E^n$ be as in Lemma~\ref{lem-fb-crossing-bdy1}, so that $\BB P[E^n] \geq 1-\ep$ provided $n$ is sufficiently large.
We assume that $E^n$ occurs and show that the condition in the lemma statement is satisfied. 

Suppose $ v \in\bdy Q^n$ such that $\op{dist}(v , \BB e_*^n ; Q^n) \geq \ep n^{1/4}$. Let $i\in [0,l^n]_{\BB Z}$ be chosen so that $v$ is one of the endpoints of $\beta^n(i)$. We can find $i' \in  [0,l^n]_{\lfloor \delta^{2+\zeta} n^{1/2} \rfloor \BB Z}$ such that $|i-i'| \leq \delta^{2+\zeta} n^{1/2}$. By condition~\ref{item-bdy-cont} of Lemma~\ref{lem-fb-crossing-bdy1}, for this choice of $i'$ we have $\op{dist}(v , \beta^n(i') ; Q^n) \leq \delta^{1+\zeta/4} n^{1/4}$. 
Since $\op{dist}(v , \BB e_*^n ; Q^n) \geq \ep n^{1/4}$, it follows that $\op{dist}(\beta^n(i') , \BB e_*^n ; Q^n) \geq \frac12 \ep n^{1/4}$. Furthermore, by the triangle inequality, 
\eqb \label{eqn-crossing-bdy-in}
B_{\delta n^{1/4}}\left( v ; Q^n \right) 
\subset B_{(\delta + \delta^{1+\zeta/4}) n^{1/4}}\left(\beta^n(i') ; Q^n \right)
\subset B_{2\delta n^{1/4} - 4}\left(\beta^n(i') ; Q^n \right) 
\eqe
and
\eqb \label{eqn-crossing-bdy-out}
B_{\ep n^{1/4}}\left( v ; Q^n \right) 
\supset B_{(\ep - \delta^{1+\zeta/4}) n^{1/4}}\left(\beta^n(i') ; Q^n \right)
\supset B_{\delta^\zeta n^{1/4} + 4}\left(\beta^n(i') ; Q^n \right)  .
\eqe
The inclusions~\ref{eqn-crossing-bdy-in} and~\eqref{eqn-crossing-bdy-out} also hold with filled metric balls instead of ordinary metric balls.  
By~\eqref{eqn-crossing-bdy-in} and~\eqref{eqn-crossing-bdy-out},
\eqb \label{eqn-crossing-bdy-final}
\# \op{cross}^n\left( B_{\delta n^{1/4}}^\bullet(v ; Q^n)  ,  B_{\ep n^{1/4}}^\bullet(v ; Q^n)     \right) \leq    
\# \op{cross}^n\left( B_{2\delta n^{1/4}-4}^\bullet( \beta^n(i')  ; Q^n) ,    B_{ \delta^\zeta n^{1/4}+4}^\bullet( \beta^n(i') ; Q^n )   \right)   .
\eqe 
By~\eqref{eqn-filled-ball-contain} followed by Lemma~\ref{lem-crossing-ip}, the right side of~\eqref{eqn-crossing-bdy-final} is at most $\op{IP}_0(\beta^n(i') ; 2\delta n^{1/4} , \delta^\zeta n^{1/4})$. By condition~\ref{item-bdy-cross} of Lemma~\ref{lem-fb-crossing-bdy1}, this last quantity is at most 4. 
\end{proof}

\section{Proof of main theorems}
\label{sec-identification}

Throughout this section, we assume we are in the setting of Theorem~\ref{thm-perc-conv}.  In particular, we fix $\frk l_L , \frk l_R > 0$ and a sequence of pairs of positive integers $(\el_L^n ,\el_R^n)_{n\in\BB N}$ such that $ \el_L^n + \el_R^n$ is always even and
\eqbn
\frk l_L^n :=  \bcon^{-1} n^{-1/2} \el_L^n \rta \frk l_L \quad \op{and} \quad \frk l_R^n :=  \bcon^{-1}  n^{-1/2} \el_R^n  \rta \frk l_R . 
\eqen 

We recall that $(Q^n ,\BB e^n , \theta^n)$ is a free Boltzmann quadrangulation with simple boundary of perimeter $\el_L^n+\el_R^n$, viewed as a connected metric space by replacing each edge with an isometric copy of the unit interval and that $d^n$, $\mu^n$, $\xi^n$, and $\eta^n$, respectively, denote the rescaled metric, area measure, and boundary path. Recall also that $\frk Q^n$ denotes the doubly curve-decorated metric measure space $\left( Q^n , d^n,\mu^n, \xi^n,\eta^n \right)$.
   
As per usual, we define the clusters $\{\dot Q_j^n\}_{j\in\BB N_0}$, the unexplored quadrangulations $\{\ol Q_j^n\}_{j\in\BB N_0}$, the peeled edges $\{\dot e_j^n\}_{j\in\BB N}$, the filtration $\{\mcl F_j^n\}_{j \in \BB N_0}$, and the terminal time $\mcl J^n$ for the percolation peeling exploration of $(Q^n,\BB e^n,\theta^n)$ with $\el_L^n$-white/$\el_R^n$-black boundary conditions as in Section~\ref{sec-perc-peeling}.  We also define the rescaled boundary length process $Z^n = (L^n,R^n)$ as in~\eqref{eqn-bdy-process-rescale}. 
 
By Propositions~\ref{prop-stable-conv-finite} and~\ref{prop-ghpu-tight}, for any sequence of positive integers tending to $\infty$ there is a subsequence $\mcl N$ and a coupling of a doubly curve-decorated metric measure space $\wt{\frk H} = (\wt H , \wt d , \wt \mu , \wt\xi , \wt\eta)$ with a two-dimensional cadlag process $Z = (L,R)  : [0,\infty) \rta \BB R$ such that  
\eqb \label{eqn-ssl-conv0}
(\frk Q^n , Z^n) \rta (\wt{\frk H} ,   Z ) \quad \op{as} \: \mcl N\ni n \rta\infty
\eqe 
in the two-curve GHPU topology on the first coordinate and the Skorokhod topology on the second coordinate. 

In fact, we know the limit of the laws of $(Q^n,d^n,\mu^n,\xi^n)  $ and $Z^n$ in the GHPU and Skorokhod topologies, respectively: $(\wt H , \wt d , \wt \mu , \wt\xi) $ is a free Boltzmann Brownian disk with boundary length $\frk l_L + \frk l_R$ equipped with its natural metric, area measure, and boundary path and $Z$ has the law of the left/right boundary length process for a chordal $\SLE_6$ on such a free Boltzmann Brownian disk between two points at counterclockwise boundary length distance $\frk l_R$ from each other (although we do not know a priori that $Z$ is the left/right boundary length process of any curve on $\wt H$).  We extend $\xi $ to $\BB R$ by periodicity, so that in particular $\xi(-\frk l_L) = \xi(\frk l_R)$.

We will prove Theorem~\ref{thm-perc-conv} by showing that the pair $(\wt{\frk H} , \wt\eta )$ satisfies the conditions of Theorem~\ref{thm-bead-mchar}.  This will then imply that $\wt \eta $ is a chordal $\SLE_6$ from $\wt\xi(0)$ to $\wt\xi(\frk l_R)$ in $\wt H$ parameterized by quantum natural time and that $Z$ is its left/right boundary length process.  As we will see in Section~\ref{sec-main-proof}, Theorem~\ref{thm-perc-conv-uihpq} is a straightforward consequence of Theorem~\ref{thm-perc-conv}.

We start in Section~\ref{sec-ssl} by introducing some notation and passing to a further subsequence of $\mcl N$ along which not only $\frk Q^n$ and $Z^n$ but also the internal metric spaces corresponding to the complementary connected components of the curve $\eta^n([0,t])$ for each rational time $t\geq 0$ converge in law. We also establish that the limits of these internal metrics are free Boltzmann Brownian disks conditional on the limiting boundary length process $Z$ and express their boundary lengths in terms of $Z$.

In Section~\ref{sec-map-limit}, we prove several relationships among our subsequential limiting objects which eventually lead to the statement that the limits of the internal metric spaces corresponding to the complementary connected components of $\eta^n([0,t])$ are the same as the internal metric spaces corresponding to the complementary connected components of $\wt\eta([0,t])$ (Lemma~\ref{lem-map-conclude}). The arguments in this subsection are similar to those found in~\cite[Section~7.3]{gwynne-miller-saw}. The results of Section~\ref{sec-map-limit} allow us to prove Proposition~\ref{prop-component-law}, which says that $(\wt{\frk H} , \wt\eta )$ satisfies condition~\ref{item-bead-mchar-wedge} of Theorem~\ref{thm-bead-mchar}.

The next two subsections are devoted to checking condition~\ref{item-bead-mchar-homeo} of Theorem~\ref{thm-bead-mchar}, the topology and consistency condition.  In Section~\ref{sec-bdy-process-id} we will describe the boundary length measures on the complementary connected components of $\wt\eta([0,t])$ in terms of the limiting boundary length process $Z$. The results of this subsection imply in particular that if $t_1,t_2 \geq 0$ are such that an $\SLE_6$ with left/right boundary length process $Z$ satisfies $ \eta(t_1) =  \eta(t_2)$, then also $\wt\eta(t_1) = \wt\eta(t_2)$. In other words, the curve~$\wt\eta$ has at least as many self-intersections as we would expect from the process~$Z$.

As explained at the beginning of Section~\ref{sec-homeo}, the results of Section~\ref{sec-bdy-process-id} show that there exists an $\SLE_6$-decorated Brownian disk $(H,d,\mu,\xi,\eta)$ and a continuous, measure-preserving, curve-preserving surjective map $\Phi : H\rta \wt H$ which is injective on $H\setminus \eta$ (Lemma~\ref{lem-surjection}). To show that $\Phi$ is a homeomorphism, we will use the results of Section~\ref{sec-crossing} to argue that $\wt\eta$ can hit each point in $\wt H$ at most~$6$ times (Lemma~\ref{lem-ssl-curve-hit}). This statement together with the topological theorem~\cite[Main Theorem]{almost-inj} will imply that $\Phi$ is in fact a homeomorphism, whence condition~\ref{item-bead-mchar-homeo} of Theorem~\ref{thm-bead-mchar} is satisfied.

In Section~\ref{sec-main-proof} we will conclude the proofs of Theorems~\ref{thm-perc-conv} and~\ref{thm-perc-conv-uihpq}.

\subsection{Subsequential limits}
\label{sec-ssl}

Suppose we are given a subsequence $\mcl N$ and a limiting coupling $(\wt{\frk H} , Z)$ as in~\eqref{eqn-ssl-conv0}. 
Before proving any additional statements about this coupling, we will pass to a further subsequence of $\mcl N$ along which several additional curve-decorated metric measure spaces, corresponding to (roughly speaking) the complementary connected components of $\eta^n$ at each time $t\in \BB Q_+ := \BB Q \cap [0,\infty)$, equipped with their internal metrics and boundary paths, converge in the GHPU topology. See Figure~\ref{fig-map-limit} for an illustration of these objects.

\begin{figure}[ht!]
 \begin{center}
\includegraphics[scale=.8]{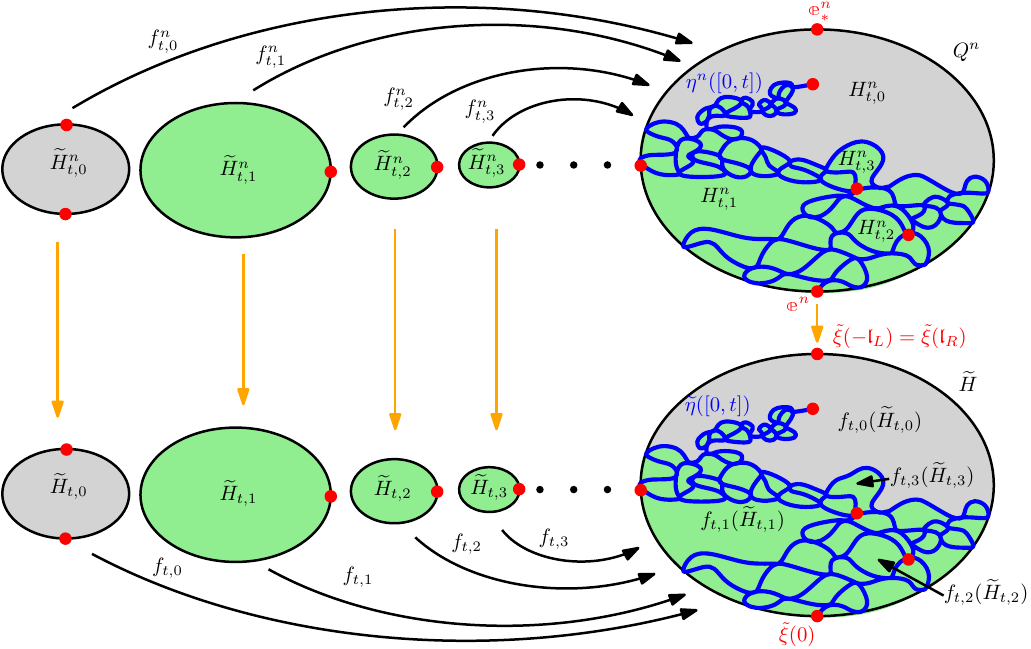} 
\caption[Illustration of the spaces and functions involved in Section~\ref{sec-identification}]{Illustration of the curve-decorated metric measure spaces and their subsequential limits considered in Section~\ref{sec-identification}. On the right are the main doubly curve-decorated metric measure spaces $\frk Q^n = (Q^n ,d^n , \mu^n , \xi^n , \eta^n)$ corresponding to a free Boltzmann quadrangulation equipped with its natural metric, area measure, and boundary path and a face percolation exploration path (appropriately rescaled); and their subsequential limit $\wt{\frk H} = (\wt H ,\wt d , \wt\mu,\wt\xi,\wt\eta)$, which is a free Boltzmann Brownian disk equipped with a curve $\wt\eta$ whose law we seek to identify. The spaces $\frk H_{t,k}^n = (H_{t,k}^n , d_{t,k}^n , \mu_{t,k}^n , \xi_{t,k}^n)$ for $(t,k) \in \BB Q_+ \times \BB N_0$ and $n\in\BB N$ are free Boltzmann quadrangulations equipped with their internal metrics, natural area measures, and boundary paths (appropriately rescaled): $\frk H_{t,0}^n$ is the unexplored quadrangulation at time $\lfloor \tcon t n^{3/4} \rfloor$ and $\frk H_{t,k}^n$ for $k\geq 1$ are the bubbles cut out by the percolation peeling process run up to time $\lfloor \tcon t n^{3/4} \rfloor$, in decreasing order of boundary length. The spaces $\wt{\frk H}_{t,k} = (\wt H_{t,k} , \wt d_{t,k} , \wt \mu_{t,k} , \wt\eta_{t,k} )$ are their subsequential limits. The orange arrows indicate HPU convergence in the compact metric spaces $\{(W_{t,k} , D_{t,k})\}_{k \in \BB N_0}$ and $(W,D)$ into which these spaces are isometrically embedded (see Section~\ref{sec-map-limit}). The maps $f_{t,k}$ are the limits of the inclusion maps $f_{t,k}^n$ in the sense of Lemma~\ref{lem-map-limit}. We show in Proposition~\ref{prop-component-law} that the sets $f_{t,k}(\wt H_{t,k} \setminus \bdy \wt H_{t,k})$ are the connected components of $\wt H\setminus \wt\eta([0,t])$. 
}\label{fig-map-limit}
\end{center}
\end{figure}

We first introduce a system for indexing the complementary connected components of the re-scaled boundary path $\eta^n(\cdot) = \lambda^n(\tcon n^{3/4} \cdot)$ run up to a specified time $t$.
For $n\in\BB N$ and $t \in \BB Q_+$, let 
\eqb \label{eqn-unexplored-metric-space}
H_{t,0}^n := \ol Q_{\lfloor \tcon t n^{3/4} \rfloor}^n \quad \op{and} \quad \BB e_{t,0}^n := \dot e_{\lfloor \tcon t n^{3/4} \rfloor+1}^n
\eqe
be the time-$\lfloor\tcon t n^{3/4} \rfloor$ unexplored region for the percolation peeling exploration, as in Section~\ref{sec-perc-peeling} (viewed as a connected metric space as in Remark~\ref{remark-ghpu-graph}) and the $(\lfloor \tcon tn^{3/4} \rfloor+1)$th peeled edge, respectively.

By the Markov property of peeling the conditional law of $( H_{t,0}^n , \BB e_{t,0}^n )$ given the percolation peeling $\sigma$-algebra~$\mcl F_{\lfloor \tcon t n^{3/4} \rfloor}^n$ is that of a free Boltzmann quadrangulation with simple boundary and perimeter $ \bcon n^{1/2} \left(  L_t^n +   R_t^n + \frk l_L^n + \frk l_R^n \right) $.
Let
\eqb \label{eqn-unexplored-perimeter-rescale}
\Delta_{t,0}^n :=   L_t^n +  R_t^n + \frk l_L^n + \frk l_R^n
\eqe 
be the rescaled perimeter of $H_{t,0}^n$. 

For $n\in\BB N$ and $(t,k) \in \BB Q_+ \times \BB N$, let $H_{t,k}^n$ be the bubble disconnected from the target edge $\BB e_*^n$ by the percolation peeling exploration of $(Q^n , \BB e^n , \theta^n)$ run up to time $\lfloor \tcon t n^{3/4} \rfloor$ whose perimeter is the $k$th largest among the perimeters of all such bubbles (with ties broken in some arbitrary deterministic manner), or let $H_{t,k}^n = \emptyset$ if there are fewer than $k$ such bubbles. 

Let $J_{t,k}^n$ be the time at which $  H_{t,k}^n$ is disconnected from $\infty$ by the percolation peeling exploration and let 
\eqb \label{eqn-bubble-t-time}
\tau_{t,k}^n := \tcon^{-1} n^{-3/4} J_{t,k}^n .
\eqe 
 We define the root edge for $ H_{t,k}^n$ to be the edge $\BB e_{t,k}^n \in \bdy H_{t,k}^n$ which is the leftmost edge of the peeled quadrilateral $\frk f\left( \ol Q_{J_{t,k}^n-1} , \dot e_{J_{t,k}^n} \right) \cap \bdy \rng H_{t,k}^n$ which belongs to $\bdy H_{t,k}^n$.

Let
\eqb \label{eqn-bubble-jump}
\Delta_{t,k}^n :=   \bcon^{-1} n^{-1/2} \#\mcl E(\bdy H_{t,k}^n)  ,
\eqe 
so that $\Delta_{t,k}^n$ differs from the downward jump of the re-scaled boundary length process $L^n$ (resp.\ $R^n$) at time $\tau_{t,k}^n$ by at most a universal constant times $n^{-1/2}$ if $H_{t,k}^n$ lies to the left (resp.\ right) of $\lambda^n$.  

We will view the bubbles $H_{t,k}^n$ as curve-decorated metric measure spaces.
For $n\in\BB N$ and $(t,k) \in \BB Q_+ \times \BB N_0$, 
let $d_{t,k}^n$ be the internal metric of $d^n$ on $ H_{t,k}^n$ (i.e., the graph metric on $H_{t,k}^n$ rescaled by $\bcon^{-1/2} n^{-1/4}$) and let $\mu_{t,k}^n := \mu^n|_{\ H_{t,k}^n}$. 
Let $ \beta_{t,k}^n : \BB Z \rta \bdy H_{t,k}^n$ be the periodic counterclockwise boundary path of $ H_{t,k}^n$ with $\beta_{t,k}^n(0)$ equal to the root edge $\BB e_{t,k}^n$, extended by linear interpolation in the manner of Remark~\ref{remark-ghpu-graph}, and let $ \xi_{t,k}^n(s) := \beta_{t,k}^n\left( \bcon n^{1/2} s\right)$ for $s\in \BB R$. Define the curve-decorated metric measure spaces 
\eqb \label{eqn-bubble-space}
\frk H_{t,k}^n := \left( H_{t,k}^n , d_{t,k}^n, \mu_{t,k}^n, \xi_{t,k}^n \right) ,\quad \forall (t,k) \in \BB Q_+ \times \BB N_0 .
\eqe 
We remind the reader that the case $k=0$ is special: $H_{t,0}^n$, defined as in~\eqref{eqn-unexplored-metric-space}, is the unexplored region at time $\lfloor \tcon  t n^{3/4} \rfloor$ whereas $H_{t,k}^n$ for $k\in\BB N$ is one of the bubbles disconnected from the target edge $\BB e_*^n$ by the percolation peeling exploration run up to time $\lfloor \tcon  t n^{3/4} \rfloor$ (or $\emptyset$).  

By the Markov property of peeling, if $n\in\BB N$ and $t\in \BB Q_+$ and we condition on the $\sigma$-algebra 
\eqb \label{eqn-smaller-filtration}
 \sigma\left( \frk P(\ol Q_{j-1}^n , \dot e_j^n ) , \theta_j^n : j \in[0, t n^{3/4}]_{\BB Z} \right) \subset \mcl F_{\lfloor \tcon t n^{3/4} \rfloor}^n
\eqe 
generated by the peeling indicators and colors of the peeled quadrangulations (but not the peeling clusters $\dot Q_j^n$) up to time $\lfloor \tcon t n^{3/4} \rfloor$
then the conditional law of the quadrangulations $\{( H_{t,k}^n , \BB e_{t,k}^n )\}_{k\in\BB N_0}$ is that of a collection of independent free Boltzmann quadrangulations with simple boundary and given perimeter.  

Since the laws of the processes $Z^n$ converge in the Skorokhod topology (Proposition~\ref{prop-stable-conv-finite}), the rescaled boundary lengths of the quadrangulations $\{H_{t,k}^n\}_{n\in\BB N}$ are tight for each $(t , k) \in \BB Q_+ \times \BB N_0$. By~\cite[Theorem~1.4]{gwynne-miller-simple-quad} the laws of the curve-decorated metric measure spaces $\{\frk H_{t,k}^n\}_{n\in\BB N}$ of~\eqref{eqn-bubble-space} are tight in the GHPU topology. 

By the Prokhorov theorem, after possibly passing to a subsequence of $\mcl N$ we can find a coupling of our original subsequential limiting pair $(\wt{\frk H} , Z)$ with curve-decorated metric measure spaces
\eqb \label{eqn-bubble-ssl}
\wt{\frk H}_{t,k}  := \left(  \wt H_{t,k} ,  \wt d_{t,k} ,  \wt\mu_{t,k}  ,  \wt\xi_{t,k} \right) ,\quad \forall (t,k) \in  \BB Q_+ \times \BB N_0
\eqe
such that the following convergence of joint laws holds as $\mcl N \ni n \rta\infty$: 
\eqb \label{eqn-ssl-conv}
\left( \frk Q^n , Z^n ,   \{ \frk H_{t,k}^n\}_{(t,k) \in \BB Q_+ \times \BB N_0} \right) \rta 
\left( \wt{\frk H} , Z , \{ \wt{\frk H}_{t,k} \}_{(t,k) \in \BB Q_+ \times \BB N} \right)  
\eqe
in the two-curve GHPU topology on the first coordinate, the Skorokhod topology on the second coordinate, and the countable product of the GHPU topology on the third coordinate.

By the Skorokhod representation theorem, we can couple the objects of~\eqref{eqn-ssl-conv} for $n\in\mcl N$ together in such a way the convergence~\eqref{eqn-ssl-conv} occurs a.s. In the remainder of this section we fix such a sequence $\mcl N$ and such a coupling.

In the next two lemmas, we identify the conditional law of the curve-decorated metric measure spaces~$\wt{\frk H}_{t,k} $ given the limiting boundary length process~$Z$.  For this purpose we define continuum analogs of some of the above objects.  For $t\in \BB Q_+$, define
\eqb \label{eqn-unexplored-perimeter-ssl}
\Delta_{t,0}  :=   L_t   +   R_t + \frk l_L  + \frk l_R .
\eqe 
For $(t,k) \in \BB Q_+ \times \BB N$, let $\tau_{t,k} $ be the time of the downward jump of either $L $ or $R $ before time $t$ with the $k$th largest magnitude. Also let  $\Delta_{t,k} := (L_{\tau_{t,k}^-} - L_{\tau_{t,k} }) \vee (R_{\tau_{t,k}^-} - R_{\tau_{t,k} })$ be the size (in absolute value) of this downward jump.

\begin{lem} \label{lem-bubble-time-conv}
Let $(t,k) \in \BB Q_+ \times \BB N_0$. In the notation introduced above, almost surely $\Delta_{t,k}^n \rta \Delta_{t,k} $ and, if $k \geq 1$, then almost surely $\tau_{t,k}^n \rta \tau_{t,k} $ and for large enough $n\in\mcl N$, the bubble $H_{t,k}^n$ lies to the left (resp.\ right) of $\lambda^n$ if and only if $L$ (resp.\ $R$) has a downward jump at time $\tau_{t,k}$. 
\end{lem}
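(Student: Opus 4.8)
The statement asserts joint convergence, along the fixed coupling of~\eqref{eqn-ssl-conv}, of the rescaled boundary lengths $\Delta_{t,k}^n$ and bubble-disconnection times $\tau_{t,k}^n$ to their continuum analogs $\Delta_{t,k}$ and $\tau_{t,k}$, together with the side (left/right of $\lambda^n$) on which the $k$th largest bubble lies. The key observation is that all of these quantities are, up to errors of order $n^{-1/2}$ in the case of $\Delta_{t,k}^n$ and up to the discretization $\tcon^{-1} n^{-3/4}$ in the case of $\tau_{t,k}^n$, \emph{deterministic functionals of the rescaled boundary length process} $Z^n = (L^n,R^n)$ restricted to $[0,t]$. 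Indeed, by Definition~\ref{def-bdy-process} and~\eqref{eqn-peel-inc-cases}, each downward jump of $L^n$ (resp.\ $R^n$) at a time $\tau \le t$ corresponds to a bubble of $\bdy Q^n$ disconnected from $\BB e_*^n$ on the left (resp.\ right) side of $\lambda^n$, and the magnitude of that jump equals $\Delta_{\tau,\cdot}^n$ up to the universal additive $O(n^{-1/2})$ coming from the exposed edges of the peeled quadrilateral (the same error already flagged in the text just after~\eqref{eqn-bubble-jump}). Thus $\Delta_{t,k}^n = \Phi_k(Z^n|_{[0,t]}) + O(n^{-1/2})$ and $\tau_{t,k}^n = \Psi_k(Z^n|_{[0,t]}) + O(n^{-3/4})$, where $\Phi_k$ and $\Psi_k$ extract, respectively, the magnitude and the location of the $k$th largest downward jump of the two coordinates of a càdlàg path on $[0,t]$ (and the side is read off from which coordinate jumps). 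The case $k=0$ is even more direct: $\Delta_{t,0}^n = L_t^n + R_t^n + \frk l_L^n + \frk l_R^n$ by~\eqref{eqn-unexplored-perimeter-rescale} and $\frk l_L^n \to \frk l_L$, $\frk l_R^n \to \frk l_R$, so $\Delta_{t,0}^n \to \Delta_{t,0}$ follows from $Z^n_t \to Z_t$ once we know $t$ is a.s.\ not a jump time of $Z$.

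The execution has three steps. First, reduce to a continuity statement: since the coupling of~\eqref{eqn-ssl-conv} makes $Z^n \to Z$ hold a.s.\ in the Skorokhod topology on $[0,\infty)$, it suffices to show that the functionals $\Phi_k$, $\Psi_k$ and the side-indicator are a.s.\ continuous at $Z$ with respect to the Skorokhod topology, and that the $O(n^{-1/2})$, $O(n^{-3/4})$ error terms vanish (the latter is immediate). Second, verify the continuity of $\Phi_k$ and $\Psi_k$ at a generic realization of $Z$. Here we use that $Z$ has the law of the left/right boundary length process of chordal $\SLE_6$ on a free Boltzmann Brownian disk (Proposition~\ref{prop-stable-conv-finite}), which by Theorem~\ref{thm-bdy-process-law} is locally absolutely continuous with respect to a pair of independent totally asymmetric $3/2$-stable processes with no upward jumps. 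For such a process, almost surely: (i) no two of the jumps of $L$ and $R$, taken together, have equal magnitude (so the ``$k$th largest jump'' is unambiguous and ties need not be broken); (ii) for each fixed $k$ there is a unique time realizing the $k$th largest jump and it lies in the open interval $(0,t)$ with $Z$ continuous in a punctured neighborhood of it; (iii) the fixed time $t$ is not a jump time. Under (i)--(iii), a standard argument shows that if $f_n \to f$ in the Skorokhod topology on $[0,t]$ and $f$ satisfies these genericity properties, then the location and size of the $k$th largest jump of $f_n$ converge to those of $f$, and for large $n$ the sign/coordinate of that jump agrees — this is because a Skorokhod-convergent sequence has the property that large jumps are ``tracked'' (each jump of $f$ of size $> \ep$ is approached by a jump of $f_n$ of nearly the same size at a nearby time, and conversely $f_n$ has no spurious large jumps where $f$ is continuous). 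Third, read off the side-indicator: the bubble $H_{t,k}^n$ lies to the left of $\lambda^n$ iff the $k$th largest bubble-disconnection corresponds to a jump of $Y^{R,n}$ being absorbed into $L^n$ — concretely, iff the corresponding jump of $Z^n$ is a jump of $L^n$ rather than $R^n$; passing to the limit using (i), for large $n$ this matches whether $L$ or $R$ jumps at $\tau_{t,k}$, which is exactly the claimed dichotomy.

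The main obstacle is the genericity statement (i)--(ii): ruling out ties between jump magnitudes of the two independent stable processes, and ensuring the $k$th largest jump is uniquely attained. For independent totally asymmetric $3/2$-stable processes with no upward jumps, the jump magnitudes of $L$ and $R$ form two independent Poisson point processes on $(0,\infty)$ (in size) with atomless intensity $\asymp |x|^{-5/2}\,dx$; the superposition is again Poisson with atomless intensity on $(0,t]\times(0,\infty)$, so a.s.\ no two points share the same second coordinate and, for each $k$, there is a.s.\ a unique point with the $k$th largest size. Transferring this to $Z$ itself requires only the local absolute continuity of Theorem~\ref{thm-bdy-process-law}, applied on $[0,t \wedge \sigma_r]$ for a sequence $r\downarrow 0$ (using Lemma~\ref{lem-endpoint-cont} to handle the terminal time $\sigma_0$) — the null events remain null under an absolutely continuous change of measure. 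Once these genericity facts are in hand, the continuity of $\Phi_k$, $\Psi_k$ at $Z$ and hence the a.s.\ convergences $\Delta_{t,k}^n\to\Delta_{t,k}$, $\tau_{t,k}^n\to\tau_{t,k}$ and the eventual agreement of the side follow by the routine Skorokhod-topology argument sketched above. I would also note that the case $k=0$ needs only $Z^n_t \to Z_t$ plus genericity (iii), and requires no jump analysis.
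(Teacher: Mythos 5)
Your proposal is correct and follows essentially the same route as the paper's proof: both use local absolute continuity with respect to a pair of independent totally asymmetric $3/2$-stable processes (the paper cites Lemma~\ref{lem-perc-limit-rn}, you cite Theorem~\ref{thm-bdy-process-law}, which give the same information) to establish the genericity facts — no simultaneous jumps, no ties in jump magnitudes, no jump at the fixed time $t$ — and then conclude from Skorokhod convergence of $Z^n$ to $Z$ that the $k$th largest jump location, magnitude, and side all converge. Your write-up simply spells out the deterministic-functional reduction and the Poisson-point-process argument for genericity, which the paper compresses into two sentences.
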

\begin{proof}
It follows from local absolute continuity with respect to a pair of independent $3/2$-stable processes with no upward jumps (Lemma~\ref{lem-perc-limit-rn}) that a.s.\ the two coordinates $L$ and $R$ of $Z$ do not have any simultaneous downward jumps and neither of these coordinates has two downward jumps of the same magnitude or a downward jump at time $t$. Hence the Skorokhod convergence $Z^n\rta Z$ immediately implies the convergence conditions in the statement of the lemma.  
\end{proof}
 
\begin{lem} \label{lem-bubble-cond-ssl}
Let $t\in\BB Q_+$.  If we condition on $Z|_{[0,t]}$, then the curve-decorated metric measure spaces $\{ \wt{\frk H}_{t,k} \}_{k \in \BB N_0}$ are conditionally independent free Boltzmann Brownian disks with respective boundary lengths $\{ \Delta_{t,k} \}_{k \in \BB N_0}$, each equipped with its natural metric, area measure, and boundary path. 
\end{lem}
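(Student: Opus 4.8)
The plan is to deduce this lemma by combining three ingredients: (i) the discrete conditional independence statement for the free Boltzmann quadrangulations $\{(H^n_{t,k},\BB e^n_{t,k})\}_{k\in\BB N_0}$ given the $\sigma$-algebra in~\eqref{eqn-smaller-filtration}, which was recorded just before~\eqref{eqn-bubble-ssl}; (ii) the scaling limit of a free Boltzmann quadrangulation with simple boundary of given perimeter toward the free Boltzmann Brownian disk with that perimeter in the GHPU topology~\cite[Theorem~1.4]{gwynne-miller-simple-quad}; and (iii) the joint convergence~\eqref{eqn-ssl-conv} together with the identification of the limiting perimeters via Lemma~\ref{lem-bubble-time-conv}. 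The one subtlety is that~\eqref{eqn-smaller-filtration} is a coarser $\sigma$-algebra than $\mcl F^n_{\lfloor \tcon t n^{3/4}\rfloor}$; it contains the peeling indicators $\frk P(\ol Q^n_{j-1},\dot e^n_j)$ and the colors $\theta^n_j$ for $j\in[0,tn^{3/4}]_{\BB Z}$ but \emph{not} the peeling clusters $\dot Q^n_j$ themselves. The rescaled boundary length process $Z^n$ restricted to $[0,t]$, together with the data recording which side of $\lambda^n$ each disconnected bubble lies on, is measurable with respect to this coarser $\sigma$-algebra, and this is precisely the discrete data which converges to $Z|_{[0,t]}$ in~\eqref{eqn-ssl-conv} (augmented by Lemma~\ref{lem-bubble-time-conv}).

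First I would fix $t\in\BB Q_+$ and let $\mcl G^n$ denote the $\sigma$-algebra in~\eqref{eqn-smaller-filtration}. As noted in the paragraph containing~\eqref{eqn-smaller-filtration}, conditionally on $\mcl G^n$ the pairs $\{(H^n_{t,k},\BB e^n_{t,k})\}_{k\in\BB N_0}$ are independent free Boltzmann quadrangulations with simple boundary whose perimeters are $\mcl G^n$-measurable: the perimeter of $H^n_{t,0}$ is $\bcon n^{1/2}\Delta^n_{t,0}$ by~\eqref{eqn-unexplored-perimeter-rescale}, and the perimeter of $H^n_{t,k}$ for $k\geq 1$ is $\bcon n^{1/2}\Delta^n_{t,k}$ by~\eqref{eqn-bubble-jump}. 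Hence, conditionally on $\mcl G^n$, the rescaled curve-decorated metric measure spaces $\{\frk H^n_{t,k}\}_{k\in\BB N_0}$ of~\eqref{eqn-bubble-space} are conditionally independent, with $\frk H^n_{t,k}$ having the rescaled law of a free Boltzmann quadrangulation with simple boundary of perimeter $\bcon n^{1/2}\Delta^n_{t,k}$. Now I would test this against bounded continuous functionals: for any bounded continuous $F$ on the product of countably many copies of the GHPU space and any bounded continuous $G$ of the path $Z|_{[0,t]}$ and of the side-labels, I would write $\BB E[F(\{\frk H^n_{t,k}\}_k)\,G(Z^n|_{[0,t]},\text{labels})]$ as $\BB E[G(Z^n|_{[0,t]},\text{labels})\,\BB E[F(\{\frk H^n_{t,k}\}_k)\mid\mcl G^n]]$, and the inner conditional expectation is a (deterministic, by~\cite[Theorem~1.4]{gwynne-miller-simple-quad} and uniform-in-perimeter control of the modulus of continuity in that convergence, plus a truncation of the product over $k$) continuous function of the perimeters $\{\Delta^n_{t,k}\}_k$, hence of $\mcl G^n$-data, up to an error tending to $0$ as $n\to\infty$ along $\mcl N$.

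The second step is to pass to the limit. By Lemma~\ref{lem-bubble-time-conv}, $\Delta^n_{t,k}\to\Delta_{t,k}$ a.s.\ for each $k$, and by~\eqref{eqn-ssl-conv} together with the a.s.\ coupling from the Skorokhod representation theorem, $\frk H^n_{t,k}\to\wt{\frk H}_{t,k}$ in GHPU and $Z^n|_{[0,t]}\to Z|_{[0,t]}$ in Skorokhod, with the side-labels converging as well by Lemma~\ref{lem-bubble-time-conv}. Taking $\mcl N\ni n\to\infty$ in the displayed identity, and using dominated convergence plus the uniform-in-perimeter tightness of $\frk H^n_{t,k}$ on compact ranges of the perimeter to handle the tail of the product over $k$, yields
\[
\BB E\bigl[F(\{\wt{\frk H}_{t,k}\}_k)\,G(Z|_{[0,t]},\text{labels})\bigr]
= \BB E\bigl[G(Z|_{[0,t]},\text{labels})\,\Psi(\{\Delta_{t,k}\}_k)\bigr],
\]
where $\Psi(\{\frk l_k\}_k) = \BB E[F(\{\frk D_k\}_k)]$ with $\{\frk D_k\}_k$ independent free Boltzmann Brownian disks of boundary lengths $\{\frk l_k\}_k$ equipped with their natural metric, area measure and boundary path (the continuity and dominated-convergence bookkeeping here is the routine part). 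Since $\{\Delta_{t,k}\}_k$ is a deterministic function of $Z|_{[0,t]}$ (the perimeters of the jump-bubbles are the magnitudes of the downward jumps of $L$ and $R$ before time $t$, sorted, and $\Delta_{t,0}$ is given by~\eqref{eqn-unexplored-perimeter-ssl}), this identity for all bounded continuous $F,G$ is exactly the assertion that, conditionally on $Z|_{[0,t]}$, the $\{\wt{\frk H}_{t,k}\}_{k\in\BB N_0}$ are independent free Boltzmann Brownian disks with boundary lengths $\{\Delta_{t,k}\}_{k\in\BB N_0}$.

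The main obstacle is the mismatch between $\mcl G^n$ and $\mcl F^n_{\lfloor\tcon tn^{3/4}\rfloor}$ combined with the need to control the infinite product over $k$ uniformly: one must argue that $\BB E[F(\{\frk H^n_{t,k}\}_k)\mid\mcl G^n]$ really is, up to $o(1)$, a \emph{continuous} function of the finitely many largest perimeters alone, which requires (a) the GHPU scaling limit of~\cite[Theorem~1.4]{gwynne-miller-simple-quad} to hold with a modulus of continuity that is locally uniform in the boundary length (so that perimeters $\bcon n^{1/2}\Delta^n_{t,k}$ converging to $\bcon n^{1/2}\Delta_{t,k}$ give converging laws), and (b) a tail estimate showing that the bubbles with small perimeter contribute negligibly — i.e.\ for $\ep>0$ there is $K$ with $\sum_{k>K}\Delta_{t,k}<\ep$ a.s.\ and the corresponding discrete bubbles have total rescaled diameter going to $0$, which follows from the summability of the jumps of $Z$ (Skorokhod convergence and the $3/2$-stable tail via Lemma~\ref{lem-perc-limit-rn}) together with Lemma~\ref{lem-max-bdy-length}. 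Once these two uniformity inputs are in place, the rest is a standard weak-convergence-of-conditional-laws argument.
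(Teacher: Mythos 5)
Your proposal is correct and takes essentially the same approach as the paper: condition on the coarser $\sigma$-algebra $\mcl G_t^n$ of~\eqref{eqn-smaller-filtration}, use the Markov property of peeling to get conditional independence of the $\{\frk H_{t,k}^n\}_k$ as free Boltzmann quadrangulations with $\mcl G_t^n$-measurable perimeters, then pass to the limit via~\cite[Theorem~1.4]{gwynne-miller-simple-quad} and Lemma~\ref{lem-bubble-time-conv}. Your additional bookkeeping (test functionals, truncation of the product over $k$, local uniformity in perimeter) makes explicit what the paper leaves as routine; since weak convergence in a countable product of Polish spaces reduces to convergence of finite-dimensional projections, the tail control you worry about is automatic.
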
 
\begin{proof}
Let $\mcl G_t^n$ be the $\sigma$-algebra of~\eqref{eqn-smaller-filtration} and note that $Z^n|_{[0,t]}$ and each $\Delta_{t,k}^n$ is $ \mcl G_t^n$-measurable. By the Markov property of peeling, if we condition on $ \mcl G_t^n$ then the curve-decorated metric measure spaces $\{ \frk H_{t,k}^n \}_{k\in\BB N_0}$ are conditionally independent free Boltzmann quadrangulations with respective perimeters $\{n^{1/2} \Delta_{t,k}^n\}_{k\in \BB N_0}$, each equipped with its rescaled metric, area measure, and boundary path. By~\cite[Theorem~1.4]{gwynne-miller-simple-quad} and Lemma~\ref{lem-bubble-time-conv}, the above described conditional laws given $\mcl G_t^n$, which are the same as the conditional laws given only $Z^n|_{0,t]}$ and $\{ \Delta_{t,k}\}_{k \in \BB N_0}$, converge as $\mcl N \ni n\rta\infty$ to the conditional laws described in the statement of the lemma. Since $\{ \Delta_{t,k} \}_{k \in \BB N_0}$ is a.s.\ determined by $Z |_{[0,t]}$, we obtain the statement of the lemma. 
\end{proof}

\subsection{Laws of complementary connected components}
\label{sec-map-limit}

Suppose we have fixed a subsequence $\mcl N$ and a coupling as in Section~\ref{sec-ssl}.  In this subsection we will establish several facts concerning the relationship between the main curve-decorated metric measure space $\wt{\frk H}$ and the curve-decorated metric measure spaces $\{\wt{\frk H}_{t,k}\}_{(t,k) \in \BB Q_+ \times \BB N_0}$ which are the subsequential limits of the complementary connected components of the curves $\eta^n([0,t])$.  This will in particular lead to the following proposition, which will be used to check condition~\ref{item-bead-mchar-wedge} of Theorem~\ref{thm-bead-mchar} for $\wt{\frk H}$.

\begin{prop}[Laws of complementary connected components] \label{prop-component-law}
For $t\geq 0$, let $\wt{\mcl U}_t$ be the collection of singly marked metric measure spaces of the form $(U , \wt d_U  , \wt\mu_U  , \wt x_U)$ where $U$ is a connected component of $\wt H \setminus \wt\eta ([0,t])$, $\wt d_U $ is the internal metric of $\wt d $ on $U$, and $\wt x_U$ is the point where $\wt\eta $ finishes tracing $\bdy U$.  If we condition on $Z |_{[0,t]}$, then the conditional law of $\wt{\mcl U}_t$ is that of a collection of independent singly marked free Boltzmann Brownian disks with boundary lengths specified as follows. The elements of $\wt{\mcl U}_t$ corresponding to the connected components of $\wt H \setminus \wt\eta ([0,t])$ which do not have the target point $\wt\xi(\frk l_R)$ on their boundaries are in one-to-one correspondence with the downward jumps of the coordinates of $Z|_{[0,t]}$, with boundary lengths given by the magnitude of the corresponding jump. The element of $\wt{\mcl U}_t$ corresponding to the connected component of $\wt H \setminus \wt\eta ([0,t])$ with $\wt\xi(\frk l_L)$ on its boundary has boundary length $L_t   +   R_t + \frk l_L  + \frk l_R $. 
\end{prop}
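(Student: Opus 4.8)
The plan is to transfer the exact Markov-type statement of Theorem~\ref{thm-sle-bead-nat} (the continuum side, for an actual $\SLE_6$ on a free Boltzmann Brownian disk) through the subsequential limit~\eqref{eqn-ssl-conv}, using the discrete Markov property of peeling recorded in Section~\ref{sec-ssl}. The starting point is Lemma~\ref{lem-bubble-cond-ssl}, which already gives us that, conditionally on $Z|_{[0,t]}$, the limiting curve-decorated metric measure spaces $\{\wt{\frk H}_{t,k}\}_{k\in\BB N_0}$ are independent free Boltzmann Brownian disks with boundary lengths $\{\Delta_{t,k}\}_{k\in\BB N_0}$, where by~\eqref{eqn-unexplored-perimeter-ssl} we have $\Delta_{t,0} = L_t + R_t + \frk l_L + \frk l_R$ and, by the definition just above Lemma~\ref{lem-bubble-time-conv}, the $\Delta_{t,k}$ for $k\geq 1$ are exactly the magnitudes of the downward jumps of the coordinates of $Z|_{[0,t]}$. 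So the laws and independence structure asserted in Proposition~\ref{prop-component-law} are already in hand for the spaces $\wt{\frk H}_{t,k}$; what remains is the identification
\[
\text{(connected components $U$ of $\wt H\setminus\wt\eta([0,t])$, with internal metric, restricted measure, and endpoint $\wt x_U$)}
\;\longleftrightarrow\;
\{\wt{\frk H}_{t,k}\}_{k\in\BB N_0} .
\]

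\textbf{Key steps.} First I would set up, following~\cite[Section~7.3]{gwynne-miller-saw}, the joint embedding picture illustrated in Figure~\ref{fig-map-limit}: pass to a further (a.s.) coupling via Skorokhod so that for each $(t,k)$ there are isometric embeddings of $H_{t,k}^n$ (for $n\in\mcl N$) and of $\wt H_{t,k}$ into a common compact metric space $(W_{t,k},D_{t,k})$ realizing the GHPU convergence as HPU convergence (Proposition~\ref{prop-ghpu-embed}), and similarly embed $Q^n$ and $\wt H$ into $(W,D)$. Next, the natural inclusion maps $f_{t,k}^n : H_{t,k}^n\hookrightarrow Q^n$ are $1$-Lipschitz (internal metric dominates the ambient metric), so by an Arzel\`a--Ascoli/diagonal argument over the countable index set $\BB Q_+\times\BB N_0$ one extracts limiting $1$-Lipschitz maps $f_{t,k} : \wt H_{t,k}\to\wt H$ (this is the content of the analog of~\cite[Lemma~7.9 and the surrounding discussion]{gwynne-miller-saw}). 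One then checks, exactly as in~\cite{gwynne-miller-saw}: (a) $f_{t,k}$ pushes $\wt\mu_{t,k}$ forward to $\wt\mu|_{f_{t,k}(\wt H_{t,k})}$ and pushes the boundary path $\wt\xi_{t,k}$ forward into $\wt\eta([0,t])\cup\bdy\wt H$, with $f_{t,k}(\wt\xi_{t,k}(0))$ being the limit of the rescaled root edges $\BB e_{t,k}^n$, which by construction is $\wt\eta(t)$ for $k=0$ and $\wt\eta(\tau_{t,k})$ for $k\geq 1$; (b) the images $f_{t,k}(\wt H_{t,k}\setminus\bdy\wt H_{t,k})$ are pairwise disjoint open subsets of $\wt H\setminus\wt\eta([0,t])$ whose closures cover $\wt H$ (since the $H_{t,k}^n$ cover $Q^n$ and the measures converge, no mass is lost in the limit); and (c) $f_{t,k}$ restricted to the interior is a homeomorphism onto its image and $\wt d_{t,k}$ is the internal metric of $\wt d$ on that image — this last point uses that $\wt H_{t,k}$ is a Brownian disk, hence a.s.\ homeomorphic to a disk with no ``pinch points'' (the argument of Lemma~\ref{lem-fb-pinch} applied in the limit), so the limit of the boundary-length-parametrized boundary paths still parametrizes the topological boundary. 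Combining (a)--(c): the sets $f_{t,k}(\wt H_{t,k}\setminus\bdy\wt H_{t,k})$ for $k\in\BB N_0$ are precisely the connected components of $\wt H\setminus\wt\eta([0,t])$, each equipped with its internal $\wt d$-metric and its restriction of $\wt\mu$ is isometric (as a measure space) to the corresponding $\wt{\frk H}_{t,k}$, and the marked point $\wt x_U$ — the point where $\wt\eta$ finishes tracing $\bdy U$ — matches $f_{t,k}(\wt\xi_{t,k}(0))$, which is $\wt\xi(\frk l_R)$-component's description for $k=0$ and a jump endpoint for $k\geq 1$. This is the statement $\wt{\mcl U}_t = \{\wt{\frk H}_{t,k}\}_{k\in\BB N_0}$ (up to the marked-point relabeling), so Proposition~\ref{prop-component-law} follows from Lemma~\ref{lem-bubble-cond-ssl}.

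\textbf{Main obstacle.} The delicate point is step (c) together with the ``no lost mass / the images really exhaust $\wt H$'' half of (b): one must rule out that in the limit the interior of some $\wt H_{t,k}$ gets collapsed by $f_{t,k}$, or that two components get glued, or that a macroscopic piece of $\wt H$ fails to be covered. Discretely this is clear because $f_{t,k}^n$ is an honest inclusion of a sub-quadrangulation, but continuity of $f_{t,k}$ alone does not preclude pathological identifications in the limit. The resolution is the one used in~\cite[Section~7.3]{gwynne-miller-saw}: combine the fact that $\wt H_{t,k}$ is almost surely a Brownian disk (so has the topology of a disk and its boundary length measure has full support on the topological boundary, with no interior pinch points — Lemma~\ref{lem-fb-pinch}'s continuum analog) with the a.s.\ convergence of the measures and boundary paths under~\eqref{eqn-ssl-conv} to force $f_{t,k}|_{\mathrm{int}}$ to be injective and a homeomorphism onto a connected component, and to force $\sum_k \wt\mu_{t,k}(\wt H_{t,k}) = \wt\mu(\wt H)$ so that nothing is missed. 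Once the topology of each piece is under control, matching up the boundary paths and marked points is routine. The subtlety of simultaneously controlling the (countably many, but for a.e.\ realization finitely many of macroscopic size) pieces is handled by working with one rational time $t$ and one index $k$ at a time and then taking a countable union of a.s.\ events.
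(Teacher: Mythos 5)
Your overall strategy is the one the paper follows: pass to a further coupling via Skorokhod representation, build subsequential limit maps $f_{t,k}\colon \wt H_{t,k}\to\wt H$ from the inclusion maps $f_{t,k}^n = (\iota_{t,k}^n)^{-1}$, establish that they are measure-preserving isometries of internal metrics (Lemmas~\ref{lem-map-limit} and~\ref{lem-map-property}), show the images $f_{t,k}(\wt H_{t,k}\setminus\bdy\wt H_{t,k})$ are precisely the components of $\wt H\setminus(\wt\eta([0,t])\cup\bdy\wt H)$ (Lemma~\ref{lem-map-conclude}), and then invoke Lemma~\ref{lem-bubble-cond-ssl} for the conditional law. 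This is the argument of Section~\ref{sec-map-limit}.

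Two points are either vague or missing. First, your sketch of how to close the ``nothing is missed'' step is not quite right: almost-sure Prokhorov convergence of the measures only yields the Fatou-type inequality $\sum_{k}\wt\mu_{t,k}(\wt H_{t,k})\leq\wt\mu(\wt H)$, because you cannot interchange the infinite sum over $k$ with the limit in $n$ without uniform control. The paper closes the gap distributionally rather than deterministically: by Lemma~\ref{lem-bubble-cond-ssl}, $\sum_k\wt\mu_{t,k}(\wt H_{t,k})$ has the same law as the sum of the areas of independent free Boltzmann Brownian disks with boundary lengths given by the jumps of $Z$, and by Theorem~\ref{thm-sle-bead-nat} this is exactly the distribution of the total area of the Brownian disk minus the area of an independent chordal $\SLE_6$ range, which is $0$. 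So $\sum_{k}\wt\mu_{t,k}(\wt H_{t,k})\eqD\wt\mu(\wt H)$, and combined with the a.s.\ inequality this forces a.s.\ equality. Simply saying ``a.s.\ convergence of measures forces the mass identity'' elides the crucial comparison to the continuum SLE$_6$.

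Second, the statement of Proposition~\ref{prop-component-law} is for all $t\geq 0$, but the maps $f_{t,k}$ and the full suite of Lemmas~\ref{lem-map-limit}--\ref{lem-map-conclude} are only constructed for $t\in\BB Q_+$. Your ``countable union of a.s.\ events'' comment handles countably many rational times and indices simultaneously, but does not promote the statement to irrational $t$. The paper fills this in with a short continuity argument at the end: $\wt\eta$ is continuous, $Z$ is right-continuous, and the law of a free Boltzmann Brownian disk depends continuously on its boundary length in the GHPU topology (by scaling), so one passes to a limit in $t$ through rationals. You should add this step.
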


Proposition~\ref{prop-component-law} will follow from Lemma~\ref{lem-bubble-cond-ssl} once we establish that the curve-decorated metric measure spaces $\wt{\frk H}_{t,k}$ of~\eqref{eqn-bubble-ssl} are related to the main curve-decorated metric measure space $\wt{\frk H}$ in the appropriate manner, i.e., the $\wt{\frk H}_{t,k}$'s are the connected components of $\wt H\setminus \wt\eta([0,t])$, each equipped with the internal metric of $\wt d$ and the restriction of $\wt\mu$. This will be checked using an elementary metric space argument similar to~\cite[Section~7.3]{gwynne-miller-saw}. 

It will be convenient to view each of our sequences of convergent metric spaces and its limit as a sub-space of a common metric space.
By Proposition~\ref{prop-ghpu-embed}, there a.s.\ exist random compact metric spaces $(W,D)$ and $\{(W_{t,k},  D_{t,k})\}_{(t,k) \in \BB Q_+ \times \BB N_0}$ and isometric embeddings 
\allb \label{eqn-ghpu-embeddings}
&\iota : (\wt H , \wt d) \rta (W,D) ,\quad 
\iota^n : (Q^n,d^n) \rta (W,D) ,\: \forall n \in \mcl N , \notag \\ 
&\iota_{t,k} : ( \wt H_{t,k}  ,  \wt d_{t,k} ) \rta ( W_{t,k} ,  d_{t,k} )   , \quad \op{and} \quad 
\iota_{t,k}^n : ( H_{t,k}^n ,  d_{t,k}^n) \rta ( W_{t,k} ,  D_{t,k} )  , \: \forall n \in \mcl N 
\alle 
such that a.s.\ $\iota^n(\frk Q^n) \rta \iota(\wt{\frk H})$ in the $D$-HPU topology (Definition~\ref{def-hpu}) and $ \iota_{t,k}^n( \frk H_{t,k}^n) \rta  \iota_{t,k} ( \wt{\frk H}_{t,k} )$ in the $D_{t,k}$-HPU topology for each $(t,k) \in \BB Q_+ \times \BB N_0$.

We henceforth identify the doubly curve-decorated metric measure space $\wt{\frk H}$ with its image under $\iota$ and each of the doubly curve-decorated metric measure spaces $\frk Q^n$ for $n\in\BB N$ with its image under $\iota^n$.  We also identify $\wt{\frk H}_{t,k} $ for $(t,k) \in \BB Q_+ \times \BB N_0$ with its image under $\iota_{t,k}$.  Since $\frk H_{t,k}^n$ for $n\in\mcl N$ and $(t,k) \in \BB Q_+ \times \BB N_0$ has already been identified with its image under $\iota^n$ (recall that $H_{t,k}^n \subset Q^n$) we write
\eqb \label{eqn-embed-bubble}
\wt{\frk H}_{t,k}^n = \left(  \wt H_{t,k}^{n} ,  \wt d_{t,k}^{n} ,  \wt \mu_{t,k}^{ n} ,  \wt\xi_{t,k}^{ n}      \right) :=  \iota_{t,k}^n( \frk H_{t,k}^n) .
\eqe 
We also define maps
\eqb \label{eqn-embed-maps} 
f_{t,k}^n = (\iota_{t,k}^n)^{-1} : \wt H_{t,k}^{ n} \rta H_{t,k}^n \subset Q^n .
\eqe  
See Figure~\ref{fig-map-limit} for an illustration of the above maps.

We now check that the maps $f_{t,k}^n$ admit subsequential limits $f_{t,k} : \wt H_{t,k} \rta \wt H$ (in an appropriate sense) and establish some basic properties of the maps $f_{t,k}$. We eventually aim to show that the sets $f_{t,k}(\wt H_{t,k} \setminus \bdy \wt H_{t,k})$ are the connected components of $\wt H\setminus (\wt\eta([0,t]) \cup \bdy\wt H)$, which will be established in Lemma~\ref{lem-map-conclude}. This statement together with Lemma~\ref{lem-bubble-cond-ssl} will imply Proposition~\ref{prop-component-law}.
On a first read, the reader may want to read only the statements of the next two lemmas and skip their proofs.

\begin{lem}
\label{lem-map-limit}
Almost surely there is a (random) subsequence $\mcl N'\subset \mcl N$ and maps $f_{t,k} : \wt H_{t,k}  \rta \wt H$ for $(t,k) \in \BB Q_+ \times \BB N_0$ such that the following hold for each $(t,k) \in \BB Q_+ \times \BB N_0$.
\begin{enumerate}
\item\label{item-map-limit-conv}  The maps $\{f_{t,k}^n\}_{n \in \mcl N'}$ converge to $f_{t,k}$ in the following sense.
For each $x \in \wt H_{t,k} $, each subsequence $\mcl N''\subset \mcl N'$, and each sequence of points $x^n \in \wt H_{t,k}^{n}$ for $n\in\mcl N''$ such that $D_{t,k} (x^n,x) \rta 0$ as $\mcl N''\ni n \rta\infty$, we have $D(f_{t,k}^n(x^n) , f_{t,k}(x)) \rta 0$.  
\item\label{item-map-limit-haus} $f_{t,k}^n(\wt H_{t,k}^n) \rta f_{t,k}(\wt H_{t,k})$ in the $D$-Hausdorff distance. 
\item \label{item-map-limit-dist}For each sequence $x^n\rta x$ as in condition~\ref{item-map-limit-conv}, 
\allb \label{eqn-map-limit-dist}
&\lim_{\mcl N'' \ni n \rta\infty} \wt d_{t,k}^n\left(x^n ,\bdy \wt H_{t,k}^n \right)
= \lim_{\mcl N'' \ni n\rta\infty} d^n\left(f_{t,k}^n(x^n) , \eta^n([0,t]) \cup \bdy Q^n \right)  \notag \\
&\qquad = \wt d_{t,k}\left(x , \bdy \wt H_{t,k}\right)  
= \wt d\left(f_{t,k}(x) , \wt\eta([0,t]) \cup \bdy \wt H\right) .
\alle
In particular, $f_{t,k}(\bdy \wt H_{t,k}) \subset \wt\eta([0,t]) \cup\bdy \wt H$ and $f_{t,k}(\wt H_{t,k} \setminus \bdy \wt H_{t,k}) \cap (\wt\eta([0,t]) \cup \bdy \wt H) = \emptyset$. 
\item\label{item-map-limit-internal} For each $x\in \wt H_{t,k} \setminus \bdy \wt H_{t,k}$ and $\rho \in \left( 0, \frac13 \wt d_{t,k}(x , \bdy \wt H_{t,k}) \right)$, the map $f_{t,k}|_{B_\rho(x;\wt d_{t,k})}$ is an isometry from $(B_\rho(x; \wt d_{t,k}) , \wt d_{t,k})$ to $(B_\rho(f_{t,k}(x) ; \wt d) , \wt d)$.
\item\label{item-map-limit-measure} For each $x$ and $\rho$ as in condition~\ref{item-map-limit-internal}, we have $\wt\mu_{t,k}(A) = \wt\mu(f_{t,k}(A))$ for each Borel set $A\subset B_\rho(x; \wt d_{t,k})$. 
\end{enumerate} 
\end{lem}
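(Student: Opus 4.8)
The plan is to construct the limit maps $f_{t,k}$ via a diagonal extraction argument and then verify each of the five properties by passing to the limit in the corresponding statement for the discrete maps $f_{t,k}^n$. First I would fix $(t,k)$ and build $f_{t,k}$ on a countable dense set. Since $(W,D)$ is compact, for each fixed $x \in \wt H_{t,k}$ and each sequence $x^n \in \wt H_{t,k}^n$ with $D_{t,k}^n(x^n,x) \to 0$ (such a sequence exists by the $D_{t,k}$-HPU convergence $\wt{\frk H}_{t,k}^n \to \wt{\frk H}_{t,k}$), the sequence $f_{t,k}^n(x^n) = \iota^n((\iota_{t,k}^n)^{-1}(x^n))$ lies in the compact space $W$, so it has a convergent subsequence. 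Choosing a countable dense set $\mcl D_{t,k} \subset \wt H_{t,k}$ and diagonalizing over $(t,k) \in \BB Q_+ \times \BB N_0$ and over the points of each $\mcl D_{t,k}$, I extract a single subsequence $\mcl N' \subset \mcl N$ along which $f_{t,k}^n(x^n)$ converges for every $x \in \mcl D_{t,k}$ and every choice of approximating sequence. The key point to check here is that the limit is independent of the approximating sequence: if $x^n \to x$ and $\hat x^n \to x$, then $D_{t,k}^n(x^n, \hat x^n) \to 0$; but $f_{t,k}^n$ is an isometry from $(\wt H_{t,k}^n, \wt d_{t,k}^n)$ onto $(H_{t,k}^n, d_{t,k}^n)$, which is the internal metric on $H_{t,k}^n \subset Q^n$, so $d^n(f_{t,k}^n(x^n), f_{t,k}^n(\hat x^n)) \leq d_{t,k}^n(f_{t,k}^n(x^n), f_{t,k}^n(\hat x^n)) = \wt d_{t,k}^n(x^n, \hat x^n) \to 0$. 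Hence $D(f_{t,k}^n(x^n), f_{t,k}^n(\hat x^n)) \to 0$ since the embeddings are isometric, and the two limits agree. This defines $f_{t,k}$ on $\mcl D_{t,k}$, and the same uniform continuity estimate (now comparing $x^n \to x$ and $y^m \to y$ through the triangle inequality in $D$) shows $f_{t,k}$ is uniformly continuous on $\mcl D_{t,k}$, hence extends to all of $\wt H_{t,k}$; a further routine approximation argument gives property~\ref{item-map-limit-conv} for arbitrary $x \in \wt H_{t,k}$.

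With property~\ref{item-map-limit-conv} in hand, property~\ref{item-map-limit-haus} is immediate: $\wt H_{t,k}^n \to \wt H_{t,k}$ in the $D_{t,k}$-Hausdorff distance by HPU convergence, $f_{t,k}^n$ maps onto $H_{t,k}^n$ isometrically (for the internal metrics) so $f_{t,k}^n(\wt H_{t,k}^n) = H_{t,k}^n$, and by property~\ref{item-map-limit-conv} together with the compactness of $W$ one checks that every $D$-limit point of a sequence in $H_{t,k}^n$ lies in $f_{t,k}(\wt H_{t,k})$ and conversely. For property~\ref{item-map-limit-dist}, the first equality $\wt d_{t,k}(x^n, \bdy \wt H_{t,k}^n) = d^n(f_{t,k}^n(x^n), \bdy H_{t,k}^n)$ holds because $f_{t,k}^n$ maps $\bdy \wt H_{t,k}^n$ to $\bdy H_{t,k}^n$ and respects the internal metrics; the third and fourth equalities follow from the continuity of the distance function and the HPU convergences, once we know $\bdy H_{t,k}^n$ corresponds to $\eta^n([0,t]) \cup \bdy H^n$ locally near $f_{t,k}^n(x^n)$ — this last point uses the definition of the peeling process: the boundary of the bubble $H_{t,k}^n$ (for $k \geq 1$) or of the unexplored region $H_{t,0}^n$ consists of boundary edges of $Q^n$ together with edges traced by the peeling exploration up to time $\lfloor \tcon t n^{3/4}\rfloor$, and these peeled edges are at graph distance $O(1)$ from the range of $\lambda^n$ up to that time, hence from $\eta^n([0,t])$. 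Taking $n \to \infty$ along $\mcl N'$ and using that $\eta^n \to \wt\eta$ uniformly and $\bdy H^n \to \bdy \wt H$ in Hausdorff distance gives~\eqref{eqn-map-limit-dist}; the two displayed inclusions in property~\ref{item-map-limit-dist} are the direct consequences of setting $\wt d_{t,k}(x, \bdy \wt H_{t,k}) = 0$ versus $> 0$.

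For properties~\ref{item-map-limit-internal} and~\ref{item-map-limit-measure}, fix $x \in \wt H_{t,k} \setminus \bdy \wt H_{t,k}$ and $\rho < \frac13 \wt d_{t,k}(x, \bdy \wt H_{t,k})$. By property~\ref{item-map-limit-dist}, for large $n \in \mcl N'$ and approximating $x^n \to x$ we have $\wt d_{t,k}^n(x^n, \bdy \wt H_{t,k}^n) > 2\rho$, so any $d_{t,k}^n$-geodesic between two points of $B_\rho(x^n; \wt d_{t,k}^n)$ stays at distance $> \rho$ from $\bdy H_{t,k}^n$, hence stays in $H_{t,k}^n \setminus \bdy H_{t,k}^n$, which is an open subset of $Q^n$; therefore such a geodesic is also a path in $Q^n$ avoiding $\eta^n([0,t]) \cup \bdy H^n$, so $d_{t,k}^n$ agrees with $d^n$ on $B_\rho(x^n; \wt d_{t,k}^n)$, i.e.\ $f_{t,k}^n$ restricted to this ball is a $D$-isometry onto its image. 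Passing to the limit along $\mcl N'$, using property~\ref{item-map-limit-conv} and the continuity of $D$ and $\wt d_{t,k}$, one obtains that $f_{t,k}|_{B_\rho(x; \wt d_{t,k})}$ is distance-preserving; that its image is exactly $B_\rho(f_{t,k}(x); \wt d)$ follows from the same argument applied in $\wt H$ (distances in $\wt H$ near $f_{t,k}(x)$ are realized by paths avoiding $\wt\eta([0,t]) \cup \bdy \wt H$, hence lying in $f_{t,k}(\wt H_{t,k} \setminus \bdy \wt H_{t,k})$ by property~\ref{item-map-limit-dist} and a connectedness argument). Finally, property~\ref{item-map-limit-measure} is the Prokhorov-convergence analogue: the measures $\wt\mu_{t,k}^n = \mu^n|_{H_{t,k}^n}$ converge to $\wt\mu_{t,k}$ and $\mu^n \to \wt\mu$, and on the metric ball $B_\rho(x^n; \wt d_{t,k}^n)$ the restriction $\mu^n|_{H_{t,k}^n}$ coincides with $\mu^n|_{B_\rho(\cdot)}$; pushing forward under the isometry $f_{t,k}^n$ and taking limits gives $\wt\mu_{t,k}(A) = \wt\mu(f_{t,k}(A))$ for $A$ a closed sub-ball, and then for all Borel $A \subset B_\rho(x; \wt d_{t,k})$ by a monotone class argument. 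The main obstacle throughout is property~\ref{item-map-limit-dist} — specifically, establishing that the boundary of the discrete bubble $H_{t,k}^n$ (hence its Hausdorff/metric limit) really corresponds to $\wt\eta([0,t]) \cup \bdy \wt H$ and not to something larger; this requires carefully exploiting the peeling structure (the edges on $\bdy \ol Q_j^n$ that were cut off by peeling lie within bounded graph distance of $\lambda^n([0,j]_{\frac12 \BB Z})$, via Lemma~\ref{lem-peel-interface}-type reasoning) together with the uniform convergence $\eta^n \to \wt\eta$, and it is the linchpin on which properties~\ref{item-map-limit-internal} and~\ref{item-map-limit-measure} and ultimately Proposition~\ref{prop-component-law} rest.
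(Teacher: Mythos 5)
Your overall strategy tracks the paper's closely: extract a subsequence along which the $1$-Lipschitz maps $f_{t,k}^n$ converge (the paper cites a compactness lemma for this; you reconstruct it by hand, which is fine), prove condition~\ref{item-map-limit-dist} via the discrete identity $\wt d_{t,k}^n(x^n,\bdy\wt H_{t,k}^n) = d^n(f_{t,k}^n(x^n),\eta^n([0,t])\cup\bdy Q^n) + o_n(1)$ and the HPU/uniform convergences, and deduce the local isometry in condition~\ref{item-map-limit-internal} by showing that for small enough balls around an interior point, internal and ambient metrics coincide. Conditions~\ref{item-map-limit-conv}--\ref{item-map-limit-haus} and the distance-preservation half of condition~\ref{item-map-limit-internal} are sound and essentially the same as the paper's.

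There is a genuine gap in your argument for the surjectivity half of condition~\ref{item-map-limit-internal}, i.e.\ the claim that $f_{t,k}(B_\rho(x;\wt d_{t,k})) = B_\rho(f_{t,k}(x);\wt d)$ rather than merely $\subset$. You assert that points of $B_\rho(f_{t,k}(x);\wt d)$ avoid $\wt\eta([0,t])\cup\bdy\wt H$ and ``hence lie in $f_{t,k}(\wt H_{t,k}\setminus\bdy\wt H_{t,k})$ by property~\ref{item-map-limit-dist} and a connectedness argument.'' But property~\ref{item-map-limit-dist} only gives one direction: the image of the interior avoids the curve and boundary. To conclude the converse --- that everything near $f_{t,k}(x)$ avoiding the curve and boundary is in the image --- you would need $f_{t,k}(\wt H_{t,k}\setminus\bdy\wt H_{t,k})$ to be relatively open in $\wt H\setminus(\wt\eta([0,t])\cup\bdy\wt H)$, or equivalently that $\bigcup_k f_{t,k}(\wt H_{t,k}\setminus\bdy\wt H_{t,k})$ covers the complement of $\wt\eta([0,t])\cup\bdy\wt H$. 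This covering statement is exactly condition~\ref{item-map-cover} of Lemma~\ref{lem-map-property}, which is proved \emph{after} and \emph{using} Lemma~\ref{lem-map-limit} (via the measure argument), so invoking it here would be circular. The paper avoids this by staying on the discrete side: given $z\in B_\rho(f_{t,k}(x);\wt d)$, choose $z^n\in Q^n$ with $z^n\to z$; for large $n$ the point $z^n$ is $d^n$-closer to $f_{t,k}^n(x^n)$ than to $\bdy H_{t,k}^n$ (using condition~\ref{item-map-limit-dist} applied to $x^n\to x$ and the Hausdorff convergence $Q^n\to\wt H$), hence $z^n\in H_{t,k}^n$; then pass to a subsequential limit $y$ of $(f_{t,k}^n)^{-1}(z^n)$ in the compact space $W_{t,k}$ and check $f_{t,k}(y)=z$ with $y\in B_\rho(x;\wt d_{t,k})$. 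You should replace your ``connectedness argument'' with this discrete pre-image construction. A minor additional point: in condition~\ref{item-map-limit-measure} your Prokhorov-limit argument implicitly requires choosing the radius so that the boundary of the ball has zero measure; the paper makes this choice explicit (introducing $\rho'>\rho$ with $\wt\mu_{t,k}(\bdy B_{\rho'})=0$), and you should too.
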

\begin{proof}
\noindent\textit{Proof of condition~\ref{item-map-limit-conv}.} Each of the maps $f_{t,k}^n$ is $1$-Lipschitz from $(\wt H_{t,k}^{ n} , \wt d_{t,k}^{ n} )$ to $(W,D)$, so the existence of a subsequence $\mcl N'\subset \mcl N$ and maps $\{f_{t,k}\}_{(t,k) \in \BB Q_+ \times \BB N_0}$ satisfying condition~\ref{item-map-limit-conv} is immediate from~\cite[Lemma~2.1]{gwynne-miller-uihpq} (plus a diagonalization argument to get a subsequence which works for all $(t,k) \in \BB Q_+ \times \BB N_0$ simultaneously). Henceforth fix such a subsequence $\mcl N'$ and maps $f_{t,k}$.
\medskip

\noindent\textit{Proof of condition~\ref{item-map-limit-haus}.} It is clear from condition~\ref{item-map-limit-conv} that any subsequential limit of the sets $f_{t,k}^n(\wt H_{t,k}^n) = H_{t,k}^n$ for $n\in\mcl N'$ in the $D$-Hausdorff distance must coincide with $f_{t,k}(\wt H_{t,k} )$, so since $(W,D)$ is compact we infer that condition~\ref{item-map-limit-haus} holds.
\medskip


\noindent\textit{Proof of condition~\ref{item-map-limit-dist}.} Fix a subsequence $\mcl N''\subset \mcl N'$ and a sequence $x^n\rta x$ as in condition~\ref{item-map-limit-conv}. 
For each $n\in\mcl N''$, we have $f_{t,k}^n(\bdy \wt H_{t,k}^n) = \bdy H_{t,k}^n \subset \eta^n([0,t]) \cup \bdy Q^n$, $f_{t,k}^n(\wt H_{t,k}^n \setminus \bdy \wt H_{t,k}^n) \cap (\eta^n([0,t]) \cup \bdy Q^n) =\emptyset$, and any path in $H_{t,k}^n = f_{t,k}^n(\wt H_{t,k}^n)$ which hits $\eta^n([0,t]) \cup \bdy Q^n$ must pass through $\bdy H_{t,k}^n$. Hence 
\eqb \label{eqn-bdy-dist-n}
  \wt d_{t,k}^n\left(x^n ,\bdy \wt H_{t,k}^n \right) =  d^n\left(f_{t,k}^n(x^n) , \eta^n([0,t]) \cup \bdy Q^n \right) + o_n(1)   
\eqe 
where here the $o_n(1)$ is a deterministic rounding error coming from the fact that $\eta^n$ does not trace every edge of each peeled quadrilateral.
Since $x^n\rta x$ and $\bdy \wt H_{t,k}^n$ is parameterized by the path $\xi_{t,k}^n $, which converges uniformly to the parameterization $\wt \xi_{t,k}$ of $\bdy \wt H_{t,k}$, we infer that the left side of~\eqref{eqn-bdy-dist-n} converges a.s.\ to $\wt d_{t,k}\left(x , \bdy \wt H_{t,k}\right)$. On the other hand, condition~\ref{item-map-limit-conv} implies that $f_{t,k}^n(x^n) \rta f_{t,k}(x)$. Since $\eta^n \rta \wt\eta$ and the boundary parameterizations $\xi^n \rta \wt\xi$ uniformly, we infer that the right side of~\eqref{eqn-bdy-dist-n} converges a.s.\ to $\wt d\left(f_{t,k}(x) , \wt\eta([0,t]) \cup \bdy \wt H\right) $. Thus~\eqref{eqn-map-limit-dist} holds. 

The last statement of condition~\ref{item-map-limit-dist} follows since by~\eqref{eqn-map-limit-dist}, if $x\in \wt H_{t,k}$ then $x \in \bdy \wt H_{t,k}$ if and only if $\wt d_{t,k}\left(x , \bdy \wt H_{t,k}\right) = 0$ if and only if $\wt d\left(f_{t,k}(x) , \wt\eta([0,t]) \cup \bdy \wt H\right) = 0$ if and only if $f_{t,k}(x) \in \wt\eta([0,t]) \cup \bdy \wt H$. 
\medskip

\noindent\textit{Proof of condition~\ref{item-map-limit-internal}.} Let $x\in \wt H_{t,k}$ and $\rho \in \left( 0, \frac13 \wt d_{t,k}(x , \bdy \wt H_{t,k}) \right)$. Also fix $0 < \ep < \frac13 \wt d_{t,k}(x , \bdy \wt H_{t,k}) - \rho$ and points $y_1,y_2 \in B_\rho(x ; \wt d_{t,k})$. 

Since $\wt H_{t,k}^n \rta \wt H_{t,k}$ in the $D_{t,k}$-Hausdorff distance, we can find points $y_1^n,y_2^n , x^n\in \wt H_{t,k}^n$ for $n\in\mcl N'$ such that a.s.\ $x^n\rta x$, $y_1^n\rta y_1$, and $y_2^n \rta y_2$ as $\mcl N' \ni n \rta\infty$. By condition~\ref{item-map-limit-conv}, a.s.\ $f_{t,k}^n(x^n) \rta f_{t,k}(x)$, $f_{t,k}^n(y_1^n) \rta f_{t,k}(y_1)$, and $f_{t,k}^n(y_2^n) \rta f_{t,k}(y_2)$. 
By our choice of $\rho$ and $\ep$ together with condition~\ref{item-map-limit-dist}, for large enough $n\in\mcl N'$ we have 
\eqbn
\wt d_{t,k}^n\left(x^n , \bdy \wt H_{t,k}^n \right) > 3\rho +3\ep \quad \op{and} \quad \wt d_{t,k}^n\left( x^n , y_i^n \right) < \rho + \ep ,\quad \forall i \in \{1,2\} .
\eqen 
If this is the case, then $y_1^n$ and $y_2^n$ are $\wt d_{t,k}^n$-closer to each other than to $\bdy \wt H_{t,k}^n$, so since every path in $H_{t,k}^n = f_{t,k}^n(\wt H_{t,k}^n)$ which exits $H_{t,k}^n$ must pass through $\bdy H_{t,k}^n$, 
\eqbn
\wt d_{t,k}^n(y_1^n , y_2^n) = d_{t,k}^n \left(  f_{t,k}^n(y_1^n) , f_{t,k}^n(y_2^n) \right) = d^n\left(  f_{t,k}^n(y_1^n) , f_{t,k}^n(y_2^n) \right)   . 
\eqen
Taking a limit as $n\rta\infty$ shows that $\wt d_{t,k}(y_1,y_2) = d(f_{t,k}(y_1) , f_{t,k}(y_2))$. 
Therefore $f_{t,k}$ is distance-preserving on $B_\rho(x ; \wt d_{t,k} )$. 

We still need to show that $f_{t,k}(B_\rho(x ; \wt d_{t,k})) = B_\rho( f_{t,k}(x) ; \wt d)$. It is clear from the preceding paragraph that $f_{t,k}(B_\rho(x ; \wt d_{t,k})) \subset B_\rho( f_{t,k}(x) ; \wt d)$, so we just need to prove the reverse inclusion.
Since $f_{t,k}^n(x^n) \rta f_{t,k}(x)$ and $Q^n \rta \wt H $ in the $D$-local Hausdorff distance,  
\eqb \label{eqn-map-ball-conv}
B_\rho\left( f_{t,k}^n(x^n)  ; d^n \right) \rta B_\rho\left( f_{t,k}(x) ; \wt d \right) 
\eqe
in the $D$-Hausdorff distance. By~\eqref{eqn-map-ball-conv}, for each $z\in  B_\rho\left( f_{t,k}(x) ; \wt d \right) $, there exists a sequence of points $z^n \in B_\rho\left( f_{t,k}^n(x^n)  ; d^n \right)$ for $n\in\mcl N'$ such that $z^n \rta z$. 
By condition~\ref{item-map-limit-dist} and our choice of $\rho$, for large enough $n\in\mcl N' $, $z^n$ is $d^n$-closer to $f_{t,k}^n(x^n)$ than to $\bdy H_{t,k}^n$, so $z^n\in H_{t,k}^n$ and 
\eqb \label{eqn-ball-dist-compare}
d^n\left( f_{t,k}^n(x^n) , z^n \right) = d_{t,k}^n(f_{t,k}^n(x^n) , z^n) = \wt d_{t,k}^n\left( x^n , (f_{t,k}^n)^{-1}(z^n)  \right) .
\eqe 
By compactness of $(W_{t,k} , D_{t,k})$, there is a subsequence $\mcl N''$ of $\mcl N'$ and a $y \in \wt H_{t,k}$ such that $(f_{t,k}^n)^{-1}(z^n) \rta y$ as $\mcl N'' \ni n \rta\infty$. By condition~\ref{item-map-limit-conv}, $f_{t,k}(y) = z$. 
The left side of~\eqref{eqn-ball-dist-compare} converges to $d^n(f_{t,k}(x) , z) \leq \rho$ and the right side converges to $\wt d_{t,k}(x,y)$. Therefore $y\in B_\rho(x ; \wt d_{t,k})$ so since our initial choice of $z \in B_\rho(f_{t,k}(x) ; \wt d)$ was arbitrary, we see that $f_{t,k}(B_\rho(x ; \wt d_{t,k})) =  B_\rho( f_{t,k}(x) ; \wt d)$. 
\medskip

\noindent\textit{Proof of condition~\ref{item-map-limit-measure}.} Let $x \in \wt H_{t,k}$, $\rho > 0$, and $x^n \in \wt H_{t,k}^n$ for $n\in\mcl N'$ be as above and choose $\rho' > \rho$ such that $\rho' <   \frac13 \wt d_{t,k}(x , \bdy \wt H_{t,k})$ and 
\eqbn
\wt\mu_{t,k}\left( \bdy B_{\rho'}\left(x ; \wt d_{t,k} \right) \right) =     \wt\mu\left( \bdy B_{\rho'}\left( f_{t,k}(x) ; \wt d \right) \right) = 0 .
\eqen
By this condition together with the HPU convergence $\wt{\frk H}_{t,k}^n \rta \wt{\frk H}_{t,k}$ and $\frk Q^n \rta \wt{\frk H}$ as $\mcl N'\ni n \rta\infty$, 
\eqb \label{eqn-ball-measure-conv}
\wt\mu_{t,k}^n|_{ B_{\rho'}(x^n ; \wt d_{t,k}^n )} \rta \wt\mu_{t,k} |_{ B_{\rho'}\left(x ; \wt d_{t,k} \right)}
\eqe 
in the $D_{t,k}$-Prokhorov metric and 
\eqb \label{eqn-ball-measure-conv'}
 \mu^n|_{ B_{\rho'}(f_{t,k}^n(x) ; d^n )} \rta  \wt\mu|_{ B_{\rho'}\left( f_{t,k}(x) ;  \wt d\right)} 
\eqe 
in the $D $-Prokhorov metric.  

Conditional on everything else, for $n\in \mcl N'$ let $w^n$ be sampled uniformly from $\wt\mu_{t,k}^n|_{ B_{\rho'}(x^n ; \wt d_{t,k}^n )} $ (normalized to be a probability measure) and let $w$ be sampled uniformly from $\wt\mu_{t,k}|_{ B_{\rho'}\left(x ; \wt d_{t,k} \right)}$ (normalized to be a probability measure). By~\eqref{eqn-ball-measure-conv} $w^n\rta w$ in law, so by the Skorokhod representation theorem we can couple together $\{ w^n\}_{n\in\mcl N'} $ and $w$ in such a way that a.s.\ $w^n \rta w$ as $\mcl N' \ni n \rta\infty$. By condition~\ref{item-map-limit-conv}, $f_{t,k}^n(w^n) \rta f_{t,k}(w)$. By our choice of $\rho'$, for each sufficiently large $n \in \mcl N'$, 
\eqbn
B_{\rho'}(f_{t,k}(x^n) ; d^n ) = B_{\rho'}(f_{t,k}(x^n) ; d_{t,k}^n ) .
\eqen
For such an $n$ the law of $f_{t,k}^n(w^n)$ is that of a uniform sample from $ \mu^n|_{ B_{\rho'}(f_{t,k}(x^n) ; d^n )}$. By~\eqref{eqn-ball-measure-conv'}, the law of $f_{t,k}(w)$ is that of a uniform sample from $ \wt\mu|_{ B_{\rho'}\left( f_{t,k}(x) ;  \wt d\right)}$. We similarly infer from~\eqref{eqn-ball-measure-conv} and~\eqref{eqn-ball-measure-conv'} that  
\eqbn
\wt\mu_{t,k}\left(  B_{\rho'}\left(x ; \wt d_{t,k}\right) \right) =  \wt \mu\left( B_{\rho'}\left( f_{t,k}(x) ;  \wt d\right) \right) .
\eqen
Therefore,  
\eqbn
(f_{t,k})_*\left(   \wt\mu_{t,k}|_{ B_{\rho'}\left(x ; \wt d_{t,k} \right)} \right) =  \wt \mu|_{ B_{\rho'}\left( f_{t,k}(x) ; \wt d\right)} ,
\eqen
which implies condition~\ref{item-map-limit-measure}.
\end{proof}

From Lemma~\ref{lem-map-limit}, we can deduce some further properties of the maps $f_{t,k}$.

\begin{lem} \label{lem-map-property}
Suppose we are in the setting of Lemma~\ref{lem-map-limit} and fix a subsequence $\mcl N'$ and maps $\{f_{t,k} : (t,k) \in \BB Q_+ \times \BB N_0\}$ satisfying the conditions of that lemma. Almost surely, the following is true for each $t\in \BB Q_+$.
\begin{enumerate}
\item For each distinct $k_1,k_2 \in \BB N_0$, we have $f_{t,k_1} (\wt H_{t,k_1} \setminus \bdy \wt H_{t,k_1}) \cap f_{t,k_2}(\wt H_{t,k_2})  = \emptyset$. \label{item-map-disjoint} 
\item For $k\in\BB N_0$, let $\rng d_{t,k}$ be the internal metric of $\wt d$ on $f_{t,k}(\wt H_{t,k} \setminus \bdy \wt H_{t,k} )$. Then $f_{t,k}$ is an isometry from $( \wt H_{t,k}  \setminus \bdy \wt H_{t,k} , \wt d_{t,k})$ to $(f_{t,k}(\wt H_{t,k}   \setminus \bdy \wt H_{t,k} ) , \rng d_{t,k})$. \label{item-map-iso}
\item For each $k \in \BB N_0$ and each Borel set $A\subset \wt H_{t,k} \setminus \bdy \wt H_{t,k}$, we have $\wt\mu_{t,k}(A) = \wt\mu(f_{t,k}(A))$. \label{item-map-measure'}
\item $\wt\mu\left( \wt H \setminus \bigcup_{k=0}^\infty f_{t,k}(\wt H_{t,k} \setminus \bdy \wt H_{t,k}) \right) = 0$. \label{item-map-mass}
\item $\bigcup_{k=0}^\infty f_{t,k}(\wt H_{t,k} \setminus \bdy \wt H_{t,k}) = \wt H\setminus ( \wt\eta([0,t]) \cup \bdy \wt H)$. \label{item-map-cover}
\end{enumerate}
\end{lem}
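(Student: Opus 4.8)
\textbf{Proof proposal for Lemma~\ref{lem-map-property}.}

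The plan is to establish the five assertions essentially in the order listed, bootstrapping each from Lemma~\ref{lem-map-limit} and the two-curve HPU convergence $\frk Q^n \rta \wt{\frk H}$, with assertion~\ref{item-map-cover} the main obstacle. For assertion~\ref{item-map-disjoint}: fix $k_1 \neq k_2$ and suppose $f_{t,k_1}(x) = f_{t,k_2}(y)$ for some $x \in \wt H_{t,k_1}\setminus\bdy\wt H_{t,k_1}$ and $y \in \wt H_{t,k_2}$. Pick approximating sequences $x^n \rta x$ in $\wt H_{t,k_1}^n$ and $y^n \rta y$ in $\wt H_{t,k_2}^n$ (using the $D_{t,k_i}$-Hausdorff convergence $\wt H_{t,k_i}^n \rta \wt H_{t,k_i}$), so that by Lemma~\ref{lem-map-limit}\ref{item-map-limit-conv}, $f_{t,k_1}^n(x^n)$ and $f_{t,k_2}^n(y^n)$ both converge to the common point, hence $d^n(f_{t,k_1}^n(x^n), f_{t,k_2}^n(y^n)) \rta 0$. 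But $H_{t,k_1}^n$ and $H_{t,k_2}^n$ are distinct bubbles (or a bubble and the unexplored region), so they intersect only along their boundaries in $\eta^n([0,t]) \cup \bdy H^n$; hence $d^n(f_{t,k_1}^n(x^n), \eta^n([0,t])\cup\bdy H^n) \rta 0$. By Lemma~\ref{lem-map-limit}\ref{item-map-limit-dist}, $\wt d_{t,k_1}(x, \bdy\wt H_{t,k_1}) = 0$, forcing $x \in \bdy\wt H_{t,k_1}$, a contradiction.

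For assertion~\ref{item-map-iso}: by Lemma~\ref{lem-map-limit}\ref{item-map-limit-internal}, $f_{t,k}$ is locally an isometry from $\wt d_{t,k}$-balls of radius $<\tfrac13 \wt d_{t,k}(x,\bdy\wt H_{t,k})$ onto $\wt d$-balls of the same radius. A curve in $\wt H_{t,k}\setminus\bdy\wt H_{t,k}$ maps to a curve of the same $\wt d$-length in $f_{t,k}(\wt H_{t,k}\setminus\bdy\wt H_{t,k})$ by covering it with finitely many such balls, so $\rng d_{t,k}(f_{t,k}(x),f_{t,k}(y)) \leq \wt d_{t,k}(x,y)$; conversely, a $\rng d_{t,k}$-geodesic (or near-geodesic) between $f_{t,k}(x)$ and $f_{t,k}(y)$ stays in $f_{t,k}(\wt H_{t,k}\setminus\bdy\wt H_{t,k})$ by definition of the internal metric, and pulls back under the local isometries to a path in $\wt H_{t,k}\setminus\bdy\wt H_{t,k}$ of the same length, giving the reverse inequality. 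Injectivity of $f_{t,k}$ on $\wt H_{t,k}\setminus\bdy\wt H_{t,k}$ is immediate since a distance-preserving map between metric spaces (for the internal metrics) is injective. Assertion~\ref{item-map-measure'} follows from Lemma~\ref{lem-map-limit}\ref{item-map-limit-measure} by writing $\wt H_{t,k}\setminus\bdy\wt H_{t,k}$ as a countable increasing union of sets of the form $\{x : \wt d_{t,k}(x,\bdy\wt H_{t,k}) > 1/m\}$, each of which is covered by countably many of the balls in condition~\ref{item-map-limit-internal}, and applying a standard measure-decomposition argument together with assertion~\ref{item-map-disjoint} to avoid double-counting.

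For assertion~\ref{item-map-mass}: by the HPU convergence $\mu_{t,k}^n \rta \wt\mu_{t,k}$ and $\mu^n \rta \wt\mu$, together with Lemma~\ref{lem-map-limit}\ref{item-map-limit-haus} and the fact that $\sum_k \mu_{t,k}^n(H_{t,k}^n) = \mu^n(Q^n)$ for each $n$ (the bubbles and unexplored region partition $Q^n$ up to measure-zero boundary), a Fatou/weak-convergence argument gives $\sum_k \wt\mu_{t,k}(\wt H_{t,k}) \geq \wt\mu(\wt H)$; combined with assertion~\ref{item-map-measure'} and the disjointness from assertion~\ref{item-map-disjoint} this forces $\sum_k \wt\mu(f_{t,k}(\wt H_{t,k}\setminus\bdy\wt H_{t,k})) = \wt\mu(\wt H)$, which is assertion~\ref{item-map-mass} (using that $\wt\mu(\bdy\wt H) = 0$ and $\wt\mu(\wt\eta([0,t])) = 0$; the latter holds since $\wt\eta$ has measure zero, being a limit of curves whose complementary components carry all the mass). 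Finally, for assertion~\ref{item-map-cover} — the hard step — the inclusion $\subset$ is Lemma~\ref{lem-map-limit}\ref{item-map-limit-dist}. For $\supset$, let $z \in \wt H\setminus(\wt\eta([0,t])\cup\bdy\wt H)$, so $\rho := \wt d(z,\wt\eta([0,t])\cup\bdy\wt H) > 0$. Approximate $z$ by $z^n \in Q^n$ with $z^n \rta z$; then for large $n$, $d^n(z^n, \eta^n([0,t])\cup\bdy Q^n) > \rho/2$, so $z^n$ lies in exactly one of the quadrangulations $H_{t,k}^n$, say $k = k_n$, at internal distance $> \rho/2$ from $\bdy H_{t,k_n}^n$. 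The main obstacle is to rule out $k_n \rta \infty$ (which would make the ``limit bubble'' degenerate). This is handled by a diameter bound: a bubble $H_{t,k}^n$ containing a point at $d^n$-distance $>\rho/2$ from its boundary has $d^n$-diameter bounded below, and by Lemma~\ref{lem-fb-pinch} applied to the unexplored region at a suitable earlier time (the bubble is cut off along a boundary arc), or by the scaling limit of the boundary length process $Z^n \rta Z$ together with the fact that only finitely many jumps of $Z|_{[0,t]}$ exceed any fixed threshold, there are only finitely many $k$ for which $H_{t,k}^n$ can have macroscopic diameter; hence $\{k_n\}$ is bounded along a subsequence, say $k_n \equiv k$. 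Passing to this subsequence, $z^n = f_{t,k}^n((f_{t,k}^n)^{-1}(z^n))$ with $(f_{t,k}^n)^{-1}(z^n) \rta$ some $w \in \wt H_{t,k}$ by compactness of $W_{t,k}$, and Lemma~\ref{lem-map-limit}\ref{item-map-limit-conv} gives $f_{t,k}(w) = z$; Lemma~\ref{lem-map-limit}\ref{item-map-limit-dist} applied to $w$ shows $\wt d_{t,k}(w,\bdy\wt H_{t,k}) = \wt d(z,\wt\eta([0,t])\cup\bdy\wt H) = \rho > 0$, so $w \in \wt H_{t,k}\setminus\bdy\wt H_{t,k}$, giving $z \in f_{t,k}(\wt H_{t,k}\setminus\bdy\wt H_{t,k})$ as desired.
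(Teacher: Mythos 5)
Your proofs of conditions~(1), (2), and~(3) are essentially the same as the paper's, and they are fine.

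The proof of condition~(4), however, contains a genuine gap: the Fatou argument goes in the \emph{wrong} direction. Setting $a_{n,k} := \mu_{t,k}^n(H_{t,k}^n)$, you have $\sum_k a_{n,k} = \mu^n(Q^n) \rta \wt\mu(\wt H)$ and $a_{n,k} \rta \wt\mu_{t,k}(\wt H_{t,k})$ for each fixed $k$. Fatou's lemma (for the counting measure on $k$) then gives
\[
\sum_{k} \wt\mu_{t,k}(\wt H_{t,k}) \;=\; \sum_k \liminf_n a_{n,k} \;\leq\; \liminf_n \sum_k a_{n,k} \;=\; \wt\mu(\wt H),
\]
which is $\leq$, not $\geq$. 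The $\leq$ direction is also what conditions~(1) and~(3) already give; the content of~(4) is precisely ruling out mass escaping into the tail of the bubble index $k$, and no soft Fatou/weak-convergence argument can do that. The paper handles this by an exact distributional identity: using Lemma~\ref{lem-bubble-cond-ssl}, the conditional law given $Z|_{[0,t]}$ of $\sum_k \wt\mu_{t,k}(\wt H_{t,k})$ is that of $\sum_k \Delta_{t,k}^2 X_k$ with i.i.d.\ $X_k$ distributed as the area of a unit-boundary-length Brownian disk; by Theorem~\ref{thm-sle-bead-nat}, the same distribution describes the sum of areas of complementary components of an SLE$_6$ on a Brownian disk given its boundary length process, and since SLE$_6$ has zero Lebesgue area that sum a.s.\ equals the total area of the disk. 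Hence $\sum_k \wt\mu_{t,k}(\wt H_{t,k}) \eqD \wt\mu(\wt H)$, and together with $\leq$ this forces a.s.\ equality. This law-comparison step is essential and is what your proposal is missing. Relatedly, your parenthetical use of $\wt\mu(\wt\eta([0,t])) = 0$ inside the proof of~(4) is circular: that fact is a consequence of conditions~(4) and~(5) (Lemma~\ref{lem-map-conclude}), not an available input.

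Your proof of condition~(5) deviates from the paper in the step ruling out $k_n \rta\infty$: the paper uses condition~(4) directly (if $k_n\rta\infty$, then $B_\rho(z;\wt d)$ is disjoint from every $f_{t,k}(\wt H_{t,k})$, so by~(4) it has zero $\wt\mu$-mass, contradicting positivity of the Brownian disk measure on open sets), whereas you propose a diameter/pinch argument. Your route is plausible in spirit but as written it is too sketchy to verify: you need a uniform-in-$k$ diameter bound for bubbles with small boundary length, which Lemma~\ref{lem-fb-pinch} alone does not deliver without a further union bound over all bubbles and control on the tail of the boundary-length sequence $\{\Delta_{t,k}\}$. The paper's use of~(4) makes this step essentially immediate, which is another reason the law-comparison argument for~(4) is the right thing to establish first.
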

\begin{proof}
Throughout the proof, we assume we are working on the full probability event that the conditions of Lemma~\ref{lem-map-limit} are satisfied, and omit the qualifier ``a.s.". 
\medskip

\noindent\textit{Proof of condition~\ref{item-map-disjoint}.} Suppose $k_1,k_2\in \BB N_0$ with $k_1\not= k_2$ and $z \in f_{t,k_1}(\wt H_{t,k_1}) \cap f_{t,k_2}(\wt H_{t,k_2})$. 
By condition~\ref{item-map-limit-haus} of Lemma~\ref{lem-map-limit}, we can choose $x_1^n \in \wt H_{t,k_1}^n$ and $x_2^n\in \wt H_{t,k_2}^n$ for $n\in\mcl N'$ such that $f_{t,k_1}^n(x_1^n)$ and $f_{t,k_2}^n(x_2^n)$ each converge to $z$ as $\mcl N'\ni n \rta\infty$. By compactness of $(W_{t,k_i} , D_{t,k_i})$ for $i\in \{1,2\}$, we can find  a subsequence $\mcl N''$ of $\mcl N'$ and points $x_1 \in \wt H_{t,k_1}$ and $x_2 \in \wt H_{t,k_2}$ such that $x_1^n\rta x_1$ and $x_2^n\rta x_2$ as $\mcl N'' \ni n \rta\infty$. By condition~\ref{item-map-limit-conv} of Lemma~\ref{lem-map-limit}, $f_{t,k_1}(x_1) = f_{t,k_2}(x_2) = z$. Since $d^n(f_{t,k_1}^n(x_1^n) , f_{t,k_2}^n(x_2^n) ) \rta 0$, also
\eqbn
d^n(f_{t,k_1}^n(x_1^n) , \bdy H_{t,k_1}^n ) = \wt d_{t,k_1}^n( x_1^n ,\bdy \wt H_{t,k_1}^n) \rta 0 .
\eqen
By condition~\ref{item-map-limit-dist} of Lemma~\ref{lem-map-limit}, $x_1 \in \bdy \wt H_{t,k_1} $ so (by the last statement of that condition) $z = f_{t,k_1}(x_1) \notin f_{t,k_1}(\wt H_{t,k_1} \setminus \bdy \wt H_{t,k_1})$.  
\medskip

\noindent\textit{Proof of condition~\ref{item-map-iso}.} The interior of the Brownian disk $(\wt H_{t,k} \setminus \partial \wt H_{t,k} , \wt d_{t,k})$ is homeomorphic to an open Euclidean disk (see, e.g.,~\cite{bet-disk-tight}), so is $\sigma$-compact. Hence we can choose countably many balls $B_j = B_{\rho_j}(x_j; \wt d_{t,k})$ for $j\in\BB N$ such that $x_j \in \wt H_{t,k} \setminus \bdy \wt H_{t,k}$ and $\rho_j \in \left(0 , \wt d_{t,k}(x_j , \bdy \wt H_{t,k} ) \right)$ whose union covers $\wt H_{t,k} \setminus \bdy \wt H_{t,k}$. By condition~\ref{item-map-limit-internal} of Lemma~\ref{lem-map-limit}, $f|_{B_j}$ is an isometry from $(B_j , \wt d_{t,k})$ to $(f_{t,k}(B_j ) , \wt d)$ for each $j\in \BB N_0$.

If $\gamma : [0,T] \rta \wt H_{t,k} \setminus \bdy \wt H_{t,k}$ is a path with finite $\wt d_{t,k}$-length, then the image of $\gamma$ is compact so we can find finitely many times $ 0 = s_0 < \dots < s_N = T$ such that $\gamma([s_{i-1} , s_i])$ is contained in a single one of the balls $B_j$ for each $i \in [1,N]_{\BB Z}$. 
Hence the $\wt d$-length of $f_{t,k}(\gamma)$ coincides with the $\wt d_{t,k}$-length of $\gamma$. Since this holds for every such path $\gamma$, we obtain condition~\ref{item-map-iso}. 
\medskip

\noindent\textit{Proof of condition~\ref{item-map-measure'}.} Define the balls $\{B_j\}_{j\in\BB N}$ as above. 
For $j\in \BB N$, let $C_j := B_j\setminus \bigcup_{i=1}^{j-1} B_i$ so that the $C_j$'s are disjoint and their union covers $\wt H_{t,k} \setminus \bdy \wt H_{t,k}$. By condition~\ref{item-map-limit-measure} of Lemma~\ref{lem-map-limit}, for each Borel set $A\subset \wt H_{t,k} \setminus \bdy \wt H_{t,k}$ it holds that $\wt\mu_{t,k}(C_j \cap A) = \wt\mu(f_{t,k}(C_j\cap A))$ for each $j\in\BB N$. Summing over all $j$ yields condition~\ref{item-map-measure'}. 
\medskip

\noindent\textit{Proof of condition~\ref{item-map-mass}.}
It follows from Lemma~\ref{lem-bubble-cond-ssl} that a.s.\ $\wt\mu_{t,k}(\bdy \wt H_{t,k}) = 0$ for each $k\in\BB N_0$. Hence conditions~\ref{item-map-disjoint} and~\ref{item-map-measure'} together imply that
\eqb \label{eqn-mass-dominate}
\sum_{k=0}^\infty \wt\mu_{t,k}(\wt H_{t,k} ) \leq \wt\mu(\wt H) .
\eqe 
We will now argue that $\sum_{k=0}^\infty \wt\mu_{t,k}(\wt H_{t,k} ) \eqD  \wt\mu(\wt H)$, so that~\eqref{eqn-mass-dominate} is a.s.\ an equality, which in turn will imply condition~\ref{item-map-mass}. 
 
Recall that the law of the area of a free Boltzmann Brownian disk with unit boundary length is given by 
\eqb \label{eqn-area-law}
\frac{1}{ \sqrt{2\pi a^5 } } e^{-\frac{1}{2 a} } \BB 1_{(a\geq 0)}\, da
\eqe
 where $da$ denotes Lebesgue measure on $[0,\infty)$. The law of the area of a free Boltzmann Brownian disk with boundary length $\frk l$ can be obtained by sampling a random variable from the law~\eqref{eqn-area-law} and then multiplying it by $\frk l^2$. 
Let $\{X_k\}_{k\in\BB N_0}$ be a collection of i.i.d.\  random variables with the law~\eqref{eqn-area-law}, independent from everything else, and recall the boundary length $  \Delta_{t,k}$ of $\wt H_{t,k}$ for $k\in\BB N_0$ defined as in~\eqref{eqn-unexplored-perimeter-ssl}. By Lemma~\ref{lem-bubble-cond-ssl},  
\eqb \label{eqn-area-sum-expand}
\sum_{k=0}^\infty \wt\mu_{t,k} (\wt H_{t,k}  ) \eqD \sum_{k=0}^\infty  \Delta_{t,k}^2 X_k .
\eqe  

The process $Z$ has the same law as the left/right boundary length process of a chordal $\SLE_6$ on an independent doubly marked Brownian disk with left/right boundary lengths $\frk l_L$ and $\frk l_R$, parameterized by quantum natural time. 
The quantities $\{\Delta_{t,k}\}_{k \in \BB N_0}$ are precisely the set of boundary lengths of the complementary connected components of such an $\SLE_6$ curve run up to time $t$. If we condition on these boundary lengths, then by Theorem~\ref{thm-sle-bead-nat} the conditional law of the collection of internal metric measure spaces corresponding to these connected components is that of a collection of independent quantum disks with given boundary lengths.
Since the area of a chordal $\SLE_6$ curve on an independent doubly marked Brownian disk is a.s.\ equal to zero, we infer that the sum of the areas of these connected components is a.s.\ equal to the total mass of the Brownian disk. Therefore the right side of~\eqref{eqn-area-sum-expand}, and hence also the left side of~\eqref{eqn-mass-dominate}, has the same law as $\wt \mu(\wt H)$. Thus condition~\ref{item-map-mass} holds.
 \medskip

\noindent\textit{Proof of condition~\ref{item-map-cover}.} Condition~\ref{item-map-limit-dist} of Lemma~\ref{lem-map-limit} implies that $\bigcup_{k=0}^\infty f_{t,k}(\wt H_{t,k} \setminus \bdy \wt H_{t,k}) \subset \wt H \setminus ( \wt\eta([0,t]) \cup \bdy \wt H) $, so we just need to prove the reverse inclusion. 
To this end, suppose $z \in   \wt H\setminus ( \wt\eta([0,t]) \cup \bdy \wt H)$. We seek $k \in \BB N_0$ such that $z\in f_{t,k}(\wt H_{t,k} \setminus \bdy \wt H_{t,k})$. Choose $\rho \in \left( 0, \wt d(z , \wt\eta([0,t]) \cup \bdy \wt H) \right)$.  
We can find a sequence of points $z^n \in Q^n$ for $n\in\mcl N'$ such that $D(z^n,z) \rta 0$. Since $\xi^n\rta \wt\xi$ and $\eta^n \rta \wt\eta$ $D$-uniformly, we have $\liminf_{n\rta\infty} d^n(z^n , \eta^n([0,t]) \cup \bdy Q^n) > \rho$. In particular, $z^n\in Q^n\setminus (\eta^n([0,t]) \cup \bdy Q^n)$, so there exists $k^n\in \BB N_0$ such that $z^n \in  H_{t,k^n}^n$ and
\eqb \label{eqn-bdy-dist}
\liminf_{n\rta\infty}  d^n( z^n , \bdy  H_{t,k^n}^n ) > \rho .
\eqe 

We claim that $\liminf_{\mcl N' \ni n\rta\infty} k^n < \infty$. Suppose by way of contradiction that this is not the case, i.e.\ $k^n \rta \infty$. Any path in $Q^n$ from $z^n$ to a point not in $H_{t,k^n}^n$ must pass through $\bdy H_{t,k^n}^n$. Hence for each fixed $k\in\BB N_0$, 
\eqbn
\liminf_{n\rta\infty}  d^n( z^n ,  H_{t,k}^n ) \geq    \liminf_{n\rta\infty}  d^n( z^n , \bdy  H_{t,k^n}^n ) > \rho .
\eqen
By condition~\ref{item-map-limit-haus} of Lemma~\ref{lem-map-limit}, $\wt d(z , f_{t,k}(\wt H_{t,k}) ) > \rho$ for each such $k$. Hence $B_\rho(z ; \wt d)$ is disjoint from each $f_{t,k}(\wt H_{t,k})$. By condition~\ref{item-map-mass}, $\wt\mu(B_\rho(z;\wt d)) = 0$, which contradicts the fact that the natural area measure $\wt\mu$ of the Brownian disk $\wt H$ a.s.\ assigns positive mass to every open set, so proves our claim.  
 
Hence $\liminf_{\mcl N' \ni n\rta\infty} k^n < \infty$, so there exists $k\in\BB N_0$ and a subsequence $\mcl N''\subset \mcl N'$ such that $k^n = k$ for each $n\in \mcl N''$. By compactness of $(W_{t,k} , D_{t,k})$, after possibly passing to a further subsequence we can arrange that $(f_{t,k}^n)^{-1} (z^n) \rta x \in \wt H_{t,k}$ as $\mcl N'' \ni n \rta\infty$. By condition~\ref{item-map-limit-conv} of Lemma~\ref{lem-map-limit}, $f_{t,k}(x) = z$ and by condition~\ref{item-map-limit-dist} of Lemma~\ref{lem-map-limit}, $z \in \wt H_{t,k} \setminus \bdy \wt H_{t,k}$. Thus $z\in \bigcup_{k=0}^\infty f_{t,k}(\wt H_{t,k} \setminus \bdy \wt H_{t,k}) $, as required.
\end{proof}

The following lemma summarizes two of the most important implications of the preceding lemmas. 

\begin{lem}
\label{lem-map-conclude}
Suppose we are in the setting of Lemma~\ref{lem-map-limit} and fix a subsequence $\mcl N'$ and maps $\{f_{t,k} : (t,k) \in \BB Q_+ \times \BB N_0\}$ satisfying the conditions of that lemma. Almost surely, the following is true.
\begin{enumerate}
\item For each $t\in \BB Q_+$, the sets $f_{t,k}(\wt H_{t,k} \setminus \bdy \wt H_{t,k})$ for $k\in \BB N_0$ are precisely the connected components of $\wt H\setminus (\wt\eta([0,t]) \cup \bdy \wt H)$. \label{item-map-component}
\item The mass of the limiting curve $\wt\eta$ satisfies $\wt\mu(\wt\eta) = 0$. \label{item-map-curve-mass}
\end{enumerate}
\end{lem}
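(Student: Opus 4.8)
\textbf{Proof plan for Lemma~\ref{lem-map-conclude}.}

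For part~\ref{item-map-component}, I would proceed as follows. Fix $t\in\BB Q_+$. By Lemma~\ref{lem-map-property}\ref{item-map-cover}, $\bigcup_{k=0}^\infty f_{t,k}(\wt H_{t,k}\setminus\bdy\wt H_{t,k}) = \wt H\setminus(\wt\eta([0,t])\cup\bdy\wt H)$, so every point of the complement lies in some $f_{t,k}(\wt H_{t,k}\setminus\bdy\wt H_{t,k})$. By Lemma~\ref{lem-map-property}\ref{item-map-disjoint}, these sets are pairwise disjoint. Hence it suffices to show each $f_{t,k}(\wt H_{t,k}\setminus\bdy\wt H_{t,k})$ is \emph{connected} and \emph{open} in $\wt H\setminus(\wt\eta([0,t])\cup\bdy\wt H)$, since a disjoint cover of a space by nonempty open connected sets is exactly the partition into connected components (each such set is clopen in the complement, hence a union of components, and connectedness forces it to be a single component). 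Connectedness is immediate: $\wt H_{t,k}\setminus\bdy\wt H_{t,k}$ is connected (it is the interior of a Brownian disk, by Lemma~\ref{lem-bubble-cond-ssl}) and $f_{t,k}$ is continuous, being a limit of the $1$-Lipschitz maps $f_{t,k}^n$ (Lemma~\ref{lem-map-limit}\ref{item-map-limit-conv}). For openness, I would use Lemma~\ref{lem-map-limit}\ref{item-map-limit-internal}: for $x\in\wt H_{t,k}\setminus\bdy\wt H_{t,k}$ and $\rho\in(0,\tfrac13\wt d_{t,k}(x,\bdy\wt H_{t,k}))$, the map $f_{t,k}$ restricted to $B_\rho(x;\wt d_{t,k})$ is an \emph{isometry onto} $B_\rho(f_{t,k}(x);\wt d)$; in particular $f_{t,k}(x)$ has a $\wt d$-neighborhood contained in $f_{t,k}(\wt H_{t,k})$, and since by Lemma~\ref{lem-map-limit}\ref{item-map-limit-dist} the part of this neighborhood lying in $\wt\eta([0,t])\cup\bdy\wt H$ is exactly $f_{t,k}(\bdy\wt H_{t,k}\cap B_\rho(x;\wt d_{t,k}))=\emptyset$ (as $x$ is interior and $\rho$ small), the whole ball $B_\rho(f_{t,k}(x);\wt d)$ lies in $f_{t,k}(\wt H_{t,k}\setminus\bdy\wt H_{t,k})$. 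This gives openness in $\wt H$ (a fortiori in the complement), completing part~\ref{item-map-component}.

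For part~\ref{item-map-curve-mass}, I would combine Lemma~\ref{lem-map-property}\ref{item-map-mass} and~\ref{item-map-cover}. Fix $t\in\BB Q_+$. By~\ref{item-map-cover}, $\bigcup_k f_{t,k}(\wt H_{t,k}\setminus\bdy\wt H_{t,k}) = \wt H\setminus(\wt\eta([0,t])\cup\bdy\wt H)$, and by~\ref{item-map-mass}, $\wt\mu$ of the left side equals $\wt\mu(\wt H)$. Therefore $\wt\mu(\wt\eta([0,t])\cup\bdy\wt H) = 0$, and since $\wt\mu(\bdy\wt H)=0$ (the natural area measure of a Brownian disk does not charge its boundary), we get $\wt\mu(\wt\eta([0,t]))=0$ for every $t\in\BB Q_+$. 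Taking a union over the countable set $\BB Q_+$ and using that $\wt\eta = \bigcup_{t\in\BB Q_+}\wt\eta([0,t])$ (since $\wt\eta$ is defined on $[0,\infty)$ and is right-continuous, indeed continuous), we conclude $\wt\mu(\wt\eta)=0$.

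I do not expect either part to present a serious obstacle, as all the substantive work is packaged into Lemmas~\ref{lem-map-limit} and~\ref{lem-map-property}. The only point requiring a little care is the openness argument in part~\ref{item-map-component}: one must make sure that the isometry statement of Lemma~\ref{lem-map-limit}\ref{item-map-limit-internal} genuinely yields an \emph{open} image in $\wt H$ (not merely in some internal metric), which it does precisely because it asserts $f_{t,k}(B_\rho(x;\wt d_{t,k})) = B_\rho(f_{t,k}(x);\wt d)$ — an honest $\wt d$-metric ball — and because Lemma~\ref{lem-map-limit}\ref{item-map-limit-dist} certifies that small such balls centered at interior points avoid $\wt\eta([0,t])\cup\bdy\wt H$ entirely.
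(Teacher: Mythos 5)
Your proof is correct and takes essentially the same route as the paper's: part~\ref{item-map-component} follows from openness (via Lemma~\ref{lem-map-limit}\ref{item-map-limit-internal}), disjointness (Lemma~\ref{lem-map-property}\ref{item-map-disjoint}), and covering (Lemma~\ref{lem-map-property}\ref{item-map-cover}), while part~\ref{item-map-curve-mass} follows from Lemma~\ref{lem-map-property}\ref{item-map-mass} together with the covering condition and a limit in $t$. The one thing you spell out that the paper leaves implicit is the connectedness of each image $f_{t,k}(\wt H_{t,k}\setminus\bdy\wt H_{t,k})$ — which is indeed needed to pass from a disjoint open cover to the statement that these sets are precisely the connected components, and which holds because $\wt H_{t,k}\setminus\bdy\wt H_{t,k}$ is connected and $f_{t,k}$ is ($1$-Lipschitz, hence) continuous — so your write-up is if anything slightly more complete.
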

\begin{proof}
By condition~\ref{item-map-limit-internal} of Lemma~\ref{lem-map-limit}, each of the sets $ f_{t,k}(\wt H_{t,k} \setminus \bdy \wt H_{t,k})$ for $k\in\BB N_0$ is open. 
By condition~\ref{item-map-disjoint} of Lemma~\ref{lem-map-property}, these sets are disjoint and by condition~\ref{item-map-cover} of Lemma~\ref{lem-map-property}, their union is $\wt H\setminus (\wt\eta([0,t]) \cup \bdy \wt H)$. These facts together imply condition~\ref{item-map-component}. 

By conditions~\ref{item-map-mass} and~\ref{item-map-cover} of Lemma~\ref{lem-map-property}, for each $t\in\BB Q_+$,
\eqbn
\wt\mu\left( \wt\eta([0,t]) \right) \leq \wt\mu\left( \wt H \setminus \bigcup_{k=0}^\infty f_{t,k}(\wt H_{t,k} \setminus \bdy \wt H_{t,k})   \right) = 0.
\eqen
Sending $t\rta\infty$ gives condition~\ref{item-map-curve-mass}.
\end{proof}

\begin{proof}[Proof of Proposition~\ref{prop-component-law}]
By Lemma~\ref{lem-bubble-cond-ssl} and condition~\ref{item-map-component} of Lemma~\ref{lem-map-conclude}, the statement of the proposition is true for each $t\in \BB Q_+$. 
Since $\wt\eta$ is continuous, $Z$ is right continuous, and the law of a free Boltzmann Brownian disk depends continuously on its boundary length in the GHPU topology (by scaling) the statement for general $t \geq 0$ follows by taking limits.
\end{proof}

\subsection{$Z$ is the boundary length process of $\wt\eta$}
\label{sec-bdy-process-id}

Recall that $ Z$ has the same law as the left/right boundary length process for a chordal $\SLE_6$ on an independent doubly marked free Boltzmann Brownian disk with left/right boundary lengths $\frk l_L$ and $\frk l_R$, parameterized by quantum natural time.  The goal of this subsection and the next is to establish the existence of an $\SLE_6$-decorated Brownian disk and a homeomorphism $\Phi$ satisfying condition~\ref{item-bead-mchar-homeo} of Theorem~\ref{thm-bead-mchar}. 

In the present subsection, we will show that $\wt\eta$ intersects itself at least as often as a chordal $\SLE_6$ whose boundary length process is $Z$ (Lemma~\ref{lem-curve-top}) and that $Z$ determines the boundary length measure, not just the total boundary length, of the complementary connected components of $\wt\eta$ (Lemma~\ref{lem-bubble-bdy}).  These two statements will be used to show the existence of the desired map $\Phi$ in Section~\ref{sec-homeo} below (showing the injectivity of $\Phi$ will also require the estimates of Section~\ref{sec-crossing}). 

The proofs in this section are based on elementary limiting arguments together with the description of the topology of $\eta$ in terms of the left/right boundary length process given at the end of Section~\ref{sec-lqg-bdy-process}. The reader may wish to skip the rest of this section on a first read and refer back to the various lemmas as they are used.

Throughout the remainder of this section, we write
\eqb \label{eqn-continuum-terminal-time}
\sigma_0 := \inf\left\{ t\geq 0 :  L_t = -\frk l_L \: \op{or} \: R_t = -\frk l_R \right\} ,
\eqe 
as in Section~\ref{sec-lqg-bdy-process},
for the terminal time of $Z$, so that $Z_t = (-\frk l_L , -\frk l_R)$ for each $t\geq \sigma_0$. 
 
The following lemma tells us that the self-intersection times of $\wt\eta$ and the times when it hits $\bdy \wt H$, respectively, are at least as frequent as the self-intersection times and boundary intersection times of a chordal $\SLE_6$ on a Brownian disk with boundary length process $Z$ (recall~\eqref{eqn-inf-adjacency0} from Section~\ref{sec-lqg-bdy-process} and the discussion just after).

\begin{lem}
\label{lem-curve-top}
Almost surely, the following is true.
\begin{enumerate}
\item\label{item-curve-top-terminal} $\wt\eta(t) = \wt\xi(\frk l_R)$ for each time $t$ after the terminal time $\sigma_0$. 
\item\label{item-curve-top-double} For each $t_1,t_2 \geq 0$ with $t_1<t_2$ such that either
\eqb
\label{eqn-inf-adjacency}
 L_{t_1}  =  L_{t_2} = \inf_{s\in [t_1,t_2]}  L_s \quad\op{or} \quad
 R_{t_1}  =  R_{t_2} = \inf_{s\in [t_1,t_2]}   R_s
\eqe 
it holds that $\wt\eta(t_1) = \wt\eta(t_2)$. 
\item \label{item-curve-top-bdy} For each time $t\geq 0$ at which $  L$ (resp.\ $R$) attains a record minimum, it holds that $\wt\eta(t) = \wt\xi(L_t)$ (resp.\ $\wt\eta(t) = \wt\xi(- R_t)$). 
\end{enumerate}
\end{lem}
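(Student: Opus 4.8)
The plan is to establish each of the three assertions by passing to the subsequential limit along $\mcl N'$ and using the description of the topology of an $\SLE_6$-decorated Brownian disk in terms of its left/right boundary length process recalled at the end of Section~\ref{sec-lqg-bdy-process}, together with the relationship between the percolation peeling clusters $\dot Q_j^n$ and the curve $\lambda^n$ (and the interface path $\rng\lambda_*^n$) from Section~\ref{sec-interface}. For assertion~\ref{item-curve-top-terminal}, I would argue discretely: after the discrete terminal time $\mcl J^n$ we have $\lambda^n(j) = \BB e_*^n$, and although we do not control $\tcon^{-1} n^{-3/4} \mcl J^n$, the equicontinuity estimate Proposition~\ref{prop-equicont} (specifically~\eqref{eqn-equicont-end}) already shows that $\op{dist}(\lambda^n(i), \BB e_*^n ; Q^n) \leq \ep n^{1/4}$ for all $i \geq \delta^{-1} n^{3/4}$ with probability close to $1$. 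Combined with the fact that the rescaled boundary length process $Z^n$ is nearly constant after a time of constant order (which forces $\sigma_0 \leq \delta^{-1}$ with high probability for small $\delta$) and that $\BB e_*^n$ converges to $\wt\xi(\frk l_R)$ under the embeddings, taking $\mcl N' \ni n \rta \infty$ and then $\delta \rta 0$ gives $\wt\eta(t) = \wt\xi(\frk l_R)$ for every $t > \sigma_0$, and then for $t = \sigma_0$ by continuity of $\wt\eta$.

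For assertion~\ref{item-curve-top-double}, I would argue as follows. Suppose~\eqref{eqn-inf-adjacency} holds, say for the $L$-coordinate (the $R$-case is symmetric); we may also assume $t_1, t_2 < \sigma_0$ since otherwise assertion~\ref{item-curve-top-terminal} applies. The condition $L_{t_1} = L_{t_2} = \inf_{s\in[t_1,t_2]} L_s$ means, in the discrete picture, that at a time close to $\tcon n^{3/4} t_2$ the percolation peeling process disconnects from the target edge exactly the portion of $\bdy \ol Q$ between the peeled edges at the two times, with no net boundary length change on the left in between beyond the bubbles cut off; more precisely, since $Z^n \rta Z$ in the Skorokhod topology and $L, R$ a.s.\ have no simultaneous jumps (by Lemma~\ref{lem-perc-limit-rn}) and the increments of $L^n$ near $\tcon n^{3/4} t_i$ approximate $0$ appropriately, one can choose times $j_1^n \approx \tcon n^{3/4} t_1$ and $j_2^n \approx \tcon n^{3/4} t_2$ such that the peeled quadrilateral at step $j_2^n$ disconnects the peeled edge $\dot e_{j_1^n}^n$ from $\BB e_*^n$ inside $\ol Q_{j_1^n - 1}^n$. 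This forces $\dot e_{j_1^n}^n$ to lie on $\bdy \ol Q_{j_2^n}^n$ at a location which, via Lemma~\ref{lem-peel-interface}, coincides with an endpoint of the edge $\rng\lambda_*^n(s)$ for $s$ between $s_{j_1^n}$ and $s_{j_2^n}$, so that $\dot e_{j_1^n}^n$ and $\dot e_{j_2^n}^n$ lie at bounded graph distance from one another; hence $\op{dist}(\lambda^n(j_1^n), \lambda^n(j_2^n); Q^n) = O(1)$. Dividing the scaled distances by $n^{1/4}$ and taking the limit along $\mcl N'$ (using $\eta^n \rta \wt\eta$ uniformly in the embedded picture) gives $\wt\eta(t_1) = \wt\eta(t_2)$. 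Assertion~\ref{item-curve-top-bdy} is handled the same way: a record minimum of $L$ at time $t$ corresponds, in the discrete model, to the peeled edge $\dot e_j^n$ (for $j \approx \tcon n^{3/4} t$) being at bounded graph distance from the boundary vertex $\beta^n(\cdot)$ at the appropriate boundary-length coordinate $\approx \bcon n^{1/2} L_t$, which follows from~\eqref{eqn-bdy-process-inf} relating $Y^{L,n}_j$ to the running minimum of $W^{L,n}$ and the fact that the covered edges on the left arc are exactly those disconnected from the target; passing to the limit and using $\xi^n \rta \wt\xi$ uniformly gives $\wt\eta(t) = \wt\xi(L_t)$, and symmetrically for $R$.

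The main obstacle I anticipate is making rigorous the discrete approximation in assertion~\ref{item-curve-top-double}: one must choose the discrete times $j_1^n, j_2^n$ so that the peeling cluster at step $j_2^n$ genuinely disconnects $\dot e_{j_1^n}^n$ from $\BB e_*^n$, and this requires a careful bookkeeping argument that the near-equality of $L^n$ at its running minimum at both endpoints (which is all the Skorokhod convergence $Z^n \rta Z$ gives) translates into the exact combinatorial disconnection event up to a bounded number of edges --- the subtlety being that the condition~\eqref{eqn-inf-adjacency} is an equality, not an inequality, so one must control the small fluctuations of $L^n$ near the relevant times and rule out the pathology that $L^n$ dips slightly below its endpoint values in between without a corresponding disconnection. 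This is exactly the kind of estimate that the overshoot lemma (Lemma~\ref{lem-no-close-jump}) and the structure of the boundary length process are designed to handle, so I would reduce to a localized version of that analysis; once that combinatorial fact is in hand, the rest is a routine limiting argument using the a.s.\ convergence from the Skorokhod coupling fixed in Section~\ref{sec-ssl}.
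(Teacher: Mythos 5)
Your overall strategy is sound and matches the paper's in outline: establish assertion~\ref{item-curve-top-terminal} by showing the unexplored region degenerates to a point, establish assertions~\ref{item-curve-top-double} and~\ref{item-curve-top-bdy} by pushing discrete boundary-process identities (via~\eqref{eqn-bdy-process-inf}) through the Skorokhod convergence $Z^n \rta Z$ and the uniform convergence $\eta^n \rta \wt\eta$. You also correctly identify the delicate point for assertion~\ref{item-curve-top-double}: the condition~\eqref{eqn-inf-adjacency} is an \emph{equality}, and Skorokhod convergence does not convert an equality $L_{t_1} = L_{t_2} = \inf_{[t_1,t_2]} L$ into the discrete disconnection event $L^n_{t_1^n} = L^n_{t_2^n} \leq \inf_{(t_1^n,t_2^n)} L^n_s$ unless one knows more about the local behavior of $L$ near $t_1,t_2$. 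The gap is in how you propose to fill it.

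You suggest reducing to a ``localized version'' of the overshoot estimate Lemma~\ref{lem-no-close-jump}. That lemma is the wrong tool: it controls the overshoot of the stopping time $J_r^n$ relative to the remaining perimeter (so that the finite-volume boundary length process stays comparable to the infinite-volume one), and says nothing about whether $L$ attains a local minimum at the times appearing in~\eqref{eqn-inf-adjacency}. The paper instead introduces a dedicated regularity lemma for $Z$ (Lemma~\ref{lem-bdy-connect-reg}) which asserts, a.s., that for any pair $t_1<t_2$ satisfying~\eqref{eqn-inf-adjacency} one can find nearby $t_1' \in [t_1,t_1+\ep]$, $t_2' \in [t_2-\ep,t_2]$ with $L_{t_1} = L_{t_2} = \inf_{[t_1',t_2']} L_s$ and such that $L$ does \emph{not} attain a local minimum at $t_1'$ or $t_2'$; at such times the Skorokhod convergence genuinely does produce discrete times with the exact disconnection property, and then one concludes for general $(t_1,t_2)$ by continuity of $\wt\eta$. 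Proving Lemma~\ref{lem-bdy-connect-reg} is not routine: it uses local absolute continuity of $Z$ with respect to a pair of independent $3/2$-stable processes $Z^\infty$ (Theorem~\ref{thm-bdy-process-law}) and the SLE-theoretic fact that chordal $\SLE_6$ a.s.\ has no boundary double points (via~\cite{miller-wu-dim}), together with a stationarity argument to rule out a downward jump coinciding with a local minimum. Without some version of this input your argument for assertion~\ref{item-curve-top-double} does not close; the same regularity lemma is also implicitly needed for assertion~\ref{item-curve-top-bdy} to pass from the countably many $r$ with $T_r = T_{r^+}$ to all $r$. (For assertion~\ref{item-curve-top-terminal}, the paper actually argues more directly from $\Delta_{t,0}^n \rta 0$ and the GHPU convergence of free Boltzmann quadrangulations; your route through Proposition~\ref{prop-equicont} is a workable alternative.)
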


For the proof of Lemma~\ref{lem-curve-top}, we need some elementary regularity properties for the process $Z$. 

\begin{lem}
\label{lem-bdy-connect-reg}
Almost surely, the limiting boundary length process $ Z = (L,R)$ satisfies the following properties. Suppose $0 < t_1<t_2 \leq  \sigma_0$ are such that 
\eqb
\label{eqn-inf-adjacency-reg}
  L_{t_1}  =  L_{t_2} = \inf_{s\in [t_1,t_2]}  L_s.
\eqe
Then
\begin{enumerate}
\item\label{item-bdy-connect-initial} For each $\ep > 0$ there is a time $t_1' \in [t_1-\ep , t_1]$ with $ L_{t_1' } <   L_{t_1}$. 
\item\label{item-bdy-connect-triple} There is no time $t \in (t_1,t_2)$ such that $L_t = L_{t_1} = L_{t_2}$. 
\item\label{item-bdy-connect-jump} For each $\ep > 0$, there are times $t_1' \in [t_1, t_1 +\ep]$ and $t_2' \in [t_2-\ep , t_2]$ such that $ L_{t_1'}  = L_{t_2'} = \inf_{s\in [t_1' , t_2' ]}   L_s$ and $L$ does not attain a local minimum at time $t_1'$ or $t_2'$.
\end{enumerate}
The same holds with $R$ in place of $ L$. 
\end{lem}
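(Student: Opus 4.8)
\textbf{Proof plan for Lemma~\ref{lem-bdy-connect-reg}.} The key observation is that the process $Z = (L,R)$ is locally absolutely continuous with respect to a pair of independent totally asymmetric $3/2$-stable processes with no upward jumps: indeed, by Lemma~\ref{lem-perc-limit-rn-stopping} the law of $Z|_{[0,\wt\sigma_r]}$ is absolutely continuous with respect to the law of $Z^\infty|_{[0,\sigma_r^\infty]}$ for each $r>0$, and by assertion~\ref{item-endpoint-lim} of Lemma~\ref{lem-endpoint-cont} one has $\wt\sigma_r \uparrow \sigma_0$ as $r\downarrow 0$. Since all three properties in the statement are measurable with respect to $Z|_{[0,t]}$ for $t < \sigma_0$ (more precisely, each asserts that a certain ``bad'' event fails for all pairs $0 < t_1 < t_2 \leq \wt\sigma_r$ for every rational $r > 0$, and these exhaust all pairs $0 < t_1 < t_2 < \sigma_0$; the case $t_2 = \sigma_0$ can be handled separately using that $L$ and $R$ attain their minima on $[0,\sigma_0]$, as in the proof of assertion~\ref{item-endpoint-lim} of Lemma~\ref{lem-endpoint-cont}), it suffices to prove the analogous statements for a single totally asymmetric $3/2$-stable process $L^\infty$ with no upward jumps, and by symmetry the same argument handles $R$.

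So the plan is to reduce to the following three facts about a totally asymmetric $\alpha$-stable process $L^\infty$ with $\alpha = 3/2$ and no positive jumps, applied on a compact time interval. First, for assertion~\ref{item-bdy-connect-initial}: if $L^\infty_{t_1} = \inf_{s\in[t_1,t_2]} L^\infty_s$ for some $t_2 > t_1$, then $t_1$ is a point at which $L^\infty$ attains a running infimum ``from the right,'' and by time-reversal / the fact that for a spectrally negative stable process the running infimum process is strictly increasing at such times (equivalently, $0$ is regular for $(-\infty,0)$ for $L^\infty$ minus its running infimum, which holds since $L^\infty$ has unbounded variation), a.s.\ for every such $t_1$ there are times arbitrarily close to $t_1$ from the left at which $L^\infty$ is strictly below $L^\infty_{t_1}$. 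Second, for assertion~\ref{item-bdy-connect-triple}: a spectrally negative $3/2$-stable process a.s.\ has no ``triple points'' for its running-infimum level sets, i.e.\ a.s.\ there do not exist $t_1 < t < t_2$ with $L^\infty_{t_1} = L^\infty_t = L^\infty_{t_2}$ all equal to $\inf_{s\in[t_1,t_2]} L^\infty_s$; this is a standard consequence of the fact that the inverse local time at the infimum is a stable subordinator (of index $1 - 1/\alpha = 1/3$), whose range has zero Lebesgue measure and whose level sets are a.s.\ singletons, combined with the regularity in the first point. Third, assertion~\ref{item-bdy-connect-jump} follows from the first two: given $t_1 < t_2$ as in~\eqref{eqn-inf-adjacency-reg}, using assertion~\ref{item-bdy-connect-triple} there is no intermediate time where $L$ returns to the level $L_{t_1}$, and using assertion~\ref{item-bdy-connect-initial} (and its right-sided analogue, which holds by the same regularity argument) one can slide $t_1$ slightly to the right and $t_2$ slightly to the left to times $t_1', t_2'$ at which $L$ is not at a local minimum while still having $L_{t_1} = L_{t_2} = \inf_{s\in[t_1',t_2']} L_s$; the requirement that $L$ does not attain a local minimum at $t_1'$ or $t_2'$ is arranged because the set of local-minimum times of $L^\infty$ is a.s.\ countable (each local minimum is isolated on one side or is a jump endpoint), so one can avoid it while perturbing.

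The main technical obstacle is making the regularity statements in the first two points precise and correctly quoting the relevant stable-process facts: the cleanest route is to cite~\cite[Chapters VII--VIII]{bertoin-book}, specifically that for a spectrally negative stable process of index $\alpha \in (1,2)$ the reflected process $L^\infty - \inf L^\infty$ has $0$ regular for itself and for $(-\infty, 0)$ (since paths have infinite variation and no positive jumps), so that the inverse local time at the infimum is a $(1-1/\alpha)$-stable subordinator with a.s.\ strictly increasing, continuous-except-for-jumps paths; from this the "no triple points" and "approach from below" statements are immediate. One must also be slightly careful at the terminal time $\sigma_0$, since $Z$ is not locally absolutely continuous with respect to the stable process all the way up to $\sigma_0$; but as noted, the only pair with $t_2 = \sigma_0$ is handled by noting $L_{\sigma_0} = \inf_{s\in[0,\sigma_0]} L_s$ is attained (no upward jumps), and $t_1 = \sigma_0$ is excluded by $t_1 < t_2 \leq \sigma_0$, so passing $r \downarrow 0$ through the rationals and using continuity of $Z$ on $[0,\sigma_0)$ closes the gap. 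Everything else is a routine approximation argument.
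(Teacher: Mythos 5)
Your proposal takes a genuinely different route for the crucial assertion~\ref{item-bdy-connect-initial}: you try to derive it from pure L\'evy-process regularity theory, while the paper derives it from the SLE$_6$ \emph{no boundary double point} fact of Miller--Wu (\cite[Remark 5.3]{miller-wu-dim}), translated through the $\sqrt{8/3}$-quantum wedge boundary length process of \cite[Corollary 1.19]{wedges}. The paper's route is clean precisely because the fact it needs --- that $L^\infty_{t_1} = L^\infty_{t_2} = \inf_{[t_1,t_2]} L^\infty_s$ together with $L^\infty$ attaining a running infimum at $t_1$ forces a boundary double point of $\eta^\infty$ --- is a.s.\ impossible for SLE$_6$, with no path-regularity analysis needed.

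The L\'evy-theoretic substitute you propose for assertion~\ref{item-bdy-connect-initial} has a genuine gap. You argue that ``$0$ is regular for $(-\infty,0)$ for $L^\infty$ minus its running infimum, which holds since $L^\infty$ has unbounded variation'' gives the conclusion. This regularity hypothesis is also satisfied by Brownian motion, and for Brownian motion the conclusion is \emph{false}: if $\tau$ is a local minimum of a Brownian motion $B$ and $t_2$ is its first return to the level $B_\tau$ after $\tau$, then $B_s > B_\tau$ for $s \in (\tau,t_2)$ and $B_s \geq B_\tau$ for $s$ in a left-neighborhood of $\tau$, so $(t_1,t_2)=(\tau,t_2)$ satisfies \eqref{eqn-inf-adjacency-reg} while violating assertion~\ref{item-bdy-connect-initial}. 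Regularity of the origin for $(-\infty,0)$ is a statement at deterministic or stopping times and does not transfer to the uncountable collection of one-sided local-minimum times arising in \eqref{eqn-inf-adjacency-reg}. What actually distinguishes the $3/2$-stable process from Brownian motion here is the \emph{absence of downward creeping} (no Gaussian component), together with the fact that the excursion measure of the reflected-at-infimum process puts no mass on excursions whose terminal jump lands exactly on the previous running minimum. These are the facts one would need to push your L\'evy-theoretic argument through; a bare appeal to unbounded variation is not enough.

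Two smaller points. For assertion~\ref{item-bdy-connect-triple}, your appeal to the $(1-1/\alpha)$-stable inverse local time subordinator and its ``level sets being singletons'' is not a proof of the statement (the inverse local time is concerned with running infima from time $0$, not local infima over arbitrary random intervals, and its range is an uncountable Cantor set, so cardinality arguments of that type do not close the gap); the paper instead notes that assertion~\ref{item-bdy-connect-triple} is an immediate corollary of assertion~\ref{item-bdy-connect-initial} applied with $t$ in place of $t_1$, which is both shorter and correct. For assertion~\ref{item-bdy-connect-jump}, the idea of perturbing away from the (countable) set of local minimum times is the right one, but note that by assertion~\ref{item-bdy-connect-triple} one has $L_s > L_{t_1}$ for $s \in (t_1,t_2)$, so if both $t_1' > t_1$ and $t_2' < t_2$ then $\inf_{[t_1',t_2']} L_s > L_{t_1}$ and the identity $L_{t_1} = L_{t_2} = \inf_{[t_1',t_2']} L_s$ as literally written cannot hold; the paper's proof produces times $t_1^\delta, t_2^\delta$ with $L_{t_1^\delta} = L_{t_2^\delta} = \inf_{[t_1^\delta,t_2^\delta]} L_s$, which is what is actually used in Lemma~\ref{lem-curve-top}, and one needs the explicit construction (via first crossing of the level $L_{t_1}+\delta$) rather than an unspecified ``slide.''
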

\begin{proof}
Let $Z^\infty = (L^\infty,R^\infty)$ be a pair of independent totally asymmetric $3/2$-stable processes with no upward jumps.  Recall from~\cite[Corollary~1.19]{wedges} that $Z^\infty$ is the left/right boundary length for a chordal $\SLE_6$ $\eta^\infty$ from $0$ to $\infty$ on an independent $\sqrt{8/3}$-quantum wedge, parameterized by quantum natural time.  If there are times $t_1, t_2 \in [0,\infty)$ with $t_1<t_2$ such that $L^\infty_{t_1} = L^\infty_{t_2} = \inf_{s \in [t_1,t_2]} L^\infty_s$ and $L^\infty$ attains a running infimum relative to time~$0$ at time $t_1$, then $\eta^\infty(t_1) = \eta^\infty(t_2) $ lies in the boundary of the quantum wedge.  Since chordal $\SLE_6$ a.s.\ does not have a boundary double point~\cite[Remark~5.3]{miller-wu-dim}, we see that a.s.\ no such times $t_1<t_2$ exist. 

Since $L^\infty$ has stationary increments there a.s.\ do not exist times $0 \leq t_0 < t_1 < t_2$ such that $L^\infty_{t_1} = L^\infty_{t_2} = \inf_{s \in [t_1,t_2]} L^\infty_s$ and $L^\infty$ attains a running infimum relative to time $t_0$ at time $t_1$.  Since the law of $Z$ is absolutely continuous with respect to the law of $Z^\infty$ up to any time prior to the terminal time $\sigma_0$ (Theorem~\ref{thm-bdy-process-law}) we infer that there a.s.\ do not exist times $0 \leq t_0 < t_1 < t_2 \leq  \sigma_0$ such that $ L_{t_1} =   L_{t_2} = \inf_{s \in [t_1,t_2]}   L_s$ and $L$ attains a running infimum relative to time $t_0$ at time $t_1$.  Applying this with $t_0 = t_1-\ep$ shows that condition~\ref{item-bdy-connect-initial} is satisfied.

Condition~\ref{item-bdy-connect-triple} is immediate from condition~\ref{item-bdy-connect-initial} applied with $t$ in place of $t_1$. 

We now consider condition~\ref{item-bdy-connect-jump}.  Almost surely, the $3/2$-stable process $L^\infty$ has only countably many downward jumps and no two such jumps have the same size. Hence a.s.\ each time at which $L^\infty$ has a downward jump takes the form $\inf\{t\geq q : L_t^\infty - \lim_{s\rta t^-} L_s^\infty \in I\}$ for some $q\in \BB Q_+$ and some interval $I\subset (-\infty,0)$ with rational endpoints. By the strong Markov property and since there are only countably many such times $q$ and intervals $I$, we infer that a.s.\ there is no time at which $L^\infty$ has a downward jump and attains a local minimum. By local absolute continuity the same is a.s.\ true of $L$.  Hence for the proof of condition~\ref{item-bdy-connect-jump} we can assume without loss of generality that $L$ does not have a downward jump at time $t_2$. 

Now set $t_* := (t_1+t_2)/2$ and for $\delta > 0$, let
\eqbn
t_2^\delta := \inf\left\{ t  \geq t_* : L_t \leq L_{t_1} + \delta \right\}\quad \op{and} \quad
t_1^\delta := \sup\left\{ t  \leq t_* : L_t = L_{t_2^\delta} \right\}.
\eqen
Since $L$ does not have a downward jump at time $t_2$, we have $t_2^\delta  <t_2$ and $t_1^\delta > t_1$.  By definition,~\eqref{eqn-inf-adjacency-reg} holds with $t_1^\delta$ and $t_2^\delta$ in place of $t_1$ and~$t_2$.  By condition~\ref{item-bdy-connect-triple}, $t_1^\delta \rta t_1$ and $t_2^\delta \rta t_2$ as $\delta \rta 0$.  By the preceding paragraph, $L$ does not have a local minimum at any of the times~$t_2^\delta$ at which~$L$ has a downward jump, and such times exist for arbitrarily small~$\delta$. By condition~\ref{item-bdy-connect-initial}, $L$ does not have a local minimum at any of the times~$t_1^\delta$. Thus condition~\ref{item-bdy-connect-jump} holds.

The statement for $R$ follows from symmetry. 
\end{proof}

\begin{proof}[Proof of Lemma~\ref{lem-curve-top}]
If $t \in  \BB Q_+$, then $Z_t = (-\frk l_L , -\frk l_R)$ on the event $\{t > \sigma_0\}$ so since $Z^n\rta Z$ in the Skorokhod topology on this event the rescaled boundary length $\Delta_{t,0}^n$ of the unexplored region~$H_{t,0}^n$ at time~$t$ tends to~$0$ a.s.\ as $n\rta\infty$. Since the conditional law of~$H_{t,0}^n$ given~$\Delta_{t,0}^n$ is that of a free Boltzmann quadrangulation with simple boundary, we infer from~\cite[Theorem~1.4]{gwynne-miller-simple-quad} that $\op{diam} \left( H_{t,0}^n; d_{t,0}^n \right) \rta 0$ in probability on the event $\{t > \sigma_0\}$. Since $\eta^n([t,\infty)) \subset H_{t,0}^n$ (modulo rounding error) and $\eta^n\rta \wt\eta$ uniformly, we infer that a.s.\ $\wt\eta$ is constant on $(\sigma_0 ,\infty) \cap \BB Q_+$ and hence, by continuity, on all of $[\sigma_0,\infty)$. Since the target edges satisfy $\BB e_*^n = \xi^n(\frk l_R^n) + O_n(n^{-1/4}) \rta \wt\xi(\frk l_R)$ in $(W,D)$, we infer that $\wt\eta(\sigma_0 ) = \wt\xi(\frk l_R)$. 

If $t_1,t_2 \geq 0$ are such that $L_{t_1}  = L_{t_2} = \inf_{s\in [t_1,t_2]} L_s$ and $L$ does not have a local minimum at either $t_1$ or $t_2$, then the Skorokhod convergence $L^n\rta L$ implies that we can find sequences $t_1^n \rta t_1$ and $t_2^n\rta t_2$ such that for each $n\in\BB N$,
\eqbn
L^n_{t_1^n}  = L^n_{t_2^n} \leq \inf_{s\in (t_1^n,t_2^n)} L^n_s.
\eqen
By~\eqref{eqn-bdy-process-inf} applied to the percolation peeling process after time $\lfloor n^{3/4} t_1^n \rfloor$, this latter condition implies that $d^n(\eta^n(t_1^n) , \eta^n(t_2^n)) = O_n(n^{-1/4})$ (where here the $O_n(n^{-1/4})$ comes from rounding error).  Since $\eta^n \rta \wt \eta$ uniformly, we infer that $\wt\eta(t_1) = \wt\eta(t_2)$ for each such pair of times $t_1$, $t_2$. By continuity of $\wt\eta$ together with condition~\ref{item-bdy-connect-jump} of Lemma~\ref{lem-bdy-connect-reg}, we find that $\wt\eta(t_1) =\wt\eta(t_2)$ whenever $L_{t_1}  = L_{t_2} = \inf_{s\in [t_1,t_2]} L_s$. We similarly obtain the analogous statement with $R$ in place of $L$. Hence condition~\ref{item-curve-top-double} holds.

To check condition~\ref{item-curve-top-bdy}, for $r \in [0,\frk l_L]$ let $T_r = \inf\{t\geq 0 : L_t \leq - r\}$ and $T_r^n := \inf\{t\geq 0: L_t^n \leq -r\}$.  Also let $T_{r^+} = \lim_{u \rta r^+} T_u$.  By~\eqref{eqn-bdy-process-inf}, $d^n(\eta^n(T_r^n) , \xi^n(r)) = O_n(n^{-1/4})$.  If $T_r = T_{r^+}$ (which by the monotonicity of $r\mapsto T_r$ is the case for all but countably many values of $r$) then by the Skorokhod convergence $Z^n\rta Z$ we have $T_r^n \rta T_r$.  Since $\eta^n \rta \wt\eta$ and $\xi^n\rta\wt\xi$ uniformly, we infer that $\wt\eta(T_r) = \xi(r)$ whenever $T_r = T_{r^+}$.  On the other hand, $  L_{T_r} = L_{T_{r^+}}  = \inf_{s\in [T_r ,T_{r^+}]} L_s$, whence condition~\ref{item-curve-top-double} implies that $\wt\eta(T_r) = \wt\eta(T_{r^+})$. By the continuity of~$\wt\eta$ we infer that a.s.\ $\wt\eta(T_r) = \xi(r)$ for each $r\in [0,\frk l_L]$. We similarly obtain the analogous statement for record minima of~$R$. 
\end{proof}

We next describe the boundary length measures on the connected components $f_{t,k}(\wt H_{t,k} \setminus \bdy \wt H_{t,k})$ of $\wt H\setminus (\wt\eta([0,t]) \cup \bdy \wt H)$  (Lemma~\ref{lem-map-conclude}) in terms of the process $Z$. In particular, this description will be the same as the description of the boundary length measure on the complementary connected components of an $\SLE_6$ on an independent quantum disk given at the end of Section~\ref{sec-lqg-bdy-process}. 

For this purpose we introduce the notation
\eqbn
\ul L_t = \inf_{s\in[0,t]} L_s \quad \op{and} \quad \ul R_t = \inf_{s\in [0,t]} R_s. 
\eqen
We similarly define $\ul L_t^n$ and $\ul R_t^n$ for the rescaled discrete boundary length processes.
To describe the boundary length measure in the case when $k=0$ (which we recall corresponds to the complementary connected component containing the target point), for 
$t\in \BB Q_+$ and $u \in \left[ 0, L_t  - \ul{L}_t \right]$ (resp.\ $u \in \left[ 0 , R_t - \ul{R}_t \right]$) let
\eqb  \label{eqn-backup-time}
T^L_{t,0}(u) := \sup\left\{ s \leq t : L_s \leq L_t  - u \right\} \quad \op{and} \quad 
T^R_{t,0}(u) := \sup\left\{ s \leq t : R_s \leq R_t  - u \right\} .
\eqe 
Since $  L  $ has no upward jumps, we infer that a.s.\ $L_{T^L_{t,0}(u)} = L_t -u$ for each $u \in \left[ 0, L_t  - \ul{L}_t \right]$ and similarly for $R$. 

In the case when $k \in \BB N$, we recall the boundary length $\Delta_{t,k}$ of $\wt H_{t,k}$ from the discussion just above Lemma~\ref{lem-bubble-time-conv}. 
For $(t,k) \in \BB Q_+ \times \BB N$ such that $L$ (resp.\ $R$) has a downward jump at time $\tau_{t,k}$ and $u \in [0, \Delta_{t,k} \wedge (L_{ \tau_{t,k}^-} - \ul{L}_{\tau_{t,k}^-} )]$ (resp.\ $u \in [0, \Delta_{t,k} \wedge ( R_{\tau_{t,k}^-} - \ul{R}_{\tau_{t,k}^-} )]$), define 
\eqb \label{eqn-bubble-backup-time}
T_{t,k}^L(u) := \sup\left\{ s  < \tau_{t,k} : L_s \leq L_{\tau_{t,k}^-}  - u \right\} \qquad
\text{(resp.\ }
T_{t,k}^R(u) := \sup\left\{ s  < \tau_{t,k} : R_s \leq R_{\tau_{t,k}^-}  - u \right\} \text{)} .
\eqe

\begin{lem}
\label{lem-bubble-bdy}
Suppose we are in the setting of Lemma~\ref{lem-map-limit} and fix a subsequence $\mcl N'$ and maps $\{f_{t,k} : (t,k) \in \BB Q_+ \times \BB N_0\}$ satisfying the conditions of that lemma.  In the notation described just above, for each $u\in [ - \frk l_L - L_t , \frk l_R +  R_t   ]$ the image of the boundary path of $\wt H_{t,0}$ satisfies
\eqb \label{eqn-unexplored-bdy-path}
f_{t,0}(\wt\xi_{t,0}(u)) = 
\begin{cases}
\wt\xi\left( u + L_t  \right) ,\quad &u \in \left[ -(L_t + \frk l_L ) ,  - (L_t - \ul{L}_t) \right]\\
\wt\eta\left(  T^L_{t,0}(-u) \right) , \quad &u \in [- (L_t - \ul{L}_t) , 0 ] \\
\wt\eta\left( T^R_{t,0}(u) \right) ,\quad &u \in [0, R_t - \ul{R}_t ] \\
  \wt\xi\left( u -  R_t  \right)   ,\quad &u \in [R_t -  \ul{R}_t ,   R_t + \frk l_R ]  . 
\end{cases}
\eqe 
Furthermore, the image of the boundary paths for the bubbles $\wt H_{t,k}$ for $k\in\BB N$ which lie to the left of $\lambda^n$ satisfy
\eqb \label{eqn-bubble-bdy-path-L}
f_{t,k}(\wt\xi_{t,k}(u)) = 
\begin{cases} 
\wt\eta\left(  T_{t,k}^L( u) \right) , \quad &u \in [ 0 , \Delta_{t,k} \wedge (L_{\tau_{t,k}^-} -  \ul{L}_{\tau_{t,k}^-})      ] \\ 
 \wt\xi\left( u - L_{\tau_{t,k}^-} \right)   ,\quad &u \in [ \Delta_{t,k} \wedge (L_{\tau_{t,k}^-} -  \ul{L}_{\tau_{t,k}^-}),  \Delta_{t,k} ]  ; 
\end{cases}
\eqe 
and a similar formula holds with $R$ in place of $L$ for the bubbles to the right of $\lambda^n$. 
\end{lem}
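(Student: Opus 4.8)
The plan is to establish the claimed identities first at the discrete level, up to $d^n$-errors of order $n^{-1/4}$ (coming from the rounding in the definition of $\lambda^n$ and from the fact that $\xi_{t,k}^n$, $\eta^n$, $\xi^n$ each trace a single edge in unit time), and then pass to the limit along $\mcl N'$ using the convergence of the maps $f_{t,k}^n$ from Lemma~\ref{lem-map-limit}, the uniform convergence $\eta^n\to\wt\eta$ and $\xi^n\to\wt\xi$ in $(W,D)$, and the Skorokhod convergence $Z^n\to Z$ together with Lemma~\ref{lem-bubble-time-conv}. Since $\BB Q_+\times\BB N_0$ is countable it suffices to fix $(t,k)$ and argue a.s.

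For the discrete identity in the case $k=0$, write $j = \lfloor\tcon t n^{3/4}\rfloor$. By Definition~\ref{def-bdy-process} and~\eqref{eqn-unexplored-length}, the simple cycle $\bdy\ol Q_j^n$ splits, in counterclockwise order starting from the root edge $\BB e_{t,0}^n = \dot e_{j+1}^n$, into four consecutive arcs: the left outer boundary of the peeling cluster $\dot Q_j^n$ (the $X_j^{L,n}$ edges of $\bdy\ol Q_j^n\cap\bdy\dot Q_j^n$ incident to white), the uncovered portion of the left boundary arc of $\bdy Q^n$ ($\el_L^n - Y_j^{L,n}$ edges of $\bdy Q^n$), the uncovered portion of the right boundary arc ($\el_R^n - Y_j^{R,n}$ edges), and the right outer boundary of $\dot Q_j^n$ ($X_j^{R,n}$ edges). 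On the two boundary-arc pieces $\beta_{t,0}^n$ agrees, up to a bounded number of edges, with the boundary path $\beta^n$ of $Q^n$ re-indexed by the covered-edge counts, which by~\eqref{eqn-bdy-process-inf} are the discrete analogues of the indices in~\eqref{eqn-unexplored-bdy-path}. On the outer-boundary pieces, the edge whose counterclockwise distance along the left (resp.\ right) outer boundary from $\dot e_{j+1}^n$ is $m$ is within bounded graph distance of the peeled edge $\lambda^n(j')$, where $j'$ is the last time $\leq j$ at which $W^{L,n}$ (resp.\ $W^{R,n}$) attains the level $W_j^{L,n}-m$ (resp.\ $W_j^{R,n}-m$); this is the discrete counterpart of the description of the outer boundary in terms of $T^L_t$, $T^R_t$ recalled at the end of Section~\ref{sec-lqg-bdy-process}, and follows from the fact that each exposed edge is incident to a peeled quadrilateral together with~\eqref{eqn-bdy-process-inf}. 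Combining, $f_{t,0}^n\circ\beta_{t,0}^n$ agrees up to $o_n(1)$ in $d^n$ with the right side of~\eqref{eqn-unexplored-bdy-path} with $Z^n$ and the evident discrete analogues $T^{L,n}_{t,0}$, $T^{R,n}_{t,0}$ of $T^L_{t,0}$, $T^R_{t,0}$ in place of $Z$, $T^L_{t,0}$, $T^R_{t,0}$. To pass to the limit, fix $u$ such that $Z$ is continuous at $T^L_{t,0}(u)$ (resp.\ $T^R_{t,0}(u)$) and neither coordinate of $Z$ has a local minimum there; by Lemma~\ref{lem-bdy-connect-reg} and countability of the jump and local-minimum sets of $Z$, all but countably many $u$ have this property. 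For such $u$, the Skorokhod convergence $Z^n\to Z$ and the absence of upward jumps of $L$, $R$ give $T^{L,n}_{t,0}(u)\to T^L_{t,0}(u)$; combined with the uniform convergence $\eta^n\to\wt\eta$, $\xi^n\to\wt\xi$ and with the convergence $f_{t,0}^n\circ\xi_{t,0}^n\to f_{t,0}\circ\wt\xi_{t,0}$ supplied by condition~\ref{item-map-limit-conv} of Lemma~\ref{lem-map-limit}, this yields~\eqref{eqn-unexplored-bdy-path} at $u$, and the identity extends to all $u$ by continuity of $f_{t,0}\circ\wt\xi_{t,0}$, $\wt\eta$, $\wt\xi$, $T^L_{t,0}$, $T^R_{t,0}$.

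The case $k\geq 1$ is handled identically: one decomposes $\bdy H_{t,k}^n$ into the segment of the left (resp.\ right) outer boundary of $\dot Q_{J_{t,k}^n-1}^n$ cut off by the peeled quadrilateral $\frk f(\ol Q^n_{J_{t,k}^n-1},\dot e^n_{J_{t,k}^n})$ — which is a segment of $\lambda^n$, parameterized by backing up $W^{L,n}$ (resp.\ $W^{R,n}$) from its value at time $J_{t,k}^n-1$, the discrete analogue of~\eqref{eqn-bubble-backup-time} — together with a segment of $\bdy Q^n$ that is nonempty exactly when the downward jump of $W^{L,n}$ (resp.\ $W^{R,n}$) at time $J_{t,k}^n$ exceeds the current left (resp.\ right) outer-boundary length above the running minimum (this is the source of the truncation at $\Delta_{t,k}\wedge(L_{\tau_{t,k}^-}-\ul L_{\tau_{t,k}^-})$), using~\eqref{eqn-peel-inc-cases}. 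One then passes to the limit as before, using Lemma~\ref{lem-bubble-time-conv} for $\tau_{t,k}^n\to\tau_{t,k}$, $\Delta_{t,k}^n\to\Delta_{t,k}$ and the left/right correspondence, and invoking Lemma~\ref{lem-curve-top} to handle the endpoint $u=0$ and the transition point $u=L_{\tau_{t,k}^-}-\ul L_{\tau_{t,k}^-}$, where the formula switches between a point on $\wt\eta$ and a point on $\bdy\wt H$ (these agree because $Z$ forces a self-intersection or a boundary visit there).

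The main obstacle is the discrete bookkeeping of the previous two paragraphs: identifying, for each position along $\bdy\ol Q_j^n$ or $\bdy H_{t,k}^n$, the corresponding edge of $Q^n$ up to bounded graph distance, and checking it is the asserted $\lambda^n$- or $\beta^n$-value. This amounts to the statement that the percolation peeling process builds up the outer boundary of its cluster in the ``{\L}ukasiewicz'' fashion encoded by $W^L$ and $W^R$, so that backing up $u$ units of boundary length corresponds to returning to the last time the boundary-length process was $u$ lower; none of it is deep, but it requires care with the asymmetric role of $L$ versus $R$ (the two red edges in Figure~\ref{fig-bdy-process}) and with the several kinds of $O(1)$ errors. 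Everything else — the passage to the limit and the handling of the countable exceptional set of $u$'s — is routine given Lemmas~\ref{lem-map-limit}, \ref{lem-bubble-time-conv}, \ref{lem-curve-top}, and~\ref{lem-bdy-connect-reg}.
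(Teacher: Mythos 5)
Your overall strategy mirrors the paper's: establish a discrete identity for $f_{t,0}^n\circ\xi_{t,0}^n$ up to $O_n(n^{-1/4})$ rounding error, pass to the limit along $\mcl N'$ for all but countably many $u$ using Skorokhod convergence of $Z^n$ and uniform convergence of $\eta^n,\xi^n$, and then extend to all $u$ by continuity of both sides. However, there is a genuine gap in the final continuity step.

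You claim the identity "extends to all $u$ by continuity of $f_{t,0}\circ\wt\xi_{t,0}$, $\wt\eta$, $\wt\xi$, $T^L_{t,0}$, $T^R_{t,0}$," and the same implicit claim is reused in the $k\geq 1$ case (Lemma~\ref{lem-curve-top} is invoked only at the two isolated transition points). But $T^L_{t,0}$ and $T^R_{t,0}$ are \emph{not} continuous: $u\mapsto T^L_{t,0}(u)=\sup\{s\leq t: L_s\leq L_t-u\}$ is monotone non-increasing and has a jump at every $u$ for which $L$ spends an interval of time strictly above the level $L_t-u$ after its last visit below it. The right side of~\eqref{eqn-unexplored-bdy-path} is therefore not obviously continuous as a function of $u$, and so a.s.\ agreement on a co-countable set does not automatically propagate. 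The resolution requires the observation that even though $T^L_{t,0}(u^+)<T^L_{t,0}(u)$ can happen, one still has $L_{T^L_{t,0}(u^+)}=L_{T^L_{t,0}(u)}=\inf_{s\in[T^L_{t,0}(u^+),T^L_{t,0}(u)]}L_s$, and then condition~\ref{item-curve-top-double} of Lemma~\ref{lem-curve-top} gives $\wt\eta(T^L_{t,0}(u^+))=\wt\eta(T^L_{t,0}(u))$ for \emph{every} $u$, which — combined with continuity of $\wt\eta$ and monotonicity of $T^L_{t,0}$ — yields continuity of $u\mapsto\wt\eta(T^L_{t,0}(u))$ despite the jumps in $T^L_{t,0}$. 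Lemma~\ref{lem-curve-top} thus plays a role at every discontinuity of $T^L_{t,0}$ and $T^R_{t,0}$, not only at the endpoint $u=0$ or the transition point; its role in your proof is too limited.

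Two smaller issues with the exceptional-set argument in the limit-passing step: the local-minimum times of $L$ (or $R$) form an uncountable set, so "countability of the jump and local-minimum sets of $Z$" is not a valid reason for the complement of your good set to be countable. The cleaner route, which avoids engaging with Lemma~\ref{lem-bdy-connect-reg} at all here, is to take the good set to be those $u$ for which $L_{T^L_{t,0}(u^+)}=L_{T^L_{t,0}(u)}$; the complement is countable simply by monotonicity of $u\mapsto T^L_{t,0}(u)$. The Skorokhod convergence then gives the needed convergence of backup times at each such $u$ without requiring any assertion about regularity of $Z$ at $T^L_{t,0}(u)$.
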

 
\begin{proof} 
For $n\in\mcl N$, let
\eqbn
T^{L,n}_{t,0}(u) := \sup\left\{ s < t :   L^n_s \leq L^n_t  - u \right\} \quad \op{and} \quad 
T^{R,n}_{t,0}(u) := \sup\left\{ s < t : R^n_s \leq R^n_t  - u \right\}  
\eqen
be the rescaled discrete analogs of the times in~\eqref{eqn-backup-time}. 
Recalling~\eqref{eqn-bdy-process-inf}, we see that for $n\in \mcl N$ and $u\in [-\frk l_L^n - L_t^n , \frk l_R^n + R_t^n]$, the $d^n$-distance from $f_{t,0}^n(\xi_{t,0}^n(u))  $ to $z^n(u)$ is bounded above by a deterministic rounding error of order $O_n(n^{-1/4})$, where here
\eqbn 
z^n(u) := 
\begin{cases}
\xi^n\left( u +   L^n_t  \right) ,\quad &u \in \left[ -(L^n_t + \frk l_L^n ) ,  - ( L^n_t - \ul{L}^n_t) \right]\\
\eta^n\left(  T^{L,n}_{t,0}(-u) \right) , \quad &u \in [- (L^n_t - \ul{ L}^n_t) , 0 ] \\
\eta^n\left( T^{R,n}_{t,0}(u) \right) ,\quad &u \in [0, R^n_t - \ul{ R}^n_t ] \\
  \xi^n\left( u -    R_t^n \right)   ,\quad &u \in [ R_t^n -  \ul{  R}^n_t ,   R_t^n + \frk l_R^n ]   .
\end{cases}
\eqen
 
For $u \in \left[ 0, L_t  - \ul{L}_t \right]$ (resp.\ $u \in \left[ 0 , R_t - \ul{R}_t \right]$) let
\eqbn
  T^L_{t,0}(u^+) :=  \lim_{v\rta u^+} T^L_{t,0}(v) \leq T^L_{t,0}(u) \quad \op{and} \quad 
  T^R_{t,0}(u^+) :=  \lim_{v\rta u^+} T^R_{t,0}(v) \leq T^R_{t,0}(u) .
\eqen   
By the Skorokhod convergence $L^n\rta L$, we infer that a.s.\ $L^n_{T_{t,0}^{L,n}(u)} \rta L_{T_{t,0}^L(u)}$ for each $u \in \left[ 0, L_t  - \ul{L}_t \right]$ such that $ L_{T^L_{t,0}(u^+)}  =  L_{T^L_{t,0}(u )}$. 
By monotonicity of $u\mapsto T_{t,0}^L(u)$, the set of times $u \in \left[ 0, L_t  - \ul{L}_t \right]$ for which this is not the case is a.s.\ countable.
Analogous statement hold for $R$ and $R^n$. Since a.s.\ neither $L$ nor $R$ has a jump at time $t$, we also have the a.s.\ convergence of running infima at time $t$, $\ul L_t^n \rta \ul{L}_t  $ and $\ul R_t^n \rta \ul{R}_t  $. 

By combining the above boundary length process convergence statements with the a.s.\ uniform convergence $\eta^n\rta \wt\eta$ and $\xi^n\rta\wt\xi$, we see that a.s.\ $z^n(u) \rta \wt z(u)$ for all but countably many $u\in [- \frk l_L - L_t , \frk l_R +  R_t   ]$, where here $\wt z(u)$ denotes the right side of~\eqref{eqn-unexplored-bdy-path}. Since also $\xi_{t,0}^n  \rta \wt \xi_{t,0}$ uniformly a.s., we infer from condition~\ref{item-map-limit-conv} of Lemma~\ref{lem-map-limit} and the above relation between $f_{t,0}^n(\xi_{t,0}^n(u))$ and $z^n(u)$ that a.s.~\eqref{eqn-unexplored-bdy-path} holds for all but countably many $u \in [ - \frk l_L - L_t , \frk l_R +  R_t   ]$. 

It is clear that $u\mapsto f_{t,0}(\wt\xi_{t,0}(u)) $ is continuous. We will now argue that a.s.\ $u\mapsto \wt z(u)$ is continuous, so that a.s.~\eqref{eqn-unexplored-bdy-path} holds for all $u \in [ - \frk l_L - L_t , \frk l_R +  R_t   ]$ simultaneously. 
By right continuity of $L$, it is a.s.\ the case that for each $u \in \left[ 0, L_t  - \ul{L}_t \right]$,
\eqbn
L_{T^L_{t,0}(u^+)} = L_{T^L_{t,0}(u )} = L_t - u = \inf_{s \in [ T^L_{t,0}(u^+)  ,  T^L_{t,0}(u ) ]} L_s
\eqen
and similarly with $R$ in place of $L$. By Lemma~\ref{lem-curve-top}, a.s.\  
\eqb \label{eqn-eta-on-excursion}
\wt\eta( T^L_{t,0}(u^+) ) = \wt\eta( T^L_{t,0}(u ) )  ,\quad \forall u \in \left[ 0, L_t  - \ul{L}_t \right]
\eqe 
and similarly with $R$ in place of $L$. Since $\wt\eta$ is continuous and $u \mapsto  T^L_{t,0}(u ) $ and $u\mapsto T^R_{t,0}(u)$ are right-continuous function of $u$, we obtain the desired continuity for $\wt z$. Thus~\eqref{eqn-unexplored-bdy-path} holds.

The formula~\eqref{eqn-bubble-bdy-path-L} is proven via a similar argument (here we recall Lemma~\ref{lem-bubble-time-conv}). 
\end{proof}

\subsection{Existence of a homeomorphism}
\label{sec-homeo}

In this subsection we will establish the following proposition, which implies that the subsequential limit $(\wt{\frk H}, Z)$ satisfies condition~\ref{item-bead-mchar-homeo} of Theorem~\ref{thm-bead-mchar}. 

\begin{prop}[Topology and consistency] \label{prop-homeo}
The topology of $(\wt H ,\wt\eta )$ is determined by $Z $ in the same manner as the topology of a chordal $\SLE_6$ on a free Boltzmann Brownian disk, 
i.e.\ there is a doubly curve-decorated metric measure space $( H ,d , \mu  , \xi  , \eta )$ consisting of a free Boltzmann Brownian disk with left/right boundary lengths $\frk l_L$ and $\frk l_R$ equipped with its natural metric, area measure, and boundary path decorated by an independent chordal $\SLE_6$ from $\xi(0)$ to $\xi(\frk l_R)$, parameterized by quantum natural time, such that the following is true almost surely. 
There is a homeomorphism $\Phi : H \rta \wt H$ such that $\Phi\circ\eta=\wt\eta$, $\Phi_* \mu = \wt\mu$, and for each $t\in \BB Q_+$, $\Phi $ a.s.\ pushes forward the natural length measure on the boundary of the connected component of $H \setminus \eta ([0,t])$ with $\xi(\frk l_R)$ on its boundary to the natural boundary length measure on the connected component of $\wt H  \setminus \wt\eta ([0,t])$ with $\wt\xi(\frk l_R)$ on its boundary (which is well-defined since we know from Proposition~\ref{prop-component-law} the internal metric on this component is that of a Brownian disk). 
\end{prop}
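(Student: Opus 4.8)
The plan is to build the curve-decorated metric measure space $(H,d,\mu,\xi,\eta)$ directly from the process $Z$ together with the collection $\{\wt{\frk H}_{t,k}\}$ of subsequential limiting bubbles, and then construct the map $\Phi$ as the inverse of an analogue of the maps $f_{t,k}$ from Lemma~\ref{lem-map-limit}. Concretely: sample $(H,d,\mu,\xi)$ to be a free Boltzmann Brownian disk with left/right boundary lengths $\frk l_L,\frk l_R$ (marked at $\xi(0)$ and $\xi(\frk l_R)$), and conditionally on it sample an independent chordal $\SLE_6$ curve $\eta$ from $\xi(0)$ to $\xi(\frk l_R)$ parameterized by quantum natural time. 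By Theorem~\ref{thm-sle-bead-nat} the left/right boundary length process of $\eta$ has the same law as $Z$, so we may couple $\eta$ and $\wt\eta$ so that their boundary length processes are literally equal; call this common process $Z$ again. The topology of $(H,\eta)$ is encoded by $Z$ via the description at the end of Section~\ref{sec-lqg-bdy-process} (self-intersections given by~\eqref{eqn-inf-adjacency0}, boundary hits given by running infima of $L$ and $R$, and terminal behavior as in Theorem~\ref{thm-bdy-process-law}), and by Lemma~\ref{lem-inf-adjacency0} this is a genuine characterization of the quotient $H/\!\sim$ where $x\sim y$ iff $x,y$ are identified as points of $\eta$.

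The key step is to define $\Phi$. First I would set up, for the continuum $\SLE_6$-decorated disk $(H,d,\mu,\xi,\eta)$, the analogues $H_{t,k}$ for $(t,k)\in\BB Q_+\times\BB N_0$ of the bubbles: $H_{t,0}$ is the connected component of $H\setminus\eta([0,t])$ containing $\xi(\frk l_R)$ on its boundary, and $H_{t,k}$ for $k\ge 1$ are the bubbles cut off by $\eta([0,t])$ ordered by decreasing boundary length. By Theorem~\ref{thm-sle-bead-nat}, conditionally on $Z$ these are independent free Boltzmann Brownian disks with boundary lengths $\Delta_{t,k}$ (exactly as in Lemma~\ref{lem-bubble-cond-ssl} for the $\wt{\frk H}_{t,k}$). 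Hence we may further couple so that $\frk H_{t,k}\equiv\wt{\frk H}_{t,k}$ as curve-decorated metric measure spaces for every $(t,k)$, via measure- and boundary-path-preserving isometries $g_{t,k}\colon \wt H_{t,k}\to H_{t,k}$. Then define $\Phi$ on $f_{t,k}(\wt H_{t,k}\setminus\bdy\wt H_{t,k})$, which by Lemma~\ref{lem-map-conclude} are precisely the connected components of $\wt H\setminus(\wt\eta([0,t])\cup\bdy\wt H)$, by the inclusion-composition $\Phi:=(\text{inclusion }H_{t,k}\hookrightarrow H)\circ g_{t,k}^{-1}\circ f_{t,k}^{-1}$. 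One checks this is well-defined on overlaps (take $t\to\infty$ through rationals), that it sends $\wt\mu$ to $\mu$ on $\wt H\setminus\wt\eta$ by Lemma~\ref{lem-map-property}\ref{item-map-measure'} and Lemma~\ref{lem-map-conclude}\ref{item-map-curve-mass}, and that it extends continuously across $\wt\eta$ using Lemma~\ref{lem-curve-top} and Lemma~\ref{lem-bubble-bdy} — these say that the boundary length process $Z$ dictates which boundary points of the bubbles get glued together, so the partial definitions on distinct bubbles agree in the limit along the shared curve $\wt\eta$. The continuum $\eta$ is glued by the same rule (Lemma~\ref{lem-inf-adjacency0}), so $\Phi\circ\eta=\wt\eta$ as maps. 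The boundary-length-pushforward claim for the target component is immediate from the $k=0$ case of Lemma~\ref{lem-bubble-bdy} (equation~\eqref{eqn-unexplored-bdy-path}), which expresses $f_{t,0}\circ\wt\xi_{t,0}$ as an explicit functional of $\eta,\xi$ and $Z$ matching the corresponding formula for $\eta$.

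Surjectivity of $\Phi$ follows since $\Phi$ is continuous, its image contains the dense set $\wt H\setminus(\wt\eta\cup\bdy\wt H)$ (Lemma~\ref{lem-map-property}\ref{item-map-cover}), and $\wt H$ is compact. Injectivity is \emph{not} asserted in this proposition — indeed $\Phi$ is only shown here to be a continuous surjective, measure- and curve-preserving map that is injective on $H\setminus\eta$; the upgrade to a homeomorphism (using~\cite{almost-inj} and the crossing estimate of Section~\ref{sec-crossing}, hence Lemma~\ref{lem-ssl-curve-hit}) is carried out separately. I expect the main obstacle to be the continuity of $\Phi$ across the curve $\wt\eta$: one must show that if $z_m\to z$ in $\wt H$ with $z\in\wt\eta$ and each $z_m$ lies in some bubble $f_{t,k_m}(\wt H_{t,k_m}\setminus\bdy\wt H_{t,k_m})$, then $\Phi(z_m)$ converges, and to the ``right'' point of $\eta\subset H$. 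This requires combining the boundary-path formulas of Lemma~\ref{lem-bubble-bdy} with the fact that the bubbles shrink (in diameter) as their boundary lengths shrink — which again comes from~\cite[Theorem~1.4]{gwynne-miller-simple-quad} and tightness — and with condition~\ref{item-map-limit-dist} of Lemma~\ref{lem-map-limit} controlling distance-to-boundary. Once continuity and the gluing-compatibility are in hand, $\Phi$ descends to a well-defined continuous bijection on the quotient level and the remaining assertions are bookkeeping.
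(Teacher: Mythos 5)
Your construction runs in the wrong direction, and this is the source of a genuine gap rather than a cosmetic slip. You define $\Phi$ on the open bubbles $f_{t,k}(\wt H_{t,k}\setminus\bdy\wt H_{t,k})\subset\wt H$ by composing $f_{t,k}^{-1}$ with the isometry $g_{t,k}$ (your written formula $g_{t,k}^{-1}\circ f_{t,k}^{-1}$ does not type-check given your declared orientation $g_{t,k}\colon\wt H_{t,k}\to H_{t,k}$, so I read it as $g_{t,k}\circ f_{t,k}^{-1}$), which yields a map $\wt H\to H$. But condition~\ref{item-bead-mchar-homeo} of Theorem~\ref{thm-bead-mchar}, and hence Proposition~\ref{prop-homeo}, requires a homeomorphism $\Phi\colon H\to\wt H$ satisfying $\Phi\circ\eta=\wt\eta$ and $\Phi_*\mu=\wt\mu$, i.e.\ the map must go \emph{from} the model $\SLE_6$-decorated disk \emph{to} the subsequential limit.

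The asymmetry is essential. Lemmas~\ref{lem-inf-adjacency0} and~\ref{lem-curve-top} together establish only the one-sided inclusion: every identification in $\eta\cup\bdy H$ dictated by $Z$ (which by Lemma~\ref{lem-inf-adjacency0} is \emph{all} of $\eta$'s self-intersections) is also an identification of $\wt\eta\cup\bdy\wt H$. They do \emph{not} yet rule out $\wt\eta$ having additional self-intersections beyond those dictated by $Z$. If $z=\wt\eta(t_1)=\wt\eta(t_2)$ while condition~\eqref{eqn-inf-adjacency} fails, then $\eta(t_1)\neq\eta(t_2)$ by Lemma~\ref{lem-inf-adjacency0}, and the restrictions of your proposed map to distinct bubbles abutting $z$ would accumulate at the two distinct points $\eta(t_1),\eta(t_2)\in H$: there is no continuous extension. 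Your paragraph on ``continuity across $\wt\eta$'' implicitly needs the converse of Lemma~\ref{lem-curve-top}, which is exactly what has not yet been established (and which is, in a sense, the content of the whole homeomorphism claim). The paper's Lemma~\ref{lem-surjection} avoids this by defining $\Phi\colon H\to\wt H$ starting from $X=\eta\cup\bdy H$ via the universal property of the quotient: $X$ is the quotient of $[0,\infty)\sqcup[-\frk l_L,\frk l_R]$ by the $Z$-determined relation, and since $\wt\eta\sqcup\wt\xi$ respects all those identifications (Lemma~\ref{lem-curve-top}), the map $\Phi_X\colon X\to\wt X$ exists; extra identifications in $\wt X$ just make it non-injective, which is handled afterwards. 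The map is then extended to the bubbles $H_k$ by $f_{\infty,k}\circ g_k^{-1}\colon H_k\to\wt H$ (also note: the paper works with the bubble decomposition at a single time $N\ge\sigma_0$ rather than over all rational $t$, streamlining compatibility). Finally, the upgrade from surjection to homeomorphism is \emph{part of the proof of this proposition}, not a later step: $\Phi$ is light by Lemma~\ref{lem-ssl-curve-hit} (which uses the crossing estimate Proposition~\ref{prop-fb-crossing}), almost injective because it is injective on $H\setminus(\eta\cup\bdy H)$, and then \cite[Main Theorem]{almost-inj} gives the embedding. You correctly name these ingredients but treat them as external to the proposition, whereas without them the map in your direction does not even exist.
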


To prove Proposition~\ref{prop-homeo}, we will first establish the existence of the SLE$_6$-decorated Brownian disk $(H,d,\mu,\xi,\eta)$ and a map $\Phi : H\rta \wt H$ satisfying all of the conditions in the proposition statement, except that $\Phi$ is not a priori known to be injective on the range of $\eta$, in the following manner. We know from Proposition~\ref{prop-component-law} that the joint law of $Z$ and the bubbles cut out by $\wt\eta$ is the same as the law of the left/right boundary length process for SLE$_6$ on a Brownian disk and the bubbles it cuts out. Hence we can choose $(H,d,\mu,\xi,\eta)$ in such a way that the left/right boundary length process for $\eta$ is equal to $Z$ and the bubbles cut out by $\eta$ (viewed as curve-decorated metric measure spaces) are the same as the bubbles cut out by $\wt\eta$. The results of Section~\ref{sec-bdy-process-id} tell us that $\wt\eta$ hits itself at least as often as $ \eta$, so since the corresponding bubbles cut out by $\eta$ and $\wt\eta$ agree, this gives us a measure-preserving, curve-preserving surjection $\Phi :  H \rta \wt H$ which is an isometry away from $\eta$ (Lemma~\ref{lem-surjection}). 

To show that $\Phi$ is a homeomorphism, we will use Proposition~\ref{prop-fb-crossing} to show that the pre-image of any point of $\wt H$ under $\Phi$ has cardinality at most 6 (Lemma~\ref{lem-ssl-curve-hit}), then apply the criterion of~\cite[Main Theorem]{almost-inj}, as discussed in Section~\ref{sec-overview}.

Suppose we are in the setting of Lemma~\ref{lem-map-limit} and fix a subsequence $\mcl N' \subset \mcl N$ and maps $\{f_{t,k} :(t,k) \in \BB Q_+ \times \BB N\}$ as in that lemma.  By conditions~\ref{item-curve-top-terminal} and~\ref{item-curve-top-bdy} of Lemma~\ref{lem-curve-top}, we have $\wt\eta(t) = \wt\eta(\sigma_0) = \wt\xi(\frk l_R)$ for each time $t$ after the terminal time $\sigma_0$ of~\eqref{eqn-continuum-terminal-time}.  Let $N$ be the smallest integer which is at least $\sigma_0$ and for $k\in\BB N_0$ let 
\eqbn
\wt{\frk H}_{\infty,k}  = \left(\wt H_{\infty,k} , \wt d_{\infty , k} ,  \wt\mu_{\infty,k }  , \wt\xi_{\infty,k} \right)  := \wt{\frk H}_{N,k}
\eqen
and $f_{\infty,k} : = f_{N,k}$. Then $\wt{\frk H}_{t,k} = \wt{\frk H}_{\infty,k}$ for each rational $t\geq \sigma_0$ and each $k\in\BB N_0$. Furthermore, the unexplored Brownian disk $\wt{\frk H}_{N,0}$ degenerates to the trivial one-point curve-decorated metric measure space and condition~\ref{item-map-component} of Lemma~\ref{lem-map-conclude} implies that and that the sets $f_{\infty,k}(\wt H_{\infty,k} \setminus \bdy \wt H_{\infty,k})$ are precisely the connected components of $\wt H\setminus ( \wt\eta \cup \bdy \wt H)$. 
 
By Lemma~\ref{lem-bubble-cond-ssl} and conditions~\ref{item-map-iso} and~\ref{item-map-measure'} of Lemma~\ref{lem-map-property}, we find that the conditional law given $ Z$ of these connected components, each viewed as a metric measure space equipped with the internal metric of $\wt d$ and the restriction of $\wt\mu$, is that of a collection of independent free Boltzmann Brownian disks with boundary lengths specified by the magnitudes of the downward jumps of the coordinates $ L$ and $ R$. 
By Theorem~\ref{thm-sle-bead-nat}, this conditional law is the same as the conditional law of the collection of singly-marked metric measure spaces corresponding to the bubbles cut out by a chordal $\SLE_6$ on an independent doubly marked free Boltzmann Brownian disk with left/right boundary lengths $\frk l_L$ and $\frk l_R$, given the left/right boundary length process. 

Hence there exists a doubly curve-decorated metric measure space $\frk H = (H , d , \mu  , \xi , \eta)$ such that $(H,d,\mu,\xi)$ is a free Boltzmann Brownian disk with boundary length $\frk l_L + \frk l_R$ equipped with its natural metric, area measure, and boundary path; and $\eta$ is an independent chordal $\SLE_6$ from $\xi(0)$ to $\xi(\frk l_R)$, parameterized by quantum natural time such that $ Z$ is the same as the left/right boundary length process for $\eta$ and the following is true a.s. For $k\in\BB N$ let $H_k$ be the closure of the connected component of $H\setminus  \eta$ with the $k$th largest boundary length; let $d_k$ be the internal metric of $d$ on $H_k\setminus \bdy H_k$, extended by continuity to all of $H_k$; and let~$\xi_k$ be the periodic boundary path of~$H_k$ such that~$\xi_k(0)$ is the point where~$\eta$ finishes tracing~$\bdy H_k$ (which is well-defined since~$(H_k ,d_k)$ is a Brownian disk). 
There is an isometry
\eqb \label{eqn-component-map}
g_k  : (\wt H_{\infty,k} , \wt d_{\infty,k}) \rta (H_k  ,d_k) 
\eqe 
such that $(g_k)_* \wt\mu_{\infty,k} = \mu|_{H_k}$ and $g_k \circ \wt\xi_{\infty,k} = \xi_k$.

For $t\geq 0$, let $H_{t,0}$ be the closure of the connected component of $H\setminus \eta([0,t])$ with the target point $\xi(\frk l_R)$ on its boundary. 
The internal metric of $d$ on $H_{t,0}\setminus \bdy H_{t,0}$ is that of a free Boltzmann Brownian disk, so we can define the periodic boundary path $\xi_{t,0} : \BB R\rta \bdy H_{t,0}$ with $\xi_{t,0}(0) = \eta_{t,0}$. 

The following lemma establishes the existence of a map satisfying all of the properties in Proposition~\ref{prop-homeo} except that $\Phi$ is only injective on $H\setminus (\eta \cup \bdy H)$, not necessarily injective everywhere.

\begin{lem} \label{lem-surjection}
Almost surely, there exists a continuous surjective map $\Phi : (H,d) \rta (\wt H , \wt d)$ such that $\Phi\circ \eta = \wt\eta$, $\Phi_* \mu = \wt\mu$, $\Phi\circ \xi_{t,0} =f_{t,0} \circ \wt\xi_{t,0}$ for each $t\in \BB Q_+$, and for each $k\in\BB N$ it holds that $\Phi|_{H_k\setminus \bdy H_k}$ is an isometry between the internal metric spaces $(H_k\setminus \bdy H_k , d_k)$ and $(f_{\infty,k}(\wt H_{\infty,k}\setminus \bdy \wt H_{\infty,k}) , \rng d_{\infty,k})$, where here $\rng d_{\infty,k}$ is the internal metric of $\wt d$ on $\wt H_{\infty,k}$, as in Lemma~\ref{lem-map-property}.
\end{lem}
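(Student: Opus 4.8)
\textbf{Proof proposal for Lemma~\ref{lem-surjection}.}
The plan is to build $\Phi$ piece by piece from the isometries $g_k$ of~\eqref{eqn-component-map} on the bubbles and from the identification of self-intersection times of $\wt\eta$ given by Lemma~\ref{lem-curve-top}, and then check continuity and surjectivity by an approximation/compactness argument. First I would define $\Phi$ on the complementary connected components: for $k\in\BB N$ and $x\in H_k\setminus\bdy H_k$, set $\Phi(x) := f_{\infty,k}(g_k^{-1}(x))$. By construction of $g_k$ (an isometry onto $(H_k,d_k)$ with $(g_k)_*\wt\mu_{\infty,k}=\mu|_{H_k}$ and $g_k\circ\wt\xi_{\infty,k}=\xi_k$) together with conditions~\ref{item-map-iso} and~\ref{item-map-measure'} of Lemma~\ref{lem-map-property}, this makes $\Phi|_{H_k\setminus\bdy H_k}$ an isometry onto $(f_{\infty,k}(\wt H_{\infty,k}\setminus\bdy\wt H_{\infty,k}),\rng d_{\infty,k})$ which is also measure preserving. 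It remains to define $\Phi$ on $\eta\cup\bdy H$. For a point $w$ in this set, choose a time $t$ with $\eta(t)=w$ (or a boundary time) and set $\Phi(w):=\wt\eta(t)$, respectively $\Phi(\xi(r)) := \wt\xi(r)$ on $\bdy H$. The key point that makes this well defined is that $Z$ is by construction the left/right boundary length process of $\eta$, so Lemma~\ref{lem-inf-adjacency0} tells us exactly when $\eta(t_1)=\eta(t_2)$ (namely when~\eqref{eqn-inf-adjacency0} holds) and when $\eta$ hits $\bdy H$; Lemma~\ref{lem-curve-top} (conditions~\ref{item-curve-top-double} and~\ref{item-curve-top-bdy}) says that $\wt\eta$ has \emph{at least} these coincidences. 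Hence $t_1,t_2$ with $\eta(t_1)=\eta(t_2)$ force $\wt\eta(t_1)=\wt\eta(t_2)$, so $\Phi$ is well defined on $\eta$, and similarly the boundary identification is consistent. One also has to check the two definitions agree on the overlap $\bdy H_k\cap(\eta\cup\bdy H)$, which follows from the boundary-path identities: Lemma~\ref{lem-bubble-bdy} expresses $f_{\infty,k}\circ\wt\xi_{\infty,k}$ in terms of $\wt\eta$ and $\wt\xi$ precomposed with the same backup-time functionals of $Z$ that describe $\xi_k=g_k\circ\wt\xi_{\infty,k}$ in terms of $\eta$ and $\xi$ on the $\SLE_6$ side, so the two prescriptions for $\Phi$ on $\bdy H_k$ match.

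Next I would verify the required compatibility identities. The relation $\Phi\circ\eta=\wt\eta$ is immediate from the definition on $\eta$. For $\Phi\circ\xi_{t,0}=f_{t,0}\circ\wt\xi_{t,0}$ with $t\in\BB Q_+$: the boundary path $\xi_{t,0}$ of the target-containing component of $H\setminus\eta([0,t])$ is, by the description at the end of Section~\ref{sec-lqg-bdy-process}, an explicit functional of $Z|_{[0,t]}$ applied to $\eta$ and $\xi$ — namely~\eqref{eqn-unexplored-bdy-path} with $(\eta,\xi)$ in place of $(\wt\eta,\wt\xi)$ — and Lemma~\ref{lem-bubble-bdy} gives the identical functional for $f_{t,0}\circ\wt\xi_{t,0}$ in terms of $(\wt\eta,\wt\xi)$; composing the first with $\Phi$ and using $\Phi\circ\eta=\wt\eta$, $\Phi\circ\xi=\wt\xi$ yields the claim. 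For $\Phi_*\mu=\wt\mu$: by Lemma~\ref{lem-map-conclude}\ref{item-map-curve-mass} and the analogous fact $\mu(\eta)=0$ for $\SLE_6$ on a Brownian disk (the area of an $\SLE_6$ curve is a.s.\ zero), both measures are concentrated on $\bigcup_k (H_k\setminus\bdy H_k)$ resp.\ its image, where $\Phi$ is the disjoint union of the measure-preserving maps from the previous paragraph; summing over $k$ and using condition~\ref{item-map-disjoint} of Lemma~\ref{lem-map-property} gives $\Phi_*\mu=\wt\mu$.

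Finally, continuity and surjectivity. For surjectivity, note $\Phi(H)\supset\wt\eta\cup\bdy\wt H$ and $\Phi(H)\supset\bigcup_k f_{\infty,k}(\wt H_{\infty,k}\setminus\bdy\wt H_{\infty,k})$, and by condition~\ref{item-map-cover} of Lemma~\ref{lem-map-property} the latter union together with $\wt\eta\cup\bdy\wt H$ is all of $\wt H$. For continuity I would argue as follows. $H$ is compact, so it suffices to show $\Phi$ has closed graph, equivalently: if $x_m\to x$ in $H$ then any subsequential limit of $\Phi(x_m)$ equals $\Phi(x)$. If $x$ lies in the interior of some $H_k$ this is clear since $\Phi$ is an isometry there; the only issue is when $x\in\eta\cup\bdy H$, and here one uses that points close to $x$ in $d$ either lie on $\eta$ (or $\bdy H$), where $\Phi$ is controlled by uniform continuity of $\wt\eta$ and $\wt\xi$ together with the coincidence structure of $Z$, or lie in bubbles $H_k$ whose $d$-diameter must then be small — because a bubble touching $x$ and containing a point at definite distance from $x$ would, via the isometry $\Phi|_{H_k}$ and condition~\ref{item-map-limit-dist}/\ref{item-map-limit-internal} of Lemma~\ref{lem-map-limit}, produce a corresponding large-diameter set in $\wt H$ touching $\wt\eta$, contradicting continuity of $\wt\eta$ and the fact that only finitely many $f_{\infty,k}(\wt H_{\infty,k})$ have diameter exceeding any given $\ep$. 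More precisely, I would fix $\ep>0$, use that there are finitely many bubbles of $d$-diameter $\ge\ep$, and for $m$ large observe $x_m$ is either within $\ep$ of $\eta\cup\bdy H$ in $d$ or inside one of these finitely many large bubbles; in the first case $\Phi(x_m)$ is within a modulus-of-continuity error of $\Phi$ of a nearby curve/boundary point, and in the second case $x_m$ converging to $x\in\eta$ forces $x$ to lie on $\bdy H_k$ for that bubble and one uses the matching of boundary paths above. Letting $\ep\to0$ gives continuity. The main obstacle I expect is precisely this last point — controlling $\Phi$ near the (typically fractal) self-intersection set of $\eta$ — which is why the quantitative continuity of $\wt\eta$, the finiteness of large bubbles, and the exact translation of the $Z$-functionals between the $\eta$ and $\wt\eta$ sides (Lemmas~\ref{lem-inf-adjacency0}, \ref{lem-curve-top}, \ref{lem-bubble-bdy}) all have to be combined carefully; the isometry statement on each $H_k$ and the measure and curve identities are comparatively routine consequences of the work already done in Sections~\ref{sec-map-limit} and~\ref{sec-bdy-process-id}.
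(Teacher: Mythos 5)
Your construction of $\Phi$ follows the paper's own decomposition essentially verbatim: define it on the bubbles $H_k$ via $f_{\infty,k}\circ g_k^{-1}$, define it on $\eta\cup\bdy H$ via the correspondence encoded by $Z$ (using Lemma~\ref{lem-inf-adjacency0} on the $\eta$-side and Lemma~\ref{lem-curve-top} on the $\wt\eta$-side to see well-definedness), check agreement on $\bdy H_k$ via Lemma~\ref{lem-bubble-bdy}, and then verify the curve, boundary-path, measure, and internal-isometry identities exactly as you do. The one organizational difference is that the paper builds $\Phi|_{\eta\cup\bdy H}$ as a continuous map from the start, by presenting $X=\eta\cup\bdy H$ and $\wt X=\wt\eta\cup\bdy\wt H$ as quotients of $[0,\infty)\sqcup[-\frk l_L,\frk l_R]$ and invoking the universal property of the quotient topology together with Lemma~\ref{lem-curve-top}; your pointwise definition yields the same set map, but you need to establish continuity of $\Phi|_X$ separately (it is used implicitly in your last step), which is exactly what the quotient-topology formulation gives for free.

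Where your argument does not go through as written is the continuity step at the boundary case. You argue that if $x_m$ is $d$-close to $\eta\cup\bdy H$ then ``$\Phi(x_m)$ is within a modulus-of-continuity error of $\Phi$ of a nearby curve/boundary point.'' This is circular (you are in the middle of proving $\Phi$ is continuous), and the underlying heuristic is not valid: $\Phi|_{H_k}$ is an isometry only between the \emph{internal} metrics $(H_k,d_k)$ and $(f_{\infty,k}(\wt H_{\infty,k}\setminus\bdy\wt H_{\infty,k}),\rng d_{\infty,k})$, and $d_k\ge d$, $\rng d_{\infty,k}\ge \wt d$, so a small $d$-distance from $x_m$ to $\bdy H_{k_m}$ gives no a~priori control on $\wt d(\Phi(x_m),\Phi(y_m))$ for $y_m\in\bdy H_{k_m}$. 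Likewise the claim that a small-$d$-diameter bubble with large-$\wt d$-diameter image ``contradicts continuity of $\wt\eta$'' is not correct: the continuity of $\wt\eta$ does not forbid a bubble of large $\wt d$-diameter from touching the curve, and conditions~\ref{item-map-limit-dist}/\ref{item-map-limit-internal} of Lemma~\ref{lem-map-limit} only control $f_{t,k}$ well inside $\wt H_{t,k}$, not near its boundary. The correct argument (which the paper uses, and which you gesture at without deploying cleanly) is to pass to a subsequence with $k_m\to\infty$ and to observe \emph{separately} that $\op{diam}(H_{k_m};d)\to 0$ and that $\op{diam}(\Phi(H_{k_m});\wt d)\to 0$; the latter follows because, by condition~\ref{item-map-component} of Lemma~\ref{lem-map-conclude}, the sets $\Phi(H_k)$ are exactly the complementary connected components of $\wt\eta\cup\bdy\wt H$, of which only finitely many have $\wt d$-diameter exceeding any fixed $\ep$ (by continuity of $\wt\eta$ extended to $[0,\infty]$). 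With both diameters shrinking one picks $y_m\in\bdy H_{k_m}\subset X$, gets $y_m\to x$, uses continuity of $\Phi|_X$, and concludes $\Phi(x_m)\to\Phi(x)$. If you replace the ``modulus of continuity'' step with this two-sided diameter control, your proof matches the paper's.
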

\begin{proof}
\noindent \textit{Step 1: definition of $\Phi|_{\eta \cup \bdy H}$.} 
 Let $X $ (resp.\ $\wt X$) be the topological space $\eta\cup \bdy H$ (resp.\ $\wt \eta\cup \bdy \wt H$), equipped with the topology it inherits from $H$ (resp.\ $\wt H$).
Then $X$ (resp.\ $\wt X$) is the image of $[0,\infty) \sqcup [-\frk l_L , \frk l_R]$ under the continuous surjection $\eta \sqcup \xi$ (resp.\ $\wt\eta\sqcup\wt\xi$). Since $\eta$ (resp.\ $\wt\eta$) extends continuously to $[0,\infty]$ (see condition~\ref{item-curve-top-terminal} of Lemma~\ref{lem-curve-top} in the case of $\wt\eta$), it follows by compactness that this continuous surjection is in fact a quotient map. 

By the discussion at the end of Section~\ref{sec-lqg-bdy-process} and a compactness argument as above, $X$ is the topological quotient of $[0,\infty)\times[-\frk l_L , \frk l_R]$ under the equivalence relation which identifies $t_1,t_2\in [0,\infty)$ whenever
\eqbn
\wt L_{t_1}  = \wt L_{t_2} = \inf_{s\in [t_1,t_2]} \wt L_s \quad\op{or} \quad
\wt R_{t_1}  = \wt R_{t_2} = \inf_{s\in [t_1,t_2]} \wt R_s
\eqen
and identifies $t \in [0,\infty)$ with $\wt\xi(  \wt L_t)$ (resp.\ $ \wt\xi(- \wt R_t)$) whenever $\wt L$ (resp.\ $\wt R$) attains a record minimum at time~$t$.  By Lemma~\ref{lem-curve-top} and the universal property of the quotient topology, there is a continuous surjective map $\Phi_X : X \rta \wt X$ such that $\Phi_X\circ \eta = \wt \eta$ and $\Phi_X \circ \xi = \wt\xi$. 
\medskip

\noindent \textit{Step 2: piecing together maps between bubbles.} 
Since each of the maps $g_k$ of~\eqref{eqn-component-map} is a continuous (in fact 1-Lipschitz) bijection between compact spaces $(\wt H_{\infty,k} ,\wt d_{\infty,k})$ and $(H_k , d)$, it follows that each $g_k^{-1}$ is continuous from $(H_k ,d)$ to $(\wt H_{\infty,k} , \wt d_{\infty,k})$.  Hence each $f_{\infty,k}\circ g_k^{-1}$ for $k\in\BB N$ is continuous from $(H_k , d)$ to $(f_{\infty,k}(\wt H_{\infty,k}) , \wt d)$. By~\eqref{eqn-bubble-bdy-path-L} of Lemma~\ref{lem-bubble-bdy}, each of these maps agrees with $\Phi_X$ on $\bdy H_k \subset X$. Therefore, the map $\Phi : H \rta \wt H$ defined by
\eqbn
\Phi(x) := \begin{cases}
\Phi_X(x) ,\quad &x\in X \\
(f_{\infty,k} \circ g_k^{-1})(x) , \quad &x\in H_k 
\end{cases}
\eqen
is well-defined. 

It is clear from surjectivity of $\Phi_X$ together with condition~\ref{item-map-cover} of Lemma~\ref{lem-map-property} that $\Phi$ is surjective. 
Since $\Phi_X \circ \eta = \wt\eta$ and $\Phi_X \circ \xi =\wt\xi$, the same is true for $\Phi$. 
By~\eqref{eqn-unexplored-bdy-path} of Lemma~\ref{lem-bubble-bdy}, a.s.\ $\Phi\circ \xi_{t,0} =f_{t,0} \circ \wt\xi_{t,0}$ for each $t\in \BB Q_+$. 

Each $g_k$ is measure-preserving and each $f_{\infty,k}$ is measure-preserving by condition~\ref{item-map-measure'} of Lemma~\ref{lem-map-property}. Since $\mu(\eta) = \wt\mu(\wt\eta) = 0$ (condition~\ref{item-map-curve-mass} of Lemma~\ref{lem-map-conclude}) we infer that $\Phi_* \mu = \wt\mu$.  Since $g_k^{-1}$ is an isometry $(H_k  , d_k) \rta (\wt H_{\infty,k}  , \wt d_{\infty,k}) $ and each $f_{\infty,k}$ restricts to an isometry $(\wt H_{\infty,k} \setminus \bdy \wt H_{\infty,k} , \wt d_{\infty,k}) \rta (f_{\infty,k}(\wt H_{\infty,k}\setminus \bdy \wt H_{\infty,k}) , \rng d_{\infty,k})$ (condition~\ref{item-map-iso} of Lemma~\ref{lem-map-property}) we infer that each $\Phi|_{H_k\setminus \bdy H_k}$ is an isometry between the internal metric spaces $(H_k\setminus \bdy H_k , d_k)$ and $(f_{\infty,k}(\wt H_{\infty,k}\setminus \bdy \wt H_{\infty,k}) , \rng d_{\infty,k})$. 
\medskip

\noindent \textit{Step 3: continuity.} 
We now check that~$\Phi$ is continuous via a compactness argument.  Suppose that we are given a sequence $\{x_m\}_{m\in\BB N}$ of points in~$H$ which converges to a point $x  \in H$. We must show that $\Phi(x_m) \rta \Phi(x)$.  By compactness of~$\wt H$, for every sequences of positive integers tending to $\infty$ there exists a subsequence $\mcl M$ along which $\Phi(x_m) \rta \wt x \in \wt H$. It suffices to show that $\wt x = \Phi(x)$ for every such subsequence $\mcl M$. 

Since $\Phi_X$ and each $f_{\infty,k} \circ g_k^{-1}$ is continuous, it is clear that $\wt x = \Phi(x)$ if either $x_m \in X$ for infinitely many $m\in\mcl M$ or there is a $k\in\BB N$ such that $x_m \in H_k$ for infinitely many $m \in \mcl M$.  If this is not the case, then after possibly passing to a further subsequence we can arrange that for each $m\in\mcl M$, there exists $k_m \in \BB N$ such that $x_m \in H_{k_m} \setminus \bdy H_{k_m}$ and $k_m\rta\infty$. 

Since $\eta$ (resp.\ $\wt\eta$) extends continuously to $[0,\infty]$, it follows that the set $H\setminus (\eta\cup \bdy H)$ (resp.\ $\wt H\setminus (\wt \eta \cup \bdy \wt H)$) has only finitely many connected components of $d$- (resp.\ $\wt d$-) diameter larger than $\ep$ for any $\ep  >0$. Hence $\op{diam}(H_{k_m} ; d) \rta 0$ as $\mcl M\ni m \rta\infty$ and, by condition~\ref{item-map-component} of Lemma~\ref{lem-map-conclude}, also $\op{diam}(\Phi(H_{k_m}) ; \wt d) \rta 0$ as $m\rta\infty$. If we choose $y_m \in \bdy H_{k_m}$ for $m\in\mcl M$, then $d(x_m , y_m) \rta 0$ and since $\Phi(x_m) , \Phi(y_m) \in \Phi(H_{k_m})$ also $\wt d(\Phi( x_m) ,\Phi(y_m) )   \rta 0$. Therefore $y_m \rta x$ so since $y_m \in X$ for each $m$, also $\Phi(y_m) \rta \Phi(x)$ and hence $\Phi(x_m) \rta \Phi(x)$, i.e.\ $\Phi(x) = \wt x$ as required. 
\end{proof}

To prove Proposition~\ref{prop-homeo}, it remains to show that the map $\Phi$ of Lemma~\ref{lem-surjection} is injective. This will be accomplished by means of the topological theorem~\cite[Main Theorem]{almost-inj}, which says that a continuous map between topological manifolds which is \emph{almost injective}, in the sense that the set of points with multiplicity~$1$ is dense; and \emph{light}, in the sense that the pre-image of every point is totally disconnected, must be an embedding (i.e., a homeomorphism onto its image). The isometry condition in Lemma~\ref{lem-surjection} implies that $\Phi$ is almost injective, so we need to check that $\Phi$ is light. In fact, we will show using the results of Section~\ref{sec-crossing} that the pre-image of any point has cardinality at most $7$.  

\begin{lem} \label{lem-ssl-curve-hit}
Almost surely, the curve $\wt\eta$ hits each point of $\wt H \setminus \bdy \wt H$ at most~$7$ times and each point of $\bdy\wt H \setminus \{\wt\eta(\infty)\}$ at most $4$ times.
\end{lem}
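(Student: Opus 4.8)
The plan is to transfer the crossing estimate of Proposition~\ref{prop-fb-crossing} from the discrete quadrangulations $Q^n$ to the limiting space $\wt H$, and to conclude that if $\wt\eta$ hit some point $\wt x \neq \wt\eta(\infty)$ at seven or more times, we could find a discrete configuration with more than six annulus crossings for large $n$, a contradiction. First I would fix a point $\wt x$ and times $0\le t_1 < \dots < t_7 < \infty$ with $\wt\eta(t_i) = \wt x$ for $i = 1,\dots,7$; since $\wt\eta(\infty) = \wt\xi(\frk l_R) \neq \wt x$ and $\wt\eta$ is continuous, we may take all $t_i$ strictly less than $\sigma_0$ and bounded away from it, and by discarding if necessary assume the $t_i$ are (say) rational. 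The key point is that $\wt x \notin \wt\eta(\{0\}) \cup \bdy \wt H$ would give the cleanest argument, but in general $\wt x$ could lie on $\wt\eta([0,t_1])$ between passes; regardless, because $\wt\eta$ returns to $\wt x$ at seven distinct times, there is some $\rho_0 > 0$ and seven disjoint time intervals $[a_i,b_i] \subset [t_1,t_7+1]$, each containing one of the $t_i$, such that $\wt\eta$ enters and leaves the ball $B_{\rho_0}(\wt x ; \wt d)$ during each $[a_i,b_i]$ and such that $\wt\eta(a_i), \wt\eta(b_i)$ lie outside $B_{\rho_1}(\wt x;\wt d)$ for a slightly larger $\rho_1 > \rho_0$; here I use that $\wt\eta$ is not locally constant near $\wt x$, which follows since otherwise $\wt\mu$ would have an atom or the topology of $(\wt H,\wt\eta)$ (Proposition~\ref{prop-component-law}) would be violated.

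Next I would use the HPU convergence $\frk Q^n \to \wt{\frk H}$ (along the subsequence $\mcl N$, in the common space $(W,D)$ of~\eqref{eqn-ghpu-embeddings}) to pull this picture back to $Q^n$. Pick $x^n \in Q^n$ with $D(x^n, \wt x) \to 0$; then $x^n$ is at $d^n$-distance at least, say, $\tfrac12 \wt d(\wt x, \wt\eta(\infty)) > \ep$ from the target edge $\BB e_*^n$ for large $n$ (using $\xi^n \to \wt\xi$ uniformly). The uniform convergence $\eta^n \to \wt\eta$ together with the Hausdorff convergence of metric balls (as used repeatedly in Section~\ref{sec-map-limit}, e.g.~\eqref{eqn-map-ball-conv}) shows that for $\delta < \rho_0$ and $\zeta$ chosen with $\rho_0 < \delta^\zeta < \rho_1$ and all large $n \in \mcl N$, the path $\eta^n$ (equivalently the percolation exploration path $\lambda^n$, up to the rounding discussed throughout) makes at least seven inside–outside crossings of the annular region between $B_{\delta n^{1/4}}(x^n;Q^n)$ and $B_{\delta^\zeta n^{1/4}}(x^n;Q^n)$ — hence of the filled-ball annulus $B_{\delta^\zeta n^{1/4}}^\bullet(x^n;Q^n) \setminus B_{\delta n^{1/4}}^\bullet(x^n;Q^n)$, since filling in only adds crossings-worth of the curve to the inner region and cannot decrease the count of excursions outside. (One should be slightly careful: filled balls differ from ordinary balls, so I would phrase the crossing count directly in terms of the times $\dot e_j^n$ entering/leaving these filled balls, using that a crossing of the ordinary-ball annulus forces a crossing of a comparable filled-ball annulus.) Taking $\delta$ small enough and then $n$ large, Proposition~\ref{prop-fb-crossing} (applied with this $\ep$ and $\zeta$) says that with probability tending to~$1$, \emph{every} vertex $v$ with $\op{dist}(v,\BB e_*^n;Q^n)\ge \ep n^{1/4}$ has at most $6$ crossings. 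Since $\wt x$ exists on a positive-probability event, this is a contradiction for $n$ large, proving the lemma.

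The main obstacle I anticipate is making the transfer of the crossing count robust to the discrepancies between the continuum and discrete setups: namely (a) the rounding errors in how $\lambda^n$ traces edges of peeled quadrilaterals (handled throughout the paper by $O_n(n^{-1/4})$ error terms, harmless here since $\delta n^{1/4} \gg n^{-1/4}$); (b) the difference between ordinary metric balls $B_r(\cdot;Q^n)$ and filled metric balls $B_r^\bullet(\cdot;Q^n)$ — I would deal with this exactly as in the proof of Proposition~\ref{prop-fb-crossing} itself, comparing $B_{\delta n^{1/4}}^\bullet$ and $B_{\delta^\zeta n^{1/4}}^\bullet$ to peeling-by-layers clusters via~\eqref{eqn-filled-ball-contain} and Lemma~\ref{lem-crossing-ip}, or more simply by observing that an inside–outside crossing of the ordinary-ball annulus that stays in the unexplored region yields an inside–outside crossing of the filled-ball annulus with slightly adjusted radii; and (c) the need to be sure the seven excursions of $\wt\eta$ near $\wt x$ survive the passage to a discrete approximation as \emph{distinct} crossings — for this I would keep the excursion time-intervals $[a_i,b_i]$ disjoint and separated by a definite gap, and choose points $\wt\eta(c_i)$ in each with $\wt d(\wt\eta(c_i),\wt x) < \rho_0$ and endpoints with $\wt d(\wt\eta(a_i),\wt x), \wt d(\wt\eta(b_i),\wt x) > \rho_1$; uniform convergence of $\eta^n$ then transfers all of these finitely many open/closed inequalities for $n$ large, forcing seven genuine crossings. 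One small additional point: the statement excludes the point $\wt\eta(\infty)$, and indeed the curve is eventually constant there (condition~\ref{item-curve-top-terminal} of Lemma~\ref{lem-curve-top}), so the distance-to-target hypothesis of Proposition~\ref{prop-fb-crossing} is exactly what lets us avoid that degenerate point. Once Lemma~\ref{lem-ssl-curve-hit} is in hand, lightness of $\Phi$ (pre-images have cardinality $\le 7$, hence are finite, hence totally disconnected), combined with almost-injectivity from the isometry statement of Lemma~\ref{lem-surjection} and~\cite[Main Theorem]{almost-inj}, gives that $\Phi$ is a homeomorphism, completing Proposition~\ref{prop-homeo}.
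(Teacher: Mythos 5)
Your overall strategy is the same as the paper's: use Proposition~\ref{prop-fb-crossing} to forbid seven or more inside--outside annulus crossings in the discrete, transfer this to the continuum, and conclude. The paper, however, factors the transfer into a separate intermediate step (Lemma~\ref{lem-ssl-crossing}) that bounds filled-ball crossings of $\wt\eta$ directly, and then proves Lemma~\ref{lem-ssl-curve-hit} entirely at the continuum level with a Borel--Cantelli argument (in place of your positive-probability contradiction, which would also work after patching). The reason that separation matters is the place your write-up has a genuine gap.

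The claim that seven crossings of the ordinary-ball annulus $B_{\delta^\zeta n^{1/4}}(x^n;Q^n) \setminus B_{\delta n^{1/4}}(x^n;Q^n)$ yield seven crossings of the filled-ball annulus $B_{\delta^\zeta n^{1/4}}^\bullet(x^n;Q^n) \setminus B_{\delta n^{1/4}}^\bullet(x^n;Q^n)$ ``since filling in only adds crossings-worth of the curve to the inner region and cannot decrease the count of excursions outside'' is incorrect as stated. Filling enlarges \emph{both} balls. Enlarging the inner one is indeed harmless, but enlarging the outer one can destroy crossings: an excursion of $\eta^n$ that leaves $B_{\delta^\zeta n^{1/4}}(x^n;Q^n)$ may nonetheless stay inside $B_{\delta^\zeta n^{1/4}}^\bullet(x^n;Q^n)$ if the exit point lies in a component of $Q^n\setminus B_{\delta^\zeta n^{1/4}}(x^n;Q^n)$ that is disconnected from the target, and then that excursion produces no inside--outside crossing of the filled annulus in the sense of Definition~\ref{def-crossing}. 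Your fallback (``deal with this exactly as in the proof of Proposition~\ref{prop-fb-crossing} itself'') does not address this point either: the filled-to-peeling-by-layers comparison there is used to prove the \emph{upper bound} on crossings, not to go from unfilled to filled crossings. The needed argument is the one the paper carries out in Lemma~\ref{lem-ssl-crossing} around~\eqref{eqn-filled-ball-ssl}: one shows that a subsequential Hausdorff limit $Y$ of the discrete filled balls $B_{\delta^\zeta}^\bullet(z^n;d^n)$ is contained in the continuum filled ball $B_{\delta^\zeta}^\bullet(z;\wt d)$, by producing, for any $w\notin B_{\delta^\zeta}^\bullet(z;\wt d)$, a chain of points to $\wt\eta(\infty)$ staying at definite distance from the ball, and transferring that chain to the discrete to show $w^n\notin B_{2\delta^\zeta}^\bullet(z^n;d^n)$ for large $n$. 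Then in the continuum one uses that $\wt H\setminus B_\delta(z;\wt d)$ has exactly one component meeting $\wt H\setminus B_{\delta^\zeta}^\bullet(z;\wt d)$ (namely the one containing $\wt\eta(\infty)$) to identify filled and unfilled crossing counts; this is where the topological structure of the disk really enters. Once you have Lemma~\ref{lem-ssl-crossing} in hand, the point-counting argument you sketch works, but you should also replace the loose justification for ``$\wt\eta$ is not locally constant'' — the paper's reason is that by condition~\ref{item-map-component} of Lemma~\ref{lem-map-conclude} the set of complementary components of $\wt\eta([0,t])$ strictly increases in $t$ (this uses the continuum boundary length process having jumps on every positive-length interval), not an appeal to atoms of $\wt\mu$, which would not by itself preclude local constancy.
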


To prove Lemma~\ref{lem-ssl-curve-hit}, we need to study inside-outside crossings of annular regions by the curve $\wt\eta$, which are defined in the following manner (in analogy with Definition~\ref{def-crossing}).

\begin{defn}
\label{def-crossing-cont} 
For a topological space $X$, a curve $\eta : [0,\infty) \rta X$, and sets $Y_0 \subset Y_1 \subset X$, an \emph{inside-outside crossing} of $Y_1\setminus Y_0$ by the path $\eta$ is a time interval $[t_0,t_1] \subset [0,\infty)$ such that $ \eta(t_0) \in Y_0$, $\ \eta(t_1) \in X\setminus Y_1$, and $ \eta((t_0, t_1)) \subset Y_1\setminus Y_0$.  We write $\op{cross}(Y_0 ,Y_1 ; \eta)$ for the set of inside-outside crossings of $Y_1\setminus Y_0$ by $ \eta$. 
\end{defn}

As in Section~\ref{sec-crossing},  we will consider crossings of annular regions between filled metric balls. For $z \in \wt H$ and $\rho \geq 0$, we define the \emph{filled metric ball} $B_\rho^\bullet(z; \wt d)$ to be the union of the closed metric ball $B_\rho (z;\wt d)$ and the set of points which it disconnects from the target point $\wt\xi(\frk l_R) = \wt\eta(\infty)$ in $\wt H$.

\begin{lem}
\label{lem-ssl-crossing}
For each $\ep \in (0,1)$, there exists $\delta_* = \delta_*(\ep ) \in (0,1)$ such that for each $\delta \in (0,\delta_*]$, it holds with probability at least $1-\ep$ that the following is true.  For each $z \in \wt H$ with $\wt d(z , \bdy \wt H) \geq \ep$, the number of inside-outside crossings (Definition~\ref{def-crossing-cont}) satisfies
\eqb \label{eqn-ssl-crossing}
\#\op{cross} \left( B_{\delta}^\bullet(z ;\wt d) , B_{\ep}^\bullet(z ; \wt d) ; \wt \eta \right) \leq 7.
\eqe 
Moreover, for each $z\in \bdy \wt H$ with $\wt d(z,\wt\eta(\infty)) \geq \ep$,  
\eqb \label{eqn-ssl-crossing-bdy}
\#\op{cross} \left( B_{\delta}^\bullet(z ;\wt d) , B_{\ep}^\bullet(z ; \wt d) ; \wt \eta \right) \leq 4 .
\eqe 
\end{lem}
\begin{proof}
Fix $\ep \in (0,1)$.  By Proposition~\ref{prop-fb-crossing}, there exists $\delta_* = \delta_*(\ep) \in (0,1)$ such that for each $\delta \in (0,\delta_*]$, there exists $n_* = n_*(\delta,\ep) \in \BB N$ such that for $n\geq n_*$, it holds with probability at least $1-\ep$ that the following is true.  For each $z^n \in Q^n$ with $d^n(z^n , \bdy Q^n  ) \geq \ep/2$, one has
\eqb \label{eqn-discrete-crossing}
\#\op{cross} \left( B_{2 \delta }^\bullet(z^n ; d^n) , B_{\ep/2}^\bullet(z^n ; d^n ) ;  \eta^n \right) \leq 7 .
\eqe 
Hence with probability at least $1-\ep$, there exists a subsequence $\mcl N'\subset \mcl N$ such that~\eqref{eqn-discrete-crossing} holds for each $n\in\mcl N'$. 
Henceforth assume that this is the case. 
We will show that the condition in the statement of the lemma holds.

To this end, fix $z\in \wt H$ with $\wt d(z , \bdy \wt H ) \geq \ep$. 
Since $Q^n \rta \wt H$ in the $D$-Hausdorff distance, there exists $z^n \in Q^n$ for $n\in\mcl N'$ such that $z^n \rta z$. 
This implies that $d^n(z^n ,\bdy Q^n) \geq \ep/2$ for large enough $n\in\mcl N'$ and that
\eqb \label{eqn-ball-h-conv}
B_{\delta }(z^n ; d^n) \rta B_{\delta }(z ; \wt d)  \quad \op{and} \quad B_{ \ep }(z^n ; d^n) \rta B_{ \ep }(z ; \wt d) 
\eqe 
in the $D$-Hausdorff distance. By the compactness of $(W,D)$, we can find a subsequence $\mcl N'' \subset \mcl N'$ and a subset $Y$ of $\wt H$ such that $B_{ \ep  }^\bullet(z^n ; d^n ) \rta Y$ in the $D$-Hausdorff distance as $\mcl N'' \ni n \rta\infty$. 

We claim that 
\eqb \label{eqn-filled-ball-ssl}
Y\subset B_{ \ep }^\bullet(z ; \wt d)  .
\eqe 
To see this, suppose $w \in \wt H\setminus B_{ \ep }^\bullet(z ; \wt d)$ and let $w^n \in Q^n$ for $n\in\mcl N''$ be chosen so that $w^n\rta w$. There is a $\rho > 0$ and points $w = w_0 , w_1 , \dots, w_N = \wt\eta(\infty)$ in $\wt H$ such that $\wt d(\wt w_{j-1} ,  B_{ \ep  }^\bullet(z ; \wt d) )\geq 2\rho$ and $\wt d(w_{j-1} , w_j) \leq \frac14 \rho$ for each $j \in [1,N]_{\BB Z}$. From this and the Hausdorff convergence $Q^n\rta \wt H$, for large enough $n\in\mcl N''$ there exists points $w^n = w_0^n , w_1^n , \dots, w_N^n = \wt\eta^n(\infty)$ in $Q^n$ such that $d^n(w_j^n  ,  B_{ \ep}(z^n ; d^n) ) \geq \rho$ and $d^n(w_{j-1}^n , w_j^n) \leq \frac13  \rho$ for each $j \in [1,N]_{\BB Z}$.
Therefore, for each such $n$ there exists a path from $w^n$ to $\wt\eta^n(\infty)$ in $Q^n$ which stays at distance at least $\frac13 \rho$ from $B_{\ep}(z^n ; d^n)$, whence $w^n $ lies at $d^n$-distance at least $\frac13\rho$ from $B_{ \ep }^\bullet(z^n ; d^n ) $. Since $w^n\rta w$, we infer that $w\notin Y$. 

It follows from~\eqref{eqn-discrete-crossing},~\eqref{eqn-ball-h-conv}, and~\eqref{eqn-filled-ball-ssl} that 
\begin{align*}
\#\op{cross} \left( B_{\delta} (z ;\wt d) , B_{\ep }^\bullet(z ; \wt d) ; \wt \eta \right)
&\leq \#\op{cross} \left( B_{\delta} (z ;\wt d) ,Y ; \wt \eta \right)\\
&\leq \limsup_{\mcl N'' \ni n \rta\infty} \#\op{cross} \left( B_{2 \delta }^\bullet(z^n ; d^n) , B_{ \ep / 2  }^\bullet(z^n ; d^n ) ;  \eta^n \right) 
\leq 7 .
\end{align*}
The only connected component of $\wt H\setminus B_{\delta} (z ;\wt d)$ which contains a point of $\wt H\setminus B_{ \ep  }^\bullet(z ; \wt d)$ is the one which contains $\wt\eta(\infty)$. Therefore, $\op{cross} \left( B_{\delta} (z ;\wt d) , B_{\ep  }^\bullet(z ; \wt d) ; \wt \eta \right) = \op{cross} \left( B_{\delta}^\bullet (z ;\wt d) , B_{\ep  }^\bullet(z ; \wt d) ; \wt \eta \right)$. We thus obtain~\eqref{eqn-ssl-crossing}.

The proof of~\eqref{eqn-ssl-crossing-bdy} is essentially identical, except that we use Proposition~\ref{prop-fb-crossing-bdy} instead of Proposition~\ref{prop-fb-crossing}. 
\end{proof}

\begin{proof}[Proof of Lemma~\ref{lem-ssl-curve-hit}] 
For $k\in \BB N$ let $\ep_k := 2^{-k}$. Let $\delta_k = \delta_k(\ep_k  ) \in (0,2^{-k}]$ be chosen so that the conclusion of Lemma~\ref{lem-ssl-crossing} holds with $\delta_k$ in place of $\delta_*$ and let $E_k$ be the event that the following is true: for each $z \in \wt H$ with $\wt d(z , \bdy\wt H ) \geq \ep_k$,
\eqbn
\#\op{cross} \left( B_{\delta_k}^\bullet(z ;\wt d) , B_{ \ep_k }^\bullet(z ; \wt d) ; \wt \eta \right) \leq   7 .
\eqen 
Then $\BB P[E_k ] \geq 1-\ep_k$ so by the Borel-Cantelli lemma, a.s.\ $E_k$ occurs for each large enough $k\in \BB N$. 
Henceforth fix $k_0 \in \BB N$ and assume we are working on the event $\bigcap_{k=k_0}^\infty E_k$.  

Let $z \in \wt H\setminus \bdy \wt H $. By condition~\ref{item-map-component} of Lemma~\ref{lem-map-conclude}, it is a.s.\ the case that the set of connected components of $\wt H\setminus \wt\eta([0,t_1])$ is a strict subset of the set of connected components of $\wt H\setminus \wt\eta([0,t_2])$ for each $0 < t_1 < t_2 < \wt\sigma_0$. Hence a.s.\ $\wt\eta$ is not constant on any positive-length interval of times. 

Consequently, if $\wt\eta$ hits $z$ more than 7 times, then there exists times $0 \leq t_1 < s_1 < t_2 < \dots < s_7 < t_8 <  s_8 < \wt\sigma_0$ such that $\wt\eta(t_1) = \dots = \wt\eta(t_8) = z$ and $\wt \eta(s_j) \not= z$ for each $j\in [1,7]_{\BB Z}$. Choose $\rho > 0$ such that each $\wt\eta(s_j)$ lies outside $B_\rho^\bullet(z ; \wt d)$.  
Then for each $0 < \rho_1 < \rho_2 \leq \rho$, 
\eqbn
\#\op{cross} \left( B_{\rho_1}^\bullet(z ;\wt d) , B_{\rho_2 }^\bullet(z ; \wt d) ; \wt \eta \right) \geq 8 .
\eqen
This contradicts the occurrence of $E_k$ for $k \geq k_0$ such that $\ep_k \leq \rho \wedge \wt d(z , \bdy \wt H)$. 
Therefore, $\wt\eta$ hits $z$ at most 7 times, as required. 

We similarly obtain that $\wt\eta$ hits each point of $\bdy\wt H \setminus \{\wt\eta(\infty)\}$ at most 4 times using~\eqref{eqn-ssl-crossing-bdy} of Lemma~\ref{lem-ssl-crossing}.
\end{proof}

\begin{proof}[Proof of Proposition~\ref{prop-homeo}]
Let $\Phi : H \rta \wt H$ be the continuous surjective map from Lemma~\ref{lem-surjection}. 
By Lemma~\ref{lem-surjection} and compactness, it suffices to show that $\Phi$ is injective. 

The map $\Phi$ restricts to a bijection from $H\setminus (\eta\cup \bdy H) = \Phi^{-1}(\wt H\setminus (\wt\eta \cup \bdy \wt H) ) $ to $\wt H\setminus (\wt\eta \cup \bdy \wt H)$. 
In particular, $\Phi^{-1}(\Phi(z)) = \{z\}$ for each $z$ in a dense subset of $H$, i.e.\ $\Phi$ is almost injective. 
Since $\Phi\circ \xi_{0,0} = \wt \xi$, $\Phi|_{\bdy H}$ is a bijection from $\bdy H$ to $\bdy \wt H$. 
By Lemma~\ref{lem-bubble-bdy}, $\Phi$ maps the intersection with $\bdy H$ of each connected component of $H\setminus \eta$ to the intersection with $\bdy\wt H$ of the corresponding connected component of $\wt H\setminus\wt\eta$. In particular, $\wt\eta$ does not hit any points of $\bdy \wt H$ whose pre-images under $\Phi$ are not hit by $\eta$.
Since $\Phi\circ\eta=\wt\eta$, we infer that $\Phi^{-1}(\wt\eta) = \eta$ and $\Phi|_{H\setminus \eta}$ is injective.
Hence for each point $z\in \wt H \setminus \{\wt\eta(\infty)\} $ which is hit by $\wt\eta$, 
\eqbn
\Phi^{-1}(z) = \{ \eta(t) : t \geq 0 \: \op{with} \: \wt\eta(t) = z\} .
\eqen
So, Lemma~\ref{lem-ssl-curve-hit} implies that a.s.\ $\# \Phi^{-1}(z)   \leq 7$ for each $z\in \wt H\setminus \{\wt\eta(\infty)\}$. 

For each rational time $t$ less than the terminal time $\sigma_0$, the boundary lengths of the arcs separating the two marked boundary points of the ``future" Brownian disk $\wt H_{t,0}$ are given by $L_t + \frk l_L$ and $R_t + \frk l_R$, which are positive by the definition~\eqref{eqn-continuum-terminal-time} of $\sigma_0$. 
Hence $\wt\eta$ does not hit $\xi(\frk l_R) = \wt\eta(\infty)$ before time $\sigma_0$. By this and condition~\ref{item-curve-top-terminal} of Lemma~\ref{lem-curve-top}, we obtain $\wt\eta^{-1}(\wt\eta(\infty)) = \eta^{-1}(\eta(\infty)) = [\sigma_0 , \infty)$. In particular, $\Phi^{-1}(\wt\eta(\infty)) = \{\eta(\infty)\}$. 

By combining the conclusions of the preceding two paragraphs, we get that the map $\Phi $ is light, i.e.\ the pre-image of every point is totally disconnected.
By~\cite[Main Theorem]{almost-inj}, $\Phi|_{H\setminus \bdy H} : H\setminus \bdy H \rta \wt H$ is a homeomorphism onto its image. In particular, $\Phi(H\setminus \bdy H) \subset \wt H\setminus \bdy \wt H$. Since $\Phi|_{\bdy H}$ is injective, in fact $\Phi$ is injective, as required.
\end{proof}

\subsection{Proofs of Theorems~\ref{thm-perc-conv} and~\ref{thm-perc-conv-uihpq}}
\label{sec-main-proof}

\begin{proof}[Proof of Theorem~\ref{thm-perc-conv}]
By Proposition~\ref{prop-component-law}, Proposition~\ref{prop-homeo}, and Theorem~\ref{thm-bead-mchar}, we infer that the subsequential limiting space $\wt{\frk H}$ has the law of a free Boltzmann Brownian disk with boundary length $\frk l_L + \frk l_R$ decorated by an independent chordal $\SLE_6$ from $\wt\xi(0)$ to $\wt\xi(\frk l_R)$. Since our initial choice of subsequence was arbitrary, we obtain the convergence in the theorem statement. 
\end{proof}

We now deduce our infinite-volume scaling limit result using local absolute continuity.

\begin{proof}[Proof of Theorem~\ref{thm-perc-conv-uihpq}] 
Fix $\rho > 0$, $\ep \in (0,1)$, and $\frk l > 0$ to be chosen later, in a manner depending only on $\rho$ and $\ep$. 
Also fix a sequence of positive integers $ (\el_L^n ,\el_R^n)_{n\in\BB N}$ such that $\el_L^n + \el_R^n$ is always even and $(n^{-1/2} \el_L^n, n^{-1/2} \el_R^n) \rta (\frk l , \frk l)$ and define the doubly curve-decorated metric measure spaces $\frk Q^n $ and $\frk H$ as in Theorem~\ref{thm-perc-conv} for this choice of $(\el_R^n , \el_L^n)$ and for $(\frk l_L , \frk l_R) = (\frk l , \frk l)$.  

By~\cite[Proposition~4.6]{gwynne-miller-simple-quad} and since the percolation exploration path is determined locally by the quadrangulation and the face percolation configuration, there exists $\frk l_0 = \frk l_0(\ep,\rho)$ such that if $n\geq n_*$, then for $\frk l\geq \frk l_0$ and large enough $n\in\BB N$, there exists a coupling of $\frk Q^n$ and $\frk Q^\infty$ such that with probability at least $1-\ep/2$, the $\rho$-truncations (Definition~\ref{def-ghpu-truncate}) satisfy $\frk B_\rho \frk Q^n = \frk B_\rho \frk Q^\infty$.
 
Since the quantum natural time parameterization of a chordal $\SLE_6$ is determined locally by the underlying field, it follows from~\cite[Proposition~4.2]{gwynne-miller-uihpq} that after possibly increasing $\frk l$, we can find a coupling of the limiting spaces $\frk H$ and $\frk H^\infty$ such that with probability at least $1-\ep/2$, one has $\frk B_\rho \frk H = \frk B_\rho \frk H^\infty$.

The theorem statement follows by combining the above two observations with Theorem~\ref{thm-perc-conv}. 
\end{proof}

\section{The case of triangulations}
\label{sec-triangulation}

In this subsection we state analogs of Theorems~\ref{thm-perc-conv} and~\ref{thm-perc-conv-uihpq} for critical ($p=1/2$;~\cite{angel-peeling,angel-uihpq-perc}) site percolation on a triangulation of type I (self-loops and multiple edges allowed) or II (no self-loops, but multiple edges allowed). Our arguments do not transfer directly to the case of type III triangulations, which have no self-loops or multiple edges, since the Markov property of peeling does not hold for such triangulations. 

Our reason for considering site percolation on a triangulation is that this is perhaps the simplest version of percolation. There are several reasons why this is the case:
\begin{itemize}
\item The percolation threshold for site percolation on the uniform infinite planar triangulation of type I or II is $1/2$~\cite{angel-peeling,angel-uihpq-perc}. 
\item Percolation interfaces take a particularly simple form (they are just paths in the dual map whose edges all have one white and one black vertex) and the peeling exploration path is exactly the percolation interface starting from the root edge~\cite{angel-uihpq-perc,angel-curien-uihpq-perc}. 
\item An instance of the uniform infinite planar triangulation of type II decorated by a site percolation configuration can be encoded by means of a simple random walk with steps which are uniform on $\{(0,1), (1,0) , (-1,-1)\}$, via a discrete analog of the peanosphere construction of~\cite{wedges}; this encoding is described in~\cite{bernardi-dfs-bijection,bhs-site-perc}. 
\item Site percolation on the triangular lattice is the only percolation model on a deterministic lattice known to converge to $\SLE_6$~\cite{smirnov-cardy,camia-newman-sle6,hls-sle6}. 
\end{itemize}
We emphasize, however, that the arguments of the present paper can be adapted to prove analogous scaling limit results for any percolation model on a random planar map with simple boundary which can be explored via peeling. 

\subsection{Preliminary definitions}
\label{sec-tri-prelim}

Before stating our results for triangulations, we recall the definition of the free Boltzmann distribution on triangulations with simple boundary of given perimeter and a peeling interface for site percolation on it.

Recall that a triangulation of type I is a general triangulation (with multiple edges and self-loops allowed) and a triangulation of type II is allowed to have multiple edges but no self-loops.
A triangulation of type I or II with simple boundary and its corresponding boundary path $\beta$ are defined in an analogous manner as in the quadrangulation case (recall Section~\ref{sec-intro-def-quad}). Note that in the triangulation case the perimeter is not constrained to be even although the perimeter of a type II triangulation with simple boundary must be at least $2$. 

For $n, \el\in\BB N $ and $i\in \{\mathrm{I},\mathrm{II}\}$ we write $\mcl T_{ \op{S} ,i }^\shortrightarrow(n,\el)$ for the set of pairs $(T , e)$ where $T$ is a triangulation with simple boundary having $ \el$ boundary edges and $n$ interior vertices and $e$ is an oriented root edge in its boundary.  

We define the free Boltzmann partition functions for triangulations of type I or II by
\eqb \label{eqn-fb-partition-tri}
\frk Z_{\mathrm{I}}^\Delta(\el) := \begin{cases}
\frac{2-\sqrt 3}{4} ,\quad &\el = 1\\
\frac{(2\el-5)!! 6^p}{8\sqrt 3 \el!} ,\quad &\el \geq 2
\end{cases} \quad \op{and} \quad
\frk Z_{\mathrm{II}}^\Delta(\el) := \frac{(2\el -4)!}{(\el -2)! \el !} \left( \frac{9}{4} \right)^{\el -1} ,\quad \el \geq 2  
\eqe 
where here $m !!$ for odd $m\in\BB N$ is the product of the positive odd integers which are less than or equal to $m$ and $(-1)!! = 1$.

\begin{defn} \label{def-fb-tri}
For $\el \in \BB N$ (with $\el\geq 2$ in the type II case) the \emph{free Boltzmann triangulation of type $i \in \{{\mathrm {I}},{\mathrm {II}}\}$ with simple boundary of perimeter $\el$} is the probability measure on 
 $\bigcup_{n=0}^\infty \mcl T_{\op{S}}^\shortrightarrow (n ,  \el)$ which assigns to each element of $ \mcl T_{\op{S} , i}^\shortrightarrow (n, \el)$ a probability equal to $\rho_i^{-1} \frk Z_i^\Delta( \el)^{-1}$, where here $\rho_{\mathrm{I}}= \sqrt{432}$ and $\rho_{\mathrm{II}} = 27/2$. 
 \end{defn}
  
For $i\in \{{\mathrm {I}},{\mathrm {II}}\}$, the \emph{uniform infinite planar trangulation of type $i$} (UIHPT$_{\op{S}}^i$) is the Benjamini-Schramm local limit of the free Boltzmann triangulation of type $i$ with simple boundary of perimeter $\el$ as $\el\rta\infty$~\cite{angel-curien-uihpq-perc}.

Suppose now that we are given $\el \in\BB N$, $\el_L , \el_R \in \BB N$ with $\el_L + \el_R = \el$, and a triangulation $(T,\BB e)$ with a distinguished oriented root edge $\BB e \in \bdy T$. 
Also let $\beta$ be the boundary path of $T$ starting from $\BB e$ and let $\BB e_* := \beta(\el_R+1)$. 
A \emph{critical site percolation configuration on $T$ with $\el_L$-white/$\el_R$-black boundary conditions} is a random function $\theta$ from the vertex set of $T$ to $\{0,1\}$ such that $\theta(v) = \mathsf{white} $ (resp.\ $\theta(v)= \mathsf{black} $) for each vertex in the left (resp.\ right) arc of $\bdy T$ from $\BB e$ to $\BB e_*$, including one endpoint of each of $\BB e$ and $\BB e_*$; and the values $\theta(v)$ for vertices $v \in T\setminus \bdy T$ are i.i.d.\  Bernoulli $1/2$-random variables.  We say that vertices $v$ with $\theta(v) =\mathsf{white} $ (resp.\ $\theta(v) = \mathsf{black} $) are open or white (resp.\ closed or black). 

For such a site percolation configuration $\theta$, there necessarily exists a unique path~$\lambda$ in (the dual map of) $T$ from~$\BB e$ to $\BB e_*$ such that each edge traversed by $\lambda$ has a white vertex to its left and a black vertex to its right; see Figure~\ref{fig-tri-perc} for an illustration.  The path $\lambda$ is called the \emph{interface path}, and can be explored via peeling by iteratively revealing the boundary edge of the unexplored triangulation other than $\BB e_*$ which has one white and one black endpoint. 

In contrast to the case of face percolation on a quadrangulation discussed above, for site percolation on a triangulation the percolation interface is identical to the peeling exploration path. Moreover, in the site percolation case $\lambda$ can be defined on the integers rather than the half-integers since each peeled edge shares an endpoint with the previous peeled edge.

\begin{figure}[ht!]
 \begin{center}
\includegraphics[scale=1]{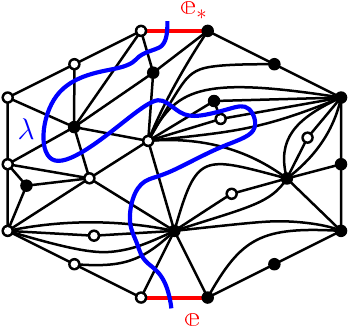} 
\caption[Site percolation interface on a triangulation]{\label{fig-tri-perc} A finite triangulation with simple boundary together with the percolation exploration path, equivalently the percolation interface, with $7$-white/$7$-black boundary conditions. }
\end{center}
\end{figure}

\subsection{Scaling limit result for triangulations}
\label{sec-tri-thm}
 
Our scaling limit results for site percolation on a triangulation rely on the following theorem of Albenque, Holden, and Sun~\cite{aasw-type2}. 

\begin{thm}[\cite{aasw-type2}]
\label{thm-tri-limit}
Let $\BB c_{\mathrm{I}} = 3/4$ and $\BB c_{\mathrm{II}} = 3/2$.  Let $\{\el^n\}_{n\in\BB N}$ be a sequence of positive integers tending to $\infty$ such that $  n^{-1/2} \el^n \rta 1$ as $n\rta\infty$.  For $n\in\BB N$, let $(T^n,\BB e^n)$ be a free Boltzmann triangulation of type $i \in \{\mathrm{I} , \mathrm{II}\}$ with simple boundary of perimeter $\el^n$ viewed as a connected metric space by identifying each edge with an isometric copy of the unit interval. Let $d^n$ be the graph distance on $T^n$, scaled by $\sqrt{3/2} n^{-1/4}$, let $\mu^n$ be the measure on $T^n$ which assigns each vertex a mass equal to $\BB c_i n^{-1}$ times its degree, let $\beta^n :[0,\el^n] \rta \bdy T^n$ be the boundary path of $\el^n$, extended by linear interpolation, and let $\xi^n(s) :=  \beta^n(   n^{1/2} s )$.  The curve-decorated metric measure spaces $\frk T^n := (T^n,d^n,\mu^n,\eta^n)$ converge in the scaling limit in the GHPU topology to the free Boltzmann Brownian disk with unit perimeter (Section~\ref{sec-intro-def-disk}). 
\end{thm}  
 
We note that Theorem~\ref{thm-tri-limit} implies the analogous scaling limit result for the UIHPT$_{\op{S}}^i$ toward the Brownian half-plane in the local GHPU topology by a local coupling argument using~\cite[Proposition 4.2]{gwynne-miller-uihpq} and the triangulation analog of~\cite[Proposition 4.6]{gwynne-miller-simple-quad}. 

Taking Theorem~\ref{thm-tri-limit} as input, an argument which is essentially identical to the one used to prove Theorems~\ref{thm-perc-conv} and~\ref{thm-perc-conv-uihpq} yields exact analogs of these theorems in the case of site percolation on a type $i \in \{\mathrm I, \mathrm{II}\}$ triangulation.  In fact, some of the arguments in the proofs of the theorems in the case of site percolation of a triangulation are slightly simpler due to the simpler form of the peeling process in this case~\cite{angel-uihpq-perc,angel-curien-uihpq-perc} and the exact agreement between the percolation exploration path and the peeling interface.

Define $\BB c_i$ for $i\in \{\mathrm{I}, \mathrm{II}\}$ as in Theorem~\ref{thm-tri-limit}. Following~\eqref{eqn-tcon-choice}, let $\tcon_i = \tfrac32  \ccon(i)^{-1} \lcon $ be the time scaling constant, with $\ccon(i)$ the constant from the type $i$ triangulation analog of~\eqref{eqn-cover-tail} and $\lcon$ the (non-explicit) scaling constant from~\eqref{eqn-levy-process-scaling} (note that the boundary length scaling constant is 1 for triangulations by Theorem~\ref{thm-tri-limit}). 

Fix $\frk l_L , \frk l_R > 0$ and a sequence of pairs of positive integers $\{(\el_L^n ,\el_R^n)\}_{n\in\BB N}$ such that $ n^{-1/2} \el_L^n  \rta \frk l_L$ and $  n^{-1/2} \el_R^n \rta \frk l_R$.  
For $n\in\BB N$, let $(T^n ,\BB e^n )$ be a free Boltzmann triangulation of type $i$ with simple boundary of perimeter $\el_L^n+\el_R^n$ viewed as a connected metric space by replacing each edge with an isometric copy of the unit interval and let $\theta^n$ be a critical face percolation configuration on $T^n$. 
Define the rescaled metric, area measure, and boundary path $d^n$, $\mu^n$, and $\xi^n$ for $T^n$ as in Theorem~\ref{thm-tri-limit}. 
Also let $\lambda^n : [0,\infty) \rta T^n$ be the percolation interface path of $(T^n,\BB e^n , \theta^n)$ with $\el_L^n$-white/$\el_R^n$-black boundary conditions (Section~\ref{sec-tri-prelim}), extended to a continuous path on $[0,\infty)$ which traces the edge $\lambda^n(j)$ during each time interval $[j-1,j]$ for $j\in\BB N$; and for $t\geq 0$ let $\eta^n(t ) := \lambda^n(\tcon_i n^{ 3/4} t)$. 
Define the doubly-marked curve-decorated metric measure spaces
\eqbn
\frk T^n := \left( T^n , d^n , \mu^n , \xi^n , \eta^n \right)  .
\eqen
Also let $\frk H = (H,d,\mu,\xi,\eta)$ be a free Boltzmann Brownian disk with perimeter $\frk l_L + \frk l_R$ decorated by an independent chordal $\SLE_6$ from $\xi(0)$ to $\xi(\frk l_R)$, as in Theorem~\ref{thm-perc-conv}. We have the following triangulation analog of Theorem~\ref{thm-perc-conv}. 

\begin{thm}  \label{thm-perc-conv-tri}
One has $\frk T^n \rta \frk H$ in law with respect to the two-curve Gromov-Hausdorff-Prokhorov-uniform topology. That is, site percolation on a free Boltzmann triangulation of type $i \in \{\mathrm I, \mathrm{II}\}$ with simple boundary converges to chordal $\SLE_6$ on a free Boltzmann Brownian disk. 
\end{thm}

As in the case of Theorem~\ref{thm-perc-conv}, we in fact obtain the joint scaling limit of $\frk T^n$ and the analog of the rescaled boundary length process $Z^n = (L^n,R^n)$ of Definition~\ref{def-bdy-process-rescale} for site percolation on a triangulation toward $\frk H$ and its associated left/right boundary length process in the GHPU topology on the first coordinate and the Skorokhod topology on the second coordinate.  We also obtain from Theorem~\ref{thm-perc-conv-tri} an analogous scaling limit result for site percolation on the UIHPT$_{\op{S}}^i$ toward chordal $\SLE_6$ on the Brownian half-plane, using the same local coupling argument as in Theorem~\ref{thm-perc-conv-uihpq}.

\appendix

\section{Index of notation}
\label{sec-index}
 
Here we record some commonly used symbols in the paper, along with their meaning and the location where they are first defined (notations used only locally are not listed). We emphasize that a superscript $\infty$ denotes objects associated with the infinite-volume setting and a superscript $n$ denotes objects associated with the quadrangulation $Q^n$.

\begin{multicols}{2}
\begin{itemize}
\item $\frk Z$: free Boltzmann partition function; \eqref{eqn-fb-partition}.
\item $\beta$: boundary path of a quadrangulation; Section~\ref{sec-intro-def-quad}.
\item $\BB e$: root edge; Section~\ref{sec-intro-def-quad}.
\item $\BB e_*$: target edge; Section~\ref{sec-intro-def-perc}. 
\item $  (H,d,\mu,\xi)$: Brownian disk with its metric, area measure, and boundary path; Section~\ref{sec-intro-def-disk}.
\item $\theta$: percolation configuration; Section~\ref{sec-intro-def-perc}.
\item $\lambda$: percolation exploration path; Section~\ref{sec-intro-def-perc}.
\item $\bcon = 2^{3/2} / 3$: boundary length scaling constant; \eqref{eqn-normalizing-constant}.
\item $\tcon$: normalizing constant for percolation exploration path time; just after \eqref{eqn-normalizing-constant}.
\item $\frk l_L , \frk l_R$: left/right boundary lengths for Brownian disk; Section~\ref{sec-results}.
\item $\el_L,\el_R$ (or $\el_L^n,\el_R^n$): left/right boundary lengths for quadrangulation; Section~\ref{sec-results}. 
\item $\eta$: SLE$_6$ parameterized by quantum natural time; Section~\ref{sec-lqg-bdy-process}. 
\item $\sigma_0$: time when $\eta$ reaches its target point; Section~\ref{sec-lqg-bdy-process}.
\item $Z=(L,R)$: left/right boundary length process for SLE$_6$ on the Brownian disk; Section~\ref{sec-lqg-bdy-process}.
\item $\frk f(\cdot,\cdot)$: peeled quadrilateral; Section~\ref{sec-general-peeling}. 
\item $\frk P(\cdot,\cdot)$: peeling indicator; Section~\ref{sec-general-peeling}.
\item $\op{Peel}(\cdot,\cdot)$: unexplored quadrangulation when peeling; Section~\ref{sec-general-peeling}.
\item $\frk F(\cdot,\cdot)$: region disconnected from $\infty$ when peeling; Section~\ref{sec-general-peeling}.
\item $\op{Co}_{\BB e_*}, \op{Co}^L_{\BB e_*}, \op{Co}^R_{\BB e_*}$: covered edges after peeling; Section~\ref{sec-general-peeling}.
\item $\op{Ex}_{\BB e_*}$: exposed edges after peeling; Section~\ref{sec-general-peeling}. 
\item $\ol Q_j$: unexplored quadrangulation for peeling process; Section~\ref{sec-perc-peeling}.
\item $\dot Q_j$: peeling cluster; Section~\ref{sec-perc-peeling}.
\item $\dot e_j$: peeled edge; Section~\ref{sec-perc-peeling}.
\item $\mcl J$: terminal time of percolation peeling process; Section~\ref{sec-perc-peeling}. 
\item $\mcl F_j$: filtration of peeling process; Section~\ref{sec-perc-peeling}. 
\item $X_j^L,X_j^R,X_j$: left, right, total exposed boundary length of $\dot Q_j$; Definition~\ref{def-bdy-process}.
\item $Y_j^L,Y_j^R,Y_j$: left, right, total covered boundary length of $\dot Q_j$; Definition~\ref{def-bdy-process}.
\item $W^L_j$ and $W^R_j$: net boundary length $X^L_j-Y^L_j$ and $X^R_j-Y^R_j$; Definition~\ref{def-bdy-process}. 
\item $W_j = (W^L_j , W^R_j)$; Definition~\ref{def-bdy-process}. 
\item $L^n , R^n , Z^n$: re-scaled net boundary length processes; Definition~\ref{def-bdy-process-rescale}. 
\item $J_r^n$: first time that $Y_j^{L,n} \geq \el_L^n - r \BB c n^{1/2}$ or $Y_j^{R,n} \geq \el_R^n - r \BB c n^{1/2}$; \eqref{eqn-discrete-hit-time}.
\item $\sigma_r^n$: re-scaled version of $J_r^n$; \eqref{eqn-discrete-hit-time}
\item $I_{\alpha_0,\alpha_1}^n$: stopping time for left/right boundary length process; \eqref{eqn-discrete-close-time}.
\item $\tau_{\alpha_0,\alpha_1}^n$: re-scaled version of $I_{\alpha_0,\alpha_1}^n$; \eqref{eqn-discrete-close-time-rescale}.
\item $\op{cross}(\cdot,\cdot)$: number of crossings by percolation peeling exploration; Definition~\ref{def-crossing}.
\item $B_r^\bullet(\cdot;\cdot)$: filled metric ball; Definition~\ref{def-filled-ball}.
\item $B_r^\pbl(\cdot;\cdot)$: peeling-by-layers cluster; Section~\ref{sec-crossing-perturb}. 
\item $\mcl A_r^\pbl(\cdot;\cdot)$: edge set $\mcl E\left( \bdy B_r^\pbl(\BB A ; \ol Q_j) \setminus \bdy \ol Q_j  \right)  $; \eqref{eqn-pbl-bdy-length-def}.
\item $\op{IP}_j(\cdot,\cdot)$: number of interface paths which cross an annulus; Definition~\ref{def-ip-crossing}.
\end{itemize}
\end{multicols}

\begin{multicols}{2}
\begin{itemize}
\item $\mcl N$: subsequence along which we have GHPU convergence in law; Section~\ref{sec-identification}.
\item $\wt{\frk H} = (\wt H ,\wt d , \wt \mu , \wt \xi , \wt\eta)$: subsequential limiting space; \eqref{eqn-ssl-conv0}. 
\item $\frk H_{t,k}^n = (H_{t,k}^n , d_{t,k}^n , \mu_{t,k}^n , \xi_{t,k}^n)$: component of $Q^n\setminus \eta^n([0,t])$ with $k$th largest boundary length; \eqref{eqn-bubble-space}. 
\item $\Delta_{t,k}^n$: re-scaled boundary length of $H_{t,k}^n$; \eqref{eqn-bubble-jump}.
\item $\tau_{t,k}^n$: re-scaled time at which $H_{t,k}^n$ is disconnected from the target point; \eqref{eqn-bubble-t-time}.
\item $\wt{\frk H}_{t,k}  = (\wt H_{t,k}  , \wt d_{t,k}  , \wt\mu_{t,k}  , \wt\xi_{t,k} )$: subsequential limit of $\frk H_{t,k}^n$; \eqref{eqn-bubble-ssl}. 
\item $\Delta_{t,k}$: re-scaled boundary length of $\wt{ H}_{t,k}$; \eqref{eqn-unexplored-perimeter-ssl}.
\item $\tau_{t,k}$: time at which $H_{t,k} $ is disconnected from the target point; just below \eqref{eqn-unexplored-perimeter-ssl}.
\item $(W,D)$: space into which $Q^n$ and $H$ are embedded; \eqref{eqn-ghpu-embeddings}.
\item $(W_{t,k} , D_{t,k})$: space into which $H_{t,k}^n$ and $H_{t,k}$ are embedded; \eqref{eqn-ghpu-embeddings}.
\item $\wt{\frk H}_{t,k}^n = (\wt H_{t,k}^n  , \wt d_{t,k}^n  , \wt\mu_{t,k}^n  , \wt\xi_{t,k}^n )$: $\frk H_{t,k}^n$ embedded into $W_{t,k}$; \eqref{eqn-embed-bubble}.
\item $f_{t,k}^n$: identity map $\wt H_{t,k}^n \rta H_{t,k}^n$; \eqref{eqn-embed-maps}.
\item $f_{t,k}$: subsequential limit of $f_{t,k}^n$; Lemma~\ref{lem-map-limit}.
\end{itemize}
\end{multicols}

\bibliography{cibiblong,cibib}
\bibliographystyle{hmralphaabbrv}

\end{document}